%% file: delay_main.tex
\documentclass[12pt]{article}
\usepackage{amsmath,amsfonts,amssymb,amscd}
\usepackage{graphicx}
\usepackage[T2A]{fontenc}
\usepackage{color}
\usepackage{float}

\newtheorem{theo}{Theorem}
\newtheorem{lem}{Lemma}[section]

\newtheorem{cor}{Corollary}[section]

\makeatletter \@addtoreset{equation}{section} \makeatother

\newcommand{\grad}{\mathop{\rm grad}\nolimits}

\newcommand{\mR}{\mathbb{R}}

\newcommand{\eps}{\varepsilon}
\newcommand{\ph}{\varphi}
\newcommand{\thet}{\vartheta}

\newcommand{\ze}{\zeta}
\newcommand{\p}{\partial}

\newcommand{\Imm}{\operatorname{Im}}

\newcommand{\im}{\operatorname{Im}}

\newcommand{\re}{\operatorname{Re}}

\newcommand{\dK}{\mathrm K}

\begin{document}

\title
{On Maximal Delay of Stability Loss  for Dynamical Bifurcations}
\author{Anatoly Neishtadt\thanks{Department of Mathematical Sciences, Loughborough University, Loughborough, LE11 3TU, UK
} }
\date{}
\maketitle

\begin{abstract}
We consider a dynamical bifurcation caused by a slow passage through a static bifurcation point: in a system depending on a parameter, the parameter changes slowly in time and passes through the critical value corresponding to the loss of stability of an equilibrium via a Poincar\'e--Andronov--Hopf bifurcation in the frozen system.
 If the system is analytic, then the loss of stability is inevitably delayed: phase points attracted to the equilibrium in the stability region remain near the equilibrium for a long time after entering the instability region, so that the parameter changes by an amount of order ~1 independently of how slow the variation of the parameter is. Remarkably, there exists a {\it maximal delay}: all phase points attracted to the stable equilibrium before a certain threshold value of the parameter leave a neighbourhood of the unstable equilibrium almost simultaneously near another threshold value of the parameter, known as  {\it a buffer point}. A delay of stability loss beyond the buffer point is impossible unless the initial data have a very special form. We assume that, although the equilibrium is non-degenerate for real values of the parameter, one of its eigenvalues vanishes generically for some complex value of the parameter (a complex analogue of a saddle-node bifurcation), and that this complex singularity is, in a suitable sense, the closest one to the real Poincar\'e--Andronov--Hopf bifurcation point. We show that  the value of maximal delay is determined by this complex singularity: the threshold values defining  the maximal delay are  the intersection points of the Stokes lines associated with this singularity and the real axis. We study these phenomena in the framework of slow--fast dynamical systems.
\end{abstract}

\section{Introduction} 
\label{intro}

There is a vast variety of applied problems leading to study of dynamical systems with phase variables changing on different time scales ({\it slow-fast dynamical systems} with {\it slow and fast variables}). {\it Stability loss delay} is a remarkable feature of dynamical bifurcations in such systems. Existence of maximal delay (known as {\it a buffer point}) is an important phenomenon in stability loss delay that is related to singularities of solutions in complex time in a way which is still to be understood. In this paper we provide an  asymptotic description of the maximal delay in stability loss phenomenon at slow passage through a bifurcation in a generic setting.
\medskip

Finite dimensional slow-fast dynamical systems with continuous time are described by ODEs  
   \begin{eqnarray}
\label{perturbed}
     \frac{dx}{dt}&=&f(x,\kappa,\varepsilon ), \quad   x \in \mR^n,  \\
\frac{d\kappa}{dt}&=&\varepsilon g(x,\kappa,\varepsilon ), \quad  \kappa\in \mR^m.
 \nonumber
\end{eqnarray}
\noindent 

Here  $x$ and $\kappa$ are {\it fast} and {\it slow} variables. Small positive parameter $\eps$ characterises ratio of time scales of these variables. The first equation in (\ref{perturbed}) for 
$\kappa={\rm const}$  and  $\eps=0$ is called {\it the fast system}.
\medskip

Let $x=X(\kappa)$ be a non-degenerate equilibrium of the fast system: 
$f(X(\kappa), \kappa,0)\equiv 0$ and eigenvalues $\lambda_1(\kappa), \lambda_2(\kappa),\ldots, \lambda_n(\kappa)$ of this equilibrium are different from 0. Dynamics of slow variables near this equilibrium is approximately described by {\it the slow system}
 \begin{equation}
 \label {slow}
\frac{d\kappa}{dt} = \varepsilon g(X(\kappa),\kappa,0),\quad  x= X(\kappa).
\end{equation}
Assume that in the domain of slow variables there exists a bifurcation surface that separates regions of asymptotic stability and instability of this equilibrium. We say that there is a dynamical bifurcation with stability loss if drift of slow variables in (\ref{slow}) leads to crossing the bifurcation surface from the stability to instability region. Typically, two complex-conjugate eigenvalues cross imaginary axis (a Poincaré-Andronov-Hopf bifurcation in the fast system). Consider phase points of (\ref {perturbed}) that are attracted to the equilibrium of fast system in the stability region at the distance of order 1 from the bifurcation surface.  We say that there is {\it  a delay of stability loss} if these phase points stay near the equilibrium in the instability region during the time of order $1/\eps$. 

\medskip
The buffer point phenomenon consists of the following. Fix a solution $\kappa=\dK(\tau), x=X(\dK(\tau)), \tau=\eps t$ of the slow system.  Let $\tau_*$ be a bifurcation value of {\it the slow time} $\tau$, i.e. $\dK(\tau)$  crosses the bifurcation surface at $\tau=\tau_*$. The value $\tau_*^+ > \tau_*$  is {\it a buffer point} if  there exists a slow time moment
$\tau_- < \tau_*$  such that all the phase points of (\ref{perturbed})  attracted to a small neighbourhood of equilibrium $X(\dK(\tau))$ before time $\tau_*^-$ will leave  a neighbourhood of the equilibrium near $\tau=\tau_*^+$. Thus, stability delay beyond time 
$\tau_*^+$  is not possible for these phase points. Value $\tau_*^+-\tau_*$   is  {\it a maximal delay}. Phase points of (\ref{perturbed}) attracted to a small neighbourhood of equilibrium $X(\dK(\tau))$  at times  $\tau$   between  $\tau_*^-$ and $\tau_*$ will leave a neighbourhood of equilibrium at times between  $\tau_*$   and $\tau_*^+$.  Stability delay beyond time $\tau_*^+$ occurs for phase points with special initial data. Those are phase points attracted to a small neighbourhood of equilibrium $X(\dK(\tau))$ close to the time $\tau_*^-$. 

\medskip
Theory of stability loss delay was started in Pontryagin’s school by Shishkova in \cite {sh}, where an example of this phenomenon in a non-trivial ($X(\dK(\tau))  \not \equiv {\rm const}$) setting was studied using methods of the dynamics in complex time. Such an approach and its modifications turned out to be a powerful tool in analysis of this phenomenon.  

\medskip
Systematic development of a general theory of the stability loss delay phenomenon was started in  \cite{nei1, nei2.1,nei2.2,ber}. It was shown that if the system is analytic, the stability loss delay necessarily occurs \cite{nei1, nei2.1}. It turns out that the analyticity is a property responsible for the stability loss delay phenomenon. There are examples of systems of smoothness $C^{\infty}$ for which stability loss does not delay: an escape from an equilibrium occurs inside an interval of slow time that shrinks to a bifurcation value as $\eps \to 0$  \cite{nei2.1}. Asymptotic expressions for delay time were obtained in \cite{nei2.2} under certain conditions. Namely, the analytic continuation of slow system solutions along some paths in complex time plane was considered, and it was assumed that the family of these paths is separated from singularities. These complex time singularities are related to existence of a buffer point. From the viewpoint of motion in real time, the buffer point phenomenon is related to an exponentially small mismatch of bounded backward and forward in time solutions \cite{dien}.
\medskip

Stability loss delay accompanies also dynamical bifurcations of periodic trajectories in analytic slow-fast ODEs and fixed points in analytic slow-fast maps \cite{nei1, F91, F92, NST, Su96, Su97, FS}. 
Study of stability loss delay for infinite dimensional dynamical systems is a subject of active research in \cite{Su94, TWK, KV, GKV, AACS, TLFK}. 

Stability loss delay appears in diverse applications, including mechanics, laser physics, chemistry, epidemiology, ecology, neural models, climate modelling, and biochemistry (see references in \cite{NT}).

\medskip
Maximal delay/buffer point was for the first time considered (without using these names) in a particular example in \cite{sh}. In this example there is a complex slow time point $\tau_c$  where an eigenvalue of an equilibrium vanishes.  Points $\tau_*^-$ and $\tau_*^+$ turned out to be points of intersection of Stokes lines passing through $\tau_c$  and the axis of real time. At first glance, the example in \cite{sh} looks as a typical one as it presents a normal form for Poincaré-Andronov-Hopf bifurcation with slowly time dependent parameters. However, in this example position of equilibrium remains an analytic function of complex slow time at  $\tau_c$, while generically  $\tau_c$ should be a branch point of order 2 for this position (a complex counterpart of saddle-node bifurcation). It turns out that the example in \cite{sh} is a degenerate one: two coefficients in expansion near $\tau_c$ vanish while generically they should have non-zero values.  In this paper, we consider the generic case: an eigenvalue vanishes at  $\tau_c$, and the equilibrium $X(\dK(\tau))$  of the fast system has a branch point of order 2 at $\tau_c$. We do this for slow-fast systems of general form (\ref{perturbed}). The result is that, similarly to \cite{sh},  $\tau_*^-$ and $\tau_*^+$ are points of intersection of Stokes lines passing through $\tau_c$   with the axis of real time. Values $\tau_*^-$ and $\tau_*^+$ describe maximal delay phenomenon in the limit as $\eps \to 0$. We  show that accuracy with which $\tau_*^+$ provides the value of maximal delay is  $O(\eps|\ln \eps|)$.

\medskip
Our methodology is based on study of solutions of systems (\ref{perturbed}) in complex time. Let the solution $\kappa=\dK(\tau), x=X(\dK(\tau)), \tau=\eps t$ of the slow system (\ref{slow}) be defined on an interval 
$\tau \in [\tau_0,\tau_1]\subset \mR$.  Let $\lambda_1(  \dK(\tau)), \lambda_2(  \dK(\tau))$ be two complex conjugate eigenvalues of the equilibrium $X(\dK(\tau))$ of the fast system. Assume that real parts of these eigenvalues are negative at  $\tau< \tau_*$, zero at $\tau= \tau_*$, and positive at $\tau> \tau_*$; 
 here  $\tau_*  \in [\tau_0,\tau_1]$ is the bifurcation value of slow time.  Other eigenvalues have negative real parts for $\tau  \in [\tau_0,\tau_1]$. Assuming analyticity of functions $ f,g$ in (\ref{perturbed}) with respect to $x,\kappa$, consider systems (\ref{perturbed}) , (\ref{slow})  for complex values of phase variables and time. In the complex time plane consider a path $\tau=\Gamma(s)$ parametrised by a real variable $s$. Take $s/\eps$  as a new time for the fast system. Then eigenvalues of the equilibrium $X(\dK(\Gamma(s)))$ are $\Lambda_j(s)=\Gamma'(s)   \lambda_j(  \dK(\Gamma(s))), j=1,2,\ldots, n$.   Let us call path $\tau=\Gamma(s)$ {\it iso-expanding}\footnote{This name is suggested by Prof Carles Simo.}, if $\Lambda_1(s)$ is an imaginary number. For motion in real time $t$, initially there is a contraction in all fast variables (for $\eps t<\tau_*$) and then an expansion in two fast variables (for $\eps t>\tau_*$).  For motion along an iso-expanding path there is neither contraction nor expansion in one of fast variables in the linearised system. There is a smooth family of iso-expanding paths that connect points of stable ($\eps t<\tau_*$) and unstable ($\eps t>\tau_*$) parts of the real time axis near $\tau_*$. Study of dynamics when time changes along such a path (see \cite{nei2.2}) provides a proof of existence of stability loss delay and asymptotic formulas for time of delay. Points of the real time axis connected by such a path are (asymptotically) moments of time of attraction to the equilibrium  $X(\dK(\tau))$ and of escape from it. Continuation of iso-expanding paths family away from $\tau_*$ in the domain of analyticity of functions $f,g$  is prevented by singularities of paths. Typical singularities are related to a zero eigenvalue or a double eigenvalue: $\lambda_1(  \dK(\tau_c))=0$,  or
 $\lambda_1(  \dK(\tau_c))=\lambda_2(  \dK(\tau_c))$ at some complex point $\tau_c$. In this paper we consider the case of a zero eigenvalue. Iso-expanding path through $\tau_c$ is responsible for the buffer point phenomenon.

Generically, $\tau_c$ is a branch point of order 2 for $X(\dK(\tau))$. Dynamics near $\tau_c$ is approximately described by a complex Riccati equation whose real counterpart plays a fundamental role in relaxation oscillations theory \cite{mr}. In complex time this equation is considered in \cite{C}. Its role for determining  maximal delay is discussed in \cite{nei95}. We study an effect of this dynamics upon the real time dynamics. This is  achieved via analytic continuation of solutions along iso-expanding paths.  To avoid growing norm solutions we will use the following approach (see \cite{nei2.2}): complex system of $n+m$ differential equations is replaced by a system of $n+m-1$ equations with a complex non-constant delay using the real-analyticity conditions. In study of analytic continuation we use a perturbative approach. Outside $\sim \eps^{2/3}$ - neighbourhood of  $\tau_c$  this is a perturbation relative to solutions of the linearised near the equilibrium equation. 
In this neighbourhood this is a perturbation relative to solutions of the Riccati equation. Choice of $\sim \eps^{2/3}$ - neighbourhood of  $\tau_c$  is determined by the condition that both perturbative processes provide the same accuracy at the boundary of this neighbourhood. On the boundary of this neighbourhood one should  switch from an iso-expanding path determined by linearisation near equilibria of the fast system to an iso-expanding path determined by    the Riccati equation. Inside $\sim \eps^{2/3}$ - neighbourhood of  $\tau_c$  we consider perturbations near two special solutions of the Riccati equation. A special solution of the Riccati equation is a solution that approaches an equilibrium of the fast system constructed for the Riccati equation as time changes along an iso-expanding path through $\tau_c$.  We switch from one special solution to another at  $\tau_c$.  This switch is a cornerstone of the methodology.  At this switch, at  $\tau=\tau_c$,  the phase point of the original system (\ref{perturbed}) turned out to be at a distance $\sim \eps^{1/3}$  from a special solution of the Riccati equation. As a result, for moving along an appropriate iso-expanding path $\Gamma_{+,\eps}$, far from $\tau_c$,  this phase points remains at a distance $\sim \eps^{1/3}$ from the equilibrium of the fast system.  It keeps this distance up to the point $\tau_{+,\eps}$  of intersection of this iso-expanding path with the axis of real time.  At the slow time moment   $\tau_{+,\eps}$ the phase point is in the instability region: $\re \lambda_{1,2} (\dK(\tau_{+,\eps}))>{\rm const}>0$. Thus, during the real slow time interval of length $\sim \eps\ln \eps$ centred at $\tau_{+,\eps}$  the distance of the phase point from the equilibrium of the fast system grows from a value $\sim \eps^{1/3}$ to a value $ \sim 1$. Therefore, 
$\tau_*^+=\tau_{+,\eps} +O(\eps\ln \eps)$. This constitutes the principal result of the paper, Theorem \ref{t1.th} in Section \ref{tosld}.



\section{ Formulation of conditions }
\label{form_conditions}   
 
The natural framework for dynamical bifurcation theory is that of systems with
slow and fast variables (slow-fast systems) (\ref{perturbed}). Here $ (x, \kappa) \in D \subset \mR^{n +m}$, and 
$|\varepsilon| < \varepsilon_1 ={\rm const}$.  The fast system is the first equation  in
 (\ref{perturbed}) with $\kappa={\rm const}$ and $\varepsilon =0$. 
 We suppose that for all $\kappa$
in the projection of  $D$ onto the $\kappa$-space the fast system  has an 
equilibrium position $x=X(\kappa)$ depending continuously on $\kappa$. Let
$\lambda_i(\kappa)$, $i=1,2,...n$, be eigenvalues of this equilibrium. For the {\it slow system} (\ref{slow})
we fix a solution $\kappa=\dK(\tau)$, $x=X(\dK(\tau))$ with the slow time  $\tau = \varepsilon t
 \in [\tau_0,\tau_1]$, $\varepsilon > 0$, and consider behavior of
 the eigenvalues $\lambda_i(\kappa)$, $i=1,2,...n$ along  it. Let $\lambda_1(\dK(\tau))$ be in the left complex
half-plane for $ \tau < \tau_* \in[ \tau_0,\tau_1]$, and cross
imaginary axis for $\tau =\tau_*$ with a nonzero velocity. Let
 $\lambda_1(\dK(\tau_*)) \ne 0$, and $\lambda_2(\dK(\tau_*))
 =\overline{\lambda_1(\dK(\tau_*))}$ (the bar here and below indicates 
 complex conjugation). Let $\lambda_i((\dK(\tau))$, $i=3,\ldots,n$, be in the
left complex half-plane for all  $\tau \in[ \tau_0,\tau_1]$. We assume that the right-hand sides 
in  (\ref{perturbed}) can be continued analytically with respect
 to $x$, $\kappa$ and smoothly in $\varepsilon $ into a complex neighbourhood
 of the point
  $(X(\dK(\tau_*))$, $\dK(\tau_*) )$,  and this neighbourhood does not depend on $\varepsilon$.
 Then 
there is a stability loss delay in  system (\ref{perturbed})
 \cite {nei1, nei2.1}.
 
To formulate  conditions of the main theorem  of this paper we use constructions
 related to  analytic continuation of the slow solution $X, \dK$. 
We may assume that ${\rm Im}\,\lambda_1(\dK(\tau_*))  < 0$. For
$ \tau  \in[ \tau_0,\tau_1]$, let us introduce the complex phase $\Psi (\tau) =
\int\limits_{\tau_*}^{\tau} \lambda_1(\dK(\vartheta)) d\vartheta$. The function $ {\rm Re}\, \Psi$ has a minimum at $\tau = \tau_*$
 since at this point 
${\rm Re}\,\lambda_1(\dK(\tau))  $ changes sign from negative to positive.
Let $\Pi $ be the function that maps
a slow time moment $\tau_- < \tau_*$ into the slow time moment
$\Pi(\tau_-) > \tau_*$ such that $ {\rm Re}\, \Psi(\tau_-) =
 {\rm Re}\, \Psi(\Pi(\tau_-) )$. The function $\Pi$ is well defined on an interval
$[\hat\tau, \tau_*], \quad \hat\tau< \tau_*$.  The slow solution
can be  analytically continued 
into some neighbourhood of the point $\tau_*$ in the plane of the
complex slow time $\tau$. The function $\Psi$ can be  continued into
the same neighbourhood.  Points $\tau_-$
and $\Pi(\tau_-)$ are connected by an arc $\Gamma_{\tau_-}$ of the 
level curve  $ {\rm Re}\, \Psi= {\rm const} $ in the upper half-plane of complex slow time. If $\tau_-$ is
sufficiently close to $\tau_*$, then in the domain $K_{\tau_-}$
bounded by $\Gamma_{\tau_-}$ and its complex conjugate 
${\overline\Gamma}_{\tau_-}$ (Fig.\, \ref{curves_1}) the following conditions are 
satisfied: 1) the slow solution $X,\,\dK$ is analytic; 2) the
right-hand sides of  (\ref{perturbed}) are analytic at the points of
the slow solution; 3) $\lambda_{1,2}(\dK)\ne 0$;
 4) $\lambda_1(\dK) \ne \lambda_2(\dK)$, $\lambda_i(\dK) \ne \lambda_j(\dK),
 \quad i=1,2,\quad j=3,\ldots, n$;  5)  ${\rm Re}\,\lambda_j(\dK)  < 0,
\quad j=3,\ldots , n$, for real $\tau$, and the linearized near the
 equilibrium fast system considered along any curve   
$ {\rm Re}\, \Psi(\tau)= {\rm const} $ has $(n-2)$ eigenvalues with
 negative real parts corresponding to the eigenvalues
$\lambda_3,\ldots ,\lambda_n$;
 6) the tangent lines to the curves $\re\Psi={\rm const}$ are not vertical.
 \begin{figure} [H]
\begin{center}
            \includegraphics[scale=0.6, angle=0.0]{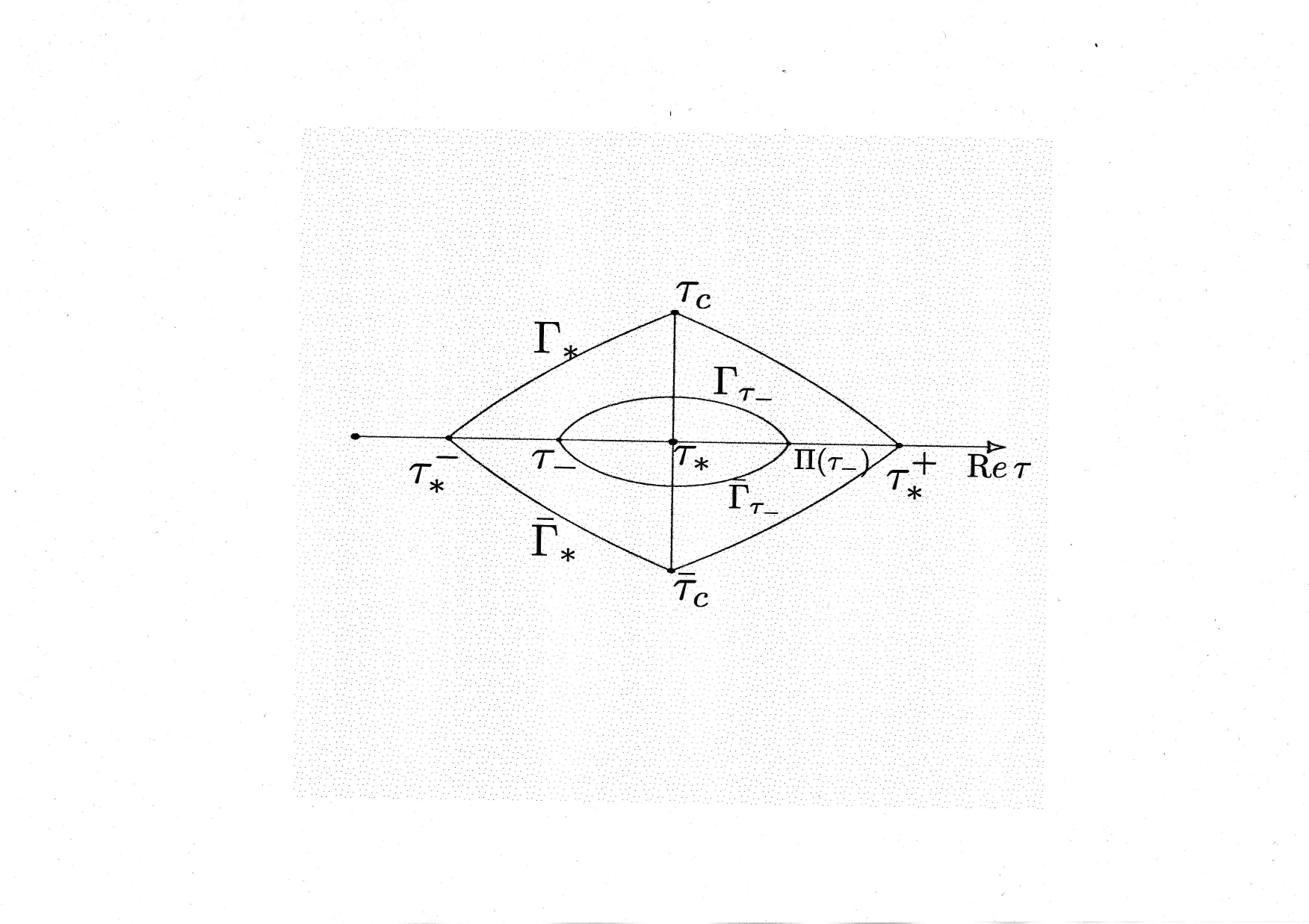}
            \end{center}
           \caption{Curves in the complex slow time plane}
            \label{curves_1}
\end{figure}
Let $\tau_*^-$ be the lower bound
of the values $\tau_-$ for which conditions 1) - 6)  hold for 
$K_{\tau_-}$. Denote
$$
\tau_*^+ = \sup_{\tau_*^- < \tau < \tau_* } \Pi (\tau), \quad 
 K_* =\bigcup_{\tau_*^- < \tau_- < \tau_* } K_{\tau_-}. \quad
$$

{\bf Remark.} Condition 6) in the definition of $\tau_*^-$ can be relaxed. It is sufficient to require that there exists a family  of nonintersecting symmetric with respect to the real axis smooth curves which are  transversal to  curves  $\re \Psi={\rm const}$, and this family  covers  $K_{\tau_-}$.

\medskip 
On the boundary of the region $K_*$ one of the conditions 1) - 6) 
should be violated. We will suppose, that this is condition
3), i.e. $\lambda_1=0$ at some point $\tau_c$ of the
boundary of  $K_*$ in the upper half-plane,
and then $\lambda_2=0$ at the  complex conjugate point $\bar \tau_c$. We will assume also that the 
right hand-sides of the system (\ref{perturbed}) are
 in a general position in a neighbourhood of the point $ X(\dK(\tau_c)), \dK(\tau_c)$. More 
precisely, we suppose that the following assumptions A - E 
and general position conditions F, G are satisfied.
 
 \medskip
A. Values $\tau_*^-, \tau_*^+$ are finite.
 
B. The component $\Gamma_*$ of the $\partial K_*$ in the upper half-plane
is a connected curve. Denote $\bar \Gamma_*$ the complex 
conjugate to  $\Gamma_*$.
 
C. The slow solution is continuous on $\partial K_*$. 
 
D. There is a unique point $\tau_c \in \Gamma_*$ such that
 $\lambda_1(\dK(\tau_c))=0 $. We assume that  $\lambda_j(\dK(\tau_c))\ne 0,\, j=2,\ldots,n. $ 
(Then $\lambda_2(\dK(\bar\tau_c))=0 $, $\lambda_j(\dK(\bar\tau_c))\ne 0,\,
 j=1,3,\ldots,n. $)

E. Conditions 2, 4 - 6 are satisfied in the domain $K_* \cup\partial K_*$.
 
\medskip
Denote 
$$
\kappa_c=\dK(\tau_c),\quad x_c=X(\kappa_c),\quad
 A_c=\partial f(x_c,\kappa_c,0)/\partial x .
$$
 
\noindent
 According to condition D,  
 matrix $A_c$ has one zero eigenvalue.  
Let $z_1$ be the projection of the vector $(x-x_c)$ onto the
 eigenvector of the matrix $A_c$ which
corresponds to this eigenvalue. Let $z_2$ be the projection
of the vector $(x-x_c)$ onto the eigenspace of the matrix $A_c$ which
corresponds to all other eigenvalues. Then 
\begin{equation}
\label{de_z1}
\dot z_1=b \cdot (\kappa-\kappa_c) +az_1^2 +O(\varepsilon +|z_1|^3+
|\kappa-\kappa_c|(|z_1|+|z_2|)+|z_1||z_2|+|z_2|^2+|\kappa-\kappa_c|^2).
\end{equation}
Here and below, the norm $|\ast|$ is defined as the sum of the absolute values of the components of a vector, and ``$\cdot$'' denotes the scalar product.

\medskip

\noindent
We will consider the problem under the following conditions of general position:
 
F. $a\ne 0$.
 
G. $b\cdot g(x_c,\kappa_c,0)\ne 0$.
\medskip

Under conditions F and G, the equilibrium $X(\dK(\tau))$ has a second-order branch  point at $\tau_c$. We formulate this property as the following statement.
\begin{lem} 
\label{FG}
The functions $X(\dK)$ and $\dK$ can be analytically continued into a neighbourhood of the point $\tau_c$ 
as double-valued functions that admit Puiseux expansions in
  $(\tau-\tau_c)^{1/2}$: 
\begin{equation}
\begin{aligned}
&X(\dK(\tau))=x_c+O(|\tau-\tau_c|^{1/2}),\
\dK(\tau)=\kappa_c+g_c(\tau-\tau_c)+O(|\tau-\tau_c|^{3/2}),\\
&Z_1=(a^{-1}b\cdot g_c(\tau-\tau_c))^{1/2}+O(|\tau-\tau_c|).
\end{aligned}
\end{equation}
Here $Z_1$ is the value of the coordinate $z_1$ at the equilibrium $X$, and $g_c=g(x_c,\kappa_c,0)$.
\end{lem}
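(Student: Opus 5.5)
The plan is to treat the slow solution near $\tau_c$ as the solution of an analytic implicit problem in which the slow time enters only through a square root. In the coordinates $(z_1,z_2,\kappa)$ introduced above, the equilibrium equation $f(x,\kappa,0)=0$ splits into two parts. The component along $A_c$ restricted to the complementary eigenspace is invertible, since all other eigenvalues of $A_c$ are nonzero by condition D. The analytic implicit function theorem therefore gives $z_2=h(z_1,\kappa)$, analytic near $(0,\kappa_c)$, with $h=O(|z_1|^2+|\kappa-\kappa_c|)$. Substituting into the $z_1$-component yields a scalar analytic bifurcation function
\[
F(z_1,\kappa)=b\cdot(\kappa-\kappa_c)+az_1^2+O(|z_1|^3+|\kappa-\kappa_c||z_1|+|\kappa-\kappa_c|^2),
\]
read off from (\ref{de_z1}) with $\eps=0$. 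The equilibrium $X(\kappa)$ is then given by a root $z_1=Z_1(\kappa)$ of $F$, together with $z_2=h(Z_1,\kappa)$.

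Next I will rewrite the slow system (\ref{slow}) as an analytic system in a uniformizing variable. Put $\tau-\tau_c=s^2$ and seek $\kappa=\kappa(s)$, $z_1=w(s)$ satisfying
\[
F(w,\kappa)=0,\qquad \frac{d\kappa}{ds}=2s\,g(x(w,\kappa),\kappa,0),\qquad \kappa(0)=\kappa_c,\ w(0)=0 .
\]
Differentiating the constraint gives $F_{z_1}w'+F_\kappa\kappa'=0$. Here $F_{z_1}=2aw+O(\dots)$ vanishes at $s=0$, which is exactly the singularity. Substituting $\kappa'=2sg$ gives
\[
(2aw+\dots)w'=-2s\,b\cdot g+\dots .
\]
So I look for $w=s\,u(s)$. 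After dividing by $s$ this becomes a regular analytic ODE, roughly $2au(u+su')=-2b\cdot g+\dots$, with the nondegenerate algebraic initial condition $a u(0)^2=-b\cdot g_c$. This condition has a nonzero root precisely because F and G hold.

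Rather than fight the singular ODE, I would prefer to obtain $(\kappa(s),u(s))$ directly from the implicit function theorem, or from Cauchy–Kovalevskaya or Briot–Bouquet type arguments. Concretely, the system for $(\kappa,u)$ in $s$ is analytic and its linearization at $s=0$ is invertible, with factor $2au(0)\ne0$. Standard analytic ODE existence then gives an analytic solution $(\kappa(s),u(s))$ near $s=0$.

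The main obstacle is identifying this analytic branch with the continuation of the given real slow solution along $\Gamma_*$. By condition C, $\dK$ is continuous up to $\tau_c$. Near $\tau_c$ but off it the equilibrium is nondegenerate, so the solution is analytic there. Hence along a path approaching $\tau_c$ it satisfies the same constraint $F=0$ and the same ODE. By uniqueness of the solution with given value at a regular point, and by continuity at $\tau_c$ forcing $\kappa\to\kappa_c$ and $Z_1\to0$, it must coincide with $\kappa(\sqrt{\tau-\tau_c})$ for one choice of the root. The two sign choices $s\mapsto -s$ give the two branches, so the continuation is double-valued.

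The expansions in the statement then follow by reading off the Taylor coefficients. From $\kappa'(s)=2sg$ we get $\kappa=\kappa_c+g_cs^2+O(s^3)$. From $w=su(0)+O(s^2)$ we get $Z_1=(a^{-1}b\cdot g_c(\tau-\tau_c))^{1/2}+O(|\tau-\tau_c|)$, where the sign convention and branch of the square root absorb the minus sign in $au(0)^2=-b\cdot g_c$. Finally $X=x_c+O(s)$, since $z_2=h=O(s^2)$ and $z_1=O(s)$.
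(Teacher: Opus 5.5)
\textbf{Verdict: the proposal has a genuine gap.} The identification of the given slow solution with the branch you construct is asserted rather than proved, and the existence step rests on the wrong theorem.

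Your first step, solving $\dot z_2=0$ for $z_2$ and passing to a scalar bifurcation function $F$, is the one the paper indicates. The paper then applies Weierstrass preparation to reduce $F=0$ to a quadratic in $z_1$; you uniformize time instead.

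\textbf{The identification step.} You yourself call this the main obstacle, and it is not actually carried out. You construct only the solution through the fold point. You then identify the given $\dK$ with one of its branches by ``uniqueness of the solution with given value at a regular point''. But no regular point is known at which the two solutions agree, and that is exactly what has to be proved. What is needed is uniqueness for the \emph{singular} Cauchy problem at $\tau_c$: every solution of the slow system on $K_*$ near $\tau_c$ that tends to $(x_c,\kappa_c)$ as $\tau\to\tau_c$ must be one of the Puiseux branches. Continuity at a singular point does not give this by itself; for $s\,y'=2y$, every solution $y=Cs^2$ tends to $0$.

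A clean way to close the gap, which also makes your singular ODE unnecessary, goes as follows.
\begin{itemize}
\item The equilibrium set $M=\{F(z_1,\kappa)=0\}$ is a smooth hypersurface near $(0,\kappa_c)$, because $F_\kappa(0,\kappa_c)=b\neq0$.
\item On $M$ the slow flow is $V/F_{z_1}$, where $V=(-F_\kappa\cdot g,\,F_{z_1}g)$ is analytic, tangent to $M$, and satisfies $V(0,\kappa_c)=(-b\cdot g_c,0)\neq0$ by G.
\item In flow-box coordinates $(\theta,y)$ with $V=\partial_\theta$, the transverse coordinate $y$ is constant along the given solution, hence $y\equiv0$ by condition C.
\item Along the trajectory $y=0$ one has $\tau-\tau_c=\int_0^\theta F_{z_1}\,d\theta'=-a\,(b\cdot g_c)\,\theta^2+O(\theta^3)$, a nondegenerate critical point by F and G.
\item Inverting gives $\theta$ as an analytic function of $(\tau-\tau_c)^{1/2}$. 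Existence, the identification and all three expansions then follow at once.
\end{itemize}
Within your own framework the same conclusion holds, because your Briot--Bouquet linearization is $-2I$, so deviations from the analytic solution grow like $|s|^{-2}$ along rays. However, you would first have to show that $(Z_1/s,(\kappa-\kappa_c)/s^2)\to(\pm u_0,g_c)$ along the given solution.

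\textbf{The existence step.} This is also not justified as written.
\begin{itemize}
\item The equation you obtain after dividing by $s$ is not regular: $u'$ enters only through $s\,u'$. So neither Cauchy's theorem nor Cauchy--Kovalevskaya applies, and invertibility of the linearization is not the relevant hypothesis.
\item With $\kappa$ as an unknown, the division by $s$ is not legitimate. Generically $F_{z_1}=2az_1+c\cdot(\kappa-\kappa_c)+\dots$ with $c\neq0$, so $s^{-1}F_{z_1}(su,\kappa)$ is not analytic in $(s,u,\kappa)$.
\end{itemize}
Both points are repaired by setting $\kappa=\kappa_c+s^2k$. Your system then becomes $s\,(u,k)'=\Phi(s,u,k)$ with $\Phi$ analytic, $\Phi(0,u_0,g_c)=0$ and $\partial_{(u,k)}\Phi(0,u_0,g_c)=-2I$. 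The Briot--Bouquet theorem applies because no eigenvalue is a positive integer.

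\textbf{The sign of $Z_1$.} Your own computation gives $au(0)^2=-b\cdot g_c$, i.e.\ $Z_1=(-a^{-1}b\cdot g_c(\tau-\tau_c))^{1/2}+O(|\tau-\tau_c|)$. This agrees with ${Z_1}_a$ in Section~\ref{some_curves}. The displayed statement therefore has a sign typo, and the sign cannot be absorbed by the branch choice: the two determinations of a square root differ by $-1$, not by $\pm i$.
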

We omit the standard, albeit rather long, proof of this lemma. The main steps of the proof are to express $z_2$ in terms of $z_1$ and $\kappa$ using the equation $\dot z_2=0$, and to use the Weierstrass preparation theorem to reduce the equation $\dot z_1=0$ to a quadratic equation for $z_1$ with coefficients depending on $\kappa$.

\bigskip
\noindent {\bf Remarks.}
\noindent
1. Problems in \cite{sh, karim, dien} have $a=0$, $b=0$. 
 
\noindent
 2. Maximal delay in the case $a=0$, $b=0$ was studied using geometric desingularisation in \cite{hayes, christ_krist}.
 
\noindent
3. The level curves $ {\rm Re}\, \Psi = {\rm const}$ passing through the point
$\tau_c$  are referred to in the literature as both  Stokes and anti-Stokes  lines. We call them  Stokes  lines, following  the terminology of \cite {fedoryuk}.
 \medskip
 
  In the proof of Theorem \ref{t1.th}  below we will use one  more  condition denoted as  H. This condition simplify the reasonings, but the theorem is valid without this condition.
 
 
 Clearly 
 \begin{equation}
 \label{cond_H1}
 \lambda_1(\dK(\tau))\ne 2\lambda_2(\dK(\tau)), \lambda_2(\dK(\tau))\ne 2\lambda_1(\dK(\tau))   
 \end{equation}
 for $\tau$ close to $\tau_c, \bar \tau_c$ and for real $\tau$. We will assume that
 
 H. Condition (\ref {cond_H1}) is satisfied for $\tau\in K_* \cup\partial K_*$.
 
 

   \medskip
  
  There will be a comment in the proof about an addition to the proof required to avoid using  the condition H.
  
   
   \medskip
   For simplicity in the formulations of intermediate results we will assume the following: $\re\tau_c=\re\tau_*$; along the curves  $\re \Psi(\tau)={\rm const}$ the function  $\im \tau$ attains a unique maximum, and this maximum occurs at $\re\tau=\re \tau_c$. 
      
   \medskip
 The curve $\Gamma_*$ consists of two smooth components that meet  at $\tau_c$ at an angle of $120^{\circ}$.  We will denote them as $\Gamma_{*,1}$ (the left one) and 
  $\Gamma_{*,2}$ (the right one).
  

\section{Time of stability loss delay}
\label{tosld}
 
Let $(x(t), \kappa(t))$ be the phase point of the system (\ref{perturbed})
with initial condition $ \kappa(t_0)=\kappa_0=\dK(\tau_0)$,  $t_0 =\tau_0/\varepsilon$.
 
\begin{theo} 
\label{t1.th}
If $\tau_0 <\tau_*^- -C_1\varepsilon |\ln \varepsilon|$ and the initial point
 $(x(t_0),\kappa(t_0)) $ is in $C_2^{-1}$-neighbourhood of the equilibrium
$(X(\kappa_0),\kappa_0) $, then
 
a) for $\tau_0 + C_1\varepsilon |\ln \varepsilon| \le\varepsilon t \le \tau_*^+ -C_3\varepsilon |\ln \varepsilon|$
the phase point $(x(t),\kappa(t))$ is in a $C_4\varepsilon$-neighbourhood of the equilibrium
$(X(\dK(\varepsilon t)),\dK(\varepsilon t))$,
 
b) there exists $t_d$ such that $|\varepsilon t_d -\tau_*^+| < C_3\varepsilon |\ln \varepsilon|$ 
and $|x(t_d)-X(\kappa(t_d))|>C_5^{-1}$. 
 
\end{theo}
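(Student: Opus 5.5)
We follow the strategy outlined in the Introduction: analytic continuation of the solution in complex slow time along iso-expanding paths (the level curves $\re\Psi={\rm const}$), combined with a local Riccati analysis near $\tau_c$.

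\textbf{Step 1 (attraction phase).} We first use standard averaging/normal-hyperbolicity estimates on the real interval where all eigenvalues have negative real parts. They show that after a slow time $C_1\eps|\ln\eps|$ the phase point enters an $O(\eps)$-neighbourhood of the slow manifold $x=X(\kappa)+\eps X_1(\kappa)+\dots$. Both statement a) near the beginning and the initial data for the complex continuation follow from this.

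\textbf{Step 2 (setting up the continuation).} We rewrite the system in coordinates attached to the slow solution:
\begin{itemize}
\item the deviation $\xi=x-X(\kappa)-\eps X_1(\kappa)$, split along the eigen-directions of $\lambda_1$, $\lambda_2$ and the remaining stable block;
\item the slow deviation $\kappa-\dK(\tau)$.
\end{itemize}
Following \cite{nei2.2}, we use real-analyticity: the value of the solution at a complex point $\tau$ of the upper half-plane determines its value at $\bar\tau$ by conjugation. This replaces the complex system by an equivalent system with a complex delay for the component along $\lambda_2$. That component is then controlled by the contracting/neutral dynamics along the conjugate path, which prevents growth of norms.

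\textbf{Step 3 (continuation away from $\tau_c$).} For $\tau_-\in(\tau_*^-+C\eps|\ln\eps|,\tau_*)$ we continue along $\Gamma_{\tau_-}$ from $\tau_-$ to $\Pi(\tau_-)$. Along this path $\Lambda_1$ is imaginary, so the $\lambda_1$-mode is neutral. The $\lambda_2$-mode and the $\lambda_{3..n}$-modes contract by conditions 5) and H. Hence a Gronwall/contraction-mapping argument in the Banach space of functions on the path gives $|\xi|=O(\eps)$ on the whole arc. Uniqueness of analytic continuation then shows that the real solution is $O(\eps)$-close to the slow manifold at $\Pi(\tau_-)$. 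Taking the supremum over admissible $\tau_-$ yields a) up to $\tau_*^+-C_3\eps|\ln\eps|$. The margin $C_3\eps|\ln\eps|$ comes from paths passing within distance $\delta\gg\eps^{2/3}$ of $\tau_c$, where the error bound degrades like $\eps\,\delta^{-3/2}$.

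\textbf{Step 4 (the neighbourhood of $\tau_c$) — the main obstacle.} For b) we need paths near $\Gamma_*$ through $\tau_c$. By Lemma~\ref{FG} and the normal form (\ref{de_z1}), conditions F and G let us rescale
\[
z_1=\eps^{1/3}w,\qquad \tau-\tau_c=\eps^{2/3}\sigma .
\]
The leading dynamics then becomes the Riccati equation $dw/d\sigma=\beta\sigma+aw^2$, with $\beta=b\cdot g_c$, equivalently an Airy equation. In the annulus $|\tau-\tau_c|\sim\eps^{2/3}$ we match the linearized solution from Step 3 to a special Riccati solution, namely one that decays along the rescaled Stokes direction. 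At $\tau_c$ we switch to the other special solution, using Stokes phenomenon for Airy functions. The key estimate is that after this switch the deviation from the special solution is of exact order $\eps^{1/3}$ and not smaller. The non-vanishing Stokes multiplier of the Airy function gives this lower bound.

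\textbf{Step 5 (escape).} Continuing along $\Gamma_{+,\eps}$ (which is $\Gamma_{*,2}$ perturbed by $O(\eps^{2/3})$) with the Step 3 estimates, the deviation stays of order $\eps^{1/3}$, with a nonzero leading coefficient. This holds up to $\tau_{+,\eps}=\tau_*^++O(\eps^{2/3})$. On the real axis $\re\lambda_1>{\rm const}>0$ there, so the deviation grows like $\eps^{1/3}e^{c(\tau-\tau_{+,\eps})/\eps}$ and reaches order one within slow time $O(\eps|\ln\eps|)$. This gives $t_d$. Condition $\tau_0<\tau_*^--C_1\eps|\ln\eps|$ guarantees that the relevant starting point lies on the real axis in the attracted region.

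The hardest part is Step 4: the uniform matching of the two asymptotic regimes in the $\eps^{2/3}$-annulus, including the switch between special solutions and the lower bound $\sim\eps^{1/3}$. We would first try to reduce this to explicit Airy asymptotics, treating the $O(\eps^{1/3})$ corrections as a perturbation in weighted sup-norms.
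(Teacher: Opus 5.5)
Your plan for b) follows the paper's route in outline: match to the special Riccati solution with $R=R_-$ at $|\tau-\tau_c|\sim\eps^{2/3}$; read it at $\tau_c$ in the $(\hat\chi,\hat\sigma)$ variables, where $R\approx 1\neq R_+$, so the Stokes coefficient produces a deviation $\sim\eps^{1/3}$; transport this neutrally to the real axis; then use a Chetaev-type escape.

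The genuine gap is Step~3, i.e.\ part a) near $\tau_*^+$. Along $\Gamma_{\tau_-}$ the $\lambda_1$-mode is neutral. Any deviation created where the path comes closest to $\tau_c$ (at distance $\delta$, with $\delta^{3/2}\sim\tau_*^+-\Pi(\tau_-)$) is therefore carried with essentially unchanged modulus to the real endpoint $\Pi(\tau_-)$. No Gronwall or contraction argument on the path can remove it.

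With the degradation $\eps\delta^{-3/2}$ you mention, this deviation is $\sim 1/(C_3|\ln\eps|)$ at $\Pi(\tau_-)=\tau_*^+-C_3\eps|\ln\eps|$. Even after the paper's normalisation to third order in $\eps$, the bound for $z$ near $\tau_c$ is $\eps^2\hat d_+^{-5/2}\sim\eps^{1/3}|\ln\eps|^{-5/3}$ there, nowhere near $C_4\eps$. So Step~3 gives a) only up to $\tau_*^+-c$ with $c>0$ fixed, and the $\eps|\ln\eps|$ margin does not arise the way you describe.

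The missing idea is to reach the real points through a direction in which $z$ \emph{contracts}. The paper continues the solution into the two-dimensional domain $D_q$, bounded by the level curve $\Gamma_{q,\eps}$ of $\re\Psi_\eps$ passing $C_q\eps^{2/3}$ below $\tau_c$. It reaches the real points with $\tau>\tau_*$ by moving vertically downward from $\Gamma_{q,\eps}$. By condition 6) $\re\Psi_\eps$ decreases downward, so $z$ is damped at rate $\sim\hat d_+^{1/2}$. After a vertical distance of order $\eps\hat d_u^{-1/2}|\ln(\eps\hat d_u^{-3/2})|$, the deviation $\lesssim\eps^{1/3}C_q^{-5/2}$ inherited from $\Gamma_{q,\eps}$ has decayed to the quasi-static level $\eps^3\hat d_+^{-4}$ (Lemma~\ref{lem_cont_D_q_r}). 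Real points to the left of $Q_1$, where the lower boundary of this strip $D_{q,r,d}$ meets the axis, are thus $O(\eps^3)$-close in the normalised variables and $O(\eps)$-close in the original ones. The $\eps|\ln\eps|$ margin is the width of this strip plus the $O(\eps|\ln\eps|)$ shift between level curves of $\re\Psi$ and $\re\Psi_\eps$.

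Three smaller points.
\begin{itemize}
\item In Step~3 the $\lambda_2$-mode does not contract along $\Gamma_{\tau_-}$; H is only a non-resonance condition. Along the arc, $\re\int_{\tau_-}^{\tau}\lambda_2\,d\theta=\re\Psi(\bar\tau)-\re\Psi(\tau_-)$, which is negative inside and zero at both ends. On the second half of the arc this mode is therefore amplified by a factor $e^{c/\eps}$. It must be handled entirely through $w(t)=\overline{z(\bar t)}$, with $z$ in the lower half-plane controlled by downward motion from the real axis.
\item In Step~5, $\tau_{+,\eps}=\tau_*^++O(\eps^{2/3})$ would only give $|\eps t_d-\tau_*^+|=O(\eps^{2/3})$, not $O(\eps|\ln\eps|)$. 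A displacement $O(\eps^{2/3})$ at distance $\sim\eps^{2/3}$ from $\tau_c$ changes the level of $\re\Psi$ by only $O(\eps)$, since $|\lambda_1|\sim\eps^{1/3}$ there. Replacing $\dK$ by $\dK_\eps$ adds $O(\eps|\ln\eps|)$ (Lemmas~\ref{second_line}, \ref{lem_last_arc}).
\item You cannot run along $\Gamma_{*,2}$ (near $\tau_c$, along $\gamma_2$) itself. The continued solution has $R\approx1$, and such Riccati solutions have poles near $\gamma_2$ (zeros of ${\rm Ai}$). The paper follows $\re\psi_a=-C_{a,0}\eps$ and leaves it at $P_1$ with $|\hat\sigma|\sim C_{a,1}\gg e^{2C_{a,0}/3}$. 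This makes the Stokes term $e^{-C_{a,0}}$ dominate the $O(1/|v|)$ error. The nonlinear corrections along the final arc then integrate to $O(C_{a,1}^{-1/2})$, which keeps $\frac12|z(P_1)|<|z|<2|z(P_1)|$ down to the real axis.
\end{itemize}
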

 
\noindent Here and henceforth $C_i, c_i, k_i$ are positive constants. The appearance of $C_i$ in some relation is equivalent to the assertion
that there exists $C_i$  satisfying this relation for small enough $\eps > 0$ (and
similarly for other constants).
 
 \medskip
According to Theorem \ref{t1.th} all the phase points which were
attracted to the slow solution before the moment of  slow time 
$\tau_*^-$ will depart from the slow solution in $O(\varepsilon |\ln \varepsilon|)$-neighbourhood
of the moment of slow time $\tau_*^+$. Hence, $\tau_*^+$ is {\it a buffer point}.

Note that the last statement of Theorem \ref{t1.th} implies that $|x(t_d)-X(\dK(\eps t_d))|>C_6^{-1}$. (Indeed, $\dot \kappa=O(\eps), \dot \dK=O(\eps)$.  Thus, $\kappa(t_d)-\dK(\eps t_d)=O(\eps\ln\eps)$.)
 
The proof is based on analytic continuation of the solution in a certain domain
in the complex slow time  plane. This domain is close to the domain $K_*$. Outside
$O(\varepsilon^{2/3})$-neighbourhoods of the points $\tau_c$, $\bar\tau_c$ in this
 domain, the motion can be described by perturbation theory as a perturbation 
of the slow solution. In $O(\varepsilon^{2/3})$-neighbourhoods of the points $\tau_c$, $\bar\tau_c$ the system can be considered as a perturbation of some 
auxiliary Riccati equation. This equation (for real time) plays an important
role in the theory of relaxation oscillations, and its solution can be
 expressed via Bessel functions. We will consider this equation in
 Section \ref{s_riccati}.
 
 \medskip
 {\bf Remark.} The phase points which were attracted to the slow solution near the moment of the slow time 
$\tau_0\in (\tau_*^-,\tau_*)$ will depart from the slow solution near the moment of the slow time $\Pi(\tau_0)$ \cite{nei1, nei2.2}. 

\medskip
Maximal delay in the situation of Theorem \ref{t1.th} was considered in \cite{nei95}. Numerical demonstration of maximal delay phenomenon in this situation is contained in \cite{ns}.

 \section{Auxiliary constructions}
  \label{aux_constr}
 \subsection{Some curves in the plane of the slow time}
 \label{some_curves}
 Recall that the curve $\Gamma_*$ consists of two smooth components which meet  at $\tau_c$ at an angle of $120^{\circ}$.  They are denoted as  $\Gamma_{*,1}$ (the left one) and $\Gamma_{*,2}$ (the right one). 
  
 Now, consider the slow-fast system obtained by expanding and truncating the original system near the point
  $(X(\kappa_c),   \kappa_c)$:    
\begin{equation}
\label{eq_expanded}
\begin{aligned}
&\dot z_1=b \cdot (\kappa-\kappa_c) +az_1^2 ,\\
&\dot \kappa=\eps g_c,  \ g_c= g(x_c,\kappa_c,0).
\end{aligned}
\end{equation}
(Recall that $z_1$ is the projection of the vector $(x-x_c)$ onto the
 eigenvector of the matrix $A_c$ corresponding  to zero eigenvalue.)
For $\kappa$, we consider the solution $\kappa=\dK_a(\tau)=g_c(\tau-\tau_c)+\kappa_c$.
Take the equilibrium of the fast system ${Z_1}_a(\kappa)= \sqrt{-a^{-1}b \cdot (\kappa-\kappa_c)}$. The eigenvalue of this equilibrium is ${\Lambda_1}_a(\kappa)=2a{Z_1}_a= 2a\sqrt{-a^{-1}b \cdot (\kappa-\kappa_c)}$. We should choose an appropriate  branch of the square root to have an agreement between $X(\dK(\tau))$ and ${Z_1}_a(\dK_a(\tau))$ for 
$\tau$ close to $\tau_c$ in $K_*$.
 This choice is described below.

Introduce the complex phase
\begin{equation}
\label{phase_a_1}
   \psi_{a}(\tau)
 = \int_{\tau_c}^{\tau}{\Lambda_1}_a(\dK_a(\thet))\,d\thet= \int_{\tau_c}^{\tau}2a\sqrt{-a^{-1}b \cdot g_c  (\thet-\tau_c)}\,d\thet=\frac{4}{3}a\sqrt{-a^{-1}b \cdot g_c(\tau-\tau_c)^{3}}.
 \end{equation}
The set $ \{\tau\, :\, {\rm Re }\,\psi_{a}(\tau)=0   \}$ consists of three rays that meet at $\tau_c$ (Fig. \ref{c_rays}).
\begin{figure}[H]
\begin{center}
            \includegraphics[scale=0.4, angle=0.0]{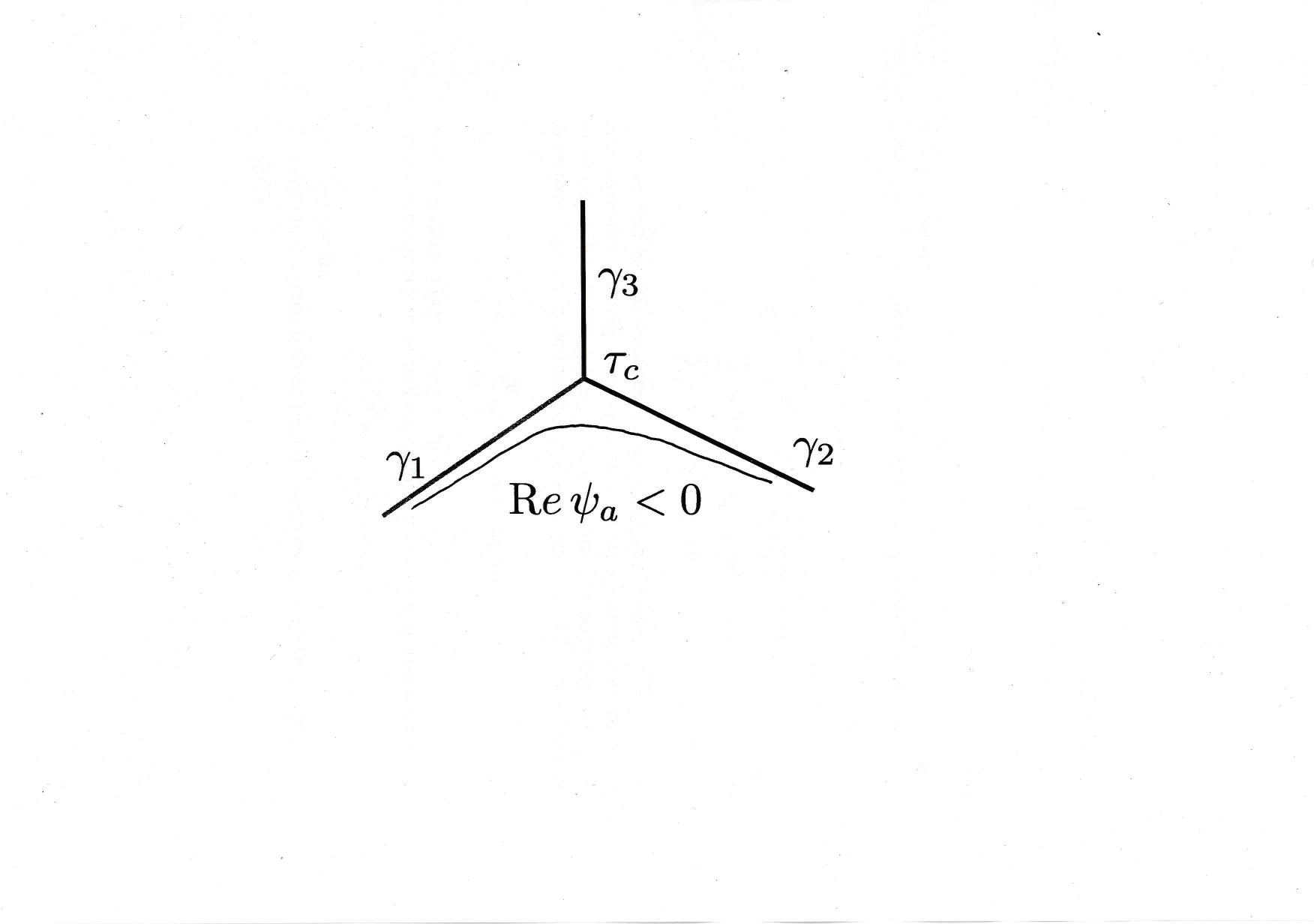}
            \end{center}
           \caption{Complex rays}
            \label{c_rays}
\end{figure}

Denote  by
$ \gamma_{1}$ and $\gamma_{2}$ the rays that are tangent 
\footnote{The order of tangency is $1/2$, see Lemma \ref {lem_Gamma_gamma}.}, respectively,  to
 $\Gamma_{*,1}$ and $\Gamma_{*,2}$ (Fig. \ref{curves_2}). We choose the branch of the square root in the definition of ${Z_{1}}_a(\kappa)$   such that the sign of ${\rm Re }\,\psi_{a}(\tau)$ in the domain between $ \gamma_{1}$ and $\gamma_{2}$ is the same as the sign of ${\rm Re }\,(\Psi(\tau)- \Psi(\tau_c))$ in the domain
 $K_*$.  This choice implies that $\re \psi_{a}(\tau)<0$ in the angle between $\gamma_1$ and $\gamma_2$ because $\re \Psi(\tau)$ has a minimum at $\tau=\tau_*$.
 \begin{figure}[H]
\begin{center}
            \includegraphics[scale=0.4, angle=0.0]{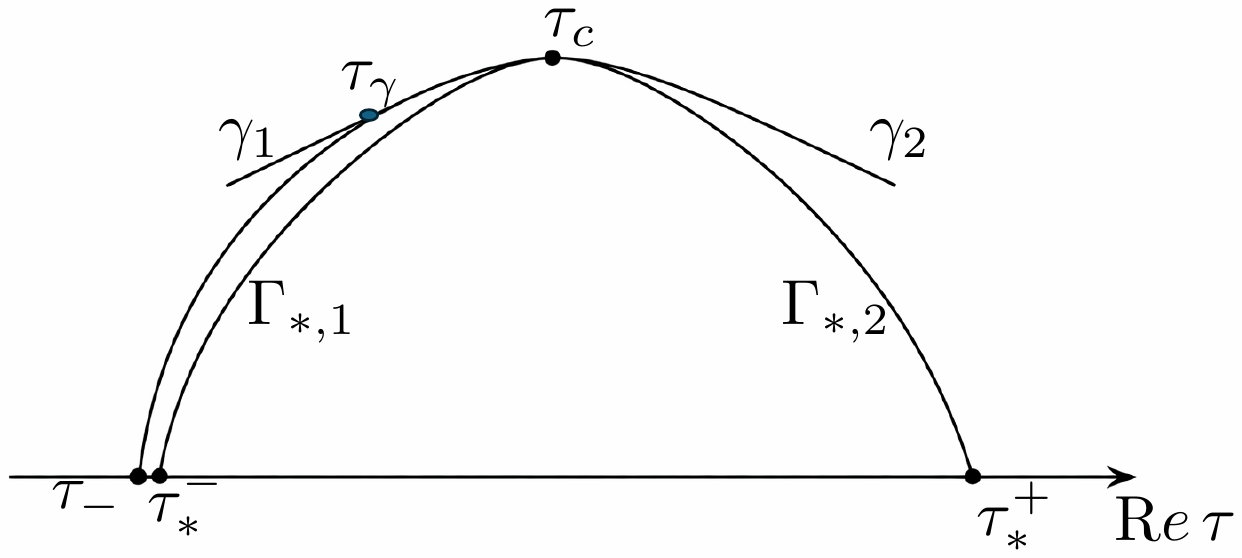}
            \end{center}
           \caption{Curves $\Gamma_{1,*}, \Gamma_{2,*}$ and rays $\gamma_1, \gamma_2$  }
            \label{curves_2}
\end{figure}

 \begin{lem} 
 \label{lem_Gamma_gamma}
 Distance between points of $\Gamma_{*,j}$  and $ \gamma_{j}$ measured along normals to $\Gamma_{*,j}$,\\ $j=1,2$ is $O(|\tau-\tau_c|^{3/2})$, where  $\tau$ is used as a parameter along $\Gamma_{*,j}$. The same estimate is valid if $\tau$ is taken  on $ \gamma_{j}$ and distance is measured along normals to $ \gamma_{j}$.
  \end{lem}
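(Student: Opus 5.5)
The plan is to compare the two phase functions $\Psi(\tau)-\Psi(\tau_c)$ and $\psi_a(\tau)$ near $\tau_c$. The curves $\Gamma_{*,j}$ are arcs of the level set $\re(\Psi(\tau)-\Psi(\tau_c))=0$ through $\tau_c$, and the rays $\gamma_j$ are arcs of $\re\psi_a=0$.

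First, Lemma \ref{FG} gives the local behaviour of the integrand. Since $\lambda_1(\dK(\tau))$ is the eigenvalue of the equilibrium $X(\dK(\tau))$ that vanishes at $\tau_c$, we can expand it in the $z_1$ direction. From (\ref{de_z1}) the $z_1$-component of the linearisation is $2aZ_1+O(|\kappa-\kappa_c|+|Z_1|^2+|Z_2|)$, and the remaining block is nondegenerate. Hence
$$\lambda_1(\dK(\tau))=2aZ_1+O(|\tau-\tau_c|)={\Lambda_1}_a(\dK_a(\tau))+O(|\tau-\tau_c|).$$
Here we use $Z_1=(a^{-1}b\cdot g_c(\tau-\tau_c))^{1/2}+O(|\tau-\tau_c|)$ together with the branch choice made above, which matches $Z_1$ with ${Z_1}_a$ in $K_*$. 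The overall sign convention of $Z_1$ versus ${Z_1}_a$ is fixed by that choice, so the leading terms coincide. To make the $O(|\tau-\tau_c|)$ eigenvalue remainder rigorous, I would use the Puiseux structure in Lemma \ref{FG}: the eigenvalue is an analytic function of $(\tau-\tau_c)^{1/2}$. Integrating from $\tau_c$ then gives
$$\Psi(\tau)-\Psi(\tau_c)=\psi_a(\tau)+R(\tau),\qquad R(\tau)=O(|\tau-\tau_c|^{2}),$$
and $R$ is analytic in $s=(\tau-\tau_c)^{1/2}$ on the relevant sector.

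Second, we compare the zero sets. Write $\tau-\tau_c=r e^{i\theta}$. Then $\re\psi_a=c\,r^{3/2}\cos\bigl(\tfrac32\theta+\theta_0\bigr)$, whose zero set is the three rays $\theta=\theta_k$. Near the ray $\gamma_j$ the $\theta$-derivative of $\re\psi_a$ is of size $c\,r^{3/2}$, bounded away from zero after dividing by $r^{3/2}$. The equation for $\Gamma_{*,j}$ becomes
$$\cos\bigl(\tfrac32\theta+\theta_0\bigr)+r^{-3/2}\re R=0,$$
and the perturbation $r^{-3/2}\re R$ is $O(r^{1/2})$. Its $\theta$-derivative is also $O(r^{1/2})$, since $R$ is analytic in $s$ and so $\partial_\theta R=O(r^2)$. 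By the implicit function theorem in the variable $\theta$, applied for each small $r$, the curve $\Gamma_{*,j}$ is a graph $\theta=\theta_j+O(r^{1/2})$. This holds provided we identify the correct branch of the zero set, the one bounding $K_*$, using the sign agreement of $\re\psi_a$ and $\re(\Psi-\Psi(\tau_c))$ in the angle between the rays. The angular deviation $O(r^{1/2})$ corresponds to a normal distance $r\cdot O(r^{1/2})=O(|\tau-\tau_c|^{3/2})$.

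The same graph representation makes the tangent directions of $\Gamma_{*,j}$ converge to that of $\gamma_j$. So the distance measured along normals to $\Gamma_{*,j}$ differs from the angular distance only by a bounded factor, and the same holds for normals to $\gamma_j$. This gives both statements of the lemma.

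The main obstacle is the first step: justifying that the error in the eigenvalue is $O(|\tau-\tau_c|)$ rather than merely $o(|\tau-\tau_c|^{1/2})$. This needs the full Puiseux expansion of Lemma \ref{FG}, including the $O(|\tau-\tau_c|)$ correction to $Z_1$ and the contribution of $z_2$ and $\kappa-\kappa_c$ through (\ref{de_z1}), together with a perturbation argument for a simple eigenvalue of the linearisation. That simple eigenvalue depends analytically on the matrix entries, which are themselves analytic in $(\tau-\tau_c)^{1/2}$.
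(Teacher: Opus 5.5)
Your proposal is correct and follows the paper's proof. Both write $\Psi(\tau)-\Psi(\tau_c)=\psi_a(\tau)+O(|\tau-\tau_c|^2)$ and solve $\re(\Psi-\Psi(\tau_c))=0$ in polar coordinates, which gives an angular deviation $O(|\tau-\tau_c|^{1/2})$ from $\gamma_j$ and hence a normal distance $O(|\tau-\tau_c|^{3/2})$; both then use the $O(|\tau-\tau_c|^{1/2})$ tangent-angle discrepancy for the second claim, the only difference being that you justify the $O(|\tau-\tau_c|^2)$ remainder via the Puiseux/simple-eigenvalue argument, whereas the paper simply asserts it.

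One small point: match $Z_1$ with ${Z_1}_a$ directly from the equilibrium equation $aZ_1^2=-b\cdot(\kappa-\kappa_c)+\dots$ given by (\ref{de_z1}). The formula in Lemma \ref{FG} as printed omits the minus sign, so it would differ from ${Z_1}_a$ by a factor $\pm i$, not merely by a choice of sign.
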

   
 
 Consider also a curve  $\Gamma_{\tau_-}$ with $\tau_-$ close to  $\tau_*^-$ (notation $\Gamma_{\tau_-}$ is introduced in Section \ref{form_conditions}). 
 \begin{lem}  
 \label{lem_Gamma_Gamma}
 Distance between points of  $\Gamma_{\tau_-}$ and  $\Gamma_{*,1}$  measured along normals to $\Gamma_{*,1}$ is 
 of order $|\tau_- -\tau_*^-|/\sqrt{|\tau_{-}-\tau_c|}$ for $\tau$ such that  $|\tau_{-}-\tau_c|>{\rm const} |\tau_{-}-\tau_*^-|^{2/3} $, where $\tau$ is the parameter along $\Gamma_{*,1}$. The same estimate is valid if $\tau$ is taken  on $ \Gamma_{\tau_-}$ and distance is measured along normals to $ \Gamma_{\tau_-}$.

 \end{lem}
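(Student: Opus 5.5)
Both $\Gamma_{\tau_-}$ and $\Gamma_{*,1}$ are level curves of $\re\Psi$: $\Gamma_{\tau_-}=\{\re\Psi=\re\Psi(\tau_-)\}$ and $\Gamma_{*,1}\subset\{\re\Psi=\re\Psi(\tau_c)\}$. The plan is therefore to measure the normal distance between two level curves of the harmonic function $\re\Psi$ by dividing the difference of levels by the gradient. Since $\Psi'(\tau)=\lambda_1(\dK(\tau))$, we have $|\grad\re\Psi|=|\lambda_1(\dK(\tau))|$. The level difference is
\[
\delta=\re\Psi(\tau_-)-\re\Psi(\tau_*^-) = \re\lambda_1(\dK(\tau_*^-))(\tau_--\tau_*^-)+O(|\tau_--\tau_*^-|^2),
\]
and $\re\Psi(\tau_*^-)=\re\Psi(\tau_c)$ because $\tau_*^-$ lies on $\Gamma_{*,1}$. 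Moreover $\re\lambda_1(\dK(\tau_*^-))<0$ by condition 5) and the stability region. Hence $|\delta|\asymp|\tau_--\tau_*^-|$.

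Next, I estimate the gradient near $\tau_c$. Lemma \ref{FG} gives $Z_1\sim c(\tau-\tau_c)^{1/2}$, and the eigenvalue satisfies $\lambda_1=2aZ_1+O(|\tau-\tau_c|)$. Therefore
\[
|\lambda_1(\dK(\tau))|\asymp|\tau-\tau_c|^{1/2}
\]
in a neighbourhood of $\tau_c$ (conditions F, G ensure the constant is nonzero). Away from $\tau_c$, $|\lambda_1|$ is bounded above and below on $\overline{K_*}$ near $\Gamma_{*,1}$ by conditions D and E. So on $\Gamma_{*,1}$ we have $|\grad\re\Psi|\asymp|\tau-\tau_c|^{1/2}$ uniformly.

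Now take a point $\tau\in\Gamma_{*,1}$ and move along the normal $\tau+s\nu$, with $\nu$ the unit normal. Then
\[
\re\Psi(\tau+s\nu)-\re\Psi(\tau)=s|\lambda_1(\tau)|+O\bigl(s^2\sup|\lambda_1'|\bigr),
\]
where $|\lambda_1'|\asymp|\tau-\tau_c|^{-1/2}$ near $\tau_c$. Set $r=|\tau-\tau_c|$. I will solve this equation for $s$ with right side $\delta$ by the implicit function theorem (or a monotonicity argument) on the disc $|s|\le r/2$. On that disc $|\lambda_1(\tau+s\nu)|\asymp r^{1/2}$, so $\re\Psi$ is strictly monotone along the normal with derivative $\asymp r^{1/2}$. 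A root with $|s|\asymp\delta/r^{1/2}$ exists provided $\delta/r^{1/2}\ll r$, i.e. $r\gg|\delta|^{2/3}\asymp|\tau_--\tau_*^-|^{2/3}$. This is exactly the hypothesis $|\tau-\tau_c|>{\rm const}\,|\tau_--\tau_*^-|^{2/3}$, with the constant taken large. The root lies on $\Gamma_{\tau_-}$ because it is the level curve of the right value. To see that it is not some other component, I will use the monotonicity together with the continuity of the family $\Gamma_{\tau_-}\to\Gamma_{*,1}$ as $\tau_-\to\tau_*^-$, which follows from the construction of $K_*$. This gives the distance of order $|\tau_--\tau_*^-|/\sqrt{|\tau-\tau_c|}$.

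For the symmetric statement, with $\tau\in\Gamma_{\tau_-}$ and normals to $\Gamma_{\tau_-}$, the same argument applies with the roles of the two curves exchanged. The only thing to check is that $|\tau-\tau_c|$ is comparable for corresponding points. This holds since the displacement $\delta r^{-1/2}\ll r$.

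The main obstacle is the uniformity near $\tau_c$: I need two-sided bounds on $|\lambda_1|$ and $|\lambda_1'|$ on the whole disc of radius $r/2$ around each point. I will get these from the Puiseux expansion of Lemma \ref{FG}, using $\lambda_1=2aZ_1(1+O(r^{1/2}))$, together with Cauchy estimates for the derivative of the single-valued branch on a disc avoiding $\tau_c$. The portion of $\Gamma_{*,1}$ away from $\tau_c$ is routine by compactness and condition E.
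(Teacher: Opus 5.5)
Your proof is correct and follows essentially the same route as the paper. Both arguments view $\Gamma_{\tau_-}$ and $\Gamma_{*,1}$ as level curves of $\re\Psi$ whose levels differ by $\asymp|\tau_--\tau_*^-|$, and divide this by $|\grad\re\Psi|=|\lambda_1|\asymp|\tau-\tau_c|^{1/2}$. Both then get the restriction $|\tau-\tau_c|\gg|\tau_--\tau_*^-|^{2/3}$ by requiring the normal displacement to be small compared with $|\tau-\tau_c|$; the paper does this in the polar coordinates of Lemma \ref{lem_Gamma_gamma}, using $\Phi=-\alpha\rho^{3/2}\sin(3\theta/2)+O(\rho^2)$ and the condition $\Delta/\rho<k_2^{-1}$.

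Your monotonicity argument on the disc of radius $r/2$, and your check that the root lies on the correct component of the level set, are a more careful version of that same step.

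One small slip: the sign $\re\lambda_1(\dK(\tau_*^-))<0$ comes from the standing assumption that $\lambda_1$ lies in the left half-plane for $\tau<\tau_*$. Condition 5) concerns only $\lambda_3,\dots,\lambda_n$.
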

 

 
 \medskip
Take any point $\tau_{\gamma}$ on $\gamma_1$ close to $\tau_c$. Its distance from $\Gamma_{*,1}$  is $O(|\tau_{\gamma}-\tau_c|^{3/2})$. Consider the curve  $\re \Psi(\tau)={\rm const}$ passing through $\tau_{\gamma}$ (i.e. $\re \Psi(\tau)=\re \Psi(\tau_{\gamma}))$. This curve crosses the real axis at some point $\tau_{\gamma,-}$ such that $|\tau_{\gamma,-} -\tau_*^-|/\sqrt{|\tau_{\gamma}-\tau_c|}=O(|\tau_{\gamma}-\tau_c|^{3/2})$, i.e. at $|\tau_{\gamma,-} -\tau_*^-|=O(|\tau_{\gamma}-\tau_c|^2)$. These estimates used the assumption \\ $|\tau_{\gamma}-\tau_c|> {\rm const}  |\tau_{\gamma,-} -\tau_*^-|^{2/3}$, i.e. $ |\tau_{\gamma,-} -\tau_*^-|< {\rm const}^{-1}|\tau_{\gamma}-\tau_c|^{3/2}$, which is satisfied. Thus, if  $|\tau_{\gamma}-\tau_c|\sim \eps^{2/3}$, then $|\tau_{\gamma,-} -\tau_*^-|=O(\eps^{4/3})$. 
\medskip

Consider  in the $\tau$-plane the line through the point $\tau_{\gamma}$ parallel to the real axis. Take any point $\tau_p$ on this line with $\re \tau_{\gamma}\le\re\tau_p\le\re \tau_c$ (Fig. \ref {points_1}). Distance of $\tau_p$ from  $\Gamma_{*,1}$ is
$O(|\tau_{\gamma}-\tau_c|)$.
 Denote by $\Gamma_p$ the segment   of the curve $\re \Psi(\tau)={\rm const}$  with $\re\tau\le \re\tau_p$ (assumptions made for simplicity of formulations at the end of Section \ref{form_conditions} imply  that  $\tau_{p}$ is the only intersection of this curve and the line $\im \tau=\im \tau_{\gamma}$  at $\re \tau\le \re \tau_c$). Denote by  $\tau_{p,-}$ the point of intersection of the curve  $\Gamma_p$ and the real axis. 
Then $|\tau_{p,-}-\tau_*^-|=O( |\tau_{\gamma}-\tau_c|\sqrt{|\tau_{\gamma}-\tau_c|})=
O(|\tau_{\gamma}-\tau_c|^{3/2})$.  Thus, if  $|\tau_{\gamma}-\tau_c|\sim \eps^{2/3}$, then $|\tau_{p,-} -\tau_*^-|=O(\eps)$.
\begin{figure}[H]
\begin{center}
            \includegraphics[scale=0.6, angle=0.0]{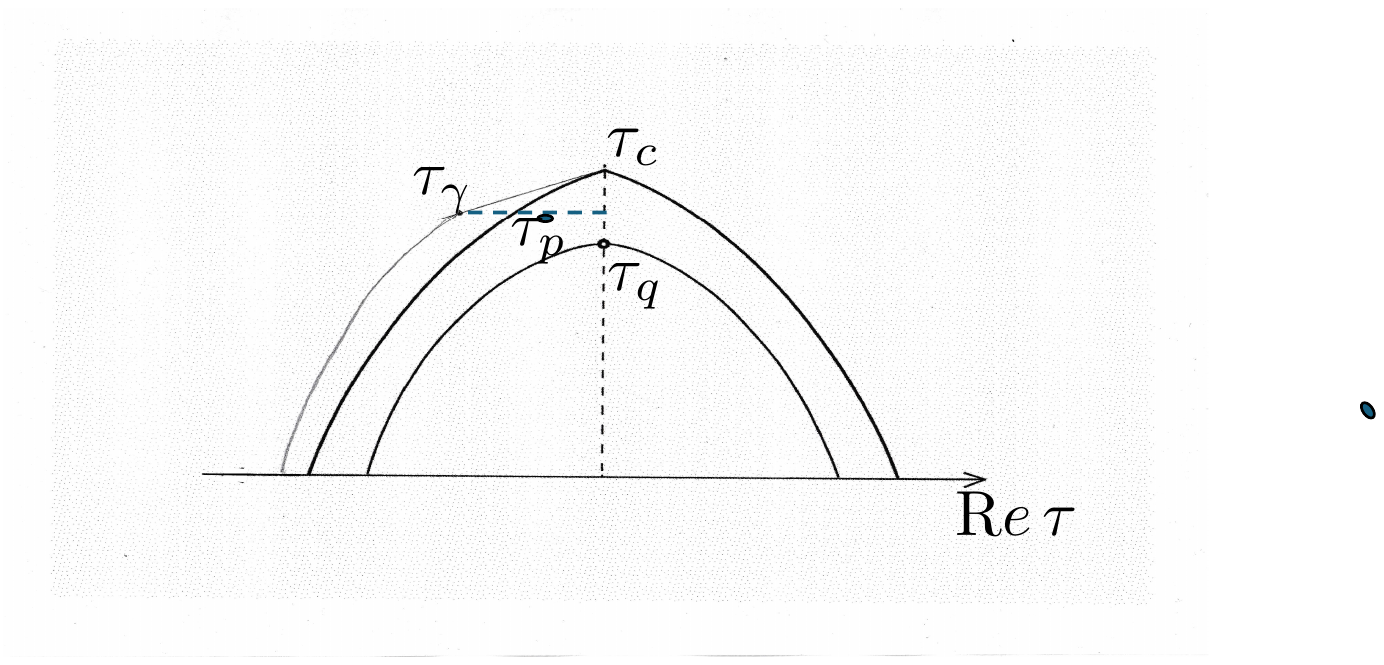}
            \end{center}
            \vskip -1.5cm
           \caption{Points $\tau_{\gamma}, \tau_p, \tau_q$}
            \label{points_1}
\end{figure}
Consider now the vertical line through $\tau_c$. Let $\tau_q$ be a point on this line below $\tau_c$ and close to $\tau_c$ (Fig. \ref {points_1}). Denote by $\Gamma_q$ the curve $\re \Psi(\tau)={\rm const}$ passing through $\tau_{q}$ (i.e. $\re \Psi(\tau)=\re \Psi(\tau_{q}))$. This curve crosses the real axis at some points $\tau_{q,-}$ (the left one) and  $\tau_{q,+}$  (the right one). 
\begin{lem}  
 \label{lem_Gamma_q}
 Distance between points of  $\Gamma_{q}$ and  $\Gamma_{*,1}\cup\Gamma_{*,2} $  measured along normals to $\Gamma_{q}$ is of order $|\tau_q -\tau_c|^{3/2}/\sqrt{|\tau-\tau_c|}$,  where $\tau$ is the parameter along $\Gamma_{q}$. 
  \end{lem}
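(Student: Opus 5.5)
The idea is to compare level curves of $\re\Psi$ through the local model $\psi_a$ near $\tau_c$, and through a gradient-flow estimate away from $\tau_c$.

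\textbf{Setup.} Write $\delta=\re\Psi(\tau_c)-\re\Psi(\tau_q)$, so that $\Gamma_q=\{\re\Psi=\re\Psi(\tau_c)-\delta\}$ while $\Gamma_{*,1}\cup\Gamma_{*,2}\subset\{\re\Psi=\re\Psi(\tau_c)\}$. Near $\tau_c$, Lemma \ref{FG} gives $\lambda_1(\dK(\tau))=\Lambda_{1a}(\dK_a(\tau))(1+O(|\tau-\tau_c|^{1/2}))$, hence
\[
\Psi(\tau)-\Psi(\tau_c)=\psi_a(\tau)\bigl(1+O(|\tau-\tau_c|^{1/2})\bigr).
\]
Along the vertical line through $\tau_c$, which lies inside the angle between $\gamma_1$ and $\gamma_2$ where $\re\psi_a<0$, we have $|\re\psi_a(\tau_q)|\sim|\tau_q-\tau_c|^{3/2}$. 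Therefore $\delta\sim|\tau_q-\tau_c|^{3/2}$.

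\textbf{Normal displacement.} At a point $\tau\in\Gamma_q$, move along the normal to $\Gamma_q$ until $\Gamma_{*,1}\cup\Gamma_{*,2}$ is reached. Since $\re\Psi$ is harmonic, $|\grad\re\Psi|=|\Psi'|=|\lambda_1(\dK(\tau))|$. The displacement $d$ along the normal then satisfies $d\approx\delta/|\lambda_1(\dK(\tau))|$, provided $|\lambda_1|$ is essentially constant on the segment of length $d$. Two regimes must be checked.

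\emph{Away from $\tau_c$.} On the compact part of $K_*\cup\partial K_*$, $|\lambda_1|$ is bounded away from zero by conditions D and E. Hence $d=O(\delta)=O(|\tau_q-\tau_c|^{3/2})$, which agrees with the claimed order because there $|\tau-\tau_c|\sim1$. Conditions 6) and E give transversality, so the normal meets $\Gamma_*$ and not some other branch of the level set.

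\emph{Near $\tau_c$, in the range $|\tau_q-\tau_c|\lesssim|\tau-\tau_c|\ll1$.} Here $|\lambda_1|\sim|\tau-\tau_c|^{1/2}$, so $d\sim\delta/|\tau-\tau_c|^{1/2}\sim|\tau_q-\tau_c|^{3/2}/|\tau-\tau_c|^{1/2}$. This is at most $|\tau_q-\tau_c|$, which is small relative to $|\tau-\tau_c|$ except on the scale $|\tau-\tau_c|\sim|\tau_q-\tau_c|$. In that final scale, rescale $\tau-\tau_c=|\tau_q-\tau_c|\,\sigma$. Then $\re\Psi-\re\Psi(\tau_c)=|\tau_q-\tau_c|^{3/2}\bigl(\re\tfrac43 a\sqrt{-a^{-1}b\cdot g_c\,\sigma^3}+O(|\tau_q-\tau_c|^{1/2})\bigr)$. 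In the $\sigma$-plane the level curve through $\sigma=-i$ and the rays $\gamma_{1,2}$ are fixed curves at distance of order $1$ from each other. Scaling back gives distance $\sim|\tau_q-\tau_c|$, consistent with the formula.

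\textbf{Main obstacle.} The hardest step is making the mean-value argument $d\approx\delta/|\Psi'|$ rigorous uniformly in $\tau$. This requires showing $|\Psi'|$ varies by a bounded factor along the normal segment, which follows from $d\ll|\tau-\tau_c|$ whenever $|\tau-\tau_c|\gg|\tau_q-\tau_c|$. It also requires that the errors $O(|\tau-\tau_c|^{1/2})$ in the expansion do not spoil the comparison; Lemma \ref{lem_Gamma_gamma} controls this, since $\Gamma_{*,j}$ deviates from $\gamma_j$ by only $O(|\tau-\tau_c|^{3/2})$. The intermediate scale can be handled by gluing the rescaled-model estimate to the gradient estimate on an overlap $|\tau-\tau_c|\sim M|\tau_q-\tau_c|$ with $M$ a large constant. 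The statement for distances measured along normals to $\Gamma_*$ follows in the same way, or by comparing the two normals, which have nearly equal directions.
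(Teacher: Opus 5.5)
Your proposal is correct and takes essentially the same route as the paper. The paper also calls the scale $|\tau-\tau_c|\sim|\tau_q-\tau_c|$ evident, which your rescaling just makes explicit. For $|\tau-\tau_c|\gg|\tau_q-\tau_c|$ it reuses the argument of Lemma \ref{lem_Gamma_Gamma}: the distance is the level difference $|\tau_q-\tau_c|^{3/2}$ divided by $|\grad\re\Psi|\sim|\tau-\tau_c|^{1/2}$. One small wording slip: $|\grad\re\Psi|=|\Psi'|$ follows from the Cauchy--Riemann equations for the analytic function $\Psi$, not from harmonicity of $\re\Psi$ alone.
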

 In particular, $|\tau_{q,\pm}-\tau_*^{\pm}|$ is of order $|\tau_q -\tau_c|^{3/2}$. If  $|\tau_{q}-\tau_c|\sim \eps^{2/3}$ then $|\tau_{q,\pm}-\tau_*^{\pm}|\sim \eps$.

\input{delay_riccati_1.tex}

 \input{delay_transformation.tex}

  \input{delay_motion.tex}

\input{delay_proofs_of_lemmas_1}

\input{delay_proofs_of_lemmas_2}

  \input{delay_proofs_sect_11}

  \input{delay_proofs_sect_12}

   \input{delay_proofs_sect_13}

    \input{delay_proofs_sect_15}

 {\bf  {Acknowledgements.}} The work was supported by the EPSRC, Grant No. UKRI1724. We are grateful to A. P. Veselov for
useful discussion.

\input{appendix_1}

  \end{document}

%% file: delay_riccati_1.tex
 \subsection{Riccati equation}
 \label{s_riccati}
 Consider equations (\ref{eq_expanded}), and take  $\kappa=\dK_a(\tau)=g_c(\tau-\tau_c)+\kappa_c$:
 \begin{equation}
 \label{e_riccati}
 \frac{ d z_1}{d t}=b \cdot g_c(\tau-\tau_c) +az_1^2 .
 \end{equation}
 This Riccati equation is well known in  the theory of
relaxation oscillations \cite {mr}.
For frozen value of $\tau$ it has equilibria at  $z_1= \sqrt{-a^{-1}b \cdot g_c(\tau-\tau_c)}$
(for $\tau\ne\tau_c$ there are two equilibria corresponding to two values of the square root). The eigenvalues of these equilibria are  $2a\sqrt{-a^{-1}b\cdot  g_c(\tau-\tau_c)}$. The complex phase introduced in  Section  \ref{some_curves} is 
$$
   \psi_{a}(\tau)= \int_{\tau_c}^{\tau}2a\sqrt{-a^{-1} b\cdot g_c  (\thet-\tau_c)}\,d\thet=\frac{4}{3}a\sqrt{-a^{-1}b \cdot g_c(\tau-\tau_c)^{3}}.
 $$
 Let $a=u\exp(iw), \  -a^{-1}b\cdot  g_c = p \exp (i q),\,  \tau-\tau_c = r\exp (i \ph)$ with real $u,w,p,q, r,\ph$; $u>0,  p>0, r\ge0$. (Notation $p,q$ here should not be mixed with $p,q$ in the previous Section.) Then
 $$
  \psi_{a}(\tau)=\frac{4}{3} u p^{1/2}r^{3/2}\exp{(i(3(\ph+2\pi l)+q+2w) /2)},\,  l=1,2,\ldots
 $$ 
 The set $ \{\tau\, :\, {\rm Re }\,\psi_{a}(\tau)=0   \}$ consists of three rays $3\ph+q+2w =3\pi + 2\pi k$, i.e.
$\ph=\ph_k=\pi+(2\pi/3)k -q/3-2w/3, \ k=1,2, 3$, that meet at $\tau_c$. 
These rays where denoted $\gamma_{1,2.3}$ in the previous Section.  We assume that the rays $\gamma_{1}$ and  $\gamma_{2}$ correspond, respectively,  to $k=1$ and  $k=2$, i.e. to $\ph_1= 5\pi/3-q/3-2w/3$ and
$\ph_2= 7\pi/3-q/3-2w/3$\footnote{One could have in mind a model example $\dot z= i(\tau-\tau_c)+z^2$, where $ q=3\pi/2,w=0, \ph_1= 7\pi/6, \ph_2= 11\pi/6$. }. At  $\tau_c$,  these rays are  tangent\footnote{The order of tangency is $1/2$, see Lemma \ref {lem_Gamma_gamma}.}, respectively,  to $\Gamma_{*,1}$ and $\Gamma_{*,2}$. Choice of the branch of the square root  at the definition of the equilibrium and the complex phase is described in the previous Section. This choice is such that $\re \psi_{a}(\tau)<0$ in the $120^{\circ}$-angle   between $\gamma_1$ and $\gamma_2$. This way we have a unique equilibrium $z_1=z_{1,a}(\tau)$ for $\tau$  in this angle.

 Introduce  a new slow time $s$ and a new dependent variable $\ze$ by  formulas\\
 $\tau=\tau_c -s\exp(i\ph_1)$,  $z_1=\exp(i\alpha_1)\ze,\
 \alpha_1=-\pi/6+q/3-w/3$. The equation in the new variables takes the form
 \begin{equation}
 \label{non_normalised_1}
 \eps \frac{  d \ze}{d s}=i(|a|ps +|a|\ze^2 ).
 \end{equation}
 The ray $\gamma_1$ corresponds to $s\le 0$.
 Introduce new variables $\hat \ze, \hat s$ via formulas
 \begin{equation}
\ze=\eps^{1/3}\frac{p^{1/3}}{|a|^{1/3}}\hat \ze,\  s=\eps^{2/3}\frac{1}{p^{1/3}|a|^{2/3} }\hat s.
 \end{equation}  
 The equation for the new variables is
 \begin{equation}
 \label {normalised_1}
 \frac{  d \hat\ze}{d \hat s}=i(\hat s +\hat{\ze}^2 ).
 \end{equation}
 This equation by replacing $\hat \ze$ with $i\hat \ze$ and $\hat s$ with $-\hat s$ is reduced to equation (9.2) in \cite{mr}, whose general solution is given by formula (9.6) in \cite{mr}. Returning to variables  $\hat \ze, \hat s$  we get that the general solution to
equation (\ref{normalised_1}) is
\begin{equation}
\label{solution_1}
\hat\ze=-i\sqrt{-\hat s}\,\frac{J_{-2/3}(v)-RJ_{2/3}(v)}{RJ_{-1/3}(v)+J_{1/3}(v)},
\quad v=\frac{2}{3}(-\hat s)^{3/2} .
\end{equation}
\noindent Here $J_{\nu}(\cdot)$ is the Bessel function of the order $\nu$,
and $R$ is a constant which should be defined by initial conditions. 

%

We will use asymptotic expansions of Bessel functions for large and small values of $v$ \cite{be}:
\begin{equation}
\begin{aligned}
\label{big_v_J}
J_{\nu}(v)&=(\pi v/2)^{-1/2}\cos(v-\nu\pi/2-\pi/4)[1+O(|v|^{-2})]\\
&-(\pi v/2)^{-1/2}  \sin(v-\nu\pi/2-\pi/4)[ \frac{1}{2v}(\nu^2-\frac{1}{4}) +O(|v|^{-3})]  , 
     \  -\pi<\arg v<\pi
\end{aligned}
 \end{equation}
 and
 \begin{equation}
 \label{small_v_J}
 J_{\nu}(v)=\frac{v^{\nu}}{2^{\nu}}\frac{1}{\Gamma(\nu+1)}[1+O(|v|^2)].
\end{equation}
\begin{lem}
\label{expansion_minus}
Asymptotic expansion (\ref{big_v_J}) implies that
\begin{equation}
\label{special_1}
\hat\ze=-\sqrt{-\hat s}[1+O(|v|^{-1})]\ {\rm for} \  \im s=0,\ s\le 0, \ R=R_{-}=\exp((2\pi i)/3).
\end{equation}
\end{lem}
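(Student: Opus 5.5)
The plan is to substitute the large-argument expansion (\ref{big_v_J}) directly into (\ref{solution_1}) with $R=R_-=e^{2\pi i/3}$, and to check that the oscillating parts cancel exactly at leading order. For $\im s=0$, $s\le0$ we also have $\hat s\le 0$, so $v=\frac23(-\hat s)^{3/2}$ is real and nonnegative. Hence $\arg v=0$ lies in the admissible sector, and the factors $e^{\pm iv}$ are bounded in modulus by $1$. The claim concerns large $|v|$; for bounded $v$ the $O(|v|^{-1})$ term is vacuous provided the denominator does not vanish, and I would treat only $v\ge v_0$ with $v_0$ large.

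Put $\theta=v-\pi/4$. The common prefactor $(\pi v/2)^{-1/2}$ cancels between numerator and denominator. To leading order the numerator of (\ref{solution_1}) is proportional to $\cos(\theta+\pi/3)-R\cos(\theta-\pi/3)$, and the denominator to $R\cos(\theta+\pi/6)+\cos(\theta-\pi/6)$. Writing $\cos x=(e^{ix}+e^{-ix})/2$ gives the following.
\begin{itemize}
\item In the numerator, the coefficient of $e^{i\theta}$ is $e^{i\pi/3}-e^{2\pi i/3}e^{-i\pi/3}=0$. The coefficient of $e^{-i\theta}$ is $1+e^{-i\pi/3}=\sqrt3\,e^{-i\pi/6}$.
\item In the denominator, the coefficient of $e^{i\theta}$ is $e^{5\pi i/6}+e^{-i\pi/6}=0$. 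The coefficient of $e^{-i\theta}$ is $i+e^{i\pi/6}=\sqrt3\,e^{i\pi/3}$.
\end{itemize}
Both leading terms are therefore pure multiples of $e^{-i\theta}$, with nonzero coefficients. Their ratio is $e^{-i\pi/2}=-i$, so $\hat\ze=-i\sqrt{-\hat s}\cdot(-i)=-\sqrt{-\hat s}$ at leading order. This exact cancellation is precisely what singles out the value $R_-=e^{2\pi i/3}$.

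For the error terms, I would use the second line of (\ref{big_v_J}). The $\sin$-terms carry the factor $\frac1{2v}(\nu^2-\frac14)$, and the remaining corrections are $O(v^{-2})$ and $O(v^{-3})$. Each correction is $(\pi v/2)^{-1/2}$ times a bounded combination of $e^{\pm i\theta}$ times $O(v^{-1})$, because $v$ is real and $|e^{\pm i\theta}|=1$. The leading terms of the numerator and of the denominator both have modulus $\sqrt3\,(\pi v/2)^{-1/2}/2$, bounded below by a constant times $(\pi v/2)^{-1/2}$. So after dividing by $e^{-i\theta}$, the numerator equals $\frac{\sqrt3}{2}e^{-i\pi/6}[1+O(v^{-1})]$ and the denominator equals $\frac{\sqrt3}{2}e^{i\pi/3}[1+O(v^{-1})]$. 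Their quotient is $-i[1+O(v^{-1})]$, which gives (\ref{special_1}).

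The only delicate point is the exact cancellation of the $e^{i\theta}$ terms, together with the nonvanishing of the surviving coefficients. If either failed, the quotient would oscillate, or the denominator could vanish, at a bounded-ratio level. Both are settled by the explicit phase computation above. Uniformity of the $O$-terms on the real ray is automatic, since all exponentials are unimodular there.
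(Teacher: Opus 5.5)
Your proof is correct and follows the same route as the paper. Both substitute the large-$v$ asymptotics (\ref{big_v_J}) into (\ref{solution_1}) and show that for $R=e^{2\pi i/3}$ the leading ratio is exactly $-i$; the paper obtains this $R$ by solving for it, whereas you verify it by checking that the $e^{i\theta}$ components cancel. Your lower bound $\frac{\sqrt3}{2}(\pi v/2)^{-1/2}$ on the surviving leading terms is a useful addition, since it justifies writing the $O(|v|^{-1})$ corrections as a multiplicative factor, a step the paper leaves implicit.
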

\begin{lem}
\label{ze_small_v}
Asymptotic expansion (\ref{small_v_J}) implies that
\begin{equation}
\label{as1_small}
\hat\ze=\frac{1}{R}\frac{-2\pi i}{\Gamma^2(1/3)3^{1/6}}  [1+O(|v|^{2/3})]  .
\end{equation}
\end{lem}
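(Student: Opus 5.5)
The plan is a direct substitution of the small-argument expansion (\ref{small_v_J}) into the general solution (\ref{solution_1}), for a fixed constant $R\neq 0$ and $v\to 0$. In both the numerator and the denominator I keep the Bessel function of negative order as the leading term and treat the other as a relative correction.

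\emph{Numerator.} By (\ref{small_v_J}),
$$J_{-2/3}(v)=\frac{2^{2/3}v^{-2/3}}{\Gamma(1/3)}[1+O(|v|^2)], \qquad J_{2/3}(v)=O(|v|^{2/3}).$$
Hence
$$J_{-2/3}(v)-RJ_{2/3}(v)=\frac{2^{2/3}v^{-2/3}}{\Gamma(1/3)}[1+O(|v|^{4/3})].$$

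\emph{Denominator.} Similarly,
$$J_{-1/3}(v)=\frac{2^{1/3}v^{-1/3}}{\Gamma(2/3)}[1+O(|v|^2)], \qquad J_{1/3}(v)=O(|v|^{1/3}).$$
Hence
$$RJ_{-1/3}(v)+J_{1/3}(v)=\frac{R\,2^{1/3}v^{-1/3}}{\Gamma(2/3)}[1+O(|v|^{2/3})],$$
where the constant in $O$ depends on $R^{-1}$. The worst relative error comes from the denominator, and it is exactly the $O(|v|^{2/3})$ claimed in (\ref{as1_small}).

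\emph{Ratio and prefactor.} Taking the quotient gives
$$\frac{2^{1/3}\Gamma(2/3)}{R\,\Gamma(1/3)}\,v^{-1/3}[1+O(|v|^{2/3})].$$
Next I multiply by $-i\sqrt{-\hat s}$. From $v=\frac23(-\hat s)^{3/2}$ we get $\sqrt{-\hat s}=(3v/2)^{1/3}$. The factors $v^{1/3}$ and $v^{-1/3}$ cancel, so the leading term is the constant
$$-\frac{i\,3^{1/3}\Gamma(2/3)}{R\,\Gamma(1/3)}.$$
Finally I apply the reflection formula $\Gamma(1/3)\Gamma(2/3)=\pi/\sin(\pi/3)=2\pi/\sqrt3$ to replace $\Gamma(2/3)$ by $2\pi/(\sqrt3\,\Gamma(1/3))$. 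This gives $-2\pi i/(R\,3^{1/6}\Gamma^2(1/3))$, which is (\ref{as1_small}).

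The only point needing care, and the main obstacle, is branch consistency. The powers $v^{\nu}$ in (\ref{small_v_J}), the root $\sqrt{-\hat s}$ in (\ref{solution_1}), and $(-\hat s)^{3/2}$ in the definition of $v$ must all be taken on compatible branches, so that $(3v/2)^{1/3}$ really equals the chosen $\sqrt{-\hat s}$ and the $v^{\pm1/3}$ factors cancel exactly rather than up to a cube root of unity. I would fix the principal branches for $-\pi<\arg(-\hat s)<\pi$ and then check that $v^{1/3}=(2/3)^{1/3}(-\hat s)^{1/2}$ holds on this sector; the expression (\ref{solution_1}) itself is single-valued in $\hat s$, which guarantees the consistency.

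I would also state explicitly that the estimate is uniform only for $R$ bounded away from $0$. This applies in particular to the value $R=R_-$ of Lemma \ref{expansion_minus}, which is used later.
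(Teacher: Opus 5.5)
Your proposal is correct and is essentially the paper's proof. As there, you substitute (\ref{small_v_J}), keep $J_{-2/3}$ and $RJ_{-1/3}$ as the leading terms (the denominator gives the $O(|v|^{2/3})$ relative error), cancel $v^{-1/3}$ against $\sqrt{-\hat s}=(3v/2)^{1/3}$, and finish with $\Gamma(1/3)\Gamma(2/3)=2\pi/\sqrt3$.

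Your remarks on branch consistency and on uniformity in $R$ go slightly beyond what the paper writes. One correction: uniformity needs $R$ bounded above as well as away from $0$, because the numerator correction is $O(|R|\,|v|^{4/3})$; this is harmless in the applications, where $R$ stays close to $R_-$.
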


\medskip
Now, introduce in equations (\ref{eq_expanded}) a new
  slow time $\sigma $ and a new dependent variable $\chi$ by formulas
 $\tau=\tau_c -\sigma \exp(i\ph_2), \  z_1=\exp(i\alpha_2)\chi$, where
 $\ph_2= 7\pi/3-q/3-2w/3, \   \alpha_2=-5\pi/6+q/3-w/3$. The equation in the new variables takes the same form
 (\ref {non_normalised_1}), but now for variables $\sigma, \chi$:
 \begin{equation}
 \label{non_normalised_2}
 \eps \frac{  d \chi}{d \sigma}=i(|a|p\sigma +|a|\chi^2 ).
 \end{equation}
 The ray $\gamma_2$ corresponds to $\sigma \le 0$.
 Introduce new variables $\hat \chi, \hat \sigma$ via formulas
 \begin{equation}
 \label{normalisation_1}
\chi=\eps^{1/3}\frac{p^{1/3}}{|a|^{1/3}}\hat \chi,\  \sigma=\eps^{2/3}\frac{1}{p^{1/3}|a|^{2/3} }\hat \sigma.
 \end{equation} 
 The equation in the new variables takes the same form
 (\ref {normalised_1}), but now for variables $\hat\sigma, \hat \chi$: 
 \begin{equation}
 \label {normalised_2}
 \frac{  d \hat\chi}{d \hat \sigma}=i(\hat \sigma +\hat{\chi}^2 ).
 \end{equation}
  The general solution to
this equation is again(\ref{solution_1}):
\begin{equation}
\label{solution_2}
\hat\chi=-i\sqrt{-\hat \sigma}\,\frac{J_{-2/3}(v)-RJ_{2/3}(v)}{RJ_{-1/3}(v)+J_{1/3}(v)},
\quad v=\frac{2}{3}(-\hat \sigma)^{3/2} .
\end{equation}


\begin{lem}
\label{expansion_plus}
Asymptotic expansion (\ref{big_v_J}) implies that
\begin{equation}
\label{special_2}
\hat\chi=\sqrt{-\hat \sigma}[1+O(|v|^{-1})]\ {\rm for} \  \im \sigma=0,\ \sigma\le 0, \ R=R_{+}=\exp(-(2\pi i)/3).
\end {equation}
\end{lem}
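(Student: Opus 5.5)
The plan is to substitute the large-argument expansion (\ref{big_v_J}) directly into the general solution (\ref{solution_2}) with $R=R_+=\exp(-2\pi i/3)$, and to show that the ratio of Bessel combinations equals $i[1+O(|v|^{-1})]$. Then $\hat\chi=-i\sqrt{-\hat\sigma}\cdot i[1+O(|v|^{-1})]=\sqrt{-\hat\sigma}\,[1+O(|v|^{-1})]$, which is (\ref{special_2}). For $\hat\sigma$ real and $\hat\sigma\le 0$, the number $v=\frac23(-\hat\sigma)^{3/2}$ is real and nonnegative, so (\ref{big_v_J}) applies with $\arg v=0$. The common prefactor $(\pi v/2)^{-1/2}$ cancels in the ratio.

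Put $\theta=v-\pi/4$ (real) and write each cosine through $e^{\pm i\theta}$. At leading order the numerator $J_{-2/3}-R_+J_{2/3}$ is proportional to $\cos(\theta+\pi/3)-R_+\cos(\theta-\pi/3)$. Using $R_+e^{-i\pi/3}=-1$ and $R_+e^{i\pi/3}=e^{-i\pi/3}$, the $e^{-i\theta}$ terms cancel exactly. What remains is $\tfrac12 e^{i\theta}(1+e^{i\pi/3})=\tfrac{\sqrt3}{2}e^{i\theta}e^{i\pi/6}$.

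Similarly, the denominator $R_+J_{-1/3}+J_{1/3}$ is proportional to $R_+\cos(\theta+\pi/6)+\cos(\theta-\pi/6)$. Here the $e^{-i\theta}$ coefficient is $e^{-5\pi i/6}+e^{i\pi/6}=0$. What remains is $\tfrac12 e^{i\theta}(e^{-i\pi/6}-i)=\tfrac{\sqrt3}{2}e^{i\theta}e^{-i\pi/3}$. Hence the leading ratio is $e^{i\pi/2}=i$. The point of the choice $R=R_+$ is precisely that it kills the $e^{-i\theta}$ component in both numerator and denominator; this is what selects the special solution attached to the equilibrium branch $+\sqrt{-\hat\sigma}$.

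It remains to control the corrections, and this is the only step needing care, though it is not a serious obstacle. The second line of (\ref{big_v_J}) contributes terms of relative size $O(|v|^{-1})$, namely $\frac{1}{2v}(\nu^2-\frac14)\sin(\cdot)$. These contain both $e^{\pm i\theta}$ and do not cancel, but because $\theta$ is real we have $|e^{\pm i\theta}|=1$. So numerator and denominator each equal $\tfrac{\sqrt3}{2}e^{i\theta}\cdot(\text{nonzero constant})+O(|v|^{-1})$, with the leading coefficients bounded away from zero uniformly in $\theta$. Dividing gives $i[1+O(|v|^{-1})]$ for $v$ large. This is all that is claimed, since the expansion is used only in the regime where (\ref{big_v_J}) is valid. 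The argument mirrors the proof of Lemma \ref{expansion_minus} with $R_-$ replaced by $R_+$, where the $e^{+i\theta}$ components cancel instead and yield the sign $-\sqrt{-\hat s}$.
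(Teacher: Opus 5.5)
Your proof is correct and follows the paper's route. The paper substitutes the large-$v$ asymptotics (\ref{big_v_J}) into (\ref{solution_2}) and solves for the $R$ that makes the leading ratio equal to $i$ (it replaces $-i$ by $i$ in (\ref{eq_for_R})). This is the same computation as your cancellation of the $e^{-i\theta}$ components at $R=R_+$. Your observation that the surviving leading terms are pure $e^{i\theta}$ with modulus $\sqrt3/2$ (since $\theta$ is real) adds a justification for absorbing the $O(|v|^{-1})$ corrections after division, a point the paper leaves implicit.
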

\medskip

For $\tau\in\gamma_1$  (respectively, $\tau\in\gamma_2$) the equilibrium $z_{1,a}(\tau)$ is  given by   $\hat \ze =-\sqrt{-\hat s}$ (respectively, $\hat \chi =\sqrt{-\hat \sigma}$). We will use these formulas for  $\tau$ in the $120^{\circ}$ angle  between  $\gamma_1$ and $\gamma_2$, including its boundaries  $\gamma_1$ and $\gamma_2$. The branches of the square roots in these formulas are chosen so that $\sqrt{-\hat s}$, for real $\hat s \le 0$, and  $\sqrt{-\hat \sigma}$,  for real $\hat \sigma \le 0$,  are the positive arithmetic square roots.
 For the phase  (\ref{phase_a_1}), we obtain $\psi_a= \frac {4}{3}i(-\hat s)^{3/2}$ 
  (respectively, $\psi_a= -\frac {4}{3}i(-\hat \sigma)^{3/2}$). Here we use the fact that
    $\psi_a$  is invariant under transformations of the time and the dependent variable. Note that $\re \psi_a <0$ inside the angle between $\gamma_1$ and $\gamma_2$, as required.  To verify this, consider a point close to $\gamma _1$:
  $ \hat s = -|\hat s_0| -i \mu, \ 0<\mu\ll 1$. Then $\re\psi_a= \re (\frac {4}{3}i(|\hat s_0| +i \mu)^{3/2})=- 2  |\hat s_0|^{1/2}\mu+O(\mu^2)<0$. Finally, note that  $\re\psi_a$ vanishes only on the rays $\gamma_j$. 
  
  Formulas (\ref{solution_1}), (\ref{solution_2}) will be used for  $\tau$ in the $120^{\circ}$ angle  between  $\gamma_1$ and $\gamma_2$, including the boundary  rays, $\gamma_1$ and $\gamma_2$. The branches of the square roots in these formulas are chosen as described above; namely,
$\sqrt{-\hat s}$ for real $\hat s \le 0$  (respectively,  $\sqrt{-\hat \sigma}$ for real $\hat \sigma \le 0$) denotes the positive arithmetic square root.

\medskip
Consider any solution to  equation (\ref{e_riccati}). On $\gamma_1$ it has the form (\ref{solution_1}) with $R=R_l$. On
$\gamma_2$ the same solution has the form (\ref{solution_2})
 with $R=R_r$. The following relation between $R_l$ and $R_r$
follows from the previous transformation formulas  and asymptotic 
formulas for Bessel functions at the point $v=0$ (\ref{small_v_J}).
\begin{lem}
\label{lem_relation}
\begin{equation}
\label{r_relation}
R_r=R_le^{-2\pi i/3}  .
\end{equation}  
\end{lem}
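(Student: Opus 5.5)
The idea is to compare the two representations (\ref{solution_1}) and (\ref{solution_2}) of one and the same solution of (\ref{e_riccati}) at the common point $\tau=\tau_c$, where both local variables vanish. The value there is given by Lemma~\ref{ze_small_v}. First I record how the two coordinate systems are related. From $\tau=\tau_c-s\exp(i\ph_1)=\tau_c-\sigma\exp(i\ph_2)$ and $\ph_2-\ph_1=2\pi/3$ we get
\[
\sigma=s\,e^{i(\ph_1-\ph_2)}=e^{-2\pi i/3}s .
\]
From $z_1=e^{i\alpha_1}\ze=e^{i\alpha_2}\chi$ and $\alpha_1-\alpha_2=2\pi/3$ we get $\chi=e^{2\pi i/3}\ze$. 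The normalisations to $\hat\ze,\hat s$ and $\hat\chi,\hat\sigma$ use identical constants, so
\[
\hat\sigma=e^{-2\pi i/3}\hat s,\qquad \hat\chi=e^{2\pi i/3}\hat\ze .
\]
This map carries (\ref{normalised_1}) into (\ref{normalised_2}), as a direct check confirms.

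Next I check that the explicit formula (\ref{solution_1}), written with $R=R_l$, is a single-valued meromorphic function of $\hat s$ near $\hat s=0$. Then its value at $\hat s=0$ is unambiguous and does not depend on the direction of approach. Write $J_\nu(v)=v^\nu E_\nu(v^2)$ with $E_\nu$ entire, and put $w=-\hat s$, so that $v^{1/3}=(2/3)^{1/3}w^{1/2}$ with the branch fixed on $\gamma_1$. Then the $J_{\pm2/3}$ terms carry $w^{\mp1}$ and the $J_{\pm1/3}$ terms carry $w^{\pm1/2}$. Together with the prefactor $\sqrt{-\hat s}=w^{1/2}$, all half-integer powers cancel, and
\[
\hat\ze=-i\,\frac{c_1E_{-2/3}-R\,c_2w^2E_{2/3}}{R\,c_3E_{-1/3}+c_4\,w\,E_{1/3}},
\]
where the $E$'s are entire in $w^3$. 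This is meromorphic in $\hat s$, and it agrees with the analytic continuation of the solution of (\ref{e_riccati}) from $\gamma_1$ into a neighbourhood of $\tau_c$. The same holds for $\hat\chi$ with $R=R_r$. Both expressions therefore describe one meromorphic function of $\tau$ near $\tau_c$, up to the factor $e^{2\pi i/3}$.

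Then I evaluate at $\tau=\tau_c$, i.e.\ $\hat s=\hat\sigma=0$. By Lemma~\ref{ze_small_v}, $\hat\ze(0)=K/R_l$ and $\hat\chi(0)=K/R_r$, with the same nonzero constant $K=-2\pi i/(\Gamma^2(1/3)3^{1/6})$. The relation $\hat\chi=e^{2\pi i/3}\hat\ze$ gives $K/R_r=e^{2\pi i/3}K/R_l$, hence $R_r=R_le^{-2\pi i/3}$.

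The main obstacle is the degenerate cases $R_l=0$ and $R_l=\infty$, where $\tau_c$ is a pole or a zero of the solution, and more generally the branch bookkeeping. For the degenerate cases I would either argue by continuity in $R$, since the map $R_l\mapsto R_r$ is a Möbius map read off from the meromorphic identity, or compare the leading Laurent coefficient at $w=0$ in the displayed formula instead of the value. For the branches, the point to verify is that the positive-root convention on $\gamma_1$ and on $\gamma_2$ both lead to the same meromorphic function. This holds because no fractional powers survive in the reduced formula, so the choice of branch drops out.
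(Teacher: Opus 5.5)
Your proposal is correct and is essentially the paper's own proof: you compare the two representations of the same solution at $\tau=\tau_c$, using $\chi=e^{2\pi i/3}\ze$ (from $\alpha_1-\alpha_2=2\pi/3$) together with Lemma~\ref{ze_small_v}, which gives the values $K/R_l$ and $K/R_r$ with the same constant $K$. Your extra checks are refinements the paper leaves implicit: meromorphy near $\tau_c$ (which also follows from the fact that Riccati solutions with entire coefficients are meromorphic) and the degenerate cases $R_l\in\{0,\infty\}$. One small slip: $J_{\pm2/3}(v)$ carries $w^{\pm1}$, not $w^{\mp1}$, although your displayed reduced formula is correct.
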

\noindent The system (\ref{e_riccati}) has two special solutions which
 have, respectively,  asymptotic behaviour  (\ref{special_1}) on $\gamma_1$ and
 (\ref{special_2}) on $\gamma_2$. These are different solutions because the relation 
 (\ref{r_relation}) is not satisfied for them. Splitting of these two solutions is the
 reason for existence of maximal delay time. 
 The first of these special solutions, which is described by formula
 (\ref{solution_1}) with $R=R_l=R_{-}=\exp((2\pi i)/3)$ in variables $\ze, s$, is described by formula (\ref{solution_2}) 
 with $R=R_r=R_-e^{-2\pi i/3} =1$ in variables $\chi, \sigma$.  This solution  has poles on  $\gamma_2$
because the function $J_{-1/3}(v)+J_{1/3}(v)$ has zeroes on
the positive real semi-axis \cite{be}, Sec. 7.9. Inside the   angle between $\gamma_1$ and $\gamma_2$ we have ${\rm Im\, }v<0$ (here $v=\frac{2}{3}(-\hat \sigma)^{3/2}$). 
\begin{lem}
\label{r_expansion}
For ${\rm Im\, }v<0$ the function   (\ref{solution_2}) with $R\ne e^{2\pi i/3}$    has the expansion
\begin{equation}
\label {expansion}
\hat\chi=\sqrt{-\hat\sigma}\Bigl [1-2e^{-2iv}e^{\pi i/6}
\frac{R-e^{-2\pi i/3}}{R- e^{2\pi i/3}}
+O\Bigl (e^{-4|{\rm Im\, }v|}+\frac{1}{|v|}\Bigr)\Bigr ].  
\end{equation}
\end{lem}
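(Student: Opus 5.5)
The plan is to rewrite the Bessel functions in (\ref{solution_2}) through Hankel functions, which have clean exponential asymptotics in the lower half-plane, and then expand the quotient. I would use $J_\nu=\tfrac12\bigl(H^{(1)}_\nu+H^{(2)}_\nu\bigr)$ together with the connection formula $J_{-\nu}=\tfrac12\bigl(e^{i\nu\pi}H^{(1)}_\nu+e^{-i\nu\pi}H^{(2)}_\nu\bigr)$. Applied with $\nu=2/3$ and $\nu=1/3$, the numerator and denominator of (\ref{solution_2}) become
\[
J_{-2/3}-RJ_{2/3}=\tfrac12\bigl[(e^{2\pi i/3}-R)H^{(1)}_{2/3}+(e^{-2\pi i/3}-R)H^{(2)}_{2/3}\bigr],
\]
\[
RJ_{-1/3}+J_{1/3}=\tfrac12\bigl[e^{i\pi/3}(R-e^{2\pi i/3})H^{(1)}_{1/3}+e^{-i\pi/3}(R-e^{-2\pi i/3})\cdot(-e^{2\pi i/3})\,e^{i\pi/3}\ldots\bigr].
\]
Here I used $Re^{i\pi/3}+1=e^{i\pi/3}(R-e^{2\pi i/3})$. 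The analogous rewriting of $Re^{-i\pi/3}+1$ as a multiple of $(R-e^{-2\pi i/3})$ up to a unimodular factor will be written out carefully. This factorization shows where the hypothesis $R\ne e^{2\pi i/3}$ enters: it is exactly the condition that the dominant coefficient in the denominator does not vanish.

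Next, I will use the large-argument asymptotics $H^{(1,2)}_\nu(v)=(\pi v/2)^{-1/2}e^{\pm i(v-\nu\pi/2-\pi/4)}[1+O(|v|^{-1})]$, which are equivalent to (\ref{big_v_J}). For $\im v<0$ we have $|e^{iv}|=e^{|\im v|}$, so $H^{(1)}$ dominates. The ratio satisfies $H^{(2)}_\nu/H^{(1)}_\nu=e^{-2i(v-\nu\pi/2-\pi/4)}[1+O(|v|^{-1})]$, whose modulus is $e^{-2|\im v|}$. I will factor the $H^{(1)}$ terms out of both numerator and denominator. The quotient then becomes
\[
\frac{(e^{2\pi i/3}-R)H^{(1)}_{2/3}}{e^{i\pi/3}(R-e^{2\pi i/3})H^{(1)}_{1/3}}
\cdot\frac{1+\beta_{2/3}\,e^{-2iv}(1+O(|v|^{-1}))}{1+\beta_{1/3}\,e^{-2iv}(1+O(|v|^{-1}))},
\]
where each $\beta_\nu$ is an explicit constant times a ratio of the form $(R-e^{-2\pi i/3})/(R-e^{2\pi i/3})$.

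The first factor reduces to the unimodular constant $-e^{-i\pi/3}\,H^{(1)}_{2/3}/H^{(1)}_{1/3}=-e^{-i\pi/3}e^{i\pi/6}[1+O(|v|^{-1})]$. Combined with the prefactor $-i\sqrt{-\hat\sigma}$, this must reproduce the leading term $\sqrt{-\hat\sigma}$. I will check this against Lemma \ref{expansion_plus}, which is the case $R=R_+=e^{-2\pi i/3}$ where the correction vanishes; that fixes the branch conventions for $v$ and $\sqrt{-\hat\sigma}$ consistently with the chosen branches. In the second factor I expand the quotient as $1+(\beta_{2/3}-\beta_{1/3})e^{-2iv}+O(e^{-4|\im v|})$. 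The cross terms $e^{-2iv}O(|v|^{-1})$ are absorbed into $O(1/|v|)$, since $|e^{-2iv}|\le 1$.

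The main obstacle is bookkeeping rather than analysis. One has to compute $\beta_{2/3}-\beta_{1/3}$ and verify that it equals $-2e^{\pi i/6}(R-e^{-2\pi i/3})/(R-e^{2\pi i/3})$. The two phase factors $e^{i(2\nu\pi/2+\pi/2)}$ coming from $\nu=2/3$ and $\nu=1/3$ differ by a sign, and that sign is what produces the coefficient $2$. This must be done with the branch of $v^{1/2}$ and of $\arg v\in(-\pi,0)$ fixed consistently with (\ref{big_v_J}). A further check is needed for uniformity: the Hankel asymptotics are valid uniformly in $-\pi<\arg v<0$, and $\im v<0$ is exactly the image of the interior of the angle between $\gamma_1$ and $\gamma_2$. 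As a sanity check I would verify that $R=R_+$ gives a zero correction, consistent with (\ref{special_2}), and that the formula is singular only at $R=e^{2\pi i/3}$, which is the special solution that has poles on $\gamma_2$.
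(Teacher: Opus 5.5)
Your argument is the paper's proof in Hankel-function form. The paper substitutes the cosine asymptotics (\ref{big_v_J}), writes each cosine as a sum of $e^{\pm i(\cdot)}$, factors out the dominant $e^{iv}$ and expands in $e^{-2iv}$. Writing the cosines as exponentials is exactly your split via $J_{\nu}=\tfrac12(H^{(1)}_\nu+H^{(2)}_\nu)$ and $J_{-\nu}=\tfrac12(e^{i\nu\pi}H^{(1)}_\nu+e^{-i\nu\pi}H^{(2)}_\nu)$. Your bookkeeping is, if anything, cleaner: the $O(|v|^{-1})$ errors sit on each exponential separately, which is what actually justifies the single overall factor $(1+O(|v|^{-1}))$ the paper writes.

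Your factorisations are right; the second denominator coefficient is simply $Re^{-i\pi/3}+1=e^{-i\pi/3}(R-e^{-2\pi i/3})$. The correction coefficient also comes out as claimed. Put $\rho=(R-e^{-2\pi i/3})/(R-e^{2\pi i/3})$. Then $\beta_{2/3}=e^{7\pi i/6}\rho$ and $\beta_{1/3}=e^{-2\pi i/3}e^{5\pi i/6}\rho=e^{i\pi/6}\rho$, so $\beta_{2/3}-\beta_{1/3}=-2e^{i\pi/6}\rho$. The sign comes from the extra factor $e^{-2\pi i/3}$ produced by the connection coefficients $e^{\mp i\pi/3}$ in the denominator. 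The bare phases $e^{i(\nu\pi+\pi/2)}$ for $\nu=2/3$ and $\nu=1/3$ differ by $e^{i\pi/3}$, not by a sign.

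One step, as you wrote it, is wrong. In fact $H^{(1)}_{2/3}/H^{(1)}_{1/3}=e^{i(-\pi/3+\pi/6)}[1+O(|v|^{-1})]=e^{-i\pi/6}[1+O(|v|^{-1})]$, not $e^{i\pi/6}[\,\cdot\,]$. With your value the leading term would be $(-i)(-e^{-i\pi/3})e^{i\pi/6}\sqrt{-\hat\sigma}=e^{i\pi/3}\sqrt{-\hat\sigma}$. This cannot be absorbed by ``fixing branch conventions'' against Lemma \ref{expansion_plus}, because the branches are already pinned down: principal powers in (\ref{big_v_J}), the arithmetic root $\sqrt{-\hat\sigma}$ on $\gamma_2$, and $\arg v\in(-\pi,0)$ inside the angle. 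With the correct phase, $(-i)(-e^{-i\pi/3})e^{-i\pi/6}=1$, and the leading term is $\sqrt{-\hat\sigma}$ with no adjustment.

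Finally, ``uniformly in $-\pi<\arg v<0$'' is overstated, for two reasons. The Hankel expansion of $H^{(1)}_\nu$, like (\ref{big_v_J}), is uniform only for $\arg v\ge-\pi+\delta$. Also, expanding $1/(1+\beta_{1/3}e^{-2iv})$ needs $|\rho|e^{-2|\im v|}$ to be bounded away from $1$. Both restrictions are implicit in the paper's statement too. They are harmless where the lemma is used: for $\re\tau\ge\re\tau_c$, away from $\gamma_1$, on $2\im v=-C_{a,0}$ with $C_{a,0}$ large and $|R-1|<1/100$.
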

 \begin{lem}
 \label{lem_dr}
 For the function $\hat\ze$ (\ref{solution_1}) we have
 \begin{equation}
\frac{\partial \hat\ze}{\partial R}=-i \frac{3\sqrt{3}}{2\pi \hat s}\frac{1}{(RJ_{-1/3}(v)+J_{1/3}(v))^2},
\quad v=\frac{2}{3}(-\hat s)^{3/2} .
\end {equation}
For the derivative ${\partial \hat\chi}/{\partial R}$  such relation is valid with the replacement of $\hat s$ with $\hat \sigma$.
  \end{lem}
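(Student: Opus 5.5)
The plan is a direct computation. Write $\hat\ze=-i\sqrt{-\hat s}\,N(v,R)/D(v,R)$ with
\[
N=J_{-2/3}(v)-RJ_{2/3}(v),\qquad D=RJ_{-1/3}(v)+J_{1/3}(v).
\]
Only $N$ and $D$ depend on $R$, so the quotient rule gives $\partial\hat\ze/\partial R=-i\sqrt{-\hat s}\,(\partial_R N\cdot D-N\cdot\partial_R D)/D^2$. Expanding the numerator,
\[
-J_{2/3}\bigl(RJ_{-1/3}+J_{1/3}\bigr)-\bigl(J_{-2/3}-RJ_{2/3}\bigr)J_{-1/3}=-\bigl(J_{1/3}J_{2/3}+J_{-1/3}J_{-2/3}\bigr).
\]
The terms containing $R$ cancel exactly. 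So the claim reduces to evaluating the $R$-independent combination $J_{1/3}J_{2/3}+J_{-1/3}J_{-2/3}$.

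The key step is the classical identity
\[
J_\nu(v)J_{1-\nu}(v)+J_{-\nu}(v)J_{\nu-1}(v)=\frac{2\sin\nu\pi}{\pi v}.
\]
I would derive it from the Wronskian $J_\nu J_{-\nu}'-J_\nu' J_{-\nu}=-2\sin(\nu\pi)/(\pi v)$ together with the recurrences $J_\nu'=J_{\nu-1}-(\nu/v)J_\nu$ and $J_{-\nu}'=-J_{-\nu+1}-(\nu/v)J_{-\nu}$. Substituting these, the $(\nu/v)J_\nu J_{-\nu}$ terms cancel, and one is left with $-(J_\nu J_{1-\nu}+J_{-\nu}J_{\nu-1})=-2\sin(\nu\pi)/(\pi v)$. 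Alternatively the identity can be quoted directly from \cite{be}.

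With $\nu=1/3$ the identity gives $J_{1/3}J_{2/3}+J_{-1/3}J_{-2/3}=\sqrt3/(\pi v)$. Hence
\[
\frac{\partial\hat\ze}{\partial R}=\frac{i\sqrt3\,\sqrt{-\hat s}}{\pi v\,D^2}.
\]
Now use $v=\frac23(-\hat s)^{3/2}$, which gives $\sqrt{-\hat s}/v=\frac{3}{2(-\hat s)}=-\frac{3}{2\hat s}$. This produces exactly $-i\,\frac{3\sqrt3}{2\pi\hat s}\,D^{-2}$, as claimed.

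The only point needing care, and the one I expect to be the main obstacle, is branch consistency. One must check that $(-\hat s)^{3/2}$ in $v$ equals $(\sqrt{-\hat s})^3$ with the same branch of $\sqrt{-\hat s}$ as in (\ref{solution_1}), so that $\sqrt{-\hat s}/v=3/(2(-\hat s))$ holds without a sign ambiguity. One must also check that the branches of $J_{\pm\nu}(v)=v^{\pm\nu}(\ldots)$ are the same ones used in the Wronskian identity. Both hold on the sector considered (the angle between $\gamma_1$ and $\gamma_2$), because everything is defined by continuation from the positive arithmetic root on $\hat s\le 0$. The statement for $\partial\hat\chi/\partial R$ is identical, since (\ref{solution_2}) has the same form with $\hat\sigma$ in place of $\hat s$.
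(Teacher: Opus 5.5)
Your proposal is correct and follows the paper's proof: the quotient rule reduces $\partial\hat\ze/\partial R$ to the $R$-independent combination $J_{1/3}J_{2/3}+J_{-1/3}J_{-2/3}$, which is evaluated by the identity $J_\nu J_{1-\nu}+J_{-\nu}J_{\nu-1}=2\sin(\nu\pi)/(\pi v)$ at $\nu=1/3$ (quoted in the paper from Sec.~7.11, formula (33) of \cite{be}), and the substitution $\sqrt{-\hat s}/v=-3/(2\hat s)$ then gives the stated formula. Your derivation of that identity from the Wronskian $W(J_\nu,J_{-\nu})=-2\sin(\nu\pi)/(\pi v)$ and the standard recurrences is a correct self-contained replacement for the citation, and your remark on using consistent branches makes explicit a point the paper leaves implicit.
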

  
  In the domain bounded by rays $\gamma_1$ and $\gamma_2$, consider the solution of equation (\ref{e_riccati}) that in variables $\hat \chi,\hat \sigma$ is described by the formula (\ref{solution_2}) with $R=1$. This solution has poles on $\gamma_2$ (i.e. for real $\hat \sigma<0$)  and does not have other singularities. This is because its poles are related to  zeros of the Airy function $\rm {Ai(z)}$:
   $$
  {\rm Ai}(z)=\sqrt\frac{-z}{9}\left[J_{-1/3}(\frac{2}{3}(-z)^{\frac{3}{2}})+J_{1/3}(\frac{2}{3}(-z)^{\frac{3}{2}})\right],
  $$
  and  all  zeros of  $\rm {Ai(z)}$ are  located on the negative real axis of the complex plane.  In the variables $\hat\ze, \hat s$ this solution is described by  formula (\ref{solution_1}) with $R=\exp((2\pi i)/3)$. This implies \footnote{Indeed, if function $\hat \ze$ is regular at some $v$, but the denominator in  (\ref{solution_1}) vanishes at this $v$, then the numerator in (\ref{solution_1}) should also vanish at this $v$. This would imply that  $J_{1/3}(v)J_{2/3}(v)+J_{-1/3}(v)J_{-2/3}(v)$ is equal to 0.
However, according to  \cite{be}, Sec. 7.11, formula (33), this expression is equal to ${\sqrt{3}}/({\pi v})$.}
   that, in the considered domain, the function $\exp((2\pi i)/3)J_{-1/3}(v)+J_{1/3}(v), v=\frac{2}{3}(-\hat s)^{3/2}$  has zeros only on the ray $\gamma_2$ (which has equation 
   $\hat s=\exp((2\pi i)/3)\hat \sigma$ with real $\hat \sigma<0$).

%% file: delay_transformation.tex
 \subsection{Additional notation}
 \label{add_notation}
 
 Denote
 $$
 L_*=\{x, \kappa \ : \  x=X(\kappa), \kappa=\dK(\tau), \tau\in K_*\}.
 $$
 Take $c_{l,1}^{-1}< \Imm \tau_c$ and denote
\begin{eqnarray*}
K_*^+&=&\{\tau \,: \,  \tau\in K_*,  \,  \Imm\tau \ge  -c_{l,1}^{-1}   \}, \ K_*^-=\{\tau \,: \,  \tau\in K_*,  \,  \Imm\tau \le  c_{l,1}^{-1}   \},\\
  L_*^+&=&\{x, \kappa \ : \  x=X(\kappa), \kappa=\dK(\tau), \tau\in K_*^+\},\,  L_*^- = \{x, \kappa \ : \  x=X(\kappa), \kappa=\dK(\tau), \tau\in K_*^-\}.
\end {eqnarray*}
Denote by $W$ the $c_{l,2}^{-1}$-neighbourhood of $L_*$, where $c_{l,2}^{-1}$ is such that the right hand sides of the system (\ref{perturbed}) are analytic for $(x,\kappa)\in W $.  
Denote by $W^+$ and  $W^-$ the $c_{l,2}^{-1}$-neighbourhoods of $L_*^+$ and  $L_*^-$, respectively.
 Denote by $V, V^+, V^-$ the projections of $W, W^+,W^-$ onto the $\kappa$-space.

 Denote 
 \begin{equation}
 \begin{aligned}
K^{+}_{*,\delta}&=\{\tau \ : \ \tau\in K_*^+,   \   \im\tau<\im  \tau_c- \delta\}, \
 K^{-}_{*,\delta}=\{\tau \ : \ \tau\in K_*^-,   \     \im\tau>\im \bar{\tau}_c+ \delta  \}  ,\\
  K_{*,\delta}&=K^{+}_{*,\delta}\cup K^{-}_{*,\delta}.   
 \end{aligned}
 \end{equation}
 \vskip -0.3cm
  ({\rm Fig.\ \ref{domain_K}})
 \begin{figure}[H]
\begin{center}
            \includegraphics[scale=0.4, angle=0.0]{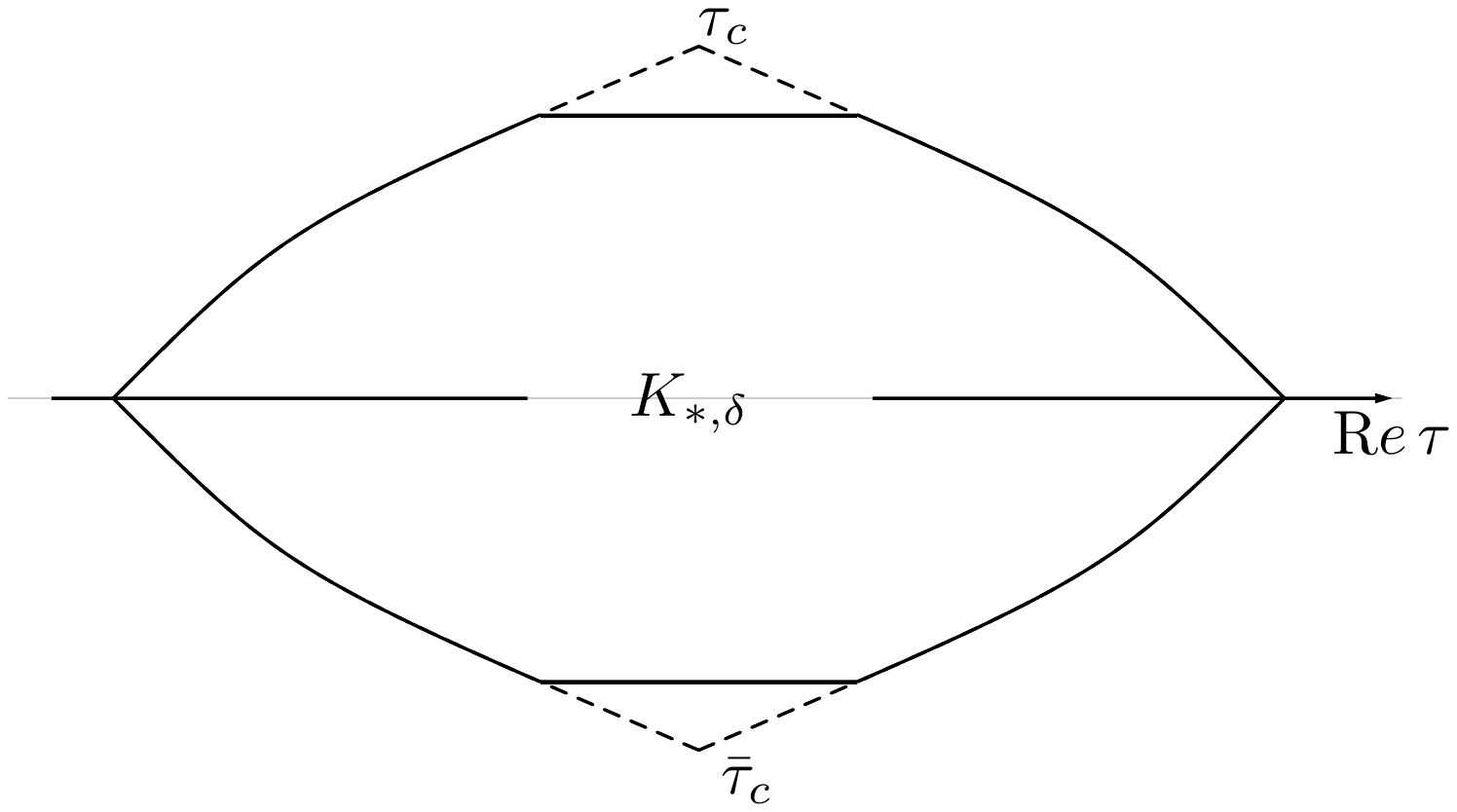}
            \end{center}
           \caption{Slow time domain $ K_{*,\delta}$}
            \label{domain_K}
\end{figure}


  Here $\delta=C_{\delta}\eps^{2/3}$, where $C_{\delta}$ is a sufficiently large  constant whose value will be defined later, at the end of Section \ref{s_transformations}. 
 
  Denote
 \begin{equation}
  B_{*,{\delta}}^{\pm}=\{\kappa \ : \   \kappa=\dK(\tau), \tau\in K^{\pm}_{*,\delta}\}, \ B_{*,{\delta}}=B_{*,{\delta}}^{+}\cup B_{*,{\delta}}^{-} \, .
\end{equation}
\begin{lem}
\label{lem_implicit}
If $C_{\delta}>c_{l,3}$, then in a $c_{l,4}^{-1}\eps^{2/3}$-neighbourhood of $ B_{*,{\delta}}$ (in the complex $\kappa$-space)  the equilibrium $X(\kappa)$  of the fast system is well defined and is an analytic  function of $\kappa$. 
\end{lem}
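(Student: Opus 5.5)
The plan is to apply the analytic implicit function theorem to $f(x,\kappa,0)=0$, with quantitative control of the constants. The only degeneracy of the Jacobian $A(\kappa)=\partial f(X(\kappa),\kappa,0)/\partial x$ in the relevant region comes from $\lambda_1$ (near $\tau_c$) or $\lambda_2$ (near $\bar\tau_c$) approaching zero. By Lemma \ref{FG}, the smallest eigenvalue in modulus behaves like $|\tau-\tau_c|^{1/2}$. On $B_{*,\delta}$ we have $|\tau-\tau_c|\ge\delta=C_\delta\eps^{2/3}$, so the inverse Jacobian satisfies $|A^{-1}|\le C|\tau-\tau_c|^{-1/2}$. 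This allows continuation into a $\kappa$-neighbourhood of size comparable to $|\tau-\tau_c|\ge C_\delta\eps^{2/3}$, which is larger than $c_{l,4}^{-1}\eps^{2/3}$.

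\textbf{Step 1 (away from $\tau_c,\bar\tau_c$).} Fix a small $r_0>0$. On $\{\tau\in K_*^{\pm}: |\tau-\tau_c|\ge r_0,\ |\tau-\bar\tau_c|\ge r_0\}$, conditions 1)--3) and E give that $X(\dK(\tau))$ is analytic and $A(\dK(\tau))$ is invertible with uniformly bounded inverse (compactness, using C for the closure). By the analytic implicit function theorem, $X(\kappa)$ then extends analytically and single-valuedly to a fixed, $\eps$-independent neighbourhood of this part of $B_{*,\delta}$.

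\textbf{Step 2 (near $\tau_c$).} I will work in the coordinates $(z_1,z_2)$ of (\ref{de_z1}). First, solve $\dot z_2=0$ for $z_2=h(z_1,\kappa)$ by the implicit function theorem; this is possible because the block of $A_c$ acting on $z_2$ is invertible by D. This reduces the problem to the scalar equation
$$F(z_1,\kappa)=b\cdot(\kappa-\kappa_c)+az_1^2+O(|z_1|^3+|\kappa-\kappa_c||z_1|+|\kappa-\kappa_c|^2)=0.$$
Take $\kappa_0=\dK(\tau)$ with $\tau\in K^+_{*,\delta}$ and $\rho:=|\tau-\tau_c|\le r_0$. By Lemma \ref{FG} and G, the root satisfies $Z_1\sim\rho^{1/2}$, the derivative satisfies $|\partial F/\partial z_1|=|2aZ_1+O(\rho)|\ge c\rho^{1/2}$, and $|b\cdot(\kappa_0-\kappa_c)|\sim\rho$. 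For $|\kappa-\kappa_0|\le\eta\rho$ with $\eta$ small, I will apply Rouch\'e's theorem (or a Newton--Kantorovich estimate) on the disc $|z_1-Z_1|\le c'\rho^{1/2}$. The perturbation of $F$ there is $O(\eta\rho)$, while $|F|\ge c''\rho$ on the boundary circle. Hence exactly one root persists, and it depends analytically on $\kappa$. The other root lies near $-Z_1$, at distance $\sim\rho^{1/2}$, so it stays separated and no branching occurs; the root selected is the continuation of the branch fixed on $K_*$. The same argument handles $\bar\tau_c$, using $\lambda_2$.

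\textbf{Step 3 (gluing).} Since $\rho\ge\delta=C_\delta\eps^{2/3}$, the neighbourhood radius from Step 2 is $\eta\rho\ge\eta C_\delta\eps^{2/3}$. Taking $c_{l,4}^{-1}\le\eta C_\delta$, with $C_\delta>c_{l,3}$ large enough to also keep the neighbourhood inside $W$ and away from the Puiseux branch point, gives the local statements. The local continuations agree on overlaps by uniqueness of the root in the corresponding discs, so $X(\kappa)$ is a well-defined single-valued analytic function on the whole $c_{l,4}^{-1}\eps^{2/3}$-neighbourhood.

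\textbf{Main obstacle.} The hardest part is single-valuedness. Near $\tau_c$, $\dK$ is injective to leading order, since $\dK(\tau)\approx\kappa_c+g_c(\tau-\tau_c)$ with $g_c\neq0$ by G. However, the $\eps^{2/3}$-neighbourhood of $B_{*,\delta}$ in $\kappa$-space must not encircle the branch locus $b\cdot(\kappa-\kappa_c)+\dots=0$. This will be secured by observing that $K^+_{*,\delta}$ lies below the line $\im\tau=\im\tau_c-\delta$, so it stays on one side of that locus, and then choosing $C_\delta$ large.
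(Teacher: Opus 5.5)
Your proposal is correct and follows the paper's route. The paper applies the quantitative implicit function theorem directly to the full system, using $|\det \partial f(x_*,\kappa_*,0)/\partial x|>k_1^{-1}|\kappa_*-\kappa_c|^{1/2}$ at $\kappa_*=\dK(\tau)\in B_{*,\delta}$; this admits a $\kappa$-radius of order $\eps^{2/3}$ with $|X(\kappa)-x_*|=O(\eps^{1/3})$, hence $(X(\kappa),\kappa)\in W$. Your reduction to the scalar $z_1$-equation with Rouch\'e is just a more explicit version of the same estimate near $\kappa_c$.

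The paper does not address single-valuedness at all, and your remark on it is right in spirit. However, a complex hypersurface has no ``sides'', so the precise statement is this: on the neighbourhood, the discriminant (to leading order a nonzero multiple of $b\cdot(\kappa-\kappa_c)$) omits a ray from the origin. Hence its square root, and therefore the selected root $z_1$, has a single-valued branch there.
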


The proof is a direct application of the Implicit Function Theorem with quantitative estimates. For $\kappa_*\in B_{*,{\delta}}, \, x_*=X(\kappa_*)$ we have 
$|{\det}\left(\partial f(x_*,\kappa_*,0)/\partial x\right)|>k_1^{-1}|\kappa_* -\kappa_c|^{1/2}$. Hence, for $\kappa$ satisfying
$|\kappa-\kappa_*|<k_2^{-1}\eps^{2/3}$, there exists an equilibrium $X(\kappa)$ of the fast system such  that
$|X(\kappa)-x_*| <k_3|\kappa_* -\kappa_c|^{-1/2}|\kappa-\kappa_*|<k_4\eps^{1/3}$, i.e. $(X(\kappa), \kappa)\in W$. We omit the detailed proof.

\medskip
We denote the  $c_{l,4}^{-1}\eps^{2/3}$-neighbourhoods of $ B_{*,{\delta}}^{\pm}$ by   $ V_{{\delta}}^{\pm}$,
and denote $ V_{{\delta}}=V_{{\delta}}^{+}\cup V_{{\delta}}^{-}$. 

Thus, the slow system (\ref{slow}) is well defined for $\kappa\in V_{{\delta}}$.
\begin{lem}
The solution $\dK$ of the slow system (\ref{slow}) can be continued to a\\  $c_{l,5}^{-1}\eps^{2/3}$-neighbourhoods of
the domains $K^{\pm}_{*,\delta}$\, .
\end{lem}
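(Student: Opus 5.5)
We continue $\dK$ by solving the slow system (\ref{slow}) in complex time, $d\kappa/d\tau=G(\kappa)$ with $G(\kappa):=g(X(\kappa),\kappa,0)$, along paths issuing from points of $K^{\pm}_{*,\delta}$. By Lemma \ref{lem_implicit}, $X(\kappa)$, and hence $G$, is analytic in $V_\delta$. This set is the $c_{l,4}^{-1}\eps^{2/3}$-neighbourhood of $B_{*,\delta}$, and $B_{*,\delta}$ is the image of $K_{*,\delta}$ under $\dK$. It therefore suffices to show that, starting from $\kappa=\dK(\tau_1)$ with $\tau_1\in K^{\pm}_{*,\delta}$ and moving $\tau$ a distance at most $c_{l,5}^{-1}\eps^{2/3}$ along a straight segment, the solution stays inside $V_\delta$. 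Then the standard continuation theorem for analytic ODEs gives an analytic extension. Because different starting points give the same germ on overlaps (uniqueness on the connected overlap with $K^{\pm}_{*,\delta}$), the extension is single valued on the neighbourhood.

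The quantitative step is a bound on $G$. We want $|G(\kappa)|\le k_5$ uniformly for $\kappa\in V_\delta$, with $k_5$ independent of $\eps$. Once this holds, along the segment from $\tau_1$ to $\tau$ we get $|\kappa(\tau)-\dK(\tau_1)|\le k_5|\tau-\tau_1|\le k_5 c_{l,5}^{-1}\eps^{2/3}$. This is less than $c_{l,4}^{-1}\eps^{2/3}$ as soon as $c_{l,5}>k_5c_{l,4}$. A bootstrap argument then applies: the first exit time from $V_\delta$ cannot occur before the segment ends, so the solution exists on the whole segment.

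The bound on $G$ follows from the implicit function estimate sketched after Lemma \ref{lem_implicit}. For $\kappa$ within $c_{l,4}^{-1}\eps^{2/3}$ of $\kappa_*\in B_{*,\delta}$ we have $|X(\kappa)-X(\kappa_*)|<k_4\eps^{1/3}$. Hence $(X(\kappa),\kappa)\in W$, where $g$ is analytic and bounded on the closure. This gives a uniform bound on $G$.

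The main subtlety, and where I would be careful, is near the edges of $K^{\pm}_{*,\delta}$ at distance about $\delta=C_\delta\eps^{2/3}$ from $\tau_c$. There $\dK'$ is still bounded, since by Lemma \ref{FG} it equals $g_c+O(|\tau-\tau_c|^{1/2})$. However, $X$ varies fast: $|dX/d\kappa|\sim|\kappa-\kappa_c|^{-1/2}\sim\eps^{-1/3}$. One must check that the $\eps^{2/3}$-neighbourhood does not reach the branch point. Since $C_\delta>c_{l,3}$ is large, $|\kappa-\kappa_c|\ge k\,C_\delta\eps^{2/3}-c_{l,4}^{-1}\eps^{2/3}>0$, so the correct branch of $X$ is followed. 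This is exactly what Lemma \ref{lem_implicit} guarantees.
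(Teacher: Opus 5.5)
Your proposal is correct and follows essentially the route the paper intends; the paper just cites the existence and uniqueness theorem with estimates for analytic ODEs and omits the proof. Your uniform bound $|G|\le k_5$ on $V_\delta$ (from Lemma \ref{lem_implicit} and $(X(\kappa),\kappa)\in W$), combined with the choice $c_{l,5}>k_5c_{l,4}$, is exactly the quantitative content of that theorem: the existence radius is $b/M$ with $b=c_{l,4}^{-1}\eps^{2/3}$ and $M=k_5$.
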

The proof is a direct application of the Existence and Uniqueness Theorem with estimates for analytic ODEs. We omit the  proof.

\medskip
Denote by $\hat K^{\pm}_{*,\delta}$ the parts of  $c_{l,5}^{-1}\eps^{2/3}$-neighbourhoods of
the domains $K^{\pm}_{*,\delta}$ for which\\ $\im  \bar \tau_c+ \delta    < \im\tau<\im  \tau_c- \delta$. Denote $\hat K_{*,\delta} = \hat K^{+}_{*,\delta}\cup \hat K^{-}_{*,\delta}$.

\medskip

Denote 
\begin{equation}
\begin{aligned}
 W_{\delta}=\{x,\kappa \ : \  (x,\kappa)\in  W, \ \kappa \in  V_{{\delta}}  \}, \quad  W_{\delta}^{\pm}=\{x,\kappa \ : \  (x,\kappa)\in  W, \ \kappa \in  V_{{\delta}}^{\pm} \}.
\end{aligned}
\end{equation}
Denote $\tilde d_+=\tilde d_+(\kappa)=  b\cdot(\kappa-\kappa_c), \  d_+=d_+(\kappa)=|\tilde d_+|, \  \tilde d_-=\tilde d_-(\kappa)=b\cdot(\kappa-\bar \kappa_c),  \\ d_-=d_-(\kappa)=|\tilde d_-|  $. 
\begin{lem}
\label{lem_for_domain_V} 
If the constant $c_{l,4}$ in Lemma \ref{lem_implicit} is chosen  sufficiently large, then  \\  $c_{l,6}^{-1}d_{+}<|\kappa-\kappa_c| < c_{l,6}d_{+}$   in $V_{\delta}^{+}$, and  $c_{l,6}^{-1}d_{-}<|\kappa-\bar \kappa_c| < c_{l,6}d_{-}$   in $V_{\delta}^{-}$.
 \end{lem}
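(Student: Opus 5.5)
The claim is that $|\kappa-\kappa_c|$ and $d_+=|b\cdot(\kappa-\kappa_c)|$ are comparable on $V_\delta^+$; the statement for $V_\delta^-$ then follows by complex conjugation. The upper bound $d_+\le |b|\,|\kappa-\kappa_c|$ is trivial, so only the lower bound $|b\cdot(\kappa-\kappa_c)|\ge c^{-1}|\kappa-\kappa_c|$ needs work. For this I will use that $\kappa$ lies close to the curve $\dK(\tau)$, which near $\tau_c$ is transversal to the hyperplane $b\cdot(\kappa-\kappa_c)=0$ by condition G.

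\textbf{Near $\tau_c$.} I first treat points $\kappa=\dK(\tau)$ with $\tau\in K^+_{*,\delta}$ and $|\tau-\tau_c|<r_0$, with $r_0$ a small fixed constant. By Lemma \ref{FG}, $\dK(\tau)-\kappa_c=g_c(\tau-\tau_c)+O(|\tau-\tau_c|^{3/2})$. Hence
\[
b\cdot(\dK(\tau)-\kappa_c)=(b\cdot g_c)(\tau-\tau_c)+O(|\tau-\tau_c|^{3/2}).
\]
Since $b\cdot g_c\neq 0$ by G, for $r_0$ small this gives $d_+\ge \tfrac12|b\cdot g_c|\,|\tau-\tau_c|$, while $|\dK(\tau)-\kappa_c|\le 2|g_c|\,|\tau-\tau_c|$. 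So the comparability holds on the curve itself.

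Next take a general point $\kappa\in V_\delta^+$ with $|\kappa-\dK(\tau)|<c_{l,4}^{-1}\eps^{2/3}$. Every $\tau\in K^+_{*,\delta}$ satisfies $\im\tau<\im\tau_c-\delta$, so $|\tau-\tau_c|\ge\delta=C_\delta\eps^{2/3}$. The perturbation $\kappa-\dK(\tau)$ therefore changes both $|\kappa-\kappa_c|$ and $d_+$ by at most $\max(1,|b|)\,c_{l,4}^{-1}\eps^{2/3}\le \max(1,|b|)\,c_{l,4}^{-1}C_\delta^{-1}|\tau-\tau_c|$. Choosing $c_{l,4}$ large makes this a small fraction of $\tfrac12|b\cdot g_c|\,|\tau-\tau_c|$ (indeed $C_\delta>c_{l,3}$ already bounds $C_\delta$ below). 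Then both quantities are bounded above and below by constants times $|\tau-\tau_c|$, which gives the claim with a suitable $c_{l,6}$. This is exactly where the hypothesis that $c_{l,4}$ is sufficiently large is used.

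\textbf{Away from $\tau_c$.} It remains to treat $\tau\in K^+_{*,\delta}$ with $|\tau-\tau_c|\ge r_0$. Here I need $\dK(\tau)$ to stay a fixed distance from both $\kappa_c$ and the hyperplane $b\cdot(\kappa-\kappa_c)=0$. This is the step I expect to be the real obstacle, because it is a global statement about the slow solution rather than a local expansion.

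I would argue as follows. By condition D, $\tau_c$ is the unique point of $K_*\cup\partial K_*$ where $\lambda_1(\dK)=0$. In a small neighbourhood $U$ of $\kappa_c$, Lemma \ref{FG} (via the Weierstrass reduction) shows that the fast system has an equilibrium with a zero eigenvalue only on the set $b\cdot(\kappa-\kappa_c)+O(|\kappa-\kappa_c|^2)=0$, a perturbation of the hyperplane. If $d_+(\dK(\tau))$ could be arbitrarily small with $\dK(\tau)$ staying in $U$, then by compactness of $\overline{K_*^+}$ and continuity of $\dK$ (condition C), a limit point $\tau'\neq\tau_c$ would have $\lambda_1(\dK(\tau'))=0$, contradicting D. If instead $\dK(\tau)$ leaves $U$, then $|\dK(\tau)-\kappa_c|$ is bounded below, and the claimed inequality is only needed locally.

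So I would read the lemma as concerning the relevant neighbourhood of $\kappa_c$. In practice I would restrict $V_\delta^+$ to $|\kappa-\kappa_c|<r_0'$ where $d_+$ is used in the sequel, and state this explicitly. With that restriction the compactness argument closes, and the constant $c_{l,6}$ is the maximum of the local and compactness constants.
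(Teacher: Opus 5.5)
Your near-$\tau_c$ step is exactly the paper's proof. It expands $b\cdot(\dK(\tau)-\kappa_c)=(b\cdot g_c)(\tau-\tau_c)+O(|\tau-\tau_c|^{3/2})$ via Lemma \ref{FG}, uses G to get $|\dK(\tau)-\kappa_c|\sim d_+\sim|\tau-\tau_c|$, and then uses $|\tau-\tau_c|>\delta=C_\delta\eps^{2/3}$ with $c_{l,4}$ large to pass to the $c_{l,4}^{-1}\eps^{2/3}$-neighbourhood. The paper does not treat $|\tau-\tau_c|\ge r_0$ separately. It reads the two-sided bounds off the same expansion on all of $K^+_{*,\delta}$, although the expansion gives them only for small $|\tau-\tau_c|$. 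You are right that the far region needs an argument, but the compactness argument you give does not supply it.

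The failing step is the passage from $d_+\to0$ to a zero eigenvalue. From $d_+(\dK(\tau_n))\to0$ you only obtain a limit point $\tau'$ with $\dK(\tau')$ on the hyperplane $\mathcal P=\{b\cdot(\kappa-\kappa_c)=0\}$. Conditions 3), 5) and D exclude only the set $\Sigma$ where the equilibrium degenerates. Near $\kappa_c$, and for the branch near $x_c$, $\Sigma=\{b\cdot(\kappa-\kappa_c)+O(|\kappa-\kappa_c|^2)=0\}$ is tangent to $\mathcal P$ at $\kappa_c$ but is a different hypersurface. For instance, for $\dot z_1=\kappa^1+h(\kappa^2)+az_1^2$ with $\kappa_c=(0,\kappa^2_c)$ and $h(\kappa^2_c)=h'(\kappa^2_c)=0$, one has $b=(1,0)$. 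Then $d_+=0$ at $(0,\kappa^2)$ with $h(\kappa^2)\ne0$, where the equilibria $z_1=\pm(-h(\kappa^2)/a)^{1/2}$ are non-degenerate.

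There is a second problem. For $\tau'$ far from $\tau_c$, $X(\dK(\tau'))$ may lie on a different equilibrium branch even when $\dK(\tau')$ is near $\kappa_c$, so the local description of $\Sigma$ does not apply. Since $\dK$ need not be injective, cutting $V^+_\delta$ down to $|\kappa-\kappa_c|<r_0'$ does not remove such $\tau'$. So your claim that ``with that restriction the compactness argument closes'' is not justified.

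In fact nothing in A--H prevents the holomorphic function $\tau\mapsto b\cdot(\dK(\tau)-\kappa_c)$ from having a second zero in $\overline{K^+_*}$. You have two clean options:
\begin{itemize}
\item Restrict in the $\tau$-parametrisation, to the $c_{l,4}^{-1}\eps^{2/3}$-neighbourhood of $\dK(\{\tau\in K^+_{*,\delta}:|\tau-\tau_c|<r_0\})$. There your local step is already a complete proof.
\item Add the hypothesis $b\cdot(\dK(\tau)-\kappa_c)\ne0$ on $\overline{K^+_*}\setminus\{\tau_c\}$. The later $d_+$-weighted estimates on all of $W^+_\delta$, e.g.\ $\hat d_\pm>k_1^{-1}\eps^{2/3}$ in Lemma \ref{K_and_K_eps}, tacitly need this. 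Compactness and C then give $d_+\ge c>0$ for $|\tau-\tau_c|\ge r_0$. The lemma follows because $|\kappa-\kappa_c|$ is bounded there and the $O(\eps^{2/3})$ perturbation is negligible.
\end{itemize}

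Finally, complex conjugation does not give the $V^-_\delta$ statement as literally written. It turns the $V^+_\delta$ statement into comparability of $|\kappa-\bar\kappa_c|$ with $|\bar b\cdot(\kappa-\bar\kappa_c)|$, not with $|b\cdot(\kappa-\bar\kappa_c)|$ as $d_-$ is defined. The literal version would need $b\cdot\bar g_c\ne0$, which G does not give when $m\ge2$. The $\bar b$ version is evidently the intended one, since $\bar b$ is the coefficient of the local expansion at $\bar\kappa_c$, but you should say so.
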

 
 In what follows we take such  $c_{l,4}$. We choose $c_{l,5}$ such that the solution  $\dK$ of the slow system with $\tau$ from the  $c_{l,5}^{-1}\eps^{2/3}$-neighbourhood of
the domain $K_{*,\delta}$ does not leave $ V_{{\delta}}$.
 
 \section{Transformations of variables}
 \label{s_transformations}
 For $(x,\kappa) \in W_{\delta}$,  introduce  $\xi=x-X(\kappa)$ as a new variable. In the statements and proofs of lemmas in this Section we  use, in addition to the notation $O(\cdot)$, the notation $O^*(\cdot)$, which  indicates that the function inside the  $O$-symbol is a homogeneous polynomial in $\xi$  of a specified degree. For example, $O^*(d_+^{-1/2})$ and  $O^*(|\xi|)$, are used, respectively,  for terms that are independent of $\xi$ and terms that are linear in $\xi$, etc.  

 
 \begin{lem}
\label{lem_transform_0} 
(a) For $C_{\delta}>c_{t,1}$, if the constant $c_{l,4}$ in Lemma \ref{lem_implicit} is chosen sufficiently large, then, in the variables $\xi,\kappa$, the system (\ref{perturbed}) in the domain $W^+_{\delta}$ takes the form
 \begin{eqnarray}
\label{before_transform_0}
&    \dot\xi
   = \left({\cal {A}}(\kappa)+ \eps O^*(d_+^{-1/2})\right)\xi+O(|\xi|^2)+\eps O^*(d_+^{-1/2}), \\
     &\dot\kappa
   = \eps G(\kappa)+\eps O(|\xi|)+O^*(\eps^2), & \\
\nonumber
&    {\cal {A}}
   = \partial f(X(\kappa),\kappa,0)/\partial x,  \quad
     G
   = g(X(\kappa),\kappa,0).  &
\end{eqnarray}
Differentiation with respect to $\kappa$ of  $O$-terms  that do not contain explicitly $d_+$  multiplies  estimates  of these terms by $O(d_{+}^{-1/2})$.    Differentiation with respect to $\kappa$ of  $O$-terms  that  contain explicitly $d_+$  multiplies estimates of these terms by $O(d_{+}^{-1})$.
Derivative of
$G$  with respect to $\kappa$ is $O(d_{+}^{-1/2})$.
 
 
 (b) More accurate component-wise estimates are valid. For 
 $(x,\kappa) \in W^+_{\delta} $, decompose $\xi$  into projections onto the invariant subspace of the  matrix $A_c$  corresponding to the eigenvalue 0 (the first component of $\xi$) and onto the invariant subspace corresponding to all other eigenvalues. Then $ O^*(d_+^{-1/2})$ in the last term of the first equation in (\ref{before_transform_0}) can be replaced by $O^*(1)$  for all components of $\dot \xi$ except the first.

 In the domain $ W^-_{\delta}$, analogous  estimates hold, with  $d_+$ replaced by $d_-$.

\end{lem}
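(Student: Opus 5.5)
We substitute $x=X(\kappa)+\xi$ directly into (\ref{perturbed}) and Taylor-expand around the equilibrium. The key quantitative input is how fast $X(\kappa)$ varies near the branch point. We work in $W^+_\delta$; the case $W^-_\delta$ is the complex conjugate one, with $\kappa_c$ replaced by $\bar\kappa_c$ and $d_+$ by $d_-$.

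\textbf{Step 1: derivative bounds for $X$.} Lemma \ref{lem_implicit} makes $X(\kappa)$ analytic on $V^+_\delta$. The implicit function theorem gives $\partial X/\partial\kappa=-\mathcal A^{-1}\,\partial f/\partial\kappa$. By Lemma \ref{FG} the only small eigenvalue of $\mathcal A$ is $\lambda_1\approx 2aZ_1$, and $|Z_1|\sim d_+^{1/2}$ by Lemma \ref{FG} together with Lemma \ref{lem_for_domain_V}. Moreover $\lambda_1$ is separated from the other eigenvalues by condition D, so the spectral projections of $\mathcal A$ are uniformly bounded. Hence
$$\mathcal A^{-1}=O(d_+^{-1/2}) \ \text{on the $z_1$-direction}, \qquad \mathcal A^{-1}=O(1) \ \text{on the complementary invariant subspace}.$$
This gives $\partial X/\partial\kappa=O(d_+^{-1/2})$. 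For higher derivatives, we differentiate the identity $f(X(\kappa),\kappa,0)=0$ repeatedly, or equivalently apply Cauchy estimates on discs of radius $\sim d_+$. The latter is admissible because $d_+\gtrsim\delta=C_\delta\eps^{2/3}$ and the neighbourhood in Lemma \ref{lem_implicit} has radius $c_{l,4}^{-1}\eps^{2/3}$, comparable to $d_+$ once $C_\delta$ and $c_{l,4}$ are large. The result is that each further $\kappa$-derivative costs a factor $d_+^{-1}$, while derivatives of $f,g$ at points $(X(\kappa),\kappa)$ cost $|\partial X/\partial\kappa|=O(d_+^{-1/2})$. These two rules are exactly the differentiation claims in the statement.

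\textbf{Step 2: the equations.} Differentiating $\xi=x-X(\kappa)$ gives
$$\dot\xi=f(X+\xi,\kappa,\eps)-\frac{\partial X}{\partial\kappa}\,\dot\kappa .$$
We expand $f$ in $\xi$ as
$$f(X+\xi,\kappa,\eps)=f(X,\kappa,0)+\mathcal A\xi+O(|\xi|^2)+\eps\bigl(O(1)+O(|\xi|)\bigr),$$
using $f(X,\kappa,0)=0$. Similarly
$$\dot\kappa=\eps g(X+\xi,\kappa,\eps)=\eps G+\eps O(|\xi|)+O(\eps^2),$$
which is the second equation. Substituting this into $-(\partial X/\partial\kappa)\dot\kappa$ and using Step 1 produces:
\begin{itemize}
\item an inhomogeneous term $\eps O^*(d_+^{-1/2})$;
\item a linear correction $\eps O^*(d_+^{-1/2})\xi$;
\item terms $\eps d_+^{-1/2}O(|\xi|^2)$, which are absorbed into $O(|\xi|^2)$ since $\eps d_+^{-1/2}\le \eps^{2/3}$ is small;
\item an $O(\eps^2 d_+^{-1/2})$ remainder, absorbed into the inhomogeneous term.
\end{itemize}
The bound $G'=O(d_+^{-1/2})$ follows from $G=g(X(\kappa),\kappa,0)$ and Step 1.

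\textbf{Step 3: component-wise refinement (part b).} We apply the projection onto the complementary invariant subspace of $A_c$ to $-(\partial X/\partial\kappa)g$. The large factor $d_+^{-1/2}$ in $\partial X/\partial\kappa=-\mathcal A^{-1}\partial f/\partial\kappa$ comes only from the eigendirection of $\lambda_1$. That eigendirection differs from the $z_1$-axis of $A_c$ by $O(|\kappa-\kappa_c|^{1/2})=O(d_+^{1/2})$. Hence the non-first components of $\partial X/\partial\kappa$ are
$$O(1)+O(d_+^{1/2})\cdot O(d_+^{-1/2})=O(1).$$
This replaces $O^*(d_+^{-1/2})$ by $O^*(1)$ in the inhomogeneous term for those components.

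\textbf{Main obstacle.} The delicate step is making the $\mathcal A^{-1}$ estimates uniform up to the edge $d_+\sim C_\delta\eps^{2/3}$, including the perturbation of the eigenprojection used in Step 3. This requires the constants $c_{t,1}$ and $c_{l,4}$ to be chosen so that the Cauchy discs stay inside the analyticity domain. We would handle it via a Weierstrass-preparation normal form near $\kappa_c$, as in the proof of Lemma \ref{FG}.
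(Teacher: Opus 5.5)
Your argument is correct. Its overall structure is the paper's: substitute $x=X(\kappa)+\xi$, expand, and observe that everything is controlled by the bounds $\partial X/\partial\kappa=O(d_+^{-1/2})$ in the first component and $O(1)$ in the others. You differ in how you obtain these bounds.

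The paper does a Lyapunov--Schmidt type elimination. It solves $\dot z_2=0$ as $z_2=h(z_1,\kappa)$, where $h=O(|z_1|^2+|\kappa-\kappa_c|)$, and reduces to the scalar equation for the square-root branch $Z_1$. It gets $\partial Z_1/\partial\kappa=O(d_+^{-1/2})$ on the slow curve from Lemma~\ref{FG}, then transfers this to nearby $\kappa\in V_\delta$ by a closest-point perturbation. Part (b) then follows from $\partial Z_2/\partial\kappa=\partial_{z_1}h\,\partial Z_1/\partial\kappa+\partial_\kappa h$ with $\partial_{z_1}h=O(d_+^{1/2})$.

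You instead use $\partial X/\partial\kappa=-\mathcal A^{-1}\partial_\kappa f$ with a spectral splitting of $\mathcal A^{-1}$. The factor $\lambda_1^{-1}=O(d_+^{-1/2})$ appears only on the $\lambda_1$-eigenline, whose direction is $O(d_+^{1/2})$-close to the $z_1$-axis. This is the same cancellation seen in different coordinates. Your version works pointwise on all of $V^+_\delta$ once $|\lambda_1|\gtrsim d_+^{1/2}$ there. It also gives the higher-derivative rules mechanically by differentiating $f(X(\kappa),\kappa,0)=0$, which the paper leaves implicit. The paper's version keeps the explicit square-root structure of $Z_1$, which is reused later for the Riccati model.

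One correction: your justification of the Cauchy-estimate alternative is wrong as stated. The neighbourhood in Lemma~\ref{lem_implicit} has thickness $c_{l,4}^{-1}\eps^{2/3}$. This is much smaller than $d_+$ except at the edge $d_+\sim C_\delta\eps^{2/3}$, and enlarging $C_\delta$ and $c_{l,4}$ only makes the mismatch worse. Discs of radius $\sim d_+$ are admissible only because the chosen branch of $X$ continues analytically off $V^+_\delta$, into the complement of the branch locus $\{\lambda_1=0\}$. Near $\kappa_c$ that locus is given, up to a nonzero factor, by $b\cdot(\kappa-\kappa_c)+O(|\kappa-\kappa_c|^2)=0$. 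By Lemma~\ref{lem_for_domain_V} it therefore lies at distance comparable to $d_+$. Since differentiating the identity already yields every differentiation rule in the statement, you can simply drop that remark.
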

Here and below, the estimates of the $O$-terms are uniform in $C_{\delta}$ unless stated otherwise. In what follows, we assume that the constant $c_{l,4}$ is chosen as required by Lemma \ref{lem_transform_0}.

  \begin{lem}
\label{lem_transform_00}
For $C_{\delta}>c_{t,2}$, one can make in system (\ref{before_transform_0}) a real-analytic transformation of variables
$\xi =C(\kappa)\tilde \xi$ such that the matrix $\tilde{\cal {A}}=C^{-1}{\cal {A}}C$ has the block-diagonal form with blocks $\tilde{\cal {A}}_0$ of size $2\times2$ and $\tilde{\cal {A}}_1$ of size $(n-2)\times (n-2)$.  The first block has the form 
$$
\tilde{\cal {A}}_0= \frac12
      \begin{pmatrix}
            \lambda_1+\lambda_2 & i(\lambda_1-\lambda_2 ) \\
          i(\lambda_2-\lambda_1)& \lambda_1+\lambda_2
      \end{pmatrix} .
$$
In the new variables, system (\ref{perturbed})  in  domain $ W^+_{\delta}$  takes the following form (we omit   the ``tildes'' over the new variables)
\begin{equation}
\begin{aligned}
\label{after_0_transform}
  & \dot\xi
  = \left(\tilde{\cal {A}}(\kappa)+\eps O^*(d_+^{-1/2}) \right) \xi+O(|\xi|^2) +\eps O^*(d_{+}^{-1/2}), \quad\\
    &\dot\kappa
  =  \eps G(\kappa)+\eps O(|\xi|)+O^*(\eps^2).
\end{aligned}
 \end{equation}
  
  Differentiation with respect to $\kappa$ of  $O$-terms  that do not contain explicitly $d_+$  multiplies estimates of these terms by $O(d_{+}^{-1/2})$.    Differentiation with respect to $\kappa$ of  $O$-terms  that  contain explicitly $d_+$  multiplies estimates of these terms by $O(d_{+}^{-1})$.

  In the domain $W^-_{\delta}$, similar estimates hold with $d_+$ replaced by $d_-$ and with $z$ and $w$ interchanged.
  


\end{lem}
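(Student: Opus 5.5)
The transformation is built from eigenvectors of ${\cal A}(\kappa)$ along the slow solution, and then the effect on the $O$-terms of (\ref{before_transform_0}) is tracked. By Lemma \ref{lem_implicit}, condition E (condition 4), and condition D, on $V^+_\delta$ the eigenvalue $\lambda_1$ is simple and separated from $\lambda_2,\dots,\lambda_n$ by a quantity bounded below by a constant. Indeed, only $\lambda_1$ approaches $0$ near $\kappa_c$, while the other eigenvalues stay away from $0$. Hence the spectral projectors of ${\cal A}(\kappa)$ onto the eigenline of $\lambda_1$, the eigenline of $\lambda_2$, and the invariant subspace of $\lambda_3,\dots,\lambda_n$ are analytic in $\kappa$ (Riesz integrals over fixed contours). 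Their $\kappa$-derivatives are controlled by the derivative of ${\cal A}(\kappa)=\partial f(X(\kappa),\kappa,0)/\partial x$. This derivative is $O(d_+^{-1/2})$ because $\partial X/\partial\kappa=O(d_+^{-1/2})$; this follows from the Puiseux structure of Lemma \ref{FG} together with Lemma \ref{lem_for_domain_V}.

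I choose eigenvectors $e_1(\kappa)$, $e_2(\kappa)$ for $\lambda_1,\lambda_2$, normalised so that $e_2(\bar\kappa)=\overline{e_1(\kappa)}$, and a real-analytic basis of the stable invariant subspace. The matrix $C_1=(e_1,e_2,\dots)$ diagonalises the $2\times2$ part to ${\rm diag}(\lambda_1,\lambda_2)$. I then compose with the constant matrix $T=\frac{1}{\sqrt2}\begin{pmatrix}1&i\\ i&1\end{pmatrix}$ (or a similar one), chosen so that $T\,{\rm diag}(\lambda_1,\lambda_2)\,T^{-1}=\tilde{\cal A}_0$ as displayed. This should be checked directly: $\tilde{\cal A}_0$ has eigenvalues $\lambda_{1,2}$ with eigenvectors $(1,\mp i)$. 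For real $\kappa$ the resulting $C=C_1T^{-1}$ is real, so $C$ is real-analytic. Taking the normalisation of $e_1$ to be analytic near $\kappa_c$ (e.g. fixing one component to equal the $z_1$-direction of $A_c$) keeps $C$ and $C^{-1}$ bounded uniformly in $V^+_\delta$. The required bound is $\partial C/\partial\kappa=O(d_+^{-1/2})$.

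Substituting $\xi=C(\kappa)\tilde\xi$ gives
\[
\dot{\tilde\xi}=C^{-1}{\cal A}C\tilde\xi-C^{-1}\frac{\partial C}{\partial\kappa}\dot\kappa\,\tilde\xi+C^{-1}\bigl(O(|\xi|^2)+\eps O^*(d_+^{-1/2})\bigr).
\]
Using the second equation of (\ref{before_transform_0}), $\dot\kappa=\eps G+\eps O(|\xi|)+O^*(\eps^2)$. The term $C^{-1}C_\kappa\dot\kappa\tilde\xi$ therefore contributes $\eps O^*(d_+^{-1/2})\tilde\xi$, plus $\eps O(d_+^{-1/2}|\xi|^2)$. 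The latter can be absorbed into $O(|\xi|^2)$, since $\eps d_+^{-1/2}\le \eps^{2/3}C_\delta^{-1/2}$ on $V^+_\delta$; this is where the condition $C_\delta>c_{t,2}$ enters. The $\dot\kappa$ equation is unchanged except that $O(|\xi|)=O(|\tilde\xi|)$, because $C$ is bounded. The derivative rules are inherited: each $\kappa$-derivative of $C$ or $C^{-1}$ costs $O(d_+^{-1/2})$, and $\kappa$-derivatives of explicit $d_+$-factors cost $O(d_+^{-1})$. The case of $W^-_\delta$ follows by complex conjugation, which interchanges the roles of $\lambda_1,\lambda_2$ and hence the two components, here called $z,w$.

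The main obstacle is the uniform control of the eigenprojector for $\lambda_1$, and of its derivative, as $\kappa\to\kappa_c$. This is where the bound $\partial C/\partial\kappa=O(d_+^{-1/2})$, rather than $O(1)$, arises. I would handle it with the Riesz-projector representation and the estimate $\partial{\cal A}/\partial\kappa=O(d_+^{-1/2})$. The estimate itself comes from differentiating $f(X(\kappa),\kappa,0)=0$, which gives $\partial X/\partial\kappa=-{\cal A}^{-1}\partial f/\partial\kappa$ with $|{\cal A}^{-1}|=O(|\lambda_1|^{-1})=O(d_+^{-1/2})$.
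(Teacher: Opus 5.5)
Your proposal is correct and follows essentially the paper's route. Like the paper, you take a real-analytic eigenbasis of ${\cal A}(\kappa)$ that depends analytically on $X(\kappa)$, so its $\kappa$-derivative is $O(d_+^{-1/2})$; you then substitute $\xi=C(\kappa)\tilde\xi$ and absorb the resulting $\eps O(d_+^{-1/2}|\xi|^2)$ term into $O(|\xi|^2)$. Your Riesz-projector and implicit-differentiation arguments just make explicit what the paper takes from Lemma~\ref{FG} and from a reference for the global basis.

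One slip needs fixing: the displayed $T$ has its columns in the wrong order. Its first column $(1,i)$ is the $\lambda_2$-eigenvector of $\tilde{\cal A}_0$, so $T\,{\rm diag}(\lambda_1,\lambda_2)\,T^{-1}$ is the transpose of $\tilde{\cal A}_0$, and $C_1T^{-1}$ is not real. Instead take $T=\frac{1}{\sqrt2}\begin{pmatrix}1&1\\-i&i\end{pmatrix}$, built from the eigenvectors $(1,-i)$ and $(1,i)$ that you list. With $e_2(\bar\kappa)=\overline{e_1(\kappa)}$, this gives the required block and a real $C$, whose first two columns are $\sqrt2\,\re e_1$ and $-\sqrt2\,\Imm e_1$.
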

  
 
\begin{lem}
\label{lem_transform_1}
For $C_{\delta}>c_{t, 3}$,  in $W_{\delta}$   there exists a real-analytic transformation of variables 
$\xi \mapsto\hat\xi$
with the following properties. 
In  $W^{+}_{\delta}$ this transformation 
differs by $O(\eps d_{+}^{-1})$ from a linear (affine) transformation of $\xi$;
system (\ref{perturbed}) in the new variables takes the form (we omit the ``hats'' over the new variables)
\begin{eqnarray}
\label{after_transform}
   \dot\xi &=& A(\kappa,\eps)\xi+O(|\xi|^2)+\eps^3O^*(d_{+}^{-3}|\xi|)+\eps^3O^*(d_{+}^{-7/2}), \\
   \dot\kappa
  &=&  \eps  F(\kappa,\eps)+\eps O(|\xi|)+O^*(\eps^2),\\
  F&=& G(\kappa)+O^*(\eps d_{+}^{-1}).
  \label{FandG}
   \end{eqnarray}
  
  The matrix $A$  is block-diagonal with blocks $A_0$ of size
$2\times 2$ and  $A_1$ of size $(n-2)\times(n-2)$. These blocks are $O^*(\eps d_+^{-1})$-close to $\tilde{\cal {A}}_0$ and $\tilde{\cal {A}}_1$, respectively.
  The first block has the form
$$
 A_0=\frac12
      \begin{pmatrix}
            \Lambda_1+\Lambda_2 & i(\Lambda_1-\Lambda_2 ) \\
          i(\Lambda_2-\Lambda_1)& \Lambda_1+\Lambda_2
      \end{pmatrix} .
$$

  If one uses variables $z=\xi_1+i\xi_2, w= \xi_1-i\xi_2$ instead of $\xi_1, \xi_2$, 
 then  for $(x,\kappa) \in W^+_{\delta}$ the estimate  $O^*(\eps d_{+}^{-1})$ for shift of variables can be replaced by  $O^*(\eps d_{+}^{-1/2})$ for $w$ and remaining components of $\xi$,
  the term $ O^*(d_+^{-7/2})$ can be replaced by $ O^*(d_+^{-3})$  for $\dot w$ and remaining components of $\dot \xi$.
  
  Differentiation with respect to $\kappa$ of  $O$-terms  that do not contain explicitly $d_+$  multiplies estimates of  these terms by $O(d_{+}^{-1/2})$.    Differentiation with respect to $\kappa$ of  $O$-terms  that  contain explicitly $d_+$  multiplies estimates of  these terms by $O(d_{+}^{-1})$.
   
   In the domain $W^-_{\delta}$, similar estimates hold with $d_+$ replaced by $d_-$ and with $z$ and $w$ interchanged.

 \end{lem}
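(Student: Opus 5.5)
We obtain the transformation of Lemma \ref{lem_transform_1} by a two-step normal form procedure applied to system (\ref{after_0_transform}) of Lemma \ref{lem_transform_00}: first remove the $\xi$-independent term $\eps O^*(d_+^{-1/2})$, then remove the $\eps$-order part of the linear term outside the block structure. In both steps the cohomological equations are solved iteratively, and each iteration gains a factor $\eps d_+^{-3/2}$. This gain is small in $W^+_\delta$, because there $d_+\ge c\,\delta = cC_\delta\eps^{2/3}$, so
\[
\eps d_+^{-3/2}\le (cC_\delta)^{-3/2}.
\]
Smallness of this ratio is what forces the condition $C_\delta>c_{t,3}$. The real-analyticity of the transformation is inherited from the real-analyticity of the data. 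The construction in $W^-_\delta$ is obtained by complex conjugation, which replaces $d_+$ by $d_-$ and interchanges $z$ and $w$. The two constructions are glued on the overlap $\im\tau\approx 0$, where $d_\pm\sim 1$ and all corrections are $O(\eps)$. Alternatively, one can perform the construction directly on $\hat K_{*,\delta}$ along the slow solution, which is real on the real axis.

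\emph{Step 1: shift to the slow manifold.} Set $\xi=\hat\xi+h(\kappa,\eps)$, with $h$ chosen so that the free term is eliminated up to order $\eps^3$. The first approximation is
\[
h_1=-\eps\,\tilde{\cal A}(\kappa)^{-1}O^*(d_+^{-1/2}).
\]
In the $z$-component $\tilde{\cal A}^{-1}$ has size $\lambda_1^{-1}\sim d_+^{-1/2}$, so $h_1=O(\eps d_+^{-1})$ there. By Lemma \ref{lem_transform_0}(b), the other components have free term $O^*(1)$ and inverse matrix $O(1)$, so the shift is only $O(\eps d_+^{-1/2})$ for $w$ and the remaining coordinates; these are the component-wise claims. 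The next residual comes from $\dot\kappa\,\partial h/\partial\kappa$. Each $\kappa$-derivative costs $d_+^{-1}$ and the inverse matrix costs $d_+^{-1/2}$, so each iteration multiplies by $\eps d_+^{-3/2}$. After two iterations the remainder is $\eps^3 d_+^{-7/2}$ in the $z$-equation and $\eps^3 d_+^{-3}$ in the others. The quadratic term $O(|h|^2)=O(\eps^2d_+^{-2})$ has to be handled inside the iteration. It is absorbed by solving the full fixed-point equation
\[
\tilde{\cal A}h+O(|h|^2)+\eps O^*(d_+^{-1/2})=\eps F\cdot\partial_\kappa h
\]
order by order, which works because $|h|\ll d_+^{1/2}$. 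The $\kappa$-equation changes by $\eps O(|h|)=O^*(\eps^2 d_+^{-1})$, which gives (\ref{FandG}).

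\emph{Step 2: block-diagonalisation of the linear part.} After the shift, the linear part is $\tilde{\cal A}+\eps O^*(d_+^{-1/2})+O(|h|)$. We now apply $\hat\xi=(I+P(\kappa,\eps))\eta$ with $P$ off-block-diagonal in the $(z,w,\text{rest})$ splitting. $P$ is found from the homological equation
\[
[\tilde{\cal A},P]=\text{off-diagonal part}-\eps F\,\partial_\kappa P .
\]
It is solvable because $\lambda_1\neq\lambda_2$ and $\lambda_{1,2}\neq\lambda_j$ (condition E). The eigenvalue gap within the $2\times2$ block is of order $d_+^{1/2}$ near $\tau_c$, since there $\lambda_2$ stays away from $0$ while $\lambda_1\sim d_+^{1/2}$. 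Hence $P=O(\eps d_+^{-1})$, and each iteration again gains $\eps d_+^{-3/2}$. Two iterations leave the error $\eps^3O^*(d_+^{-3}|\xi|)$. We keep the diagonal corrections in $A$; this is why $\Lambda_{1,2}$ differ from $\lambda_{1,2}$ by $O(\eps d_+^{-1})$. Restoring the $2\times2$ normal form $A_0$ in terms of $\Lambda_1,\Lambda_2$ uses the same conjugation as in Lemma \ref{lem_transform_00}. The stated derivative rules follow from the Cauchy estimates on $\eps^{2/3}$-neighbourhoods supplied by Lemma \ref{lem_implicit}: on a disc of radius $\sim d_+$ in $\kappa$, an explicit $d_+$-power loses $d_+^{-1}$ per derivative, and the remaining terms lose $d_+^{-1/2}$ through $X'(\kappa)$.

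\emph{Main obstacle.} The hardest part is the bookkeeping of $d_+$-powers uniformly up to $d_+\sim\eps^{2/3}$, where all the gains degenerate to the constant $C_\delta^{-3/2}$. For this I would run all iterations in weighted norms $\sup d_+^{k}|\cdot|$ on nested neighbourhoods of size $\sim d_+$, and check that Cauchy estimates never require radius larger than $c\,d_+$. A second source of difficulty is making the $z$/$w$ asymmetry precise: only the $z$-direction feels the small eigenvalue near $\tau_c$. This is why the $w$ and remaining components gain an extra $d_+^{1/2}$.
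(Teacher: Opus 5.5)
Your route is the paper's. You shift the origin to the equilibrium of the frozen-$\kappa$ fast system, and repeat the shift once so that the $\xi$-independent remainder becomes $\eps^3O^*(d_+^{-7/2})$ for $\dot z$ and $\eps^3O^*(d_+^{-3})$ for $\dot w,\dot\eta$. You then block-diagonalise the linear part for frozen $\kappa$ and repeat once. Your Step 1 count agrees with the paper: the shift is $O(\eps d_+^{-1})$ in $z$ and $O(\eps d_+^{-1/2})$ in $w,\eta$, each repetition gains $\eps d_+^{-3/2}\le (cC_\delta)^{-3/2}$, and $F=G+O^*(\eps d_+^{-1})$.

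Step 2, however, is miscounted, and it needs to be fixed.

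\textbf{The gaps are of order one.} In your splitting $z\,|\,w\,|\,\eta$, the gaps entering the homological equation are $\lambda_1-\lambda_2$ and $\lambda_{1,2}-\lambda_j$. These are of order $1$, not $d_+^{1/2}$. Near $\tau_c$ one has $\lambda_1\to 0$, while $\lambda_2(\kappa_c)\neq 0$ and $\lambda_j(\kappa_c)\neq 0$ by condition D. Condition E keeps these differences away from zero on the closure.

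\textbf{The gain per repetition is $\eps d_+^{-1}$.} Because the homological operator has an $O(1)$ inverse, you get $P=O(\eps d_+^{-1})$. Each repetition then multiplies the off-block-diagonal remainder by $\eps d_+^{-1}$ only: a $\kappa$-derivative costs $d_+^{-1}$, $\dot\kappa$ contributes $\eps$, and there is no inversion loss. This gives $\eps d_+^{-1}\to\eps^2d_+^{-2}\to\eps^3d_+^{-3}$, which is the paper's count and yields the term $\eps^3O^*(d_+^{-3}|\xi|)$.

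\textbf{Your stated gain gives the wrong estimate.} With the gain $\eps d_+^{-3/2}$ you wrote, two steps leave $\eps^3d_+^{-4}|\xi|$. The same happens with a gap of order $d_+^{1/2}$, which would give $P=O(\eps d_+^{-3/2})$. Neither is the estimate of the lemma, and neither is consistent with your own conclusion $P=O(\eps d_+^{-1})$.

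\textbf{Gluing.} You should not ``glue'' the $W^+_\delta$ and $W^-_\delta$ constructions. Two analytic maps that only agree up to $O(\eps)$ on the overlap cannot be glued into one analytic map. No gluing is needed anyway, because every step is canonical: the unique equilibrium of the frozen system near $\xi=0$, and the unique small off-block-diagonal $P$ (or the explicit iteration formulas). So a single formula defines a real-analytic map on all of $W_\delta$. The conjugation symmetry then only transfers the estimates from $d_+$ to $d_-$, with $z$ and $w$ interchanged.
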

 
  \begin{lem}
\label{lem_transform_2}
For  system (\ref{after_transform}), for  $C_{\delta}>c_{t, 4}$ and $|\hat \xi| <c_{t, 5}\eps^{1/3}$, in $ W_{\delta}$   one can make a real-analytic transformation  of variables $(\xi, \kappa)=(\xi_1,\xi_2, \eta, \kappa)\mapsto  (\hat \xi, \kappa)=(\hat\xi_1,\hat\xi_2,\hat \eta,  \kappa)$ 
  that eliminates some quadratic, cubic, and fourth order terms in equations. 
   In the domain $W^+_{\delta}$   this transformation in variables $z=\xi_1+i\xi_2, w= \xi_1-i\xi_2, \ \eta,\kappa$ meets estimates
   \begin{equation*}
   \begin{aligned}
  &\hat z=z+ O(|z|^2 d_+^{-1/2}+ |\xi|^2) ,\
    \hat w=w+O(|zw| d_+^{-1/2}+
  |\xi|^2 ),  \ 
  \hat \eta =\eta +O( |\xi|^2) 
  \end{aligned}
  \end{equation*}
    (``hats'' for new variables). The transformed system is such that the equation for $\dot z$ contains  terms $O(\eps^3d_+^{-3}|\xi|)+ O^*(\eps^{3}d_+^{-7/2})$, and the remaining part of $\dot z$  contains the monomial $z^2$  with a coefficient $O^*(\eps d_+^{-3/2})$,  other quadratic monomials containing $z$ and monomial $w^2$ with coefficients   $O^*(\eps  d_+^{-1/2})$, and all other  quadratic monomials with coefficients   $O^*(1)$ (we omit the ``hats'' over the new variables). Higher order in $\xi$ terms  in equation for $\dot z$, that are not bounded above by estimates for quadratic terms,   are estimated as
  $     O(|\xi|_*^3d_+^{-1/2})+O(|z|^5d_+^{-3/2})$,
  where ``star'' indicates that in computation  of $|\xi|_*^3$ the term  $|z|^3$ is not included (i.e.\footnote{Recall that $|\cdot|$  is the sum of absolute values of coordinates of a vector.}
    $ |\xi|_*^3=|\xi|^3-|z|^3$ assuming that $\xi$ is represented in coordinates $z, w,\eta$).   
        The equation for $\dot w $ contains  terms $O(\eps^3d_+^{-3}|\xi|)+ O^*(\eps^{3}d_+^{-3})$, and the remaining part of $\dot w$ contains monomial $wz$ with  coefficient $O^*(\eps d_+^{-3/2})$,  monomials $z^2, w^2,  w\eta$  with  coefficients $O^*(\eps d_+^{-1/2})$, other  quadratic monomials with coefficients   $O^*(1)$. 
  The equation for $\dot \eta$ contains  terms $O(\eps^3d_+^{-3}|\xi|)+ O^*(\eps^{3}d_+^{-3})$, and the remaining part of $\dot \eta$  contains  monomials $z^2, w^2$  with  coefficients $O^*(\eps d_+^{-1/2})$, other  quadratic monomials with coefficients   $O^*(1)$.
  Higher order in $\xi$ terms in equations for $\eta, w$ are estimated, respectively,   as $O(|\xi|_*^3d_+^{-1/2})  +O(|z|^4)$ and  $O(|\xi|_*^3d_+^{-1/2})  +O(|z|^3)$. 
   
   In the domain $W^-_{\delta}$, similar estimates hold with $d_+$ replaced by $d_-$ and with $z$ and $w$ interchanged.  
   The  transformation of variables meets estimates
   \begin{equation*}
   \begin{aligned}
  &\hat z=z+ O(|zw| d_-^{-1/2}+ |\xi|^2) ,\
    \hat w=w+O(|w|^2 d_-^{-1/2} +|\xi|^2 ),  \ 
  \hat \eta =\eta +O( |\xi|^2).
  \end{aligned}
  \end{equation*}
In this domain, the equation for $\dot z$ contains terms $O(\eps^3d_-^{-3}|\xi|)+ O^*(\eps^3d_-^{-3})$, and in the remaining part of $\dot z$   quadratic monomials are estimated as $O^*(|\eta|(|\eta|+|w|))+  O^*(\eps |zw|d_-^{-3/2})+O^*(\eps |\xi|^2d_-^{-1/2})$,   higher order terms, that are not bounded above by estimates for  quadratic terms, are estimated as 
$$ O(|\xi|_{**}^3d_{-}^{-1/2})+ O(|w|^3).$$ 
  Here  $ |\xi|_{**}^3=|\xi|^3-|w|^3$.
 \end{lem}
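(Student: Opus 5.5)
The plan is a normal form argument: starting from system (\ref{after_transform}), we remove the undesired quadratic, cubic and fourth-order terms by a near-identity polynomial change of variables
\[
\hat\xi=\xi+P_2(\xi,\kappa)+P_3(\xi,\kappa)+P_4(\xi,\kappa),
\]
where each $P_k$ is a homogeneous polynomial of degree $k$ in $\xi$ with coefficients analytic in $\kappa\in V_\delta$. We work in the coordinates $z,w,\eta$, in which the linear part $A(\kappa,\eps)$ is diagonal in $(z,w)$ with eigenvalues $\Lambda_1,\Lambda_2$, and block $A_1$ in $\eta$. The coefficients of $P_k$ are found from homological equations of the form
\[
\mathrm{ad}_{A}P_k=Q_k-\eps\,\partial_\kappa P_k\cdot F,
\]
where $Q_k$ collects the monomials to be removed. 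We solve these \emph{without} the $\kappa$-derivative term first. The term $\eps\,\partial_\kappa P_k\cdot F$ is then treated as a new perturbation, and by the differentiation rules of Lemma~\ref{lem_transform_1} it costs an extra factor $\eps d_+^{-1}$ (times the small-divisor factor). This is exactly where the coefficients $O^*(\eps d_+^{-1/2})$ and $O^*(\eps d_+^{-3/2})$ in the statement come from.

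The key steps, in order, are as follows.

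(i) Identify the divisors. Near $\tau_c$ in $W^+_\delta$ we have $\Lambda_1=O(d_+^{1/2})$, while $\Lambda_2$ and the eigenvalues of $A_1$ are bounded away from $0$. Condition H, $\lambda_1\ne2\lambda_2$ and $\lambda_2\ne2\lambda_1$, together with condition 4 guarantees that the remaining resonance combinations are bounded away from zero. Hence every monomial except the resonant one $z^2$ in $\dot z$ has divisor bounded below either by a constant or by $c\,d_+^{1/2}$; the latter happens for monomials with divisor $\Lambda_1$, for instance $zw$ in $\dot w$ and $z\eta$ in $\dot \eta$.

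(ii) Remove all quadratic terms that are nonresonant with divisor $O(1)$, keeping their $O^*(1)$ coefficients where the statement allows. Terms with divisor $\sim d_+^{1/2}$ produce transformation coefficients $O(d_+^{-1/2})$; this is the origin of the $|z|^2d_+^{-1/2}$ and $|zw|d_+^{-1/2}$ corrections in $\hat z,\hat w$. The condition $|\hat\xi|<c_{t,5}\eps^{1/3}$ together with $d_+\gtrsim\eps^{2/3}$ (from $C_\delta>c_{t,4}$) keeps $|z|d_+^{-1/2}$ small, so the transformation is invertible and close to the identity.

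(iii) Repeat the procedure at cubic and quartic order for the monomials $z^3,z^4$ in $\dot w,\dot\eta$, whose divisors are $O(1)$. This leaves the stated remainders $O(|\xi|_*^3d_+^{-1/2})+O(|z|^5d_+^{-3/2})$, using that each further power of $z$ in the transformation costs at most $d_+^{-1/2}$.

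(iv) Check that the $\eps^3$ inhomogeneous terms only change by bounded factors. They become $O(\eps^3d_+^{-3}|\xi|)$ because, at that order, they multiply coefficients of size $O(d_+^{-1/2}|\xi|)$ with $\eps^3d_+^{-7/2}\cdot d_+^{-1/2}|\xi|\ll\eps^3d_+^{-3}|\xi|$ after using $|\xi|\lesssim\eps^{1/3}\lesssim d_+^{1/2}$. Real-analyticity is preserved by choosing the $P_k$ to commute with the real structure, i.e.\ symmetric under $z\leftrightarrow\bar w$. The $W^-_\delta$ case follows from the same construction with $z\leftrightarrow w$ and $d_+\to d_-$.

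The main obstacle is the bookkeeping of the $\eps\,\partial_\kappa P_k\cdot F$ feedback near $\tau_c$. Each $\kappa$-derivative of a coefficient carrying $d_+^{-1/2}$ costs $d_+^{-1}$, and dividing again by $\Lambda_1\sim d_+^{1/2}$ gives an extra $\eps d_+^{-3/2}$. This is acceptable only because $\eps d_+^{-3/2}\le C_\delta^{-3/2}$ is small. I would iterate this correction once, or twice for the $z^2$ coefficient, and verify that the resulting series converges for $C_\delta$ large, which fixes $c_{t,4}$.
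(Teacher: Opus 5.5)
Your overall scheme (frozen-$\kappa$ normal form, with the $\kappa$-dependence of the transformation producing the $\eps$-terms) is the paper's, but step (iii) removes the wrong cubic and quartic monomials. This is a genuine gap.

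\textbf{The missing eliminations in $\dot z$.} The statement requires that $\dot z$ contain no pure $z^3$ or $z^4$ terms. Its higher-order remainder is $O(|\xi|_*^3d_+^{-1/2})+O(|z|^5d_+^{-3/2})$, and $|\xi|_*^3$ explicitly excludes $|z|^3$. Once $z^2$ has been removed from $\dot z$ with a coefficient $O(d_+^{-1/2})$, the $\dot z$ equation contains $z^3O^*(d_+^{-1/2})$ and $z^4O^*(d_+^{-1})$, coming from the original $z^2$ and $z^3$ terms. These are neither of the allowed form nor dominated by the $O^*(\eps d_+^{-3/2})z^2$ term, so they must be eliminated.

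Their divisors are $2\Lambda_1$ and $3\Lambda_1$, of size $d_+^{1/2}$, not $O(1)$. The transformation coefficients are therefore $O(d_+^{-1})$ and $O(d_+^{-3/2})$. This is exactly where the remainder $z^5O(d_+^{-3/2})$ comes from: writing $\hat z=z+\beta z^3$ with $\beta=O(d_+^{-1})$ produces the term $3\beta z^2\cdot\gamma z^3$, where $\gamma=O(d_+^{-1/2})$ is the $z^3$ coefficient. The further term $z^7O(d_+^{-5/2})$ produced by removing $z^4$ is absorbed into it because $|z|^2d_+^{-1}\lesssim c_{t,5}^2/C_\delta$. This, rather than convergence of an iteration, is where the hypotheses $|\hat\xi|<c_{t,5}\eps^{1/3}$ and $C_\delta>c_{t,4}$ are used.

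For real-analyticity the partner terms $w^3,w^4$ are removed from $\dot w$. By contrast, $z^3$ in $\dot w$ may stay, since the statement keeps $O(|z|^3)$ there. In $\dot\eta$ it is $z^3$ that must go, while $z^4$ may stay.

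Your heuristic that ``each further power of $z$ costs at most $d_+^{-1/2}$'' describes what happens when the $\dot z$ terms are \emph{not} removed, and it yields exactly the forbidden $z^3d_+^{-1/2}$. The omission matters later. With $|z|\sim\eps^{1/3}$, as on the arc in Lemma~\ref{lem_after_c_3}, a surviving $z^3d_+^{-1/2}$ term adds about $\eps^{2/3}d_+^{-1/2}$ to the growth rate of $z$. Its integral over a fast-time interval of length $O(1/\eps)$ is of order $\eps^{-1/3}$.

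\textbf{Smaller points.}

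In (i) you call $z^2$ in $\dot z$ resonant. It is only near-resonant, with divisor $2\Lambda_1-\Lambda_1=\Lambda_1$, exactly as for $zw$ in $\dot w$. It must be eliminated: near $\tau_c$ its coefficient is essentially $a\ne0$, while the statement demands $O^*(\eps d_+^{-3/2})$. Your own correction $|z|^2d_+^{-1/2}$ in $\hat z$ already presupposes this elimination.

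No iteration of the $\eps\,\partial_\kappa P_k\cdot F$ term is needed. The paper solves the homological equations once for frozen $\kappa$ and simply leaves the resulting $\kappa$-derivative terms in the equations. Those terms are precisely the stated $O^*(\eps d_+^{-1/2})$ and $O^*(\eps d_+^{-3/2})$ coefficients.

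The inequality in (iv) is false as written. The left side $\eps^3d_+^{-4}|\xi|$ exceeds $\eps^3d_+^{-3}|\xi|$ by the factor $d_+^{-1}$. Using $|\xi|\lesssim d_+^{1/2}$ cannot help, since both sides are linear in $|\xi|$.
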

 
  \begin{lem}
\label{lem_transform_kappa}
For $C_{\delta}>c_{t,6}$, 
 in $W_{\delta}$   there exists a real-analytic transformation of variables $\xi,\kappa\mapsto\xi, \hat\kappa$ with the following properties. 
In  $W^{+}_{\delta}$ this transformation 
satisfies estimates
$$
\hat \kappa =\kappa +  \eps O(|z|d_-^{-1/2}+|\xi|_*).
$$
Equation for $\hat \kappa$ has the form (the ``hats" are omitted)
\begin{equation}
\begin{aligned}
\label{eq_kappa_transformed}
 & \dot {\kappa}=\eps G(\kappa) +O^*(\eps^2 d_+^{-1})+\eps^2 O(|z|^2d_+^{-2})+ \eps^2 O(|\xi|_*^2d_+^{-3/2}) \\
 &
 +\eps O\left(|z|^4+|z|^5d_+^{-2}+ |\xi|_*^3d_+^{-1}\right)
 +\eps^3 O(|z|d_+^{-3} + |\xi|_*d_+^{-3/2})
 +\eps^4O(d_+^{-7/2}|\xi|).
    \end{aligned}
\end{equation}

 In the domain $W^-_{\delta}$, similar estimates hold with $d_+$ replaced by $d_-$ and with $z$ and $w$ interchanged.
   
\end{lem}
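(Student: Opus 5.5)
The transformation is a near-identity correction of the slow variable that removes the terms $\eps O(|\xi|)$ from the equation for $\kappa$ in (\ref{after_transform}), as far as the stated orders allow. I will work in the variables produced by Lemma \ref{lem_transform_2}. I will first treat the domain $W^+_\delta$; the domain $W^-_\delta$ then follows by complex conjugation, which swaps $d_+$ with $d_-$ and $z$ with $w$.

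The ansatz is
$$\hat\kappa=\kappa+\eps\Phi(\xi,\kappa),\qquad \Phi=\Phi_1+\Phi_2+\Phi_3 ,$$
where $\Phi_j$ is a homogeneous polynomial of degree $j$ in $\xi$ with coefficients analytic in $\kappa$. Differentiating along trajectories gives
$$\dot{\hat\kappa}=\dot\kappa+\eps\,\partial_\xi\Phi\cdot\dot\xi+\eps\,\partial_\kappa\Phi\cdot\dot\kappa .$$
The coefficients $\Phi_j$ are chosen so that the degree-$j$ part of $\eps O(|\xi|)$ in $\dot\kappa$ is cancelled by $\eps\,\partial_\xi\Phi_j\, A\xi$. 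This leads to homological equations of the form $L_A\Phi_j=-h_j$ on the space of degree-$j$ polynomials.

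Because $A$ is block diagonal, the operator $L_A$ is diagonal on monomials $z^kw^l\eta^\beta$. Its eigenvalue on such a monomial is $k\Lambda_1+l\Lambda_2+\sum\beta_i\lambda_i$. Near $\tau_c$ the eigenvalue $\Lambda_1$ is of order $d_+^{1/2}$. All other combinations are bounded away from zero, by conditions D, E and condition H, together with the non-resonance of the $\eta$ block.

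\begin{itemize}
\item Linear terms: the $z$-monomial is divided by $\Lambda_1\sim d_+^{1/2}$, which produces the shift $\eps O(|z|d_+^{-1/2})$. The $w$ and $\eta$ monomials are divided by $O(1)$ quantities, which produces $\eps O(|\xi|_*)$.
\item Quadratic terms: all monomials except $z^2$ are removed. The $z^2$ term can be removed only with a loss of $d_+^{-1/2}$.
\item Cubic terms: the same procedure is applied to the cubic terms involving $w$ and $\eta$.
\end{itemize}

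In the statement the shift is written with $d_-^{-1/2}$; I would check which of $d_\pm$ is intended. With $d_+$ the estimate matches the divisor $\Lambda_1$.

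The main work is bookkeeping the remainders produced by the substitution. Their sources are as follows.

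\begin{itemize}
\item The term $\eps\,\partial_\xi\Phi\cdot\dot\xi$ picks up the nonlinear part of $\dot\xi$ from Lemma \ref{lem_transform_2}, namely the quadratic coefficients and the higher-order terms $O(|\xi|_*^3d_+^{-1/2})+O(|z|^5d_+^{-3/2})$. These yield the terms $\eps O(|z|^4+|z|^5d_+^{-2}+|\xi|_*^3d_+^{-1})$.
\item The same term picks up the $\eps^3$ inhomogeneities of $\dot\xi$. Multiplied by $\partial_\xi\Phi_1=O(d_+^{-1/2})$, they give $\eps^4O(d_+^{-7/2}|\xi|)$ and part of the $\eps^3$ terms.
\item The term $\eps\,\partial_\kappa\Phi\cdot\dot\kappa$ uses the derivative rules of Lemma \ref{lem_transform_1}: each $\kappa$-derivative costs $d_+^{-1/2}$ or $d_+^{-1}$. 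This produces $\eps^2O(|z|^2d_+^{-2})$ and $\eps^2O(|\xi|_*^2d_+^{-3/2})$.
\item The term $O^*(\eps^2 d_+^{-1})$ comes directly from $F-G$ in (\ref{FandG}).
\end{itemize}

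Finally, I will invert the map $\kappa\mapsto\hat\kappa$ and substitute $\kappa=\hat\kappa+O(\cdot)$ into $G$ and into the coefficients. Since $\partial G/\partial\kappa=O(d_+^{-1/2})$, this substitution adds only terms of the types already listed.

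The expected obstacle is making sure that every remainder stays within the stated estimates uniformly for $d_+\ge c\,C_\delta\eps^{2/3}$. This requires $\eps d_+^{-3/2}$ to be small, which is what forces the condition $C_\delta>c_{t,6}$. It also requires checking that the near-resonant divisor $\Lambda_1$ never appears squared in the coefficients that are kept. I would track each product by its homogeneity in $z$ and in $d_+$ separately, and use $|z|\lesssim\eps^{1/3}\lesssim d_+^{1/2}$ to absorb borderline terms.

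Real-analyticity of the transformation follows because the normalizing procedure commutes with the real structure (conjugation swaps $z$ and $w$). The analytic dependence on $\kappa$ in $V_\delta$ follows from Lemma \ref{lem_implicit}.
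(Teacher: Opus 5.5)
Your strategy is the paper's: a near-identity correction $\hat\kappa=\kappa+\eps\Phi$ obtained from homological equations with divisors $k\Lambda_1+l\Lambda_2+\dots$. But the single pass you describe does not reach (\ref{eq_kappa_transformed}), and your claim that back-substitution ``adds only terms of the types already listed'' is false.

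\textbf{Linear and quadratic remainders.} Write $\Phi_1=M\xi$, with $z$-column $O(d_+^{-1/2})$ and other columns $O(1)$.
The terms $\eps\,\partial_\kappa\Phi_1\cdot\eps G$ and $\eps\bigl(G(\hat\kappa-\eps\Phi_1)-G(\hat\kappa)\bigr)$ are then \emph{linear} in $\xi$, of size $\eps^2O(|z|d_+^{-3/2}+|\xi|_*d_+^{-1/2})$.
The only linear terms permitted by (\ref{eq_kappa_transformed}) are $\eps^3O(|z|d_+^{-3}+|\xi|_*d_+^{-3/2})$ and $\eps^4O(d_+^{-7/2}|\xi|)$. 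For the $z$-parts the ratio is $d_+^{3/2}/\eps$, which is of order $\eps^{-1}$ away from $\tau_c$.
Likewise, $\eps\,\partial_z\Phi_1$ applied to the quadratic part $O(|\eta|(|\eta|+|w|))$ of $\dot z$ produces quadratic terms $\eps d_+^{-1/2}O(|\xi|_*^2)$. These exceed the permitted $\eps^2O(|\xi|_*^2d_+^{-3/2})$ by a factor $d_+/\eps$. So $\Phi_2$ must be computed from the equation \emph{after} the linear step, not from the original quadratic part.

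This is why the paper works sequentially:
\begin{enumerate}
\item it sets $\hat\kappa=\kappa+\eps M\xi$ with $M=-A^{-1}E$ and recomputes the right-hand side;
\item it performs a \emph{second} linear elimination of the new $\eps^2$-linear terms, and this second pass is exactly what leaves $\eps^3O(d_+^{-3}|z|+d_+^{-3/2}|\xi|_*)$ in the statement;
\item only then does it remove the quadratic terms $\eps O^*(|z|^2+d_+^{-1/2}|\xi|_*^2)$, including those created in the first step.
\end{enumerate}

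\textbf{Cubic terms.} Your cubic step targets the wrong monomials. The statement keeps $\eps O(|\xi|_*^3d_+^{-1})$ with $|\xi|_*^3=|\xi|^3-|z|^3$, so cubic monomials containing $w$ or $\eta$ need not be removed. In fact they are regenerated at that larger size by $\eps\,\partial_z\Phi_1$ acting on the $O(|\xi|_*^3d_+^{-1/2})$ part of $\dot z$.
What must be removed is the pure term $\eps O^*(|z|^3)$, which no term of (\ref{eq_kappa_transformed}) covers (take $d_+\sim 1$). The paper removes it using the near-resonant divisor $3\Lambda_1$, and the resulting $\eps^2O(|z|^3d_+^{-3/2})$ is absorbed in $\eps^2O(|z|^2d_+^{-2})$.
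Your plan leaves $\eps z^3$ in place. Removing the $w,\eta$-cubics instead would require divisors such as $2\Lambda_1+\Lambda_2$ that no hypothesis controls.

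\textbf{Non-resonance.} Your blanket claim that all other divisors are bounded away from zero is also wrong. In the $\kappa$-equation the divisor of $zw$ is $\Lambda_1+\Lambda_2$. It vanishes near the real Hopf point $\tau_*\in K_*^+$, because for real $\kappa$ one has $\Lambda_2=\bar\Lambda_1$ and $\re\Lambda_1$ changes sign there. Condition H only controls $2\Lambda_2-\Lambda_1$, the divisor of $w^2$ in $\dot z$, not anything in $\dot\kappa$.
The paper's sketch also removes $zw$ without comment. This is harmless in the application, since $z,w=O(\eps^3)$ on the real axis near $\tau_*$, but it does not follow from D, E, H.

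\textbf{Shift estimate.} You are right that $d_-^{-1/2}$ in the shift estimate should read $d_+^{-1/2}$, since the $z$-block of $A^{-1}$ is $O(d_+^{-1/2})$.
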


Let $\xi(t), \kappa(t)$ be a solution of the system obtained from (\ref{after_transform}) after transformation from Lemma \ref{lem_transform_kappa}. Consider the real initial conditions $\xi(t_0), \kappa(t_0)$.

\begin{lem}
\label{lem_real}
Suppose that, in the complex $t$-plane, the solutions $\xi(t)$ and $\kappa(t)$ can be analytically continued to a neighbourhood $U$ of a real interval. Then they can also be analytically continued to the reflected set $\overline{U}$, and
\[
\xi(\bar t)= \overline{\xi(t)}, \qquad \kappa(\bar t)= \overline{\kappa(t)},
\]
for all $t \in U \cup \overline{U}$.
\end{lem}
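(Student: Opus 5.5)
The plan is the standard Schwarz reflection argument for real-analytic ODEs. The first step is to note that the transformed system is real-analytic. By construction, the transformations in Lemmas \ref{lem_transform_00}--\ref{lem_transform_kappa} are real-analytic, and the original right-hand sides $f,g$ are real for real arguments. Hence the right-hand side $H(\xi,\kappa,\eps)$ of the transformed system satisfies
\[
H(\bar\xi,\bar\kappa,\eps)=\overline{H(\xi,\kappa,\eps)}
\]
for real $\eps$ on its (conjugation-symmetric) domain of analyticity. The domains $W_\delta^\pm$ are interchanged by conjugation, since $K_*^\pm$, $L_*^\pm$ are mirror images. I expect this to be the only point needing care: one must check that the transformed system on the union of the domains is a single real-analytic system. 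This amounts to seeing that the transformation on $W^-_\delta$ is the conjugate of the one on $W^+_\delta$, which is consistent with the statement that $z,w$ swap roles.

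Next, I would define the candidate continuation on the reflected set by
\[
\tilde\xi(t)=\overline{\xi(\bar t)},\qquad \tilde\kappa(t)=\overline{\kappa(\bar t)},\qquad t\in\overline U.
\]
The map $t\mapsto \overline{\phi(\bar t)}$ of a holomorphic $\phi$ is holomorphic, so $\tilde\xi,\tilde\kappa$ are analytic on $\overline U$. Differentiating and using the reflection property of $H$, I get
\[
\frac{d\tilde\xi}{dt}(t)=\overline{\xi'(\bar t)}=\overline{H(\xi(\bar t),\kappa(\bar t),\eps)}=H(\tilde\xi(t),\tilde\kappa(t),\eps),
\]
and similarly for $\tilde\kappa$. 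Thus $(\tilde\xi,\tilde\kappa)$ solves the same system on $\overline U$.

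Finally, I would glue the two pieces. On the real interval, which lies in $U\cap\overline U$ (shrinking $U$ to a connected neighbourhood containing it if necessary), the initial data $\xi(t_0),\kappa(t_0)$ are real. So $(\tilde\xi,\tilde\kappa)$ and $(\xi,\kappa)$ are two solutions of the same analytic ODE with the same data at $t_0$, and by uniqueness they coincide near $t_0$. By the identity theorem they coincide on the connected component of $U\cap\overline U$ containing the real interval. The function equal to $(\xi,\kappa)$ on $U$ and $(\tilde\xi,\tilde\kappa)$ on $\overline U$ is therefore a well-defined analytic continuation to $U\cup\overline U$. If $U\cap \overline U$ is not connected, I would restrict to the symmetric part of $U$ containing the real interval.

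The identity $\xi(\bar t)=\overline{\xi(t)}$ on $U\cup\overline U$ then holds by construction, and likewise for $\kappa$.
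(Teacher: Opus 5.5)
Your argument is correct and is the reflection-plus-uniqueness argument the paper has in mind when it calls the proof evident. Real-analyticity of the transformed system on the conjugation-invariant domain $W_\delta$ makes $t\mapsto(\overline{\xi(\bar t)},\overline{\kappa(\bar t)})$ a solution with the same real initial data, and uniqueness together with the identity theorem finishes the proof. Your caveat about $U\cap\overline{U}$ is genuinely needed: on a component of $U\cap\overline{U}$ away from the real interval the continuation can pick up monodromy, and there the identity can fail. Restricting to the component containing the interval, or to symmetric domains such as $D_q$ and $D_\gamma$, covers all the cases where the paper uses the lemma.
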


The proof is evident.
\smallskip

Introduce $z=\xi_1+i\xi_2, w= \xi_1-i\xi_2$ and rewrite the system in the variables $z,w,\eta=(\xi_3,\ldots, \xi_n),  \kappa$. Lemma \ref {lem_real} implies that $w(t)= \overline {z(\bar t)}$ for real-analytic solutions. This equation will be used instead of the differential equation for $w$. In  the domain $W^{+}_{\delta}$  we have
\begin{equation}
\begin{aligned}
\label{d_equation}
     &\dot z=  \Lambda_1(\kappa)z+  \eps O(|z|^2 d_+^{-3/2})+
     \eps O(|\xi|_*^2 d_+^{-1/2})  
     +O(|\eta|(|\eta|+|w|))\\
            &+ O(|\xi|_*^3d_+^{-1/2})+O(|z|^5d_+^{-3/2}) +\eps^3O(d_{+}^{-3}|\xi|)+\eps^3O^*(d_{+}^{-7/2}),\\
  &\dot \eta=B(\kappa)\eta +O(|\eta|^2) + O(|\eta |(|z|+|w|)) +O(|zw|) \\
   & \skip 0.7cm+     \eps O((|z|^2  +|w|^2)d_+^{-1/2})  
     +O(|\xi|_*^3d_+^{-1/2})+ O(|z|^4)+\eps^3O(d_{+}^{-3}|\xi|)+\eps^3O^*(d_{+}^{-3}),\\
      & \dot {\kappa}=\eps F(\kappa) +\eps^2 O(|z|^2d_+^{-2})+\eps^2 O(|\xi|_*^2d_+^{-3/2}) \\
 &
 +\eps O\left(|z|^4+|z|^5d_+^{-2}+ |\xi|_*^3d_+^{-1}\right)
 +\eps^3 O(|z|d_+^{-3} + |\xi|_*d_+^{-3/2})+\eps^4O(d_+^{-7/2}|\xi|),\\
     & \hskip 1cm F= G(\kappa)+\eps O^*(d_{+}^{-1}).
\end{aligned}
\end{equation}

Here the ``star'' in $|\xi|_*^2, |\xi|_*^3$ indicates that in the computation of that quantities the term  $|z|^2$ (respectively, $|z|^3$)  is not included; that is,
    $$ |\xi|_*^2=|\xi|^2-|z|^2, |\xi|_*^3=|\xi|^3-|z|^3$$
    where $\xi$ is represented in coordinates $z, w,\eta$.
    
    In the domain $W^-_{\delta}$, similar estimates hold with $d_+$ replaced by $d_-$, $\Lambda_1$ replaced by
$\Lambda_2$,   
 and with $z$ and $w$ interchanged.

Equation for  $z$ in $W^{-}_{\delta}$ is
\begin{equation}
\begin{aligned}\dot z&=  \Lambda_1(\kappa)z+O(|\eta|(|\eta|+|w|))+  
\eps O(|z||w| d_-^{-3/2})   + \eps O(|\xi|^2d_{-}^{-1/2})
     +O(|\xi|_{**}^3d_{-}^{-1/2})+ O(|w|^3) \\
     &+\eps^3O(d_{-}^{-3}|\xi|)
     +\eps^3O^*(d_{-}^{-3}).
\label{d_equation1}
\end{aligned}
\end{equation}
Here  $ |\xi|_{**}^3=|\xi|^3-|w|^3$.

The argument $\eps$ is omitted from the right hand sides of these equations for brevity.   Instead of the system of $n+m$ ordinary differential equations we obtain a system of $n+m-1$ delay differential equations with an imaginary non-constant value of the delay.

\medskip
{\it Remark.} If condition  H  is  dropped, then for $\kappa$ near 
$\kappa_c$ and $\bar\kappa_c$ we would have the same equations (\ref{d_equation}),
(\ref{d_equation1}). However,  for $\kappa$ far from  
$\kappa_c$ and $\bar\kappa_c$, the term $O(|w|^2)$
  should be added to the right hand side of the equation for $\dot z$.

\medskip
From now on, we fix $C_{\delta}> c_{t,6}$.  

\section{Preliminary lemmas about continuation of solutions}
\label{s_continuation}
\subsection{Curves determined by improved slow equation}
\label{improved_curves}
Consider ``improved'' slow equation (see (\ref{d_equation}))
\begin{equation}
\label{improved_slow}
\dot\kappa= \eps F(\kappa), \ F= g(X(\kappa), \kappa,0)+O^*(\eps d_{\pm}^{-1}) , \ \kappa \in V_{\delta}^{\pm}.
\end{equation}
  Consider solution $\dK_{\eps}(\tau)$ of  equation (\ref {improved_slow}) with some initial condition $\dK_{\eps}(\tau_*^-)$ of the form $\dK_{\eps}(\tau_*^-)= \dK(\tau_*^-)+O(\eps)$. 
Without loss of generality, we assume that $\dK_{\eps}(\tau)$ is well defined for $\tau\in  K_{*,\delta} $ (otherwise we would introduce a smaller time domain  where both  $\dK$ and $\dK_{\eps}$ are well defined).  
Denote $\hat {d}_{+}(\tau)=| b\cdot( \dK(\tau)-\kappa_c)|, \ \hat {d}_{-}=|b\cdot(\dK(\tau)-\bar \kappa_c)|, \\  \hat {d}_{+,\eps}(\tau)=| b\cdot( \dK_{\eps}(\tau)-\kappa_c)|,\  \hat {d}_{-,\eps}=|b\cdot(\dK_{\eps}(\tau)-\bar \kappa_c)| $.
\begin{lem}
\label{K_and_K_eps}
In the domain $  K_{*,\delta} $ we have
\begin{equation*}
|\dK(\tau)-\dK_{\eps}(\tau)|=O(\eps(1+|\ln \hat {d}_{\pm}(\tau|)),\quad   
 0.5\hat {d}_{\pm}(\tau)  <\hat {d}_{\pm,\eps}(\tau)< 2\hat {d}_{\pm}(\tau).
\end{equation*}
One should take $\hat {d}_+$ for  $\im \tau> -c_{l,1}^{-1}$, and  $\hat {d}_-$ for  $\im \tau< c_{l,1}^{-1}$.
\end{lem}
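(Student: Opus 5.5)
The plan is to compare the two solutions by a Gronwall-type argument along a path in $K_{*,\delta}$ starting at the real point $\tau_*^-$. Set $\Delta(\tau)=\dK_\eps(\tau)-\dK(\tau)$. Subtracting (\ref{slow}) (written in slow time, $d\dK/d\tau=G(\dK)$, $G=g(X(\kappa),\kappa,0)$) from (\ref{improved_slow}) gives
\begin{equation*}
\frac{d\Delta}{d\tau}=G(\dK+\Delta)-G(\dK)+O^*(\eps\, \hat d_{\pm,\eps}^{-1}),\qquad \Delta(\tau_*^-)=O(\eps).
\end{equation*}
By Lemma \ref{lem_transform_0}, $\partial G/\partial\kappa=O(d_\pm^{-1/2})$ in $V_\delta^\pm$. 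Hence, as long as the segment between $\dK$ and $\dK_\eps$ stays in $V_\delta^\pm$ (and $d_\pm$ along it is comparable to $\hat d_\pm(\tau)$), we get
\begin{equation*}
\frac{d|\Delta|}{|d\tau|}\le k\,\hat d_\pm^{-1/2}|\Delta|+k\,\eps\,\hat d_\pm^{-1},
\end{equation*}
where the derivative is taken along the integration path.

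The next step is to choose integration paths in $K_{*,\delta}$ from $\tau_*^-$ to $\tau$ along which $\hat d_\pm$ behaves like the distance to $\tau_c$ (resp. $\bar\tau_c$). By Lemma \ref{FG}, $\dK(\tau)-\kappa_c=g_c(\tau-\tau_c)+O(|\tau-\tau_c|^{3/2})$, and condition G gives $b\cdot g_c\ne0$. Therefore $\hat d_+(\tau)\asymp|\tau-\tau_c|$ near $\tau_c$, and $\hat d_+$ is bounded below by a constant away from $\tau_c$ in $K_*^+$. For example, I would take a piecewise smooth path that goes up the real axis, then vertically, so that it has bounded length and approaches $\tau_c$ only radially. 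The points $\tau_c,\bar\tau_c$ are excluded from $K_{*,\delta}$ together with a neighbourhood of size $\delta=C_\delta\eps^{2/3}$, so $\hat d_\pm\ge c\,\eps^{2/3}$ there.

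Along such a path, $\int \hat d_\pm^{-1/2}|d\tau|\le\int_{\delta}^{1} r^{-1/2}dr=O(1)$, so the homogeneous growth factor is bounded. The forcing gives $\eps\int \hat d_\pm^{-1}|d\tau|=O(\eps(1+|\ln \hat d_\pm(\tau)|))$. Gronwall then yields the first estimate. Switching between $\hat d_+$ and $\hat d_-$ in the overlap $|\im\tau|<c_{l,1}^{-1}$ costs nothing, since there both quantities are of order one.

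The second estimate follows from the first. We have $\big|\hat d_{+,\eps}-\hat d_+\big|\le |b|\,|\Delta|=O(\eps|\ln\eps|)$. Since $\hat d_+\ge c\,\eps^{2/3}\gg\eps|\ln\eps|$ in $K_{*,\delta}$ (or of order one away from $\tau_c$), this gives $0.5\hat d_+<\hat d_{+,\eps}<2\hat d_+$ for small $\eps$; here one uses that $C_\delta$ is large.

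The main obstacle is closing the bootstrap. The Lipschitz bound for $G$, and the bound $O^*(\eps d_\pm^{-1})$ for the correction term in $F$, are only valid while $\dK_\eps$ stays in $V_\delta^\pm$ with $d_\pm(\dK_\eps)$ comparable to $\hat d_\pm$. I would handle this by a continuity argument: let $\tau'$ be the first point on the path where $|\Delta|$ reaches, say, $C\eps(1+|\ln\hat d_\pm|)$. Up to $\tau'$ the comparability holds, because $\eps|\ln\eps|\ll c_{l,4}^{-1}\eps^{2/3}$ keeps $\dK_\eps$ inside the $c_{l,4}^{-1}\eps^{2/3}$-neighbourhood $V_\delta$. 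Gronwall with a large enough $C$ then gives a strict inequality at $\tau'$, which is a contradiction. This same step justifies the standing assumption that $\dK_\eps$ is defined on $K_{*,\delta}$.
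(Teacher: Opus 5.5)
Your proposal is correct and follows the paper's route. The paper's proof just cites $\partial G/\partial\kappa=O(d_\pm^{-1/2})$, $F-G=\eps O(d_\pm^{-1})$ and $\hat d_\pm>k_1^{-1}\eps^{2/3}$, and says the standard estimates for solutions give the result; your Gronwall argument along real-then-vertical paths (integrable $\hat d_\pm^{-1/2}$, logarithm from $\eps\hat d_\pm^{-1}$) and the bootstrap keeping $\dK_\eps$ inside $V_\delta$ supply exactly the details the paper leaves implicit.
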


\medskip

Introduce the ``phase''
\begin{equation}
\label{phase_eps}
   \Psi_{\eps}(\tau)
 = \int_{\tau_*}^{\tau}\Lambda_1(\dK_{\eps}(\vartheta))\,d\vartheta
\end{equation}
and consider in the upper half-plane of the complex variable $\tau$ arcs of level curves \\ $\re \Psi_{\eps}={\rm const}$ with endpoints on the real axis. 

\medskip
Take any point $\tau_{\gamma}$ on $\gamma_1$ close to $\tau_c$, but such that $\im \tau_{\gamma}<\im\tau_c-{\delta}$. According to Lemma \ref{lem_Gamma_gamma},  the distance of $\tau_{\gamma}$  from $\Gamma_{*,1}$  is $O(|\tau_{\gamma}-\tau_c|^{3/2})$. Consider curves $\re \Psi(\tau)={\rm const}$ and  $\re \Psi_{\eps}(\tau)={\rm const}$ passing through $\tau_{\gamma}$ (i.e. $\re \Psi(\tau)=\re \Psi(\tau_{\gamma})$ and $\re \Psi_{\eps}(\tau)=\re \Psi_{\eps}(\tau_{\gamma}))$. The first of these curves crosses the real axis at  a point $\tau_{-}$ such that    $|\tau_- -\tau_*^-|=O(|\tau_{\gamma}-\tau_c|^2)$,
see Sect. \ref{some_curves}. 
\begin{lem}
\label{second_line}
The curve $\re \Psi_{\eps}(\tau)=\re \Psi_{\eps}(\tau_{\gamma})$ crosses the real axis at  a point $\tau_{ *,\eps,-}=\tau_{ *,\eps,-}(\tau_{\gamma})= \tau_-+ O(\eps\ln \eps)$.
\end{lem}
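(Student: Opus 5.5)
We compare the two level curves through $\tau_\gamma$, of $\re\Psi$ and of $\re\Psi_\eps$, via the difference of the phases:
\[
\Delta(\tau)=\bigl(\Psi_\eps(\tau)-\Psi_\eps(\tau_\gamma)\bigr)-\bigl(\Psi(\tau)-\Psi(\tau_\gamma)\bigr)
=\int_{\tau_\gamma}^{\tau}\bigl(\Lambda_1(\dK_\eps(\vartheta))-\Lambda_1(\dK(\vartheta))\bigr)\,d\vartheta .
\]
The point of intersection of the first curve with the real axis is $\tau_-$. The point $\tau_{*,\eps,-}$ is defined by $\re\Psi_\eps(\tau_{*,\eps,-})=\re\Psi_\eps(\tau_\gamma)$. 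Hence $\re\Psi(\tau_{*,\eps,-})-\re\Psi(\tau_-)=-\re\Delta(\tau_{*,\eps,-})$.

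Near $\tau_*^-$ the function $\re\Psi$ restricted to the real axis has derivative $\re\lambda_1(\dK(\tau_*^-))<0$, which is bounded away from zero. Therefore it suffices to show that $\Delta=O(\eps\ln\eps)$ uniformly along the relevant part of the curve. It also suffices to check that the $\re\Psi_\eps$-level curve really reaches the real axis near $\tau_-$. This follows from condition 6), transversality and closeness of the two phase functions, via the implicit function theorem along the family of vertical lines.

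To bound $\Delta$, use Lemma \ref{K_and_K_eps}. It gives $|\dK-\dK_\eps|=O(\eps(1+|\ln\hat d_+|))$ and comparable $\hat d_{+}$, $\hat d_{+,\eps}$. By Lemma \ref{FG} and Lemma \ref{lem_transform_0}, the derivative of $\Lambda_1$ with respect to $\kappa$ is $O(d_+^{-1/2})$ in $V_\delta^+$. Integrating along a path that goes from $\tau_\gamma$ down to the real axis along the $\re\Psi$-curve (staying in $K_{*,\delta}$ at distance $\gtrsim|\tau-\tau_c|$ from $\tau_c$), we get
\[
|\Delta|\le C\int \eps(1+|\ln|\vartheta-\tau_c||)\,|\vartheta-\tau_c|^{-1/2}\,|d\vartheta| .
\]
Here $\hat d_+\sim|\vartheta-\tau_c|$ by Lemma \ref{lem_for_domain_V} and Lemma \ref{FG}. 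The singularity $|\vartheta-\tau_c|^{-1/2}\ln$ is integrable, so the integral is $O(\eps)$. In the worst case, with $|\tau_\gamma-\tau_c|\sim\eps^{2/3}$, it is still $O(\eps)$, since the $\ln$ factor contributes at most $O(\eps\ln\eps)$. Away from $\tau_c$ the integrand is $O(\eps)$ on a path of bounded length.

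The main obstacle is the neighbourhood of $\tau_c$, where the path starts at distance $\sim\eps^{2/3}$ and both $\dK$ and $\Lambda_1$ are near the branch point. One must make sure that $\dK_\eps$ stays on the same branch: this is why one uses the comparability $0.5\hat d_+<\hat d_{+,\eps}<2\hat d_+$ and the condition $\im\tau_\gamma<\im\tau_c-\delta$. One must also make sure the mean-value estimate for $\Lambda_1$ on the segment between $\dK$ and $\dK_\eps$ uses $d_+^{-1/2}$ evaluated at comparable points. For this I would use the fact that $|\dK-\dK_\eps|=O(\eps\ln\eps)\ll\eps^{2/3}\lesssim\hat d_+$ for large $C_\delta$, so the segment lies in $V_\delta^+$. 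With these, $\tau_{*,\eps,-}=\tau_-+O(\eps\ln\eps)$.
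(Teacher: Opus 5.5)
Your strategy is the one the paper uses: compare the two phases integrated from $\tau_\gamma$, then convert a small phase difference into a small shift via $\re\lambda_1(\dK(\tau_*^-))<0$ on the real axis. However, your formula for $\Delta$ is wrong, and the term it drops is exactly where the $\eps\ln\eps$ comes from. The phase $\Psi$ is built from $\lambda_1(\dK)$, the eigenvalue of the frozen fast system at $\eps=0$. The phase $\Psi_\eps$ is built from $\Lambda_1(\dK_\eps)$, where $\Lambda_1(\kappa,\eps)$ is the eigenvalue of the block $A_0$ of Lemma \ref{lem_transform_1}, and that lemma only gives $\Lambda_1(\kappa,\eps)=\lambda_1(\kappa)+O(\eps d_+^{-1})$. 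So the correct integrand is
\[
\Lambda_1(\dK_\eps)-\lambda_1(\dK)=\bigl[\lambda_1(\dK_\eps)-\lambda_1(\dK)\bigr]+\bigl[\Lambda_1(\dK_\eps)-\lambda_1(\dK_\eps)\bigr],
\]
and you estimate only the first bracket, because you treat $\Lambda_1$ and $\lambda_1$ as the same function.

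The second bracket is $O(\eps\hat d_+^{-1})$. It is not of the integrable type $\hat d_+^{-1/2}|\ln\hat d_+|$. Integrating $\eps|\vartheta-\tau_c|^{-1}$ from $|\tau_\gamma-\tau_c|\gtrsim\eps^{2/3}$ gives $O(\eps|\ln\eps|)$, and this is how the paper's proof obtains the bound.

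The term is genuinely present, not an artefact of the estimate. In the model (\ref{eq_expanded}), the $O(\eps d_+^{-1})$ shift of the equilibrium gives $\Lambda_1-\lambda_1=\eps/(2(\tau-\tau_c))+\dots$. Its contribution to the real phase difference is $\frac{\eps}{2}\ln\bigl(|\tau-\tau_c|/|\tau_\gamma-\tau_c|\bigr)\approx\frac{\eps}{3}|\ln\eps|$ when $|\tau_\gamma-\tau_c|\sim\eps^{2/3}$. So the true phase difference is of exact order $\eps|\ln\eps|$, not $O(\eps)$. Your attribution of the logarithm to the $\ln\hat d_+$ factor in Lemma \ref{K_and_K_eps} is also mistaken: that factor integrates against $\hat d_+^{-1/2}$ to $O(\eps)$.

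Once you add the second bracket, your argument becomes the paper's proof and yields $\tau_{*,\eps,-}=\tau_-+O(\eps\ln\eps)$.
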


Thus,   $\tau_{ *,\eps,-}= \tau_-+ O(\eps\ln \eps) = \tau_- -  \tau_*^- + \tau_*^-  +O(\eps\ln \eps ) =\tau_*^-  +O(|\tau_{\gamma}-\tau_c|^2)
+O(\eps\ln \eps )$.
If  $|\tau_{\gamma}-\tau_c|\sim \eps^{2/3}$, then
$\tau_{ *,\eps,-}=\tau_*^ - +O(\eps^{4/3}) +O(\eps\ln \eps)=O(\eps\ln \eps)$.

The required value  $\tau_{\gamma}$ will be defined later. We will have
$ \tau_{ *,\eps,-}(\tau_{\gamma})-\tau_*^-=O(\eps|\ln \eps|)$. The estimates below  are uniform with respect to $ \tau_{ *,\eps,-}(\tau_{\gamma})$ from the considered interval till the part of the paper where we choose $\tau_{\gamma}$ (Section \ref{domain_3}). We will use a notation $\Gamma_{*,1, \eps}$ for the curve defined in Lemma \ref{second_line} omitting an indication of dependence on $\tau_{\gamma}$.

Consider,  in the $\tau$-plane,  the line passing through the point $\tau_{\gamma}$ and parallel to the real axis. Take any point $\tau_p$ on this line with $\re \tau_{\gamma}\le\re\tau_p\le\re \tau_c$. Consider curves $\re \Psi(\tau)={\rm const}$ and  $\re \Psi_{\eps}(\tau)={\rm const}$ passing through $\tau_{p}$ (i.e. $\re \Psi(\tau)=\re \Psi(\tau_{p})$ and $\re \Psi_{\eps}(\tau)=\re \Psi_{\eps}(\tau_{p}))$. The first of these curves is  $\Gamma_p$ from Section  \ref{some_curves}.  It crosses the real axis at a point $\tau_{p,-}=\tau_*^- +O(|\tau_{\gamma}-\tau_c|^{3/2})=\tau_-+O(|\tau_{\gamma}-\tau_c|^{3/2})$. Denote the second of these curves as $\Gamma_{p, \eps}$. Denote the point of intersection of this curve with the real axis by $\tau_{p,\eps, -}$.

\begin{lem}
\label{third_line}
The curve $\re \Psi_{\eps}(\tau)=\re \Psi_{\eps}(\tau_{p})$ crosses the real axis at  a point $\tau_{p,\eps, -}= \tau_-+ O(\eps\ln \eps)$.
\end{lem}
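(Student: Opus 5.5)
The idea is to reduce Lemma \ref{third_line} to Lemma \ref{second_line} via the chain $\tau_{p,\eps,-}\to\tau_{ *,\eps,-}$, which is $\tau_-+O(\eps\ln\eps)$ by Lemma \ref{second_line}. So it suffices to show that $\tau_{p,\eps,-}-\tau_{ *,\eps,-}=O(\eps\ln\eps)$. The two curves $\Gamma_{p,\eps}$ and $\Gamma_{*,1,\eps}$ are level curves of the same function $\re\Psi_\eps$. Their real-axis intersections are therefore determined by the level values $\re\Psi_\eps(\tau_p)$ and $\re\Psi_\eps(\tau_\gamma)$.

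First I compare the level values. The difference is
\[
\re\Psi_\eps(\tau_p)-\re\Psi_\eps(\tau_\gamma)=\re\int_{\tau_\gamma}^{\tau_p}\Lambda_1(\dK_\eps(\vartheta))\,d\vartheta ,
\]
taken along the horizontal segment. By Lemma \ref{K_and_K_eps}, $\hat d_{+,\eps}\sim\hat d_+$ on this segment. The segment has length $O(|\tau_\gamma-\tau_c|)$ and lies at distance $\gtrsim|\tau_\gamma-\tau_c|$ from $\tau_c$. Lemma \ref{FG} gives $\Lambda_1=O(|\tau-\tau_c|^{1/2})$ there, so the difference is $O(|\tau_\gamma-\tau_c|^{3/2})$. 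For the unperturbed $\Psi$ this is exactly the estimate behind $|\tau_{p,-}-\tau_-|=O(|\tau_\gamma-\tau_c|^{3/2})$ in Section \ref{some_curves}.

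Next I convert the level difference into a shift of the real crossing point. Near $\tau_*^-$ on the real axis, $\frac{d}{d\tau}\re\Psi_\eps=\re\Lambda_1(\dK_\eps(\tau))$. This is bounded away from zero, since $\tau_*^-<\tau_*$ strictly and $\dK_\eps=\dK+O(\eps)$ on the real axis. Hence the shift satisfies $|\tau_{p,\eps,-}-\tau_{ *,\eps,-}|\le C|\re\Psi_\eps(\tau_p)-\re\Psi_\eps(\tau_\gamma)|$. I also need each level curve to actually reach the real axis. For this I use the transversal family of curves from condition 6) and the Remark after it, which is stable under the $O(\eps\ln\eps)$ perturbation of $\Lambda_1$ away from $\tau_c$.

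Combining the two steps gives
\[
\tau_{p,\eps,-}=\tau_-+O(\eps\ln\eps)+O(|\tau_\gamma-\tau_c|^{3/2}).
\]
The remaining step, and the main obstacle, is the size of the $O(|\tau_\gamma-\tau_c|^{3/2})$ term. In the regime of interest $|\tau_\gamma-\tau_c|\sim\eps^{2/3}$, since $\im\tau_\gamma<\im\tau_c-\delta$ with $\delta=C_\delta\eps^{2/3}$ and $\tau_\gamma$ is near $\tau_c$. The term is then $O(\eps)$, which is absorbed into $O(\eps\ln\eps)$. If $\tau_\gamma$ is allowed farther from $\tau_c$, I expect to need a finer bound. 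I would first try to exploit the cancellation that makes $\re\psi_a$ vanish on $\gamma_1$: $\tau_\gamma$ lies on the Stokes ray of the model phase $\psi_a$, and $\tau_p$ is within $O(|\tau_\gamma-\tau_c|)$ of it. Comparing $\Psi_\eps$ with $\Psi$ and $\Psi$ with $\psi_a$ near $\tau_c$ (Lemma \ref{lem_Gamma_gamma}) should control the level difference, as in the statement of the lemma.
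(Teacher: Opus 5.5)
Your argument is correct in the regime where the lemma is used, but it runs the comparison in the opposite order from the paper. The paper's proof is the proof of Lemma~\ref{second_line} with $\tau_p$ in place of $\tau_\gamma$. It compares the perturbed and unperturbed level curves through the \emph{same} point $\tau_p$: $\re\int_{\tau_p}^{\tau_{p,\eps,-}}\Lambda_1(\dK_\eps(\vartheta))\,d\vartheta=0=\re\int_{\tau_p}^{\tau_{p,-}}\lambda_1(\dK(\vartheta))\,d\vartheta$. It then uses $\Lambda_1(\dK_\eps)-\lambda_1(\dK)=\eps|\ln\hat d_+|O(\hat d_+^{-1/2})+\eps O(\hat d_+^{-1})$ to get $\tau_{p,\eps,-}=\tau_{p,-}+O(\eps\ln\eps)$. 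The step from $\tau_{p,-}$ to $\tau_-$ is the unperturbed estimate $\tau_{p,-}=\tau_-+O(|\tau_\gamma-\tau_c|^{3/2})$ stated just before the lemma. You instead stay within the perturbed family, bounding $\re\Psi_\eps(\tau_p)-\re\Psi_\eps(\tau_\gamma)$ directly and using Lemma~\ref{second_line} as a black box for the perturbative step. Your route saves a second perturbation integral and needs only a crude bound on $|\Lambda_1|$ along a short horizontal segment. The paper's route gives the sharper, regime-independent intermediate result $\tau_{p,\eps,-}=\tau_{p,-}+O(\eps\ln\eps)$, the same form as Lemma~\ref{fourth_line}, and confines the $|\tau_\gamma-\tau_c|^{3/2}$ error to unperturbed curves. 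Routing through $\tau_\gamma$, as you do, necessarily adds that error to the perturbed comparison.

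Two corrections to your final paragraph. First, $\im\tau_\gamma<\im\tau_c-\delta$ gives only the lower bound $|\tau_\gamma-\tau_c|\gtrsim\eps^{2/3}$. The upper bound comes from the later choice $\hat d_+(\tau_\gamma)=C_{\gamma,*}\eps^{2/3}$ (Section~\ref{first_principal_part}), and you should state it as a hypothesis. Second, no finer bound exists outside this regime, because the $|\tau_\gamma-\tau_c|^{3/2}$ term is sharp. The vanishing of $\re\psi_a$ on $\gamma_1$ helps only at $\tau_\gamma$ itself. For $\tau_p$ on the vertical through $\tau_c$, the level difference is of exact order $|\tau_\gamma-\tau_c|^{3/2}$: see Lemma~\ref{lem_Gamma_q} and the remark after it, where $|\tau_{q,\pm}-\tau_*^{\pm}|$ is of order $|\tau_q-\tau_c|^{3/2}$. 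So with $\tau_-$ on the right-hand side the statement is false once $|\tau_\gamma-\tau_c|^{3/2}\gg\eps|\ln\eps|$, and that speculation should be dropped.
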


Consider now a vertical line through $\tau_c$. Take  a point $\tau_q$  on this line below $\tau_c$, close to $\tau_c$,  but such that $\im \tau_{q}<\im\tau_c-{\delta}$. Consider curves $\re \Psi(\tau)={\rm const}$ and  $\re \Psi_{\eps}(\tau)={\rm const}$ passing through $\tau_{q}$ (i.e. $\re \Psi(\tau)=\re \Psi(\tau_{q})$ and $\re \Psi_{\eps}(\tau)=\re \Psi_{\eps}(\tau_{q}))$. The first of these curves is  $\Gamma_q$ from Section  \ref{some_curves}.  It crosses the real axis at points $\tau_{q,\pm}=\tau_*^{\pm}+O(|\tau_q -\tau_c|^{3/2})$. Denote the second of these curves by
 $\Gamma_{q, \eps}$. Denote points of intersection of this curve with the real axis by $\tau_{q,\eps,\pm}$.

\begin{lem}
\label{fourth_line}
The curve $\re \Psi_{\eps}(\tau)=\re \Psi_{\eps}(\tau_{q})$ crosses the real axis at  points $\tau_{q,\eps,\pm}=\tau_{q,\pm}+O(\eps\ln \eps)$.
\end{lem}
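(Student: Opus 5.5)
We compare the two level curves through $\tau_q$ by integrating the perturbed phase along the unperturbed curve $\Gamma_q$. The difference $\re\Psi_\eps-\re\Psi$ grows slowly along $\Gamma_q$. Near the real axis $\re\Psi_\eps$ has a transversal derivative bounded away from zero, because $\re\lambda_1$ does not vanish at $\tau_*^\pm$. So a shift in the level of size $O(\eps\ln\eps)$ becomes a shift of the crossing point of the same order. The argument follows Lemmas \ref{second_line} and \ref{third_line}; the only new ingredient is that $\Gamma_q$ passes below $\tau_c$ at distance $\ge\delta\sim\eps^{2/3}$ and must be controlled on both sides, near $\Gamma_{*,1}$ and near $\Gamma_{*,2}$.

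\textbf{Step 1 (bound on $\Psi_\eps-\Psi$ along $\Gamma_q$).} Write
\[
\Psi_\eps(\tau)-\Psi(\tau)=\int_{\tau_*}^{\tau}\bigl(\Lambda_1(\dK_\eps(\vartheta))-\Lambda_1(\dK(\vartheta))\bigr)\,d\vartheta .
\]
Integrate along a path from $\tau_*$ to $\tau_q$ that stays in $K_{*,\delta}$, then along $\Gamma_q$ in both directions down to the real axis. By Lemma \ref{lem_transform_0}, $d\Lambda_1/d\kappa=O(d_+^{-1/2})$. Lemma \ref{K_and_K_eps} gives $|\dK_\eps-\dK|=O(\eps(1+|\ln\hat d_+|))$ and shows that $\hat d_+$ and $\hat d_{+,\eps}$ are comparable. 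Hence the integrand is $O(\eps(1+|\ln\hat d_+|)\hat d_+^{-1/2})$.

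By Lemma \ref{FG}, $\hat d_+(\vartheta)\sim|\vartheta-\tau_c|$ near $\tau_c$. The function $r^{-1/2}|\ln r|$ is integrable in arclength along a curve passing at distance $\ge\delta$ from $\tau_c$, and Lemma \ref{lem_Gamma_q} shows that $\Gamma_q$ is such a curve. Therefore $\Psi_\eps-\Psi=O(\eps)$ on $\Gamma_q$, and in particular at its real endpoints $\tau_{q,\pm}$. The same bound holds at $\tau_q$, since we get there by the same kind of path.

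\textbf{Step 2 (matching levels).} We need the point $\tau'$ on the real axis near $\tau_{q,+}$, and likewise near $\tau_{q,-}$, where $\re\Psi_\eps(\tau')=\re\Psi_\eps(\tau_q)$. Since $\re\Psi(\tau_{q,\pm})=\re\Psi(\tau_q)$, Step 1 gives $\re\Psi_\eps(\tau_{q,\pm})=\re\Psi_\eps(\tau_q)+O(\eps)$. On the real axis,
\[
\frac{d}{d\tau}\re\Psi_\eps=\re\Lambda_1(\dK_\eps(\tau)) ,
\]
and this is bounded away from zero near $\tau_*^\pm$ because $\tau_*^\pm\ne\tau_*$. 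By the implicit function theorem, or simply monotonicity, the crossing point is $\tau_{q,\pm}+O(\eps)$.

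The logarithmic loss in the statement covers the cruder alternative estimate in Step 1, where one uses $\hat d_+\gtrsim\delta$ together with $\ln\delta=O(\ln\eps)$. It also covers a possible $O(\eps\ln\eps)$ mismatch in the initial data $\dK_\eps(\tau_*^-)$ if one prefers to integrate from $\tau_*^-$ instead of $\tau_*$. The near-axis segments also need checking: $\dK_\eps$ and $\dK$ differ by $O(\eps)$ there, and we must make sure the $\eps^3$-type corrections in $F$ are harmless. Both follow from (\ref{improved_slow}), since there $d_\pm\sim1$.

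\textbf{Main obstacle.} The hard part is the uniform integrability along $\Gamma_q$ near $\tau_c$. We need the arclength parametrisation of $\Gamma_q$ near its top point, which lies below $\tau_c$ at distance $\gtrsim\eps^{2/3}$. We also need $\Gamma_q$ to stay inside the domain where $\dK_\eps$ is defined and Lemma \ref{K_and_K_eps} applies, i.e. $K_{*,\delta}$ together with its $\eps^{2/3}$-neighbourhood.

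I would establish the parametrisation using Lemma \ref{lem_Gamma_q} and the model phase $\psi_a$. Near $\tau_c$, $\Gamma_q$ is close to a level curve of $\re(\tau-\tau_c)^{3/2}$. Such a curve has length $O(|\tau-\tau_c|)$ inside each dyadic annulus around $\tau_c$. Summing $r^{1/2}|\ln r|$ over the dyadic annuli then gives a total of $O(1)$.
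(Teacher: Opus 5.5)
Your overall scheme is the paper's: compare the integrands of the two phases, integrate, and use $\re\lambda_1(\dK(\tau_*^\pm))\neq0$ to turn a level mismatch into a shift of the crossing point. But Step~1 drops the term that actually produces the logarithm.

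The two phases are built from different eigenvalues. $\Psi$ integrates the eigenvalue $\lambda_1(\dK)$ of the original linearisation. $\Psi_\eps$ integrates the eigenvalue $\Lambda_1(\dK_\eps)$ of the block $A_0$ of Lemma~\ref{lem_transform_1}, obtained after shifting to the $\eps$-corrected frozen equilibrium, and $A_0$ is only $O^*(\eps d_+^{-1})$-close to $\tilde{\mathcal A}_0$. Your formula, with $\Lambda_1(\dK)$ in place of $\lambda_1(\dK)$, is therefore not $\Psi_\eps-\Psi$. The correct integrand is
\[
\Lambda_1(\dK_\eps)-\lambda_1(\dK)=\bigl(\Lambda_1-\lambda_1\bigr)(\dK_\eps)+\bigl(\lambda_1(\dK_\eps)-\lambda_1(\dK)\bigr)=O(\eps\hat d_+^{-1})+O\bigl(\eps(1+|\ln\hat d_+|)\hat d_+^{-1/2}\bigr),
\]
exactly as in the paper's proof of Lemma~\ref{second_line}, and you only estimated the second piece.

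The first piece is not integrable at $\tau_c$. Along $\Gamma_q$ each dyadic annulus around $\tau_c$ contributes $O(\eps)$, and there are $O(|\ln\eps|)$ annuli between radius $\sim C_q\eps^{2/3}$ and radius $\sim1$, giving $O(\eps|\ln\eps|)$. This is not an artefact of the bound. In the model (\ref{e_riccati}) the frozen equilibrium shifts by $\approx\eps\,(dZ_1/d\tau)/\lambda_1$. Hence $\Lambda_1-\lambda_1\approx\eps\,d\ln Z_1/d\tau=\eps/(2(\tau-\tau_c))$, whose real part integrates to $\tfrac{\eps}{2}\ln|\tau-\tau_c|$. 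Between $\tau_q$ and $\tau_{q,\pm}$ this equals $\tfrac{\eps}{3}|\ln\eps|+O(\eps)$. So your intermediate claims, $\Psi_\eps-\Psi=O(\eps)$ on $\Gamma_q$ and $\tau_{q,\eps,\pm}=\tau_{q,\pm}+O(\eps)$, cannot hold in general. The logarithm in the statement is the genuine contribution of this adiabatic correction, not slack.

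The repair is immediate and gives exactly the paper's argument: the proof of Lemma~\ref{second_line} with $\tau_\gamma$ replaced by $\tau_q$, which the paper declares analogous. Keep your dyadic length bound for $\Gamma_q$ from Lemma~\ref{lem_Gamma_q}. Add the $O(\eps\hat d_+^{-1})$ piece, using Lemma~\ref{K_and_K_eps} to pass from $\hat d_{+,\eps}$ to $\hat d_+$. This gives $\re\Psi_\eps(\tau_{q,\pm})-\re\Psi_\eps(\tau_q)=O(\eps|\ln\eps|)$, and your Step~2 then yields $\tau_{q,\eps,\pm}=\tau_{q,\pm}+O(\eps\ln\eps)$.

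You integrate along $\Gamma_q$, while the paper integrates along the perturbed curve and then along the real axis between the two crossing points. The choice is immaterial by analyticity of the integrand away from $\tau_c$. The detour from $\tau_*$ to $\tau_q$ is also unnecessary, since only differences along $\Gamma_q$ enter.
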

Thus, $\tau_{q,\eps,\pm}=\tau_*^{\pm}+O(|\tau_q -\tau_c|^{3/2})+O(\eps\ln \eps)$. In particular, if
$|\tau_q -\tau_c|\sim \eps^{2/3}$, then   $\tau_{q,\eps,\pm}=\tau_*^{\pm}+O(\eps\ln \eps)$.

 \medskip
 We can introduce now domains $D_{\gamma}$,  $D_p$ and $D_q$  in $\tau$-plane to which solutions will be continued. 

Domain  $D_{\gamma}$ is bounded by the curve  $\Gamma_{*,1, \eps}$, by the complex conjugate to it curve 
 $\bar\Gamma_{*,1, \eps}$, and by the lines $\im \tau=\im \tau_{\gamma}$,  $\im \tau=-\im \tau_{\gamma}$, 
 $\re \tau=\re \tau_{\gamma}$.
 
 Domain  $D_{p}$ is bounded by the curve  $\Gamma_{p, \eps}$, by the complex conjugate to it curve $\bar\Gamma_{p, \eps}$, and by the lines $\im \tau=\im \tau_{\gamma}$,  $\im \tau=-\im \tau_{\gamma}, 
 \re \tau=\re \tau_p$.
 
  Domain  $D_{up}$ is the union of domains $D_{p}$ for all the considered values $\tau_p$.
 
 Domain  $D_{q}$ is bounded by the curve  $\Gamma_{q, \eps}$ and by the complex conjugate to it curve 
 $\bar\Gamma_{q, \eps}$.
 
 \subsection{Continuation into domains $D_{\gamma}, D_{up} $.}
 \label{cont_D_gamma}
 
 Denote $t_3=\tau_{ *,\eps,-}(\tau_{\gamma})/\eps$. 
  Let $\hat d_{+}(\tau_{\gamma})=C_{\gamma,*}\eps^{2/3}$, where $C_{\gamma,*}$ is a constant. Then $\im \tau_{\gamma}= \im \tau_c -(C_{\gamma}+o(1))\eps^{2/3}$ with a constant $C_{\gamma}$ determined by the constant $C_{\gamma,*}$. We take $C_{\gamma,*}$ such that
  $C_{\gamma}>C_{\delta}$.  Consider solution $z(t), \eta(t), \kappa(t)$ of  system (\ref{d_equation}) with initial conditions of the form
 \begin{equation}
 \label{init_gamma}
  |\kappa(t_3)-\dK_{\eps}(\eps t_3)|=O(\eps^6 |\ln \eps|) ,
 \  |z(t_3)|= O(\eps^3),\ |\eta(t_3)|= O(\eps^3) .
 \end{equation}
 \begin{lem}
 If $C_{\gamma}>c_{e,1}$, then 
the solution $z(t), \eta(t), \kappa(t)$ can be   analytically  continued into the domains $D_{\gamma}$ and $D_{up}$ with the following estimates.
\label{lem_cont_D_gamma}

If $\im \tau> -c_{l,1}^{-1}$, then
\begin{equation}
|\kappa(t)-\dK_{\eps}(\eps t)|<  c_{e,2}\eps^4 \hat{ d}_{+}^{-9/2}  ,\ 
 | z(t)|<c_{e,3}\eps^2 \hat{ d}_{+}^{-5/2}, \  | \eta(t)|<c_{e,4}\eps^{3} \hat{ d}_{+}^{-3}.  \end{equation}
 
 If\quad  $-2 c_{l,1}^{-1}<  \im \tau < 2 c_{l,1}^{-1}$, then
  \begin{equation}
|\kappa(t)-\dK_{\eps}(\eps t)|=O(\eps^4 ) ,\ 
 | z(t)|=O(\eps^3), \  | \eta(t)|=O(\eps^{3}).  \end{equation}

 If $\im \tau< c_{l,1}^{-1}$, then
\begin{equation}
|\kappa(t)-\dK_{\eps}(\eps t)|<  c_{e,2}\eps^4 \hat{ d}_{-}^{-9/2}  ,\ 
 | z(t)|<c_{e,4}\eps^2 \hat{ d}_{-}^{-3}, \  | \eta(t)|<c_{e,4}\eps^{3} \hat{ d}_{-}^{-3}. 
 \end{equation}
\end{lem}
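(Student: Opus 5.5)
We continue the solution along the family of iso-expanding paths that sweep out $D_{\gamma}$ and $D_{up}$. The paths are the level curves $\re\Psi_{\eps}(\tau)={\rm const}$ of the phase (\ref{phase_eps}), together with their complex conjugates, starting at the real point $t_3=\tau_{*,\eps,-}(\tau_\gamma)/\eps$. The argument is a bootstrap (continuity) argument in the variables $(z,\eta,\kappa)$ of system (\ref{d_equation}). The variable $w$ is never integrated; it is recovered from the reality relation $w(t)=\overline{z(\bar t)}$ of Lemma \ref{lem_real}. We therefore have to control the solution simultaneously at $\tau$ and $\bar\tau$. This is why the domains are symmetric and why the three regimes $\im\tau>-c_{l,1}^{-1}$, $|\im\tau|<2c_{l,1}^{-1}$ and $\im\tau<c_{l,1}^{-1}$ appear.

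\textbf{Bootstrap hypothesis.} Suppose the bounds of the lemma hold with constants doubled on a subpath. Along a path with $\re\Psi_{\eps}={\rm const}$, the linear part $\Lambda_1(\kappa)z$ of the $z$-equation is neutral up to the error from $\kappa-\dK_{\eps}$. By the bootstrap, $\Lambda_1(\kappa)-\Lambda_1(\dK_\eps)=O(\eps^4\hat d_+^{-5})$, and its integral over slow time is $O(\eps^3\hat d_+^{-5})$, which is small once $\hat d_+\gtrsim\eps^{2/3}$ with $C_\gamma$ large. Hence the fundamental solution for $z$ is $O(1)$. By condition 5) and Lemma \ref{lem_transform_00}, the block $B(\kappa)$ for $\eta$ is uniformly contracting along these paths.

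\textbf{Integrating the forcing.} We use variation of constants (Duhamel) in slow time, $dt=d\tau/\eps$, and insert the bootstrap bounds into each $O$-term of (\ref{d_equation}). The dominant inhomogeneity in $\dot z$ is $\eps^3O^*(d_+^{-7/2})$. Integrating it gives
$$
\eps^{-1}\int\eps^3\hat d_+^{-7/2}\,|d\tau|\sim \eps^2\hat d_+^{-5/2},
$$
using $\hat d_+\sim|\tau-\tau_c|$ from Lemma \ref{lem_for_domain_V}. In the $\eta$-equation the contraction makes the response local, so $\eta=O(\eps^3\hat d_+^{-3})$. All nonlinear terms are checked to be smaller by a factor $O(\eps^{2/3}\hat d_+^{-1})^k$, hence by a small power of $C_\gamma^{-1}$. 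Examples are $O(|\eta||w|)$, $\eps O(|z|^2\hat d_+^{-3/2})$ and $\eps^3O(\hat d_+^{-3}|\xi|)$, where $|w|$ is bounded through the conjugate-point estimate.

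For $\kappa$ we compare with $\dK_\eps$, which solves (\ref{improved_slow}). The difference satisfies a linear equation with coefficient $\eps\,\partial_\kappa F=O(\eps\hat d_+^{-1/2})$, forced by the $\eps^2O(|z|^2\hat d_+^{-2})$ and $\eps^3O(|z|\hat d_+^{-3})$ terms. Integrating gives $\eps^4\hat d_+^{-9/2}$ plus the initial error $O(\eps^6|\ln\eps|)$.

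\textbf{Real-axis band.} In the band $|\im\tau|<2c_{l,1}^{-1}$, the quantity $\hat d_\pm$ is bounded below, so the estimates reduce to the standard $O(\eps^3)$ and $O(\eps^4)$ ones. These are obtained as in \cite{nei2.2}, starting from the initial data (\ref{init_gamma}). Near $\tau_{*,\eps,-}$ the real-time contraction before $\tau_*$ justifies the initial choice. The lower half-plane is handled by the same argument with $d_-$ and $z,w$ interchanged, using (\ref{d_equation1}).

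\textbf{Main obstacle.} The hard step is closing the bootstrap near the top of the domain, where $\hat d_+\sim C_{\gamma,*}\eps^{2/3}$. There all ratios $\eps\hat d_+^{-3/2}$ are only $O(C_{\gamma,*}^{-3/2})$ rather than small in $\eps$. One must check that each error term carries a positive power of $C_{\gamma,*}^{-1}$, and this is what fixes $c_{e,1}$. The check must also cover terms like $O(|z|^5\hat d_+^{-3/2})$ and the imaginary delay coupling through $w$.

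A second delicate point is the integral of $\re\Lambda_1(\kappa)-\re\Lambda_1(\dK_\eps)$ along paths of length $O(1)$. For it to stay bounded we need Lemma \ref{K_and_K_eps} and the fact that the paths are level curves of $\re\Psi_\eps$, not of $\re\Psi$. This is precisely why $\Psi_\eps$ was introduced.
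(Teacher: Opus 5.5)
Your overall scheme is the paper's. The paper proves this lemma exactly as Lemma \ref{lem_cont_D_q_l}: a continuity argument in which the remainders are treated as forcing, $z$ is integrated along $\re\Psi_\eps={\rm const}$ (where it is neutral), $\eta$ is controlled by a Lyapunov function, and $\kappa$ is compared with $\dK_\eps$.

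What is missing is the paper's second pass for $z$. The modulus--Duhamel bound along neutral paths gives $|z|\lesssim\eps^2\hat d_+^{-5/2}$, and in the band $|\im\tau|<2c_{l,1}^{-1}$ it gives only $O(\eps^2)$. An $O(\eps^3)$ forcing acts there for fast time $O(1/\eps)$, so a lower bound on $\hat d_\pm$ does not yield $O(\eps^3)$. Citing \cite{nei2.2} does not supply the missing factor $\eps$.

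The paper improves the bound by moving \emph{vertically downward}. By condition 6), $\re\Psi_\eps$ decreases strictly along vertical lines, so $\re(-i\Lambda_1(\dK_\eps))<-c\,\hat d_+^{1/2}$. Hence $|z|$ decreases as long as it exceeds $c\,\eps^3\hat d_+^{-4}$, which gives $|z|=O(\eps^3)$ near the real axis. Continuing downward into $\im\tau<0$, $z$ remains contracting (its eigenvalue does not vanish at $\bar\tau_c$), and the forcing in (\ref{d_equation1}) is only $\eps^3O(d_-^{-3})$. This gives $|z|<c\,\eps^3\hat d_-^{-3}$ there.

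Only this, through $w(t)=\overline{z(\bar t)}$, produces $|w|<c\,\eps^3\hat d_+^{-3}$ in the upper part, which is the ``conjugate-point estimate'' you invoke. Your ``same argument with $z,w$ interchanged'' does not give it. Below the axis $z$ is not a neutral variable, and you never use its contraction.

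This is also what lets the bootstrap close. The printed lower-half bound $|z|<c_{e,4}\eps^2\hat d_-^{-3}$ should read $\eps^3\hat d_-^{-3}$ (compare Lemma \ref{lem_cont_D_q_l} and Lemma \ref{est_of_shorten}). Taken literally, doubled, as your hypothesis, it causes two failures:
\begin{itemize}
\item It allows $|z|\sim C_{\gamma,*}^{-3}$ near $\bar\tau_\gamma$, which is outside the region $|\xi|<c_{t,5}\eps^{1/3}$ where (\ref{d_equation1}) is valid.
\item Through $w$, it makes the $O(|zw|)$ term of $\dot\eta$ as large as $\eps^4\hat d_+^{-11/2}$. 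That is not small compared with $\eps^3\hat d_+^{-3}$ when $\hat d_+\sim\eps^{2/3}$.
\end{itemize}
So your claim that every nonlinear term carries a factor $(\eps^{2/3}\hat d_+^{-1})^k$ is unverified until the lower-half hypothesis is sharpened.

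There is also a smaller slip. The integral of $\Lambda_1(\kappa)-\Lambda_1(\dK_\eps)$ over fast time is at most of order $\eps^3\hat d_+^{-4}$, not $\eps^3\hat d_+^{-5}$. The latter is $C_{\gamma,*}^{-5}\eps^{-1/3}$ at the top of the domain and would not be small. The paper sidesteps this by absorbing $(\Lambda_1(\kappa)-\Lambda_1(\dK_\eps))z$ into the forcing, using $|z|<c_{t,5}\eps^{1/3}$.
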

Constants in this Lemma depend on constants in estimates ``$O(\cdot)$'' for initial conditions in (\ref{init_gamma}) and  initial condition for solution of  the ``improved'' slow equation in  Section \ref{improved_curves}.  

\subsection{Continuation into domain $D_{q}$.}
 \label{cont_D_q}

Denote by $D_{q,l}$ and  $D_{q,r}$ the left (with $\re \tau\le \re \tau_c)$ and right  (with $\re \tau\ge \re \tau_c)$ parts of the domain  $D_{q}$.

  Let $\im \tau_q= \im \tau_c -C_{q}\eps^{2/3}$, where $C_{q}$ is a constant, $C_{q}>C_{\delta}$.    
  Then \\$\hat d_+(\tau_q)=(C_{q,1}+o(1))\eps^{2/3}$ with a constant $C_{q,1}$ determined by the constant $C_{q}$.
  Denote $t_4=\tau_{q,\eps,-}/\eps$.  Consider solution $z(t), \eta(t), \kappa(t)$ of  system (\ref{d_equation}) with initial conditions of the form
 \begin{equation}
  |\kappa(t_4)-\dK_{\eps}(\eps t_4)|=O(\eps^6 |\ln \eps|) ,
 \  |z(t_4)|= O(\eps^3),\ |\eta(t_4)|= O(\eps^3) .
 \end{equation}
 
 \subsubsection{Continuation into domain $D_{q,l}$.}
\label{cont_D_q_l}

 \begin{lem}
 \label{lem_cont_D_q_l}

 If $C_{q}>c_{e,5}$, then 
the solution $z(t), \eta(t), \kappa(t)$ can be  analytically continued  into the domain $D_{q,l}$ 
with the following estimates.

\medskip

If $\im \tau>- c_{l,1}^{-1}$, then
\begin{equation}
|\kappa(t)-\dK_{\eps}(\eps t)|<  c_{e,6}\eps^4 \hat{ d}_{+}^{-9/2}  ,\ 
 | z(t)|<c_{e,7}\eps^2 \hat{ d}_{+}^{-5/2}, \  | \eta(t)|<c_{e,8}\eps^{3} \hat{ d}_{+}^{-3}.  \end{equation}
  
  If\quad  $-2 c_{l,1}^{-1}<  \im \tau < 2 c_{l,1}^{-1}$, then
  \begin{equation}
|\kappa(t)-\dK_{\eps}(\eps t)|=O(\eps^4 ) ,\ 
 | z(t)|=O(\eps^3), \  | \eta(t)|=O(\eps^{3}).  \end{equation}

 If  $\im \tau< c_{l,1}^{-1}$, then
\begin{equation}
|\kappa(t)-\dK_{\eps}(\eps t)|<  c_{e,6}\eps^4 \hat{ d}_{-}^{-9/2},\ 
 |z(t)|<c_{e,8}\eps^3 {\hat d}_{-}^{-3}, 
   | \eta(t)|<c_{e,8}\eps^3 \hat{ d}_{-}^{-3}.  \end{equation}
   \end{lem}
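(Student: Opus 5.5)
We prove Lemma \ref{lem_cont_D_q_l} by the same continuity (bootstrap) argument as Lemma \ref{lem_cont_D_gamma}. The solution is continued along iso-expanding paths of the improved phase $\Psi_\eps$, and the estimates come from integrating system (\ref{d_equation}) as a perturbation of its linear part.

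\emph{Paths.} We foliate $D_{q,l}$ by the arcs $\re\Psi_{\eps}(\tau)={\rm const}$ lying between $\Gamma_{q,\eps}$ and the real axis, together with their complex conjugates. Each arc starts at a real point $\tau\le\tau_{q,\eps,-}$. By the assumptions at the end of Section \ref{form_conditions}, and by Lemmas \ref{fourth_line} and \ref{lem_Gamma_q}, these arcs are graphs over $\re\tau\le\re\tau_c$ and stay at distance $\gtrsim C_q\eps^{2/3}$ from $\tau_c$. Hence $\hat d_+\ge c\,C_q\eps^{2/3}$ on $D_{q,l}$. We first continue the solution along the real axis. Then, from each real point, we continue it upward along the corresponding arc, parametrised by $\im\tau$; condition 6) guarantees that the tangents are not vertical. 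Real-analyticity is used in the form of Lemma \ref{lem_real}: $w(t)=\overline{z(\bar t)}$. This closes the system on $D_{q,l}\cup\overline{D_{q,l}}$, so the upper and lower halves are treated simultaneously. In the upper part we work in $W^+_\delta$; in the lower part, in $W^-_\delta$ with $z,w$ interchanged; in the strip $|\im\tau|<2c_{l,1}^{-1}$ both descriptions apply.

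\emph{A priori estimates.} We assume the claimed bounds hold with doubled constants up to some point of a path, and show they then hold with the original constants. Along an iso-expanding path the linear factor $\exp(\eps^{-1}\int\Lambda_1\,d\tau)$ for $z$ has modulus $\exp(\eps^{-1}\re(\Psi_\eps(\tau)-\Psi_\eps(\tau_0)))$. This is bounded in the upper region, since moving down from the arc toward the real axis through $D_{q,l}$ only decreases $\re\Psi_\eps$ relative to the starting level. The $\eta$-block contracts by condition 5). The variation-of-constants integral of the forcing term $\eps^3O^*(\hat d_+^{-7/2})$ in $\dot z$, taken along the path with $dt=d\tau/\eps$, gives $\eps^2\int\hat d_+^{-7/2}\,|d\tau|\sim\eps^2\hat d_+^{-5/2}$. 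In doing this we use $|d\tau|\sim d(\hat d_+)$ near $\tau_c$ (Lemma \ref{lem_for_domain_V}, Lemma \ref{K_and_K_eps}). For $\eta$, contraction gives $\eps\cdot\eps^{3}\hat d_+^{-3}/\eps=\eps^3\hat d_+^{-3}$. We then check that the nonlinear and $\eps$-weighted terms of (\ref{d_equation}) are smaller than these principal terms under the bootstrap hypotheses, for example $\eps|z|^2\hat d_+^{-3/2}\lesssim\eps^5\hat d_+^{-13/2}$. This holds provided $\eps\hat d_+^{-3/2}\le C_q^{-3/2}$ is small, i.e.\ for $C_q>c_{e,5}$. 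For $\kappa$, we compare $\dot\kappa-\eps F(\kappa)$ with the improved slow equation (\ref{improved_slow}). The main remainders are $\eps^2|z|^2\hat d_+^{-2}$ and $\eps^3|z|\hat d_+^{-3}$. Integrating with Gronwall, using $\p F/\p\kappa=O(\hat d_+^{-1/2})$, whose integral is bounded, gives $\eps^4\hat d_+^{-9/2}$. In the lower half the $z$-forcing is only $\eps^3\hat d_-^{-3}$, which yields $\eps^3\hat d_-^{-3}$, as stated. Since $\bar\tau_c$ is far from $D_{q,l}$, we have $\hat d_-\sim1$ on most of this part.

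\emph{Main obstacle.} The delicate step is the region near the top of $D_{q,l}$, where $\hat d_+\sim C_q\eps^{2/3}$. There, every correction term is of the same order $\eps^{1/3}$ times a power of $C_q^{-1}$, so the bootstrap closes only because of the smallness $C_q^{-3/2}$. The argument also requires a uniform bound on $\int\hat d_+^{-k}|d\tau|$ along curves that approach $\Gamma_{*,1}$ and bend toward the vertical line through $\tau_c$. I would use Lemma \ref{lem_Gamma_q} together with the ray structure of $\gamma_1$ to show that $|\tau-\tau_c|$ is monotone along each arc, up to bounded factors. This turns each integral into $\int_{\hat d_+}^{1}r^{-k}\,dr$. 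Finally, the domain bounds on $\kappa$ keep $\kappa(t)\in V_\delta$, which justifies the use of the transformed system (\ref{d_equation}).
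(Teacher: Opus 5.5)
Your treatment of the upper half of $D_{q,l}$ matches the paper's (Lemmas \ref{est_of_shorten}, \ref{margin_D_l}):
\begin{itemize}
\item continuation along the real axis and then along the arcs $\re\Psi_\eps=\mathrm{const}$, where the $z$-propagator has modulus one and the forcing integrates to $\eps^2\hat d_+^{-5/2}$;
\item a contraction (Lyapunov) argument for $\eta$;
\item the $\eps^5\hat d_+^{-11/2}$ remainder in $\dot\kappa$;
\item the margin check through $\eps\hat d_+^{-3/2}\lesssim C_q^{-3/2}$.
\end{itemize}
Two minor slips: the arcs start at real points $\tau\ge\tau_{q,\eps,-}$, and along an iso-expanding arc the propagator modulus is exactly $1$.

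The gap is the estimate for $z$ in the lower half of $D_{q,l}$ and in the strip $|\im\tau|<2c_{l,1}^{-1}$. Integrating the forcing along a path on which $z$ merely does not grow gives only $\eps^2\int\hat d_-^{-3}|d\tau|\sim\eps^2\hat d_-^{-2}$ in the lower half. In the strip it gives $\eps^3+\eps^2\im\tau=O(\eps^2)$. To reach $\eps^3\hat d_-^{-3}$ and $O(\eps^3)$, you need paths on which $z$ contracts at a rate bounded below, so that the forcing is balanced (size forcing/rate) rather than accumulated.

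Your lower-half paths, the conjugates of the upper arcs, do not supply this. By $w(t)=\overline{z(\bar t)}$, the exponential rate of $z$ along $\overline{\Gamma}$ equals the rate of $w$ along $\Gamma$, i.e.\ $\re(\Lambda_2\,d\tau)$ on a curve where $\re(\Lambda_1\,d\tau)=0$. No hypothesis fixes its sign: condition 5) covers only $\lambda_3,\dots,\lambda_n$. Avoiding any control of $w$ along these curves is exactly why the paper replaces the $w$-equation by $w(t)=\overline{z(\bar t)}$.

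The remark that $\hat d_-\sim1$ does not help either. $D_{q,l}$ is bounded by $\Gamma_{q,\eps}$ and $\overline{\Gamma}_{q,\eps}$, so it comes within $\sim C_q\eps^{2/3}$ of $\bar\tau_c$. The bound is needed uniformly there; it is used on $\re\tau=\re\tau_c$ in Lemma \ref{lem_cont_D_q_r}. Moreover, through $w(t)=\overline{z(\bar t)}$ this lower-half bound is your only control of $w$ in the upper half. Your bootstrap needs that control for the terms $|zw|$, $|\eta|\,|w|$, $\eps|w|^2\hat d_+^{-1/2}$ in the $z$- and $\eta$-equations, so the argument does not close as written.

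What is missing is vertical descent. Condition 6) holds on all of $K_*$, in both halves, and $\im\lambda_1<0$. Hence $\re(-i\Lambda_1(\dK_\eps))<0$ throughout, i.e.\ $\re\Psi_\eps$ decreases downward along every line $\re t=\mathrm{const}$. You state this fact, but use it only to bound a propagator that is already of modulus one. The contraction rate is $\gtrsim\hat d_+^{1/2}$ in the upper half and $\gtrsim1$ in the lower half, because $\Lambda_1$ does not vanish at $\bar\tau_c$. The paper uses this in two places:
\begin{itemize}
\item It descends vertically from the real axis, where $z=O(\eps^3)$, and a comparison argument gives $|z|\lesssim\eps^3\hat d_-^{-3}$ below it.
\item It descends vertically from $\Gamma_{q,\eps}$; along such lines $|z|$ decreases while it exceeds $c\,\eps^3\hat d_+^{-4}$. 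This improves your arc bound to $O(\eps^3)$ in the strip.
\end{itemize}
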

   We need also some improved estimate for motion vertically down from $\tau_q$. 
   \begin{lem}
   \label{l_improved_I}
   On the vertical line $\re \tau=\re \tau_c$, if \\
   $ \hat{ d}_{+}(\tau_q)+c_{e,9,1}\eps^{2/3}C_q^{-1/2}(\ln C_q)<\hat{ d}_{+}<c_{e,9,2}^{-1} $, then
 \begin{equation}
 | z(t)|<c_{e,9,3}\eps^3 \hat{ d}_{+}^{-4}.
\end{equation}
   \end{lem}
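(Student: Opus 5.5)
To prove Lemma~\ref{l_improved_I} I will work directly with the equation for $\dot z$ in (\ref{d_equation}), restricted to the vertical segment $\re\tau=\re\tau_c$ going downward from $\tau_q$. On this segment I already have the estimates of Lemma~\ref{lem_cont_D_q_l}: $|z|<c\,\eps^2\hat d_+^{-5/2}$, $|\eta|<c\,\eps^3\hat d_+^{-3}$, $|\kappa-\dK_\eps|<c\,\eps^4\hat d_+^{-9/2}$. The task is to upgrade the bound on $z$ from $\eps^2\hat d_+^{-5/2}$ to $\eps^3\hat d_+^{-4}$ once $\hat d_+$ exceeds $\hat d_+(\tau_q)$ by a margin of order $\eps^{2/3}C_q^{-1/2}\ln C_q$.

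First I write the $z$-equation as a linear inhomogeneous equation along the path $\tau=\tau_q-i\theta$, $\theta\ge 0$:
\[
\frac{dz}{d\theta}=-\frac{i}{\eps}\Lambda_1(\kappa)z+\frac{1}{\eps}h .
\]
The forcing $h$ collects all remaining terms. I bound $h$ using the a priori estimates from Lemma~\ref{lem_cont_D_q_l}:
\begin{itemize}
\item The inhomogeneous term $\eps^3O^*(\hat d_+^{-7/2})$ is of the required size.
\item The quadratic and higher terms are small relative to it. For example, $\eps|z|^2\hat d_+^{-3/2}\lesssim \eps^5\hat d_+^{-13/2}$, which is at most $\eps^3\hat d_+^{-7/2}$ because $\hat d_+\gtrsim \eps^{2/3}$. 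Likewise $|\eta|^2,|\eta||w|\lesssim \eps^5\hat d_+^{-11/2}$.
\item For $|w|$ I use Lemma~\ref{lem_real}, which gives $w(t)=\overline{z(\bar t)}$. At the reflected point, with $\im\tau<0$ near $\bar\tau_c$'s side, the estimates are of order $\eps^2$ or $\eps^3$ and are harmless.
\end{itemize}
Altogether $|h|\le k\,\eps^3\hat d_+^{-7/2}$.

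Next I use the variation-of-constants formula. The exponent is $\re\bigl(-\tfrac{i}{\eps}\int\Lambda_1\,d\theta\bigr)=\re\Psi_\eps/\eps$ evaluated along the vertical path. The vertical line through $\tau_c$ is where $\im\tau$ is maximal on each level curve, so it is transversal to the curves $\re\Psi_\eps=\mathrm{const}$. Moving down, it enters curves with larger $\re\Psi_\eps$, since $\re\Psi$ has its minimum at $\tau_*$ inside $K_*$. The derivative is $|\Lambda_1|\sim\hat d_+^{1/2}$, because $\Lambda_1\approx 2aZ_1\sim (b\cdot(\kappa-\kappa_c))^{1/2}$ by Lemma~\ref{FG}. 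Hence along the downward path the homogeneous solution is multiplied by
\[
\exp\Bigl(-\eps^{-1}\int \hat d_+^{1/2}\,d\theta\Bigr),
\]
so the motion is contracting. Two consequences follow:
\begin{itemize}
\item The particular solution is bounded by $|h|/|\Lambda_1|\lesssim \eps^3\hat d_+^{-4}$, which is exactly the target.
\item The initial datum at $\tau_q$, of size $\eps^2\hat d_+(\tau_q)^{-5/2}\sim \eps^{1/3}C_q^{-5/2}$, decays by the factor $\exp\bigl(-c\,\eps^{-1}\hat d_+(\tau_q)^{1/2}\Delta\bigr)$, where $\Delta$ is the descent distance.
\end{itemize}

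Finally I determine where the decay overtakes the target. With $\hat d_+(\tau_q)\sim C_q\eps^{2/3}$, the exponent equals $c\,C_q^{1/2}\Delta/\eps^{2/3}$. I need the decayed initial term $\eps^{1/3}C_q^{-5/2}e^{-cC_q^{1/2}\Delta\eps^{-2/3}}$ to fall below $\eps^3\hat d_+^{-4}\sim\eps^{1/3}C_q^{-4}$. This holds once $\Delta\gtrsim \eps^{2/3}C_q^{-1/2}\ln C_q$. Since $\hat d_+$ grows linearly with $\Delta$ by condition G, this is exactly the threshold in the hypothesis of the lemma.

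The main obstacle is making the forcing bound uniform. The a priori bounds on $z$ and $\eta$ are only of the $\hat d_+^{-5/2}$ type, and the delay term involves $w$ at the conjugate time. I will close the argument with a bootstrap: assume $|z|<2c\,\eps^3\hat d_+^{-4}$ beyond the threshold, and verify that every nonlinear contribution is $o(\eps^3\hat d_+^{-7/2})$ as $C_q$ becomes large. The upper limit $\hat d_+<c^{-1}$ ensures that the expansion remains valid on the whole segment.
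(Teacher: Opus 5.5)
Your proposal is correct and follows essentially the paper's route: the paper also takes the forcing bound $\eps^3O(\hat d_+^{-7/2})$ directly from the estimates already established in $D_{q,l}$, so your closing bootstrap is unnecessary. It also uses the downward contraction rate $\sim\hat d_+^{1/2}$. In place of your Duhamel formula, it uses a scalar comparison ODE with frozen coefficients over a stretch of length $\sim\eps^{2/3}C_q^{-1/2}\ln C_q$, followed by a barrier argument; that barrier step is what rigorously justifies your quasi-static bound $|h|/|\Lambda_1|$, and it relies on $\eps\hat d_+^{-3/2}\ll1$. One slip to fix: going down the line $\re\tau=\re\tau_c$ one crosses level curves with \emph{smaller} $\re\Psi_\eps$, toward its minimum on the real axis, which is exactly why the homogeneous factor $\exp\bigl((\re\Psi_\eps(\tau)-\re\Psi_\eps(\tau_q))/\eps\bigr)$ contracts.
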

We denote by $A_q$ the segment of the line $\re \tau=\re \tau_c$ with\\
$ \hat{ d}_{+}(\tau_q) \le \hat{ d}_{+}\le \hat{ d}_{+}(\tau_q)+c_{e,9,1}\eps^{2/3}C_q^{-1/2}(\ln C_q)$
(Fig. \ref{dom_and_c_1}).

  \begin{figure}[H]
\begin{center}
            \includegraphics[scale=0.4, angle=0.0]{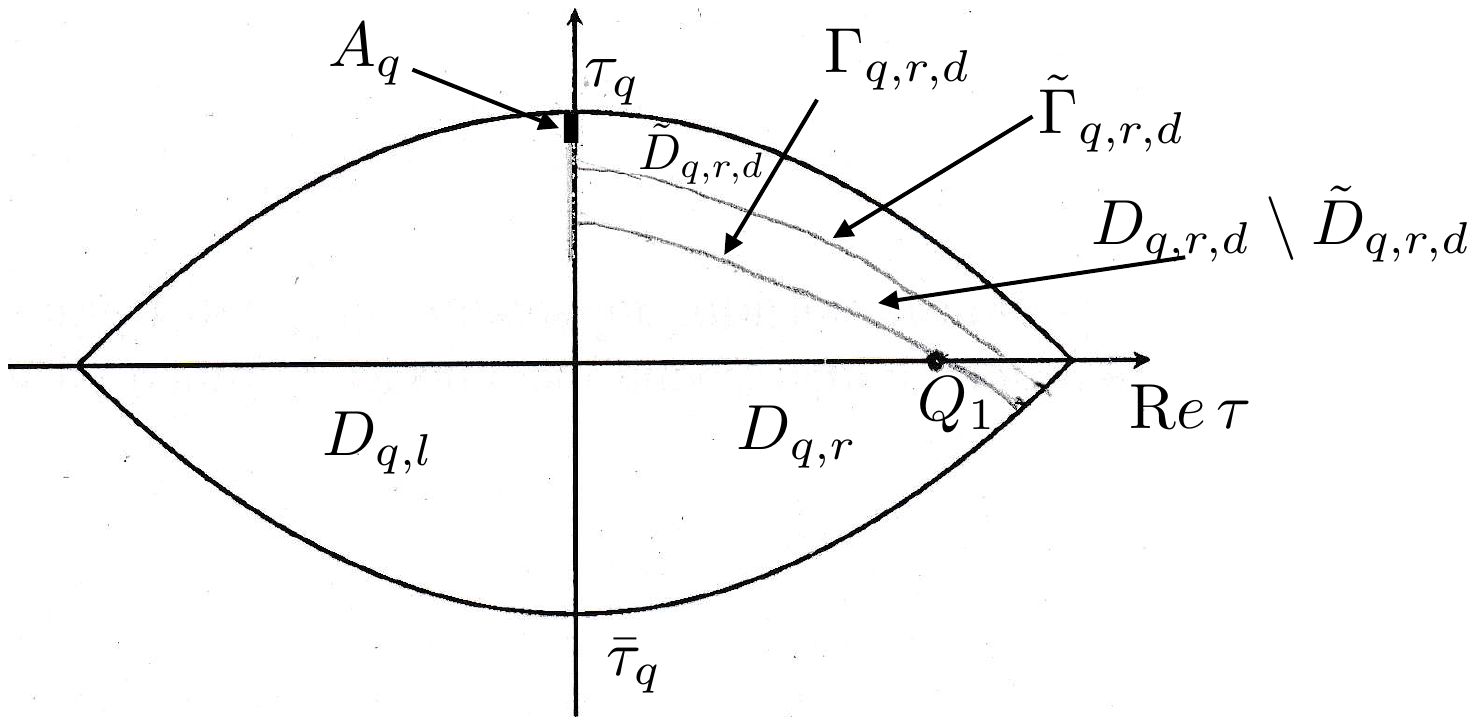}
            \end{center}
            \vskip-2cm
           \caption{Domains and curves for Lemmas \ref{lem_cont_D_q_l}, \ref{l_improved_I}, \ref{lem_cont_D_q_r}, and \ref{lem_improved_kappa_old_1}}
            \label{dom_and_c_1}
\end{figure}

\subsubsection{Continuation into domain $D_{q,r}$.}
\label{cont_D_q_r}

   \begin{lem}
    \label{lem_cont_D_q_r}
   If $C_{q}>c_{e,10}>c_{e,5}$, then 
the solution $z(t), \eta(t), \kappa(t)$ can be analytically continued   into the domain $D_{q,r}$ 
with the following estimates.

\medskip
Denote by $D_{q,r,d}$ (respectively,  by $D_{q,r,d}'$) the part of the domain $D_{q,r}$ covered by vertical segments  of lengths less than or equal to  (respectively, equal to)  $c_{e, 12} \eps \hat d_u^{-1/2}|\ln( c_{e,11}^{-1}\eps \hat d_u^{-3/2}C_q^{15/16})|$  drawn downward from all points     $\tau_u\in \Gamma_{q,\eps}$, where $\hat d_u$ is the value $\hat{ d}_{+}$ at the point $\tau_u$.  Denote by $ \Gamma_{q,r,d}$ (respectively, by  $ \Gamma_{q,r,d}'$) the lower boundary of $ D_{q,r,d}$ (respectively, of $D_{q,r,d}'$).  
Denote by  $Q_1$  the point of intersection of the curve $\Gamma_{q,r,d}$  and the real axis in the $\tau$-plane (Fig.  \ref{dom_and_c_1}). 

\medskip

Denote by $\tilde D_{q,r,d}$ (respectively, by $\tilde D_{q,r,d}'$) the part of the domain $D_{q,r,d}$ covered by vertical segments  of lengths less than  or equal to (respectively, equal to)  $c_{e,12,1} \eps \hat d_u^{-1/2}|\ln( c_{e,11,1}^{-1}\eps \hat d_u^{-3/2}C_q^{15/16})|$ drawn downward  from all points   $\tau_u\in \Gamma_{q,\eps}$.  Denote by $\tilde \Gamma_{q,r,d}$ (respectively, by  $ \tilde \Gamma_{q,r,d}'$) the lower boundary of $\tilde D_{q,r,d}$ (respectively, of $\tilde D_{q,r,d}'$). 

Denote by $\overline D_{q,r,d}, \overline {D'}_{q,r,d}, \overline{\tilde D}_{q,r,d}, \overline{{\tilde D}'}_{q,r,d} $ the domains complex conjugate   to the domains $D_{q,r,d}, D'_{q,r,d}, \tilde D_{q,r,d}, \tilde D'_{q,r,d} $.  Denote by $\overline \Gamma_{q,r,d}, \overline { \Gamma'}_{q,r,d}, \overline{\tilde  \Gamma}_{q,r,d}, \overline{{\tilde  \Gamma}'}_{q,r,d} $ the curves complex conjugate   to the curves  $ \Gamma_{q,r,d},  \Gamma'_{q,r,d}, \tilde  \Gamma_{q,r,d}, \tilde  \Gamma'_{q,r,d} $. 


 

 \bigskip
 If $\tau \in  D_{q,r,d}$,  then 
   
  $|z(t)|< c_{e,13} \eps^{1/3}C_q^{-5/2}, \  |\eta(t)|< c_{e,14} \eps C_q^{-3}, \  
     |\kappa(t)-\dK_{\eps}(\eps t)|<  c_{e,15}\eps  C_q^{-9/2}$; 
     
      \bigskip

 additionally,   if $\tau\in  D_{q,r,d}\setminus  \tilde D_{q,r,d}$,  
  then 
   
 $|z(t)|< c_{e,16} (\eps^{11/6}C_q^{-6}\hat {d}_{+}^{-1/2}+\eps^3 \hat {d}_{+}^{-4}), \  |\eta(t)|< c_{e,17} \eps^{3}\hat{ d}_{+ }^{-3}$;
  
  \bigskip
  additionally,   if $\tau\in  D_{q,r,d}'\setminus  \tilde D_{q,r,d}$,  
  then
   
    $ |\kappa(t)-\dK_{\eps}(\eps t)|<  c_{e,18}\eps^4 \hat {d}_{+}^{-9/2}$;  
     
       \bigskip

additionally,   if $\tau\in  \Gamma'_{q,r,d}$, then $|z(t)|< c_{e,19} \eps^3 \hat{ d}_{+ }^{-4}$.

\bigskip

 If $\tau\in  \bar D_{q,r,d}$,  then  
    
      $|z(t)|< c_{e,20} \eps C_q^{-3}, \    |\eta(t)|< c_{e,20}  \eps C_q^{-3}, \ 
     |\kappa(t)-\dK_{\eps}(\eps t)|<  c_{e,21}\eps C_q^{-9/2}$; 
     
      \bigskip

 additionally,    if $\tau\in  \bar D_{q,r,d}\setminus \overline {\tilde D}_{q,r,d}$,  then      
      $|z(t)|< c_{e,22} \eps^3 \hat{ d}_{- }^{-3},\    |\eta(t)|< c_{e,22} \eps^{3}\hat{ d}_{- }^{-3}, \\ 
\hskip 1cm     |\kappa(t)-\dK_{\eps}(\eps t)|<  c_{e,23}\eps^4 \hat{ d}_{-}^{-9/2} $.
     
      \bigskip

     If $\tau\in  (D_{q,r} \setminus(D_{q,r,d}\cup \bar D_{q,r,d}))\cap \{\im\tau>- c_{l,1}^{-1}\}$,  then  
     
      $|z(t)|< c_{e,24} \eps^{3}\hat{ d}_{+ }^{-4}, \
     |\eta(t)|< c_{e,24} \eps^{3}\hat{ d}_{+ }^{-3}, 
      |\kappa(t)-\dK_{\eps}(\eps t)|<  c_{e,25}\eps^4 \hat{ d}_{+}^{-9/2} $.
      
       \bigskip

        If $\tau\in  (D_{q,r} \setminus(D_{q,r,d}\cup \bar D_{q,r,d}))\cap \{\im\tau< c_{l,1}^{-1}\}$,  then  
     
      $|z(t)|< c_{e,24} \eps^{3}\hat{ d}_{- }^{-3}, \
     |\eta(t)|< c_{e,24} \eps^{3}\hat{ d}_{- }^{-3}, \ 
      |\kappa(t)-\dK_{\eps}(\eps t)|<  c_{e,25}\eps^4 \hat{ d}_{-}^{-9/2} $.

   \end{lem}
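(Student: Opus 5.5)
We continue from the data on the curve $\Gamma_{q,\eps}$ that Lemma \ref{lem_cont_D_q_l} and Lemma \ref{l_improved_I} supply on the left part and on the vertical line $\re\tau=\re\tau_c$. The right part $D_{q,r}$ is covered by vertical segments drawn downward from points $\tau_u\in\Gamma_{q,\eps}$ with $\re\tau_u\ge\re\tau_c$, together with their complex conjugates; the conjugate data are obtained through Lemma \ref{lem_real}, i.e.\ $w(t)=\overline{z(\bar t)}$. Along each such segment we integrate system (\ref{d_equation}) (in $W^+_\delta$) or its analogue with (\ref{d_equation1}) (in $W^-_\delta$). We use a bootstrap (continuity) argument in the Banach space of functions on the current subdomain, with the stated bounds as a priori hypotheses. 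The linear part of the $z$-equation is integrated with the integrating factor $\exp((\Psi_\eps(\tau)-\Psi_\eps(\tau_u))/\eps)$. On $\Gamma_{q,\eps}$ we have $\re\Psi_\eps=\mathrm{const}$, and moving vertically down we have $\re\Lambda_1\,d\tau$ of a definite sign (condition 6 and the choice of branch). Hence the homogeneous solution grows at most like $\exp(c\,\hat d_u^{1/2}\,\mathrm{dist}/\eps)$, since $|\Lambda_1|\sim \hat d_+^{1/2}$ near $\tau_c$.

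The key input is the size of $z$ on $\Gamma_{q,\eps}$ near the top. Because $\tau_q$ lies at distance $\sim C_q\eps^{2/3}$ from $\tau_c$, the forcing terms $\eps^3 d_+^{-7/2}$ and the $\eps d_+^{-3/2}|z|^2$ terms, integrated through the Riccati scale, leave $z$ of order $\eps^{1/3}C_q^{-5/2}$ in rescaled variables. This is the estimate $|z|<c\,\eps^{1/3}C_q^{-5/2}$ on $D_{q,r,d}$, obtained by comparing with the special Riccati solution of Lemma \ref{expansion_plus} and the expansion of Lemma \ref{r_expansion}. The $\eta$ and $\kappa$ equations are contracting or slaved: $B(\kappa)$ has eigenvalues with negative real parts along iso-expanding curves by condition 5, and $\dot\kappa-\eps F$ is small. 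Integrating them gives $|\eta|\lesssim\eps C_q^{-3}$ and $|\kappa-\dK_\eps|\lesssim\eps C_q^{-9/2}$, using Lemma \ref{K_and_K_eps} to control $\hat d_{+,\eps}$ against $\hat d_+$.

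Moving down the segment, the free (exponential) part of $z$, of initial size $\eps^{1/3}C_q^{-5/2}$ at $\tau_u$, decays like $\exp(-c\,\hat d_u^{1/2}\,\mathrm{dist}/\eps)$. The forced part, by contrast, is governed by the quasi-equilibrium bound $\eps^3\hat d_+^{-4}$ (the size of forcing $\eps^3 d_+^{-7/2}$ divided by $|\Lambda_1|\sim d_+^{1/2}$). The free part falls below the forced part exactly after a vertical length $c\,\eps\hat d_u^{-1/2}|\ln(\eps\hat d_u^{-3/2}C_q^{15/16})|$. This is the length defining $D_{q,r,d}$ and $\tilde D_{q,r,d}$ (the latter with slightly different constants, where the intermediate bound $\eps^{11/6}C_q^{-6}\hat d_+^{-1/2}+\eps^3\hat d_+^{-4}$ holds), and it yields $|z|<c\,\eps^3\hat d_+^{-4}$ on $\Gamma'_{q,r,d}$. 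Once $z$ has relaxed, we redo the $\kappa$ and $\eta$ estimates below $\tilde D_{q,r,d}$ and recover the standard bounds $\eps^3\hat d_\pm^{-3}$ and $\eps^4\hat d_\pm^{-9/2}$. The remaining region $D_{q,r}\setminus(D_{q,r,d}\cup\bar D_{q,r,d})$ is then handled exactly as in Lemma \ref{lem_cont_D_gamma}, with initial data taken on $\Gamma'_{q,r,d}$. On the conjugate domains, the large component is $w$ rather than $z$, and $z$ is driven only through $O(|\eta|(|\eta|+|w|))$ and $\eps|zw|d_-^{-3/2}$ in (\ref{d_equation1}). Since these terms are quadratic, $|w|\lesssim\eps^{1/3}$ gives $|z|\lesssim\eps C_q^{-3}$.

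The main obstacle is closing the bootstrap in the region near $\tau_c$ where $z\sim\eps^{1/3}$. There the quadratic term $\eps d_+^{-3/2}|z|^2$ and the quintic term $|z|^5 d_+^{-3/2}$ are only marginally small. Their smallness must come from taking $C_q$ large, which is the source of the powers $C_q^{-5/2}$ and $C_q^{15/16}$. We would first verify, in the rescaled Riccati variables $\hat\chi,\hat\sigma$, that these terms contribute at relative order $C_q^{-3/2}$ per unit rescaled length. Integrated over the $O(\ln)$ rescaled length of the segments, this is absorbed for $C_q>c_{e,10}$.
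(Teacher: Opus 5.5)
\textbf{Gap: the starting estimate on $\Gamma_{q,\eps}$.} Your treatment of the vertical segments agrees in outline with the paper. You let the free part of $z$ decay from $\eps^2\hat d_{+,q}^{-5/2}$ to the quasi-equilibrium level $\eps^3\hat d_+^{-4}$ over a vertical length $\sim\eps\hat d_u^{-1/2}|\ln(\eps\hat d_u^{-3/2}C_q^{15/16})|$. But that analysis needs, as input, the bounds $|z|\lesssim\eps^{1/3}C_q^{-5/2}$ and $|\kappa-\dK_\eps|\lesssim\eps C_q^{-9/2}$ on the right half of $\Gamma_{q,\eps}$, and the route you propose does not give them.

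Comparing with the special Riccati solution of Lemma \ref{expansion_plus} and the expansion of Lemma \ref{r_expansion} cannot work, for three reasons.
\begin{itemize}
\item The truncated Riccati equation models the system only in an $O(\eps^{2/3})$-neighbourhood of $\tau_c$. But $\Gamma_{q,\eps}$ and the strip $D_{q,r,d}$ below it run down to the real axis, at distance $O(1)$ from $\tau_c$.
\item Even near $\tau_c$, the $O(1/|v|)$ error in (\ref{special_2}), (\ref{expansion}) is, unscaled, of order $\eps/\hat d_+\sim\eps^{1/3}C_q^{-1}$ on $\Gamma_{q,\eps}$. That is the size of the equilibrium shift already removed in Lemma \ref{lem_transform_1}, and far larger than the target $\eps^{1/3}C_q^{-5/2}$, which concerns the transformed variables.
\item The hypotheses do not tell you which $R$ the continued solution has. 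In the proof of the theorem it has $R^+_\eps\approx 1$, not the value $e^{-2\pi i/3}$ of Lemma \ref{expansion_plus}.
\end{itemize}
The paper's proof uses no Riccati input at all. It is pure perturbation theory, which works because $\hat d_+\ge\hat d_{+,q}\sim C_q\eps^{2/3}$ throughout $D_q$ and $C_q$ is large.

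\textbf{What is missing: the argument of Lemma \ref{on_upper_curve_r}.} Integrate the transformed $z$-equation along $\Gamma_{q,\eps}$ itself. There $\re\Psi_\eps$ is constant, so the linear part is purely oscillatory. Start from $|z(t_q)|<c_{e,7}\eps^2\hat d_{+,q}^{-5/2}$, supplied by Lemma \ref{lem_cont_D_q_l}. The remainder $\eps^3O^*(d_+^{-7/2})$ then contributes $\eps^2\int d_+^{-7/2}|d\tau|=O(\eps^2\hat d_{+,q}^{-5/2})$.

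This is where the real difficulty lies; your last paragraph places it instead on the short, damped vertical segments. Along $\Gamma_{q,\eps}$ the path has $t$-length of order $1/\eps$ with no damping.
\begin{itemize}
\item The term $(\Lambda_1(\kappa)-\Lambda_1(\dK_\eps))z=O(d_+^{-1/2}|\kappa-\dK_\eps||z|)$ therefore accumulates to $O(\eps^{-1}\kappa_m z_m)$.
\item Meanwhile $\kappa_m=\sup|\kappa-\dK_\eps|$ is itself driven by $\eps^2|z|^2d_+^{-2}$ and related terms, giving $\kappa_m\lesssim\eps^4\hat d_{+,q}^{-9/2}+\eps^2\hat d_{+,q}^{-2}z_m$.
\item The paper closes this coupled $z$--$\kappa$ system with a quadratic inequality for $z_m=\sup|z|$. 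Its small root is $O(\eps^2\hat d_{+,q}^{-5/2})$ because both feedback coefficients, $\eps^{-1}\kappa_d$ and $\eps\hat d_{+,q}^{-2}z_m$, are $O(C_q^{-9/2})$. This is where the largeness of $C_q$ is genuinely used.
\end{itemize}

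\textbf{Secondary omission: the relaxation is staged.} The relaxation along vertical segments is not one-shot. With only the a priori bound on $\eta$, the forcing $O(|\eta|(|\eta|+|w|))$ holds $z$ above $\eps^3\hat d_+^{-4}$. The paper therefore interleaves improved $\eta$-bounds (Lemmas \ref{l_for_margin_eta}, \ref{l_for_margin_eta_improved}) with two relaxation stages. These $\eta$-bounds come from Lyapunov functions along level curves, into which pieces of $\Gamma'_{q,r,d}$ and $\tilde\Gamma'_{q,r,d}$ are glued so as to avoid the strip where $|z|$ is large. Lemma \ref{lem_closeness} is then needed to keep the $\eta$-block contracting along those glued pieces.
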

   
   Estimate for $\kappa(t)-\dK_{\eps}(\eps t)$ in the domain $\tilde D_{q,r,d}'$ can be improved. 
  \begin{lem}
   \label{lem_improved_kappa_old_1}
   If $\tau\in \tilde D_{q,r,d}'$, 
   then
   \begin{equation}
   \label{eq_improved_kappa_old_1}
    \begin{aligned}
    &|\kappa(t)-\dK_{\eps}(\eps t)|<c_{e,26}\eps^4 \hat{ d}_{+}^{-9/2} +c_{e,27}(\eps \hat d_+^{-1/2}|\ln(\eps \hat d_+^{-3/2}C_{q}^{15/16})|)\\
 &\cdot \left( \eps^{5/3}C_q^{-5}d_+^{-2}
  +\eps^{4/3}C_q^{-10}
 +\eps^{7/3}C_q^{-5/2}d_+^{-3} \right).
    \end{aligned}
       \end{equation}
  \end{lem}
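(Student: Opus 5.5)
The improved estimate for $\kappa(t)-\dK_{\eps}(\eps t)$ in $\tilde D_{q,r,d}'$ comes from integrating the equation for $\kappa$ in (\ref{d_equation}) (after the transformation of Lemma \ref{lem_transform_kappa}) along a short vertical segment, rather than relying on the cruder bound $c_{e,15}\eps C_q^{-9/2}$ of Lemma \ref{lem_cont_D_q_r}. Take $\tau\in\tilde D_{q,r,d}'$; it is the lower end of a vertical segment issued downward from some $\tau_u\in\Gamma_{q,\eps}$. Let $\ell=c_{e,12,1}\eps\hat d_u^{-1/2}|\ln(c_{e,11,1}^{-1}\eps \hat d_u^{-3/2}C_q^{15/16})|$ be its length. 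Write $\Delta=\kappa-\dK_{\eps}$. Along the segment, with $\tau=\eps t$,
$$\frac{d\Delta}{d\tau}=F(\kappa)-F(\dK_{\eps})+\eps^{-1}R,$$
where $R$ collects the $O$-terms in the $\dot\kappa$ equation of (\ref{d_equation}). By Lemma \ref{lem_transform_0}, $F(\kappa)-F(\dK_\eps)=O(\hat d_+^{-1/2}|\Delta|)$. Over a $\tau$-length of order $\ell$ this factor contributes $\exp(O(\ell\hat d_+^{-1/2}))=1+o(1)$, since $\hat d_+\gtrsim \eps^{2/3}C_q$ makes $\ell \hat d_+^{-1/2}$ small. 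So a Gronwall argument gives $|\Delta(\tau)|\le 2\bigl(|\Delta(\tau_u)|+\ell\sup|\eps^{-1}R|\bigr)$.

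The initial value at $\tau_u$ is controlled by the estimates of Lemma \ref{lem_cont_D_q_r} on $\Gamma_{q,\eps}$. There the point lies outside $D_{q,r,d}\setminus$-type regions near the upper boundary, equivalently it comes from continuation along $\Gamma_{q,\eps}$ as in the $D_{q,l}$ case. This gives $|\Delta(\tau_u)|\le c\,\eps^4\hat d_+^{-9/2}$, which is the first term of (\ref{eq_improved_kappa_old_1}). One must check that $\hat d_+$ at $\tau_u$ and at $\tau$ are comparable; this holds because $\ell\ll \hat d_+$.

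The main work is bounding $\eps^{-1}R$ on the segment using the bounds from Lemma \ref{lem_cont_D_q_r} valid in $D_{q,r,d}$: $|z|\lesssim\eps^{1/3}C_q^{-5/2}$, $|\eta|\lesssim \eps C_q^{-3}$, and $|w|$ obtained via Lemma \ref{lem_real} from the bound in $\bar D_{q,r,d}$, namely $|w|\lesssim\eps C_q^{-3}$. I go through the terms as follows.

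The term $\eps^2|z|^2\hat d_+^{-2}$ divided by $\eps$ gives $\eps^{5/3}C_q^{-5}\hat d_+^{-2}$.

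The term $\eps|z|^4$ gives $\eps^{4/3}C_q^{-10}$.

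The term $\eps^3|z|\hat d_+^{-3}$ gives $\eps^{7/3}C_q^{-5/2}\hat d_+^{-3}$.

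The remaining terms are dominated by these. For example, $\eps|z|^5\hat d_+^{-2}\lesssim \eps^{5/3}C_q^{-12.5}\hat d_+^{-2}$. The terms in $|\xi|_*$ are small because $|w|,|\eta|=O(\eps C_q^{-3})$; for instance $\eps^2|\xi|_*^2\hat d_+^{-3/2}/\eps\lesssim\eps^3\hat d_+^{-3/2}$, which is below $\eps^{7/3}\hat d_+^{-3}$ since $\hat d_+\lesssim1$.

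Multiplying by $\ell$, with $\hat d_u\sim\hat d_+$, gives the second term of (\ref{eq_improved_kappa_old_1}). The main obstacle is the bookkeeping: verifying that every leftover term in (\ref{eq_kappa_transformed}) is dominated by one of the three listed terms throughout the range $\eps^{2/3}C_q\lesssim\hat d_+\lesssim1$. The term I would check first is $\eps^4\hat d_+^{-7/2}|\xi|$.
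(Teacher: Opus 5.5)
Your bounds on the forcing terms and your Gronwall step are fine, but the argument fails at the initial value. You integrate downward from $\tau_u\in\Gamma_{q,\eps}$, and for that you need $|\kappa-\dK_\eps|\le c\,\eps^4\hat d_+^{-9/2}$ at $\tau_u$. Nothing in the paper gives this.

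On $\Gamma_{q,\eps}$ the only available bound is $c_{a,4}\eps^4\hat d_{+,q}^{-9/2}$ from Lemma~\ref{on_upper_curve_r}, or equivalently $c_{e,15}\eps C_q^{-9/2}$ from Lemma~\ref{lem_cont_D_q_r}. Here $\hat d_{+,q}=\hat d_+(\tau_q)\sim C_q\eps^{2/3}$ is frozen at $\tau_q$, not the local value. The $D_{q,l}$ analogy does not apply. Points of $\Gamma_{q,\eps}$ with $\re\tau>\re\tau_c$ are reached along $\Gamma_{q,\eps}$ from $\tau_q$, and at $\tau_q$ the deviation is already only known to be $O(\eps^4\hat d_{+,q}^{-9/2})$ (Lemma~\ref{lem_cont_D_q_l}). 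Along this iso-expanding curve the deviation is transported without damping. Also $|z|$ stays of size $\eps^2\hat d_{+,q}^{-5/2}$ rather than shrinking with the local $\hat d_+$, so the forcing cannot be bounded better either.

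Starting from this value, your Gronwall estimate returns only the crude bound $O(\eps C_q^{-9/2})$. The target (\ref{eq_improved_kappa_old_1}), where $\hat d_+\sim 1$, is $O(\eps^{7/3}|\ln\eps|)$. Moreover, $\tau_u$ itself lies in $\tilde D'_{q,r,d}$, so the bound you assume at $\tau_u$ is stronger than what the lemma asserts there. The step is therefore circular.

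The missing idea is to integrate in the opposite direction, as the paper does. The $\hat d_+$-dependent bound $|\kappa-\dK_\eps|<c\,\eps^4\hat d_+^{-9/2}$ holds on the \emph{lower} boundary $\Gamma'_{q,r,d}$ of the strip. It is part of Lemma~\ref{lem_cont_D_q_r}, obtained in Lemma~\ref{est_of_shorten_r} for points outside $D_{q,r,d}\cup\bar D_{q,r,d}$. The reason it holds there is that such points are reached, avoiding the strip, from the region where $\kappa-\dK_\eps=O(\eps^4)$ (Lemma~\ref{lem_cont_D_q_l}), along paths on which $|z|\lesssim\eps^3\hat d_+^{-4}$ and the forcing is small.

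From $\Gamma'_{q,r,d}$, move vertically upward through $D_{q,r,d}$. Three facts hold on this segment:
\begin{itemize}
\item its $\tau$-length is at most $c_{e,12}\eps\hat d_u^{-1/2}|\ln(c_{e,11}^{-1}\eps\hat d_u^{-3/2}C_q^{15/16})|$;
\item $\hat d_+\sim\hat d_u$ on it, by Lemmas~\ref{lem_0.5_2} and~\ref{lem_about_width};
\item the crude bounds $|z|\lesssim\eps^{1/3}C_q^{-5/2}$ and $|w|,|\eta|\lesssim\eps C_q^{-3}$ apply.
\end{itemize}
With this starting point, your Gronwall factor and term-by-term estimates go through unchanged and yield (\ref{eq_improved_kappa_old_1}).

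One small slip: $|\xi|_*^2$ contains the cross term $|z|(|w|+|\eta|)$. So the $\eps^2|\xi|_*^2d_+^{-3/2}$ term contributes $\eps^{7/3}C_q^{-11/2}\hat d_+^{-3/2}$, not $\eps^3\hat d_+^{-3/2}$. It is still dominated by $\eps^{7/3}C_q^{-5/2}\hat d_+^{-3}$.
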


Constants in  Lemmas \ref {lem_cont_D_q_l}, \ref {l_improved_I}, \ref {lem_cont_D_q_r}, \ref{lem_improved_kappa_old_1} depend on constants in estimates ``$O(\cdot)$'' for initial conditions (including initial condition for solution of  the ``improved'' slow equation in  Section \ref{improved_curves}).  

\medskip
Lemmas on the continuation of solutions into the domains $D_{\gamma}, D_{up}, D_{q,l}$, and $D_{q,r}$ will be  used in the proof of Theorem \ref{t1.th} as follows. We will show that the considered solution can be continued into $D_{\gamma}$  and $D_{up}$ with $\hat d_{+}(\tau_{\gamma})=C_{\gamma,*}\eps^{2/3}$ for any sufficiently large  constant 
$C_{\gamma,*}$.  We  will also show that  this solution can be continued  into $D_{q,l}$ with $\im \tau_q =\im \tau_c-C_q\eps^{2/3}$, provided that $C_q$ is sufficiently large. Continuation into $D_{\gamma}$ and  $D_{up}$ will be used in continuation into a larger domain $D_c$ with  vertices  at $\tau_c$ and $\bar \tau_c$. Continuation into $D_{q,l}$  will be used to extend the solution into $D_{q,r}$, provided $C_q$ is sufficiently large.

%% file: delay_motion.tex
\section{Proof of  Theorem 1.}
\label{t1.th.p}

\subsection{Initial part of  motion}

Let $x(t),\kappa(t)$ be the solution of system (\ref{perturbed}) with
 $ \kappa(t_0)=\kappa_0=\dK(\tau_0)$,  $t_0 =\tau_0/\varepsilon$. According to \cite{vasil'eva_butuzov}, p.55,  if the initial   point is in a $C_{2}^{-1}$-neighbourhood of the equilibrium of the fast system, then for some $t_1=t_0+O(|\ln \eps|)$ the solution comes to an $O(\eps)$-neighbourhood of the equilibrium, and  $\kappa(t_1)=\dK(\eps t_1)+O(\eps)$.  
After this, up to a slow time  $\tau_* -c_{m,1}^{-1}$ the phase point  moves in an $O(\eps)$-neighbourhood of the equilibrium, and  $\kappa(t)=\dK(\eps t)+O(\eps)$. Moreover, for description of this motion one can use the same variables as in Lemmas \ref{lem_transform_1}, \ref{lem_transform_2}, and \ref{lem_transform_kappa}. In these variables, starting from a slow  time $\eps t_1+c_{m,2}\eps|\ln \eps|$, we have
$\xi(t)=O(\eps^3),  \kappa( t)=\dK(\eps t)+O(\eps)$.  Thus, we can take  $\eps t_2= \tau_*^- +O(\eps |\ln \eps|)$
such that  at $t=t_2$ the solution in these new variables  is in  an  $O(\eps^3)$-neighbourhood of the equilibrium $z=0, \eta=0$, and $\kappa( t_2)=\dK(\eps t_2)+O(\eps)$ (we assume that the constant $C_{1}$ in the statement of Theorem \ref{t1.th} is chosen  such that for $\tau_0<\tau_*^- -C_{1}\eps|\ln\eps|$ we have  $\eps t_2<  \tau_*^-$).

\subsection{First part of the principal  part of  motion}
\label{first_principal_part}
  We consider  solution $z(t), \eta(t), \kappa(t)$ of  system (\ref{d_equation}). Take as the initial condition for the ``improved''  slow equation (\ref{improved_slow}) at $t= \tau_*^-/\eps$ the value 
  $\kappa(\tau_*^-/\eps)$:   $\dK_{\eps}(\tau_*^-)=\kappa(\tau_*^-/\eps)$.  Then the assumption  $\dK_{\eps}(\tau_*^-)= \dK(\tau_*^-)+O(\eps)$ from Section \ref{improved_curves} is satisfied.
  
Take  $\tau_{\gamma}$ (see Section \ref{improved_curves}) such that     $\hat d_{+}(\tau_{\gamma})=C_{\gamma,*}\eps^{2/3}$, where $C_{\gamma,*}$ is a sufficiently large positive constant to be specified later, at the end of Section 
\ref{first_principal_part}.  Then  $\im \tau_{\gamma}= \im \tau_c -(C_{\gamma}+o(1))\eps^{2/3}$, where $C_{\gamma}$ is a constant determined by the constant $C_{\gamma,*}$.  Denote
  $t_3=\tau_{-,*,\eps}(\tau_{\gamma})/\eps$. 
  Take value $\tau_q$ (see Section \ref{improved_curves}) such that  $\im \tau_q= \im \tau_c -C_{q}\eps^{2/3}$, with some not yet defined constant $C_{q}$.  Denote $t_4=\tau_{q,\eps,-}/\eps$.
  Lemmas \ref{lem_Gamma_Gamma}, \ref{lem_Gamma_q}, \ref{second_line}, \ref{fourth_line} imply that
   $t_{3,4}=\tau_*^- +O(\eps|\ln\eps|)$.
   We will choose the constant $C_{1}$ in formulation of
  Theorem \ref {t1.th} such that $\eps t_{3,4}>\tau_*^- -C_{1}\eps|\ln \eps|$.
  
    \medskip
  We assume that $C_{\gamma}>C_{\delta}$, $C_{q}>C_{\delta}$.
  
  \begin{lem}
  \label{lem_init_cond_3} 
  \begin{equation}
  \label{init_cond_3}
 |\kappa(t_{3,4})-\dK_{\eps}(\eps t_{3,4})|< c_{m,3}\eps^6 |\ln \eps| ,
 \  |z(t_{3,4})|<  c_{m,4}\eps^3,\ |\eta(t_{3,4})|<  c_{m,5}\eps^3  .
  \end{equation}
  \end{lem}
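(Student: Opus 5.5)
The plan is to work entirely on the real time axis, in the variables of Lemmas \ref{lem_transform_1}, \ref{lem_transform_2} and \ref{lem_transform_kappa}, that is, with system (\ref{d_equation}). The key observation is that both $\eps t_{3}$ and $\eps t_4$ lie within $O(\eps|\ln\eps|)$ of $\tau_*^-$. The normalisation $\dK_\eps(\tau_*^-)=\kappa(\tau_*^-/\eps)$ makes the discrepancy $\delta(t)=\kappa(t)-\dK_\eps(\eps t)$ vanish exactly at $t=\tau_*^-/\eps$. It is therefore enough to show that $\delta$ grows at rate $O(\eps^6)$ per unit of fast time, over a fast-time interval of length $O(|\ln\eps|)$.

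\emph{Step 1: bounds for $z,\eta$.} By the initial part of the motion (Section on the initial part, based on \cite{vasil'eva_butuzov}), for real slow times from $\eps t_1+c_{m,2}\eps|\ln\eps|$ up to $\tau_*-c_{m,1}^{-1}$ we have $\xi=O(\eps^3)$ and $\kappa=\dK(\eps t)+O(\eps)$. Choosing $C_1$ large, as already done for $t_2$, the real interval $J$ between $\tau_*^-/\eps$ and $t_{3,4}$ lies in this range. Hence on $J$ we have $|z|,|w|,|\eta|=O(\eps^3)$. This also follows directly from (\ref{d_equation}): on the real axis $\re\Lambda_{1,2}<0$ and the spectrum of $B$ lies in the left half-plane, while the forcing is $\eps^3O^*(d_\pm^{-7/2})$. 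On real $\tau$ near $\tau_*^-$ we have $d_\pm\ge{\rm const}>0$, since $\tau_c,\bar\tau_c$ are at distance of order one, and by Lemma \ref{K_and_K_eps} also $\hat d_{\pm,\eps}\sim 1$.

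\emph{Step 2: the $\kappa$-equation.} Along $J$, estimate each remainder term in the $\dot\kappa$-equation of (\ref{d_equation}), using $|\xi|=O(\eps^3)$ and $d_\pm\sim1$. The term $O^*(\eps^2d_\pm^{-1})$ has been absorbed into $\eps F(\kappa)$, which is exactly the right-hand side of the improved slow equation (\ref{improved_slow}) satisfied by $\dK_\eps$. The remaining terms are bounded as follows:
\begin{itemize}
\item $\eps^2O(|z|^2)$ and $\eps^2O(|\xi|_*^2)$ are $O(\eps^8)$;
\item $\eps O(|z|^4+|z|^5+|\xi|_*^3)$ is $O(\eps^{10})$;
\item $\eps^3O(|z|+|\xi|_*)$ is $O(\eps^6)$;
\item $\eps^4O(|\xi|)$ is $O(\eps^7)$.
\end{itemize}
Thus
$$
\dot\delta=\eps\bigl(F(\kappa)-F(\dK_\eps)\bigr)+O(\eps^6)=\eps M(t)\delta+O(\eps^6),
$$
where $M=O(1)$, because $\partial F/\partial\kappa=O(d_\pm^{-1/2})=O(1)$ on $J$.

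\emph{Step 3: Gronwall.} Integrate from $\tau_*^-/\eps$ in the appropriate direction, forward or backward according to the sign of $t_{3,4}-\tau_*^-/\eps$, starting from $\delta=0$. Over a fast-time length $|J|\le C\,|\ln\eps|$ this gives
$$
|\delta(t_{3,4})|\le e^{C\eps|\ln\eps|}\cdot C\eps^6|\ln\eps|\le c_{m,3}\eps^6|\ln\eps|.
$$
The bounds $|z(t_{3,4})|<c_{m,4}\eps^3$ and $|\eta(t_{3,4})|<c_{m,5}\eps^3$ are part of Step 1.

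The only delicate point is Step 1. One must make sure that the attraction estimates of the initial part are stated in the transformed variables and remain valid on the whole of $J$, including slightly before $\tau_*^-$. This is why $C_1$ is chosen so that $\eps t_{3,4}>\tau_*^--C_1\eps|\ln\eps|$ still exceeds $\eps t_1+c_{m,2}\eps|\ln\eps|$. It also requires that the transformations of Section \ref{s_transformations} are well defined there, which holds because $d_\pm\sim1$ on the real axis.
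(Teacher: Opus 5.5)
Your proposal is correct and follows the paper's own proof. On the real interval between $\tau_*^-/\eps$ and $t_{3,4}$ one has $z,\eta=O(\eps^3)$, so (\ref{d_equation}) gives $\dot\kappa=\eps F(\kappa)+O(\eps^6)$ against $\dot\dK_{\eps}=\eps F(\dK_{\eps})$ with equal data at $\tau_*^-/\eps$, and integrating over a fast-time interval of length $O(|\ln\eps|)$ yields the $O(\eps^6|\ln\eps|)$ bound; you make explicit the term-by-term estimates and the Gronwall step that the paper leaves implicit.
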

  (Recall that   $\dK_{\eps}(\tau_*^-)=\kappa(\tau_*^-/\eps)$ and $\eps t_{3,4}=\tau_*^-+O(\eps|\ln \eps|)$).
  \medskip
  
  According to Lemmas \ref{lem_cont_D_gamma}, \ref{lem_cont_D_q_l},  and \ref{lem_cont_D_q_r}, the considered solution can be analytically continued   into domains $D_{\gamma}, D_{up}$, and $D_{q}$ for sufficiently large $C_{\gamma,*}$ and $ C_{q}$ with estimates given by these lemmas.     These estimates are stated in terms of the variables obtained  by transformations in Lemmas \ref{lem_transform_1},  \ref{lem_transform_2}, and \ref{lem_transform_kappa}. For the original variables $z,w, \eta, \kappa$, the same lemmas  imply  the following. 

In  $D_{\gamma},  D_{up}$ and $D_{q,l}$  for $\im \tau> -c_{l,1}^{-1}$ we have
\begin{equation}
\label{after_lemma_+}
|\kappa(t)-\dK_{\eps}(\eps t)|< c_{m,6}\eps^3 \hat {d}_{+}^{-3}  ,\ 
 | z(t)|<c_{m,7}\eps\hat { d}_{+}^{-1}, \  |\eta(t)|<c_{m,8}\eps \hat{ d}_{+}^{-1/2}.  
\end{equation}
In  $D_{\gamma},  D_{up}$ and $D_{q,l}$  for $\im \tau< c_{l,1}^{-1}$ we have
\begin{equation}
|\kappa(t)-\dK_{\eps}(\eps t)|< c_{m,6}\eps^3 \hat {d}_{-}^{-3}  ,\ 
 | z(t)|<c_{m,9}\eps\hat { d}_{-}^{-1/2}, \  |\eta(t)|<c_{m,8}\eps \hat{ d}_{-}^{-1/2}.  
\end{equation}

\medskip

Values $z(t), \eta (t)$ are obtained from the components of the vector\\
$ C^{-1}(\kappa(t))(x(t)- X(\kappa(t)))$. Denote by $\hat z(t), \hat \eta (t)$  the corresponding quantities obtained in the same way from the components of the vector $ C^{-1}(\dK(\eps t))(x(t)- X(\dK(\eps t)))$. 
\begin{lem}
\label{l_XC}
For $\im \tau> -c_{l,1}^{-1}$, we have
\begin{equation}
\begin{aligned}
&|\hat z(t)|<c_{m,10}\eps\hat { d}_{+}^{-1}, 
|\hat \eta(t)|<c_{m,11}\eps\hat { d}_{+}^{-1/2} .
\end{aligned}
\end{equation}
For $\im \tau< c_{l,1}^{-1}$, we have
\begin{equation}
\begin{aligned}
&|\hat z(t)|<c_{m,12}\eps\hat { d}_{-}^{-1/2}, \
|\hat \eta(t)|<c_{m,11}\eps\hat { d}_{-}^{-1/2}.
\end{aligned}
\end{equation}
\end{lem}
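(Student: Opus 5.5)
The plan is to compare the two linearising changes of coordinates, the one based at $\kappa(t)$ and the one based at $\dK(\eps t)$, and to show that the mismatch between them is controlled by $|\kappa(t)-\dK(\eps t)|$ times the derivatives of $X$ and $C$ with respect to $\kappa$. The argument has three steps: split the difference, bound the $\kappa$-derivatives near $\kappa_c$, and add the result to the estimates (\ref{after_lemma_+}) and its analogue below the axis.

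\textbf{Splitting.} I would write
$$
C^{-1}(\dK)(x-X(\dK)) - C^{-1}(\kappa)(x-X(\kappa)) =\bigl(C^{-1}(\dK)-C^{-1}(\kappa)\bigr)(x-X(\kappa)) + C^{-1}(\dK)\bigl(X(\kappa)-X(\dK)\bigr).
$$
Here $\kappa=\kappa(t)$ and $\dK=\dK(\eps t)$. The quantity $x-X(\kappa)$ is $C(\kappa)\xi$ with $\xi=(z,w,\eta)$, and it is already bounded by (\ref{after_lemma_+}); I use $|w(t)|=|\overline{z(\bar t)}|$ for the $w$-component.

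\textbf{Size of $\kappa-\dK$ and the derivatives.} By the triangle inequality, $|\kappa(t)-\dK(\eps t)|\le|\kappa-\dK_\eps|+|\dK_\eps-\dK|$. The first term is $O(\eps^3\hat d_\pm^{-3})$ by (\ref{after_lemma_+}). The second is $O(\eps(1+|\ln\hat d_\pm|))$ by Lemma \ref{K_and_K_eps}. Since $\hat d_\pm\ge c\eps^{2/3}$ in the domains considered, the first term is $O(\eps)$, so the total is $O(\eps|\ln\eps|)$. In $W^\pm_\delta$, Lemma \ref{FG} and Lemma \ref{lem_transform_0} give $\partial X/\partial\kappa=O(d_\pm^{-1/2})$, and the construction of Lemma \ref{lem_transform_00} gives $\partial C^{\pm1}/\partial\kappa=O(d_\pm^{-1/2})$. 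The same bounds hold along the segment joining $\kappa$ and $\dK$, since it stays inside $V_\delta$ and $d_\pm$ changes by a bounded factor there. This uses Lemma \ref{lem_for_domain_V} and the fact that $\eps|\ln\eps|\ll\eps^{2/3}$.

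\textbf{Component bounds.} The second term in the splitting is therefore $O(\eps|\ln\eps|\,\hat d_\pm^{-1/2})$, and the first is smaller. This crude bound is not enough: it carries an extra logarithm, while the claimed $\hat z$ bound is $\eps\hat d_+^{-1}$. The estimate I would aim for is the following.

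First, $\dK_\eps$ satisfies the improved slow equation with $F=G+O(\eps d^{-1})$, with the same initial data to order $\eps$. The logarithm in Lemma \ref{K_and_K_eps} comes only from integrating $\eps d^{-1}$. So I expect $|\dK_\eps-\dK|=O(\eps\log)$, and this shift is what must be absorbed.

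Second, in the $z$-component the derivative of the equilibrium is of order $d_+^{-1/2}$ (from $Z_1\sim(\tau-\tau_c)^{1/2}$). However, $\dK_\eps-\dK$ is essentially a reparametrisation of time along the slow curve: to leading order it is a multiple of $g_c$ plus $O(\eps)$. Up to $O(\eps d^{-1/2})$ it therefore moves the equilibrium along itself, giving $X(\dK_\eps(\tau))\approx X(\dK(\tau+\Delta))$.

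I am not fully sure of this cancellation. If it fails, the fallback is to observe that $\eps|\ln\eps|\hat d^{-1/2}\le \eps\hat d^{-1}$ whenever $\hat d\le |\ln\eps|^{-2}$, and to treat the region of $\hat d$ of order one separately. There Lemma \ref{K_and_K_eps} gives $|\dK_\eps-\dK|=O(\eps)$ with no logarithm, since $|\ln\hat d|=O(1)$. Combining the two regions yields $|\hat z|\le c\,\eps\hat d_+^{-1}$, which absorbs (\ref{after_lemma_+}).

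For $\hat\eta$, and for $\hat z$ when $\im\tau<c_{l,1}^{-1}$, the same splitting applies. The $\eta$- and $z$-components of $\partial X/\partial\kappa$ there are $O(1)$ up to the $\eps d^{-1/2}$ refinements, by part (b) of Lemma \ref{lem_transform_0} and its $d_-$ analogue. The log-sensitive contribution then comes only from the $\eps\hat d^{-1/2}$ part, giving the stated $\eps\hat d_\pm^{-1/2}$ bounds.

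The main obstacle is this logarithmic factor from Lemma \ref{K_and_K_eps}: showing that it does not spoil the $\eps\hat d^{-1}$ and $\eps\hat d^{-1/2}$ rates is the key point.
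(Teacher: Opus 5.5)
Your decomposition is the same as the paper's. The paper writes the identity as $C^{-1}(\dK)C(\kappa)\bigl[C^{-1}(\kappa)(x-X(\kappa))+C^{-1}(\kappa)(X(\kappa)-X(\dK))\bigr]$ and uses the same bounds $\partial X/\partial\kappa,\ \partial C/\partial\kappa=O(\hat d_\pm^{-1/2})$. As written, however, your argument is not closed, and the obstacle you single out is one you created yourself. In the step on the size of $\kappa-\dK$ you replace the bound $O(\eps(1+|\ln\hat d_\pm|))$ of Lemma~\ref{K_and_K_eps} by $O(\eps|\ln\eps|)$; that is the only source of the extra logarithm.

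If you keep $|\ln\hat d_\pm|$, use the elementary fact that $\hat d^{1/2}(1+|\ln\hat d|)$ is bounded for $0<\hat d\le{\rm const}$, and use $\hat d_\pm\ge c\,\eps^{2/3}$ for the $\eps^3$ terms. This gives, uniformly in the whole domain,
\[
\hat d_+^{-1/2}\bigl(\eps^3\hat d_+^{-3}+\eps(1+|\ln\hat d_+|)\bigr)=O(\eps\hat d_+^{-1}),\qquad
\eps^3\hat d_\pm^{-3}+\eps(1+|\ln\hat d_\pm|)=O(\eps\hat d_\pm^{-1/2}).
\]
These are exactly the paper's bounds for the components of $C^{-1}(\kappa)(X(\kappa)-X(\dK))$: the $z$-component and the $w,\eta$-components above the axis, and the analogous bound for the $z$-component below it. The first one also controls $C^{-1}(\dK)C(\kappa)-I$. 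Adding (\ref{after_lemma_+}) and its analogue then finishes the proof. The reparametrisation cancellation is therefore unnecessary, and since you do not prove it, you should drop it.

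Your fallback does not repair the gap as stated. With $|\ln\eps|$ in place of $|\ln\hat d|$, the inequality $\eps|\ln\eps|\hat d^{-1/2}\le\eps\hat d^{-1}$ requires $\hat d\le|\ln\eps|^{-2}$, while your second region is ``$\hat d$ of order one''. The intermediate range $|\ln\eps|^{-2}<\hat d_\pm\ll1$ is covered by neither. For instance, for $\hat d_\pm\sim|\ln\eps|^{-1}$ the ratio $\eps|\ln\eps|\hat d^{-1/2}/(\eps\hat d^{-1})$ is of order $|\ln\eps|^{1/2}$. The same defect affects your $\hat\eta$ bound and your $\hat z$ bound for $\im\tau<c_{l,1}^{-1}$: there $\eps|\ln\eps|\le C\eps\hat d_\pm^{-1/2}$ fails on this range, whereas $\eps(1+|\ln\hat d_\pm|)\le C\eps\hat d_\pm^{-1/2}$ holds uniformly.
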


\medskip
Now we should obtain estimate that would later specify  the value $\tau_{\gamma}$, which also specifies the curve $\Gamma_{*,1, \eps}$.  We have chosen  the point $\tau_{\gamma}$ on the ray $\gamma_1$ such that $\im \tau_{\gamma}< \im \tau_c-C_{\delta}\eps^{2/3}$ and $\hat {d}_{+}(\tau_{\gamma})=C_{\gamma,*}\eps^{2/3}$.
Recall that the curve $\Gamma_{*,1, \eps}$ and the ray $\gamma_1$  intersect at 
$\tau=\tau_{\gamma}$. Thus,  estimates (\ref{after_lemma_+}) are satisfied at this value of $\tau$. We should specify the value  $C_{\gamma,*}$. For $t_{\gamma}=\tau_{\gamma}
/\eps$, we have $| \hat z(t_{\gamma})|<c_{m,10}\eps\hat { d}_{+}^{-1} =\frac{c_{m,10}}{C_{\gamma,*}}\eps^{1/3}, \  |\hat \eta(t_{\gamma})|<c_{m,11}\eps \hat{ d}_{+}^{-1/2}=\frac{c_{m,11}}{C_{\gamma,*}^{1/2}}\eps^{2/3}$. Values $ \hat z(t_{\gamma}),\hat  \eta(t_{\gamma})$ are the projections of the deviations of the phase point from $X(\dK(\tau_{\gamma}))$ onto the eigenspaces of the equilibrium $X(\dK(\tau_{\gamma}))$ of the fast system. These estimates guarantee that the exact solution is close to the equilibrium of the fast system at $\kappa=\dK(\tau_{\gamma})$,  if $\eps$ is sufficiently small. We should choose $C_{\gamma,*}$  sufficiently large to  guarantee that the exact solution is close  to the special solution with  $R=R_ -=\exp(2\pi i/3)$ of Riccati equation
 (\ref{e_riccati}). For this special solution, at $\tau=\tau_{\gamma}$, we have (see (\ref{special_1}))
 $$
 |\hat \ze -(-\sqrt{-\hat  s})|<c_{m,13}|\sqrt{-\hat s}\,  v^{-1}|, 
 $$
 where $\hat \ze, \hat  s, v$ are the variables introduced in Section \ref{s_riccati}. In the original variables this is reduced to
 $$
 |z_1-\sqrt{-a^{-1}b g_c(\tau-\tau_c)}|<\frac{c_{m,14}}{C_{\gamma,*}} \eps^{1/3}.
 $$
 (Recall that  $z_1$ is the first coordinate in the eigen-coordinate system for $\kappa=\kappa_c$.) 
 
 \medskip
 Thus, at $\tau=\tau_{\gamma}$:
 
 - the distance of the exact solution from the equilibrium $X((\kappa(\tau_{\gamma}/\eps))$ of the fast system is 
 $O(\eps\hat { d}_{+}^{-1})
 =O(\eps^{1/3}/C_{\gamma,*})$ in $z$, and $O( \eps\hat { d}_{+}^{-1/2})
  =O(\eps^{2/3}/C_{\gamma,*}^{1/2})$ in $\eta, w$;

 - the value  $\dK(\tau_{\gamma})$ differs from the value of $\kappa$  in  the simplified (expanded) slow system by
  $$O(|\tau_{\gamma} -\tau_c|^{3/2})=O((C_{\gamma,*}\eps^{2/3})^{3/2})=
 O(C_{\gamma,*}^{3/2}\eps);$$
 
  -the value $\kappa(t)$ differs from the value of $\kappa$  in  the simplified (expanded) slow system by 
  the sum of differences of $\kappa$ and $\dK_{\eps}$, of  $\dK_{\eps}$ and  $\dK$, and of  $\dK$ and the value of $\kappa$  in the simplified (expanded) slow system, i. e.  by 
    $$O(c_{m,6}\eps^3 \hat {d}_{+}^{-3}+\eps(1+|\ln \hat {d}_{+}(\tau|) +(C_{\gamma,*}^{3/2}\eps))=
  O(\eps|\ln\eps|+C_{\gamma,*}^{3/2}\eps)
  ;$$
 
- the distance between the equilibrium $X(\dK(\tau_{\gamma}))$ and the equilibrium of the simplified (expanded) fast system is $O(C_{\gamma,*}\eps^{2/3})$ in $z_1$   and  $O(C_{\gamma,*}^{3/2}\eps)$ in $\eta$.
(The second  of these values is obtained as a difference between $\dK(\tau_{\gamma})$  and the value of $\kappa$  in  the simplified (expanded) slow system, which is  $O(|\tau_{\gamma} -\tau_c|^{3/2})= O(C_{\gamma,*}^{3/2}\eps)$. The first of these values is obtained as the product of this difference and $\hat d_+^{-1/2}$, which is $O(|\tau_{\gamma} -\tau_c|^{-1/2}|\tau_{\gamma} -\tau_c|^{3/2} )=O(|\tau_{\gamma} -\tau_c|)=C_{\gamma,*}\eps^{2/3}$. These estimates also take into account a difference  of eigen-axes of equilibria for $\kappa=\dK(\tau_{\gamma})$ and $\kappa=\kappa_c$.)

\medskip
Thus, at $\tau= \tau_{\gamma}$, in the variable $z_1$, the distance between the exact solution and the special solution can be estimated as

 the distance between the exact solution and the equilibrium of the fast system at  $\kappa=\kappa(t_{\gamma})$, 
 $O(c_{m,10}\eps\hat { d}_{+}^{-1}),$\\
 $+$\\
 the distance between the equilibrium of the fast system at $\kappa=\kappa(t_{\gamma})$ and the equilibrium of the fast system at $\kappa= \dK_{\eps}(\tau_{\gamma})$, 
 $O(\hat { d}_{+}^{-1/2}c_{m,6}\eps^3 \hat {d}_{+}^{-3}),$\\
 $+$\\
  the distance between the equilibrium of the fast system at $\kappa=\dK_{\eps}(\tau_{\gamma})$ and the equilibrium of the fast system at $\kappa=\dK(\tau_{\gamma})$,
  $O(\hat { d}_{+}^{-1/2}\eps(1+|\ln \hat {d}_{+}(\tau|)),$\\
 $+$\\
 the distance between the equilibrium of the fast system at $\kappa=\dK(\tau_{\gamma})$ and the equilibrium of the expanded system at $\tau=\tau_{\gamma}$ ,
 $O((|\tau_{\gamma} -\tau_c|^{-1/2}     (|\tau_{\gamma} -\tau_c|^{3/2})=O(|\tau_{\gamma} -\tau_c|),$\\
 $+$\\
  the distance between the equilibrium of the expanded system at $\tau=\tau_{\gamma}$  and the special solution at
 $\tau=\tau_{\gamma}$, 
 $O(\frac{c_{m,14}}{C_{\gamma,*}} \eps^{1/3})$.\\
 
  Thus, we get  the following estimate for this value 
\begin{equation*}
\begin{aligned}
&O(c_{m,10}\eps\hat { d}_{+}^{-1})+O(\hat { d}_{+}^{-1/2}c_{m,6}\eps^3 \hat {d}_{+}^{-3})+O(\hat { d}_{+}^{-1/2}\eps(1+|\ln \hat {d}_{+}(\tau|))+O(|\tau_{\gamma} -\tau_c|)+O(\frac{c_{m,14}}{C_{\gamma,*}} \eps^{1/3})\\
&=O(\eps^{1/3}/C_{\gamma,*}+\eps^{2/3}/C_{\gamma,*}^{7/2}+\eps^{2/3}|\ln\eps|/C_{\gamma,*}^{1/2}+C_{\gamma,*}\eps^{2/3})+O(\eps^{1/3}/C_{\gamma,*})\\
&=O(\eps^{1/3}/C_{\gamma,*}  +C_{\gamma,*}\eps^{2/3}+\eps^{2/3}|\ln\eps|/C_{\gamma,*}^{1/2} ).
\end{aligned}
\end{equation*}
\medskip
Denote by $z_{1,\eps}(t)$ the first component of the solution $x(t)$ in the eigen-coordinate system constructed for the point $\kappa_c$. Denote by $z_{1,sp}(t)$ the value $z_1$ along the special solution. We have shown that
\begin{equation*}
\begin{aligned}
z_{1,\eps}(t_\gamma)-z_{1,sp}(t_\gamma)=O(\eps^{1/3}/C_{\gamma,*} +C_{\gamma,*}\eps^{2/3}+\eps^{2/3}|\ln\eps|/C_{\gamma,*}^{1/2}).
\end{aligned}
\end{equation*}
Introduce $R_{\gamma}$ as the value of $R$ in solution (\ref{solution_1}) that would give  $z_1=z_{1,\eps}(t_\gamma)$ at $t=t_\gamma$.  The value of $R$ that gives $z_{1,sp}(t)$  is 
$R_- =\exp(2\pi i/3)$.
\begin{lem}
\label{delta_r_1}
\begin{equation*}
\begin{aligned}
R_{\gamma}-R_-=O(|z_{1,\eps}(t_\gamma)-z_{1,sp}(t_\gamma)|/(C_{\gamma,*}^{1/2}\eps^{1/3} )).
\end{aligned}
\end{equation*}
\end {lem}
Thus,
\begin{equation*}
R_{\gamma}-R_-=O(1/C_{\gamma,*}^{3/2} +C_{\gamma,*}^{1/2}\eps^{1/3}+\eps^{1/3}|\ln\eps|/C_{\gamma,*}).
\end{equation*}
\medskip

 Denote
  $$\tau_{\eps}(t)=\tau_c +b \cdot(\kappa(t)-\kappa_c)/ (b \cdot g_c).$$
 Here
$g_c=g(x_c, \kappa_c,0)$.
Denote by $s_{\eps}$ the value obtained from $\tau_{\eps}(t)$ by  the formulas expressing $s$ via $\tau$ (see Section \ref{s_riccati}). Denote $ \zeta_{\eps}(t)=e^{-i\alpha_1}z_{1,\eps}(t)$, and  introduce corresponding values $\hat s_{\eps}(t), \hat \zeta_{\eps}(t)$,  as in  Section \ref{s_riccati}. 
 Introduce $v_{\eps}(t)=\frac{2}{3}(-\hat s_{\eps}(t))^{3/2}$.  We have
\begin{equation}
\begin{aligned}
\label{for_tau_eps}
\tau_{\eps}(t_{\gamma})&-\tau_{\gamma}=\tau_c+b \cdot(\kappa(t_{\gamma})-\kappa_c)/(b \cdot g_c)-\tau_{\gamma}\\
&=\tau_c+b \cdot(\dK(\tau_{\gamma})-\kappa_c)/(b\cdot g_c)-\tau_{\gamma}+O(\kappa(t_{\gamma})-\dK(\tau_{\gamma}))\\
&=\tau_c+b \cdot(\kappa_c+g_c(\tau_{\gamma}-\tau_c)+O(\tau_{\gamma}-\tau_c)^{3/2}-\kappa_c)/(b\cdot g_c)-\tau_{\gamma} +O(\eps|\ln\eps|+C_{\gamma,*}^{-3}\eps)\\
&=O((\tau_{\gamma}-\tau_c)^{3/2})+ O(\eps|\ln\eps|+C_{\gamma,*}^{-3}\eps)
= O(\eps|\ln\eps|+C_{\gamma,*}^{3/2}\eps).
\end{aligned}
\end{equation}

  Introduce $R_{\eps}(t)$  such that
  \begin{equation}
  \label{solution_eps}
\hat\ze_{\eps}(t)=-i\sqrt{-\hat s_{\eps}(t)}\,
\frac{J_{-2/3}(v_{\eps}(t))-R_{\eps}(t)J_{2/3}(v_{\eps}(t))}{R_{\eps}(t)J_{-1/3}(v_{\eps}(t))+J_{1/3}(v_{\eps}(t))}.
\end{equation}
This value is well defined at least at $t=t_{\gamma}$.
\begin{lem}
\label{delta_r_2}
$$
R_{\eps}(t_{\gamma})-R_\gamma=  O(|\tau_{\eps}(t_{\gamma})-\tau_{\gamma}|/(C_{\gamma,*}\eps^{2/3})).
$$
\end{lem}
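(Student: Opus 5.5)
At the fixed moment $t_\gamma$ the value $\hat\ze_\eps(t_\gamma)$ is the same in both descriptions. $R_\gamma$ is defined by requiring that formula (\ref{solution_1}) at the frozen slow time $\hat s(\tau_\gamma)$ reproduces $z_{1,\eps}(t_\gamma)$. $R_\eps(t_\gamma)$ is defined by (\ref{solution_eps}) with the same value of $\hat\ze$ but with the argument $\hat s_\eps(t_\gamma)$, computed from $\tau_\eps(t_\gamma)$. So I will view (\ref{solution_1}) as a function $\hat\ze=\Phi(\hat s,R)$, solve it for $R$ as an analytic function $R=\mathcal R(\hat s,\hat\ze)$, and estimate
$$R_\eps(t_\gamma)-R_\gamma=\mathcal R(\hat s_\eps,\hat\ze)-\mathcal R(\hat s_\gamma,\hat\ze)$$
by the mean value inequality along the segment joining $\hat s_\gamma=\hat s(\tau_\gamma)$ and $\hat s_\eps(t_\gamma)$. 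Differentiating $\Phi(\hat s,\mathcal R)=\hat\ze$ gives
$$\partial\mathcal R/\partial\hat s=-(\partial\Phi/\partial\hat s)/(\partial\Phi/\partial R).$$
From the Riccati equation (\ref{normalised_1}), at fixed $R$ we have $\partial\Phi/\partial\hat s=i(\hat s+\Phi^2)$. Lemma \ref{lem_dr} gives $\partial\Phi/\partial R$ explicitly.

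The size of the segment is the first thing to fix. By (\ref{for_tau_eps}), $|\tau_\eps(t_\gamma)-\tau_\gamma|=O(\eps|\ln\eps|+C_{\gamma,*}^{3/2}\eps)$. The rescaling $s\mapsto\hat s$ multiplies by $\eps^{-2/3}$, so $|\hat s_\eps-\hat s_\gamma|\sim\eps^{-2/3}|\tau_\eps(t_\gamma)-\tau_\gamma|$, which is much smaller than $|\hat s_\gamma|\sim C_{\gamma,*}$. Hence along the whole segment $|\hat s|\sim C_{\gamma,*}$ and $|v|\sim C_{\gamma,*}^{3/2}$ is large. Also $|\arg(-\hat s)|$ is small, since $\tau_\gamma\in\gamma_1$ corresponds to real $\hat s<0$ and the perturbation is relatively small. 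So the large-$v$ asymptotics (\ref{big_v_J}) apply uniformly on the segment, and $R$ stays in an $O(C_{\gamma,*}^{-3/2}+\dots)$-neighbourhood of $R_-$ by the estimate obtained just above.

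Next I estimate the two derivatives for $R$ near $R_-=e^{2\pi i/3}$ and $v$ large, almost real. Write $e^{\pm iv}$ for the oscillating factors, which are $O(1)$ for almost real $v$. By (\ref{big_v_J}), $R_-J_{-1/3}+J_{1/3}$ is of order $|v|^{-1/2}$ times a nonvanishing combination, since $R_-$ is exactly the value giving the special solution with nonzero denominator. Lemma \ref{lem_dr} then gives
$$|\partial\Phi/\partial R|\sim|\hat s|^{-1}|v|\sim C_{\gamma,*}^{1/2},$$
which is consistent with the scaling in Lemma \ref{delta_r_1}. For the numerator, $\Phi=-\sqrt{-\hat s}(1+O(|v|^{-1}))$ plus an $O(|R-R_-|\sqrt{|\hat s|})$ correction. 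The leading terms of $\hat s+\Phi^2$ cancel, so
$$|\hat s+\Phi^2|=O(|\hat s|\,|v|^{-1}+|\hat s|\,|R-R_-|)=O(C_{\gamma,*}^{-1/2}+C_{\gamma,*}|R-R_-|).$$
Therefore $|\partial\mathcal R/\partial\hat s|=O(C_{\gamma,*}^{-1}+C_{\gamma,*}^{1/2}|R-R_-|)$. Since $|R-R_-|=O(C_{\gamma,*}^{-3/2})+o(1)$, this is $O(C_{\gamma,*}^{-1})$.

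Multiplying by the segment length $\eps^{-2/3}|\tau_\eps(t_\gamma)-\tau_\gamma|$ gives
$$R_\eps(t_\gamma)-R_\gamma=O\big(|\tau_\eps(t_\gamma)-\tau_\gamma|/(C_{\gamma,*}\eps^{2/3})\big),$$
as claimed. The constant is uniform, because the relation between $C_\gamma$ and $C_{\gamma,*}$ only alters constants.

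The main obstacle is justifying uniformly that $\mathcal R$ is well defined and analytic along the whole segment. This needs $\partial\Phi/\partial R\neq0$, which is immediate from Lemma \ref{lem_dr} since that derivative never vanishes where it is finite. It also needs the denominator $RJ_{-1/3}+J_{1/3}$ to stay away from zero. Here I would use that this denominator vanishes only near $R\approx e^{-2\pi i/3}$-type values, or on $\gamma_2$ for the special solution (see the discussion after Lemma \ref{r_expansion}). Our $R$ stays close to $R_-$, so the asymptotic forms of the Bessel combinations give a lower bound $c|v|^{-1/2}$ on the denominator, once $C_{\gamma,*}$ is large and $\eps$ is small.
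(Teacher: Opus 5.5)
Your proposal is correct and is essentially the paper's proof: both write $\partial R/\partial\hat s=-(\partial\hat\ze/\partial\hat s)/(\partial\hat\ze/\partial R)$. Both get $|\partial\hat\ze/\partial R|\gtrsim|\hat s|^{1/2}$ from Lemma~\ref{lem_dr} and the large-$v$ asymptotics, bound $|\partial\hat\ze/\partial\hat s|=O(|\hat s|^{-1/2})$, and multiply by $|\Delta\hat s|\sim\eps^{-2/3}|\tau_\eps(t_\gamma)-\tau_\gamma|$ with $|\hat s|\sim C_{\gamma,*}$.

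Your use of $\partial\hat\ze/\partial\hat s=i(\hat s+\hat\ze^2)$ to control that derivative for $R\ne R_-$ is a small refinement of the paper, which evaluates only at $R=R_-$. Your remark on where $RJ_{-1/3}+J_{1/3}$ vanishes is inaccurate: for real $v$ it vanishes, e.g., at $R=1$. The bound you actually need, $|RJ_{-1/3}(v)+J_{1/3}(v)|\ge c|v|^{-1/2}$ for $R$ near $R_-$ and nearly real large $v$, still follows directly from (\ref{big_v_J}).
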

Thus,
$$
R_{\eps}(t_{\gamma})-R_\gamma=O(\eps^{1/3}|\ln\eps|/C_{\gamma,*}+\eps^{1/3}C_{\gamma,*}^{1/2}).
$$
Therefore,
\begin{equation*}
\begin{aligned}
&|R_{\eps}(t_{\gamma})-R_-|\le |R_{\eps}(t_{\gamma})-R_{\gamma}|+|R_{\gamma}-R_-|\\
&=O(\eps^{1/3}|\ln\eps|/C_{\gamma,*}+\eps^{1/3}C_{*}^{1/2})
+O(1/C_{\gamma,*}^{3/2}+\eps^{1/3}C_{\gamma,*}^{1/2}.
+\eps^{1/3}|\ln\eps|/C_{\gamma,*})
\\
&=O(1/C_{\gamma,*}^{3/2}+\eps^{1/3}(C_{\gamma,*}^{1/2}+C_{\gamma,*}^{-1}|\ln\eps|)).
\end{aligned}
\end{equation*}

For $C_{\gamma,*}>c_{m,15}$,  we have
\begin{equation}
\label{delta_R_1}
|R_{\eps}(t_{\gamma})-R_{-}|< \frac {c_{m,16}}{C_{\gamma,*}^{3/2}}<\frac{1}{200}. 
\end{equation}

\medskip
Here we  fix $C_{\gamma,*}, C_q$.  For this purpose, we will need one more constant, $c_{m,19}$, which we will specify later, in Section  \ref{domain_3}. This constant is determined by the properties of the function   $\hat \chi$ in (\ref {solution_2}).
Recall that  $C_{\gamma}$ is determined by  $C_{\gamma,*}$.  We take $C_q=C_{\gamma}$ and choose $C_{\gamma,*}$ sufficiently large so that
 $$C_{\gamma,*}>\max \{c_{m,15}, c_{m,19} \}  \ \mbox {and} \ C_{\gamma}>\max \{c_{e,1}, c_{e,10}\}  .$$
 
Since  $C_{\gamma,*}$ is now a fixed constant, it is not necessary to indicate dependence on it.  We will nevertheless indicate this dependence where it is instructive. 

\subsection{Second part of the principal  part of  motion: continuation of solution up to  singularity}
\label{second_principal_part}

Denote by $D_{\triangle}$ the triangle in the complex slow time plane bounded by the lines $\gamma_1, \im \eps t =\im \tau_{\gamma}, \re \eps t =\re \tau_c$ (Fig.  \ref{d_triangle}). Let  $\bar D_{\triangle}$ be  the triangle complex conjugate to $D_{\triangle}$. Denote \\$D_c= D_{up}\cup D_{\triangle} \cup \bar D_{\triangle}$ (notation $D_{up}$ is introduced at the end of Section \ref{improved_curves}).
 Denote by $z_{sm},w_{sm},\eta_{sm},\xi_{sm}$ values analogous to $z,w,\eta,\xi$ in Lemma \ref{lem_transform_00}, but constructed for 
 $x-X(\kappa_c)$ and matrix $A_c={\cal A}(\kappa_c)$. Denote by $z_{ms},w_{ms},\eta_{ms}, \xi_{ms}$ values analogous to $z,w,\eta,\xi$ in Lemma \ref{lem_transform_00},  but constructed for 
 $x-X(\bar \kappa_c)$ and matrix ${\cal A}(\bar\kappa_c)$.
 \begin{figure}[H]
\begin{center}
            \includegraphics[scale=0.4, angle=0.0]{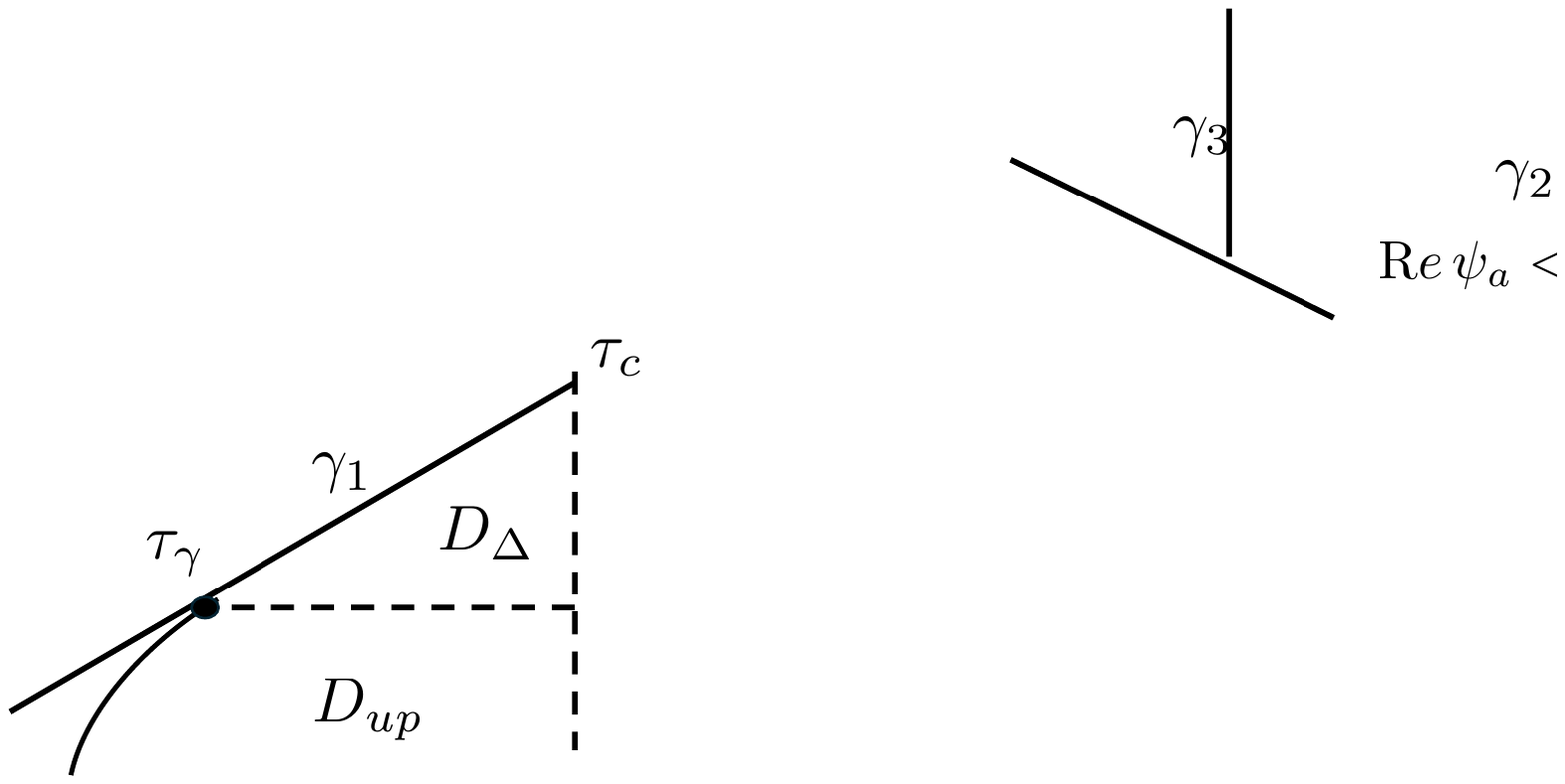}
            \end{center}
           \caption{Triangle $D_{\triangle}$ }
            \label{d_triangle}
\end{figure}

\begin{lem}
\label{lem_to_c}
Solution $x(t), \kappa(t)$ can be analytically continued   into the domain $D_c$ with the following estimates.

\medskip
If $\tau \in D_{\triangle}$, then
\begin{equation}
\begin{aligned}
&|\kappa(t)-\kappa_c|=O(\eps^{2/3} ) ,\ 
 | z_{sm}(t)|=O(\eps^{1/3}), \ | w_{sm}(t)|=O(\eps^{2/3}), \  | \eta_{sm}(t)|=O(\eps^{2/3}), \\
 &\kappa(t)=\overline{\kappa(\bar t)},  \ z_{sm}(t)= \overline {w_{ms}(\bar t)},\ w_{sm}(t)= \overline {z_{ms}(\bar t)}, \ \eta_{sm}(t)= \overline {\eta_{ms}(\bar t)},\\
 &  |\kappa(\tau/\eps)-\dK(\tau)|=O(\eps|\ln\eps|),\ |\kappa(\tau_c/\eps)-\kappa_c|=O(\eps|\ln\eps|).
 \end{aligned}
  \end{equation}
 
 In this domain values
$ R_{\eps}(t ), \hat s_{\eps}(t), \hat\ze_{\eps}(t )$ are well defined and
 \begin{equation}
\begin{aligned}
\label{delta_R_2}
&|R_{\eps}(\tau_{c}/\eps )-R_{\eps}(\tau_{\gamma}/\eps)|=O(\eps^{1/3} (|\ln\eps|+C_{\gamma,*}^{3/2})), 
\end{aligned}
\end{equation}
\begin{equation}
\begin{aligned}
&|\hat s_{\eps}(\tau_{c}/\eps )|=O(\eps^{1/3} (|\ln\eps|+C_{\gamma,*}^{3/2})),\\
&|\hat\ze_{\eps}(\tau_{c}/\eps )- \frac{1}{R_{\eps}(\tau_{\gamma}/\eps)}\frac{-2\pi i}{\Gamma^2(1/3)3^{1/6}}    |=O(\eps^{1/3} (|\ln\eps|+C_{\gamma,*}^{3/2})).
\end{aligned}
\end{equation}

The estimates for $\tau\in D_{up}$
  are given by Lemma \ref{lem_cont_D_gamma}.

\medskip
    If $\tau \in \bar D_{\triangle}$, then
\begin{equation}
\begin{aligned}
&|\kappa(t)-\bar \kappa_c|=O(\eps^{2/3})  ,\ 
 | z_{ms}(t)|=O(\eps^{2/3}), \ | w_{ms}(t)|=O(\eps^{1/3}),\ | \eta_{ms}(t)|=O(\eps^{2/3}).
 \end{aligned} 
 \end{equation}
\end{lem}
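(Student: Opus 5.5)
We propose to prove Lemma \ref{lem_to_c} by continuing the solution from the boundary of $D_{up}$ into the triangle $D_{\triangle}$ and treating the dynamics there as a perturbation of the Riccati equation (\ref{e_riccati}), not of the slow solution. On $D_{up}$ the estimates of Lemma \ref{lem_cont_D_gamma}, together with Lemma \ref{l_XC}, already give the required bounds. On the horizontal side $\im\tau=\im\tau_\gamma$ of $D_\triangle$ they give $|z_{sm}|=O(\eps^{1/3})$ and $|w_{sm}|,|\eta_{sm}|=O(\eps^{2/3})$. By (\ref{delta_R_1}) they also give $R_\eps(t_\gamma)$ within $1/200$ of $R_-$. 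The relations $z_{sm}(t)=\overline{w_{ms}(\bar t)}$ and their analogues follow from Lemma \ref{lem_real}, because the coordinate constructions for $\kappa_c$ and $\bar\kappa_c$ are complex conjugate. So it suffices to work in $D_\triangle$, and the bounds in $\bar D_\triangle$ follow by reflection.

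Inside $D_\triangle$ we have $|\tau-\tau_c|=O(\eps^{2/3})$. We rescale $z_{sm}=\eps^{1/3}\hat z$, $\tau-\tau_c=\eps^{2/3}\hat\tau$, with $w_{sm},\eta_{sm}=O(\eps^{2/3})$. The equation (\ref{de_z1}) for $z_1$ then becomes the normalised Riccati equation (\ref{normalised_1}) with a remainder of size $O(\eps^{1/3})$ in $\hat s$-time, once we use $\kappa-\kappa_c=g_c(\tau-\tau_c)+O(\eps|\ln\eps|)$. This remainder comes from the terms $\eps$, $|z_1|^3$, $|\kappa-\kappa_c||z_1|$ and $|z_2|^2$. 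The components $w_{sm},\eta_{sm}$ obey equations whose linear parts have eigenvalues bounded away from zero, and their forcing is $O(\eps^{2/3})$, coming from $z_1^2$ and $\kappa-\kappa_c$. The path is followed through $D_\triangle$ in $O(\eps^{-1/3})$ fast time. Along the iso-expanding directions the $w$ and $\eta$ modes are non-expanding, which holds by condition 5) and by the choice of path through vertical and horizontal segments. Hence a bootstrap keeps $w_{sm},\eta_{sm}=O(\eps^{2/3})$ and $\kappa=\kappa_c+O(\eps^{2/3})$. For $\kappa$ itself, $\dot\kappa=\eps g+O(\eps|\xi|)$ over slow time $O(\eps^{2/3})$ gives $\kappa(\tau/\eps)-\dK(\tau)=O(\eps|\ln\eps|)$, by combining with Lemma \ref{K_and_K_eps} and Lemma \ref{FG}.

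The key step is to track the quantity $R_\eps(t)$ defined in (\ref{solution_eps}) instead of $\hat\zeta_\eps$. Using Lemma \ref{lem_dr}, we get $dR_\eps/d\hat s=(\partial\hat\zeta/\partial R)^{-1}\cdot(\text{remainder})$. Since $R_\eps$ stays near $R_-$, the denominator $R J_{-1/3}+J_{1/3}$ has no zeros in the closed triangle: by the Airy-zero argument at the end of Section \ref{s_riccati}, its zeros lie only on $\gamma_2$. Therefore $(\partial\hat\zeta/\partial R)^{-1}$ is bounded on the rescaled triangle, uniformly for $|R-R_-|<1/100$. Integrating over $\hat s$-length $O(1)$ gives $|R_\eps(\tau_c/\eps)-R_\eps(\tau_\gamma/\eps)|=O(\eps^{1/3}(|\ln\eps|+C_{\gamma,*}^{3/2}))$. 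The $C_{\gamma,*}^{3/2}$ and $\ln$ contributions come from (\ref{for_tau_eps}), which bounds $\tau_\eps-\tau$. This is (\ref{delta_R_2}). Then $\hat s_\eps(\tau_c/\eps)=O(\eps^{1/3}(\ldots))$ follows from $\tau_\eps(t)-\tau=O(\eps|\ln\eps|+C_{\gamma,*}^{3/2}\eps)$ after rescaling by $\eps^{-2/3}$. The value of $\hat\zeta_\eps$ at $\tau_c$ then follows from Lemma \ref{ze_small_v} together with the $R$-estimate, since $|v|^{2/3}=O(|\hat s|)$.

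The main obstacle is keeping $\partial\hat\zeta/\partial R$ uniformly nondegenerate near $\hat s=0$, where Lemma \ref{lem_dr} contains a $1/\hat s$ factor. Near $v=0$ the denominator behaves like $v^{-1/3}$, so $1/(\hat s(RJ_{-1/3}+J_{1/3})^2)\sim$ const. This shows $\partial\hat\zeta/\partial R$ is bounded above and below there, and I would verify this explicitly from (\ref{small_v_J}). Equivalently, one can avoid the singular parametrisation by the Möbius structure of the Riccati flow in $R$. A second delicate point is closing the bootstrap for $w_{sm},\eta_{sm}$ in the corner near $\tau_c$, where the eigenvalue of the $z$-mode vanishes but those of the other modes do not. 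There I would use the exponential dichotomy of the $\eta,w$ block along short horizontal paths.
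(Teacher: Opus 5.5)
Your overall architecture matches the paper's: a perturbed Riccati equation in $D_{\triangle}$, tracking of $R_{\eps}$ via Lemma~\ref{lem_dr} and the Bessel asymptotics, then Lemma~\ref{ze_small_v} at $\tau_c$. The bootstrap for $w_{sm}$, however, has a genuine gap.

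Condition 5) concerns only $\lambda_3,\dots,\lambda_n$ and says nothing about the $w_{sm}$-mode. That mode's eigenvalue in $D_{\triangle}$ is $\lambda_2(\kappa_c)+O(\eps^{1/3})\neq 0$. The hypotheses give no sign for $\re\lambda_2(\kappa_c)=\re\lambda_1(\bar\kappa_c)$. So along a horizontal or iso-expanding segment in direction $e^{i\theta}$, the rate $\re(e^{i\theta}\lambda_2(\kappa_c))$ may be positive. These segments are not short: their fast-time length is of order $C_{\gamma}\eps^{-1/3}$, so the mode can be amplified by a factor $\exp(c\,\eps^{-1/3})$. An exponential dichotomy does not help, because there is no terminal condition for an unstable component. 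This is exactly why the paper never integrates the $w$-equation in the upper half-plane and instead uses the real-analyticity relation $w(t)=\overline{z(\bar t)}$.

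The missing idea is to run the reflection the other way from how you propose: information from $\bar D_{\triangle}$ feeds $D_{\triangle}$, not conversely.
\begin{itemize}
\item On the lower boundary of $D_{up}$, which is the upper side of $\bar D_{\triangle}$, Lemmas~\ref{lem_cont_D_gamma} and~\ref{symmetry_triangle_1} give $z_{ms}=O(\eps^{2/3})$.
\item Moving vertically downward, this non-critical mode contracts. Its rate is $\im\lambda_1<0$, as in the paper's estimates for $z$ in the lower half-plane, using conditions 6) and E) down to $\bar\tau_c$.
\item Hence $z_{ms}=O(\eps^{2/3})$ in $\bar D_{\triangle}$, and so $w_{sm}(t)=\overline{z_{ms}(\bar t)}=O(\eps^{2/3})$ in $D_{\triangle}$.
\end{itemize}
Equivalently, you may integrate the $w_{sm}$-equation vertically \emph{upward} from $\im\tau=\im\tau_{\gamma}$. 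This contracts because $\im\lambda_2(\kappa_c)=-\im\lambda_1(\bar\kappa_c)>0$, a fact that does not come from condition 5). The paper closes the bootstrap by assuming only $|w_{sm}|<\eps^{1/2}$ a priori and improving it by this argument.

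A smaller slip concerns the zeros of $RJ_{-1/3}+J_{1/3}$. Their absence keeps $\hat\ze_{\eps}$ pole-free, but it is not what bounds $(\partial\hat\ze/\partial R)^{-1}\propto\hat s\,(RJ_{-1/3}+J_{1/3})^2$. That quantity is bounded on the compact rescaled triangle simply by continuity, with a constant depending on the fixed $C_{\gamma,*}$. Your $v\to 0$ check handles the only delicate point.
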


\medskip
Note that $\kappa$ in the present lemma denotes the original variable
 $\kappa$, whereas in Lemma \ref{lem_cont_D_gamma} it denotes the variable obtained by the transformation described in Lemma \ref{lem_transform_kappa}.

\begin{cor}
Estimates (\ref{delta_R_1}) and  (\ref{delta_R_2}) imply that
$$|R_{\eps}(\tau_{c}/\eps )-R_{-}|<
\frac {c_{m,17}}{C_{\gamma,*}^{3/2}}<\frac{1}{100}, \quad c_{m,17}=2c_{m,16}.$$
\end{cor}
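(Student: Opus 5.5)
This corollary follows from the two estimates by the triangle inequality. The plan is to write
\[
|R_{\eps}(\tau_c/\eps)-R_-|\le |R_{\eps}(\tau_c/\eps)-R_{\eps}(\tau_\gamma/\eps)|+|R_{\eps}(\tau_\gamma/\eps)-R_-|
\]
and bound the two terms separately.

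The second term is bounded by (\ref{delta_R_1}), which holds for $C_{\gamma,*}>c_{m,15}$:
\[
|R_{\eps}(\tau_\gamma/\eps)-R_-|<\frac{c_{m,16}}{C_{\gamma,*}^{3/2}}<\frac1{200}.
\]
Here $t_\gamma=\tau_\gamma/\eps$, so $R_\eps(t_\gamma)$ and $R_\eps(\tau_\gamma/\eps)$ are the same quantity.

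The first term is bounded by (\ref{delta_R_2}) from Lemma \ref{lem_to_c}:
\[
|R_{\eps}(\tau_c/\eps)-R_{\eps}(\tau_\gamma/\eps)|\le k\,\eps^{1/3}\bigl(|\ln\eps|+C_{\gamma,*}^{3/2}\bigr).
\]
The constant $C_{\gamma,*}$ has already been fixed at the end of Section \ref{first_principal_part}, and the right-hand side tends to $0$ as $\eps\to0$. So for $\eps$ small enough it is at most $c_{m,16}C_{\gamma,*}^{-3/2}$. This is consistent with the paper's convention that estimates hold for sufficiently small $\eps$.

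Adding the two bounds gives
\[
|R_{\eps}(\tau_c/\eps)-R_-|<\frac{2c_{m,16}}{C_{\gamma,*}^{3/2}}=\frac{c_{m,17}}{C_{\gamma,*}^{3/2}}.
\]
Since $c_{m,16}C_{\gamma,*}^{-3/2}<1/200$, this quantity is less than $1/100$.

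Nothing here is difficult. The one point to check is the order of choices: $C_{\gamma,*}$ must be fixed before $\eps$ is taken small, so that the $C_{\gamma,*}^{3/2}\eps^{1/3}$ contribution can be absorbed. This ordering is exactly the one the paper uses.
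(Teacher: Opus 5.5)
Your argument is correct and is the argument the paper intends. The paper states the corollary without a separate proof, as a direct consequence of (\ref{delta_R_1}) and (\ref{delta_R_2}) via the triangle inequality, and your ordering of choices (first fix $C_{\gamma,*}$ at the end of Section \ref{first_principal_part}, then take $\eps$ small so that the $O(\eps^{1/3}(|\ln\eps|+C_{\gamma,*}^{3/2}))$ term is below $c_{m,16}C_{\gamma,*}^{-3/2}$) is exactly the paper's.
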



\subsection{A domain to which the solution will be continued }
\label{domain_3}

Recall that $R_{-}=e^{2\pi i/3}$. Then $ R_{+}=e^{-2\pi i/3}R_{-}=1$.
Denote 
$
R^+_{\eps}(\tau_{c}/\eps )=e^{-2\pi i/3} R_{\eps}(\tau_{c}/\eps ).
$
Then
$$
|R^+_{\eps}(\tau_{c}/\eps )-1|<\frac {c_{m,17}}{C_{\gamma,*}^{3/2}}< \frac{1}{100}.
$$
We will use functions $\hat\sigma_{\eps}(t)=e^{-2\pi i/3}\hat s_{\eps}(t)$ and  $\hat \chi_{\eps}(t)=e^{2\pi i/3}\hat\ze_{\eps}(t)$.  Then 
\begin{equation}
\label{solution_eps_chi}
\hat\chi_{\eps}(t)=-i\sqrt{-\hat \sigma_{\eps}(t)}\,
\frac{J_{-2/3}(v_{\eps}(t))-R^+_{\eps}(t)J_{2/3}(v_{\eps}(t))}{R^+_{\eps}(t)J_{-1/3}(v_{\eps}(t))+J_{1/3}(v_{\eps}(t))}, 
\ v_{\eps}(t)=\frac{2}{3}(-\hat \sigma_{\eps}(t))^{3/2}.
\end{equation}
Indeed, we know that the substitution $s=e^{2\pi i/3}\sigma$ together with the  replacement of $R$ with $ e^{-2\pi i/3}R$ in (\ref{solution_1}) results  in multiplication of the right hand side of  (\ref{solution_1}) by $e^{2\pi i/3}$ (cf.  (\ref{solution_1}) ,  (\ref{solution_2}), and
 (\ref{r_relation})). This implies (\ref{solution_eps_chi}). The left- and right-hand sides of (\ref {solution_eps_chi}) are well defined at least for $\re \tau= \re \tau_c,\ \im \tau\le \im \tau_c$.


\medskip

Take $c_{m,18}, c_{m,19}$ such that, for  $ C_{\gamma,*}>c_{m,19}$,
 the function $\hat \chi$ (\ref {solution_2}) with $R$ satisfying  \\
 $|R-1|\le \frac {c_{m,17}}{C_{\gamma,*}^{3/2}}< 1/100$ has no poles on $\gamma_2$ for $ \re \tau_{c}  \le  \re\tau  \le \re \tau_c+c_{m,18}^{-1}\eps^{2/3}$ or  below  $\gamma_2$.

\medskip


 Denote by $D_{1, 1 }$ the curvilinear trapezoid bounded by the  lines   $\re \tau =\re \tau_{c},\\ \re \tau = \re \tau_c+c_{m,18}^{-1}\eps^{2/3},\ \gamma_2$ , and by the upper boundary of $D_{q,r}$. Let $\bar D_{1, 1}$ be its complex conjugate curvilinear trapezoid.
 Denote by $D_{1, 2 }$ the part of the domain $D_{q,r}$ with $\re \tau_{c} \le\re \tau\le \re \tau_c+c_{m,18}^{-1}\eps^{2/3}$.
 
 \medskip

\medskip

Let us describe the domain in the slow complex time plane  to which we should continue the solution (Fig. \ref{d_2}).
 \begin{figure}[H]
\begin{center}
            \includegraphics[scale=0.4, angle=0.0]{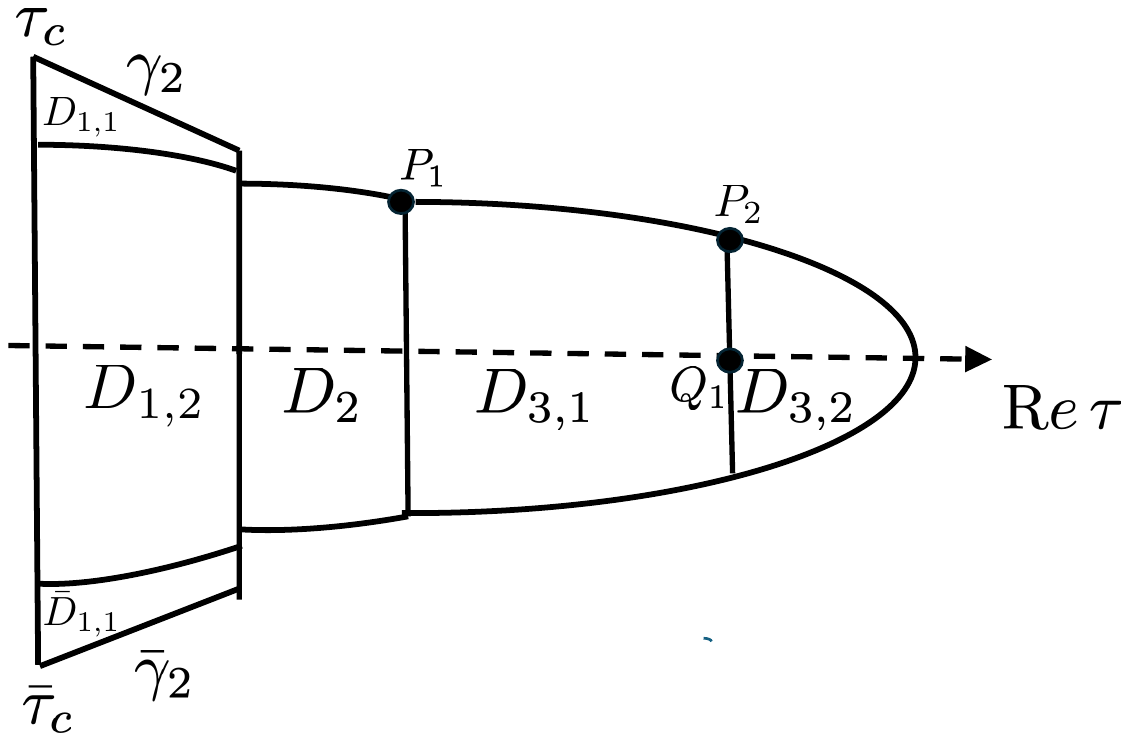}
            \end{center}
           \caption{Domains $D_1=(D_{1,1}\cup D_{1,2}\cup \bar D_{1,1}),\  D_2$ and $D_3=(D_{3,1}\cup D_{3,2}) $ }
            \label{d_2}
\end{figure}
\noindent
This domain is the union of three domains, $D_1, D_2$, and $D_3$. The domain $D_1$ is a trapezoid bounded by the lines   $\re \tau =\re \tau_{c},\  \re \tau = \re \tau_c+c_{m,18}^{-1}\eps^{2/3},\ \gamma_2,$ and  $\bar{\gamma_2}$. 
The domain $D_2$ is bounded by  the curve  $\re \psi_{a}=-C_{a,0}\eps$, the complex conjugate to it curve, the line $ \re \tau = \re \tau_c+c_{m,18}^{-1}\eps^{2/3}$,  and the line  $ \re \tau = \re \tau_c+C_{a,1}\eps^{2/3}$. Here $C_{a,0}$ and  $C_{a,1}$ are constants that will be specified later. The complex phase   $\psi_a$ is introduced in (\ref {phase_a_1}). Denote by $P_1$ the upper right corner of the domain $D_2$. The domain $D_3$ is bounded  by the line $ \re \tau = \re \tau_c+C_{a,1}\eps^{2/3}$,    the segment of the curve $\re \Psi_{\eps}={\rm const}$ from $P_1$ till the real axis (denote by $\tau_{+, C_{a,0}, \eps}$ the corresponding point of the real axis), and the complex conjugate to it curve. 
Denote by  $Q_1$  the point of intersection of the curve $\Gamma_{q,r,d}$  and the real axis in the $\tau$-plane. 
Note  that for $d_{+}\sim 1$ the vertical  distance between the upper boundary of the domain $D_{q,r}$ (curve 
$\Gamma_{q,\eps}$) and the considered level curve $\re \Psi_{\eps}={\rm const}$ is of order $\eps$, while the vertical
 width of the domain $\tilde D_{q,r,d}$ is of order $\eps |\ln \eps|$. Thus, for  $d_{+}\sim 1$, the considered level curve  $\re \Psi_{\eps}={\rm const}$ is inside the domain  $\tilde D_{q,r,d}$ and passes above $Q_1$. We represent the domain $D_3$ as the union of two domains,  $D_3=D_{3,1}\cup D_{3,2}$.
The border between $D_{3,1}$ and  $D_{3,2}$  is the vertical line $\re \tau =\re Q_1$. Note that  $\tau_{+, C_{a,0}, \eps}-\re Q_1=O(\eps|\ln\eps|)$.

 
 \medskip
Define the  constant  $c_{m,22}$ as follows. Consider the expansion  (\ref{expansion}):
\begin{equation}
\label{expansion_copy}
\hat\chi=\sqrt{-\hat\sigma}\Bigl [1-2e^{-2iv}e^{\pi i/6}
\frac{R-e^{-2\pi i/3}}{R- e^{2\pi i/3}}
+O\Bigl (e^{-4|{\rm Im\, }v|}+\frac{1}{|v|}\Bigr)\Bigr ]. 
\end{equation}
Here $\hat\chi,\ \hat\sigma$  can be replaced with $\chi=\eps^{1/3}\hat\chi, \  \sigma=\eps^{2/3}\hat\sigma$. We have
$v=\frac{2}{3}(-\hat\sigma)^{3/2}=\frac{2}{3}(-\sigma)^{3/2}/\eps$.

For $|R-1|<1/100$, we have  
$$
|\frac{R-e^{-2\pi i/3}}{R- e^{2\pi i/3}}|>c_{m,20}^{-1}.
$$
We choose $c_{m,21}, c_{m,22}$  such that on the curve   $\eps^{-1}\re \psi_{a}=\re(-2iv)=2\im v= -C_{a,0}$, for $C_{a,0}>c_{m,21}$, we have in (\ref{expansion_copy})
\begin{equation}
\label{exp_esimate_1}
|2e^{-2iv}e^{\pi i/6}
\frac{R-e^{-2\pi i/3}}{R- e^{2\pi i/3}}
+O\Bigl (e^{-4|{\rm Im\, }v|} \Bigr )|
> c_{m,22}^{-1}e^{-C_{a,0}}.
\end{equation}

 We take  $P_1$   on the curve   $\eps^{-1}\re \psi_{a}=-C_{a,0}$  in such a way that  for $|R-1|<1/100$ at $P_1$ we have   in (\ref{expansion_copy})
\begin{equation}
\label{exp_esimate_2}
|\Bigl [-2e^{-2iv}e^{\pi i/6}
\frac{R-e^{-2\pi i/3}}{R- e^{2\pi i/3}}
+O\Bigl (e^{-4|{\rm Im\, }v|}+\frac{1}{|v|}\Bigr)\Bigr ]|> 0.5 c_{m,22}^{-1} e^{-C_{a,0}}. 
\end{equation}
This is guaranteed, provided that $|v| \ge c_{m,23}e^{C_{a,0}}$ at $P_1$.
At the point $P_1$ we have
\begin{equation}
c_{m,24}^{-1}e^{-C_{a,0}}<|\frac{\chi-\sqrt{-\sigma}}
{\sqrt{-\sigma}}|<c_{m,24}e^{-C_{a,0}}. 
\end{equation}

Now we can determine $C_{a,0}$. Constant  $C_{a,0}>c_{m,21}$ is such that the curve\\ $\eps^{-1}\re \psi_{a}=-C_0$ with $ \re \tau_c \le\re \tau \le \re \tau_c+c_{m,18}^{-1}\eps^{2/3}$ belongs to $D_q$, and is at a distance larger than 1 (in $\tau/\eps^{2/3}$) from the boundary of $D_q$.
This guarantees that, for the choice of $C_{a,1}$ indicated below, the arc of the curve $\eps^{-1}\re \psi_{a}=-C_{a,0}$ ending at the point $P_{1}$ belongs to $D_{q}$ and remains at a positive distance from its boundary.
 The choice of $C_{a,1}$ ensuring that (\ref {exp_esimate_2}) holds is  $C_{a,1}>c_{m,25}e^{2C_{a,0}/3}$. Later we will impose an additional condition on   $C_{a,1}$
  (see Lemmas \ref {lem_after_c_2_1},  \ref {lem_after_c_3}).
 
\subsection{Third  part of the principal  part of  motion}
\label{third_principal_part}

\begin{lem}
\label{lem_after_c_1}
The solution $x(t), \kappa(t)$ can be  analytically continued  into $D_1$  with the following estimates.

\medskip
If $\tau \in D_{1,1}$, then
\begin{equation}
\begin{aligned}
\label{e_after_c_1_1}
&|\kappa(t)-\kappa_c|=O(\eps^{2/3} ) ,\ 
 | z_{sm}(t)|=O(\eps^{1/3}), \ | w_{sm}(t)|=O(\eps^{2/3}), \  | \eta_{sm}(t)|=O(\eps^{2/3}), \\
 &\kappa(t)=\overline{\kappa(\bar t)},  \ z_{sm}(t)= \overline {w_{ms}(\bar t)},\ w_{sm}(t)= \overline {z_{ms}(\bar t)}, \ \eta_{sm}(t)= \overline {\eta_{ms}(\bar t)}.
 \end{aligned}
  \end{equation}
 The estimates for $\tau\in D_{1,2}$
  are given by Lemma \ref{lem_cont_D_q_r}.

\medskip
    If $\tau \in \bar D_{1,1}$, then
\begin{equation}
\begin{aligned}
\label{e_after_c_1_2}
&|\kappa(t)-\bar \kappa_c|=O(\eps^{2/3})  ,\ 
 | z_{ms}(t)|=O(\eps^{2/3}), \ | w_{ms}(t)|=O(\eps^{1/3}),\ | \eta_{ms}(t)|=O(\eps^{2/3}).
 \end{aligned} 
 \end{equation}
 
 On the part of the curve $\eps^{-1}\re \psi_{a}=-C_{a,0}$ with  $   \re \tau_{c}  \le  \re\tau  \le \re \tau_c+c_{m,18}^{-1}\eps^{2/3}$, we have
 \begin{equation}
 \begin{aligned}
 \label{est_on_psi_a}
 &\hat \sigma_{\eps}(\tau/\eps)=\hat \sigma
+O(\eps^{1/3}(|\ln \eps|+C_{a,0})),\  R_{\eps}^+(\tau/\eps)=R_{\eps}^+ +O(\eps^{1/3}C_{a,0}^{1/3}e^{C_{a,0}} ),\\
 &\hat \chi_ {\eps}(\tau/\eps)=-i\sqrt{-\hat \sigma}\,
\frac{J_{-2/3}(v)-R^+_{\eps}(\tau_c/\eps)J_{2/3}(v)}{R^+_{\eps}(\tau_c/\eps)J_{-1/3}(v)+J_{1/3}(v))}
+O(\eps^{1/3}(|\ln \eps|C_{a,0}^{-1/3}+C_{a,0}^{2/3})),\\
& \hat \chi_ {\eps}(\tau/\eps)=\sqrt{-\hat\sigma}\Bigl [1-2e^{-2iv}e^{\pi i/6}
\frac{R^+_{\eps}(\tau_c/\eps)-e^{-2\pi i/3}}{R^+_{\eps}(\tau_c/\eps)- e^{2\pi i/3}}
+O\Bigl (e^{-4|{\rm Im\, }v|}+\frac{1}{|v|}\Bigr)\Bigr ]\\
&+O(\eps^{1/3}(|\ln \eps|C_{a,0}^{-1/3}+C_{a,0}^{2/3})),\\
&c_{m,26}^{-1}e^{-C_{a,0}}-c_{m,27}\frac{1}{|v|}<|\frac{\hat \chi_ {\eps}(\tau/\eps)-\sqrt{-\hat\sigma}}{\sqrt{-\hat\sigma}}|
+O(\eps^{1/3}(C_{a,0}^{-2/3}|\ln \eps|+C_{a,0}^{1/3}))< c_{m,26}e^{-C_{a,0}}+c_{m,27}\frac{1}{|v|},\\
&c_{m,26}^{-1}e^{-C_{a,0}}-c_{m,27}\frac{1}{|v|}<|\frac{z(t)}
{\sqrt{\sigma}}|+O(\eps^{1/3}(C_{a,0}^{-2/3}|\ln \eps|+C_{a,0}^{1/3}))<c_{m,26}e^{-C_{a,0}} +c_{m,27}\frac{1}{|v|},\\
&  |\kappa(t)-\dK_{\eps}(\eps t)|=O(\eps). 
\end{aligned}
\end{equation}
\end{lem}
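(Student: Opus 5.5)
The plan is a perturbation argument around the Riccati equation (\ref{e_riccati}) inside the $\eps^{2/3}$-neighbourhood of $\tau_c$, glued to the estimates already obtained on the boundary. The initial data come from Lemma \ref{lem_to_c}. On the vertical segment $\re\tau=\re\tau_c$ we know $\kappa-\kappa_c=O(\eps^{2/3})$, $z_{sm}=O(\eps^{1/3})$ and $w_{sm},\eta_{sm}=O(\eps^{2/3})$. At $\tau_c$ we have $|R^+_\eps(\tau_c/\eps)-1|<1/100$. In the rescaled variables $\hat\sigma=\eps^{-2/3}(\cdots)$ and $\hat\chi=\eps^{-1/3}(\cdots)$, system (\ref{perturbed}) written in the coordinates $z_{sm},w_{sm},\eta_{sm},\kappa$ becomes equation (\ref{normalised_2}) plus $O(\eps^{1/3})$ corrections. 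These are the neglected terms of (\ref{de_z1}) evaluated at $z_1\sim\eps^{1/3}$ and $\kappa-\kappa_c\sim\eps^{2/3}$, divided by $\eps^{2/3}$. The equations for $w_{sm},\eta_{sm}$ remain contracting: their linear parts have eigenvalues bounded away from zero, with real parts of the correct sign along horizontal paths. The slow variable obeys $\kappa=\kappa_c+g_c(\tau-\tau_c)+O(\eps|\ln\eps|)+O(\eps^{4/3})$ on a domain of size $\eps^{2/3}$.

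\textbf{Step 1: continuation into $D_{1,1}$.} Move horizontally to the right from the segment $\re\tau=\re\tau_c$ by at most $c_{m,18}^{-1}\eps^{2/3}$, i.e. a bounded distance in $\hat\sigma$. By the choice of $c_{m,18},c_{m,19}$, the comparison solution $\hat\chi$ of (\ref{solution_2}) with $R=R^+_\eps(\tau_c/\eps)$ has no poles there. It is therefore bounded on this compact set in $\hat\sigma$, together with its variational equation. A Gronwall bootstrap over a bounded rescaled time then keeps the exact $\hat\chi_\eps$ within $O(\eps^{1/3}|\ln\eps|)$ of it, as long as $w_{sm},\eta_{sm}=O(\eps^{2/3})$ and $\kappa-\kappa_c=O(\eps^{2/3})$. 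These last bounds are closed by the same bootstrap, because their forcing is $O(|z_{sm}|^2+\eps)=O(\eps^{2/3})$. This gives (\ref{e_after_c_1_1}). The conjugate estimates (\ref{e_after_c_1_2}) on $\bar D_{1,1}$ follow from Lemma \ref{lem_real}, i.e. $w(t)=\overline{z(\bar t)}$. This is the point of working with the delay system: the $w$-values used in $D_{1,1}$ are read off from $\bar D_{1,1}$ and vice versa. Hence the two triangles must be continued simultaneously by the same bootstrap. On $D_{1,2}$ the estimates are simply quoted from Lemma \ref{lem_cont_D_q_r}.

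\textbf{Step 2: the curve $\eps^{-1}\re\psi_a=-C_{a,0}$.} Compare $\hat\sigma_\eps(\tau/\eps)$ with $\hat\sigma$ using $\tau_\eps(t)-\tau=O(\eps|\ln\eps|)+O(C_{a,0}\eps)$, in the manner of (\ref{for_tau_eps}), and rescale by $\eps^{-2/3}$. Next define $R^+_\eps(t)$ through (\ref{solution_eps_chi}) and bound its drift by $\partial\hat\chi/\partial R$ from Lemma \ref{lem_dr}. Here $(RJ_{-1/3}+J_{1/3})^{-2}$ is of size $e^{-2|\im v|}\sim e^{C_{a,0}}$, which produces the factor $e^{C_{a,0}}$ in the $R$-estimate; the factor $C_{a,0}^{1/3}$ comes from $|v|$. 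Substituting into Lemma \ref{r_expansion} yields the expansion for $\hat\chi_\eps$. Together with (\ref{exp_esimate_1}) and $|R-e^{-2\pi i/3}|/|R-e^{2\pi i/3}|>c_{m,20}^{-1}$, this gives the two-sided bounds. Converting $\hat\chi_\eps-\sqrt{-\hat\sigma}$ into the transformed $z$ of (\ref{d_equation}) costs only $O(\eps^{1/3}\cdot\ldots)$ errors: this uses the near-identity changes of Lemmas \ref{lem_transform_1}, \ref{lem_transform_2}, \ref{lem_transform_kappa} with $d_+\sim\eps^{2/3}$, and the fact that $X(\kappa)$ corresponds to $\sqrt{-\sigma}$ up to $O(|\tau-\tau_c|)$. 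The bound $|\kappa-\dK_\eps|=O(\eps)$ follows from integrating $\dot\kappa-\eps F=O(\eps|z|+\ldots)$ over a slow time of length $O(\eps^{2/3})$.

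The main obstacle is Step 1: the sign structure of the fast linearisation along horizontal paths near $\tau_c$ is not uniform, and the $z$-perturbation must be controlled without $|\hat\chi|$ becoming large near poles. I would first handle this with the explicit pole-free region fixed by $c_{m,18},c_{m,19}$, and use majorant estimates for the delay system on compact rescaled domains.
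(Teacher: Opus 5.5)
Your plan follows the paper's except at one step, where it has a genuine gap. The paper also obtains (\ref{e_after_c_1_1})–(\ref{e_after_c_1_2}) as in Lemma \ref{lem_to_c} and then varies $R$ along the arc.

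\textbf{The gap: the bound on $w_{sm}$.} It sits in Step 1, in the bound $|w_{sm}|=O(\eps^{2/3})$ on $D_{1,1}$, equivalently $|z_{ms}|=O(\eps^{2/3})$ on $\bar D_{1,1}$. You close it by asserting that the $w_{sm}$-mode contracts along horizontal paths. Its rate there is $\re\lambda_2(\kappa_c)=\re\lambda_1(\bar\kappa_c)$, and no hypothesis controls its sign. Conditions 5) and 6) concern only $\lambda_3,\dots,\lambda_n$ and the level curves $\re\Psi={\rm const}$ of $\lambda_1$. Under the simplifying assumption at the end of Section \ref{form_conditions}, read literally, the rate is actually zero. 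Every level curve then has horizontal tangent on $\{\re\tau=\re\tau_c,\ 0<\im\tau<\im\tau_c\}$, so $\re\lambda_1=0$ there, and Schwarz reflection across this line makes $\lambda_1(\bar\kappa_c)$ purely imaginary.

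\textbf{Why this breaks the argument.} A horizontal path across $D_{1,1}$ takes fast time of order $\eps^{-1/3}$. A positive rate is therefore fatal. Even a neutral rate only gives $w_{sm}=O(\eps^{2/3}\cdot\eps^{-1/3})=O(\eps^{1/3})$. With that bound, the term $O(|z_{sm}||w_{sm}|)$ in the $z_{sm}$-equation contributes $O(1)$ to the rescaled Riccati equation, and your Gronwall comparison fails. The delay identity $w(t)=\overline{z(\bar t)}$ does not rescue this if $\bar D_{1,1}$ is also swept horizontally, because $z_{ms}$ there has exactly the same rate.

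\textbf{The fix.} Use the device from the proof of Lemma \ref{lem_to_c}.
\begin{enumerate}
\item Move vertically downward into $\bar D_{1,1}$ from its upper edge, which is the lower boundary $\bar\Gamma_{q,\eps}$ of $D_{q,r}$. On that edge Lemma \ref{lem_cont_D_q_r} gives $z=O(\eps C_q^{-3})$, hence $z_{ms}=O(\eps^{2/3})$.
\item Downward motion in the lower half-plane contracts the $\lambda_1$-mode at a rate of order one, as in Lemma \ref{l_down_down_z}. With forcing $O(\eps^{2/3})$ this keeps $z_{ms}=O(\eps^{2/3})$ on $\bar D_{1,1}$.
\item The identity $w_{sm}(t)=\overline{z_{ms}(\bar t)}$ then closes the bootstrap on $D_{1,1}$.
\end{enumerate}

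\textbf{Smaller points in Step 2.}
\begin{itemize}
\item By the choice of $C_{a,0}$, the arc $\eps^{-1}\re\psi_a=-C_{a,0}$ lies inside $D_{1,2}\subset D_{q,r}$, away from its boundary. So existence of the solution there comes from Lemma \ref{lem_cont_D_q_r}, not from your Step 1. The same lemma must control the Riccati residual along the part of your $R$-integration path below $\Gamma_{q,\eps}$.
\item The bound $|\kappa-\dK_\eps|=O(\eps)$ cannot come from integrating over slow time $O(\eps^{2/3})$ starting at $\tau_c$. Lemma \ref{lem_to_c} only gives an $O(\eps|\ln\eps|)$ discrepancy there, and your integration would propagate it. The paper instead combines the bound for the transformed $\kappa$ in Lemma \ref{lem_cont_D_q_r} with the $O(\eps)$ size of the change of variables in Lemma \ref{lem_transform_kappa}.
\item On the arc, $(RJ_{-1/3}+J_{1/3})^{-2}$ is small, of size $|v|e^{-C_{a,0}}$, not of size $e^{C_{a,0}}$ as you wrote. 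The factor $e^{C_{a,0}}$ in the $R$-estimate arises from dividing the residual by $\partial\hat\chi/\partial R$.
\end{itemize}
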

Note that $z, \kappa$ in the present lemma and the next one denote the original variables  $z, \kappa$, whereas in Lemma \ref{lem_cont_D_q_r} they denote the transformed variables introduced in
 Lemmas \ref{lem_transform_2} and  \ref{lem_transform_kappa}. 
\medskip

{\bf Remark.}  Since the constant $C_{a,0}$ was fixed before Lemma \ref{lem_after_c_1}, the form of the dependence of the other constants on $C_{a,0}$ is not important. It is given here merely for definiteness.

\medskip

\subsection{Fourth  part of the principal  part of  motion}
\label{fourth_principal_part}
 
 \begin{lem}
 \label{lem_after_c_2}
For $C_{a,1}>c_{m,25}e^{2C_{a,0}/3}$, 
 the solution $z(t), \eta(t), \kappa(t)$ can be  analytically continued   into the domain $D_2$ with the estimates given in Lemma  \ref{lem_cont_D_q_r}. At the point $P_1$ we have 
\begin{equation}
\label{est_after c_2}
 c_{m,28}^{-1}\eps^{1/3}\sqrt{|\hat \sigma |}e^{-C_{a,0}}<|z|<c_{m,28}\eps^{1/3} \sqrt{|\hat \sigma |}e^{-C_{a,0}}.
 \end{equation}

 
   
\end{lem}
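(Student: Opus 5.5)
The plan is to obtain the continuation into $D_2$ directly from Lemma \ref{lem_cont_D_q_r}, and then to get the estimate at $P_1$ by propagating the information from Lemma \ref{lem_after_c_1} along the level curve $\eps^{-1}\re\psi_a=-C_{a,0}$.

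\emph{Continuation.} By the choice of $C_{a,0}$ in Section \ref{domain_3}, the arc of the curve $\eps^{-1}\re \psi_a=-C_{a,0}$ with $\re\tau_c+c_{m,18}^{-1}\eps^{2/3}\le \re\tau\le \re\tau_c+C_{a,1}\eps^{2/3}$ lies inside $D_q$, at distance larger than $1$ (in $\tau/\eps^{2/3}$) from $\partial D_q$. The same then holds for its complex conjugate and for the vertical segments joining the two arcs. Hence $D_2\subset D_{q,r}\cup D_{1,2}$. I will check this inclusion using Lemmas \ref{lem_Gamma_q} and \ref{fourth_line}: these show that $\Gamma_{q,\eps}$ is $O(\eps)$-close, in the rescaled variable, to the level curve of $\re\psi_a$ through $\tau_q$. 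That level value is $-c\,C_q^{3/2}\eps$ with $C_q=C_\gamma$ large, so it is more negative than $-C_{a,0}\eps$. Once the inclusion is established, analyticity of the solution in $D_2$ and all the estimates there follow verbatim from Lemma \ref{lem_cont_D_q_r}, because Lemmas \ref{lem_cont_D_q_l} and \ref{lem_cont_D_q_r} were proved for the same solution, with initial data given by Lemma \ref{lem_init_cond_3}.

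\emph{Estimate at $P_1$.} I will work in the Riccati variables $\hat\sigma_\eps,\hat\chi_\eps,R^+_\eps$ along the arc of $\eps^{-1}\re\psi_a=-C_{a,0}$, starting from its point on $\re\tau=\re\tau_c+c_{m,18}^{-1}\eps^{2/3}$, where (\ref{est_on_psi_a}) holds. Along this arc $\re(-2iv)=-C_{a,0}$ is constant, so the correction term $2e^{-2iv}e^{\pi i/6}(R-e^{-2\pi i/3})/(R-e^{2\pi i/3})$ in (\ref{expansion_copy}) has modulus $\asymp e^{-C_{a,0}}$ all the way along. I will show that $R^+_\eps(t)$ stays within $O(\eps^{1/3}e^{C_{a,0}}\cdot{\rm poly}(C_{a,0},C_{a,1}))$ of $R^+_\eps(\tau_c/\eps)$. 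To do this I will write the exact $z$-equation (\ref{d_equation}) as the Riccati equation (\ref{e_riccati}) plus a perturbation of relative size $O(\eps^{1/3}|\ln\eps|)$, and use Lemma \ref{lem_dr} to convert that perturbation into a drift of $R$.

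\emph{Main obstacle.} On this arc $|v|$ grows up to $\sim C_{a,1}^{3/2}$, and $\partial\hat\chi/\partial R\sim e^{-2iv}$ has modulus $\sim e^{-C_{a,0}}$. Consequently the drift of $R$ is controlled only if the perturbation, integrated along the arc (of rescaled length $\sim C_{a,1}$), is small compared with $e^{-C_{a,0}}$. Since $C_{a,0}$ and $C_{a,1}$ are fixed constants and the perturbation carries a factor $\eps^{1/3}$, this holds for small $\eps$. With this drift bound, (\ref{exp_esimate_2}) at $P_1$ (valid because $|v|\ge c_{m,23}e^{C_{a,0}}$, which is guaranteed by $C_{a,1}>c_{m,25}e^{2C_{a,0}/3}$) gives
\[
|\hat\chi_\eps-\sqrt{-\hat\sigma}|\asymp\sqrt{|\hat\sigma|}\,e^{-C_{a,0}} .
\]
Finally I will return to the variable $z$. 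On this arc $\sqrt{-\hat\sigma}$ is the rescaled equilibrium, so $z$ equals, up to a bounded nonzero factor, $\eps^{1/3}(\hat\chi_\eps-\sqrt{-\hat\sigma})$. The errors from the transformations in Lemmas \ref{lem_transform_1}–\ref{lem_transform_kappa} and from $\kappa-\dK_\eps=O(\eps)$ are $o(\eps^{1/3})$, and these yield (\ref{est_after c_2}).
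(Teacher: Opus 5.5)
Your overall route is the paper's. The continuation comes from $D_2\subset D_{q,r}$ and Lemma \ref{lem_cont_D_q_r}. The drift of $R^+_\eps$ along the arc $\eps^{-1}\re\psi_a=-C_{a,0}$ is controlled through Lemma \ref{lem_dr}; the paper gets $R^+_\eps=R^+_\eps(\tau_c/\eps)+O(\eps^{1/3}C_{a,1}^{2}e^{C_{a,0}})$ at $P_1$. Then (\ref{exp_esimate_2}) is applied, and the equilibrium is subtracted; it is $O(\eps^{1/3}C_{a,1})$-close to $\sqrt{-\hat\sigma}$ in rescaled variables.

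The step that is wrong is your promised check of the inclusion. Going down the line $\re\tau=\re\tau_c$ from $\tau_c$, $\eps^{-1}\re\psi_a$ decreases like $-c\,y^{3/2}$, with $y$ the depth in units of $\eps^{2/3}$. $D_q$ is the region below $\Gamma_{q,\eps}$, whose depth there is $C_q$. So the arc at level $-C_{a,0}$ lies in $D_q$ only if $C_{a,0}$ exceeds roughly $c\,C_q^{3/2}$ by a margin. Your inequality says the opposite: you claim the level $-c\,C_q^{3/2}\eps$ of $\Gamma_{q,\eps}$ is more negative than $-C_{a,0}\eps$ because $C_q$ is large. That would put the arc above $\Gamma_{q,\eps}$, outside $D_q$, where Lemma \ref{lem_cont_D_q_r} gives nothing.

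In the paper the constants are chosen in the other order. $C_q=C_\gamma$ is fixed at the end of Section \ref{first_principal_part}. $C_{a,0}$ is chosen afterwards, in Section \ref{domain_3}, precisely so that the arc lies in $D_q$ at distance $>1$ (in $\tau/\eps^{2/3}$) from $\partial D_q$. Citing that choice, as in your first sentence, is all that is needed.

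The margin is also necessary because your closeness claim is too strong. The correction $\Lambda_1-\lambda_1=O(\eps\hat d_+^{-1})$ shifts $\eps^{-1}\re\Psi_\eps$ relative to $\eps^{-1}\re\psi_a$ by $O(1)$ amounts on the $\eps^{2/3}$ scale, and the shift grows logarithmically along the arc. So $\Gamma_{q,\eps}$ is not $O(\eps)$-close to a level curve of $\re\psi_a$ in rescaled variables. Moreover, Lemmas \ref{lem_Gamma_q} and \ref{fourth_line} do not address this comparison: they concern $\Gamma_{*,1}\cup\Gamma_{*,2}$ and the real-axis crossings.

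There are also two smaller slips.

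First, the Riccati comparison has to be made for $z_1$, using (\ref{de_z1}) (the first equation of (\ref{in_triangle})), not (\ref{d_equation}). The latter is written in equilibrium-centred normal-form coordinates. There the $az^2$ term has been removed by a change of size $O(|z|^2\hat d_+^{-1/2})$, which is comparable to $z$ itself when $|\tau-\tau_c|\sim\eps^{2/3}$. So it cannot be read as (\ref{e_riccati}) plus a small perturbation.

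Second, the $z$ in the bound at $P_1$ is the variable of Lemma \ref{lem_transform_00}, not the normal-form one. For that $z$, your claim that all transformation errors are $o(\eps^{1/3})$ is right, since they only enter through $w,\eta,\kappa$. Passing to the normal-form $z$, however, costs $O(\eps\hat d_+^{-1})+O(|z|^2\hat d_+^{-1/2})$. That is $\eps^{1/3}$ times $C_{a,1}^{-1}+C_{a,1}^{1/2}e^{-2C_{a,0}}$, which is not $o(\eps^{1/3})$. This is why the paper handles that step separately in Lemma \ref{lem_after_c_2_1}, under the stronger condition $C_{a,1}>c_{m,29}e^{2C_{a,0}/3}$.
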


Let  $\hat z$ be the variable introduced  in Lemma  \ref{lem_transform_2}. 
\begin{lem}
 \label{lem_after_c_2_1}
For $C_{a,1}>c_{m,29}e^{2C_{a,0}/3}$, at the point $P_1$, we have 
\begin{equation}
\label{est_after c_2_1}
 c_{m,30}^{-1}\eps^{1/3}\sqrt{|\hat \sigma |}e^{-C_{a,0}}<|\hat z|<c_{m,30}\eps^{1/3} \sqrt{|\hat \sigma |}e^{-C_{a,0}}.
 \end{equation}
 \end{lem}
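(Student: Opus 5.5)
The plan is to obtain the estimate for $\hat z$ at $P_1$ from the estimate (\ref{est_after c_2}) of Lemma \ref{lem_after_c_2} for the original variable $z$. The link is the near-identity transformation of Lemma \ref{lem_transform_2}:
\[
\hat z=z+O\bigl(|z|^2 d_+^{-1/2}+|\xi|^2\bigr).
\]
It therefore suffices to show that at $P_1$ the correction term is small compared with the lower bound $c_{m,28}^{-1}\eps^{1/3}\sqrt{|\hat\sigma|}e^{-C_{a,0}}$. Once that holds, both inequalities for $\hat z$ follow with a slightly worse constant, e.g. $c_{m,30}=2c_{m,28}$. A subtlety is that Lemma \ref{lem_transform_2} is formulated for the variables after the transformation of Lemma \ref{lem_transform_1}, which differs from an affine map by $O(\eps d_+^{-1})$. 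I would first check that this shift, together with the linear change $C(\kappa)$ and the passage from $\dK_\eps$ to $\kappa$, is already absorbed in the $z$ used in (\ref{est_after c_2}). If it is not, its contribution $O(\eps d_+^{-1/2})$ for the $z$-component should be added to the error budget below.

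Next I need the scales at $P_1$. There $\re\tau-\re\tau_c=C_{a,1}\eps^{2/3}$, so $d_+\sim C_{a,1}\eps^{2/3}$ and $|\hat\sigma|\sim C_{a,1}$. The main term is then of order $\eps^{1/3}C_{a,1}^{1/2}e^{-C_{a,0}}$, and (\ref{est_after c_2}) gives $|z|\sim \eps^{1/3}C_{a,1}^{1/2}e^{-C_{a,0}}$. For the other components, Lemma \ref{lem_cont_D_q_r} (applicable since $D_2$ lies in $D_q$) gives $|\eta|,|w|=O(\eps C_q^{-3})$ or better.

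The error terms are then estimated one by one.
\begin{itemize}
\item $|z|^2d_+^{-1/2}\sim \eps^{2/3}C_{a,1}e^{-2C_{a,0}}\cdot\eps^{-1/3}C_{a,1}^{-1/2}=\eps^{1/3}C_{a,1}^{1/2}e^{-2C_{a,0}}$. Its ratio to the main term is $e^{-C_{a,0}}$ times a structural constant, so it is small once $C_{a,0}$ is large.
\item The $|\xi|^2$ terms involving $\eta$ and $w$ are $O(\eps^2)$, hence negligible.
\item The $|z|^2$ part of $|\xi|^2$ is the dangerous one: it gives $O(\eps^{2/3}C_{a,1}e^{-2C_{a,0}})$, whose ratio to the main term is $\eps^{1/3}C_{a,1}^{1/2}e^{-C_{a,0}}$. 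This tends to zero as $\eps\to0$ for fixed constants.
\end{itemize}

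The main obstacle is the $|z|^2d_+^{-1/2}$ term, because its relative size $e^{-C_{a,0}}$ does not decrease with $C_{a,1}$ or $\eps$. I expect to handle it by using the structure of Lemma \ref{lem_transform_2} more precisely. The quadratic-in-$z$ part of the transformation has a coefficient $h(\kappa)=O(d_+^{-1/2})$ that is explicitly computable, so
\[
\hat z=z\bigl(1+h z+\dots\bigr),\qquad |hz|\lesssim \eps^{1/3}C_{a,1}^{1/2}e^{-C_{a,0}}\bigl(C_{a,1}\eps^{2/3}\bigr)^{-1/2}=e^{-C_{a,0}}.
\]
This ratio is bounded by $e^{-C_{a,0}}\le 1/2$ because $C_{a,0}>c_{m,21}$ may be assumed large. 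The condition $C_{a,1}>c_{m,29}e^{2C_{a,0}/3}$ enters to guarantee $|v|\ge c_{m,23}e^{C_{a,0}}$ at $P_1$, so that the $1/|v|$ corrections inherited from (\ref{est_on_psi_a}) do not spoil the lower bound. Combining these gives (\ref{est_after c_2_1}) for $\eps$ small enough.
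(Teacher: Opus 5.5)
You have a genuine gap, and it sits exactly where the hypothesis of the lemma is used.

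The $z$ in the estimate of Lemma \ref{lem_after_c_2} is the original variable; the paper says so explicitly after Lemma \ref{lem_after_c_1}. So the shift of origin from Lemma \ref{lem_transform_1} is not absorbed and must be included in your error budget. But its $z$-component is $O^*(\eps d_+^{-1})$, not $O^*(\eps d_+^{-1/2})$: Lemma \ref{lem_transform_1} improves the shift to $O^*(\eps d_+^{-1/2})$ only for $w$ and $\eta$. Combining Lemmas \ref{lem_transform_1} and \ref{lem_transform_2} gives
\[
\hat z=z+O(\eps d_+^{-1})+O\bigl(|z|^2d_+^{-1/2}+|\xi|^2\bigr).
\]
At $P_1$ one has $d_+>c\,C_{a,1}\eps^{2/3}$, so the first correction is of order $\eps^{1/3}C_{a,1}^{-1}$. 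This term is not small as $\eps\to0$. Compared with the main term $\eps^{1/3}\sqrt{|\hat\sigma|}e^{-C_{a,0}}\sim\eps^{1/3}C_{a,1}^{1/2}e^{-C_{a,0}}$, its relative size is $C_{a,1}^{-3/2}e^{C_{a,0}}$. It falls below a small fraction of the lower bound $c_{m,28}^{-1}\eps^{1/3}\sqrt{|\hat\sigma|}e^{-C_{a,0}}$ only when $C_{a,1}>c_{m,29}e^{2C_{a,0}/3}$ with $c_{m,29}$ large enough. This is exactly what the paper does: it obtains $|\hat z-z|<c\,\eps^{1/3}\bigl(C_{a,1}^{-1}+C_{a,1}^{1/2}e^{-2C_{a,0}}+O(\eps^{1/3})\bigr)$ and chooses $c_{m,29}>c_{m,25}$ accordingly.

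With your size $O(\eps d_+^{-1/2})$ the shift would be negligible. As a result, your argument omits a term of the same order as the quantity being estimated, and your account of the hypothesis is off. The requirement $|v|\ge c_{m,23}e^{C_{a,0}}$ at $P_1$ is the condition $C_{a,1}>c_{m,25}e^{2C_{a,0}/3}$ of Lemma \ref{lem_after_c_2}, whose conclusion you already take as input; it does not explain the new constant $c_{m,29}$.

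The rest of your budget agrees with the paper:
\begin{itemize}
\item The $|z|^2d_+^{-1/2}$ term has relative size $e^{-C_{a,0}}$ and is absorbed because $C_{a,0}$ is large; no finer structure of Lemma \ref{lem_transform_2} is needed.
\item The $\eta,w$ contributions are negligible. In the original variables they are $O(\eps^{2/3})$ at $P_1$ rather than $O(\eps C_q^{-3})$, which is harmless.
\end{itemize}
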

 From now until Lemma \ref {lem_to_original} we omit the ``hat'' over $z$.

\subsection{Fifths part of the principal  part of  motion}
\label{fifths_principal_part}

Denote by $\Gamma^{C_{a,0},\eps}$ the arc of the curve  $\re \Psi_{\eps}={\rm const}$ passing through the point $P_1$. 
\begin{lem}
\label{lem_last_arc}
The arc  $\Gamma^{C_{a,0},\eps}$ crosses the real axis $\im \tau =0$ at a point\\ $\tau_{+,C_{a,0}, \eps}=\tau_{*}^{+}+ O(\eps|\ln \eps|)$. 
\end{lem}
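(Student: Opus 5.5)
The plan is to compare $\Gamma^{C_{a,0},\eps}$ with the curve $\Gamma_{q,\eps}$ of Lemma \ref{fourth_line}, taking $\tau_q$ on the vertical line $\re\tau=\re\tau_c$ with $\im\tau_q=\im\tau_c-C_q\eps^{2/3}$. By Lemma \ref{fourth_line} together with Lemma \ref{lem_Gamma_q}, the right endpoint of $\Gamma_{q,\eps}$ on the real axis is $\tau_{q,\eps,+}=\tau_*^++O(\eps|\ln\eps|)$. It therefore suffices to show that the level values of $\re\Psi_\eps$ on the two curves differ by $O(\eps)$. One then transports this difference to the real axis. There $\partial_\tau\re\Psi_\eps=\re\lambda_1(\dK_\eps(\tau))$ is bounded away from zero near $\tau_*^+$, because $\tau_*^+>\tau_*$ lies strictly inside the instability region. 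Hence the two crossing points differ by $O(\eps)$, and so $\tau_{+,C_{a,0},\eps}=\tau_*^++O(\eps|\ln\eps|)$.

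The main step is the estimate
\[
\re\Psi_\eps(P_1)-\re\Psi_\eps(\tau_q)=O(\eps).
\]
Both points lie in an $O(\eps^{2/3})$-neighbourhood of $\tau_c$, with $\hat d_+\sim\eps^{2/3}$ there, since $C_q$, $C_{a,0}$ and $C_{a,1}$ are fixed constants. I would write
\[
\Psi_\eps(\tau)-\Psi_\eps(\tau_c)=\int_{\tau_c}^{\tau}\Lambda_1(\dK_\eps(\thet))\,d\thet .
\]
The integrand is compared with ${\Lambda_1}_a(\dK_a(\thet))$ in three stages.

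First, by Lemma \ref{lem_transform_1}, $\Lambda_1$ differs from $\lambda_1$ by $O(\eps d_+^{-1})$. Over a path of length $O(\eps^{2/3})$ this contributes $O(\eps^{5/3}\eps^{-2/3})=O(\eps)$.

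Second, by Lemma \ref{K_and_K_eps}, $\dK_\eps-\dK=O(\eps|\ln\eps|)$. Since $\partial\lambda_1/\partial\kappa=O(d_+^{-1/2})$, this shifts $\lambda_1$ by $O(\eps^{2/3}|\ln\eps|)$, which integrates to $O(\eps^{4/3}|\ln\eps|)$.

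Third, by Lemma \ref{FG}, $\lambda_1(\dK(\thet))=2a Z_1+O(|\thet-\tau_c|)$, and $Z_1$ agrees with the truncated root up to $O(|\thet-\tau_c|)$. So $\lambda_1(\dK(\thet))-{\Lambda_1}_a(\dK_a(\thet))=O(|\thet-\tau_c|)$, which integrates to $O(\eps^{4/3})$.

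Combining the three stages,
\[
\re\Psi_\eps(\tau)-\re\Psi_\eps(\tau_c)=\re\psi_a(\tau)+O(\eps) \quad\text{for } \tau=P_1,\tau_q .
\]
By construction $\re\psi_a(P_1)=-C_{a,0}\eps$. Also $\re\psi_a(\tau_q)=O((C_q\eps^{2/3})^{3/2})=O(\eps)$. This gives the claimed $O(\eps)$ estimate.

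I expect the main obstacle to be the branch and path issue near $\tau_c$. The integrand has a square-root branch point at $\tau_c$, so one must integrate along paths inside the $120^\circ$ sector between $\gamma_1$ and $\gamma_2$ (or within $D_q$), where $\dK_\eps$ is defined and where the branch chosen in Section \ref{some_curves} agrees with $\Psi_\eps$. The route I would try first is to avoid $\tau_c$ itself: integrate from $\tau_q$ to $P_1$ along a path inside $D_q$ lying at distance $\gtrsim\eps^{2/3}$ from $\tau_c$. On such a path $\hat d_+\gtrsim\eps^{2/3}$, so all the bounds above hold with $d_+^{-1}\lesssim\eps^{-2/3}$. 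The path has length $O(C_{a,1}\eps^{2/3})$, so the three error contributions remain $O(\eps)$ with constants depending on the fixed $C$'s.

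Finally, one needs the transversal crossing of the real axis and uniqueness of the crossing point. Both follow from conditions 5)–6) and assumption E applied to level curves of $\re\Psi_\eps$, which are $O(\eps|\ln\eps|)$-close to those of $\re\Psi$. This is the same argument as in the proofs of Lemmas \ref{second_line} and \ref{fourth_line}.
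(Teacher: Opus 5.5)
Your argument is correct, but it is organised differently from the paper's.

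The paper first works with the unperturbed phase. It takes the level curve $\Gamma^{C_{a,0}}$ of $\re\Psi$ through $P_1$ and observes that $P_1$ is at distance of order $\eps^{2/3}$ from both $\tau_c$ and $\Gamma_{*,2}$. Using the normal-distance estimate behind Lemma~\ref{lem_Gamma_Gamma}, it concludes that $\Gamma^{C_{a,0}}$ meets the real axis within $O(\eps)$ of $\tau_*^+$. Only afterwards does it pass from $\Psi$ to $\Psi_\eps$, by the argument of Lemma~\ref{second_line}; this is where the $\eps|\ln\eps|$ enters.

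You instead work with $\Psi_\eps$ throughout and anchor to $\Gamma_{q,\eps}$. Lemmas~\ref{lem_Gamma_q} and~\ref{fourth_line} then already supply both the comparison with the Stokes line and the transfer $\Psi\to\Psi_\eps$. What remains is an $O(\eps)$ bound on the difference of the $\re\Psi_\eps$-level values at two points of $D_q$, which you invert on the real axis using $\re\Lambda_1\ge c>0$ near $\tau_*^+$. This is more modular, since it reuses proven lemmas instead of redoing the geometry.

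Two comments on your main step. First, the identity $\re\Psi_\eps(\tau)-\re\Psi_\eps(\tau_c)=\re\psi_a(\tau)+O(\eps)$ has no meaning as written. The functions $\dK_\eps$ and $\Lambda_1$ are only defined at distance at least of order $C_\delta\eps^{2/3}$ from $\tau_c$, so $\Psi_\eps(\tau_c)$ does not exist. Your fallback is the version that must be used: integrate from $\tau_q$ to $P_1$ inside $D_q$, for instance down the line $\re\tau=\re\tau_c$ and then along the arc $\eps^{-1}\re\psi_a=-C_{a,0}$, which the paper places inside $D_q$.

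Second, on such a path the three-stage comparison with ${\Lambda_1}_a(\dK_a)$ is unnecessary. So is the branch question you single out as the main obstacle. Along the path $\hat d_+$ is comparable to $\eps^{2/3}$, so $|\Lambda_1(\dK_\eps)|=O(\hat d_+^{1/2}+\eps\hat d_+^{-1})=O(\eps^{1/3})$. The path has length $O(\eps^{2/3})$, so the trivial bound already gives a level difference of $O(\eps)$, and the exact value $\re\psi_a(P_1)=-C_{a,0}\eps$ plays no role. The same crude estimate, $\re\Psi(P_1)-\re\Psi(\tau_c)=O(|P_1-\tau_c|^{3/2})=O(\eps)$, would also shortcut the paper's first step.
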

Denote by  $\hat \sigma_1$ and  $z_1$  the values of $\hat\sigma$ and $z$ at the point $P_1$. We have \\
$c_{m,31}^{-1}C_{a,1}<\hat \sigma_1<c_{m,31}C_{a,1}$,\quad $c_{m,30}^{-1}\eps^{1/3}\sqrt{|\hat \sigma_1|}e^{-C_{a,0}}<|z_1|<c_{m,30}\eps^{1/3} \sqrt{|\hat \sigma_1|}e^{-C_{a,0}}$.
 \begin{lem}
 \label{lem_after_c_3}
 For  $C_{a,1}> {\rm max} \{ c_{m,29}e^{2C_{a,0}/3}, c_{m,32},  c_{m,33}e^{C_{a,0}/3}\}$, 
  the solution $z(t), \eta(t), \kappa(t)$ can be   analytically continued  into the domain $D_{3,1}$  with estimates given in Lemmas  \ref{lem_cont_D_q_r} and \ref{lem_improved_kappa_old_1}. 
On the curve $\Gamma^{C_{a,0},\eps}$ we have
 $$0.5|z_1|<|z|<2|z_1|, \ \eta=O(\eps), \ w=O(\eps).$$


\end{lem}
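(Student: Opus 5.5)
We continue the solution from $P_1$ along the arc $\Gamma^{C_{a,0},\eps}$ down to the real axis, using the system (\ref{d_equation}) written in the variables of Lemmas \ref{lem_transform_1}, \ref{lem_transform_2}, \ref{lem_transform_kappa}. Along this arc $\re\Psi_{\eps}$ is constant, so the linear part $\Lambda_1(\kappa)z$ produces neither contraction nor expansion of $|z|$ when time runs along the arc (with $\kappa$ replaced by $\dK_{\eps}$). We then fill $D_{3,1}$ by vertical segments hanging below points of the arc, exactly as in the construction of $D_{q,r}$ in Lemma \ref{lem_cont_D_q_r}. The a priori bounds of Lemmas \ref{lem_cont_D_q_r} and \ref{lem_improved_kappa_old_1} already hold in $D_{q,r}\supset D_{3,1}$ (up to the part near $Q_1$, handled by the same argument). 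Hence the content of the lemma is the two-sided bound $0.5|z_1|<|z|<2|z_1|$ together with $\eta,w=O(\eps)$ on $\Gamma^{C_{a,0},\eps}$.

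Key steps, in order:

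\emph{Step 1.} Write $z=z_1\exp(\int\Lambda_1(\kappa)\,dt)(1+\rho)$ along the arc, parametrised from $P_1$. The phase factor splits as $\exp(\eps^{-1}(\Psi_{\eps}(\tau)-\Psi_{\eps}(\tau_{P_1})))$, which has modulus one on the arc, times $\exp(\int(\Lambda_1(\kappa)-\Lambda_1(\dK_{\eps}))\,dt)$. The second factor is controlled by $\kappa-\dK_{\eps}$ through Lemma \ref{lem_improved_kappa_old_1} and the bound $|\partial\Lambda_1/\partial\kappa|=O(d_+^{-1/2})$.

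\emph{Step 2.} Integrate the nonlinear and remainder terms of the $\dot z$ equation in (\ref{d_equation}) along the arc using the bootstrap assumption $|z|<2|z_1|$. Here $|z_1|\sim\eps^{1/3}C_{a,1}^{1/2}e^{-C_{a,0}}$, and $\eta,w=O(\eps)$ come from Lemma \ref{lem_cont_D_q_r}, with $w(t)=\overline{z(\bar t)}$ evaluated in $\bar D_{q,r}$. The relevant terms are:
\begin{itemize}
\item $\eps|z|^2d_+^{-3/2}$, whose integral over slow time $\int d\tau/\eps$ gives $O(|z_1|^2\int d_+^{-3/2}d\tau)=O(|z_1|^2 d_+(P_1)^{-1/2})$, which is relatively small;
\item $|z|^5d_+^{-3/2}$, which contributes $O(|z_1|^4\eps^{-1}d_+(P_1)^{-1/2})$;
\item the $\eps^3$-terms and $O(\eps^2)$ contributions from $\eta$, $w$.
\end{itemize}
Dividing by $|z_1|$, each term is bounded by a negative power of $C_{a,1}$ times a factor exponential in $C_{a,0}$. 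This is where the conditions $C_{a,1}>c_{m,32}$ and $C_{a,1}>c_{m,33}e^{C_{a,0}/3}$ enter. Indeed, since $d_+(P_1)\sim C_{a,1}\eps^{2/3}$, the quantity $|z_1|^4\eps^{-1}d_+^{-1/2}/|z_1|$ behaves like $C_{a,1}e^{-3C_{a,0}}$. This term is the one to watch, and we expect the $e^{-C_{a,0}}$ factors to dominate it.

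\emph{Step 3.} Close the bootstrap to get $|\rho|<1/4$. For the $\kappa$-correction factor, use (\ref{eq_improved_kappa_old_1}): $|\kappa-\dK_{\eps}|d_+^{-1/2}/\eps$ integrated along the arc is small once $C_q$ is large.

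\textbf{Main obstacle.} The main difficulty is the region near $P_1$, where $d_+\sim C_{a,1}\eps^{2/3}$ is smallest and the coefficients $d_+^{-3/2}$ and $d_+^{-2}$ are largest. We must check that every error, integrated from $P_1$, is dominated by $|z_1|$ uniformly. We would first try to split the arc into the piece with $d_+\le\eps^{1/2}$ and the rest, estimating the first piece by the local Riccati-type bounds of Lemma \ref{lem_after_c_1} continued slightly beyond $D_2$. A second subtlety is that $\Lambda_1(\kappa)$ versus $\Lambda_1(\dK_{\eps})$ produces a relative error $\eps^{-1}\int|\kappa-\dK_{\eps}|d_+^{-1/2}d\tau$. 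The term $\eps^{4/3}C_q^{-10}\cdot\eps|\ln\eps|$ in Lemma \ref{lem_improved_kappa_old_1} makes this contribution $O(\eps^{4/3}|\ln\eps|)$, and this must be checked to be negligible.
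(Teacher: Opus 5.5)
Your skeleton matches the paper's proof. On the level curve $\Gamma^{C_{a,0},\eps}$ the term $\Lambda_1(\dK_{\eps})z$ preserves $|z|$. Continuation into $D_{3,1}\subset D_{q,r}$ and the bounds $\eta,w=O(\eps)$ follow from Lemma \ref{lem_cont_D_q_r}, using $w(t)=\overline{z(\bar t)}$. The conclusion then comes from integrating the remaining terms of the $\dot z$ equation from $P_1$ to $\re\tau=\re Q_1$ and comparing with $|z_1|$. The problem is in how you estimate those remaining terms.

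\textbf{The gap.} You insert the bootstrap bound $|z|<2|z_1|$, with $|z_1|\sim\eps^{1/3}C_{a,1}^{1/2}e^{-C_{a,0}}$, into the nonlinear terms. Since $\int d_+^{-3/2}\,d\tau\sim(C_{a,1}\eps^{2/3})^{-1/2}$ from $P_1$, this gives:
\begin{itemize}
\item for $\eps|z|^2d_+^{-3/2}$, a relative contribution of order $e^{-C_{a,0}}$, which is small only under an extra largeness assumption on $C_{a,0}$;
\item for $|z|^5d_+^{-3/2}$, a relative contribution of order $|z_1|^4\eps^{-1}d_+(P_1)^{-1/2}\sim C_{a,1}^{3/2}e^{-4C_{a,0}}$, not $C_{a,1}e^{-3C_{a,0}}$ as you wrote.
\end{itemize}
The second quantity increases with $C_{a,1}$, while the lemma only bounds $C_{a,1}$ from below. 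So the bootstrap does not close under the stated hypotheses, and ``expecting the exponential factors to dominate'' does not fix this.

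\textbf{The missing step.} Inside the coefficient multiplying $z$, use the a priori bound $|z|=O(\eps^{1/3})$, which Lemma \ref{lem_cont_D_q_r} already gives on all of $D_{q,r}$, instead of the bootstrap. Then $\dot z=(\Lambda_1(\dK_{\eps})+\alpha)z+\beta$, and:
\begin{itemize}
\item the dominant part of $\alpha$ is $\eps^{4/3}\hat d_+^{-3/2}$, coming from both terms above, so $\int|\alpha|\,dt=O(\eps^{1/3}\int\hat d_+^{-3/2}d\tau)=O(C_{a,1}^{-1/2})$;
\item the other parts of $\alpha$, including $|\Lambda_1(\kappa)-\Lambda_1(\dK_{\eps})|$ bounded through (\ref{eq_improved_kappa_old_1}), are of lower order;
\item $\beta=\eps^3O(\hat d_+^{-7/2})+O(\eps^2)$ gives $\int|\beta|\,dt=O(\eps^{1/3}C_{a,1}^{-5/2})$, which after division by $|z_1|$ is a relative error $O(C_{a,1}^{-3}e^{C_{a,0}})$.
\end{itemize}
These two bounds produce exactly the conditions $C_{a,1}>c_{m,32}$ and $C_{a,1}>c_{m,33}e^{C_{a,0}/3}$. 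The condition with $c_{m,29}$ is inherited from Lemma \ref{lem_after_c_2_1}. No bootstrap on $|z|$ is needed.

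For the same reason, your ``main obstacle'' near $P_1$ does not arise. All these integrals converge at, and are dominated by, the endpoint $\hat d_+\sim C_{a,1}\eps^{2/3}$. So you need neither a split at $d_+\sim\eps^{1/2}$ nor an extension of the Riccati bounds of Lemma \ref{lem_after_c_1} beyond $D_2$.
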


\begin{lem}
\label{lem_to_D32}

 The solution $z(t), \eta(t), \kappa(t)$ can be  analytically  continued  into the domain $D_{3,2}$, and at  $\tau=\tau_{+,C_{a,0,} \eps}$ we have $|z(t)|=|w(t)|>c_{m,34}^{-1}\eps^{1/3}, \eta(t)=O(\eps), |\kappa(t)-\dK_{\eps}(\eps t)|=O(\eps)$.
\end{lem}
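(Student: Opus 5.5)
To prove Lemma \ref{lem_to_D32}, I would continue the solution from the curve $\Gamma^{C_{a,0},\eps}$, where Lemma \ref{lem_after_c_3} gives $0.5|z_1|<|z|<2|z_1|$, $\eta=O(\eps)$, $w=O(\eps)$, along vertical segments drawn downward from points of $\Gamma^{C_{a,0},\eps}$ with $\re\tau\ge\re Q_1$. These segments cover $D_{3,2}$, and their lengths are $O(\eps|\ln\eps|)$, since $\tau_{+,C_{a,0},\eps}-\re Q_1=O(\eps|\ln\eps|)$ and the curve stays within $O(\eps|\ln\eps|)$ of the real axis there. In this region $\hat d_+\sim 1$ and $\hat d_-\sim 1$, so system (\ref{d_equation}) has bounded coefficients. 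The delay relation $w(t)=\overline{z(\bar t)}$ couples a point $\tau$ of $D_{3,2}$ with the conjugate point $\bar\tau$, which also lies in $D_{3,2}$ or in its mirror image.

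The first step is a linear-growth estimate. Along a downward vertical segment, $\im\tau$ decreases, so $t=\tau/\eps$ moves by $-i\,\Delta$. Hence $|z|$ is governed by $\exp(\re(-i\Lambda_1)\Delta/\eps)=\exp(\im\Lambda_1\,\Delta/\eps)$. This factor is controlled because $\re\Psi_\eps$ is constant on $\Gamma^{C_{a,0},\eps}$, and moving transversally down changes $\re\Psi_\eps/\eps$ by $O(|\ln\eps|)$ at worst.

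The key point is a bootstrap. I would assume on $D_{3,2}$ that
\[
|z|\le C\eps^{1/3}\,\hat g,\qquad |\eta|+|\kappa-\dK_\eps|\le C\eps,
\]
where $\hat g$ is the linearized growth factor, and verify that the nonlinear terms in (\ref{d_equation}) integrated over time $O(|\ln\eps|)$ contribute only a small relative correction. The quadratic terms are $O(|\eta|(|\eta|+|w|))=O(\eps^2)$. The higher-order terms are $O(|z|^5)+O(|\xi|_*^3)$, which are small relative to $|z|\sim\eps^{1/3}$ as long as $|z|\ll1$. The $\eps^3$ forcing terms are negligible. I expect to choose segment lengths so that $|z|$ stays $\ll 1$, possibly truncating at $|z|\le c\,\eps^{1/3}|\ln\eps|^{k}$. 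The equation for $\dot\kappa$ gives $\kappa-\dK_\eps=O(\eps)$ after integrating $\eps\cdot O(|z|^4+\dots)$ over time $O(|\ln\eps|)$, and $\eta$ remains $O(\eps)$ by the contraction of $B(\kappa)$ together with the $O(|zw|)$ forcing, which is $O(\eps^{2/3})$ at most. This last bound needs care, and I would refine it using the fact that $zw$ enters with a bounded coefficient only through $O(|zw|)$.

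At the endpoint $\tau=\tau_{+,C_{a,0},\eps}$, which is real, Lemma \ref{lem_real} gives $|w|=|\overline{z}|=|z|$. The lower bound $|z|>c_{m,34}^{-1}\eps^{1/3}$ follows from the lower bound $|z_1|\gtrsim\eps^{1/3}\sqrt{C_{a,1}}e^{-C_{a,0}}$ at $P_1$, transported along the segment, with a lower bound for the growth factor obtained from $\re\Psi_\eps$ being constant along $\Gamma^{C_{a,0},\eps}$ down to its real endpoint. This is exactly the endpoint of the vertical segment of zero length. Finally, I would pass back through the $O(\eps)$-close transformations of Lemmas \ref{lem_transform_2} and \ref{lem_transform_kappa}.

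I expect the main obstacle to be controlling the delay coupling uniformly in $D_{3,2}$, that is, estimating $w$ at $\tau$ via $z$ at $\bar\tau$, which may lie on the other side of the real axis where the estimates come from the conjugate domain. I would handle this by a symmetric bootstrap on $D_{3,2}\cup\overline{D_{3,2}}$ simultaneously, using the $W^-_\delta$ form of the equations with $z$ and $w$ interchanged.
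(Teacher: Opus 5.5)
Your mechanism for the lower bound is the paper's: $\Lambda_1(\dK_\eps)z$ is neutral along the level arc $\Gamma^{C_{a,0},\eps}$, and the remaining terms cannot change $|z|$ appreciably over fast time $O(|\ln\eps|)$. There is a genuine gap, though, because you feed this mechanism the wrong data. Lemma~\ref{lem_after_c_3} is the $D_{3,1}$ statement: its bounds on $\Gamma^{C_{a,0},\eps}$ are proved only for $\re\tau\le\re Q_1$. On the part of the arc inside $D_{3,2}$ they are exactly what must be shown. Since $\tau_{+,C_{a,0},\eps}$ is a point of that arc, taking them as initial data makes the lower bound circular. They are also untenable there: at the real endpoint $|w|=|z|$, so $w=O(\eps)$ and $|z|>0.5|z_1|\gtrsim\eps^{1/3}$ cannot both hold. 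In $D_{3,2}$ you must carry $|w|=O(\eps^{1/3})$, so, for example, $O(|\eta|(|\eta|+|w|))$ is $O(\eps^{4/3})$, not $O(\eps^2)$. The step actually needed is the paper's. At $P_2=\Gamma^{C_{a,0},\eps}\cap\{\re\tau=\re Q_1\}$, Lemma~\ref{lem_after_c_3} gives $c^{-1}\eps^{1/3}<|z_2|<c\,\eps^{1/3}$. Along the arc from $P_2$ to the real axis one writes $\dot z=(\Lambda_1(\dK_\eps)+\alpha)z+\beta$ with $\alpha=O(\eps^{2/3})$ and $\beta$ small. Over the fast time $O(|\ln\eps|)$ the modulus $|z|$ then changes by $O(\eps|\ln\eps|)\ll\eps^{1/3}$. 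No return to the original variables is needed here: the lemma is stated in the variables of Lemma~\ref{lem_transform_2}, and that transformation moves $z,w,\eta$ by $O(\eps^{2/3})$, not $O(\eps)$.

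For the a priori bounds on $w,\eta,\kappa$ that enter $\alpha,\beta$, the paper runs no new bootstrap. $D_{3,2}$ lies in $D_{q,r}$, and the paper takes the continuation together with $z,w=O(\eps^{1/3})$, $\eta=O(\eps)$, $|\kappa-\dK_\eps|=O(\eps)$ from Lemma~\ref{lem_cont_D_q_r}. Your substitute bootstrap is not closed. The factor $\hat g$ is never specified, and you allow downward motion to expand by $\eps^{-O(1)}$, which would wreck the nonlinear estimates. What makes a bootstrap close is the sign $\re(-i\Lambda_1)<0$: downward motion contracts $z$. Hence $|z|$ in $D_{3,2}$ is controlled by its values on the arc, and $|w(t)|=|z(\bar t)|$ is controlled by the same quantity. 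Since $D_{3,2}$ is symmetric, no separate mirror domain is needed. For $\eta$, a bounded coefficient in front of $zw$ buys nothing. With $|z|\sim|w|\sim\eps^{1/3}$, the contraction of $B$ gives only $\eta=O(\eps^{2/3})$. That suffices for the lower bound on $|z|$, but not for the stated $\eta=O(\eps)$. Your worry about the coupling is nonetheless well founded. $\tau_{+,C_{a,0},\eps}$ lies in $D_{q,r,d}\cap\bar D_{q,r,d}$, since the paper notes the arc is inside $\tilde D_{q,r,d}$ for $\hat d_+\sim1$. There the $\bar D_{q,r,d}$ bound $|z|<c_{e,20}\eps C_q^{-3}$ of Lemma~\ref{lem_cont_D_q_r}, read literally, would contradict the conclusion. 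So in this overlap the imported bounds really do need the reflection argument you only sketch.
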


The variables $z,w, \eta, \kappa$ here are those obtained after transformations  in Lemma \ref{lem_transform_2}. Now we should return to the original variables.
\begin{lem}
\label{lem_to_original}
At $\tau=\tau_{+,C_{a,0}, \eps}$ in the the original variables 
 $z,w, \eta, \kappa$  we have  $|z(t)|=|w(t)|>c_{m,35}^{-1}\eps^{1/3}, \eta(t)=O(\eps^{2/3}), |\kappa(t)-\dK(\eps t)|=O(\eps)$.
\end{lem}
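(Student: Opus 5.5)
The plan is to undo, one at a time, the near-identity changes of variables from Lemmas \ref{lem_transform_1}, \ref{lem_transform_2} and \ref{lem_transform_kappa}, evaluated at the single real point $\tau=\tau_{+,C_{a,0},\eps}$. We start from the estimates of Lemma \ref{lem_to_D32}, stated in the transformed variables: $|z|=|w|>c_{m,34}^{-1}\eps^{1/3}$, $\eta=O(\eps)$, $|\kappa-\dK_\eps|=O(\eps)$. The point $\tau_{+,C_{a,0},\eps}$ is real and, by Lemma \ref{lem_last_arc}, lies at distance of order one from $\tau_c,\bar\tau_c$. Hence $d_\pm\sim 1$ there, and every factor $d_\pm^{-k}$ in the transformation estimates is $O(1)$. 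The identity $|z|=|w|$ holds in every set of variables, because all transformations are real-analytic and for real $t$ we have $w(t)=\overline{z(t)}$ (Lemma \ref{lem_real}). So only the lower bound on $|z|$ and the bounds on $\eta,\kappa$ need to be tracked.

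\textbf{Step 1 (Lemma \ref{lem_transform_2}).} This transformation satisfies $\hat z=z+O(|z|^2+|\xi|^2)$, and similarly for $w$ and $\eta$. Since $|\xi|=O(\eps^{1/3})$, inverting it changes $z$ and $w$ by $O(\eps^{2/3})$. This is negligible against $\eps^{1/3}$, so $|z|>\frac12 c_{m,34}^{-1}\eps^{1/3}$ survives. The component $\eta$, however, picks up $O(|\xi|^2)=O(\eps^{2/3})$, coming from the quadratic monomials in $z,w$. This is why the conclusion weakens from $\eta=O(\eps)$ to $\eta=O(\eps^{2/3})$.

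\textbf{Step 2 (Lemma \ref{lem_transform_kappa}).} Here $\hat\kappa=\kappa+\eps O(|z|+|\xi|_*)=\kappa+O(\eps^{4/3})$. Together with $|\kappa-\dK_\eps|=O(\eps)$, this keeps $|\kappa-\dK_\eps|=O(\eps)$ in the pre-transformation variables.

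\textbf{Step 3 (Lemma \ref{lem_transform_1}).} This transformation is an affine shift of size $O(\eps d_+^{-1})=O(\eps)$. It alters $z,w,\eta$ only by $O(\eps)$ and leaves the bounds intact. Undoing the linear map $C(\kappa)$ of Lemma \ref{lem_transform_00}, and the passage $\xi=x-X(\kappa)$, only multiplies by bounded, boundedly invertible matrices. Here one should note which $z$ ("original") is meant: if it is defined via $C^{-1}(\kappa)(x-X(\kappa))$, nothing more is needed.

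\textbf{Step 4 (replacing $\dK_\eps$ by $\dK$).} Lemma \ref{K_and_K_eps} gives $|\dK-\dK_\eps|=O(\eps(1+|\ln\hat d_\pm|))$. At the endpoint $\hat d_\pm\sim 1$, so this is $O(\eps)$, and therefore $|\kappa-\dK(\eps t)|=O(\eps)$.

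\textbf{Main obstacle.} The delicate part is Step 1. One must check that inverting the nonlinear normal-form transformation is legitimate at $|\xi|\sim\eps^{1/3}$. It is, since Lemma \ref{lem_transform_2} is valid for $|\hat\xi|<c_{t,5}\eps^{1/3}$, but we also need the upper bound $|z|=O(\eps^{1/3})$ from Lemma \ref{lem_after_c_3}, propagated through $D_{3,2}$, to stay inside that region. The second point to check is that the quadratic corrections do not cancel the leading $\eps^{1/3}$ size of $z$. They are $O(\eps^{2/3})$, so this follows by taking $\eps$ small.
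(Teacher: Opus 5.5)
Your proposal is correct and follows essentially the same route as the paper. The paper also notes that at $\tau=\tau_{+,C_{a,0},\eps}$, where $d_\pm\sim 1$, the transformations of Lemmas \ref{lem_transform_00}--\ref{lem_transform_kappa} change $z,w,\eta$ by $O(\eps^{2/3})$ and $\kappa$ by $O(\eps^{4/3})$; it then passes from $\dK_\eps$ to $\dK$ via Lemma \ref{K_and_K_eps}. One small point: Lemma \ref{lem_transform_kappa} is applied last and so should strictly be inverted first, but since it leaves $\xi$ unchanged and the estimates of Lemma \ref{lem_transform_2} are uniform in $\kappa$, your ordering does not affect the bounds.
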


\begin{lem}
\label{lem_escape}
There exists  a real  $\tau_d=\tau^{+}_* + O(\eps|\ln \eps|)$  such that  $|x(t_d)- X(\kappa(t_d))|> c_{m,36}^{-1}$. 
\end{lem}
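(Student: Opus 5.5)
We start from Lemma \ref{lem_to_original}. At the real slow time $\tau_{+}:=\tau_{+,C_{a,0},\eps}$, with $t_+=\tau_+/\eps$, the phase point in the original variables satisfies $|z(t_+)|=|w(t_+)|>c_{m,35}^{-1}\eps^{1/3}$, $\eta(t_+)=O(\eps^{2/3})$ and $|\kappa(t_+)-\dK(\tau_+)|=O(\eps)$. By Lemma \ref{lem_last_arc}, $\tau_+=\tau_*^++O(\eps|\ln\eps|)$. We also have $\tau_+>\tau_*$ at a distance of order $1$, so $\re\lambda_{1,2}(\dK(\tau_+))>c^{-1}>0$, while $\re\lambda_j<0$ for $j\ge 3$. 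From $t_+$ on we work only in real time. The plan is to follow the real solution forward, and backward if needed, over a real time interval of length $O(|\ln\eps|)$. We show that in this interval the $(z,w)$ component grows from size $\eps^{1/3}$ to size $1$, that is, until $|x-X(\kappa)|$ reaches a fixed small constant $c_{m,36}^{-1}$.

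The key step is a linear-instability escape estimate in the real variables $\xi_1,\xi_2,\eta$ of Lemma \ref{lem_transform_00}, taken near $\kappa=\dK(\tau_+)$, which is far from $\kappa_c,\bar\kappa_c$, so that $d_\pm\sim1$. Here system (\ref{after_0_transform}) reads $\dot\xi=\tilde{\cal A}(\kappa)\xi+O(|\xi|^2)+O(\eps)$. Set $\rho=|z|^2=\xi_1^2+\xi_2^2$. Using the block form of $\tilde{\cal A}_0$, we get
\[
\dot\rho\ge 2\,\re\lambda_1\,\rho-C\sqrt{\rho}\,(|\xi|^2+\eps).
\]
While $|\xi|\le r_0$, with $r_0$ a small constant, and $\rho\ge\eps^{2/3}c^{-1}$, the perturbation term is at most $(\re\lambda_1)\rho$, provided $|\eta|\le K\sqrt\rho$. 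This cone condition is maintained because the $\eta$-block is contracting: $\eta$ is driven only by $O(|\xi|^2+\eps)$ terms, and initially $\eta=O(\eps^{2/3})\ll\eps^{1/3}$. A standard cone-invariance argument, comparing $\frac{d}{dt}(|\eta|-K|z|)$, propagates the condition. Therefore $\rho(t)\ge\rho(t_+)e^{\re\lambda_1(t-t_+)}$ for as long as $|\xi|\le r_0$. Since $\rho(t_+)\ge c\,\eps^{2/3}$, the ball $|\xi|\le r_0$ must be exited at some $t_d$ with $t_d-t_+\le C|\ln\eps|$, that is, $|\eps t_d-\tau_+|=O(\eps|\ln\eps|)$. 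During this interval $\kappa$ moves only $O(\eps|\ln\eps|)$, and $|\xi|\le r_0$ is small, so $\re\lambda_1(\kappa(t))$ stays bounded below and the transformations $C(\kappa)$ stay uniformly bounded. Hence $|x(t_d)-X(\kappa(t_d))|\ge c^{-1}|\xi(t_d)|\ge c_{m,36}^{-1}$. Combining with Lemma \ref{lem_last_arc} gives $\tau_d=\tau_*^++O(\eps|\ln\eps|)$.

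The main obstacle is making the cone and the growth estimate uniform, especially handling the $O(\eps)$ forcing relative to $\rho\sim\eps^{2/3}$. I would first check that the $\eps$-term is negligible: it gives $C\eps\sqrt\rho\le C\eps^{2/3}\rho$ when $\sqrt\rho\ge\eps^{1/3}$. This seems to hold, so the only delicate point is the initial $\eta$-control. Lemma \ref{lem_to_original} gives $\eta=O(\eps^{2/3})$, which is much smaller than $|z|$ and so suffices. The real-analyticity relation $|z|=|w|$ on the real axis ensures that $\rho$ is the genuine real norm of $(\xi_1,\xi_2)$.
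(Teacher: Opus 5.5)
Your proposal is correct and is essentially the paper's argument. You start in real time at $\tau_{+,C_{a,0},\eps}$ with $|z|\sim\eps^{1/3}$ and $\eta=O(\eps^{2/3})$, keep $\eta$ in a cone $|\eta|\le K|z|$ using the contraction of the $B$-block, and let $|z|^2$ grow exponentially for a time $O(|\ln\eps|)$. The paper packages your cone-invariance step into the single Chetaev function $V=|z|^2-W(\tau,\eta)$, where $W$ is a quadratic Lyapunov function for $B$; this $W$ also supplies the adapted norm your comparison of $\frac{d}{dt}(|\eta|-K|z|)$ implicitly needs, since the plain norm $|\eta|$ need not decrease under a Hurwitz $B$.
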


\medskip
This completes the proof of Theorem \ref{t1.th}.


 

%% file: delay_proofs_of_lemmas_1.tex
\section{Proofs of lemmas about curves}
 \label{proofs_curves}
 
 {\bf Proof of Lemma \ref {lem_Gamma_gamma}.}
 
For definiteness,  consider the case $j=1$. The proof for $j=2$ is analogous.  Let $Oxy$ be the orthogonal coordinate  frame in the complex slow time plane such that the point $O$ is at $\tau_c$,  and the axis $Ox$ is directed along    $ \gamma_{1}$ (Fig. \ref {l_gamma_gamma}).
\begin{figure}[H]
\begin{center}
            \includegraphics[scale=0.6, angle=0.0]{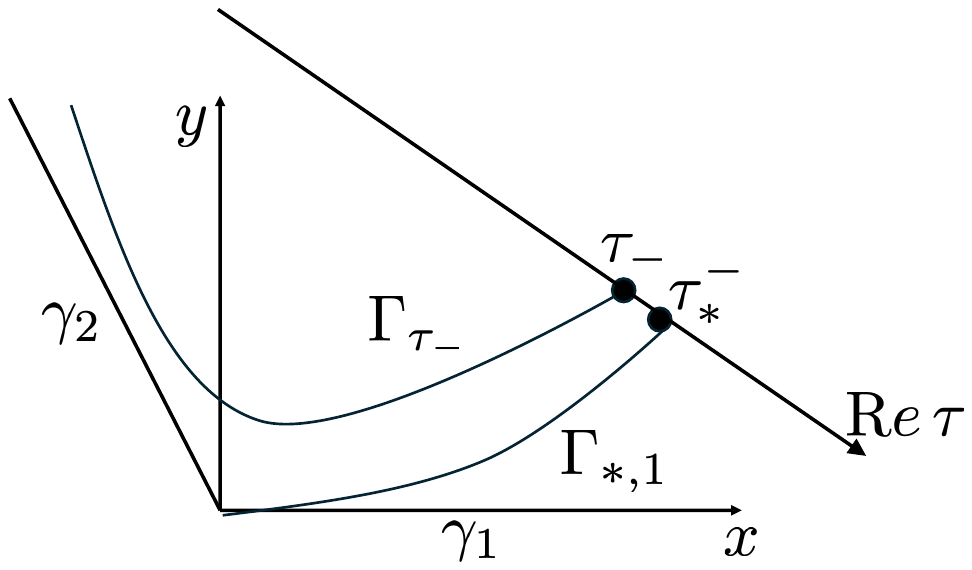}
            \end{center}
           \caption{For lemmas \ref{lem_Gamma_gamma}, \ref {lem_Gamma_Gamma}}
            \label{l_gamma_gamma}
\end{figure}
Let $\rho, \theta$ be the polar coordinates: $x=\rho \cos\theta,  y=\rho \sin\theta$. Then $\Psi(\tau)-\Psi(\tau_c)=i\alpha (x+iy)^{3/2}
+O(\rho^2)=i\alpha \rho^{3/2}(\cos (3\theta/2)+i\sin (3\theta/2)+O(\rho^2)  $.  Here $\alpha$ is a positive constant. The equation for 
$\Gamma_{*,1}$ is $\re(\Psi(\tau)-\Psi(\tau_c))=0$, which implies $-\alpha \rho^{3/2}\sin (3\theta/2)+O(\rho^2)=0$. Thus,
 $\sin (3\theta/2)+O(\rho^{1/2})=0$,
 and $\theta =O(\rho^{1/2})$. The distance between points of $ \gamma_{1}$ and  $\Gamma_{*,1}$  
 measured along normals to $ \gamma_{1} $ 
 is $y=\rho \sin\theta=O(\rho^{3/2})= O(|\tau-\tau_c|^{3/2})$.  The same estimate for the distance remains valid if this distance is measured along normals to $\Gamma_{*,1}$ and  $\tau$ is taken as the parameter along $\Gamma_{*,1}$.  This is because the angle between the tangent line to $\Gamma_{*,1}$ and the $Ox$-axis is $O(\rho^{1/2})$.

 \hskip 12cm $\square$

 \medskip
 {\bf Proof of Lemma \ref{lem_Gamma_Gamma}.}
 
One should give  a proof for $\tau $ in a small neighbourhood of $\tau_c$, because outside any fixed neighbourhood of $\tau_c$ the estimate in this lemma is evident.  
  
 Let $x,y,\rho,\theta$  be variables introduced in the proof of Lemma  \ref{lem_Gamma_gamma} (Fig.  \ref {l_gamma_gamma}).
 Denote\\ $\Phi (x,y)=\re(\Psi(\tau)-\Psi(\tau_c))$.  Equations of curves 
  $\Gamma_{*,1}$ and $\Gamma_{\tau_-}$ are, respectively,  $\Phi (x,y)=0$ and  $\Phi (x,y)=a$ with $  |a| \sim |\tau_--\tau_*^-|$. We have $\Phi (x,y)=-\alpha \rho^{3/2}\sin (3\theta/2)+O(\rho^2)$ (see proof of Lemma \ref {lem_Gamma_gamma}). Here $\alpha$ is a positive constant. If $|\theta|<k_1^{-1}$, then for the distance $\Delta$ between points of  $\Gamma_{\tau_-}$ and  $\Gamma_{*,1}$ we have
  $$
  \Delta\sim \frac{|a|}{|\grad \Phi |}\sim \frac{|a|}{\rho^{1/2} }\sim \frac{ |\tau_--\tau_*^-|}{|\tau-\tau_c |^{1/2}}\,.
  $$
  Here $\tau$ is taken as a parameter along $\Gamma_{*,1}$, distance is measured along normals to  $\Gamma_{*,1}$, and the gradient is calculated at the point $\tau$.
  
  The condition $|\theta|<k_1^{-1}$ is satisfied provided $\Delta/\rho<k_2^{-1}$. This is because the distance between points of $\Gamma_{*,1}$ and $Ox$-axis is $O(\rho^{3/2}) \ll \rho$ (see Lemma \ref {lem_Gamma_gamma}). The condition $\Delta/\rho<k_2^{-1}$  is satisfied provided
 $ |\tau_{-}-\tau_*^-|/|\tau-\tau_c |^{3/2}<k_3^{-1}$, i.e. $|\tau-\tau_c|> k_3^{2/3}  |\tau_{-} -\tau_*^-| ^{2/3}$.
 
 The same estimate for the distance remains valid if this distance is measured along normals to $\Gamma_{\tau_-}$ and  $\tau$ is taken as the parameter along $\Gamma_{\tau_-}$.

\hskip 12cm $\square$

\medskip
{\bf Proof of Lemma \ref {lem_Gamma_q}.}

For $|\tau-\tau_c| \sim |\tau_q-\tau_c|$ the statement of the Lemma is evident, because in this case the distance between points of  $\Gamma_{q}$ and  $\Gamma_{*,1}\cup\Gamma_{*,2} $  is of order  $ |\tau_q-\tau_c|  $. Thus, one should only consider the case 
 $|\tau-\tau_c| >>  |\tau_q-\tau_c|$. For this case, the proof is completely analogous to the proof of Lemma \ref{lem_Gamma_Gamma} with $ |a| \sim |\tau_q-\tau_c|^{3/2}$.

 \hskip 12cm $\square$

\section{Proofs of lemmas about Riccati equation.}
\label{s_proofs_riccati}

{\bf Proof of lemma \ref{expansion_minus}.}
\label{p_expansion_minus}

Substitute into (\ref{solution_1}) the asymptotic formulas for Bessel functions with large $|v|$. For real $v$, we obtain
\begin{equation*}
\begin{aligned}
\hat\ze&=-i\sqrt{-\hat s}\,\frac{\cos(v+\pi/12)-R\cos(v-7\pi/12)}
{R\cos(v-\pi/12)+\cos(v-5\pi/12)}(1+O(|v|^{-1}))\\
&=-i\sqrt{-\hat s}\,\frac{\cos(v+\pi/12)-R\sin(v-\pi/12)}
{R\cos(v-\pi/12)+\sin(v+\pi/12)}(1+O(|v|^{-1})).
\end{aligned}
\end{equation*}
We would like to find $R$ such that
\begin{equation}
\label{eq_for_R}
\frac{\cos(v+\pi/12)-R\sin(v-\pi/12)}
{R\cos(v-\pi/12)+\sin(v+\pi/12)}=-i.
\end{equation} 
This relation gives
$$
\cos(v+\pi/12)-R\sin(v-\pi/12)= -iR\cos(v-\pi/12)-i\sin(v+\pi/12).
$$
Thus,
\begin{equation*}
\begin{aligned}
R&=\frac{\cos(v+\pi/12)+i\sin(v+\pi/12)}{-i[\cos(v-\pi/12)+i\sin(v-\pi/12)]}
=e^{i\pi/2}\frac{e^{i\pi/12}}{e^{-i\pi/12}}=e^{i\pi/2}e^{i\pi/6}=e^{i2\pi/3}.
\end{aligned}
\end{equation*}

\hskip 12cm $\square$
\medskip

{\bf Proof of lemma \ref{ze_small_v}.}

We have 
\begin{equation*}
\begin{aligned}
\hat\ze&=-i\sqrt{-\hat s}\,\frac{J_{-2/3}(v)+O(|v|^{2/3})}{RJ_{-1/3}(v)+O(|v|^{1/3})}
=-i\sqrt{-\hat s}\,
\frac{(v/2)^{-2/3}/\Gamma(1/3)+O(|v|^{2/3})}  {R(v/2)^{-1/3}/\Gamma(2/3)+O(|v|^{1/3})}\\
&=-i\sqrt{-\hat s}\,\frac{1}{R}\left(\frac{(v/2)^{-1/3}\Gamma(2/3)}  {\Gamma(1/3)}+O(|v|^{1/3}) \right)
=-i\sqrt{-\hat s}\,\frac{1}{R}\left(\frac{(-s)^{3/2}/3)^{-1/3}\Gamma(2/3)}  {\Gamma(1/3)}+O(|v|^{1/3}) \right)\\
&=-i\,\frac{1}{R}\left(\frac{3^{1/3}\Gamma(2/3)}  {\Gamma(1/3)}+O(|v|^{2/3}) \right).
\end{aligned}
\end{equation*}
Using the reflection formula for the Gamma function
$$
\Gamma(\nu)\Gamma(1-\nu)=\frac{\pi}{\sin{\pi \nu}}
$$
with $\nu=1/3$, we obtain
\begin{equation*}
\begin{aligned}
\hat\ze&=\,\frac{1}{R}\frac{-2\pi i}  {\Gamma^2(1/3) 3^{1/6}}(1+O(|v|^{2/3})).
\end{aligned}
\end{equation*}

\hskip 12cm $\square$
\medskip

{\bf Proof of lemma \ref{expansion_plus}.}
\label{p_expansion_plus}

This proof is analogous to the proof of Lemma \ref{expansion_minus}, but one  should replace $i$ with $-i$ in the right hand side of equality (\ref{eq_for_R}). Then we obtain $R=e^{-i2\pi/3}$.

\hskip 12cm $\square$
\medskip

{\bf Proof of lemma \ref{lem_relation}.}
\label{p_relation}
The values $\ze$ and $\chi$ at $\tau =\tau_c$ for the same solution of  equation \ref {e_riccati} are related by
$e^{-i\pi/6}\ze= e^{-i5\pi/6}\chi$, which implies $\chi= e^{i2\pi/3}\ze$. On the other hand, Lemma \ref{ze_small_v} implies that, for $\tau =\tau_c$,
$$
\ze =\frac{k}{R_l}, \quad \chi=\frac{k}{R_r}
$$
with the same constant $k$. Hence, $R_r=e^{-i2\pi/3}R_l$.
 
\hskip 12cm $\square$
\medskip

{\bf Proof of lemma \ref{r_expansion}.}

Substituting into (\ref{solution_2}) the asymptotic formulas for Bessel functions with large $|v|$, we obtain
\begin{equation*}
\begin{aligned}
\hat\chi&=-i\sqrt{-\hat \sigma}\,\frac{\cos(v+\pi/12)-R\cos(v-7\pi/12)}
{R\cos(v-\pi/12)+\cos(v-5\pi/12)}(1+O(|v|^{-1}))\\
&=-i\sqrt{-\hat \sigma}\,\frac{e^{i(v+\pi/12)}+e^{-i(v+\pi/12)}-R(e^{i(v-7\pi/12)}+e^{-i(v-7\pi/12)})      }
{R(e^{i(v-\pi/12)}+e^{-i(v-\pi/12)})+e^{i(v-5\pi/12)}+e^{-i(v-5\pi/12)}}(1+O(|v|^{-1}))\\
&=-i\sqrt{-\hat \sigma}\,\frac{e^{i\pi/12}-Re^{-i7\pi/12}+e^{-2iv-i\pi/12} -Re^{-2iv+i7\pi/12}      }
{Re^{-i\pi/12}+e^{-i5\pi/12} +Re^{-2iv+\pi/12}  +e^{-2iv+i5\pi/12} }(1+O(|v|^{-1})\\
&=i\sqrt{-\hat \sigma}\,\frac{Re^{-i7\pi/12}-e^{i\pi/12}-e^{-2iv-i\pi/12} +Re^{-2iv+i7\pi/12}      }
{Re^{-i\pi/12}+e^{-i5\pi/12} +Re^{-2iv+\pi/12}  +e^{-2iv+i5\pi/12} }(1+O(|v|^{-1}))\\
&=i\sqrt{-\hat \sigma}\,\frac{e^{-i7\pi/12} [  R-e^{i8\pi/12}-e^{-2iv+i6\pi/12} +Re^{-2iv+i14\pi/12}]      }
{e^{-i\pi/12}[R+e^{-i4\pi/12} +Re^{-2iv+2\pi/12}  +e^{-2iv+6i\pi/12} ]}(1+O(|v|^{-1}))\\
&=i\sqrt{-\hat \sigma}\, e^{-i\pi/2} \frac{   R-e^{i2\pi/3}+e^{-2iv}[-e^{i\pi/2} -Re^{i\pi/6}]      }
{R-e^{i2\pi/3} +e^{-2iv}[Re^{i\pi/6}  +e^{i\pi/2} ]}(1+O(|v|^{-1}))\\
&=\sqrt{-\hat \sigma}\left (1+ e^{-2iv}  \frac{2 e^{-i\pi/2} -2Re^{i\pi/6}}      
{R-e^{i2\pi/3}}
+O(e^{-4|\im v|}\right)(1+O(|v|^{-1}))\\
&=\sqrt{-\hat \sigma}\left (1- 2e^{-2iv}  e^{i\pi/6} \frac{R-e^{-i2\pi/3}}      
{R-e^{i2\pi/3}}
+O(e^{-4|\im v|}+|v|^{-1})\right).
\end{aligned}
\end {equation*}

\hskip 12cm $\square$

\medskip
{\bf Proof of lemma \ref{lem_dr}.}

We have
\begin{equation}
\frac{\partial \hat\ze}{\partial R}=i\sqrt{-\hat s}\ \frac{J_{2/3}(v)J_{1/3}(v)+J_{-2/3}(v)J_{-1/3}(v)}{(RJ_{-1/3}(v)+J_{1/3}(v))^2},
\quad v=\frac{2}{3}(-\hat s)^{3/2} .
\end{equation} 
According to \cite{be}, Sec. 7.11, formula (33),
$$
J_{\nu}(v)J_{-\nu+1}(v)+J_{-\nu}(v)J_{\nu-1}(v)=\frac{2\sin(\nu\pi)}{\pi v}.
$$
For $\nu=1/3$, we obtain
$$
J_{1/3}(v)J_{2/3}(v)+J_{-1/3}(v)J_{-2/3}(v)=\frac{\sqrt{3}}{\pi v}=\frac{3\sqrt{3}}{2\pi (-\hat s)^{3/2}}
$$
Thus,
$$
\frac{\partial \hat\ze}{\partial R}=-i \frac{3\sqrt{3}}{2\pi \hat s}\frac{1}{(RJ_{-1/3}(v)+J_{1/3}(v))^2}
$$

\hskip 12cm $\square$
\medskip

\section{Proofs of lemmas about transformation of variables}
 \label{proofs_transformations}
 
 {\bf Proof of Lemma \ref{lem_for_domain_V}.}
 
 For definiteness, we give the proof for the domain $V^+_{\delta}$. Recall notation:
 
  \begin{equation*}
 \begin{aligned}
 &K^{+}_{*,\delta}=\{\tau \ : \ \tau\in K_*^{+},   \  \im\tau<\im  \tau_c- \delta\}, \ \delta=C_{\delta}\eps^{2/3},\\
   &B_{*,{\delta}}^{+}=\{\kappa \ : \   \kappa=\dK(\tau), \tau\in K^{+}_{*,\delta}\}, \\
   &d_+=d_+(\kappa)=|b\cdot(\kappa-\kappa_c)|,
  \end{aligned}
 \end{equation*}
 $ V_{{\delta}}^{+}$ is the $c_{l,4}^{-1}\eps^{2/3}$-neighbourhood of $ B_{*,{\delta}}^{+}$.  The equilibrium $X(\kappa)$  of the fast system is well defined and is an analytic  function of $\kappa$ in $ V_{{\delta}}^{+}$. 
\medskip
 
 We should prove that for
    sufficiently large $c_{l,4}$  we have   $c_{l,6}^{-1}d_{+}<|\kappa-\kappa_c| < c_{l,6}d_{+}$   in $V_{\delta}^{+}$.
 \medskip
 
 Take any $\kappa_v\in V_{\delta}^{+}$. Let $\kappa_b$ be the point of $B_{*,{\delta}}^{+}$ closest to $\kappa_v$. Denote\\ $d_{+,v}=d_+(\kappa_v), d_{+,b}=d_+(\kappa_b)$.
 Then
  $$|\kappa_v-\kappa_b|\le c_{l,4}^{-1}\eps^{2/3}, \quad 
 |d_{+,v}-d_{+,b}|\le k_1c_{l,4}^{-1}\eps^{2/3}.$$
 
 Let us first derive the required estimates for $\kappa_b$. The domain $B_{*,{\delta}}^{+}$ is parametrised by values of the slow time $\tau$ from $K^{+}_{*,\delta}$. For
 $\kappa_b$, 
 we have $\kappa_b=\dK(\tau), \tau\in K^{+}_{*,\delta}$. Thus, for such $\kappa_b$, we have
 \begin{equation*}
 \begin{aligned}
 \kappa_b-\kappa_c&=
   \dK(\tau)-\dK(\tau_c)= \frac{d \dK(\tau_i)}{d\tau}(\tau-\tau_c)=g(X(\dK(\tau_i)), \tau_i,0)(\tau-\tau_c) \\
  & =g(x_c, \tau_c,0)(\tau-\tau_c) + O(|\tau-\tau_c|^{3/2}),\\
   b\cdot (\kappa_b-\kappa_c)&=
  b\cdot( \dK(\tau)-\dK(\tau_c))= b\cdot \frac{d \dK(\tau_i)}{d\tau}(\tau-\tau_c)=b\cdot g(X(\dK(\tau_i)), \tau_i,0)(\tau-\tau_c)\\
 & =b\cdot g(x_c,\kappa_c,0)(\tau-\tau_c) +O(|\tau-\tau_c|^{3/2}),
   \end{aligned}
 \end{equation*}
 where $\tau_i$ is a point on the segment joining $\tau$ and $\tau_c$.  We have
 \begin{equation*}
 d_{+,b} =|b\cdot (\kappa_b-\kappa_c) |=|b\cdot g(x_c,\kappa_c,0)||\tau-\tau_c| +O(|\tau-\tau_c|^{3/2}).
\end{equation*}
According to assumption G in Section \ref{form_conditions},  $ |b\cdot g(x_c,\kappa_c)|>k_2^{-1}$.  Then
\begin{equation*}
k_3^{-1}|\tau-\tau_c|< |\kappa_b-\kappa_c|<k_3|\tau-\tau_c|, \  k_4^{-1}d_{+,b} <|\tau-\tau_c| <  k_4d_{+,b}, \ k_5^{-1}d_{+,b}  <|\kappa_b-\kappa_c| <  k_5d_{+,b}.
 \end{equation*}
This, in particular, implies that  
$$
    |\kappa_b-\kappa_c|> k_3^{-1}|\tau-\tau_c| >k_3^{-1}\delta=k_3^{-1}C_{\delta}\eps^{2/3}, \quad    d_{+,b}> k_4^{-1}|\tau-\tau_c| >k_4^{-1}\delta=k_4^{-1}C_{\delta}\eps^{2/3}.
  $$
  Then, for sufficiently large  constant $ c_{l,4}$, we have
  $$
  |\kappa_v-\kappa_b|< 0.5|\kappa_b-\kappa_c|,\quad    |d_{+,v}-d_{+,b}|< 0.5d_{+,b}
  $$
  which implies
  $$
0.25 k_5^{-1}d_{+,v}  <0.5|\kappa_b-\kappa_c| <|\kappa_v-\kappa_c| < 1.5|\kappa_b-\kappa_c|< 3 k_5d_{+,v} .
  $$
  Thus, 
    $$c_{l,6}^{-1}d_{+}(\kappa)<|\kappa-\kappa_c| < c_{l,6}d_{+}(\kappa)$$
   in $V_{\delta}^{+}$ for a sufficiently large constant $ c_{l,6}$.

\hskip 12cm $\square$

  \medskip
 {\bf Proof of Lemma \ref{lem_transform_0}.}
 
 For definiteness, we give the proof for the domain $W^+_{\delta}$.  Substitute $x=\xi+X(\kappa)$ into (\ref{perturbed}). We obtain
 \begin{eqnarray}
\label{sfe_1}
     \frac{d\xi}{dt}&=&f(\xi+X(\kappa),\kappa,0) -\eps\frac{\p X(\kappa)}{\p \kappa}g(\xi+X(\kappa),\kappa,0) 
    +O(\eps)
      ,\\
\frac{d\kappa}{dt}&=&\varepsilon g(\xi+X(\kappa),\kappa,0)
+O(\eps^2).
 \nonumber
\end{eqnarray}

Consider first equation  (\ref{perturbed}) for $\eps=0$ in variables $z_1, z_2$ used in (\ref{de_z1}). We obtain
\begin{equation}
\begin{aligned}
&\dot z_1=b \cdot (\kappa-\kappa_c) +az_1^2 +O(
z_1|^3+
|\kappa-\kappa_c|(|z_1|+|z_2|)+|z_1||z_2|+|z_2|^2+|\kappa-\kappa_c|^2),\\
&\dot z_2={\cal A}_{c,2}\, z_2+b_1 \cdot (\kappa-\kappa_c) + O(
|z_1|^2+
|\kappa-\kappa_c|(|z_1|+|z_2|)+|z_1||z_2|+|z_2|^2+|\kappa-\kappa_c|^2),
\end{aligned}
\end{equation}
where ${\cal A}_{c,2}$ is a non-degenerate matrix, $ b_1={\rm const}$. To find the equilibrium, we first solve equation $\dot z_2=0$ for $z_2$. This determines $z_2$ as an analytic function of $z_1, \kappa$, whose expansion with respect to $z_1,  \kappa-\kappa_c$ starts with terms proportional to $z_1^2$ and  to $\kappa-\kappa_c$.
We then substitute this function into the equation
 $\dot z_1=0$ and solve it for $z_1$. Finally, we substitute this value of 
 $z_1$  into the previously obtained solution of the equation $\dot z_2=0$. Denote $Z_1(\kappa)$ and $Z_2(\kappa)$ equilibrium values of $z_1$ and $z_2$.
 
To obtain the estimates, first consider $(x, \kappa)\in L_*^+$ (the notation $L_*^+$ is introduced in Section \ref{add_notation}). The set $L_*^+$ is parametrised by values of $\tau\in K_*^+$.  Lemmas \ref {FG} and \ref{lem_for_domain_V} imply that
$|\partial Z_1(\kappa)/\partial \kappa|=O((d_+(\kappa))^{-1/2})$.
For $\kappa \in V_{\delta}$, let $\kappa_b$ be the point of $L_*^+$ closest to $\kappa$. For sufficiently large $C_{\delta}, c_{l,4}$, we have
$|\partial Z_1(\kappa)/\partial \kappa|<2|\partial Z_1(\kappa_b)/\partial \kappa|=O((d_+(\kappa_b))^{-1/2})=O((d_+(\kappa))^{-1/2})$.
The expression of $Z_2$ in terms of  $Z_1$ and $\kappa$ implies that $|\partial Z_2(\kappa)/\partial \kappa|=O(1)$.

\hskip 12cm $\square$


\bigskip
{\bf Proof of Lemma \ref{lem_transform_00}.}

For definiteness, we give the proof for the domain $W^+_{\delta}$. Condition 4) is Section \ref{form_conditions} implies that $\lambda_1(\kappa) \ne \lambda_2(\kappa)$, $\lambda_i(\kappa) \ne \lambda_j(\kappa),
 \quad i=1,2,\quad j=3,\ldots, n$ for $\kappa \in  V^+_{\delta}$. This allows to choose a real-analytic basis which is a union of  bases in eigenspaces  of the matrix ${\cal A}$, corresponding to eigenvalues  $\lambda_1, \lambda_2$  and $\lambda_3, \lambda_4,\ldots, \lambda_n$. This basis can first be constructed locally; its definition can then be extended globally by propagating it along solutions of the slow system.
Details of this construction are given in \cite{nei_diss}, pp. 282 - 285.  The matrix $C$ provides the reduction of the matrix ${\cal A}$ to this basis. The matrix $C$ is non-degenerate and depends analytically on the position of the equilibrium  $X(\kappa)$, for which we have
 $\partial X(\kappa)/\partial \kappa=O^*(d_+^{-1/2})$.
Substitute $\xi=C(\kappa)\tilde\xi$ into (\ref{perturbed}). Differentiation of $C(\kappa)$ over time leads  to  terms  $\eps O^*(d_+^{-1/2})\xi$ and   $\eps O(d_+^{-1/2}|\xi|^2)$ in equations. 
  The second of these terms is absorbed into the term   $O(|\xi|^2)$   in (\ref{after_0_transform}).   
 
 \hskip 12cm $\square$
 
 \medskip
 {\bf Proof of Lemma \ref{lem_transform_1}.}

For definiteness, we give the proof for the domain $W^+_{\delta}$. Let $\xi=(z, w,\eta)$.

(a)  We make a shift of the coordinate origin $\xi\mapsto \tilde \xi $ so that the new origin coincides with  the equilibrium point of the  equation  for new fast variables with  frozen  $\kappa$. We compute this   shift  in three steps. First, we solve the equations $\dot w=0, \dot \eta=0$ for $w, \eta$ and substitute the obtained solution into the equation $\dot z=0$. Second, we solve the resulting equation $\dot z=0$ for   $z$. Third, we substitute this value of $z$ into the already obtained solution of the equations $\dot w=0, \dot \eta=0$.  
This procedure yields an estimate for the shift of $z$ of order 
$O^*(\eps d_+^{-1})$,  and for the other fast variables of order $O^*(\eps)+O^*(\eps^2 d_+^{-3/2})+O^*(\eps^2d_+^{-2})=O^*(\eps d_+^{-1/2})$. 
The  term $O^*(\eps^2 d_+^{-3/2})$ appears  due to the term $O^*(\eps d_+^{-1/2}\xi) $ in (\ref{after_0_transform}), while the term    $O^*(\eps^2d_+^{-2})$ appears due to  $\sim z^2$ terms in the equations $\dot w=0, \dot \eta=0$. Note that  an additional linear term $O^*(\eps d_+^{-1})\tilde \xi $  appears in the equations for new fast variables. This is due to the shift in  terms $\sim z^2$.  There are also new nonlinear terms 
  $O(\eps d_+^{-1}|\tilde \xi|^2) $. These terms are absorbed in the term  $O(|\tilde \xi|^2)$. 
 
 The considered shift in $\xi$ leads to change of form of the equation for $\dot \kappa$. An additional $\tilde \xi$-independent term $O^*(\eps^2 d_+^{-1})$ appears due to the shift in the  term $\sim z$. Also,  an additional linear in $\tilde \xi$ term $O^*( \eps^2 d_+^{-1}|\tilde \xi|)$ and nonlinear in $\tilde \xi$ term $O( \eps^2 d_+^{-1}|\tilde \xi|^2)$ appear there.  These terms are absorbed in the terms $O^*(\eps|\tilde \xi|)$ and  $O(\eps|\tilde \xi|^2)$.

 Additional new terms appear  in the right hand sides of the differential equations for the new variables due to the time dependence of $\kappa$.  The terms that do not vanish at $\tilde \xi=0 $ are $O^*(\eps^2d_+^{-2})$  for $\dot{\tilde z}$ and  $O^*(\eps^2 d_+^{-3/2})$ for other fast variables. Linear terms in $\dot \kappa$ produce a linear term  $O^*(\eps^2d_+^{-2})\tilde \xi =O^*(\eps d_+^{-1})\tilde \xi $ in   $\dot{\tilde \xi}$. Nonlinear terms in $\dot \kappa$ produce a nonlinear  term  $O(\eps^2d_+^{-2}|\tilde \xi|^2)$ in
$\dot{\tilde \xi}$. This term is absorbed in the term   $O(\eps|\tilde \xi|^2)$.

  Repeat the same step in the new variables.
     Now  the shift is $O^*(\eps^2 d_+^{-5/2})$ for the  new variable $z$ and 
     $O^*(\eps^2 d_+^{-2})$ for other fast variables. In the equation for the new variables the term that does not vanish at $\tilde \xi=0 $ is $O^*(\eps^3 d_+^{-7/2})$  for $\dot{\tilde z}$ and $O^*(\eps^3 d_+^{-3})$ for other fast variables.

 (b) The matrix of the linearised near $\xi=0$ system differs from a block-diagonal form by  $O^*(\eps d_+^{-1})$. We perform a linear transformation of variables that reduces this matrix to  block-diagonal form for fixed $\kappa$. The matrix of this transformation  differs from the unit matrix by  $O^*(\eps d_+^{-1})$. As a result, we obtain a system in which the matrix of the linearised near $\xi=0$ system differs from a block-diagonal form by  $O^*(\eps^2 d_+^{-2})$. One further similar step reduces the deviation from block-diagonality   to  $O^*(\eps^3 d_+^{-3})$. 
 Thus, the equation for $\xi$ has the form of  (\ref{after_transform}) with the block diagonal matrix $A$.
 
 \hskip 12cm $\square$
 
\medskip
{\bf Proof of Lemma \ref{lem_transform_2}.}

We use the standard procedure from normal form theory for the elimination of non-resonant terms (see, e.g., \cite{arnold_geometric}, Ch. 5). We start with the proof for the domain $W^+_{\delta}$.

First, we perform a standard real-analytic transformation  of variables
 $\eta, z,w$ which, for frozen $\kappa$, eliminates from the right-hand side of the equation for
  $\dot \eta$ 
   the  monomials   $z^2, w^2,  z^3, w^3$;
from the equation for  $\dot z$ the  monomial  $w^2$;
and from the equation for $\dot w$ the  monomial  $z^2$.
 For this transformation we have the estimate
  $\hat \xi=\xi+O(|\xi|^2)$ (``hat'' for new variables). New quadratic  and higher order terms appear in the transformed equations due to dependence of the transformation on $\kappa$. These terms are proportional to $\eps$. The estimate for these terms is $ \eps O(|\xi|^2d_+^{-1/2})$.

 Then, we perform a standard real-analytic transformation of  variables  $z, w$  
  which,  for frozen $\kappa$, eliminates  from 
   the equation for $\dot z$ all  quadratic monomials containing $z$, and from equation for $\dot w$ all quadratic monomials containing $w$.
  This transformation satisfies the estimates
\begin{equation*}
   \begin{aligned}
   \hat z=z+ O(|z|^2 d_+^{-1/2}+ |\xi|^2) ,\
    \hat w=w+ O(|zw|d_+^{-1/2}+
  |\xi|^2)  \ 
  \end{aligned}
  \end{equation*}
    (``hats'' for new variables).
   New quadratic and higher order terms appear in the transformed equations due to the dependence of the transformation on $\kappa$. These terms are proportional to $\eps$. The estimates for these terms are $ \eps O(|z|^2 d_+^{-3/2})+
     \eps O(|\xi|_*^2d_+^{-1/2}) $ in the equation for $\dot z$, and  $ \eps O(|zw| d_+^{-3/2})+
     \eps O(|\xi|^2 d_+^{-1/2}) $ in the equation for $\dot w$
      (we omit “hats” over
the new variables).

The transformation under consideration leads to the appearance of new third- and higher-order terms (not proportional to  $\eps$) due to the substitution of the formulas for the new variables into the right-hand sides of the equations.
   In equation for  $\dot z$, the terms  not proportional to $z^3$ are estimated by  $O(|\xi|_*^3 d_+^{-1/2}) $.  The terms proportional to $z^3$ are estimated by $z^3O^* (d_+^{-1/2})+z^4O^* (d_+^{-1})$ and  $z^4O^* (d_+^{-1/2})+z^5O(d_+^{-1}) $ (these terms originate from the terms $\sim z^2$ and $\sim z^3 $ in the original equation, respectively).  The term $z^3O^* (d_+^{-1/2}) $ can be eliminated by a transformation of variables. The new large term created by this  is $z^5O (d_+^{-3/2})$.  The terms $z^4O^* (d_+^{-1/2})$ and $z^4O^* (d_+^{-1}) $ we also eliminate by a transformation of variables.  The new large term created by this is $z^7O (d_+^{-5/2})$. This term  is majorated by   $z^5O (d_+^{-3/2})$. Other  terms generated by these transformations are smaller than those already present in the equation. To preserve real analyticity, we perform a similar transformation in the equation for
   $\dot w$ replacing $z$ with $w$.
 In the equation for $\dot \eta$  the higher order terms are $O(|\xi|_*^3d_+^{-1/2})+ O(|z|^4)$.
  In the equation for $\dot w$  the higher order terms are $O(|\xi|_*^3d_+^{-1/2})+ O(|z|^3)$.
  
  Combining the above estimates,  we obtain the  formulas for $\dot z, \dot \eta$ in (\ref{d_equation}).
  
  \medskip
  In the domain $W^-_{\delta}$, the nearly-resonant quadratic monomial in the expression for $\dot z$ is $zw$.  The transformation of $z$ is estimated by 
  \begin{equation*}
   \begin{aligned}
   \hat z=z+ O(|zw| d_-^{-1/2}+ |\xi|^2).
  \end{aligned}
  \end{equation*}
 The  dependence of this transformation on $\kappa$ gives terms $\eps O^*(|zw| d_-^{-3/2}+ |\xi|^2d_-^{-1/2})$ (``hats'' are omitted). This transformation  creates some new cubic terms. The cubic and higher order  terms  are  bounded above by $O(|\xi|_{**}^3d_-^{-1/2})$+ $O(|w|^3)$.
  Hence, we obtain (\ref{d_equation1}).
  
  \hskip 12cm $\square$
  
 \medskip
{\bf Proof of Lemma \ref{lem_transform_kappa}.}

For definiteness, we give the proof for the domain $W^+_{\delta}$.

We start by working on the terms linear in $\xi$  in the  equation for $\dot \kappa$. To this end, we make the transformation of variables 
 \begin{equation}
 \label{kappa&M}
  \hat \kappa=\kappa+\eps M(\kappa, \eps)\xi,
 \end{equation}
 where $M$ is a matrix to be determined. The new variable  $\hat \kappa$ satisfies the equation 
 \begin{equation*}
 \begin{aligned}
 \dot {\hat \kappa}&=\eps G(\kappa) +O^*(\eps^2 d_+^{-1}) +\eps E(\kappa)\xi  +\eps O(|\xi|^2) \\
 &+\eps M(\kappa, \eps) A(\kappa, \eps)\xi 
  +\eps M \alpha+\eps \dot M(\kappa, \eps)\xi  ,
\end{aligned}
\end{equation*}
where $A(\kappa, \eps)$ is the matrix of the linearised  near $\xi=0$  system, $E(\kappa)=O(1$), $\eps O(|\xi|^2)$ is the sum of nonlinear terms in $\dot \kappa$, and $\alpha$ denotes all terms in $\dot \xi$ except $A\xi$. 
 
Solving  the equation $M(\kappa, \eps)A(\kappa, \eps) +E(\kappa)=0$ for $M$,  we obtain $M=-A^{-1}E$. In the variables $z,w,\eta$, the matrix $A$ has a block diagonal structure. Thus,  $A^{-1}$ has the  block diagonal structure, where the first $1\times 1$ block is $O(d_+^{-1/2})$, while entries in all other blocks are $O(1)$. Note that derivatives in $\kappa$ of these blocks are $O(d_+^{-3/2})$ and $O(d_+^{-1/2})$, respectively. Using expressions for $\dot z, \dot \eta$ in (\ref{d_equation}), and information about  $\dot w$ in Lemma \ref{lem_transform_2}, we obtain
 \begin{equation*}
 \begin{aligned}
& \dot {\hat \kappa}=\eps G(\kappa) +O^*(\eps^2 d_+^{-1})+\eps O^*(|\xi|^2) + \eps O^*(|\xi|^3) +\eps O(|\xi|^4)\\
&+\eps d_+^{-1/2}O\left( \eps|z|^2d_+^{-3/2}
     +\eps (|\xi|_*^2 d_+^{-1/2})  +O(|\eta|(|\eta|+|w|))
       + (|\xi|_*^3d_+^{-1/2})+(|z|^5d_+^{-3/2}) +\eps^3(d_{+}^{-3}|\xi|)   \right)\\
      &+O^*(\eps d_+^{-1/2}\eps^3(d_{+}^{-7/2}))\\
&+\eps O\left(   |\eta|^2+|z\eta | 
   +     \eps ((|z|^2  +|w|^2+|w\eta|)d_+^{-1/2}) +(\eps |wz|d_+^{-3/2})
     +(|\xi|^3_*d_+^{-1/2})+ (|z|^3)+ \eps^3(d_{+}^{-3}|\xi|)\right) \\
     &+O^*(\eps \eps^3(d_{+}^{-3}))\\
   &+\eps O\left(  |\eta|^2+ (|\eta |(|z|+|w|) +O(|zw|) 
   +     \eps ((|z|^2  +|w|^2)d_+^{-1/2})  
     +(|\xi|_*^3d_+^{-1/2})+ (|z|^4)+\eps^3(d_{+}^{-3}|\xi|)\right) \\
       &+O^*(\eps \eps^3(d_{+}^{-3}))\\
     &+\eps^2O(|z|d_+^{-3/2}+(|w|+|\eta|)d_+^{-1/2} ).   
 \end{aligned}
\end{equation*}
 The substitution of $\hat \kappa$ from (\ref{kappa&M}) into $G(\kappa)$ creates additional  linear in $\xi$ terms \\
$\eps^2 O(d_+^{-1}|z| + d_+^{-1/2}|\xi|_*)$.
Comparing the magnitudes of the different terms, we obtain the following form of this equation (``hat'' over $\kappa$ is omitted):
\begin{equation*}
 \begin{aligned}
 & \dot {\kappa}=\eps G(\kappa) +O^*(\eps^2 d_+^{-1})+\eps O\left (|z|^2+d_+^{-1/2}|\xi|_*^2\right) +\eps d _+^{-1}O(|\xi|_*^3)
 +\eps O^*(|z|^3)\\
 &+\eps^2 O(|z|^2d_+^{-2})
 +\eps O\left(|z|^4+|z|^5d_+^{-2}\right)
 + \eps^2O\left(|z|d_+^{-3/2}+d_+^{-1/2}(|w|+|\eta|) \right)
 +\eps^4O(d_+^{-7/2}|\xi|).
   \end{aligned}
\end{equation*}

 Repeat similar step for the new linear in $\xi$ terms in the equation. We obtain the equation in which linear in $\xi$ term is
  $\eps^3 O(d_+^{-3}|z|+ d_+^{-3/2}(|w|+|\eta|))$. The estimates for the other terms do not change. This is because the new additional terms arising from $\dot \xi$ are the same as those from  the previous step, but multiplied by $O(\eps d_+^{-3/2})=O(1)$.  
 
 \medskip 
  In a similar way, we can eliminate the quadratic  in $\xi$  terms  $ \eps O^*\left (|z|^2+d_+^{-1/2}|\xi|_*^2\right)$ and  the cubic in $z$ term  $\eps O^*(|z|^3)$. The estimate for the new quadratic and cubic   terms   is $\eps^2 O\left (d_+^{-3/2}|z|^2+d_+^{-3/2}|\xi|_*^2 \right)$. The estimates for the other terms do not change. This is because the new additional terms arising from $\dot \xi$ are the same  as those on the previous step, but multiplied by $O( d_+^{-1/2}|\xi|)=O(1)$. 
  
 Comparing the magnitudes of the different terms, we obtain (\ref{eq_kappa_transformed}).
  
\hskip 12cm $\square$

%% file: delay_proofs_of_lemmas_2.tex
\section{Proofs of lemmas about continuation of solutions.}
\label{proofs_continuation}
{\bf Proof of lemma \ref{K_and_K_eps}.}

The differential equations for $\dK_{\eps}$ and $\dK$ in the domain $V_{\delta}^{\pm}$ are
\begin{equation}
\label{F_and_G}
\dot \dK_{\eps}=\eps F(\dK_{\eps}), \quad \dot \dK=\eps G(\dK), \quad F(\kappa)=G(\kappa)+\eps O (d_{\pm}^{-1}).
\end{equation}
For the initial condition of $\dK_{\eps}$, we have
$$
\dK_{\eps}(\tau_*^-)=  \dK(\tau_*^-)+ O(\eps).
$$
  In $V_{\delta}^{\pm}$, we have $|\partial G/\partial \kappa|= O(d_{\pm}^{-1/2})$, $\hat {d}_{\pm}(\tau)>k_1^{-1}\eps^{2/3}$. The estimates for solutions of (\ref {F_and_G}) give
$$
|\dK(\tau)-\dK_{\eps}(\tau)|=O(\eps(1+|\ln \hat d_{\pm}|)), \quad   
 0.5\hat {d}_{\pm}(\tau)  <\hat {d}_{\pm,\eps}(\tau)< 2\hat {d}_{\pm}(\tau)
$$
in $V_{\delta}$. 
\medskip

\hskip 12cm $\square$

\medskip

{\bf Proof of lemmas \ref{second_line}, \ref{third_line}, \ref{fourth_line}.}

We provide the proof of Lemma \ref{second_line}. Proofs  of other lemmas are completely analogous.

We have
\begin{equation*}
\re    \int_{\tau_{\gamma}}^{\tau_{ *,\eps,-}}\Lambda_1(\dK_{\eps}(\vartheta))\,d\vartheta=0, \quad \re    \int_{\tau_{\gamma}}^{\tau_{-}}\lambda_1(\dK(\vartheta))\,d\vartheta=0
\end{equation*}
and
\begin{equation*}
\Lambda_1(\dK_{\eps}(\tau))=\lambda_1(\dK_{\eps}(\tau))+\eps O(\hat {d}_{+,\eps}^{-1})=\lambda_1(\dK(\tau))
+\eps|\ln \hat d_{+}| O(\hat d_{+}^{-1/2})+ \eps O(\hat {d}_{+}^{-1}).
\end{equation*}
This implies
\begin{equation*}
\re    \int_{\tau_{ *,\eps,-}}^{\tau_{-}}\lambda_1(\dK(\vartheta))\,d\vartheta=\eps \re \int_{\tau_{\gamma}}^{\tau_{ *,\eps,-}}
\left(|\ln \hat d_{+}|  O(\hat d_{+}^{-1/2})+ O(\hat {d}_{+}^{-1})\right)d\vartheta=O(\eps\ln\eps).
\end{equation*}
This implies $\tau_{ *,\eps,-}=\tau_{-}+O(\eps\ln\eps)$.

\medskip

\hskip 12cm $\square$

\medskip
{\bf Proof of lemmas \ref{lem_cont_D_gamma}, \ref{lem_cont_D_q_l}.}

We will give a detailed proof of Lemma \ref{lem_cont_D_q_l}. At the end of this proof we will make a comment concerning modifications required for the proof of Lemma \ref{lem_cont_D_gamma}.

\medskip

Denote by $O_1, O_2,  O_3, O_4$ last $O$-terms in equations  (\ref{d_equation}),  (\ref{d_equation1}). We have
\begin{equation}
|O_1({d}_{+}^{-7/2})|< c_{r,1} {d}_{+}^{-7/2}, \ |O_2({d}_{+}^{-3})|<c_{r,2} {d}_{+}^{-3},\ 
 |O_3({d}_{+}^{-1})|< c_{r,3}{d}_{+}^{-1}, \ |O_4({d}_{-}^{-3})|<c_{r,4} {d}_{-}^{-3}.
\end{equation}

Rewrite system (\ref{d_equation}) in the domain  $W^{+}_{\delta}$ in the form
 \begin{equation}
\begin{aligned}
\label{with_beta_+}
     &\dot z=  \Lambda_1(\dK_{\eps})z+ ( \Lambda_1(\kappa)-\Lambda_1(\dK_{\eps}))z + \beta_1+\eps^3O_1(d_{+}^{-7/2}),\\
  &\dot \eta=B(\dK_{\eps})\eta + (B(\kappa)-B(\dK_{\eps}))\eta +\beta_2+\eps^3O_2(d_{+}^{-3}),\\
   & \dot {\kappa}=\eps F(\kappa) +\eps^2 O(|z|^2d_+^{-2})+\eps^2 O(|\xi|_*^2d_+^{-3/2}) \\
 &
 +\eps O\left(|z|^4+|z|^5d_+^{-2}+ |\xi|_*^3d_+^{-1}\right)
 +\eps^3 O(|z|d_+^{-3} + |\xi|_*d_+^{-3/2})+\eps^4O(d_+^{-7/2}|\xi|),\\
      &\hskip 1 cm F= g(X(\kappa), \kappa,0)+\eps O_3( d_{+}^{-1}).
\end{aligned}
\end{equation}
In the domain $W^{-}_{\delta}$, we have similar equations with $z$ and $w$ interchanged and with $d_+$ replaced by $d_-$, and $\Lambda_1$ replaced by
$\Lambda_2$.

Rewrite equation for $z$  in the domain  $W^{-}_{\delta}$ in the form
\begin{equation}
\begin{aligned}
\label{with_beta_-}
\dot z=  \Lambda_1(\dK_{\eps})z+  ( \Lambda_1(\kappa)-\Lambda_1(\dK_{\eps}))z + \beta_4
     +\eps^3O_4(d_{-}^{-3}).
     \end{aligned}
     \end{equation}

We have
 \begin{equation}
  |\kappa(t_4)-\dK_{\eps}(\eps t_4)|=O(\eps^6 |\ln \eps|) ,
 \  |z(t_4)|= O(\eps^3),\ |\eta(t_4)|=O(\eps^3)  .
  \end{equation}
  
  Denote by $S(T)$ the part of the domain $D_{q,l}$ (the sector) in the plane of the complex slow time where  
  $t_4 \le \re t \le T$. We consider  the solution $z(t), \eta(t), \kappa(t)$ of  system (\ref{d_equation}), (\ref{d_equation1}).
  By Cauchy's theorem \cite{golub}, there exists $T>t_4$ such that  the solution $z(t), \eta(t), \kappa(t)$ can be  analytically continued  into the sector $S(T)$.
  
  \medskip
  Take $T$ such that for $\eps t\in S(T)$  we have:
  \begin{equation}
  \begin{aligned}
  \label{induct_D_l}
 &|\beta_1|< 0.5 \eps^{3}c_{r,1} {d}_{+}^{-7/2}, \ |\beta_2| < 0.5  \eps^{3}c_{r,2} {d}_{+}^{-3}, \  |\beta_4| < 0.5 \eps^{3} c_{r,4} {d}_{-}^{-3},\\
 &  0.5 \hat d_{\pm}(\eps t)   \le d_{\pm}(\kappa(t)) \le 2 \hat d_{\pm}(\eps t),\\
 &  |\kappa(t)- \dK_{\eps}(t)|< \mu_1 \eps^{4}\hat d_{\pm}^{-9/2},\\
 & |\xi(t)|< c_{t,5}\eps^{1/3}
  \end{aligned}
  \end{equation} 
  (``+'' and ``-'' correspond respectively for $\im \tau\ge 0$ and $\im \tau\le 0$). Here,  $\mu_1$ is a positive constant whose value does not depend on choice of the constant $C_q$, provided that  $C_q$ is sufficiently large. The value $\mu_1$ is determined after the statement  of Lemma \ref {est_of_shorten}. The constant $c_{t,5}$ is introduced in the statement of Lemma \ref{lem_transform_2}.

  Note that  inequalities (\ref {induct_D_l}) are certainly satisfied for sufficiently small  $T-t_4$. 
  
  \medskip
  
  Assumptions  (\ref{induct_D_l}) imply that
  \begin{equation*}
  \begin{aligned}
&|\Lambda_1(\kappa(t))-\Lambda_1(\dK_{\eps}(\eps t))||z|=\mu_1 \eps^4O(\hat d_{\pm}^{-5})\eps^{1/3}<0.1c_{r,1}\eps^3\hat d_{\pm}^{-7/2},\\
&|B(\kappa(t))-B(\dK_{\eps}(\eps t))||\eta|=\mu_1 \eps^4O(\hat d_{\pm}^{-5})\eps^{1/3}<0.1c_{r,2}\eps^3\hat d_{\pm}^{-3}.
\end{aligned}
\end{equation*}
(Here we use that $|\xi| <c_{t,5}\eps^{1/3}$ and that $C_q$ is sufficiently large.) 

Thus, equations (\ref{with_beta_+}) have the form
 \begin{equation}
\begin{aligned}
\label{a_shorten_+}
     &\dot z=  \Lambda_1(\dK_{\eps})z+\eps^3\tilde O_1(d_{+}^{-7/2}),\\
  &\dot \eta=B(\dK_{\eps})\eta + \eps^3 \tilde O_2( d_+^{-3})
  \end{aligned}
\end{equation}
with $|\tilde O_1(d_{+}^{-7/2})|< 2c_{r,1}d_{+}^{-7/2},\ |\tilde O_2(d_{+}^{-3})|< 2c_{r,2}d_{+}^{-3}$.

Equation (\ref{with_beta_-}) has  the form
\begin{equation}
\begin{aligned}
\label{a_shorthen_-}
\dot z=  \Lambda_1(\dK_{\eps})z+\eps^3\tilde O_4(d_{-}^{-3})
     \end{aligned}
\end{equation}
with $|\tilde O_4(d_{-}^{3})|< 2c_{r,4}d_{-}^{-3}$.

\begin{lem}
\label{est_of_shorten}
In $S(T)$,  for  $\im \tau\ge -c_{l,1}^{-1}$ we have
\begin{equation}
\begin{aligned}
\label{ind_1+}
& | z(t)|<c_{r,5}\eps^2 {d}_{+}^{-5/2}, \  |\eta(t)|<c_{r,6}\eps^3 { d}_{+}^{-3},\\
 &|\kappa(t)-\dK_{\eps}(\eps t)|<  c_{r,7}\eps^4 \hat{ d}_{+}^{-9/2};
 \end{aligned}
   \end{equation}
 for $\im \tau\le  c_{l,1}^{-1}$ we have
\begin{equation}
\begin{aligned}
\label{ind_1-}
 &|z(t)|<c_{r,6}\eps^3 {d}_{-}^{-3}, 
 \  | \eta(t)|<c_{r,6}\eps^3 {d}_{-}^{-3},\\
 & |\kappa(t)-\dK_{\eps}(\eps t)|<  c_{r,7}\eps^4 \hat{ d}_{-}^{-9/2}
 \end{aligned} 
\end{equation}
The constants $c_{r,5}, c_{r,6}, c_{r,7}$ do not depend on the value $T$ or on  the choice of the constant $\mu_1$.
\end{lem}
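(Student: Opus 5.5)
The proof of Lemma \ref{est_of_shorten} will be a linear estimate along iso-expanding paths. Inside $S(T)$ the system reduces, by the bootstrap assumptions (\ref{induct_D_l}), to the shortened linear inhomogeneous equations (\ref{a_shorten_+}), (\ref{a_shorthen_-}). The forcing in these equations is $O(\eps^3 d_\pm^{-7/2})$ for $z$ and $O(\eps^3d_\pm^{-3})$ for $\eta$. The task is to integrate these equations from the initial point $t_4$ along suitable paths inside $D_{q,l}$ and to show that the solution stays of the order of the forcing divided by the local rate.

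\textbf{Estimate for $z$.} For a point $\tau\in S(T)$, I will connect it to the initial moment $\tau_{q,\eps,-}$ by a path made of two pieces. The first piece runs along the real axis. The second runs along the level curve $\re\Psi_\eps=\mathrm{const}$ through $\tau$; such curves foliate $D_{q,l}$ by construction of the domain. Along the level curve the homogeneous factor $\exp\bigl(\eps^{-1}\re(\Psi_\eps(\tau)-\Psi_\eps(\tau'))\bigr)$ equals $1$. I will not use that curve itself for the integral, though, because it would give only a crude bound $\eps^2 d_+^{-7/2}\cdot|\text{path}|$. Instead, the sharper bound $\eps^2d_+^{-5/2}$ requires integrating along a path in which $\re\Psi_\eps$ decreases toward the endpoint at a rate comparable to $|\Lambda_1|\sim d_+^{1/2}$. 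The natural choice is the vertical direction, in which the level curves are transversal, by condition 6). So I will go down vertically (or along the Remark's transversal family) from $\tau$ to the real axis, where $\re\Lambda_1<0$ on the stable part. Along this path the integral $\int \eps^3 d_+^{-7/2}\exp(\eps^{-1}\re\,\Delta\Psi_\eps)\,dt$ is bounded by $\eps^3d_+^{-7/2}\cdot \eps^{-1}\cdot(\eps/d_+^{1/2})\cdot\eps^{-1}$. Here the factor $\eps^{-1}$ comes from the change from $t$ to $\tau$, and $\eps/d_+^{1/2}$ is the effective decay length in $\tau$. This gives $O(\eps^2d_+^{-4})$ near the real axis. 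To obtain the stated $d_+^{-5/2}$ instead, I will exploit the fact that $d_+$ varies along the path. Concretely, I will write the solution as a Duhamel integral and split it into the part near the endpoint, which contributes $\eps^3 d_+^{-7/2}/|\Lambda_1| \sim \eps^3d_+^{-4}$, and the accumulated part. Since $d_+\gtrsim \eps^{2/3}$, the bound $\eps^3 d_+^{-4}\le \eps^2 d_+^{-5/2}$ holds because $d_+^{-3/2}\lesssim\eps^{-1}$. The initial datum contributes $O(\eps^3)$, transported with factor at most $1$ because $\re\Psi_\eps$ is monotone along the chosen path. In the lower region $\im\tau\le c_{l,1}^{-1}$ the same argument with $\Lambda_1$ replaced as in (\ref{a_shorthen_-}) gives $\eps^3d_-^{-3}$, since there $z$ is the non-critical variable and its rate is $O(1)$.

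\textbf{Estimate for $\eta$.} For $\eta$ the eigenvalues of $B$ have real parts bounded away from zero along the level curves, by condition 5). Hence the standard exponential dichotomy estimate gives $|\eta|\le C\eps^3 d_\pm^{-3}$ directly, without careful path selection.

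\textbf{Estimate for $\kappa$.} I will subtract the improved slow equation for $\dK_\eps$ from the $\kappa$ equation in (\ref{with_beta_+}). I will then insert the bounds just obtained for $z$, $\eta$, and for $w=\overline{z(\bar t)}$; the latter uses Lemma \ref{lem_real} to take the $z$-bound in the conjugate region. Gronwall's inequality is applied with $|\partial F/\partial\kappa|=O(d_+^{-1/2})$, whose integral along the path is $O(1)$. The dominant forcing terms are $\eps^2|z|^2d_+^{-2}\sim \eps^6d_+^{-7}$ and $\eps^3|z|d_+^{-3}\sim\eps^5 d_+^{-11/2}$. Integrated in $t$ over a $\tau$-length comparable to $d_+$, these yield at most $\eps^4 d_+^{-9/2}$, using $d_+\gtrsim\eps^{2/3}$. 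The initial error $O(\eps^6|\ln\eps|)$ is negligible.

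\textbf{Constants and the main obstacle.} All constants come only from $c_{r,i}$, from the geometry of the level curves, and from the initial data, so they are independent of $T$ and of $\mu_1$. The main obstacle is the $z$-estimate near $\tau_c$ with the exact power $d_+^{-5/2}$. The difficulty is choosing the integration paths so that $\re\Psi_\eps$ is monotone and the weighted integral of $d_+^{-7/2}$ is controlled uniformly as the path approaches the $\eps^{2/3}$-neighbourhood of $\tau_c$. I would first try vertical segments combined with Lemma \ref{lem_Gamma_q}, which controls how close the level curves come to $\Gamma_{*,1}$.
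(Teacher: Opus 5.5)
There is a genuine gap in your $z$-estimate for $\im\tau\ge -c_{l,1}^{-1}$, and it comes from the direction of your integration path. Along a vertical line, $\re\Psi_\eps$ decreases \emph{downward}: this is what condition 6) gives, namely $\re(-i\Lambda_1(\dK_\eps))<-c\,\hat d_+^{1/2}$ with $s=-\im t$. Your data are known on the real axis, so a vertical path from the real axis to $\tau$ is traversed \emph{upward}. Along it the homogeneous factor $\exp\bigl(\eps^{-1}\re(\Psi_\eps(\tau)-\Psi_\eps(\re\tau))\bigr)$ is $\ge 1$. For $\tau$ near $\tau_q$ this factor is of order $e^{c/\eps}$, because $\re\Psi_\eps(\tau)=\re\Psi_\eps(\tau_{q,\eps,-})$ exceeds $\re\Psi_\eps(\re\tau_c)=\re\Psi_\eps(\tau_*)=0$ by an amount of order one. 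So your claim that the initial datum is transported with factor at most $1$ is false. Forcing collected near the real axis is amplified in the same way, and the ``accumulated part'' of your Duhamel integral is never controlled. Separately, your product $\eps^3d_+^{-7/2}\cdot\eps^{-1}\cdot(\eps/d_+^{1/2})\cdot\eps^{-1}=\eps^2d_+^{-4}$ contradicts the local contribution $\eps^3d_+^{-4}$ you quote next; the latter is the correct one.

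The missing point is that the path you discarded already proves the lemma. Along $\re\Psi_\eps=\mathrm{const}$ the homogeneous part is a pure rotation, $dz/d\sigma=i\omega z+\ldots$ with real $\omega$. Hence $|z|$ grows at most by $\eps^3\int\hat d_+^{-7/2}\,|dt|=\eps^2\int\hat d_+^{-7/2}\,|d\tau'|$. Since $\hat d_+(\tau')\asymp|\tau'-\tau_c|$ and the level curve approaches $\tau_c$ roughly radially as you move from the real axis toward $\tau$, this integral is $O(\eps^2\hat d_+(\tau)^{-5/2})$, not $\eps^2d_+^{-7/2}$ times the path length. That is exactly the paper's argument for the first bound.

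Vertical lines are used only afterwards, traversed downward from the upper boundary $\Gamma_{q,\eps}$ to $\tau$; the level-curve bound is available on $\Gamma_{q,\eps}$, and along this descent $|z|$ decreases while $|z|>c\,\eps^3\hat d_+^{-4}$. This step is not optional. The lemma demands $|z|<c_{r,6}\eps^3d_-^{-3}$ on all of $\im\tau\le c_{l,1}^{-1}$, including the strip $0\le\im\tau\le c_{l,1}^{-1}$ where $d_-\asymp 1$. So you need $|z|=O(\eps^3)$ there, while the level-curve bound only gives $O(\eps^2)$. Only after that can you descend from the real axis into $\im\tau<0$ (now correctly downward, with an $O(1)$ rate) to get $\eps^3\hat d_-^{-3}$. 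Only then is the bound $|w|\lesssim\eps^3\hat d_+^{-3}$, which your $\kappa$-step uses, justified.

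Your $\eta$- and $\kappa$-steps otherwise match the paper, with one caveat. Path selection does matter for $\eta$: condition 5) gives contraction of the $\eta$-block only along the real axis and the level curves. You must use those paths and absorb the growth of $\hat d_+^{-3}$ along them; the paper does this with a frozen Lyapunov function.
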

We take $\mu_1= 2c_{r,7}$.
\begin{lem}
\label{margin_D_l}
If the constant $c_{e,5}$ is chosen sufficiently large, then for any $\eps T< \re \tau_c$ the assumptions   (\ref{induct_D_l}) are satisfied with a margin.
\end{lem}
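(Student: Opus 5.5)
We have to prove Lemma \ref{margin_D_l}: if $c_{e,5}$ is large, then for any $\eps T<\re\tau_c$ the bootstrap assumptions (\ref{induct_D_l}) hold with a margin on $S(T)$. The method is a continuity argument. On $S(T)$ we are free to use Lemma \ref{est_of_shorten}. Its constants $c_{r,5},c_{r,6},c_{r,7}$ do not depend on $T$, on $\mu_1$, or on $C_q$ once $C_q$ is large. We then check each inequality in (\ref{induct_D_l}) separately, and each time show it holds with the constant halved, or with a strictly smaller factor. Throughout we use $\hat d_\pm\ge k^{-1}C_q\eps^{2/3}$ on $D_{q,l}$; this follows from $\im\tau_q=\im\tau_c-C_q\eps^{2/3}$ and Lemma \ref{lem_for_domain_V}. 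So every quantity $\eps\hat d_\pm^{-3/2}$ is $O(C_q^{-3/2})$ and can be made as small as we like by enlarging $c_{e,5}$.

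\emph{Step 1: $\kappa$ and the ratio of $d$'s.} Lemma \ref{est_of_shorten} gives $|\kappa-\dK_\eps|<c_{r,7}\eps^4\hat d_\pm^{-9/2}$. This equals $\tfrac12\mu_1\eps^4\hat d_\pm^{-9/2}$, since $\mu_1=2c_{r,7}$, so the third line of (\ref{induct_D_l}) holds with margin. Next, $|d_\pm(\kappa)-\hat d_{\pm,\eps}|\le |b|\,|\kappa-\dK_\eps|=O(\eps^4\hat d_\pm^{-9/2})$. Relative to $\hat d_\pm$ this is $O((\eps\hat d_\pm^{-3/2})^{11/3})=O(C_q^{-11/2})$. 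Combining it with Lemma \ref{K_and_K_eps} (where $\hat d_{\pm,\eps}/\hat d_\pm$ is near 1; if needed we sharpen that lemma using $\eps|\ln\hat d|\ll \hat d$) gives $0.6\hat d_\pm\le d_\pm(\kappa)\le 1.9\hat d_\pm$. Since $d_\pm\sim\hat d_\pm$, the bounds of Lemma \ref{est_of_shorten} stated in terms of $d_\pm$ can also be read in terms of $\hat d_\pm$.

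\emph{Step 2: smallness of $\xi$.} From (\ref{ind_1+}) and (\ref{ind_1-}), $|z|\le c_{r,5}\eps^2\hat d^{-5/2}\cdot 2^{5/2}=O(\eps^{1/3}C_q^{-5/2})$, and $\eta$ and the $\im\tau<0$ part of $z$ are $O(\eps C_q^{-3})$. The component $w$ equals $\overline{z(\bar t)}$, so it obeys the mirrored bounds. Hence $|\xi|<\tfrac12 c_{t,5}\eps^{1/3}$ once $C_q$ is large.

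\emph{Step 3: the remainders $\beta_j$. This is the main obstacle.} We substitute the bounds of Steps 1–2 into each nonlinear term collected in $\beta_1,\beta_2,\beta_4$, using the term-by-term forms (\ref{d_equation}) and (\ref{d_equation1}). Each term must be shown to be $\le \eps^3 d^{-7/2}$ (respectively $\eps^3d^{-3}$) times a negative power of $C_q$. We write $\rho=\eps\hat d^{-3/2}=O(C_q^{-3/2})$ and express everything in powers of $\rho$. For example:
\begin{itemize}
\item $\eps|z|^2d^{-3/2}\sim \eps^5 d^{-13/2}=\eps^3d^{-7/2}\rho^2$;
\item $|\eta|(|\eta|+|w|)\lesssim \eps^6 d^{-6}$, using the real-analyticity relation for $w$, which gives $\eps^3d^{-7/2}\rho^{5/3}$-type factors;
\item $|z|^5d^{-3/2}\sim\eps^{10}d^{-14}=\eps^3d^{-7/2}\rho^{14/3}$;
\item $|\xi|_*^3 d^{-1/2}$ is similarly small.
\end{itemize}
The delicate terms are the cross terms that mix $w$, which is controlled through the conjugate point $\bar t$ where $d_-$ plays the role of $d_+$, with $z$ or $\eta$. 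For these we must check that $\hat d_-(\bar t)$ and $\hat d_+(t)$ are comparable. This holds by the symmetry of $D_{q,l}$ under conjugation, and I expect it to be the place needing care. It also requires that the point $\bar t$ lies in $S(T)$ as well; this is why $S(T)$ is taken symmetric. The terms $\eps^3O(d^{-3}|\xi|)$ are smaller by a factor $|\xi|d^{1/2}\ll1$.

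Once every term carries a positive power of $\rho$, choosing $c_{e,5}$ large makes the total less than $\tfrac14$ of the required constant. So all inequalities in (\ref{induct_D_l}) hold strictly. By continuity they persist slightly beyond $T$, and Cauchy's theorem extends the solution there. Taking the supremum of admissible $T$ then yields the lemma for all $\eps T<\re\tau_c$.
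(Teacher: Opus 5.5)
Your proposal is correct and is essentially the paper's own argument: you insert the bounds of Lemma~\ref{est_of_shorten}, with $w$ controlled through $w(t)=\overline{z(\bar t)}$, term by term into $\beta_1,\beta_2,\beta_4$; you use $d_\pm>c_*\eps^{2/3}$ with $c_*$ large (guaranteed by a large $c_{e,5}$), so that each term is a small fraction of $\eps^3c_{r,1}d_+^{-7/2}$, $\eps^3c_{r,2}d_+^{-3}$ or $\eps^3c_{r,4}d_-^{-3}$; and you close with $c_{r,7}=\mu_1/2$ and $|d_\pm(\kappa)-\hat d_\pm|\ll d_\pm$. Your extra care about $\hat d_\pm$ versus $\hat d_{\pm,\eps}$ and about comparing $d_-(\bar t)$ with $d_+(t)$ is justified (the paper glosses both), and your only slip is harmless: $|z|^5d^{-3/2}$ relative to $\eps^3d^{-7/2}$ is $\rho^7$, not $\rho^{14/3}$.
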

Thus, one can take $\eps T= \re \tau_c$. Then, the estimates in Lemma \ref{est_of_shorten} imply the estimates in Lemma \ref{lem_cont_D_q_l}.

\medskip

The proof of Lemma \ref{lem_cont_D_gamma} is completely analogous. Just Lemma \ref{est_of_shorten} should be replaced with an analogous lemma for the domain $D_{p}$. Namely, in the statement  of Lemma \ref{est_of_shorten}, one should replace $D_{q}, C_q$ by $D_{\gamma}, C_{\gamma}$, or $D_{p}, C_{p}$. 
 It should be noted that the constants in the
estimates can be chosen uniformly with respect to the choice of $\tau_p$.
\medskip

\hskip 12 cm $\square$.

 \medskip
{\bf Proof of lemma \ref{l_improved_I}.}

Consider downward motion  along the  vertical line $\re \tau = \re\tau_c$ from the point $\tau=\tau _q$.
 For this motion, the equation for $z$ in (\ref {d_equation}) takes the form
 \begin{equation}
 \label{eq_z_vert_l}
  \frac{d z}{ds}
 =-i\Lambda_1(\dK_{\eps}(\eps t)) z +  \eps^3 O (\hat {d}_{+}^{-7/2}), \
    s= -\im t, 
 \end {equation}
 where $-i\Lambda_1(\dK_{\eps}(\eps t))$ is real and negative, bounded above by  $-c_{a,1}^{-1}\hat d_{+}^{1/2}$.
 For $|z|$,  we obtain a differential inequality
 \begin{equation}
 \label{neq_z_vert_l}
  \frac{d |z|}{ds}
 <- c_{a,1}^{-1}\hat d_{+}^{1/2}|z| + c_{a,2} \eps^3 \hat {d}_{+}^{-7/2}. \
 \end {equation}
 Consider a linear differential equation for a real variable $u$: 
 \begin{equation}
 \label{u_equation}
 \frac{d u}{ds}=-c_{a,1}^{-1}\hat d_{+}^{1/2}u+c_{a,2}\eps^3\hat {d}_{+}^{-7/2}.
 \end{equation}
 Denote by $s_0$ the value of $s$ corresponding to $\tau=\tau_q$.  Note that
 $|z(\tau_q/\eps)|<c_{e,7}\eps^2\hat {d}_{+}^{-5/2}(\tau_q)\\< c_{a,3}\eps^{1/3} C_q^{-5/2}$.  

Denote by $u(s)$ the solution of (\ref{u_equation}) with the  initial condition $u(s_0)=c_{a,3}\eps^{1/3} C_q^{-5/2}$. According to Lemma \ref {lem_compar_ODE_1},
 for $s_0 \le s\le s_0+ c_{a,4}^{-1}/\eps$,  we have
 $ |z(t)|< u(s)$.
 \medskip
 Denote  $\hat d_q=\hat d_+(\tau_q)$. Choose $s_1$ such that the change 
 in $\hat d_+$  on the interval  $[s_0, s_1]$ does not exceed $\hat d_q/2$. We may take 
  $s_1-s_0= c_{a,5}^{-1}\eps^{-1}\hat d_q$. Consider the linear differential equation with constant coefficients
 \begin{equation}
 \label{v_equation}
 \frac{d v}{ds}=-\nu v+\alpha,\
  \nu=c_{a,1}^{-1}2^{-1/2}\hat d_q^{1/2},\  \alpha=c_{a,2}(3/2)^{7/2}\eps^3\hat {d}_{q}^{-7/2}.
 \end{equation}
 Denote by $v(s)$ the solution of this equation with the initial condition $v(s_0)=u(s_0)$. For $s_0<s \le s_1$ we have $u(s)<v(s)$.
 
According to Lemma  \ref{l_aux_r_1}, for 
\begin{equation}
 \begin{aligned}
 &s\ge s_0 +\frac{1}{\nu}\left| \ln \left(\frac{\nu v(s_0)}{\alpha}\right)\right |
 = s_0 +\frac{1}{c_{a,1}^{-1}2^{-1/2}\hat d_q^{1/2}}
 \left |\ln \left( \frac {c_{a,1}^{-1}2^{-1/2} \hat d_q^{1/2}c_{e,7}\eps^2\hat {d}_{q}^{-5/2}}{c_{a,2}(3/2)^{7/2}\eps^3\hat {d}_{q}^{-7/2}}\right)\right|\\
 &=s_0 +c_{a,8}\hat d_q^{-1/2}|\ln(c_{a,7}^{-1}\eps \hat {d}_{q}^{-3/2})|
 \end{aligned}
 \end{equation}
 we have 
 $$
 v(s)<2\frac{\alpha}{\nu}=2\frac{c_{a,2}(3/2)^{7/2}\eps^3\hat {d}_{q}^{-7/2}}{c_{a,1}^{-1}2^{-1/2}\hat d_q^{1/2}}=c_{a,6}\eps^3\hat {d}_{q}^{-4}.
 $$
  We have
 $$
  c_{a,8}\hat d_q^{-1/2}|\ln(c_{a,7}^{-1}\eps \hat {d}_{q}^{-3/2})|<c_{a,9}\eps^{-1/3}C_q^{-1/2}(\ln C_q).
 $$
 Denote  $s_2=s_0+c_{a,9}\eps^{-1/3}C_q^{-1/2}(\ln C_q)$. For $s_0\le s\le s_2$ we have $0.5 \hat d_{q}<\hat d_{+}< 1.5\hat d_{q}$, if $C_q$ is sufficiently large.
 Thus,  $|z(t)|\le u(s)\le v(s)< c_{a,6}\eps^3\hat {d}_{q}^{-4}<c_{a,10}\eps^3\hat {d}_{+}^{-4}$
  for  $s=s_2$.
 
  \medskip
Consider  for $s\ge s_2$ solution $u(s)$ of the equation (\ref {u_equation}) with initial condition \\
$u(s)=c_{a,10}\eps^3\hat {d}_{+}^{-4}$ at  $s=s_2$.  
 On some time interval 
 $[s_2,s_*)$,  we have $|u(s)|<c_{a,11}\eps^3 \hat d_{+}^{-4}, \ c_{a,11}= \max (2c_{a,10}, 3c_{a,2}c_{a,1})  $.  Let $s_*$  be the first value of $s$ for which this inequality is not satisfied.   At $s=s_*$,  we have
$$
 \frac{du}{ds}=-c_{a,1}^{-1}\hat d_{+}^{1/2} c_{a,11} \eps^3 \hat d_{+}^{-4}+c_{a,2}\eps^3 \hat d_{+}^{-7/2}< -c_{a,1}^{-1}\hat d_{+}^{1/2} 2c_{a,1}c_{a,2} \eps^3 \hat d_{+}^{-4}+c_{a,2}\eps^3 \hat d_{+}^{-7/2}=-c_{a,2}\eps^3 \hat d_{+}^{-7/2}.
$$
At this value of $s$,
$$
\left |\frac{d}{ds}\left(c_{a,11}\eps^3 \hat d_{+}^{-4} \right)\right |<c_{a,12}\eps^4 \hat d_{+}^{-5}< c_{a,2}\eps^3 \hat d_{+}^{-7/2},
$$
if $c_{e,5}$ is sufficiently large. Thus,
the value
 $u(s)-c_{a,11}\eps^3 \hat d_{+}^{-4}$ decays at $s=s_*$ and, therefore, is positive just before $s_*$. This contradicts the definition of $s_*$. Therefore, the inequality $|u(s)|<c_{a,11}\eps^3 \hat d_{+}^{-4}$ is satisfied at least  while $\im \tau>-c_{a,4}^{-1}$. Then,  $ |z(t)|< u(s)< c_{a,11}\eps^3 \hat d_{+}^{-4}$  while $\im \tau>-c_{a,4}^{-1}$. 
Thus, we obtain the result of the Lemma with $c_{e,9,1}=c_{a,9},  c_{e,9,3}=c_{a,11}$ and with the constant $c_{e,9,2}$ determined by the constant $c_{a,4}$.

 \hskip 12cm $\square$

\medskip

\medskip
{\bf Proof of lemma \ref{lem_cont_D_q_r}.}

\medskip

Denote by $S(T)$ the part of the domain $D_{q,r}$  in the complex time plane where  
  $\re t_c \le \re t \le T$. We consider  the solution $z(t), \eta(t), \kappa(t)$ of  system (\ref{d_equation}).
  By Cauchy's theorem \cite{golub}, there exists $T>\re t_c$ such that  the solution $z(t), \eta(t), \kappa(t)$ can be  analytically continued  into  $S(T)$.
  
According to Lemma \ref{lem_cont_D_q_l}, on the line $\re t=\re t_c$ we have:

If $\im \tau> -c_{l,1}^{-1}$, then
\begin{equation}
|\kappa(t)-\dK_{\eps}(\eps t)|<  c_{e,6}\eps^4 \hat{ d}_{+}^{-9/2}  ,\ 
 | z(t)|<c_{e,7}\eps^2 \hat{ d}_{+}^{-5/2}, \  | \eta(t)|<c_{e,8}\eps^{3} \hat{ d}_{+}^{-3}.  \end{equation}
 
  If\quad  $-2 c_{l,1}^{-1}<  \im \tau < 2 c_{l,1}^{-1}$, then
  \begin{equation}
|\kappa(t)-\dK_{\eps}(\eps t)|=O(\eps^4 ) ,\ 
 | z(t)|=O(\eps^3), \  | \eta(t)|=O(\eps^{3}).  \end{equation}

 If  $\im \tau< c_{l,1}^{-1}$, then
\begin{equation}
|\kappa(t)-\dK_{\eps}(\eps t)|<  c_{e,6}\eps^4 \hat{ d}_{-}^{-9/2},\ 
 |z(t)|<c_{e,8}\eps^3 {\hat d}_{-}^{-3}, 
   | \eta(t)|<c_{e,8}\eps^3 \hat{ d}_{-}^{-3}.  \end{equation}
  
  According to Lemma \ref{l_improved_I}, on the line $\re t=\re t_c$ for \\
  $ \hat{ d}_{+}(\tau_q)+c_{e,9,1}\eps^{2/3}C_q^{-1/2}(\ln C_q)<\hat{ d}_{+}<c_{e,9,2}^{-1} $
    we have
 \begin{equation}
 | z(t)|<c_{e,9,3}\eps^3 \hat{ d}_{+}^{-4}
 \end{equation}
 
 \medskip
   
   We begin in the same way as in the proof of Lemma \ref{lem_cont_D_q_l}.
   
   \medskip

Denote by $O_1, O_2,  O_3, O_4$ last $O$-terms in equations  (\ref{d_equation}),   (\ref{d_equation1}). We have
\begin{equation}
|O_1({d}_{+}^{-7/2})|< c_{r,1} {d}_{+}^{-7/2}, \ |O_2({d}_{+}^{-3})|<c_{r,2} {d}_{+}^{-3},\ 
 |O_3({d}_{+}^{-1})|< c_{r,3}{d}_{+}^{-1}, \ |O_4({d}_{-}^{-3})|<c_{r,4} {d}_{-}^{-3}.
\end{equation}


 The statement of Lemma \ref{lem_cont_D_q_r} contains 
  the constants $c_{e,11}, c_{e,12}, c_{e,11,1}, c_{e,12,1}$. Their definitions are lengthy and are therefore given below in the proof of Lemma \ref {lem_in_Dqr}. The values of these constants do not depend on the value of $C_q$ provided that $C_q$ is sufficiently large. These constants will be used in estimates near the curve $\Gamma_{q, \eps}$. 
For each point $\tau_u\in \Gamma_{q, \eps}$, denote  $\tilde d_u=b\cdot(\dK(\tau_u)-\kappa_c), \hat d_u=|\tilde d_u|$.  Recall the notation from  Lemma \ref{lem_cont_D_q_r}:  $D_{q,r,d}$  (respectively,  $D_{q,r,d}'$) is the part of $D_{q,r}$ covered by the vertical segments of length less than or equal to (respectively, equal to) $c_{e,12} \eps \hat d_u^{-1/2}|\ln( c_{e,11}^{-1}\eps \hat d_u^{-3/2}C_q^{15/16})|$ drawn downward from all points
$\tau_u$; $\bar D_{q,r,d}$ and  $\bar  D_{q,r,d}' $   are the domains complex conjugate to   $ D_{q,r,d}$ and   $D_{q,r,d}' $, respectively.

Recall the notation:  $\Gamma_{q,r,d}$  (respectively,   $ \Gamma_{q,r,d}'$)  is the lower boundary of the domain $D_{q,r,d}$ (respectively, of the domain $D_{q,r,d}'$);   $Q_1$ is the point of intersection of the curve $\Gamma_{q,r,d}$ with the real axis in $\tau$-plane.

Recall the notation: $\tilde D_{q,r,d}$ (respectively,   $ \tilde D_{q,r,d}'$) is the part of $D_{q,r,d}$ covered by the vertical segments  of length less than or equal to (respectively, equal to) \\ $c_{e,12,1} \eps \hat d_u^{-1/2}|\ln( c_{e,11,1}^{-1}\eps \hat d_u^{-3/2}C_q^{15/16})|$ drawn downward from all points $\tau_u\in \Gamma_{q,\eps}$;   $\tilde \Gamma_{q,r,d}$ (respectively,   $ \tilde \Gamma_{q,r,d}'$) is the lower boundary of the domain $\tilde D_{q,r,d}$ (respectively, of the domain $\tilde D_{q,r,d}'$).




   \medskip
   
Rewrite system (\ref{d_equation}) in the domain  $W^{+}_{\delta}$ in the form
 \begin{equation}
\begin{aligned}
\label{with_beta_+r}
     &\dot z=  \Lambda_1(\dK_{\eps})z+ ( \Lambda_1(\kappa)-\Lambda_1(\dK_{\eps}))z + \beta_1+\eps^3O_1(d_{+}^{-7/2}),\\
  &\dot \eta=B(\dK_{\eps})\eta + (B(\kappa)-B(\dK_{\eps}))\eta +\beta_2+\eps^3O_2(d_{+}^{-3}),\\
   & \dot {\kappa}=\eps F(\kappa) +\eps^2 O(|z|^2d_+^{-2})+\eps^2 O(|\xi|_*^2d_+^{-3/2}) \\
 &
 +\eps O\left(|z|^4+|z|^5d_+^{-2}+ |\xi|_*^3 d_+^{-1}\right)
 +\eps^3 O(|z|d_+^{-3} + |\xi|_*d_+^{-3/2})+\eps^4O(d_+^{-7/2}|\xi|),\\
  &\hskip 1cm F= g(X(\kappa), \kappa,0)+\eps O_3( d_{+}^{-1}).
\end{aligned}
\end{equation}
In the domain $W^{-}_{\delta}$, we have analogous  equations with  $z$ and $w$ interchanged,  $d_+$ replaced by $d_-$, and $\Lambda_1$ replaced by
$\Lambda_2$.

Rewrite equation for $z$  in the domain  $W^{-}_{\delta}$ in the form
\begin{equation}
\begin{aligned}
\label{with_beta_-r}
\dot z=  \Lambda_1(\dK_{\eps})z+  ( \Lambda_1(\kappa)-\Lambda_1(\dK_{\eps}))z + \beta_4
     +\eps^3O_4(d_{-}^{-3}).
     \end{aligned}
\end{equation}

 \medskip
 Define $\hat d_{+,q}=\hat d_+(\tau_q)$,  $\hat d_{-,q}=\hat d_-(\bar\tau_q)$.
 Thus, $\hat d_{+,q}=\hat d_{-,q}$

 \medskip
  Take $T$ such that for $\eps t\in S(T) $  we have: 
  \begin{equation}
  \label{induct_xi_1/3}
   |\xi(t)|< c_{t,5}\eps^{1/3},
\end{equation}
   if $\tau \in  D_{q,r,d}$, then
   \begin{equation}
   \label{induct_D_r_d}
     |\eta(t)|< \eps^{17/6}\hat d_{+,q}^{-3}, \
   \ |w(t)|<\eps^{17/6}\hat  d_{+,q}^{-3}, \ |\kappa(t)- \dK_{\eps}(t)|< \mu_1 \eps^{4}\hat d_{+,q}^{-9/2}, \ 0.5 \hat d_{+}(\eps t)   \le d_{+}(\kappa(t)) \le 2 \hat d_{+}(\eps t),
   \end{equation}

  additionally,  if $\tau \in D_{q,r,d}\setminus \tilde D_{q,r,d}$, 
   then
 \begin{equation}
   \label{induct_tilde_D_r_d}
     |\eta(t)|< \eps^{17/6}\hat d_{+}^{-3}, |z(t)|<\mu_2(\eps^{11/6}C_q^{-6}\hat {d}_{+}^{-1/2}+\eps^3 \hat {d}_{+}^{-4}),
   \ |w(t)|<\eps^{17/6}\hat  d_{+}^{-3}, \      \end{equation}
   
   \medskip
    additionally,   if $\tau\in  D_{q,r,d}'\setminus  \tilde D_{q,r,d}$,  then 
    \begin{equation}
   \label{induct_mu_1_r}
     |\kappa(t)- \dK_{\eps}(t)|< \mu_1 \eps^{4}\hat d_{+}^{-9/2},
\end {equation}
    
  \vskip 0.3 cm
    additionally,  if $\tau \in \Gamma_{q,r,d}'$, then 
     \begin{equation}
   \label{induct_Gamma_r_d}
      |\eta(t)|< \eps^{17/6}\hat d_{+}^{-3}, \
   \ |w(t)|<\eps^{17/6}\hat  d_{+}^{-3}, |\kappa(t)- \dK_{\eps}(t)|< \mu_1 \eps^{4}\hat d_{+}^{-9/2},
   \end{equation}

   if  $\tau \in  (D_{q,r}\setminus ( D_{q,r,d}\cup\bar D_{q,r,d})\cap \{\im \tau\ge -c_{l,1}^{-1} \}$, then
   
   \begin{equation}
  \begin{aligned}
  \label{induct_D_r}
 &|\beta_1|< 0.5\eps^3 c_{r,1} {d}_{+}^{-7/2}, \ |\beta_2| < 0.5 \eps^3 c_{r,2} {d}_{+}^{-3}, 
  \end{aligned}
  \end{equation}
 and
 \begin{equation}
  \begin{aligned}
\label {induct_D_r_kappa}
 &|\kappa(t)- \dK_{\eps}(t)|< \mu_1 \eps^{4}\hat d_{+}^{-9/2},\\
 &  0.5 \hat d_{+}(\eps t)   \le d_{+}(\kappa(t)) \le 2 \hat d_{+}(\eps t). 
 \end{aligned}
  \end{equation}
  
  \bigskip
    if  $\tau \in  (D_{q,r}\setminus ( D_{q,r,d}\cup\bar D_{q,r,d})\cap \{\im \tau\le c_{l,1}^{-1} \}$, then
  \begin{equation}
  \begin{aligned}
   \label{induct_D_r_4}
    |\beta_4| < 0.5 \eps^3 c_{r,4}   {d}_{-}^{-3},
   \end{aligned}
  \end{equation}
  Here $\mu_1, \mu_2$ are  positive constants whose values do not depend on the choice of the constant $C_q$, provided that  $C_q$ is sufficiently large. The values $\mu_1, \mu_2$ are determined after the statement of Lemma \ref {est_of_shorten_r}.

 
  Note that  inequalities (\ref{induct_xi_1/3}) - (\ref{induct_D_r_4})  are certainly satisfied for sufficiently small  $T-\re(t_c)>0$. Also note that $\kappa(t)=\overline{\kappa(\bar t)}, \eta(t)=\overline{\eta(\bar t)}, \dK_{\eps}(t)=\overline{\dK_{\eps}(\bar t)}$.
  
  \medskip

  Assumption   (\ref{induct_D_r_kappa}) implies  that
  \begin{equation*}
  \begin{aligned}
  &|\Lambda_1(\kappa(t))-\Lambda_1(\dK_{\eps}(\eps t))||z|=\mu_1 \eps^4O(\hat d_{\pm}^{-5})\eps^{1/3}<0.1c_{r,1}\eps^3\hat d_{\pm}^{-7/2},\\
&|B(\kappa(t))-B(\dK_{\eps}(\eps t))||\eta|=\mu_1 \eps^4O(\hat d_{\pm}^{-5})\eps^{1/3}<0.1c_{r,2}\eps^3\hat d_{\pm}^{-3}\\
&\mbox{ (``+'' for $\im \tau\ge -c_{l,1}^{-1}$,
 ``-'' for $\im \tau\le c_{l,1}^{-1}$).}
\end{aligned}
\end{equation*}
(Here we used the estimate $|\xi| <c_{t,3}\eps^{1/3}$.)

Thus, equations (\ref{with_beta_+r}) have the form
 \begin{equation}
\begin{aligned}
\label{a_shorten_+r}
     &\dot z=  \Lambda_1(\dK_{\eps})z+\eps^3\tilde O_1(d_{+}^{-7/2}),\\
  &\dot \eta=B(\dK_{\eps})\eta + \eps^3 \tilde O_2( d_+^{-3})
  \end{aligned}
\end{equation}
with $|\tilde O_1(d_{+}^{-7/2})|< 2c_{r,1}d_{+}^{-7/2},\ |\tilde O_2(d_{+}^{-3})|< 2c_{r,2}d_{+}^{-3}$.

Equation (\ref{with_beta_-r}) has  the form
\begin{equation}
\begin{aligned}
\label{a_shorten_-r}
\dot z=  \Lambda_1(\dK_{\eps})z+\eps^3\tilde O_4(d_{-}^{-3})
     \end{aligned}
\end{equation}
with $|\tilde O_4(d_{-}^{3})|< 2c_{r,4}d_{-}^{-3}$.

\medskip

\begin{lem}
\label{lem_0.5_2}
Consider in the domain $D_{q,r}$ a segment of vertical line drawn downward from a point on $\Gamma_{q, \eps}$.    Let $\hat d_u$ denote the value of   $\hat d_{+}$ at the upper endpoint of this segment. For sufficiently large $C_q$, if the length of this segment is less than or equal to $c_{r,5}^{-1}\hat d_u$, then on this segment 
$$
0.5\hat d_u <\hat d_{+}< 2\hat d_u.
$$
\end{lem}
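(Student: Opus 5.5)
The plan is to use the fact that $\hat d_+(\tau)=|b\cdot(\dK(\tau)-\kappa_c)|$ is Lipschitz in $\tau$ with a uniform constant. By definition, $\hat d_+$ depends only on the slow solution $\dK$ on the segment; it involves neither the exact solution nor $\dK_\eps$. Recall that $\dK$ is analytic in $K_*$ and continuous up to $\partial K_*$ (condition C). Also $d\dK/d\tau=g(X(\dK),\dK,0)$, which is bounded by a constant $k_1$ on $K_*\cup\partial K_*$, since $X(\dK(\tau))$ is bounded there (Lemma \ref{FG} near $\tau_c$; compactness elsewhere). Hence for any two points $\tau',\tau''$ of the segment
$$
|\hat d_+(\tau')-\hat d_+(\tau'')|\le |b\cdot(\dK(\tau')-\dK(\tau''))|\le |b|\,k_1|\tau'-\tau''|.
$$

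The first thing to check is that the segment stays in the region where this bound applies. The segment starts at $\tau_u\in\Gamma_{q,\eps}$ and goes vertically downward inside $D_{q,r}$. We have $D_q\subset K_*\cup\partial K_*$ up to an $O(\eps|\ln\eps|)$ shift (Lemmas \ref{fourth_line}, \ref{lem_Gamma_q}). Moreover $\im\tau\le\im\tau_q<\im\tau_c-\delta$ on the segment. Therefore the segment lies in the $c_{l,5}^{-1}\eps^{2/3}$-neighbourhood of $K_{*,\delta}$, where $\dK$ is defined and $|d\dK/d\tau|=|G(\dK)|\le k_1$. Note that $G$ itself is bounded; only its $\kappa$-derivative carries the factor $d_+^{-1/2}$. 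If a small part of the segment passes near the real axis, the bound $|d\dK/d\tau|\le k_1$ still holds there, since $\dK$ is analytic on $K_*$.

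With this, the conclusion is immediate. If the segment has length $\ell\le c_{r,5}^{-1}\hat d_u$, then at every point of it
$$
|\hat d_+-\hat d_u|\le |b|k_1\ell\le |b|k_1c_{r,5}^{-1}\hat d_u\le \tfrac12\hat d_u
$$
once $c_{r,5}\ge 2|b|k_1$. This gives $0.5\hat d_u<\hat d_+<1.5\hat d_u<2\hat d_u$; the strict left inequality follows by taking $c_{r,5}>2|b|k_1$.

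The role of "sufficiently large $C_q$" is only to guarantee the containment in the first step. Namely, $\hat d_u\ge\hat d_+(\tau_q)\sim C_q\eps^{2/3}$ stays above the $\delta$-scale, so the segment does not approach $\tau_c$ and leaves neither $V_\delta^+$ nor the domain of $\dK$. I expect this bookkeeping of domains to be the only delicate point; the estimate itself is elementary.
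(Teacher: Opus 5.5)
Your proposal is correct and is essentially the paper's proof. The paper likewise uses a uniform Lipschitz bound $|b\cdot(\dK_{\eps}(\tau_u)-\dK_{\eps}(\tau))|\le c_{a,1}|\tau_u-\tau|$ along the segment and takes $c_{r,5}=2c_{a,1}$; it writes the bound for $\dK_{\eps}$ rather than $\dK$, which makes no difference. Your extra bookkeeping (containment of the segment in the domain of $\dK$, and choosing $c_{r,5}>2|b|k_1$ to get the strict inequality) is a harmless refinement of the paper's one-line argument.
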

\begin{lem}
\label{lem_about_width}
For sufficiently large $C_q$, the vertical width of the domain $D_{q,r,d}$  satisfies the conditions of Lemma \ref{lem_0.5_2}, namely
        $$c_{e,12} \eps \hat d_u^{-1/2}|\ln( c_{e,11}^{-1}\eps \hat d_u^{-3/2}C_q^{15/16})|< c_{r,5}^{-1}\hat d_u .$$
\end{lem}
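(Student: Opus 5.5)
The plan is to reduce the inequality to two elementary estimates: a lower bound for $\hat d_u$ in terms of $\eps^{2/3}C_q$, and an upper bound for the logarithm. Write $L(\hat d_u)=c_{e,12}\eps \hat d_u^{-1/2}|\ln( c_{e,11}^{-1}\eps \hat d_u^{-3/2}C_q^{15/16})|$. Then the claimed inequality $L(\hat d_u)<c_{r,5}^{-1}\hat d_u$ is equivalent to
$$
c_{e,12}c_{r,5}\,\eps\,\hat d_u^{-3/2}\,\bigl|\ln( c_{e,11}^{-1}\eps \hat d_u^{-3/2}C_q^{15/16})\bigr|<1 .
$$
Put $y=\eps\hat d_u^{-3/2}$. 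The whole statement then becomes a bound on the function $y|\ln(c_{e,11}^{-1}yC_q^{15/16})|$ over the range of $y$ realised on $\Gamma_{q,\eps}$.

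\textbf{Step 1 (range of $\hat d_u$).} Every point $\tau_u\in\Gamma_{q,\eps}$ with $\re\tau_u\ge\re\tau_c$ satisfies $\hat d_u\ge k^{-1}\hat d_+(\tau_q)\ge k^{-1}C_{q,1}\eps^{2/3}$, where $C_{q,1}\sim C_q$. To see this, I would use Lemma~\ref{lem_for_domain_V} and the proof of Lemma~\ref{lem_Gamma_q}: along $\Gamma_q$ the distance to $\tau_c$ is minimal (up to a constant factor) at $\tau_q$, and by Lemma~\ref{fourth_line} together with Lemma~\ref{K_and_K_eps} the curve $\Gamma_{q,\eps}$ differs from $\Gamma_q$ by a negligible amount compared with $\eps^{2/3}C_q$. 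I would also use $\hat d_+\asymp|\tau-\tau_c|$, which follows from condition G. In the other direction $\hat d_u=O(1)$. Hence
$$
y\in\bigl[\,k_1^{-1}\eps,\ k_1C_q^{-3/2}\,\bigr].
$$

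\textbf{Step 2 (estimate of $y|\ln(\cdot)|$).} I would split the range of $y$ into two parts.

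For $y$ near the upper end, where $c_{e,11}^{-1}yC_q^{15/16}\le 1$ (possible since $y\lesssim C_q^{-3/2}$, so the argument is $\lesssim C_q^{-9/16}$), the function $y|\ln(c_{e,11}^{-1}yC_q^{15/16})|$ is increasing in $y$ on the region $y\le e^{-1}c_{e,11}C_q^{-15/16}$. It is therefore bounded by its value at $y=k_1C_q^{-3/2}$, which is $O(C_q^{-3/2}\ln C_q)$. For $y$ small, $y\sim\eps$, the bound is $O(\eps|\ln\eps|)$. The monotonicity of $y|\ln(ay)|$ on $(0,(ea)^{-1})$ covers the whole interval at once, so
$$
\sup_y \,y\,|\ln(c_{e,11}^{-1}yC_q^{15/16})|\le k_2\,C_q^{-3/2}\ln C_q .
$$
This requires $k_1C_q^{-3/2}\le e^{-1}c_{e,11}C_q^{-15/16}$, which holds once $C_q$ is large. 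Here it matters that the constants $c_{e,11},c_{e,12},c_{r,5}$ do not depend on $C_q$, as stated before Lemma~\ref{lem_0.5_2}.

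\textbf{Step 3 (conclusion).} It then remains to choose $C_q$ so large that $c_{e,12}c_{r,5}k_2C_q^{-3/2}\ln C_q<1$. This gives the required inequality uniformly in $\tau_u$ and for all small $\eps$.

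The main obstacle is Step 1: proving the uniform lower bound $\hat d_u\gtrsim C_q\eps^{2/3}$ along the perturbed curve $\Gamma_{q,\eps}$, including near $\tau_q$. I would handle it by comparing $\re\Psi_\eps$ with $\re\psi_a$ near $\tau_c$. On a disc of radius $\sim C_q\eps^{2/3}$ about $\tau_c$, the level set $\re\psi_a=\re\psi_a(\tau_q)$ stays at distance $\asymp C_q\eps^{2/3}$ from $\tau_c$ by homogeneity of $\psi_a$ of degree $3/2$. The correction $\re(\Psi_\eps-\psi_a)$ is $O(|\tau-\tau_c|^2+\eps|\ln\eps|)$, which is small relative to $(C_q\eps^{2/3})^{3/2}=C_q^{3/2}\eps$ when $C_q$ is large. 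Therefore the level curve $\Gamma_{q,\eps}$ cannot come closer to $\tau_c$ than a fixed fraction of $C_q\eps^{2/3}$.
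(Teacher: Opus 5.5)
Your Steps 2--3 are the paper's proof. The paper also rewrites the inequality as $c_{r,5}c_{e,12}\,y\,|\ln(c_{e,11}^{-1}yC_q^{15/16})|<1$ with $y=\eps\hat d_u^{-3/2}$, uses monotonicity in $y$ to put the worst case at the smallest value $\hat d_u=\eps^{2/3}(C_{q,1}+o(1))$, and concludes from $C_{q,1}\sim C_q$. The paper takes the range $\eps^{2/3}(C_{q,1}+o(1))\le\hat d_u\le c_{a,1}^{-1}$ for granted and never says on which interval the function is monotone. Your check that $k_1C_q^{-3/2}\le e^{-1}c_{e,11}C_q^{-15/16}$ for large $C_q$ is therefore a correct refinement.

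What needs fixing is your justification of Step 1, which the paper does not attempt.

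\begin{itemize}
\item You bound $\re(\Psi_\eps-\psi_a)$ by $O(|\tau-\tau_c|^2+\eps|\ln\eps|)$ and call this small compared with $C_q^{3/2}\eps$ for large $C_q$. The $\eps|\ln\eps|$ term is not small: $C_q$ is fixed while $\eps\to0$, so $|\ln\eps|/C_q^{3/2}$ is unbounded.
\item Moreover, $\Psi_\eps$ is based at $\tau_*$ and $\psi_a$ at $\tau_c$, so $\Psi_\eps-\psi_a$ also contains an $O(1)$ constant.
\end{itemize}

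Additive constants do not move level sets. What you need is the oscillation of $\re(\Psi_\eps-\psi_a)$ between $\tau_q$ and the points of a disc $|\tau-\tau_c|<mC_q\eps^{2/3}$ ($m$ small and fixed) that lie below $\im\tau_c-\delta$. Use a horizontal-then-vertical path of length $\lesssim C_q\eps^{2/3}$ from $\tau_q$. Along it,
$\Lambda_1(\dK_\eps)-d\psi_a/d\tau=O(|\tau-\tau_c|)+O(\eps|\ln\eps|\hat d_+^{-1/2})+O(\eps\hat d_+^{-1})$, by Lemmas \ref{K_and_K_eps} and \ref{lem_transform_1}. Integrating gives $O(C_q^2\eps^{4/3})+O(C_q^{1/2}\eps^{4/3}|\ln\eps|)+O(\eps\ln(C_q/C_\delta))$, which is $\ll C_q^{3/2}\eps$ for $C_q$ large and $\eps$ small. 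With this bound your homogeneity argument goes through.

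Your appeal to Lemma \ref{fourth_line} does not help here, since it only controls where the curves cross the real axis.
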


\begin{lem}
\label{lem_closeness}
By choosing $C_q$ sufficiently large,   the tangent directions of the curve   $\tilde\Gamma_{q,r,d}' $  and of the curve $\re \Psi_{\eps}={\rm const}$  passing through the same point can be made arbitrary close to each other. The same holds for the curve $\Gamma_{q,r,d}' $.
\end{lem}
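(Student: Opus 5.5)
We need to prove Lemma \ref{lem_closeness}: the tangent directions of $\tilde\Gamma_{q,r,d}'$ (and of $\Gamma_{q,r,d}'$) are close to those of the level curves $\re\Psi_\eps=\mathrm{const}$ through the same point, once $C_q$ is large.

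\emph{Parametrisation.} Parametrise $\Gamma_{q,\eps}$ by $\re\tau$ in $D_{q,r}$. Assumption 6) (non-vertical tangents) is stable under the $O(\eps\ln\eps)$ perturbation $\Psi\to\Psi_\eps$, so this is legitimate. Write $\Gamma_{q,\eps}$ as a graph $y=h(x)$ and $\tilde\Gamma_{q,r,d}'$ as $y=h(x)-\ell(x)$, where
$$\ell(x)=c_{e,12,1}\,\eps\,\hat d_u^{-1/2}\bigl|\ln\bigl(c_{e,11,1}^{-1}\eps\hat d_u^{-3/2}C_q^{15/16}\bigr)\bigr|,\qquad \hat d_u=\hat d_+(x+ih(x)).$$
The slope of $\tilde\Gamma_{q,r,d}'$ is then $h'(x)-\ell'(x)$. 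The slope of the level curve of $\re\Psi_\eps$ through the lower point $x+i(h(x)-\ell(x))$ is $-\partial_x\re\Psi_\eps/\partial_y\re\Psi_\eps$ evaluated there. Since $\Gamma_{q,\eps}$ is itself a level curve, $h'(x)$ equals the same expression evaluated at the upper point. So two quantities must be shown small, and small relative to the scale $\hat d_u^{1/2}$ set by $\Lambda_1$, which is what makes the slope bounded.

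\emph{First difference: $|\ell'(x)|$.} Using $\partial\hat d_+/\partial\tau=O(1)$ (from the expansion $\dK=\kappa_c+g_c(\tau-\tau_c)+\dots$ and Lemma \ref{lem_for_domain_V}), we get
$$|\ell'|\le k\,\eps\,\hat d_u^{-3/2}\bigl(|\ln(\eps\hat d_u^{-3/2}C_q^{15/16})|+1\bigr).$$
On $D_{q,r}$ we have $\hat d_u\ge c\,C_q\eps^{2/3}$, so $\eps\hat d_u^{-3/2}\le kC_q^{-3/2}$. The product $u|\ln(uC_q^{15/16})|$ with $u\le kC_q^{-3/2}$ is bounded by $kC_q^{-3/2}\ln C_q$. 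This tends to $0$ as $C_q\to\infty$, uniformly in $x$. (For $\hat d_u\sim1$ the bound is $O(\eps|\ln\eps|)$, which is smaller still.)

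\emph{Second difference: slope of the $\re\Psi_\eps$ level curve across the vertical offset $\ell$.} Write the gradient of $\re\Psi_\eps$ via $\Lambda_1(\dK_\eps(\tau))$; its $\tau$-derivative is $O(\hat d_+^{-1/2})$. Hence the direction of $\overline{\Lambda_1}$ rotates by at most
$$O\bigl(\ell\cdot\hat d_u^{-1/2}/|\Lambda_1|\bigr)=O(\ell\,\hat d_u^{-1}).$$
This uses $|\Lambda_1|\sim\hat d_u^{1/2}$ by Lemma \ref{FG}, and Lemma \ref{lem_0.5_2} together with Lemma \ref{lem_about_width}, which keep $\hat d_+$ comparable to $\hat d_u$ along the segment. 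Moreover $\ell\,\hat d_u^{-1}\le kC_q^{-3/2}\ln C_q$ by the same computation as above. Since the level-curve direction is orthogonal to $\overline{\Lambda_1}$, both angle differences are $O(C_q^{-3/2}\ln C_q)$. The argument for $\Gamma_{q,r,d}'$ is identical with $c_{e,11},c_{e,12}$ in place of $c_{e,11,1},c_{e,12,1}$.

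\emph{Expected obstacle.} The main difficulty is uniformity near the left end $x\approx\re\tau_c$, where $\hat d_u\sim C_q\eps^{2/3}$. There the level curves bend at the $120^\circ$ corner, and $|\Lambda_1|$ is small, so angle estimates must be normalised by $\hat d_u^{1/2}$ rather than by $1$. I would handle this by passing to rescaled variables $\tau-\tau_c=\eps^{2/3}\hat\tau$ and comparing with the model phase $\psi_a$. In the rescaled picture, $\Gamma_{q,\eps}$ stays at distance $\sim C_q$ from the vertex, and the offset becomes $O(C_q^{-1/2}\ln C_q)$, which is small relative to that distance. The curvature of the model level curves is $O(C_q^{-1})$ there, so the same conclusion follows.
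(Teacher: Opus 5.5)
Your proposal is correct and follows essentially the same route as the paper. The paper likewise splits the discrepancy into two parts: the contribution of the derivative of the vertical offset $\rho$ along $\Gamma_{q,\eps}$, and the rotation of the level-curve direction $i\overline{\Lambda_1}/|\Lambda_1|$ between $\tau_u$ and $\tau_u-i\rho$. It bounds both by $O(\eps\hat d_u^{-3/2}|\ln(\eps\hat d_u^{-3/2}C_q^{15/16})|)$ with $\eps\hat d_u^{-3/2}=O(C_q^{-3/2})$, differing only in that it uses arc length and complex unit tangents rather than slopes over $\re\tau$. Your rescaling remark near $\re\tau_c$ is unnecessary: both bounds are already normalised by $\hat d_u$ and are therefore uniform down to $\hat d_u\sim C_q\eps^{2/3}$.
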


\begin{lem}
\label{est_of_shorten_r}

For  $\tau \in S(T)$, we have the following estimates.

 \bigskip

 If $\tau\in  D_{q,r,d}$,    then

 $|z(t)|< c_{r,6}  \eps^2 \hat d_{+,q}^{-5/2},\ |\eta (t)|<   c_{r,7} \eps^3 \hat d_{+,q}^{-3},\ |w(t)|<c_{r,7} \eps^3 \hat d_{+,q}^{-3}, 
 |\kappa(t)-\dK_{\eps}(\eps t)|<  c_{r,8}\eps^4 \hat{ d}_{+,q}^{-9/2};$ 
 

 \medskip
additionally, if $\tau \in D_{q,r,d}\setminus \tilde D_{q,r,d}$,  
 then

 $|z(t)|< c_{r,9} (\eps^{11/6}C_q^{-6}\hat {d}_{+}^{-1/2}+\eps^3 \hat {d}_{+}^{-4}),\
  |\eta(t)|<  c_{r,10}\eps^{3}\hat d_{+}^{-3}, \
  |w(t)|<c_{r,10}  \eps^{3}\hat  d_{+}^{-3}$;
  
  \medskip
  additionally, if $\tau \in D_{q,r,d}'\setminus \tilde D_{q,r,d}$,  
   then
   $  |\kappa(t)-\dK_{\eps}(\eps t)|<  c_{r,11}\eps^4 \hat{ d}_{+}^{-9/2}$; 
 
\bigskip

additionally, if   $\tau \in  \Gamma_{q,r,d}'$,  then  
 
 $|z(t)|< c_{r,12} \eps^{3}\hat d_{+}^{-4}$.

\bigskip
 
  If $\tau \in  \bar D_{q,r,d}$, then
   
   $|z(t)|< c_{r,7}  \eps^3 \hat d_{-,q}^{-3},\ |\eta (t)|<   c_{r,7} \eps^3 \hat d_{-,q}^{-3},\ |w(t)|<c_{r,6} \eps^2 \hat d_{-,q}^{-5/2}, \\
 |\kappa(t)-\dK_{\eps}(\eps t)|<  c_{r,8}\eps^4 \hat{ d}_{-,q}^{-9/2}$;
    
    \bigskip
    additionally,    if $\tau\in  \bar D_{q,r,d}\setminus \overline {\tilde D}_{q,r,d}$,  then      
      $|z(t)|< c_{r,13} \eps^3 \hat{ d}_{- }^{-3},\    |\eta(t)|< c_{r,13} \eps^{3}\hat{ d}_{- }^{-3}, \\ 
     |\kappa(t)-\dK_{\eps}(\eps t)|<  c_{r,14}\eps^4 \hat{ d}_{-}^{-9/2} $.

    \bigskip

 If $\tau \in (D_{q,r} \setminus  ( D_{q,r,d}\cup \bar  D_{q,r,d})) \cap  \{\im\tau >-c_{l,1}^{-1}\}$, then 

 $|z(t)|< c_{r,15} \eps^{3}\hat{ d}_{+}^{-4}, \ | \eta(t)|<c_{r,16} \eps^{3} \hat{ d}_{+}^{-3}, \  
 |\kappa(t)-\dK_{\eps}(\eps t)|<  c_{r,17}\eps^4 \hat{ d}_{+}^{-9/2} . $   
 
 \bigskip
 If $\tau \in (D_{q,r} \setminus  ( D_{q,r,d}\cup \bar  D_{q,r,d})) \cap  \{\im\tau >-c_{l,1}^{-1}\}$, then 

 $|z(t)|< c_{r,16} \eps^{3}\hat{ d}_{-}^{-3}, \ | \eta(t)|<c_{r,16} \eps^{3} \hat{ d}_{-}^{-3}, \  |\kappa(t)-\dK_{\eps}(\eps t)|<  c_{r,17}\eps^4 \hat{ d}_{-}^{-9/2} . $  
     
     \bigskip 
 The constants $ c_{r,6}, c_{r,7}, \ldots, c_{r,17}$ do not depend on the value $T$ or on the choice of the constants $\mu_1, \mu_2 $.
\end{lem}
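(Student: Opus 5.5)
The bounds of Lemma \ref{est_of_shorten_r} are obtained inside the bootstrap set up in the proof of Lemma \ref{lem_cont_D_q_r}. On $S(T)$ the assumptions (\ref{induct_xi_1/3})--(\ref{induct_D_r_4}) are in force, so the system reduces to the shortened linear equations (\ref{a_shorten_+r}), (\ref{a_shorten_-r}). Their forcing terms are $\eps^3\tilde O_1(d_+^{-7/2})$ for $z$ and $\eps^3\tilde O_2(d_+^{-3})$ for $\eta$, with $d_\pm$ comparable to $\hat d_\pm$. Both equations are integrated by variation of constants along suitable paths, with data on the line $\re\tau=\re\tau_c$ supplied by Lemmas \ref{lem_cont_D_q_l} and \ref{l_improved_I}. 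The quantity $w$ is not integrated at all: it is read off from $w(t)=\overline{z(\bar t)}$, so every bound on $w$ in $D_{q,r,d}$ is the conjugate of a bound on $z$ in $\bar D_{q,r,d}$, and conversely. The constants that come out depend only on the constants $c_{r,1},\dots,c_{r,4}$ and on the initial data. They do not depend on $\mu_1,\mu_2$ or $T$, because $\mu_1,\mu_2$ enter only through the smallness conditions that produce the shortened equations.

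Away from $D_{q,r,d}\cup\bar D_{q,r,d}$ I will integrate $z$ along horizontal segments from $\re\tau=\re\tau_c$, or along paths that are nearly level curves of $\re\Psi_\eps$; on such paths $\re\int\Lambda_1$ does not increase. The comparison argument of Lemma \ref{l_improved_I} applies: the solution relaxes at rate $\hat d_+^{1/2}$ toward the quasi-equilibrium value $\alpha/\nu\sim\eps^3\hat d_+^{-4}$. This gives $|z|<c\eps^3\hat d_+^{-4}$ wherever the initial value $\eps^2\hat d_+^{-5/2}$ coming from Lemma \ref{lem_cont_D_q_l} has had a slow-time length of order $\eps\hat d_+^{-1/2}|\ln(\eps\hat d_+^{-3/2})|$ in which to decay. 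The $\eta$ equation is treated the same way using condition 5): the eigenvalues of $B$ have a uniformly negative real part along the paths, so $|\eta|<c\eps^3\hat d_+^{-3}$. For $\kappa$ I will integrate the equation for $\kappa-\dK_\eps$ with the bounds already obtained substituted in. The right-hand side is then $O(\eps^4\hat d_+^{-11/2})$-type, and a Gronwall estimate with Lipschitz constant $\eps\hat d_+^{-1/2}$ yields $\eps^4\hat d_+^{-9/2}$. In $\bar D$ the analogous argument with $d_-$ gives the bounds with $\hat d_-^{-3}$.

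Inside $D_{q,r,d}$ the paths descend vertically from $\Gamma_{q,\eps}$, where $\re\Psi_\eps$ equals its value at $\tau_q$, so $z$ grows at most by the factor $\exp(\eps^{-1}\re(\Psi_\eps(\tau_q)-\Psi_\eps))=O(1)$. This gives the uniform bound $c\eps^2\hat d_{+,q}^{-5/2}$, with $\hat d_+$ frozen near $\hat d_u$ by Lemmas \ref{lem_0.5_2} and \ref{lem_about_width}. Below $\tilde D_{q,r,d}$, the additional vertical descent of order $\eps\hat d_u^{-1/2}|\ln(\eps\hat d_u^{-3/2}C_q^{15/16})|$ damps the homogeneous part by the factor $(\eps\hat d_u^{-3/2}C_q^{15/16})^{c}$. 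That factor is what produces the term $\eps^{11/6}C_q^{-6}\hat d_+^{-1/2}$, added to the forced part $\eps^3\hat d_+^{-4}$. On $\Gamma'_{q,r,d}$ the damping is complete and only the forced part remains. The $\kappa$ estimate on $D'_{q,r,d}\setminus\tilde D_{q,r,d}$ follows by integrating along these vertical segments, using Lemma \ref{lem_closeness} to keep them transversal to the level curves.

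The main obstacle is the bookkeeping of exponents near $\Gamma_{q,\eps}$. There $|z|$ is only $\eps^{1/3}C_q^{-5/2}$, far larger than its quasi-equilibrium value. The constants $c_{e,11},c_{e,12}$ (and $c_{e,11,1},c_{e,12,1}$) have to be chosen so that the logarithmic widths in the definitions of $D_{q,r,d}$ and $\tilde D_{q,r,d}$ give the exact powers $\eps^{11/6}C_q^{-6}$ and $\eps^{17/6}$ needed to close the bootstrap. I would fix these constants first from the explicit solution of the comparison equation (\ref{v_equation}), and only afterwards check that they are independent of $C_q$.
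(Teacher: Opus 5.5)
There is a genuine gap in your first step. Your overall architecture matches the paper: a continuity bootstrap, comparison with Lemma \ref{l_aux_r_1} along vertical descents, a Lyapunov function for $\eta$, and $w$ obtained by conjugation. But you assert that on all of $S(T)$ the bootstrap hypotheses reduce the system to the shortened equations (\ref{a_shorten_+r}), (\ref{a_shorten_-r}), and they do not. The bounds (\ref{induct_D_r})--(\ref{induct_D_r_4}) on $\beta_1,\beta_2,\beta_4$, and the $\hat d_\pm$-dependent bound on $\kappa-\dK_\eps$, are imposed only outside $D_{q,r,d}\cup\bar D_{q,r,d}$. They cannot hold inside, because there $|z|$ is of order $\eps^{1/3}C_q^{-5/2}$, and in $\bar D_{q,r,d}$ so is $|w|=|z(\bar t)|$. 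For example, on $\Gamma_{q,\eps}$ where $\hat d_+\sim1$, one has $\eps|z|^2d_+^{-3/2}\sim\eps^{5/3}C_q^{-5}$. The term $|\Lambda_1(\kappa)-\Lambda_1(\dK_\eps)|\,|z|$ can only be bounded by $\mu_1\eps^{4/3}C_q^{-7}$. Both are far above $\eps^3d_+^{-7/2}$.

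Since $\Gamma_{q,\eps}$ is a level curve of $\re\Psi_\eps$ of fast-time length $\sim\eps^{-1}$ along which nothing is damped, your growth factor $\exp(\eps^{-1}\re(\Psi_\eps(\tau_q)-\Psi_\eps))=O(1)$ belongs to the wrong equation. In the true equation these terms act as multiplicative perturbations. Their accumulated relative effect is of size $\sup|z|\,\hat d_{+,q}^{-1/2}+\eps^{-1}\sup|\kappa-\dK_\eps|$, plus smaller terms. In turn, $\sup|\kappa-\dK_\eps|$ is fed by $z$ through the $z$-dependent terms of the $\dot\kappa$ equation in (\ref{d_equation}). The paper closes this loop by estimating the two suprema $z_m,\kappa_m$ along $\Gamma_{q,\eps}$ jointly (Lemma \ref{on_upper_curve_r}). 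This yields a quadratic inequality for $z_m$ whose small root gives $z_m<c\,\eps^2\hat d_{+,q}^{-5/2}$. Working with $\kappa_m$ rather than with $\mu_1$ is also what keeps the constants independent of $\mu_1$. So your stated reason for that independence does not cover the strip.

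The same misreading propagates downstream.
\begin{itemize}
\item \emph{Vertical descents through $D_{q,r,d}$.} The nonlinear terms must be split into a multiplicative part $\alpha_2$, which is small compared with $\hat d_+^{1/2}$ only for $C_q$ large, and a forcing $\alpha_3$ dominated by $O(|\eta|(|\eta|+|w|))$.
\item \emph{Origin of $\eps^{11/6}C_q^{-6}\hat d_+^{-1/2}$.} This term is the quasi-equilibrium $2\alpha_a/\nu$ of that forcing, reached by iteration. First the bootstrap on $\eta,w$ gives $\alpha_3\lesssim\eps^{5/3}C_q^{-6}$. Then the Lyapunov bound $|\eta|\lesssim\eps^3\hat d_{+,q}^{-3}$ in the strip gives $\alpha_3\lesssim\eps^{11/6}C_q^{-6}$. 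It is not a damping factor of the homogeneous part; in your linear model it would not appear at all.
\item \emph{$\eta$ in the strip.} Only $|\eta|\lesssim\eps^3\hat d_{+,q}^{-3}$ holds there, because the forcing of $\eta$ contains $|zw|$, $\eps|z|^2d_+^{-1/2}$ and $|z|^4$ with $z$ undamped. The bound $\eps^3\hat d_+^{-3}$ becomes available only in $D_{q,r,d}\setminus\tilde D_{q,r,d}$, after $z$ has decayed.
\item \emph{$z$ in $\bar D_{q,r,d}$.} Equation (\ref{d_equation1}) contains $O(|w|^3)\sim\eps C_q^{-15/2}$, which is why only $\eps^3\hat d_{-,q}^{-3}$ holds there.
\item \emph{Order of the estimates.} The region below the strip is reached in the paper by vertical descent from $\Gamma'_{q,r,d}$ (Lemma \ref{l_down_z}). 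Horizontal paths to the right of $\re\tau=\re\tau_c$ increase $\re\Psi_\eps$, and level curves give no decay. Hence Lemma \ref{lem_cont_D_q_l} cannot supply the data there, and the strip estimates must come first.
\item \emph{Role of Lemma \ref{lem_closeness}.} Level curves used for $\eta$ that cross the strip must be rerouted along $\Gamma'_{q,r,d}$, as in Lemma \ref{l_right_eta}. Lemma \ref{lem_closeness} is what makes that rerouting legitimate. Transversality of vertical lines is simply condition 6).
\end{itemize}
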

We take $\mu_1=2\max\{c_{r,8}, c_{r,11}, c_{r,14}, c_{r,17} \}, \mu_2=2 c_{r,9} $.

\begin{lem}
\label{margin_D_r}
If the constant $c_{e,10}$ is chosen sufficiently large, then for any $\eps T< \tau_{q,\eps,+}$ the assumptions (\ref{induct_xi_1/3}) -  (\ref{induct_D_r_4}) are satisfied with a margin.
\end{lem}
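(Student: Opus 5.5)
The statement to prove is Lemma \ref{margin_D_r}. The plan is a continuity (bootstrap) argument, parallel to Lemma \ref{margin_D_l}. Lemma \ref{est_of_shorten_r} already shows that, as long as the assumptions (\ref{induct_xi_1/3})--(\ref{induct_D_r_4}) hold on $S(T)$, the solution obeys bounds whose constants $c_{r,6},\dots,c_{r,17}$ are independent of $T$, $\mu_1$, $\mu_2$. So it remains to check, inequality by inequality, that these bounds are strictly stronger than the assumed ones once $C_q$ (hence $c_{e,10}$) is large. Then the supremal $T$ for which the assumptions hold cannot be smaller than $\tau_{q,\eps,+}/\eps$: at a boundary value the assumptions would hold with a margin, and by continuity and Cauchy's theorem they would persist slightly beyond it.

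The inequalities for $\kappa$ and $z$ carry the factors $\mu_1$ and $\mu_2$, chosen as twice the corresponding $c_{r,\cdot}$, so they hold with factor $1/2$ automatically. This covers the bounds on $|\kappa-\dK_{\eps}|$ in (\ref{induct_D_r_d}), (\ref{induct_mu_1_r}), (\ref{induct_Gamma_r_d}), (\ref{induct_D_r_kappa}), and the $z$-bound in (\ref{induct_tilde_D_r_d}). For the bounds on $\eta$ and $w$ with threshold $\eps^{17/6}$, compare with $c_{r,7}\eps^3\hat d_{+,q}^{-3}$ or $c_{r,10}\eps^3\hat d_+^{-3}$; the ratio is $O(\eps^{1/6})$, which is small for small $\eps$.

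Next come $|\xi|<c_{t,5}\eps^{1/3}$ and $0.5\hat d_+\le d_+(\kappa)\le 2\hat d_+$. In $D_{q,r,d}$ one has $|z|<c_{r,6}\eps^2\hat d_{+,q}^{-5/2}\sim c_{r,6}C_{q,1}^{-5/2}\eps^{1/3}$, which is below $\frac12 c_{t,5}\eps^{1/3}$ for $C_q$ large. Outside $D_{q,r,d}$ the bound $\eps^3\hat d_+^{-4}\le \eps^{1/3}C_q^{-4}$ is even better. For the $d_+$ ratio, use $|\kappa-\dK_{\eps}|=O(\eps^4\hat d^{-9/2})=O(\eps C_q^{-9/2})\ll \hat d_+$ together with Lemma \ref{K_and_K_eps}.

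The main obstacle is the $\beta_1,\beta_2,\beta_4$ bounds (\ref{induct_D_r}), (\ref{induct_D_r_4}) outside $D_{q,r,d}\cup\bar D_{q,r,d}$. There one must insert the established bounds into the nonlinear terms of (\ref{d_equation}), (\ref{d_equation1}) and show each is below $0.5\eps^3c_{r,1}d_+^{-7/2}$, respectively $0.5\eps^3c_{r,2}d_+^{-3}$ and $0.5\eps^3 c_{r,4}d_-^{-3}$. Typical terms are $\eps|z|^2d_+^{-3/2}\lesssim \eps^7\hat d_+^{-19/2}$, $|\eta|^2$, $|\xi|_*^3 d_+^{-1/2}$ and $|z|^5d_+^{-3/2}$. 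Each carries an extra factor like $\eps^{4}\hat d_+^{-6}\le C_q^{-6}$ relative to the target, so each is small once $C_q$ is large.

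The delicate point is the junction with $D_{q,r,d}$, where $z$ is only $O(\eps^{1/3})$. Crossing the lower boundary $\Gamma'_{q,r,d}$, one needs the improved estimate $|z|<c_{r,12}\eps^3\hat d_+^{-4}$ there. That estimate relies on the choice of the logarithmic width in Lemma \ref{lem_about_width}, so that the contraction along vertical segments (with $\re(-i\Lambda_1)$ comparable to $-\hat d_+^{1/2}$) reduces $|z|$ from $\eps^{1/3}C_q^{-5/2}$ to $O(\eps^3\hat d_+^{-4})$. It also uses Lemma \ref{lem_closeness}, which guarantees that these segments stay transversal to the level curves of $\re\Psi_{\eps}$. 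I would check this step first, verifying that the constant $c_{r,12}$ does not secretly depend on $\mu_2$. With that in hand, all the inequalities hold with margin up to $\eps T<\tau_{q,\eps,+}$.
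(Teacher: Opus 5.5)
Your proposal is correct and follows the paper's own argument. Inside $D_{q,r,d}$ the assumptions are recovered with a margin directly from Lemma \ref{est_of_shorten_r}: the factor $1/2$ from the choice of $\mu_1,\mu_2$, the $\eps^{1/6}$ gain over the $\eps^{17/6}$ thresholds, and large $C_q$ for $|\xi|<c_{t,5}\eps^{1/3}$ and the $d_+$ comparison. Outside $D_{q,r,d}\cup\bar D_{q,r,d}$ the $\beta$-bounds are checked as in Lemma \ref{margin_D_l}; the paper shortcuts this by noting that $\eps^3 d_+^{-4}<\eps^2 d_+^{-5/2}$ for large $C_q$, so those computations apply verbatim.

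Your final ``delicate point'' requires no separate verification here. The bound $|z|<c_{r,12}\eps^3\hat d_+^{-4}$ on $\Gamma'_{q,r,d}$ is already part of Lemma \ref{est_of_shorten_r}, whose constants are stated to be independent of $T$, $\mu_1$ and $\mu_2$.
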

Thus, one can take $\eps T= \tau_{q,\eps,+}$. Then, the estimates in Lemma \ref{est_of_shorten_r} imply the estimates in Lemma \ref{lem_cont_D_q_r}. Note that $\eps^2d_{+,q}^{-5/2}=O(\eps^{1/3}C_q^{-5/2}), \ \eps^3 d_{+,q}^{-3}=O(\eps C_q^{-3}), \ 
\eps^4 d_{+,q}^{-9/2}=O(\eps C_q^{-9/2})$.

\medskip
This completes the proof of Lemma \ref{lem_cont_D_q_r}.

\medskip
\hskip 12cm $\square$

   {\bf Proof of Lemma \ref{lem_improved_kappa_old_1}.}
   
   \medskip
   According to (\ref{d_equation}),  in the domain $ D_{q,r,d}$ we have
   \begin{equation}
   \begin{aligned}
   \label{e_dot_kappa_old_1}
   & \dot {\kappa}=\eps F(\kappa) +\eps^2 O(|z|^2d_+^{-2})+\eps^2 O(|\xi|_*^2d_+^{-3/2}) \\
 &
 +\eps O\left(|z|^4+|z|^5d_+^{-2}+ |\xi|_*^3d_+^{-1}\right)
 +\eps^3 O(|z|d_+^{-3} + |\xi|_*d_+^{-3/2})+\eps^4O(d_+^{-7/2}|\xi|).\\
 %
 \end{aligned}
   \end{equation} 
   On  $\Gamma_{q,r,d}'$, we have
   \begin{equation}
   \label{e_kappa_1}
    |\kappa(t)-\dK_{\eps}(\eps t)|<  c_{r,15}\eps^4 \hat{ d}_{+}^{-9/2}. 
   \end{equation}
   We estimate $|\kappa(t)-\dK_{\eps}(\eps t)|$ in $D_{q,r,d}$ by moving vertically upward from $\Gamma_{q,r,d}'$. The vertical width of  $D_{q,r,d}$ is $c_{e, 12} \eps \hat d_u^{-1/2}|\ln( c_{e,11}^{-1}\eps \hat d_u^{-3/2}C_{q}^{15/16})|$.  We know that 
 $\partial F(\kappa)/\partial \kappa= O( \hat{ d}_{+}^{-1/2})$.
  According to Lemmas \ref{lem_0.5_2} and \ref{lem_about_width}, on the vertical line, $d_u$ can be replaced by  $\hat { d}_{+}$ in $O(\cdot)$-estimates.  
  Then, for $\tau\in D_{q,r,d}$, equations (\ref{e_dot_kappa_old_1}) and (\ref{e_kappa_1})  imply 
    \begin{equation*}
    \begin{aligned}
    &|\kappa(t)-\dK_{\eps}(\eps t)|=O(\eps^4 \hat{ d}_{+}^{-9/2})\\
    &+(c_{e, 12} \eps \hat d_+^{-1/2}|\ln( c_{e,11}^{-1}\eps \hat d_+^{-3/2}C_{q}^{15/16})|)
    O\left( \eps (\eps^{1/3}C_q^{-5/2})^2d_+^{-2})+\eps (\eps^{1/3}C_q^{-5/2}) (\eps C_q^{-3} )d_+^{-3/2}\right. \\
  &+((\eps^{1/3}C_q^{-5/2})^4+(\eps^{1/3}C_q^{-5/2})^5d_+^{-2}+ (\eps^{1/3}C_q^{-5/2})^2 (\eps C_q^{-3}) d_+^{-1}
 +\eps^2 (\eps^{1/3}C_q^{-5/2})d_+^{-3} \\
 &+ \eps^2  (\eps C_q^{-3} )d_+^{-3/2})
 \left.+\eps^3(\eps^{1/3}C_q^{-5/2})d_+^{-7/2}           \right)      \\     
 &=O(\eps^4 \hat{ d}_{+}^{-9/2}) +(\eps \hat d_+^{-1/2}|\ln(\eps \hat d_+^{-3/2}C_{q}^{15/16})|)\\
 &\cdot O\left( \eps^{5/3}C_q^{-5}d_+^{-2}+\eps^{7/3}C_q^{-11/2} d_+^{-3/2}
  +\eps^{4/3}C_q^{-10}+\eps^{5/3}C_q^{-25/2}d_+^{-2}+ \eps^{5/3}C_q^{-8} d_+^{-1}\right. \\
 &+\eps^{7/3}C_q^{-5/2}d_+^{-3} 
 + \eps^3  C_q^{-3} d_+^{-3/2}
 \left.+\eps^{10/3}C_q^{-5/2}d_+^{-7/2}           \right)      \\  
 &=O(\eps^4 \hat{ d}_{+}^{-9/2}) +(\eps \hat d_+^{-1/2}|\ln(\eps \hat d_+^{-3/2}C_{q}^{15/16})|)\\
 &\cdot O\left( \eps^{5/3}C_q^{-5}d_+^{-2}
  +\eps^{4/3}C_q^{-10}
 +\eps^{7/3}C_q^{-5/2}d_+^{-3} \right).
    \end{aligned}
    \end{equation*}
    Thus,  \begin{equation*}
    \begin{aligned}
    &|\kappa(t)-\dK_{\eps}(\eps t)|<c_{e,26}\eps^4 \hat{ d}_{+}^{-9/2} +c_{e,27}(\eps \hat d_+^{-1/2}|\ln(\eps \hat d_+^{-3/2}C_{q}^{15/16})|)\\
 &\cdot \left( \eps^{5/3}C_q^{-5}d_+^{-2}
  +\eps^{4/3}C_q^{-10}
 +\eps^{7/3}C_q^{-5/2}d_+^{-3} \right).
    \end{aligned}
       \end{equation*}
   
        \hskip 12cm $\square$


\medskip



 \medskip

\section{Proofs of lemmas about motion.}
\label{s_proofs_motion}

{\bf Proof of Lemma \ref{lem_init_cond_3}.}

Recall that   $\dK_{\eps}(\tau_*^-)= \kappa(\tau_*^-/\eps)$, $z(\tau_*^-/\eps)=O(\eps^3), \eta(\tau_*^-/\eps)=O(\eps^3)$,  and $\eps t_3=\tau_{ *,\eps,-}(\tau_{\gamma})=\tau_*^ - +O(\eps|\ln \eps|)$. For  $t$ between   $ \tau_*^-/\eps$ and  $t_3$, we have $z(t)=O(\eps^3), \eta(t)=O(\eps^3)$. Thus, during this time interval, according to (\ref{d_equation}),
$$
\dot{\kappa}=\eps F(\kappa)+O(\eps^6),
$$
while
$$
 \dot \dK_{\eps}=\eps F(\dK_{\eps}).
$$
Thus, during the considered time interval we have $\kappa(t)= \dK_{\eps}(\eps t)+O(\eps^6|\ln \eps|$).

\medskip

\hskip 12cm $\square$

\medskip

{\bf Proof of Lemma \ref{l_XC}.}

\medskip
For $\im \tau> -c_{l,1}^{-1}$, we have
\begin{equation}
\begin{aligned}
&C^{-1}(\dK(\eps t))(x(t)- X(\dK(\eps t)))=C^{-1}(\dK(\eps t))C(\kappa(t))C^{-1}(\kappa(t))(x(t)- X(\dK(\eps t)))\\
&=C^{-1}(\dK(\eps t))C(\kappa(t))\left[C^{-1}( \kappa(t))
\left(x(t)- X(\kappa(t))     \right)+ C^{-1} (\kappa(t))\left( X(\kappa(t))- X(\dK(\eps t))\right)\right]
\end{aligned}
\end{equation}
The estimate (\ref{after_lemma_+}) and Lemma \ref{K_and_K_eps}  imply that 
$$
C^{-1}(\dK(\eps t))C(\kappa(t))=I +\hat d_{+}^{-1/2}\left (O(\eps^3 \hat d_{+}^{-3})+O(\eps(1+|\ln \hat d_{+})|)\right), 
$$
where $I$ is the unit matrix. We use here the fact  that the derivative of $C$ with respect to $\kappa$ is $O(\hat d_{+}^{-1/2})$.

We have $C^{-1}( \kappa(t)) \left(x(t)- X(\kappa(t))\right)=\tilde \xi(t)$, where $\tilde \xi(t)$ is the vector introduced in Lemma \ref {lem_transform_00}. For components of this vector, we have 
$$
\tilde z(t)=O(\eps \hat d_{+}^{-1}), \  \tilde w(t)=O(\eps \hat d_{+}^{-1/2}), \  \tilde \eta (t)=O(\eps \hat d_{+}^{-1/2}).
$$
For components of the vector $C^{-1}( \kappa(t))( X(\kappa(t))- X(\dK(\eps t)))= \xi'(t)$, we have
\begin{equation*}
\begin{aligned}
 &z'(t)= \hat d_{+}^{-1/2}\left(O(\eps^3 \hat d_{+}^{-3})+O(\eps(1+|\ln \hat d_{+})|)\right)=O(\eps \hat d_{+}^{-1}),\\
 &|w'(t)|+|\eta'(t)|=\left(O(\eps^3 \hat d_{+}^{-3})+O(\eps(1+|\ln \hat d_{+})|)\right)=O(\eps \hat d_{+}^{-1/2}).
 \end{aligned}
\end{equation*}
We use here that, according to (\ref{de_z1}), the  derivative of the $z_1$ (respectively, $z_2$) component of the equilibrium of the fast system with respect to $\kappa$ is $O(\hat d_{+}^{-1/2})$ (respectively, $O(1)$). 
 
Combining above estimates, we obtain the result of the lemma  for  $\im \tau> -c_{l,1}^{-1}$.
 Similar estimates give the result of the lemma  for  $\im \tau< c_{l,1}^{-1}$.
\medskip

\hskip 12cm $\square$

{\bf Proof of Lemma \ref{delta_r_1}.}

According to Lemma \ref{lem_dr},
\begin{equation}
\frac{\partial \hat\ze}{\partial R}=-i \frac{3\sqrt{3}}{2\pi \hat s}\frac{1}{(RJ_{-1/3}(v)+J_{1/3}(v))^2},
\quad v=\frac{2}{3}(-\hat s)^{3/2} .
\end {equation}
Thus, according to asymptotic formula (\ref{big_v_J}), for real $s$,
$$
|\frac{\partial \hat\ze}{\partial R}|_{R=e^{2\pi i/3}}>c_{a,1}\frac{v}{|\hat s|}\sim \frac{|\hat s|^{3/2}}{|\hat s|}> c_{a,2}|\hat s|^{1/2}.
$$
Therefore,
$$
R_{\gamma}-R_-= O(|z_{1,\eps}(t_\gamma)-z_{1,sp}(t_\gamma)|/ |\tau_{\gamma}-\tau_{c}|^{1/2})  =  O(|z_{1,\eps}(t_\gamma)-z_{1,sp}(t_\gamma)|/(C_{\gamma,*}^{1/2}\eps^{1/3} )).
$$

\medskip

\hskip 12cm $\square$

\medskip

{\bf Proof of Lemma \ref{delta_r_2}.}

According to Lemma \ref{lem_dr} and the asymptotic formula  (\ref{big_v_J}), for real $s$,
$$
|\frac{\partial \hat\ze}{\partial R}|_{R=e^{2\pi i/3}}> c_{a,1} |\hat s|^{1/2}.
$$
On the other hand, formula  (\ref{big_v_J}) implies that
$$
|\frac{\partial \hat\ze}{\partial \hat s}|_{R=e^{2\pi i/3}} =O(|\hat s|^{-1/2}), \quad 
|\frac{\partial \hat\ze}{\partial  \tau}|_{R=e^{2\pi i/3}} =O(\eps^{-2/3}|\hat s|^{-1/2}).
$$
Thus 
\begin{equation*}
\begin{aligned}
&R_{\eps}(t_{\gamma})-R_{\gamma}=
O(|\tau_{\eps}(t_{\gamma})-\tau_{\gamma}|/(\eps^{2/3}|\hat s|))
=O(|\tau_{\eps}(t_{\gamma})-\tau_{\gamma}|
/(|\tau_{\gamma}-\tau_c|)\\
&=O(|\tau_{\eps}(t_{\gamma})-\tau_{\gamma}|/(C_{\gamma,*}\eps^{2/3})).
\end{aligned}
\end{equation*}
\hskip 12cm $\square$

\medskip

{\bf Proof of Lemma \ref{lem_to_c}.}

Consider the equations for the variables $ z_{sm},\eta_{sm}$, and  $\kappa$ for $t\in D_{\triangle}$. These equations have the form 
\begin{equation}
\begin{aligned}
\label{in_triangle}
&\dot z_{sm}=b \cdot (\kappa-\kappa_c) +az_{sm}^2 +O(\varepsilon +|z_{sm}|^3+|\kappa-\kappa_c|^2+
|\kappa-\kappa_c| |\xi_{sm}|
+|\xi_{sm}|_*^2),\\
&\dot \eta_{sm}=B\eta_{sm}+O(\eps+|\kappa-\kappa_c|
+|\xi_{sm}|^2),\\ 
&\dot \kappa=\eps (g_c+O(\eps+|\kappa-\kappa_c|+|\xi_{sm}|)).
\end{aligned}
\end{equation}

Initial conditions for this system are taken at $t=t_{\gamma}$ in accordance with values of $z,\eta, w, \kappa$ at 
$t_{\gamma}$ and the following lemma. 
\begin{lem}
\label{symmetry_triangle_1}
On the boundary, where $\im \tau=\im  \tau_{\gamma}$, we have 
 \begin{equation}
 \begin{aligned}
 \label{est1_triangle}
&z_{sm}=(1+O(\eps^{1/3}))z+O(\eps^{1/3})w+O(\eps^{1/3})\eta+O(\eps^{1/3}), \\ 
&w_{sm}=(1+O(\eps^{1/3}))w+O(\eps^{1/3})z+O(\eps^{1/3})\eta+O(\eps^{2/3}).
\end{aligned}
\end{equation}
Similarly, on the boundary, where $\im \tau=-\im  \tau_{\gamma}$, we have
 \begin{equation}
 \begin{aligned}
  \label{est2_triangle}
&z_{ms}=(1+O(\eps^{1/3}))z+O(\eps^{1/3})w+O(\eps^{1/3})\eta +O(\eps^{2/3}), \\ 
&w_{ms}=(1+O(\eps^{1/3}))w+O(\eps^{1/3})z+O(\eps^{1/3})\eta+O(\eps^{1/3}). 
\end{aligned}
\end{equation}
\end{lem}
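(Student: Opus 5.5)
The plan is to treat the passage from the variables $z,w,\eta$ (built at the moving equilibrium $X(\kappa)$ with the real-analytic frame $C(\kappa)$ of Lemma \ref{lem_transform_00}) to the variables $z_{sm},w_{sm},\eta_{sm}$ (built at the fixed point $x_c$ with the frame ${\cal A}(\kappa_c)=A_c$) as a change of origin followed by a change of frame. On the segment $\im\tau=\im\tau_\gamma$, $\re\tau_\gamma\le\re\tau\le\re\tau_c$, we have $|\tau-\tau_c|\sim\eps^{2/3}$, and by Lemma \ref{lem_to_c} (or by (\ref{after_lemma_+}) together with Lemma \ref{K_and_K_eps}) $|\kappa-\kappa_c|=O(\eps^{2/3})$, so $d_+\sim\eps^{2/3}$. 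I will write
$$
x-x_c=(x-X(\kappa))+(X(\kappa)-x_c),
$$
with $x-X(\kappa)=C(\kappa)\tilde\xi$, where $\tilde\xi=(z,w,\eta)$ up to the near-identity corrections of Lemmas \ref{lem_transform_1}, \ref{lem_transform_2} and \ref{lem_transform_kappa}. Those corrections are of relative size $O(\eps d_+^{-1})=O(\eps^{1/3})$ and can be absorbed into the stated error terms. Applying the frame at $\kappa_c$ then gives
$$
\xi_{sm}=C_c^{-1}C(\kappa)\tilde\xi + C_c^{-1}(X(\kappa)-x_c).
$$

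The first step is to estimate the frame change $C_c^{-1}C(\kappa)-I$. Since $\partial C/\partial\kappa=O(d_+^{-1/2})$ (used in the proof of Lemma \ref{l_XC}), integrating from $\kappa_c$ naively gives $O(|\kappa-\kappa_c|^{1/2})=O(\eps^{1/3})$. I have to check that $C$ extends continuously to $\kappa_c$. The $2\times2$ block uses the eigenvectors of $\lambda_1,\lambda_2$, and since $\lambda_1\ne\lambda_2$ at $\kappa_c$ (condition D), $C$ is analytic in $X(\kappa)$ and hence Hölder-$1/2$ in $\kappa$. This gives the coefficients $1+O(\eps^{1/3})$ and $O(\eps^{1/3})$ multiplying $z,w,\eta$ in (\ref{est1_triangle}).

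The second step is the shift term $C_c^{-1}(X(\kappa)-x_c)$. By Lemma \ref{FG} and (\ref{de_z1}), the $z_1$-component of $X(\kappa)-x_c$ is $O(|\kappa-\kappa_c|^{1/2})=O(\eps^{1/3})$, while the remaining components (the $z_2$-part) are $O(|\kappa-\kappa_c|)=O(\eps^{2/3})$. This is because $Z_2$ depends on $z_1^2$ and $\kappa-\kappa_c$. In the $z_{sm},w_{sm}$ coordinates, $z_1$ corresponds to the zero-eigenvalue direction at $\kappa_c$, i.e.\ to $\lambda_1$, i.e.\ to $z_{sm}$ (up to the fixed linear relation $z=\xi_1+i\xi_2$). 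Hence the constant term is $O(\eps^{1/3})$ in $z_{sm}$ and $O(\eps^{2/3})$ in $w_{sm}$.

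The main obstacle is making this last identification precise: I must verify that the zero-eigenvalue eigenvector of $A_c$ projects only onto the $z_{sm}$ coordinate and not onto $w_{sm}$. In the block form $\tilde{\cal A}_0$, the eigenvector for $\lambda_1$ is $(1,-i)$, which corresponds to $w=0$. So this should hold exactly at $\kappa_c$, with any mixing coming only from the $O(\eps^{1/3})$ frame rotation times the $O(\eps^{1/3})$ shift, i.e.\ $O(\eps^{2/3})$.

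The estimate (\ref{est2_triangle}) on $\im\tau=-\im\tau_\gamma$ follows by the same argument at $\bar\kappa_c$, with the roles of $z$ and $w$ interchanged since $\lambda_2(\bar\kappa_c)=0$. Alternatively, it follows from the real-analyticity of Lemma \ref{lem_real}, which gives $z_{ms}(t)=\overline{w_{sm}(\bar t)}$.
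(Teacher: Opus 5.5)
Your proposal is correct and is essentially the paper's proof. Like the paper, you write $\xi_{sm}=C^{-1}(\kappa_c)\bigl(X(\kappa)-X(\kappa_c)\bigr)+C^{-1}(\kappa_c)C(\kappa)\xi$, use $\kappa=\kappa_c+O(\eps^{2/3})$ to get $C^{-1}(\kappa_c)C(\kappa)=I+O(\eps^{1/3})$, and note that in the $A_c$-adapted frame the shift has $z_{sm}$-component $O(\eps^{1/3})$ and $w_{sm}$-, $\eta_{sm}$-components $O(\eps^{2/3})$.

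Two minor remarks. First, the paper works directly with the variables of Lemma~\ref{lem_transform_00}, so no corrections from the later transformations need to be absorbed. The shift identification is also exact, with no frame-rotation mixing. Second, take $|\kappa-\kappa_c|=O(\eps^{2/3})$ from Lemmas~\ref{lem_cont_D_gamma} and~\ref{K_and_K_eps} on the boundary of $D_{up}$, not from Lemma~\ref{lem_to_c}, since the proof of that lemma relies on the present one.
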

Thus, we have
\begin{equation*}
z_{sm}(t_{\gamma})<c_{t,1}\eps^{1/3},\  \eta_{sm}(t_{\gamma})<c_{t,2}\eps^{2/3},  \ w_{sm}(t_{\gamma})<c_{t,3}\eps^{2/3},
|\kappa(t_{\gamma})-\kappa_c|< c_{t,4}\eps^{2/3}.
\end{equation*}
Discarding in (\ref{in_triangle}) $O(\cdot)$ terms, we obtain the system
\begin{equation}
\label{shorten_triangle}
\dot {\hat z}_{sm}=b \cdot (\hat\kappa-\kappa_c) +a\hat z_{sm}^2,  \
\dot {\hat \eta}_{sm}=B\hat \eta_{sm},\  \dot{\hat\kappa}=\eps g_c.
\end{equation}
Solutions of these equations for  ${\hat z}_{sm},  {\hat \eta}_{sm} ,\hat \kappa$  with the same initial conditions as for  (\ref{in_triangle}) are well defined in $D_{\triangle}$ and satisfy there the estimates
\begin{equation}
|\hat z_{sm}|< c_{t,5}\eps^{1/3}, \ {\hat \eta}_{sm}<c_{t,6}\eps^{2/3}, \  |\hat\kappa-\kappa_c|< c_{t,7}\eps^{2/3}.
\end{equation}
This is evident for $\hat\kappa$.  For $\hat z_{sm}$, this follows from the explicit formulas for the solution in Section \ref{s_riccati}. 
The solution ${\hat \eta}_{sm}$ can be estimated by considering a path that first follows the lower boundary of  $ D_{\triangle}$ 
and then the curve
$\re \psi_a(\tau)={\rm const}$, where the function $\psi_a$ was introduced in Section   \ref{s_riccati}. Along each part of this path, the solution satisfies a linear system of ODEs with constant coefficients whose eigenvalues have negative real parts. Therefore, the solution is well defined on the whole considered part of the curve $\re \psi_a(\tau)={\rm const}$ in  $ D_{\triangle}$, and satisfies the required estimate. Every  point of $ D_{\triangle}$ can be reached along such a path.

An analogous construction can be performed in the domain
$\bar D_{\triangle}$. We denote corresponding variables 
$z_{ms}, \eta_{ms}, w_{ms}, \xi_{ms}, \hat z_{ms}, \hat \eta_{ms, t_u},  \hat \xi_{ms,t_u}, \hat \kappa$. The estimates are 
\begin{equation}
|\hat w_{ms}|< c_{t,5}\eps^{1/3},\ |{\hat \eta}_{ms,t_u}|<c_{t,6}\eps^{2/3},\ |\hat\kappa-\bar \kappa_c|< c_{t,7}\eps^{2/3}.
\end{equation}

Denote by $S(T)$ the part of $D_c$ where $\re t\le T$. The solution of  system (\ref{d_equation}) can be continued at least into $S(T_1)$ such that in this domain, in $D_{\triangle}$, we have
\begin{equation}
\label{sm_induction}
| z_{sm}|< 2c_{t,5}\eps^{1/3},\ |{ \eta}_{sm}|<\mu \eps^{2/3},\ |\kappa-\kappa_c|< 2c_{t,7}\eps^{2/3},\ |w_{sm}|<\eps^{1/2},
\end{equation}
and in $\bar D_{\triangle}$ we have
 \begin{equation}
 \label{ms_induction}
|w_{ms}|< 2c_{t,5}\eps^{1/3},\ |{ \eta}_{ms}|<\mu \eps^{2/3},\ |\kappa-\bar \kappa_c|< 2c_{t,7}\eps^{2/3},\ |z_{ms}| <\eps^{1/2},
\end{equation}
where $\mu$ is a positive constant to be chosen later in the proof. 

\medskip
Using estimates (\ref{sm_induction}), (\ref{ms_induction}), and considering equations (\ref {in_triangle}) as a perturbation of equations (\ref{shorten_triangle}),   we can estimate $z_{sm}, \eta_{sm}$ and  $w_{ms}, \eta_{ms}$ for $\eps t \in D_{\triangle},  \re t\le T_1$:
\begin{equation}
\begin{aligned}
&| z_{sm}(t)|< 1.5c_{t,5}\eps^{1/3}, |{ \eta}_{sm}(t)|<c_{t,8}\eps^{2/3}, |\kappa(t)-\kappa_c|< 1.5c_{t,7}\eps^{2/3},\\
&|w_{ms}(\bar t)|< 1.5c_{t,5}\eps^{1/3}, |{ \eta}_{ms}(\bar  t)|<c_{t,8}\eps^{2/3}, |\kappa(\bar  t)-\bar  \kappa_c|< 1.5c_{t,7}\eps^{2/3}.
\end{aligned}
\end{equation}
Constant $c_{t,8}$ is determined independently of $\mu$. We then set  $\mu =2c_{t,8}$.
\begin{lem}
\label{symmetry_triangle_2}
For $\re t \le T_1$, we have
 \begin{equation}
 \label {e_symmetry_triangle}
 \kappa(\bar t)=\overline{\kappa(t)},\ z_{sm}(\bar t)=\overline{ w_{ms}(t)}, \ w_{sm}(\bar t)=\overline{w_{ms}(t)}, \ \eta_{sm}(\bar t)=\overline{ \eta_{ms}(t)}.
\end {equation}
\end{lem}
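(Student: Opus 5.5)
The plan is to derive Lemma \ref{symmetry_triangle_2} from the real-analyticity of the system, i.e.\ from Lemma \ref{lem_real}, together with the way the coordinates $z_{sm},w_{sm},\eta_{sm}$ and $z_{ms},w_{ms},\eta_{ms}$ are built. (In the second relation of (\ref{e_symmetry_triangle}) I read $z_{sm}(\bar t)=\overline{w_{ms}(t)}$ and $w_{sm}(\bar t)=\overline{z_{ms}(t)}$, in agreement with the statement of Lemma \ref{lem_to_c}.)

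\emph{Step 1.} Establish $\kappa(\bar t)=\overline{\kappa(t)}$ and $x(\bar t)=\overline{x(t)}$ on $S(T_1)$. The solution $x(t),\kappa(t)$ of the original system (\ref{perturbed}) has real initial data at $t_0$. The right-hand sides are real-analytic, so the system commutes with complex conjugation. The continuation into $S(T_1)$ is carried out along paths, first through $D_{up}$ and then into $D_\triangle$ and $\bar D_\triangle$. The domain $S(T_1)$ is symmetric with respect to the real axis, since $D_c$ is symmetric and the condition $\re t\le T_1$ is symmetric. It is also simply connected and contains a real interval. The function $\overline{x(\bar t)},\overline{\kappa(\bar t)}$ solves the same system and agrees with $x,\kappa$ on the real axis, so by uniqueness of analytic continuation the two coincide. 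This is exactly Lemma \ref{lem_real}, applied to the original variables rather than the transformed ones.

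\emph{Step 2.} Transfer the symmetry to the local coordinates. By construction, $\xi_{sm}$ is obtained from $x-X(\kappa_c)$ by the matrix $C_c^{-1}$ that block-diagonalises $\mathcal A(\kappa_c)$, followed by the passage $z=\xi_1+i\xi_2$, $w=\xi_1-i\xi_2$. The coordinates $\xi_{ms}$ are built in the same way for $\bar\kappa_c$, with $X(\bar\kappa_c)=\overline{X(\kappa_c)}$ and $\mathcal A(\bar\kappa_c)=\overline{\mathcal A(\kappa_c)}$. I would fix the eigenbasis at $\bar\kappa_c$ to be the complex conjugate of the one at $\kappa_c$, which is how the real-analytic basis of Lemma \ref{lem_transform_00} is defined. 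With this choice the real-coordinate vectors satisfy $\tilde\xi_{ms}(\bar x)=\overline{\tilde\xi_{sm}(x)}$.

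\emph{Step 3.} Conjugate the complex combinations. Taking conjugates of $z=\xi_1+i\xi_2$ exchanges the roles of $z$ and $w$, while $\eta$ is conjugated componentwise. This gives $z_{sm}(\bar t)=\overline{w_{ms}(t)}$, $w_{sm}(\bar t)=\overline{z_{ms}(t)}$ and $\eta_{sm}(\bar t)=\overline{\eta_{ms}(t)}$.

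The main obstacle is the consistency of conventions. One must verify that the basis chosen at $\bar\kappa_c$ really is the conjugate of the basis at $\kappa_c$. In particular, the block $\tilde{\mathcal A}_0$ at $\kappa_c$, which has the eigenvalue $\lambda_1=0$, must map under conjugation to the block at $\bar\kappa_c$, where $\lambda_2=0$. This holds because $\lambda_2(\bar\kappa)=\overline{\lambda_1(\kappa)}$ and $\tilde{\mathcal A}_0$ has the stated form; it is also what makes the pairing $z\leftrightarrow w$ correct. A second point to check is that the continuation inside $S(T_1)$ is single-valued, so that the symmetry argument of Step 1 does not depend on the path. This follows from simple connectivity and from the estimates (\ref{sm_induction}), (\ref{ms_induction}), which keep the solution inside the analyticity domain $W$.
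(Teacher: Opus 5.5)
Your proposal is correct and follows essentially the paper's argument. The paper writes $\xi_{sm}(t)=C^{-1}(\kappa_c)(X(\kappa(t))-X(\kappa_c))+C^{-1}(\kappa_c)C(\kappa(t))\xi(t)$ and the analogous formula for $\xi_{ms}(\bar t)$, then combines $X(\bar\kappa_c)=\overline{X(\kappa_c)}$, $C(\bar\kappa_c)=\overline{C(\kappa_c)}$ with the reflection symmetry of $\kappa$, $X(\kappa(t))$, $C(\kappa(t))$ and $\xi(t)$ (Lemma \ref{lem_real}) to get $\xi_{ms}(\bar t)=\overline{\xi_{sm}(t)}$, and hence the $z\leftrightarrow w$ exchange. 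Your version applies the reflection directly to $x(t)$, using $\xi_{sm}=C^{-1}(\kappa_c)(x-X(\kappa_c))$, so it never needs $X(\kappa(t))$ or $C(\kappa(t))$ near the branch point; it is a slightly cleaner rendering of the same idea. Your corrected reading $w_{sm}(\bar t)=\overline{z_{ms}(t)}$ matches the paper's own conclusion.
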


\medskip
On the lower boundary of $D_{up}$, which is also the upper boundary of $\bar D_{\triangle}$, we have 
$|z(t)|=O(\eps^{2/3}),|w(t)|=O(\eps^{1/3}), \eta(t)=O(\eps^{2/3})$ and, therefore, according to (\ref{est2_triangle}),  $|z_{ms}(t)|=O(\eps^{2/3}),|w_{ms}(t))|=O(\eps^{1/3}), \eta_{ms}(t)=O(\eps^{2/3})$. 
Considering motion vertically downward from this boundary, we obtain  $|z_{ms}(t)|=O(\eps^{2/3})$ for  $\eps t \in \bar D_{\triangle},  \re t\le T_1$. According to (\ref {e_symmetry_triangle}),  $|w_{sm} (t)| = |z_{ms}(\bar t )|$. Thus, 
$|w_{sm} (t)|=O(\eps^{2/3})$ for  $\eps t \in D_{\triangle},  \re t\le T_1$. Therefore, conditions (\ref{sm_induction}), (\ref{ms_induction}) are satisfied at $\re t=T_1$ with a margin. Therefore, one can take $T_1=\re \tau_c$. Additionally to these estimates, we obtain $|w_{sm} (t)|=O(\eps^{2/3})$ for  $t \in D_{\triangle}$, and $|z_{ms} (t)|=O(\eps^{2/3})$ for  $\eps t \in \bar D_{\triangle}$.

\medskip
 For $\eps t \in  D_{\triangle}$, we have
\begin{equation*}
\dot \kappa=\eps(g_c+O(\eps+|\kappa-\kappa_c|+|\xi_{sm}|)), 
\end{equation*}
which implies
\begin{equation*}
\dot \kappa=\eps(g_c+O(\eps^{1/3})). 
\end{equation*}
For $\dK$ in the considered domain we have
\begin{equation*}
\dot \dK=\eps(g_c+O(\eps^{1/3})). 
\end{equation*}
We also have $\kappa(\tau_{\gamma}/\eps)= \dK(\tau_{\gamma})+ O(\eps \ln\eps)$. Thus, we have
\begin{equation*}
\kappa(t)= \dK(\tau)+ O(\eps |\ln\eps|)
\end{equation*}
for $\eps t \in  D_{\triangle}$. This also implies $\kappa(\tau_c/\eps)=\kappa_c+ O(\eps |\ln\eps|)$.

Denote by $\hat s_{(\gamma)}$ the value of $\hat s$ at $\tau=\tau_{\gamma}$. Then 
$ \hat s_{\eps}(\tau_{\gamma}/\eps)=\hat s_{(\gamma)}+O(\eps^{1/3} |\ln\eps|+\eps^{1/3}C_{\gamma,*}^{3/2})$.  An estimate similar to that in (\ref{for_tau_eps}), but for the slow time spent in $D_{\Delta}$,   then implies  that  
$\hat s_{\eps}(\tau_c/\eps)=O(\eps^{1/3}| \ln\eps|+ \eps^{1/3}C_{\gamma,*}^{3/2})$.

\medskip
Differentiate the value $\hat \ze_{\eps}(t)$ (\ref {solution_eps}) with respect to time $\hat s$. On the one hand, because  $\hat \ze_{\eps}(t)$ is an exact solution of the equation, whose expansion is equation (\ref{eq_expanded}), using the already obtained estimates in $D_{\triangle}$, we obtain
\begin{equation}
\label{der_ze_1}
 \frac {d{\hat \ze}_{\eps}(t)}{d\hat s}=i( \eps^{-2/3} \nu b\cdot(\kappa(t)-\kappa_c)/(b\cdot g_c) +  ({\hat \ze}_{\eps}(t))^2 )+O(\eps^{1/3})=i(\hat s_{\eps} +({\hat \ze}_{\eps}(t))^2 )+O(\eps^{1/3}).
 \end{equation}
 Here $\nu$ is a complex number such that $\hat s=\eps^{-2/3}\mu (\tau-\tau_c)$.
 On the other hand, using Lemma \ref{lem_dr}, we obtain
\begin{equation}
\begin{aligned}
\label{der_ze_2}
 &\frac {d{\hat \ze}_{\eps}(t)}{d\hat s}
 =\frac{\partial {\hat \ze}_{\eps}(t)}{\partial R_{\eps}}\frac{d R_{\eps}}{d\hat  s}
 +\frac{\partial {\hat \ze}_{\eps}(t)}{\partial \hat s_{\eps}}\frac{d\hat  s_{\eps}}{d\hat s}\\
& =-i \frac{3\sqrt{3}}{2\pi \hat s_{\eps}}\frac{1}{(R_{\eps}J_{-1/3}(v_{\eps}(t))+J_{1/3}(v_{\eps}(t)))^2}\frac{d R_{\eps}}{d\hat  s} +i( \hat s_{\eps} + ({\hat \ze}_{\eps}(t))^2)\frac{d\hat  s_{\eps}}{d\hat s}.
\end{aligned}
 \end{equation}
 Also
 \begin{equation}
\frac{d\hat  s_{\eps}}{d\hat s} =\frac{d \tau_{\eps}}{d \tau}=
\frac{d (g_c(\kappa(t)-\kappa_c)/ (b\cdot g_c))}{d\tau}=1+O(\eps^{1/3}).
 \end{equation}
 Comparing (\ref{der_ze_1}) and (\ref{der_ze_2}), we obtain
  \begin{equation}
  \frac{3\sqrt{3}}{2\pi \hat s_{\eps}}\frac{1}{(R_{\eps}J_{-1/3}(v_{\eps}(t))+J_{1/3}(v_{\eps}(t)))^2}\frac{d R_{\eps}}{d\hat  s}= O(\eps^{1/3}).
  \end{equation}
  Thus,
   \begin{equation}
   \label{dR_e}
   \frac{d R_{\eps}}{d\hat  s}= O(\eps^{1/3} \hat s_{\eps}   (R_{\eps}J_{-1/3}(v_{\eps}(t))+J_{1/3}(v_{\eps}(t)))^2).  
   \end{equation}
   According to (\ref{small_v_J}), for small values of $|\hat s_{\eps}|$, the right hand side of (\ref{dR_e}) is\\
   $O(\eps^{1/3} |\hat s_{\eps}|  |\hat s_{\eps}| ^{-2/3\cdot{3/2}}  =O(\eps^{1/3})$. According to  (\ref{big_v_J}), for
   large values of $|\hat s_{\eps}|$, the right hand side of (\ref{dR_e}) is  $O(\eps^{1/3} |\hat s_{\eps}| |\hat s_{\eps}|^{-3/2})=O(\eps^{1/3} |\hat s_{\eps}|^{-1/2})  $. This implies
   $$
   |R_{\eps}(\tau_{c}/\eps)-R_{\eps}(\tau_{\gamma}/\eps)|=O(\eps^{1/3}).
   $$
   Together with the estimate
   $$
   \hat s_{\eps}(\tau_c/\eps)=O(\eps^{1/3} (|\ln\eps|+C_{\gamma,*}^{3/2}))
$$
and the asymptotic formula (\ref{as1_small}), this implies
$$
|\hat\ze_{\eps}(\tau_{c}/\eps )- \frac{1}{R_{\eps}(\tau_{\gamma}/\eps)}\frac{-2\pi i}{\Gamma^2(1/3)3^{1/6}}    |=O(\eps^{1/3} (|\ln\eps|+C_{\gamma,*}^{3/2})).
$$
\hskip 12cm $\square$

\medskip
{\bf Proof of Lemma \ref{lem_after_c_1}.}



Estimates  (\ref  {e_after_c_1_1}) and (\ref  {e_after_c_1_2}) can be obtained in the same way as the estimates in Lemma \ref{lem_to_c}. We omit the details.

Similarly to the proof of Lemma  \ref{lem_to_c},  we can show that 
$$\hat \sigma_{\eps}(\tau/\eps)=\hat \sigma+O(\eps^{1/3}(|\ln \eps|+|\hat \sigma|^{3/2})), \ \frac{d\hat 
\sigma _{\eps}}{d\hat \sigma} =1+O(\eps^{1/3} |\hat \chi_{\eps}|).
$$

Similarly  to the proof of Lemma  \ref{lem_to_c}, we have two different expressions for   ${d{\hat \chi}_{\eps}(t)}/{d\hat \sigma}$:
\begin{equation}
\label{der_chi_1}
\frac {d{\hat \chi}_{\eps}(t)}{d\hat \sigma}
=i(\hat \sigma_{\eps} +({\hat \chi}_{\eps}(t))^2 )
+O(\eps^{1/3}
(|{\hat \chi}_{\eps} |^3)
\end{equation}
and
\begin{equation}
\begin{aligned}
\label{der_chi_2}
 &\frac {d{\hat \chi}_{\eps}(t)}{d\hat \sigma}
& =-i \frac{3\sqrt{3}}{2\pi \hat \sigma_{\eps}}\frac{1}{(R_{\eps}^+J_{-1/3}(v_{\eps}(t))+J_{1/3}(v_{\eps}(t)))^2}\frac{d R_{\eps}^+}{d\hat  \sigma} +i( \hat \sigma_{\eps} + ({\hat \chi}_{\eps}(t))^2)\frac{d\hat  \sigma_{\eps}}{d\hat \sigma}.
\end{aligned}
 \end{equation}
 Asymptotic expansions   (\ref{big_v_J}) and  (\ref{expansion}) imply that
 \begin{equation*}
 \begin{aligned}
 & \hat \sigma_{\eps} + ({\hat \chi}_{\eps}(t))^2
 =O(\hat \sigma_{\eps}(e^{-2|\im v_{\eps }|}+|v_{\eps}|^{-1})),\\
 &\hat \sigma_{\eps}(R_{\eps}^+J_{-1/3}(v_{\eps}(t))+J_{1/3}(v_{\eps}(t)))^2=O(|\hat \sigma_{\eps}|^{-1/2}e^{2|\im v_{\eps }|})
 \end{aligned}
 \end{equation*}
 Equating  expressions (\ref{der_chi_1})
 and (\ref{der_chi_2}) we obtain
      $$
  \frac{d R_{\eps}^+}{d\hat  \sigma}= O\left(\eps^{1/3} 
 |\hat \sigma| e^{2\im v}\right).  
   $$
   Integrating this inequality shows that,  for $\tau$  on the curve $\eps^{-1}\psi_a=-C_{a,0}$, we have
   $$
    R_{\eps}^+(\tau/\eps)=R_{\eps}^+(\tau_c/\eps)+O(\eps^{1/3}C_{a,0}^{1/3}e^{C_{a,0}} ). 
   $$
  This implies
\begin {equation}
\begin{aligned}
&\hat \chi_ {\eps}(\tau/\eps)=-i\sqrt{-\hat \sigma}\,
\frac{J_{-2/3}(v)-R^+_{\eps}(\tau_c/\eps)J_{2/3}(v)}{R^+_{\eps}(\tau_c/\eps)J_{-1/3}(v)+J_{1/3}(v))}+O(\eps^{1/3}(|\ln \eps|C_{a,0}^{-1/3}+C_{a,0}^{2/3})).
\end{aligned}
\end{equation}
From the expansion (\ref{expansion}), we obtain
\begin {equation}
\hat \chi_ {\eps}(\tau/\eps)=\sqrt{-\hat\sigma}\Bigl [1-2e^{-2iv}e^{\pi i/6}
\frac{R^+_{\eps}(\tau_c/\eps)-e^{-2\pi i/3}}{R^+_{\eps}(\tau_c/\eps)- e^{2\pi i/3}}
+O\Bigl (e^{-4|{\rm Im\, }v|}+\frac{1}{|v|}\Bigr)\Bigr ]+O(\eps^{1/3}(|\ln \eps|C_{a,0}^{-1/3}+C_{a,0}^{2/3})).
\end{equation}
This implies, in view of (\ref {exp_esimate_1}),
\begin{equation}
 c_{t,1}^{-1}e^{-2C_{a,0}}-c_{t,2}\frac{1}{|v|}<|\frac{\hat \chi_ {\eps}(\tau/\eps)-\sqrt{-\hat\sigma}}{\sqrt{-\hat\sigma}}|
 +O(\eps^{1/3}(C_{a,0}^{-2/3}|\ln \eps|+C_{a,0}^{1/3}))< c_{t,1}e^{-2C_{a,0}}+c_{t,2}\frac{1}{|v|}.
\end{equation}
The equilibrium of the fast system on the considered curve in variables $\hat\chi, \hat\sigma$ is $\hat\chi_e$ such that
$|\hat\chi_e-\sqrt{-\hat\sigma}|=O(\eps^{1/3}C_{a,0}^{2/3})$.
In the original variables, we obtain
\begin{equation}
 \begin{aligned}
&c_{m,26}^{-1}e^{-2C_{a,0}}-c_{m,27}\frac{1}{|v|}<|\frac{z(t)}
{\sqrt{\sigma}}|+O(\eps^{1/3}(C_{a,0}^{-2/3}|\ln \eps|+C_{a,0}^{1/3}))<c_{m,26}e^{-2C_{a,0}} +c_{m,27}\frac{1}{|v|}.
\end{aligned}
\end{equation}
The estimate for  $|\kappa(t)-\dK_{\eps}(\eps t)|$ follows from the estimate for  $|\kappa-\hat \kappa|$  in Lemma \ref {lem_transform_kappa} and from the estimate for  $|\hat \kappa(t)-\dK_{\eps}(\eps t)|$  in Lemma \ref {lem_cont_D_q_r}.

\hskip 12cm $\square$

\medskip
{\bf Proof of Lemma \ref{lem_after_c_2}.}

\medskip
We know that the considered solution can be continued into the domain $D_{q,r}$ with estimates given
by Lemma \ref{lem_cont_D_q_r}. As $D_2\subset D_{q,r}$, it remains to prove the inequalities (\ref {est_after c_2}). This proof
follows the lines of the proof in Lemma \ref{lem_after_c_1}. We have 
$$\hat \sigma_{\eps}(\tau/\eps)=\hat \sigma+O(\eps^{1/3}(|\ln \eps|+|\hat \sigma|^{3/2})), \ \frac{d\hat 
\sigma _{\eps}}{d\hat \sigma} =1+O(\eps^{1/3} |\hat \chi_{\eps}|).
$$
Similarly to the proof in Lemma \ref{lem_after_c_2}, for the value $R_{\eps}^+(\tau/\eps)$ at the point $P_1$ we get
$$
    R_{\eps}^+(\tau/\eps)=R_{\eps}^+(\tau_c/\eps)+O(\eps^{1/3}C_{a,1}^{2}e^{C_{a,0}} ). 
   $$
 \medskip
Then at the point $P_1$ we have 
\begin{equation}
\begin{aligned}
&\hat \chi_ {\eps}(\tau/\eps)=\sqrt{-\hat\sigma}\Bigl [1-2e^{-2iv}e^{\pi i/6}
\frac{R^+_{\eps}(\tau_c/\eps)-e^{-2\pi i/3}}{R^+_{\eps}(\tau_c/\eps)- e^{2\pi i/3}}
+O\Bigl (e^{-4|{\rm Im\, }v|}+\frac{1}{|v|}\Bigr)\Bigr ]\\
&+O(\eps^{1/3}(C_{a,1}^{-1/2}|\ln \eps|+C_{a,1}^{5/2})),
\end{aligned}
\end{equation}
where $O\Bigl (e^{-4|{\rm Im\, }v|}+{1}/{|v|}\Bigr)$ is the same term as in (\ref {exp_esimate_2}).
We omit the detailed derivation of the expressions for the coefficients in this formula in terms of $C_{a,1}$, since only the existence of such coefficients, rather than their explicit form, is important.

  Taking into account that
$|R^+_{\eps}(\tau_c/\eps)-1|< 1/100$, and the definition of the point $P_1$ (see (\ref{exp_esimate_2})), we obtain that at this point 
\begin{equation}
0.5 c_{m,22}^{-1}e^{-C_{a,0}}<|\frac{\hat \chi_ {\eps}(\tau/\eps)-\sqrt{-\hat\sigma}}{\sqrt{-\hat\sigma}}|+O(\eps^{1/3}
(C_{a,1}^{-1}|\ln \eps|+C_{a,1}^{2}))
<c_{t,1}e^{-C_{a,0}}.
\end{equation} 
 
The equilibrium of the fast system at the point $P_1$  in variables $\hat\chi, \hat\sigma$ is $\hat\chi_e$ such that
$|\hat\chi_e-\sqrt{-\hat\sigma}|=O(\eps^{1/3}C_{a,1})$.
In the original variables, we obtain
\begin{equation}
 \begin{aligned}
  c_{m,28}^{-1}\eps^{1/3}\sqrt{|\hat \sigma |}e^{-C_{a,0}}<|z|<c_{m,28}\eps^{1/3} \sqrt{|\hat \sigma |}e^{-C_{a,0}}.
\end{aligned}
\end{equation}
\hskip 12cm $\square$
\medskip

\medskip
{\bf Proof of Lemma \ref{lem_after_c_2_1}.}
\medskip

According to Lemmas \ref{lem_transform_1}, \ref{lem_transform_2},
$$
\hat z=z+O(\eps d_+^{-1}) +O(|z|^2d_+^{-1/2}+|\xi|^2).
$$ 
At the point $P_1$ we have $d_+>c_{t,1}C_{a,1}\eps^{2/3}, \ c_{t,2}^{-1}C_{a,1} < |\hat \sigma |<c_{t,2}C_{a,1}, \eta=O(\eps^{2/3}), \\w=O(\eps^{2/3})$.
Thus, at point $P_1$ we have
\begin{equation}
| \hat z-z|<c_{t,3}\eps^{1/3}( C_{a,1}^{-1} +C_{a,1}^{1/2}e^{-2C_{a,0}}+  O(\eps^{1/3})).
\end{equation}

We take $C_{a,1}>c_{m,29}e^{2C_{a,0}/3}$ with  $c_{m,29}>c_{m,25}$  such that  we have\\ $| \hat z-z|<0.5c_{m,28}^{-1}\eps^{1/3} \sqrt{|\hat \sigma |}e^{-C_{a,0}}$. Then
$$
  c_{m,30}^{-1}\eps^{1/3}\sqrt{|\hat \sigma |}e^{-C_{a,0}}<|\hat z|<c_{m,30}\eps^{1/3} \sqrt{|\hat \sigma |}e^{-C_{a,0}}. 
  $$
\hskip 12cm $\square$

{\bf Proof of Lemma \ref{lem_last_arc}.}

\medskip

Denote by $\Gamma^{C_{a,0}}$ the arc of the curve  $\re \Psi ={\rm const}$ passing through the point $P_1$. Denote by 
$\tau_{+,C_{a,0}}$ the point  where this arc crosses  the real axis $\im \tau =0$. The distances of  $P_1$ to
$\tau_c$ and to the curve  $\Gamma_{*,2}$ are of order $\eps^{2/3}$. Thus, we have
$|\tau_{+,C_{a,0}}-\tau_*^+|/\sqrt{\eps^{2/3}}=O(\eps^{2/3})$ (cf. Lemma \ref{lem_Gamma_Gamma}). Thus
$|\tau_{+,C_{a,0}}-\tau_*^+|=O(\eps)$. 

Estimates in Lemma \ref {K_and_K_eps} and  the estimates for $\Lambda_1$ in Lemma \ref{lem_transform_1} imply that 
distance between the points where the curves  $\Gamma^{C_{a,0}}$ and $\Gamma^{C_{a,0}, \eps}$  cross the real axis is 
$O(\eps|\ln\eps|)$. Thus, $|\tau_{+,C_{a,0}, \eps}-\tau_{*}^{+}|= O(\eps|\ln \eps|)$. 
  
\hskip 12cm $\square$

\medskip
        {\bf Proof of Lemma \ref {lem_after_c_3}.}
   
   \medskip
        
        The possibility of continuing  the solution into the domain $D_{3,1}$ with the estimates given by Lemmas  \ref{lem_cont_D_q_r}, 
        \ref{lem_improved_kappa_old_1} follows directly from these lemmas. 
        
        The arc $\Gamma^{C_{a,o}, \eps}$  corresponding to  $\re\tau_c+ C_{a,1}\eps^{2/3}     \le \re \tau \le \re Q_1$ may consist of parts lying in the domains $D_{q,r}\setminus (D_{q,r,d}\cup \bar D_{q,r,d})$ and $D_{q,r,d}\setminus  \bar D_{q,r,d}$  (cf. Lemma \ref{crossing}).
       The estimates $\eta=O(\eps), w=O(\eps)$ on this arc in these domains follow from  Lemma  \ref{lem_cont_D_q_r}. 
        
        \medskip
        According to (\ref{d_equation}), on the   $\Gamma^{C_{a,o}, \eps}$ we have
        \begin{equation*}
        \begin{aligned}
           &\dot z=  \Lambda_1(\kappa)z+  \eps O(|z|^2 d_+^{-3/2})+
     \eps O(|\xi|_*^2 d_+^{-1/2})  +O(|\eta|(|\eta|+|w|))\\
       &+ O(|\xi|_*^3d_+^{-1/2})+O(|z|^5d_+^{-3/2}) +\eps^3O(d_{+}^{-7/2}).
       \end{aligned}
        \end{equation*}
        This can be rewritten as
         \begin{equation*}
        \begin{aligned}
           &\dot z=  \Lambda_1(\kappa)z+  \eps O(|z|^2 \hat d_+^{-3/2})+
     \eps^2 O(|z| \hat d_+^{-1/2})  \\
       &+ O(\eps |z|^2 \hat d_+^{-1/2})+O(|z|^5 d_+^{-3/2}) +\eps^3O(\hat d_{+}^{-7/2}) + O(\eps^2).
       \end{aligned}
        \end{equation*}

        Lemma \ref{lem_improved_kappa_old_1} implies that on  $\Gamma^{C_{a,o}, \eps}$, for $\re \tau \le \re Q_1$, we have
        \begin{equation*}
        \begin{aligned}
       |\Lambda_1(\kappa(t))-\Lambda_1(\dK_{\eps}(\eps t)|= O(
          \eps^4 \hat{ d}_{+}^{-5}) +(\eps \hat d_+^{-1}|\ln(\eps \hat d_+^{-3/2})|)
  \left(\eps^{4/3}+ \eps^{5/3}d_+^{-2}
   +\eps^{7/3}d_+^{-3} \right).
    \end{aligned}
       \end{equation*}
    (We do not indicate the dependence on $C_q$ here, since the  value of   $C_q$ is already fixed.)   
       
          
        Thus, we have
        \begin{equation*}
        \begin{aligned}
         &\dot z=  (\Lambda_1(\dK_{\eps}(\eps t)) +  \alpha)z+\beta,\\
        & \alpha=O(\eps^4 \hat{ d}_{+}^{-5}+(\eps \hat d_+^{-1}|\ln(\eps \hat d_+^{-3/2})|)
  \left(\eps^{4/3}+ \eps^{5/3}d_+^{-2}
   +\eps^{7/3}d_+^{-3} \right)    \\
      &+\eps |z| \hat d_+^{-3/2}+ \eps^2 \hat d_+^{-1/2}+\eps |z| \hat d_+^{-1/2}+|z|^4 d_+^{-3/2}),
       \\
        &\beta=\eps^3O(\hat d_{+}^{-7/2}) + O(\eps^2).
         \end{aligned}
        \end{equation*}
        As $|z|=O(\eps^{1/3})$, we obtain
        \begin{equation*}
        \begin{aligned}
        & \alpha=O(\eps^4 \hat{ d}_{+}^{-5}
       + (\eps \hat d_+^{-1}|\ln(\eps \hat d_+^{-3/2})|)
  \left(\eps^{4/3}+ \eps^{5/3}d_+^{-2}
   +\eps^{7/3}d_+^{-3} \right)  \\
  & +\eps^{4/3} \hat d_+^{-3/2}+ \eps^2 \hat d_+^{-1/2}+\eps^{4/3} \hat d_+^{-1/2}+\eps^{4/3} d_+^{-3/2}).
        \end{aligned}
        \end{equation*}
        The integral of $|\alpha|$ over  the part of $\Gamma^{C_{a,0}, \eps}$ with $\re \tau_c+ C_{a,1}\eps^{2/3} \le \re \tau\le \re Q_1$ does not exceed $c_{t,1}C_{a,1}^{-1/2}$.
        \footnote{The main contribution $\sim 1$
         comes from the term $\eps^{4/3} \hat d_+^{-3/2}$. 
         }
         The integral of $|\beta|$  over the same part of $\Gamma^{C_{a,0}, \eps}$ does not exceed
         $c_{t,2}\eps^{1/3} C_{a,1}^{-5/2}$.
         
         \medskip
         We have $c_{m,31}^{-1} C_{a,1}<\hat \sigma_1< c_{m,31} C_{a,1} $, \  $c_{m,30}^{-1}\eps^{1/3}\sqrt{|\hat \sigma_1|}e^{-C_{a,0}}<|z_1|<c_{m,30}\eps^{1/3} \sqrt{|\hat \sigma_1|}e^{-C_{a,0}}$.
         
         Already obtained estimates in this lemma  imply that the change of $z$ along the considered part of  $\Gamma^{C_{a,o}, \eps}$ does not exceed
         $$c_{t,3}C_{a,1}^{-1/2}|z_1| +c_{t,4}\eps^{1/3} C_{a,1}^{-5/2}.$$ We would like  to choose  $C_{a,1}$ such that this change is  smaller than  $0.5|z_1|$. For this,  it suffices to require 
    $c_{t,3}C_{a,1}^{-1/2}<0.25, \ c_{t,4}\eps^{1/3} C_{a,1}^{-5/2}<0.25 c_{m,31}^{-1}\eps^{1/3} \sqrt{|\hat \sigma_1 |}e^{-C_{a,0}}
 $, which is satisfied if $C_{a,1}>c_{m,32}, \ C_{a,1}>c_{m,33} e^{C_{a,0}/3}$. The value $C_{a,1}$ should also satisfy the condition $C_{a,1}>c_{m,29}e^{2C_{a,0}/3}$ introduced by Lemma \ref {lem_after_c_2_1}.

         \hskip 12cm $\square$
         
         \medskip
        {\bf Proof of Lemma \ref {lem_to_D32}.}
   
   \medskip
   The possibility of continuing the solution into the domain
   $D_{3,2}$ with the estimates given by Lemma  \ref{lem_cont_D_q_r} follows directly from 
        Lemma  \ref{lem_cont_D_q_r}.  In particular, in this domain we have 
        $$
        z(t)=O(\eps^{1/3}), \ w(t)=O(\eps^{1/3}), \ \eta(t)= O(\eps),\     |\kappa(t)-\dK_{\eps}(\eps t)|= O(\eps).
        $$
        According to (\ref{d_equation}), on the curve  $\Gamma^{C_{a,o}, \eps}$ in  $D_{3,2}$ we have
        \begin{equation*}
        \begin{aligned}
        &\dot z=  \Lambda_1(\kappa)z+  \eps O(|z|^2 d_+^{-3/2})+
     \eps O(|\xi|_*^2 d_+^{-1/2})  +O(|\eta|(|\eta|+|w|))\\
       &+ O(|\xi|_*^3d_+^{-1/2})+O(|z|^5d_+^{-3/2}) +\eps^3O(d_{+}^{-3}|\xi|)+\eps^3O^*(d_{+}^{-7/2}).  
       \end{aligned}
        \end{equation*}
        This can be rewritten as
        \begin{equation*}
        \begin{aligned}
          &\dot z=  \Lambda_1(\kappa)z+  \eps O(|z|^2)+O(\eps^{4/3}|z|)+O(\eps^{4/3}) \\
          &+O(\eps^{1/3}|z|^2)+O(|z|^5) +O(\eps^3|z|)+\eps^{10/3}+O(\eps^3).
        \end{aligned}
        \end{equation*}
        This can  further be rewritten as
         \begin{equation}
        \begin{aligned}
        \label{e_Q2_to_R}
           &\dot z=  (\Lambda_1(\dK_{\eps}(\eps t)) +\alpha )z+\beta,\\
           &\alpha=O(\eps+\eps^{4/3}+\eps^{4/3}+\eps^{2/3}+\eps^{4/3}+\eps^3)=O(\eps^{2/3}),\\
            &\beta=O(\eps^{4/3}+\eps^{10/3}+\eps^{3})=O(\eps^{4/3}).
       \end{aligned}
        \end{equation}
        Denote by $P_2$ the point of   the curve  $\Gamma^{C_{a,o}, \eps}$ where $\re P_2=\re Q_1$. Let $z_2$ be the value of $z(t)$ at this point. According to Lemma \ref {lem_after_c_3},  $c_{t,1}^{-1}\eps^{1/3}<|z_2|<c_{t,1}\eps^{1/3}$. The change in $t$ along the curve  $\Gamma^{C_{a,o}, \eps}$  from the point $P_2$ till the real axis is $O(|\ln\eps|)$. Then  (\ref{e_Q2_to_R}) implies that the change of
 $|z(t)|$ along  the curve  $\Gamma^{C_{a,o}, \eps}$  from the point $P_2$ till the real axis is $O(\eps|\ln\eps|)$. Thus, at the point  $\tau_{+,C_{a,0}, \eps}$ of intersection of  $\Gamma^{C_{a,o}, \eps}$ with the real axis, we have $|z(t)|=|w(t)|>c_{m,34}^{-1}\eps^{1/3}$ with  $c_{m,34} =2 c_{t,1}$.
 
 \hskip 12cm $\square$
 
  \medskip
        {\bf Proof of Lemma \ref {lem_to_original}.}
           
   \medskip
   Estimates for the transformations in Lemmas \ref{lem_transform_00}, \ref{lem_transform_1}, \ref {lem_transform_2}, and \ref {lem_transform_kappa}  show that the difference between the original and transformed variables at  $\tau= \tau_{+,C_{a,0}, \eps}$  is $O(\eps^{2/3})$ for $z,w,
   \eta$ and  $O(\eps^{4/3})$ for $\kappa$. Thus, in the original variables at  $\tau= \tau_{+,C_{a,0}, \eps}$, we have
   $$
  c_{m,35}^{-1}\eps^{1/3} < |z(t)|=|w(t)|<  c_{m,35}\eps^{1/3},\ \eta(t)=O(\eps^{2/3}), \   |\kappa(t)-\dK_{\eps}(\eps t)|= O(\eps).   
   $$
   According to Lemma \ref{K_and_K_eps},  $ |\dK(\eps t)-\dK_{\eps}(\eps t)|= O(\eps)$. Thus,  $ |\kappa(t)-\dK(\eps t)|= O(\eps)$.
   
    \hskip 12cm $\square$
    
    \medskip
        {\bf Proof of Lemma \ref {lem_escape}.}
           
   \medskip
   This is a standard situation of rapid departure from a non-degenerate linearly unstable equilibrium of the fast system, when the phase point is initially not too close to this equilibrium. We include the proof for completeness of the exposition.

    \medskip
   Denote $t_5=\tau_{+,C_{a,0},\eps}/\eps$. Note that $\tau_{+,C_{a,0},\eps}=\tau_*^+ + O(\eps|\ln \eps|)$.
   
   For real $t \ge t_5$, we have 
   \begin{equation}
        \begin{aligned}
        \label{e_for_departure_1}
        &\dot z =\lambda_1(\dK(\eps t))z +O(|z^2| +|z\eta| +|\eta|^2+|\kappa-\dK(\eps t)||z|+ \eps),\\
        &\dot \eta=B(\dK(\eps t))\eta +O(|z^2| +|z\eta| +|\eta|^2+|\kappa-\dK(\eps t)||\eta|+\eps),\\
        &\dot \kappa =\eps G(\kappa)+\eps O(|z|+|\eta|+\eps),\\
        &\dot \dK(\eps t)=\eps G(\dK(\eps t)).
   \end{aligned}
   \end{equation}
   At $t =t_5$, we have 
   $$
   c_{t,1}^{-1}\eps^{1/3} < |z(t_5)|<  c_{t,1}\eps^{1/3},\ \eta(t_5)=O(\eps^{2/3}), \   |\kappa(t_5)-\dK(\eps t_5)|= O(\eps).
   $$
   We also have
   $$
   \re\lambda_{1,2}(\dK(\eps t_5))>c_{t,2}^{-1},\  \re\lambda_{j}(\dK(\eps t_5))<-c_{t,3}^{-1}, \ j=3,\ldots , n.
   $$
   
   A  $c_{t,4}^{-1}$-neighbourhood $U$ of the point $(\re z(t_5), \im z(t_5),  \eta(t_5), \kappa(t_5))$ belongs to the domain $D$. Denote by $U_1$ the  $0.5c_{t,4}^{-1}$-neighbourhood of the point $(\re z(t_5), \im z(t_5),  \eta(t_5))$ in $x$-space.
   Consider the time interval $[t_5, t_5+\eps^{-1/2}]$ and its subinterval $[t_5, t_*]$ on which the point $(\re z(t), \im z(t),  \eta(t))$  remains inside   $U_1$. The changes in  $\kappa$ and  $\dK$ on this  interval are $O(\eps^{1/2})$. Thus, equations for $z, \eta$ in  system (\ref {e_for_departure_1}) take the form
   \begin{equation}
        \begin{aligned}
        \label{e_for_departure_2}
        &\dot z =\lambda_1(\dK(\eps t))z +O(|z^2| +|z\eta| +|\eta|^2+ \eps^{1/2}|z| +\eps),\\
        &\dot \eta=B(\dK(\eps t))\eta +O(|z^2| +|z\eta| +|\eta|^2+\eps^{1/2}|\eta|+\eps).
   \end{aligned}
   \end{equation}
   On the considered time interval, we have $\re \lambda_1(\dK(\eps t))>(3/4)c_{t,2}^{-1},\   \re\lambda_{j}(\dK(\eps t))<-(3/4)c_{t,3}^{-1}, \ j=3,\ldots , n $. 
 Thus,
 \begin{equation}
        \begin{aligned}
   &\frac{d}{dt}|z|^2=2(\re \lambda_1(\dK(\eps t))|z|^2+ O(|z^3| +|z|^2|\eta| +|z||\eta|^2+ \eps^{1/2}|z|^2+\eps|z| )\\
   &\ge (3/2)c_{t,2}^{-1}|z|^2+ O(|z^3| +|z|^2|\eta| +|z||\eta|^2+ \eps^{1/2}|z|^2+\eps|z|).
       \end{aligned}
   \end{equation}
   According to \cite{bellman}, Sect 13, the linear homogeneous system with the matrix $B(\dK(\tau))$ for frozen $\tau$ admits a
quadratic Lyapunov function $W(\tau, \eta)$, whose derivative with respect to time  is  equal to $-(\eta\cdot\eta)$. Thus,
  
   \begin{equation}
        \begin{aligned}
   &\frac{d}{dt}W=-(\eta\cdot\eta)+ O(\eps|\eta|^2+ |\eta|^3| +|\eta|^2|z| +|z|^2|\eta|+ \eps^{1/2}|\eta|^2+\eps|\eta|).
    \end{aligned}
   \end{equation}
   Following the construction in  the proof  of the Chetaev instability theorem \cite{chetaev}, we consider the function
   $$
   V(z, \eta, \tau)=|z|^2-W(\tau, \eta).
   $$
   Then
   \begin{equation}
        \begin{aligned}
         &\frac{d}{dt}V\ge (3/2)c_{t,2}^{-1}|z|^2 + (\eta\cdot\eta) \\
         &+ O(|z^3| +|z^2\eta| +|z||\eta|^2+ \eps^{1/2}|z|^2 +\eps |\eta|^2 +|\eta^3| +|\eta|^2|z| +|z|^2|\eta|+ \eps^{1/2}|\eta|^2)\\
         &+O(\eps |z|+\eps|\eta|)\\
         &\ge c_{t,6}^{-1}(|z|^2 + (\eta\cdot\eta)) +O(\eps |z|+\eps|\eta|)
        \end{aligned}
   \end{equation}
   provided that  $c_{t,4}^{-1}$ is sufficiently small.
   
   We have $|z(t_5)|> c_{t,1}^{-1}\eps^{1/3} $. We chose $t_*$ such that $|z(t)|> 0.5c_{t,1}^{-1}\eps^{1/3} $ for
   $t_5\le t\le t_*$.  Thus, on this time interval we have $ |z|^2\gg \eps |z|$.
   For the values of $\eta$ on the considered time interval we distinguish  two cases a) $|\eta|\le\eps^{2/3}$, and b) $|\eta|>\eps^{2/3}$.
   In  case a), $|\eps\eta|\ll |z|^2$. In  case b), $|\eps \eta|\ll |\eta|^2$. Thus, in both cases
   $$
   \frac{d}{dt}V\ge c_{t,7}^{-1}(|z|^2 + |\eta|^2)\ge  c_{t,7}^{-1}V.
   $$
   Then
   $$
   V(z(t), \eta(t), \eps t)\ge  c_{t,8}^{-1}\eps^{2/3}e^{c_{t,7}^{-1}t},
   $$
   and thus
   \begin{equation}
   \label{e_grow_z}
   |z(t)|\ge c_{t,8}^{-1/2}\eps^{1/3}e^{0.5c_{t,7}^{-1}t}
   \end{equation}
   As $V$ remains positive, we have $W(\eta(t), \eps t)< |z|^2$, and hence $|\eta(t)| <c_{t,9}|z(t)|$. 
   This estimate together with (\ref{e_grow_z}) implies that there exists
       $t_d = t_5+O(|\ln \eps|)$ and a constant $c_{m,36}$ such that $z(t_d)= c_{m,36}^{-1}$, and for $t_5\le t\le t_d $ the point $(\re z(t), \im z(t),  \eta(t))$ does not leave  $U_1$ (i.e. $t_d<t_*$).
   We have $\eps t_5 =\tau_{+,C_{a,0}, \eps}$, and  $|\tau_{+,C_{a,0}, \eps}-\tau_{*}^{+}|= O(\eps|\ln \eps|)$. Therefore, $|\eps t_d-\tau_{*}^{+}|= O(\eps|\ln \eps|)$. 
   Thus, $\tau_d=\eps t_d$ is the time  moment  claimed in Lemma \ref {lem_escape}.

    \hskip 12cm $\square$

%% file: delay_proofs_sect_11.tex
\section{Proofs of Lemmas from Section 11}
\label{proofs_11}

{\bf Proof of lemma \ref {est_of_shorten}.} 
\medskip

Estimate $z(t)$. Each point in the time domain $ S(T)$   with $\im t \ge 0$ can be reached from $t_4$ by first moving along the real axis and then along the curve $\re \Psi_{\eps}={\rm const}$. Along the real axis, we have $z(t)=O(\eps^3)$. Let $\sigma$ denote the arc length along the curve $\re \Psi_{\eps}={\rm const}$. On this curve, $t=t(\sigma)$.
For the motion along  the curve  $\re \Psi_{\eps}={\rm const}$ in the domain $\re \tau \ge -c_{l,1}^{-1}$, the equation for $z$ in (\ref{a_shorten_+}) takes the form
\begin{equation}
\label{eq_z_Gamma_q}
\frac{d z}{d\sigma} = i\omega(\eps\sigma)  z +\eps^3\frac{dt}{d \sigma} O (\hat {d}_{+}^{-7/2}),
\end{equation}
where  $\omega=\Lambda_1(\dK_{\eps})({dt}/{d \sigma}) i^{-1}$  is a real-valued  function bounded away  from 0 by  $c_{a,1}^{-1}{\hat d_{+} }^{\, 1/2}$.

Estimates obtained from equation (\ref {eq_z_Gamma_q}) imply that 
$$
|z(t)|<O(\eps^3) +  \eps^2 O( \hat{d}_{+}^{-5/2}(\tau))<
c_{r,5}\eps^2  \hat {d}_{+}^{-5/2}(\tau).
$$
The last estimate can be improved.
  Each point in the time domain $ S(T)$   can be reached from $t_4$ by first moving  along the curve $\Gamma_{q,\eps}$ and then  vertically downward.  For the downward motion along a  vertical line $\re t = {\rm const}$,   equation (\ref{a_shorten_+}) takes the form
\begin{equation*}
\label{eq_z_vert_+}
 \frac{d z}{ds}
 =-i\Lambda_1(\dK_{\eps}(\eps t)) z +  \eps^3 O (\hat {d}_{+}^{-7/2}), \
    s= -\im t .
\end{equation*}
According to condition 6) in Section \ref{form_conditions}, each vertical line 
 crosses the curves $\re\Psi_{\eps}={\rm const}$ transversally. Thus, $\re\Psi_{\eps}$ decreases downward along the vertical line, and \\  $\re( -i\Lambda_1(\dK_{\eps}(\eps t))<-c_{a,2}^{-1}\hat d_{+}^{1/2}$. This implies that $|z(t)|$ decreases for motion downward along any line $\re t = {\rm const}$ while $| z(t)|> c_{a,3}^{-1}\eps^3 \hat {d}_{+}^{-4}$.  
 This, in particular, implies that at $0 \le \im \tau\le 2c_{l,1}^{-1}$ we have    $|z(t)|=O(\eps^3) $. 
 \medskip
 In a similar way, by considering motion vertically downward from the real axis, we obtain that   $|z(t)|=O(\eps^3) $ for $-2c_{l,1}^{-1} \le  \im \tau\le 0 $, and $|z(t)|<c_{r,8 }\eps^3 \hat d_{-}^{-3} $ for $\im \tau \le c_{l,1}^{-1}$.  This implies that  $|w(t)|<c_{r,8 }\eps^3 \hat d_{+}^{-3} $ for $\im \tau \ge -c_{l,1}^{-1}$
 \medskip

 \medskip
 Estimate $\eta(t)$.  For definiteness, consider the half-plane   $\im t\ge 0$. Equation for $\eta$ in (\ref{a_shorten_+}) has the form
 \begin{equation}
\begin{aligned}
\dot \eta=B(\dK_{\eps})\eta + \eps^3 \tilde O_2( d_+^{-3}), \  |\tilde O_2( d_+^{-3})|< 2c_{r,2}d_{+}^{-3}.
  \end{aligned}
\end{equation}
 Along the real axis, we have $  |\eta(t)|< c_{a,4}\eps^3$.  Each point in the half-plane  $\im t\ge 0$ can be reached from the real axis
 by first moving along the real axis and then along the curve
   $\re \Psi_{\eps}(t)={\rm const}$. We introduce the arc length $\sigma$  along this curve as a new time parameter. Thus $t=t(\sigma), \ |dt/d\sigma|=1$.
 
 The equation for $\eta$  on the curve $\re \Psi_{\eps}={\rm const}$ takes the form
\begin{equation}
\label{e_with_sigma_00}
\frac{d \eta}{d\sigma}=\frac{dt}{d\sigma}B(\dK_{\eps})\eta + \alpha, \ \alpha= \eps^3 \frac{dt}{d\sigma}\tilde O_2( d_+^{-3}), \
 |\alpha|< 2 c_{r,2}\eps^3d_{+}^{-3}.
\end{equation}
According to condition  5) in Section \ref {form_conditions}, all eigenvalues of the matrix $({dt}/{d\sigma})B(\dK_{\eps})$ have negative real parts.

   According to \cite{bellman}, Sect 13, the corresponding to  (\ref {e_with_sigma_00}) linear homogeneous system 
    (i.e. with $\alpha =0$) 
   for frozen $\tau$ has a
quadratic Lyapunov function $W(\tau, \eta)$ whose $\sigma$-derivative  for frozen $\tau$  is equal to  $ -( \eta\cdot \bar \eta)$.

  We have $c_{a,5}^{-1} |\eta|^2 \le W(\eta)\le c_{a,5} |\eta|^2$.   The derivative of $W$ in the original system is 
$$
\frac{dW}{d\sigma}=-(\eta\cdot\bar \eta)+O(\eps\hat {d}_{+}^{-1/2}) W_1(\eta) +W_2(\alpha, \eta) 
 $$
 with a hermitian quadratic form $W_1$ and hermitian bilinear form $W_2$.  
 
   We have $|W_2(\alpha, \eta)|\le c_{a,6}|\alpha||\eta|\le c_{a,7}\eps^3\hat {d}_{+}^{-3}\sqrt{W}$. This  implies
 $$
 \frac{dW}{d\sigma}\le - 0.5c_{a,5}^{-1} W+  c_{a,7}\eps^3\hat {d}_{+}^{-3}\sqrt{W}.
 $$
 For the starting point on the real time axis, we consider two cases: (a) $W^{1/2} < 4  c_{a,5}  c_{a,7}\eps^3\hat {d}_{+}^{-3}$, and (b) $W^{1/2} \ge 4 c_{a,5} c_{a,7}\eps^3\hat {d}_{+}^{-3}$. For the case (a), inequality  $W^{1/2} <4 c_{a,5} c_{a,7}\eps^3\hat {d}_{+}^{-3}$ will be satisfied up to the line $\re \tau =\tau_c$.  Indeed, starting from the moment, when this inequality is not satisfied, we should have ${dW}/{d\sigma}<0$, i.e. $W$ decays, while $\hat {d}_{+}^{-3}$ grows, which leads to a contradiction. For the case (b), starting from the real axis, $W$ decays, while $\hat {d}_{+}^{-3}$ grows. If $W^{1/2} \ge 4 c_{a,5} c_{a,7}\eps^3\hat {d}_{+}^{-3}$ for all time up to   $\re \tau =\tau_c$, then, because, initially,  $W^{1/2}=O(\eps^3)$, we would have  $W^{1/2}=O(\eps^3)$ for all this time interval. Suppose now that there is a moment of time when $W^{1/2} = 4 c_{a,5} c_{a,7}\eps^3\hat {d}_{+}^{-3}$ for the first time.  From this time onward,  we have $W^{1/2} \le 4 c_{a,5} c_{a,7}\eps^3\hat {d}_{+}^{-3}$, as in case (a). Thus, in both cases (a) and (b) we obtain $W^{1/2}=O(\eps^3\hat {d}_{+}^{-3})$. This implies that $|\eta|=O(\eps^3\hat {d}_{+}^{-3})$.
 
 \medskip
 For $\dot \kappa$, we have
 \begin{equation*}
 \begin{aligned}
  & \dot {\kappa}=\eps F(\kappa) +\eps^2 O(|z|^2d_+^{-2})+\eps^2 O(|\xi|_*^2d_+^{-3/2}) \\
 &+\eps O\left(|z|^4+|z|^5d_+^{-2}+ |\xi|_*^3d_+^{-1}\right)
 +\eps^3 O(|z|d_+^{-3} + |\xi|_*d_+^{-3/2})+\eps^4O(d_+^{-7/2}|\xi|),\\
      &\hskip 1 cm F= g(X(\kappa), \kappa,0)+\eps O_3( d_{+}^{-1}). 
\end{aligned}
\end{equation*}
For terms in $\dot \kappa$, at $\im \tau\ge -c_{l,1}^{-1}$,  we have
\begin{equation}
\begin{aligned}
&\eps^2 O(|z|^2d_+^{-2})+\eps^2 O(|\xi|_*^2d_+^{-3/2})=O(\eps^2(\eps^2d_+^{-5/2})^2d_+^{-2})
=O(\eps^6d_+^{-7}),\\
&\eps O\left(|z|^4\right)=O(\eps(\eps^2d_+^{-5/2})^4)
=O(\eps^9 d_+^{-10}),\\
&\eps O(|z|^5d_+^{-2})=O(\eps (\eps^2d_+^{-5/2})^5d_+^{-2})=O(\eps^{11}d_+^{-29/2}),\\
& \eps O(|\xi|_*^3d_+^{-1})=O(\eps (\eps^2d_+^{-5/2})^2(\eps^3d_+^{-3})d_+^{-1}=O(\eps^8 d_+^{-9}),\\
&\eps^3 O(|z|d_+^{-3} + |\xi|_*d_+^{-3/2})=O(\eps^3(\eps^2d_+^{-5/2})d_+^{-3})=
O(\eps^5 d_+^{-11/2}),\\
&\eps^4O(d_+^{-7/2}|\xi|)=O(\eps^4d_+^{-7/2}(\eps^2d_+^{-5/2})=O(\eps^6d_+^{-6}).
\end{aligned}
\end{equation}
We have
$$
O(\eps^6d_+^{-7})+O(\eps^9 d_+^{-10})+O(\eps^{11}d_+^{-29/2})+O(\eps^8 d_+^{-9})+O(\eps^5 d_+^{-11/2})=O(\eps^5 d_+^{-11/2}).
$$
Thus,
$$
\dot\kappa= \eps F(\kappa)+O(\eps^5  { d}_{+}^{-11/2})= \eps F(\kappa)+O(\eps^5  \hat { d}_{+}^{-11/2}).
$$
Similarly for $\im \tau\le c_{l,1}^{-1}$.

Thus, in the considered domain,
\begin{equation*}
|\kappa(t)-\dK_{\eps}(\eps t)|<c_{r,7}\eps^4  \hat { d}_{\pm}^{-9/2}.
\end{equation*}
(This also absorbs  the effect of  the difference in the initial conditions $|\kappa(t_4)-\dK_{\eps}(\eps t_4)|=O(\eps^6\ln \eps)$).

 \hskip 12cm $\square$

 \medskip
 
 {\bf Proof of lemma \ref {margin_D_l}.} 
\medskip

 For $\im \tau\ge -c_{l,1}^{-1}$, we have
  \begin{equation}
\begin{aligned}
  &\beta_1= \eps O(|z|^2 d_+^{-3/2})+
     \eps O(|\xi|_*^2 d_+^{-1/2})  +O(|\eta|(|\eta|+|w|))\\
       &+ O(|\xi|_*^3d_+^{-1/2})+O(|z|^5d_+^{-3/2})+\eps^3O(d_{+}^{-3}|\xi|), \\
  &\beta_2=O(|\eta|^2) + O(|\eta |(|z|+|w|)) +O(|zw|) \\
   & \skip 0.7cm+     \eps O((|z|^2  +|w|^2)d_+^{-1/2})  
     +O(|\xi|_*^3d_+^{-1/2})+ O(|z|^4)+\eps^3O(d_{+}^{-3}|\xi|),\\
       & \dot {\kappa}=\eps F(\kappa) +\eps^2 O(|z|^2d_+^{-2})+\eps^2 O(|\xi|_*^2d_+^{-3/2}) \\
 &
 +\eps O\left(|z|^4+|z|^5d_+^{-2}+ |\xi|_*^3d_+^{-1}\right)
 +\eps^3 O(|z|d_+^{-3} + |\xi|_*d_+^{-3/2})+\eps^4O(d_+^{-7/2}|\xi|),\\
  &\hskip 1cm F= g(X(\kappa), \kappa,0)+O(\eps  d_{+}^{-1}).
   \end{aligned}
\end{equation}
Estimate terms in these relations using (\ref{ind_1+}).

For terms in $\beta_1$,  provided that $ d_+>c_{*} \eps^{2/3}$ with sufficiently large constant $c_{*}$, we have
\begin{equation}
\begin{aligned}
& \eps O(|z|^2  d_+^{-3/2})=O(\eps  (\eps^2  { d}_{+}^{-5/2})^2 d_+^{-3/2})=O(\eps^5  d_+^{-13/2})<0.05c_{r,1}\eps^3  { d}_{+}^{-7/2},\\
&\eps O(|\xi|_*^2 d_+^{-1/2}) = O( \eps (\eps^2  { d}_{+}^{-5/2} \eps^3  { d}_{+}^{-3}  )  d_+^{-1/2})= O(\eps^6 d_+^{-6})<0.05c_{r,1}\eps^3 { d}_{+}^{-7/2},\\
&O(|\eta|(|\eta|+|w|))=O((\eps^3  { d}_{+}^{-3})^2) =O(\eps^6 { d}_{+}^{-6})<0.05c_{r,1}\eps^3  { d}_{+}^{-7/2},\\
&O(|\xi|_*^3  d_+^{-1/2})=O(( (\eps^2  { d}_{+}^{-5/2})^2 \eps^3  { d}_{+}^{-3})  d_+^{-1/2})
=O(\eps^7  { d}_{+}^{-17/2})<0.05c_{r,1}\eps^3 { d}_{+}^{-7/2},\\
&O(|z|^5d_+^{-3/2})=O((\eps^2  {d}_{+}^{-5/2})^5 { d}_{+}^{-3/2})=O(\eps^{10} {d}_{+}^{-14})<0.05c_{r,1}\eps^3  { d}_{+}^{-7/2},\\
&\eps^3O(d_{+}^{-3}|\xi|)=O(\eps^3d_{+}^{-3}(\eps^2  {d}_{+}^{-5/2}))=O(\eps^5d_{+}^{-11/2})<0.05c_{r,1}\eps^3  { d}_{+}^{-7/2}.
\end{aligned}
\end{equation}

For terms in $\beta_2$, provided that $ d_+>c_{*} \eps^{2/3}$ with sufficiently large constant $c_{*}$,  we have
\begin{equation}
\begin{aligned}
&O(|\eta|^2)=O((\eps^3  { d}_{+}^{-3}) ^2)=O(\eps^6 { d}_{+}^{-6})<0.05c_{r,2}\eps^3 { d}_{+}^{-3}\\
&O(|\eta z|)=O(\eps^{3}  { d}_{+}^{-3} \eps^2 { d}_{+}^{-5/2})=O(\eps^{5}  { d}_{+}^{-11/2})<
0.05c_{r,2}\eps^3  { d}_{+}^{-3},\\
&O(|\eta w|)=O(\eps^{3}  { d}_{+}^{-3}\eps^3{d}_{+}^{-3})=O(\eps^{6}  { d}_{+}^{-6})<0.05c_{r,2}\eps^3  { d}_{+}^{-3},\\
&O(|z w|)=O(\eps^2  { d}_{+}^{-5/2}\eps^{3}  { d}_{+}^{-3})=
O(\eps^5  { d}_{+}^{-11/2})<0.05 c_{r,2}\eps^3  { d}_{+}^{-3},\\
&\eps O(|z|^2){ d}_{+}^{-1/2}=O(\eps (\eps^2  { d}_{+}^{-5/2})^2 { d}_{+}^{-1/2})=O(\eps^5  { d}_{+}^{-11/2})<0.05c_{r,2}\eps^3  { d}_{+}^{-3},\\
&\eps O(|w|^2) { d}_{+}^{-1/2}=O(\eps (\eps^3  { d}_{+}^{-3})^2  d_{+}^{-1/2})=O(\eps^7  d_{+}^{-13/2})<0.05 c_{r,2}\eps^3  { d}_{+}^{-3},\\
&O(|\xi|^3_* { d}_{+}^{-1/2})=O((\eps^2  { d}_{+}^{-5/2})^2 (\eps^3{d}_{+}^{-3}) { d}_{+}^{-1/2})=O(\eps^7 { d}_{+}^{-17/2})<0.05c_{r,2}\eps^3{d}_{+}^{-3}\\
&O(|z|^4)=O((\eps^2  { d}_{+}^{-5/2})^4)=O(\eps^8  { d}_{+}^{-10})<0.05c_{r,2}\eps^3  { d}_{+}^{-3},\\
&\eps^3O(d_{+}^{-3}|\xi|)=O(\eps^3d_{+}^{-3}(\eps^2  {d}_{+}^{-5/2}))=O(\eps^5d_{+}^{-11/2})<0.05c_{r,2}\eps^3  { d}_{+}^{-3}.
\end{aligned}
\end{equation}

For $\im \tau\le c_{l,1}^{-1}$, we have
\begin{equation}
\label{d_equation_1}
 \beta_4=  O(|\eta|(|\eta|+|w|))+ \eps O(|z||w| d_-^{-3/2})   + \eps O(|\xi|^2d_{-}^{-1/2})
     +O(|\xi|_{**}^3d_{-}^{-1/2})+ O(|w|^3)+\eps^3O(d_{-}^{-3}|\xi|). 
     \end{equation}
     
     Estimate terms in this relation using (\ref{ind_1-}).
     
For terms in $\beta_4$, provided that $ d_->c_{*} \eps^{2/3}$ with sufficiently large constant $c_{*}$,  we have
\begin{equation}
\begin{aligned}
&O(|\eta|(|\eta|+|w|))=O(  (\eps^3  { d}_{-}^{-3})(\eps^2  { d}_{-}^{-5/2} ))=O(\eps^5  { d}_{-}^{-11/2})<0.05c_{r,4}\eps^3  { d}_{-}^{-3},\\
&\eps O(|z||w| d_-^{-3/2}) =O(\eps (\eps^3  { d}_{-}^{-3})(\eps^2  { d}_{-}^{-5/2} )d_{-}^{-3/2})
=O(\eps^6 d_{-}^{-7})<0.05c_{r,4}\eps^3  { d}_{-}^{-3},\\
&\eps O(|\xi|^2 d_-^{-1/2})=O(\eps (\eps^2  { d}_{-}^{-5/2})^2 { d}_{-}^{-1/2})=O(\eps^5 \ { d}_{-}^{-11/2})<0.05c_{r,4}\eps^3  { d}_{-}^{-3},\\
& O(|\xi|_{**}^3 d_-^{-1/2})=O((\eps^2  { d}_{-}^{-5/2})^2(\eps^3  { d}_{-}^{-3})  d_-^{-1/2})=O(\eps^7  { d}_{-}^{-17/2})<0.05c_{r,4}\eps^3  { d}_{-}^{-3},\\
&O(|w|^3)=O((\eps^2  { d}_{-}^{-5/2})^3)=O(\eps^6{ d}_{-}^{-15/2})<0.05c_{r,4}\eps^3  { d}_{-}^{-3},\\
&\eps^3O(d_{-}^{-3}|\xi|)=O(\eps^3d_{-}^{-3}(\eps^2  {d}_{-}^{-5/2}))=O(\eps^5d_{-}^{-11/2})<0.05c_{r,4}\eps^3  { d}_{-}^{-3}.
\end{aligned}
\end{equation}
Thus,
$$
|\beta_1|< 0.4 \eps^{3} c_{r,1} {d}_{+}^{-7/2}, \ |\beta_2| < 0.4  \eps^{3}c_{r,2} {d}_{\pm}^{-3}, \  |\beta_4| < 0.4  \eps^{3} c_{r,4} {d}_{-}^{-3}.
$$

\medskip

For $\kappa$,  we have
\begin{equation*}
|\kappa(t)-\dK_{\eps}(\eps t)|<c_{r,7}\eps^4  \hat { d}_{\pm}^{-9/2}<\mu_1\eps^4  \hat { d}_{\pm}^{-9/2}
\end{equation*}
(This also absorbs the effect of  the difference in the initial conditions $|\kappa(t_4)-\dK_{\eps}(\eps t_4)|=O(\eps^6\ln \eps)$).

Then
$$
|{ d}_{\pm}(\kappa(t))-\hat { d}_{\pm}(\eps t)|< c_{a,1}\eps^4   { d}_{\pm}^{-9/2}(\kappa(t))\ll { d}_{\pm}(\kappa(t)).
$$
This implies
$$
0.6 \hat d_{\pm}(\eps t)   \le d_{\pm}(\kappa(t)) \le 1.5 \hat d_{\pm}(\eps t).
$$
If constant $c_{e,5}$ is chosen sufficiently large, then $ d_{\pm}>c_{*} \eps^{2/3}$ with a sufficiently large constant $c_{*}$. Then the above estimate imply that,
 for any $\eps T< \re \tau_c$,  the assumptions   (\ref{induct_D_l}) are satisfied with a margin.
 
 \hskip 12cm $\square$
 
\medskip

 {\bf Proof of Lemma \ref{lem_0.5_2}.}
\medskip 

Denote by $\tau_u$ the upper endpoint of the considered segment. 
Denote\\ $\tilde d_{+}(\tau)= b\cdot(\dK_{\eps}(\tau)-\kappa_c ), \ \tilde d_u=\tilde d_{+}(\tau_u)= b\cdot(\dK_{\eps}(\tau_u)-\kappa_c )  $. Thus  $ \hat d_{+}(\tau) =|\tilde d_{+}(\tau)|,  \ \hat d_u =|\tilde d_u|.$
 
  Then
$$
|\tilde d_u-\tilde d_{+}(\tau)| =|b\cdot (\dK_{\eps}(\tau_u)-\dK_{\eps}(\tau)|\le c_{a,1}|\tau_u-\tau|.
$$
Taking  $|\tau_u-\tau|<0.5c_{a,1}^{-1}\hat  d_u$, we obtain result of the Lemma with
$c_{r,5}=2c_{a,1}$.

 \hskip 12 cm $\square$
 
   \medskip
 {\bf Proof of Lemma \ref{lem_about_width}.}
\medskip 
We should prove that
 $$c_{e,12} \eps \hat d_u^{-1/2}|\ln( c_{e,11}^{-1}\eps \hat d_u^{-3/2}C_q^{15/16})|< c_{r,5}^{-1}\hat d_u ,$$
 which can be rewritten as 
 $$
  c_{r,5} c_{e,12} \eps \hat d_u^{-3/2}|\ln( c_{e,11}^{-1}\eps \hat d_u^{-3/2}C_q^{15/16})|< 1.
 $$
 For $\eps^{2/3}(C_{q,1}+o(1)) \le\hat d_u\le c_{a,1}^{-1}$ the left hand side of this inequality is a monotonic function of $\eps \hat d_u^{-3/2}$. Its value is 
 maximal at
  $\hat d_u=\eps^{2/3}(C_{q,1}+o(1))$. The inequality takes the form
  $$
  c_{r,5} c_{e,12} (C_{q,1}+o(1))^{-3/2}|\ln( c_{e,11}^{-1}C_{q,1}+o(1))^{-3/2}C_q^{15/16})|<1.
  $$
 Since $C_{q,1}$ grows linearly in the principal approximation  with growth of $C_{q}$ for large  $C_{q}$, this inequality is satisfied for  sufficiently large $C_q$.

 \hskip 12 cm $\square$
 
    \medskip
 {\bf Proof of Lemma \ref{lem_closeness}.}
\medskip

Demonstrate closeness of tangent directions of curves $\tilde\Gamma'=\tilde\Gamma'_{q,r,d} $ and $\re \Psi_{\eps}={\rm const}$ (Fig.  \ref{closeness_1}).  
\begin{figure}[H]
\begin{center}
            \includegraphics[scale=0.5, angle=0.0]{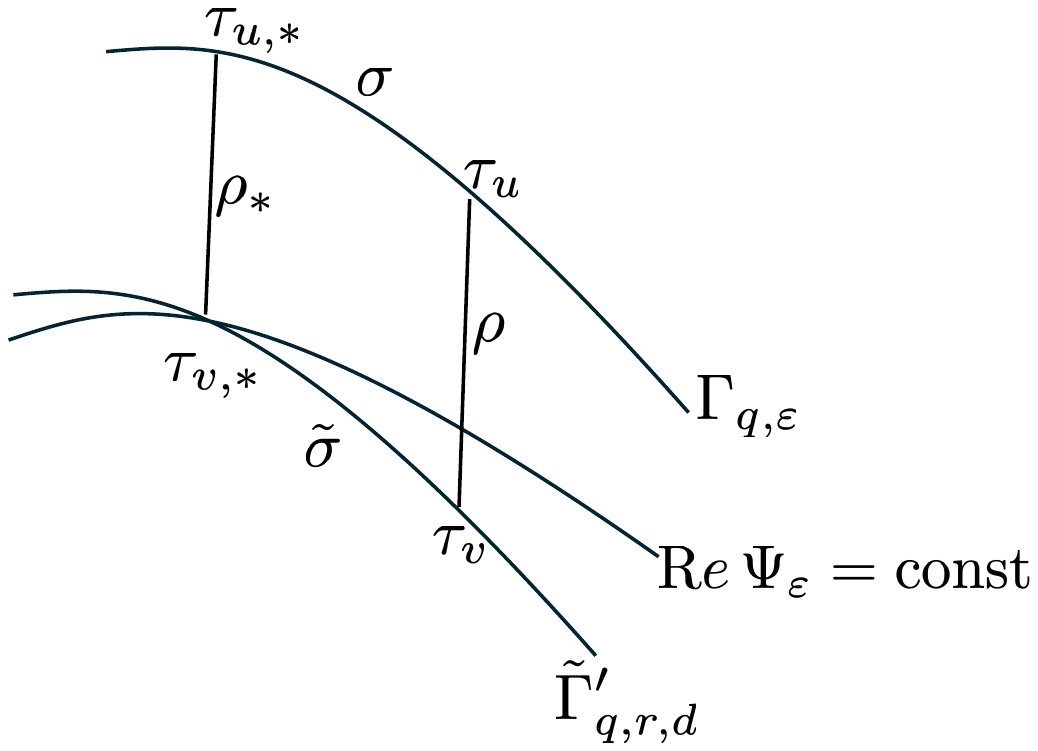}
            \end{center}
           \caption{Curves $\Gamma_{q,\eps}$,  $\tilde\Gamma'_{q,r,d}$ and  $\re \Psi_{\eps}={\rm const}$}
            \label{closeness_1}
\end{figure}
For a point $\tau_u\in  \Gamma_{q, \eps}$,   we denote $\hat d_u=\hat d(\tau_u), 
  \Lambda_{1,u}=\Lambda_{1}(\dK_{\eps}(\tau_u)),\\  \rho=-c_{e,12,1} \eps \hat d_u^{-1/2}\ln( c_{e,11,1}^{-1}\eps \hat d_u^{-3/2}C_q^{15/16})$. Denote  $\tau_v=\tau_u-i\rho\in \tilde \Gamma'$. 
   Fix some \\ $\tau_{u}=\tau_{u,*}\in \Gamma_{q, \eps}$ and the corresponding values 
   $\hat d_u=\hat d_{u,*},   \Lambda_{1,u}=  \Lambda_{1,u,*}, \rho=\rho_*$. Consider the level curve   $\re \Psi_{\eps}(\tau)={\rm const}$ passing through
   the point $\tau_{v,*}=\tau_{u,*}-i\rho_*$. Denote $\Lambda_{1,v,*}=\Lambda_{1}(\dK_{\eps}(\tau_{v,*}))$.

   The tangent direction to the curve $\re \Psi_{\eps}={\rm const}$  at the point $\tau_{v,*}$ is given by the complex number 
   $i\bar \Lambda_{1,v,*}/|\Lambda_{1,v,*}|$.
   
   For points on $\tilde\Gamma' $,  we have $\tau_{v}=\tau_{u}-i\rho$. Let $\sigma$ and  $\tilde \sigma$ denote the arc lengths along    $\Gamma_{q, \eps}$ and  $\tilde\Gamma' $, respectively. We have
   \begin{equation*}
  \left( \frac{d\tilde \sigma}{d \sigma}\right)_{\tau_{u}=\tau_{u,*}}= |\frac{d\tau_v}{d \tau_u}|_{\tau_{u}=\tau_{u,*}}=1+O\left(\eps \hat d_{u,*}^{-3/2}\ln(\eps \hat d_{u,*}^{-3/2}C_q^{15/16})\right).
    \end{equation*}

    The tangent direction to  $\tilde\Gamma' $ at the point $\tau_{v,*}$ is given by the complex number 
   \begin{equation*}
   \begin{aligned}
   &\left(\frac{d\tau_{v}}{d\tilde \sigma}\right)_{\tau_{u}=\tau_{u,*}} =\left(\frac{d\tau_{v}}{d \sigma}\frac{d\sigma}{d\tilde \sigma}\right)_{\tau_{u}=\tau_{u,*}} 
   =\left(\frac {d \tau_{u}}{d \sigma}-i
   \frac {d \rho}{d \hat d_u }  \frac {d \hat d_u}{d \sigma }
   \right)_{\tau_{u}=\tau_{u,*}}\left(1+O\left(\eps \hat d_{u,*}^{-3/2}\ln(\eps \hat d_{u,*}^{-3/2}C_q^{15/16})\right)\right)\\
   & =i\frac{\bar \Lambda_{1,u,*}}{|\Lambda_{1,u,*}|}+
   O\left(\eps \hat d_{u,*}^{-3/2}\ln(\eps \hat d_{u,*}^{-3/2}C_q^{15/16})\right)
   =i\frac{\bar\Lambda_{1,v,*}}{|\Lambda_{1,v,*}|}+O( \hat d_{u,*}^{-1}\rho_*)+
   O\left(\eps \hat d_{u,*}^{-3/2}\ln(\eps \hat d_{u,*}^{-3/2}C_q^{15/16})\right)\\
   &=i\frac{\bar \Lambda_{1,v,*}}{|\Lambda_{1,v,*}|}+O\left(\eps \hat d_{u,*}^{-3/2}\ln(\eps \hat d_{u,*}^{-3/2}C_q^{15/16})\right).
   \end{aligned}
   \end{equation*}
   Since $\eps \hat d_{u,*}^{-3/2}<c_{b,1}C_q^{-3/2}$, the angle between the tangent directions to the curves $\tilde\Gamma' $ and $\re \Psi_{\eps}={\rm const}$ can be made arbitrary small by choosing  $C_q$ sufficiently large.
   
   \medskip
   The same holds for the curve $\Gamma'_{q,r,d}$.

\medskip

\hskip 12cm $\square$

\medskip

{\bf Proof of Lemma \ref{est_of_shorten_r}.}

Estimate $z(t)$ on $\Gamma_{q,\eps}$.   Let $\sigma$ denote the arc length along the curve  $\Gamma_{q,\eps}$. On this curve, $t=t(\sigma)$.

\begin{lem}
\label{lem_on_curve_q_r}
For  motion along $\Gamma_{q,\eps}$, the equation for $z$ in (\ref{a_shorten_+r}) takes the form
\begin{equation}
\label{eq_z_Gamma_q_r}
\frac{d z}{d\sigma} = i\omega(\eps\sigma)  z +\frac{dt}{d \sigma}\alpha_1,
\end {equation}
where
 $\omega=\Lambda_1(\dK_{\eps})({dt}/{d \sigma}) i^{-1}$  is a real-valued  function bounded away from 0 by  $c_{a,1}^{-1}{\hat d_{+} }^{\, 1/2}$,
and
\begin{equation}
\begin{aligned}
|\alpha_1| &<c_{a,2}( |\kappa(t)-\dK(\eps t)| d_+^{-1/2}|z| + \eps |z|^2 d_+^{-3/2}+
 \eps |z|(\eps^{17/6} d_{+,q}^{-3})d_+ ^{-1/2}+\eps (\eps^{17/6} d_{+,q}^{-3})^2 d_+ ^{-1/2}\\
&+(\eps^{17/6} d_{+,q}^{-3})^2+|z|^2 (\eps^{17/6} d_{+,q}^{-3})d_+^{-1/2}+   |z| (\eps^{17/6} d_{+,q}^{-3})^2d_+^{-1/2}+  (\eps^{17/6} d_{+,q}^{-3})^3 d_+^{-1/2}  
\\
&   
 +|z|^5d_+^{-3/2} +\eps^3d_{+}^{-7/2}).
     \end{aligned}
     \end{equation}
\end{lem}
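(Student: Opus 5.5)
We reparametrise the $z$-equation in (\ref{with_beta_+r}) by arc length $\sigma$ along $\Gamma_{q,\eps}$. Since $t=t(\sigma)$, the chain rule gives $dz/d\sigma=(dt/d\sigma)\dot z$. We split
\[
\Lambda_1(\kappa)z=\Lambda_1(\dK_{\eps})z+(\Lambda_1(\kappa)-\Lambda_1(\dK_{\eps}))z .
\]
The first term gives $i\omega z$ with $\omega=\Lambda_1(\dK_{\eps})(dt/d\sigma)i^{-1}$. All remaining terms go into $\alpha_1$.

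\textbf{Reality and lower bound for $\omega$.} Along a level curve $\re\Psi_{\eps}={\rm const}$ we have $\re\bigl(\Lambda_1(\dK_{\eps}(\eps t))\,\eps\,dt/d\sigma\bigr)=0$, by the definition (\ref{phase_eps}) of $\Psi_{\eps}$. Hence $\Lambda_1(dt/d\sigma)$ is purely imaginary and $\omega$ is real. Since $|dt/d\sigma|=1$, we get $|\omega|=|\Lambda_1(\dK_{\eps})|$. By the same argument as for $Z_1$ in Lemma \ref{FG}, $\Lambda_1$ is comparable to $\hat d_+^{1/2}$ in $V_{\delta}^+$. Lemmas \ref{K_and_K_eps} and \ref{lem_transform_1} (which give $\Lambda_1=\lambda_1+O(\eps d_+^{-1})$) then yield $|\omega|>c_{a,1}^{-1}\hat d_+^{1/2}$. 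This uses $\hat d_+\ge C_q\eps^{2/3}$ with $C_q$ large, so that the $O(\eps d_+^{-1})$ correction is dominated by $\hat d_+^{1/2}$.

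\textbf{Bounding $\alpha_1$.} We collect in $\alpha_1$ the difference term and every term of the $\dot z$-equation in (\ref{d_equation}) except the linear one, and estimate each using the induction hypotheses (\ref{induct_D_r_d}). Note that $\Gamma_{q,\eps}\subset D_{q,r,d}$, so on it $|\eta|,|w|<\eps^{17/6}\hat d_{+,q}^{-3}$.
\begin{itemize}
\item The difference term satisfies $|\Lambda_1(\kappa)-\Lambda_1(\dK_{\eps})|\,|z|\le c\,|\kappa-\dK_{\eps}|\,d_+^{-1/2}|z|$, because $\partial\Lambda_1/\partial\kappa=O(d_+^{-1/2})$. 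Here $d_+$ and $\hat d_+$ are interchangeable by (\ref{induct_D_r_d}).
\item $\eps O(|z|^2d_+^{-3/2})$ is kept as it is.
\item $\eps O(|\xi|_*^2d_+^{-1/2})$ expands into $\eps|z|(|w|+|\eta|)d_+^{-1/2}$ and $\eps(|w|+|\eta|)^2d_+^{-1/2}$.
\item $O(|\eta|(|\eta|+|w|))$ gives $(\eps^{17/6}d_{+,q}^{-3})^2$.
\item $O(|\xi|_*^3d_+^{-1/2})$ splits into the monomials with one, two or three factors from $w,\eta$, giving the three cubic terms listed.
\item $O(|z|^5d_+^{-3/2})$ is kept.
\item $\eps^3O(d_+^{-3}|\xi|)$ is absorbed into $\eps^3d_+^{-7/2}$, since $|\xi|<c_{t,5}\eps^{1/3}$ by (\ref{induct_xi_1/3}) and $\eps^{1/3}\ll d_+^{-1/2}\cdot d_+$.
\item $\eps^3O^*(d_+^{-7/2})$ is kept.
\end{itemize}
In every term we substitute the bound for $|w|,|\eta|$ above. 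The statement writes $|\kappa-\dK|$ where one might expect $|\kappa-\dK_{\eps}|$; this is harmless, since the two differ only by an absorbed amount, or the statement simply uses that notation.

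\textbf{Main obstacle.} The delicate point is the claim that $\omega$ is bounded away from zero uniformly along the whole of $\Gamma_{q,\eps}$. We have to check that the $O(\eps d_+^{-1})$ correction to $\Lambda_1$ and the change from $\dK$ to $\dK_{\eps}$ do not destroy the lower bound near the end of $\Gamma_{q,\eps}$ closest to $\tau_c$. That is exactly where $C_q$ must be taken large. Everything else is term-by-term bookkeeping.
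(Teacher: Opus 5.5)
Your proposal is correct and matches the paper's proof, which consists precisely of substituting the bounds (\ref{induct_D_r_d}) (valid on $\Gamma_{q,\eps}\subset D_{q,r,d}$) into the first equation of (\ref{d_equation}) after splitting off $\Lambda_1(\dK_{\eps})z$. Your arguments that $\omega$ is real (because $\Gamma_{q,\eps}$ is a level curve of $\re\Psi_{\eps}$) and of order $\hat d_+^{1/2}$ (because $\eps d_+^{-1}\ll d_+^{1/2}$ for large $C_q$) fill in what the paper leaves implicit.

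Two small corrections. First, absorbing $\eps^3 d_+^{-3}|\xi|$ into $\eps^3 d_+^{-7/2}$ only needs $|\xi|\le C d_+^{-1/2}$; this follows at once from $|\xi|<c_{t,5}\eps^{1/3}$ and the boundedness of $d_+$, not from the condition you wrote. Second, the $|\kappa-\dK|$ in the statement should be read as $|\kappa-\dK_{\eps}|$, which is how it is used in the proof of Lemma \ref{on_upper_curve_r}; the two do not differ by an absorbable amount.
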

For the proof,  one only needs to substitute the estimates from (\ref{induct_D_r_d}) into the first equation in (\ref{d_equation}).
This lemma implies the following estimate.
\begin{lem}
\label{on_upper_curve_r}
If $\tau\in S(T)\cap\Gamma_{q,\eps}$,    then 
$|z(t)|< c_{a,3}  \eps^2 \hat d_{+,q}^{-5/2},\quad  |\kappa(t)- \dK_{\eps}(t)|<c_{a,4}\eps^4 \hat d_{+,q}^{-9/2}$.
\end {lem}

 

 \begin{lem}
 \label{lem_in_Dqr}
  If $\tau\in S(T) \cap D_{q,r,d} $,    then

 $|z(t)|< c_{r,6}  \eps^2 \hat d_{+,q}^{-5/2},\ |\eta (t)|<   c_{r,7} \eps^3 \hat d_{+,q}^{-3}$;
 
 \medskip
 additionally,  if
$\tau \in  S(T) \cap (D_{q,r,d}\setminus \tilde D_{q,r,d})$,  then

 $|z(t)|< c_{r,9} (\eps^{11/6}C_q^{-6}\hat {d}_{+}^{-1/2}+\eps^3 \hat {d}_{+}^{-4}),\
  |\eta(t)|<  c_{r,10}\eps^{3}\hat d_{+}^{-3};$ \
      \medskip
      
 additionally, if   $\tau \in  S(T) \cap \Gamma_{q,r,d}'$,  then  
 
 $|z(t)|< c_{r,12} \eps^{3}\hat d_{+}^{-4}$.
 
 \end{lem}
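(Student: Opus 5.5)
We prove Lemma \ref{lem_in_Dqr} by descending vertically from $\Gamma_{q,\eps}$, using Lemma \ref{on_upper_curve_r} as initial data. Each point of $S(T)\cap D_{q,r,d}$ is reached by moving along $\Gamma_{q,\eps}$ to a point $\tau_u$ and then straight down. By Lemmas \ref{lem_0.5_2} and \ref{lem_about_width}, on every such vertical segment $0.5\hat d_u<\hat d_+<2\hat d_u$, so $\hat d_+$ may be replaced by $\hat d_u$ in all $O$-estimates. With $s=-\im t$, the equation (\ref{a_shorten_+r}) for $z$ becomes
\begin{equation*}
\frac{dz}{ds}=-i\Lambda_1(\dK_{\eps}(\eps t))z+\alpha_1, \qquad \re\bigl(-i\Lambda_1\bigr)<-c_{a,2}^{-1}\hat d_u^{1/2},
\end{equation*}
where the sign is given by condition 6) together with the transversality of vertical lines to the level curves $\re\Psi_{\eps}={\rm const}$. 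The forcing $\alpha_1$ is bounded as in Lemma \ref{lem_on_curve_q_r}. Under the inductive hypotheses (\ref{induct_xi_1/3})--(\ref{induct_Gamma_r_d}) its dominant part is $\eps^3\hat d_u^{-7/2}$, with the remaining terms absorbed into this one and into $\mu_1,\mu_2$-independent constants.

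For the $z$-estimates, comparison with a scalar linear equation (as in the proof of Lemma \ref{l_improved_I}, using Lemma \ref{lem_compar_ODE_1} and Lemma \ref{l_aux_r_1}) shows that $|z|$ does not grow: $|z|<c\,\eps^2\hat d_{+,q}^{-5/2}$ throughout $D_{q,r,d}$. It also decays like $\exp(-c^{-1}\hat d_u^{1/2}s/\eps)$ from its initial value $\lesssim\eps^2\hat d_{+,q}^{-5/2}$ down to the floor $2\alpha/\nu\sim\eps^3\hat d_u^{-4}$. The decay time to reach this floor is $\sim\eps\hat d_u^{-1/2}|\ln(\eps\hat d_u^{-3/2}\cdot\ldots)|$. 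The constants $c_{e,11},c_{e,12}$ and $c_{e,11,1},c_{e,12,1}$ are chosen exactly so that:
\begin{itemize}
\item at the lower boundary $\Gamma'_{q,r,d}$ the exponential term has fallen below the floor, which gives $|z|<c_{r,12}\eps^3\hat d_+^{-4}$;
\item below $\tilde\Gamma_{q,r,d}$ the exponential factor has already reduced $\eps^2\hat d_{+,q}^{-5/2}$ to at most $\eps^{11/6}C_q^{-6}\hat d_+^{-1/2}$.
\end{itemize}
The factor $C_q^{15/16}$ inside the logarithm is what matches these powers of $C_q$, using $\hat d_{+,q}\sim C_q\eps^{2/3}$.

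For $\eta$ we use the Lyapunov function $W$ of \cite{bellman}, as in the proof of Lemma \ref{est_of_shorten}, along the curves $\re\Psi_{\eps}={\rm const}$ or along vertical lines; condition 5) supplies the required sign. This gives $|\eta|=O(\eps^3\hat d_{+,q}^{-3})$ in $D_{q,r,d}$, and the same argument run with $\hat d_+$ in place of $\hat d_{+,q}$ gives $O(\eps^3\hat d_+^{-3})$ away from $\tilde D_{q,r,d}$.

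The main obstacle is bookkeeping in $\alpha_1$. We must check that terms such as $\eps|z|^2d_+^{-3/2}$ and $|z|^5d_+^{-3/2}$, evaluated at the large value $|z|\sim\eps^{1/3}C_q^{-5/2}$ near $\Gamma_{q,\eps}$, stay below $\eps^3\hat d_u^{-7/2}$ times a factor, or have small integral over the short descent of length $O(\eps\hat d_u^{-1/2}|\ln\cdot|)$. We must also check that they do not spoil the decay rate. We handle this by absorbing $\eps|z|d_+^{-3/2}=O(C_q^{-5/2}\hat d_u^{1/2}\cdot\ldots)$ into the linear coefficient, using $\eps\hat d_u^{-3/2}\lesssim C_q^{-3/2}$, and taking $C_q$ large.
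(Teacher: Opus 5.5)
Your tools are the right ones: vertical descent from $\Gamma_{q,\eps}$ with Lemmas \ref{l_aux_r_1} and \ref{lem_compar_ODE_1}, and a Lyapunov function for $\eta$. They do give the first pair of bounds ($c_{r,6}$, $c_{r,7}$). The gap is in the two ``additionally'' parts: your claim that the $z$-forcing is dominated by $\eps^3\hat d_u^{-7/2}$ is false.

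The $\dot z$ equation in (\ref{d_equation}) contains $O(|\eta|(|\eta|+|w|))$ with an $O(1)$ coefficient. In $\tilde D_{q,r,d}$ the hypotheses (\ref{induct_D_r_d}) give only $|\eta|,|w|<\eps^{17/6}\hat d_{+,q}^{-3}\approx\eps^{5/6}C_{q,1}^{-3}$, so this term is of size $\eps^{5/3}C_q^{-6}$. Even after $|\eta|<c_{r,7}\eps^3\hat d_{+,q}^{-3}$ is proved, it is still of size $\eps^3\hat d_{+,q}^{-3}\cdot\eps^{17/6}\hat d_{+,q}^{-3}\sim\eps^{11/6}C_q^{-6}$. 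Up to powers of $C_q$, both exceed $\eps^3\hat d_u^{-7/2}$ once $\hat d_u\gg\eps^{1/3}$, in particular for $\hat d_u\sim 1$.

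So the floor $2\alpha_a/\nu$ in $\tilde D_{q,r,d}$ is not $\eps^3\hat d_u^{-4}$. The term $\eps^{11/6}C_q^{-6}\hat d_+^{-1/2}$ in the lemma is exactly this floor, with $\alpha_a\sim\eps^{11/6}C_q^{-6}+\eps^3\hat d_u^{-7/2}$ and $\nu\sim\hat d_u^{1/2}$. It is not a residue of incomplete exponential decay of $\eps^2\hat d_{+,q}^{-5/2}$, as you describe. With only the inductive bound on $\eta$ you would get the floor $\eps^{5/3}C_q^{-6}\hat d_u^{-1/2}$. That gives neither the $c_{r,9}$-bound nor the margin needed to close (\ref{induct_tilde_D_r_d}).

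The estimates therefore have to be interleaved, and your order ($z$ completely first, then $\eta$) cannot produce them. The paper proceeds as follows:
(i) a first descent, with a bootstrap on $|z|<2c_{a,3}\eps^2\hat d_{+,q}^{-5/2}$ and Lemma \ref{on_upper_curve_r} as initial data, giving the $c_{r,6}$-bound;
(ii) the $c_{r,7}$-bound for $\eta$ along level curves of $\re\Psi_\eps$ issued from the axis $\re\tau=\re\tau_c$;
(iii) a second $z$-descent with the reduced forcing $\eps^{11/6}C_q^{-6}+\eps^3\hat d_u^{-7/2}$, which fixes $\tilde\Gamma'_{q,r,d}$ and gives the $c_{r,9}$-bound;
(iv) the $c_{r,10}$-bound for $\eta$ below $\tilde\Gamma_{q,r,d}$;
(v) a fresh comparison started on $\tilde\Gamma'_{q,r,d}$, where the $z$-forcing is $o(\eps^3\hat d_+^{-7/2})$, yielding $c_{r,12}\eps^3\hat d_+^{-4}$ on $\Gamma'_{q,r,d}$.
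The depth of $\Gamma'_{q,r,d}$ is the sum of the two descent lengths; this is how $c_{e,11},c_{e,12}$ are fixed.

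Step (iv) is not ``the same argument with $\hat d_+$ in place of $\hat d_{+,q}$''. The $\dot\eta$ forcing contains $|z|^4$ and $\eps|z|^2d_+^{-1/2}$. With the rough bound $|z|\sim\eps^{1/3}C_q^{-5/2}$ these are of order $\eps^{4/3}C_q^{-10}$ and $\eps^{5/3}C_q^{-5}\hat d_+^{-1/2}$, far above $\eps^3\hat d_+^{-3}$ for $\hat d_+\sim 1$. You need two further ingredients:
\begin{itemize}
\item the refined $z$-bound from (iii), as in the estimate (\ref{for_alpha_d_improved}) of Appendix 1;
\item integration paths that avoid $\tilde D_{q,r,d}$, where the $\eta$-forcing is only $O(\eps^3\hat d_{+,q}^{-3})$.
\end{itemize}
The paper gets such paths by gluing arcs of $\re\Psi_\eps={\rm const}$ with arcs of $\tilde\Gamma'_{q,r,d}$ and $\Gamma'_{q,r,d}$. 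This is admissible for the $\eta$-block only because of Lemma \ref{lem_closeness}.

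Vertical lines, which you also propose for $\eta$, are not admissible. Condition 5) controls the spectrum of $B$ along level curves of $\re\Psi$, not the spectrum of $-iB$.
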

 
\medskip
To estimate $z(t)$ in  $S(T)\setminus (D_{q,r,d}\cup \bar D_{q,r,d})$, we use equation (\ref{a_shorten_+r}) with initial data on $\Gamma_{q,\eps,d}'$, and then equation (\ref{a_shorten_-r}) for motion vertically down along the lines $\re \tau={\rm const}$. 

\begin{lem}
\label{l_down_z}
 In  $S(T)\setminus (D_{q,r,d}\cup \bar D_{q,r,d})$,  we have
\begin{equation}
\begin{aligned}
& |z(t)|< c_{r,15} \eps^{3}\hat{ d}_{+}^{-4} \ \mbox{\rm if} \ \im \tau\ge -c_{l,1}^{-1},\\
 &|z(t)|< c_{r,16}\eps^{3} \hat{ d}_{-}^{-3} \  \mbox{\rm if} \ \im \tau\le c_{l,1}^{-1}.
  \end{aligned}     
\end{equation} 
\end{lem}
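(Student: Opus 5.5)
The plan is to prove the estimates of Lemma \ref{l_down_z} by integrating the shortened $z$-equations vertically downward, in the spirit of the proof of Lemma \ref{l_improved_I}. Each point of $S(T)\setminus(D_{q,r,d}\cup\bar D_{q,r,d})$ with $\im\tau\ge 0$ lies on a vertical segment $\re\tau={\rm const}$. This segment starts on $\Gamma_{q,r,d}'$, where Lemma \ref{lem_in_Dqr} gives $|z|<c_{r,12}\eps^3\hat d_+^{-4}$. For points of $D_{q,r}$ to the right of the region covered by $D_{q,r,d}$, the segment instead starts on the line $\re\tau=\re\tau_c$, where Lemma \ref{l_improved_I} gives the same type of bound. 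Along such a segment we use (\ref{a_shorten_+r}), which is valid there by the induction hypotheses (\ref{induct_D_r}), (\ref{induct_D_r_kappa}). With $s=-\im t$ it becomes
\[
\frac{dz}{ds}=-i\Lambda_1(\dK_\eps(\eps t))z+\eps^3\tilde O_1(\hat d_+^{-7/2}).
\]
By condition 6) and Lemma \ref{lem_closeness}, the vertical lines are transversal to the curves $\re\Psi_\eps={\rm const}$. Hence $\re(-i\Lambda_1)<-c^{-1}\hat d_+^{1/2}$, so $|z|$ satisfies the differential inequality $d|z|/ds<-c^{-1}\hat d_+^{1/2}|z|+c\,\eps^3\hat d_+^{-7/2}$.

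Next comes the barrier argument of Lemma \ref{l_improved_I}. Let $s_*$ be the first value of $s$ at which $|z|=c_{r,15}\eps^3\hat d_+^{-4}$, with $c_{r,15}\ge\max(2c_{r,12},\,3c\cdot c)$. At $s_*$ the right-hand side is below $-c\,\eps^3\hat d_+^{-7/2}$. Meanwhile $|d(\eps^3\hat d_+^{-4})/ds|=O(\eps^4\hat d_+^{-5})$, which is much smaller than $\eps^3\hat d_+^{-7/2}$ because $\hat d_+\gg\eps^{2/3}$ for large $C_q$. So $|z|$ cannot cross the barrier, which gives the first estimate for $\im\tau\ge -c_{l,1}^{-1}$. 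Lemmas \ref{lem_0.5_2} and \ref{lem_about_width} let us compare $\hat d_+$ at different points of a segment near $\Gamma_{q,r,d}'$. Away from that curve, $\hat d_+$ varies slowly relative to the decay rate, so the barrier moves slowly.

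For the strip $|\im\tau|\le 2c_{l,1}^{-1}$ the same argument gives $|z|=O(\eps^3)$, since there $\hat d_\pm\sim 1$. Below the real axis we switch to (\ref{a_shorten_-r}), valid under (\ref{induct_D_r_4}), with forcing $\eps^3\tilde O_4(\hat d_-^{-3})$. We continue downward from $\im\tau\approx 0$ with initial bound $O(\eps^3)$. The linear part $\Lambda_1$ still contracts downward, because $\re\Psi_\eps$ decreases along vertical lines. The barrier $c_{r,16}\eps^3\hat d_-^{-3}$ is then preserved, since $\eps^3\hat d_-^{-3}$ divided by the rate $\hat d_-^{1/2}$ is smaller than $\eps^3\hat d_-^{-3}$ when $\hat d_-\gtrsim 1$. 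For small $\hat d_-$ one must check the rate more carefully: the decay rate of $z$ is governed by $\Lambda_1$, which is bounded away from zero near $\bar\tau_c$ (only $\Lambda_2$ vanishes there). This gives a rate of order 1, and hence the bound $\eps^3\hat d_-^{-3}$.

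The main obstacle is the upper part of the argument. Near $\Gamma_{q,r,d}'$ the starting value is only $c_{r,12}\eps^3\hat d_+^{-4}$ and the contraction rate $\hat d_+^{1/2}$ is small, so we must ensure that the barrier constant is uniform in $T$ and in $\mu_1,\mu_2$. We also need to treat the junction where vertical segments leave $D_{q,r,d}$ through its lower boundary, as opposed to those starting on $\re\tau=\re\tau_c$. I would handle both cases with the same comparison ODE (\ref{u_equation}) and Lemma \ref{lem_compar_ODE_1}.
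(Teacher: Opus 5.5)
Your proposal is correct and follows the paper's proof. Like the paper, you integrate vertically downward from $\Gamma'_{q,r,d}$, starting from the bound $c_{r,12}\eps^3\hat d_+^{-4}$ of Lemma~\ref{lem_in_Dqr}, and use the same barrier/comparison argument; below the axis, the order-one contraction rate of $\Lambda_1$ gives the bound $\eps^3\hat d_-^{-3}$, which is what the paper means by ``in a similar way''. Your separate case of segments ``starting on the line $\re\tau=\re\tau_c$'' is not needed: every vertical line through $S(T)\setminus(D_{q,r,d}\cup\bar D_{q,r,d})$ starts on $\Gamma_{q,\eps}$ and first crosses $D_{q,r,d}$, so every such segment begins on $\Gamma'_{q,r,d}$.
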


\begin{cor}
\label{c_down_z} 
The last relation implies
\begin{equation}
\begin{aligned}
\label{for_margin_w}
|w(t)|< c_{r,16}\eps^{3} \hat{ d}_{+}^{-3} \  \mbox{\rm if} \ \im \tau\ge -c_{l,1}^{-1}, \tau\notin  D_{q,r,d}.
\end{aligned}     
\end{equation}
\end{cor}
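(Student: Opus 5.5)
The corollary to prove is the bound on $w$ in \eqref{for_margin_w}. The plan is to get it from the second inequality of Lemma \ref{l_down_z} by the reality symmetry of Lemma \ref{lem_real}, namely $w(t)=\overline{z(\bar t)}$. This symmetry is built into the delay-equation formulation, where it replaces the differential equation for $w$.

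Take $\tau=\eps t$ with $\im\tau\ge -c_{l,1}^{-1}$ and $\tau\notin D_{q,r,d}$. Then $\bar\tau$ satisfies $\im\bar\tau\le c_{l,1}^{-1}$. Since $\bar D_{q,r,d}$ is the mirror image of $D_{q,r,d}$, we also have $\bar\tau\notin\bar D_{q,r,d}$.

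First I would check that $\bar\tau$ lies in the region covered by the second estimate of Lemma \ref{l_down_z}, i.e. $\bar\tau\in S(T)\setminus(D_{q,r,d}\cup\bar D_{q,r,d})$. Both $D_{q,r}$ and the strip $\re t\le T$ defining $S(T)$ are symmetric under conjugation. So $\bar\tau\notin\bar D_{q,r,d}$ is automatic, and the only thing to verify is $\bar\tau\notin D_{q,r,d}$.

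For $\tau$ in the upper half-plane, $\bar\tau$ lies in the lower half-plane. There $D_{q,r,d}$ can reach only where its vertical segments cross the real axis, i.e. near $Q_1$ with $|\im\tau|=O(\eps|\ln\eps|)$. For the remaining points, with $\tau$ in the lower half-plane or very near the real axis (where $\hat d_+\sim\hat d_-\sim 1$), I would not use conjugation. Instead I would read the bound off directly from the estimates already established there, e.g. the $O(\eps^3)$ bounds in the strip $|\im\tau|<2c_{l,1}^{-1}$ and the bound $|w|<c_{r,10}\eps^3\hat d_+^{-3}$ from Lemma \ref{est_of_shorten_r}, adjusting constants.

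Next, apply the second inequality of Lemma \ref{l_down_z} at $\bar t$ to get
\[
|w(t)|=|z(\bar t)|<c_{r,16}\eps^3\hat d_-(\bar\tau)^{-3}.
\]

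Finally, I would show $\hat d_-(\bar\tau)=\hat d_+(\tau)$. Because $\dK_\eps$ is real-analytic, $\dK_\eps(\bar\tau)=\overline{\dK_\eps(\tau)}$, and $b$ is real. Hence
\[
b\cdot(\dK_\eps(\bar\tau)-\bar\kappa_c)=\overline{b\cdot(\dK_\eps(\tau)-\kappa_c)},
\]
and taking moduli gives the identity. Substituting yields $|w(t)|<c_{r,16}\eps^3\hat d_+^{-3}$, which is \eqref{for_margin_w}.

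The main obstacle is the domain bookkeeping in the first step: making sure the reflected point avoids $D_{q,r,d}$, and handling separately the thin region near the real axis where this can fail.
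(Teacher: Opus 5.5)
Your reflection step is exactly what the paper intends. The paper gives no proof beyond ``the last relation implies'': apply the second bound of Lemma~\ref{l_down_z} at $\bar t$ and use $w(t)=\overline{z(\bar t)}$ and $\hat d_-(\bar\tau)=\hat d_+(\tau)$.

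The gap is in how you treat the points with $\bar\tau\in D_{q,r,d}$, i.e.\ $\tau\in\bar D_{q,r,d}$. The fallback estimates you cite are not available there:
\begin{itemize}
\item The $O(\eps^3)$ bounds in the strip $|\im\tau|<2c_{l,1}^{-1}$ are proved only in $D_{\gamma}$, $D_{up}$, $D_{q,l}$ (Lemmas~\ref{lem_cont_D_gamma}, \ref{lem_cont_D_q_l}), where $\re\tau\le\re\tau_c$. They are not proved in $D_{q,r}$.
\item The bound $|w|<c_{r,10}\eps^3\hat d_+^{-3}$ of Lemma~\ref{est_of_shorten_r} concerns $D_{q,r,d}\setminus\tilde D_{q,r,d}$, which the corollary excludes. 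Moreover, the paper derives it (Corollary~\ref{c_down_down_z}, via Lemmas~\ref{l_down_down_z} and~\ref{l_for_kappa_rup}) from the very corollary you are proving.
\end{itemize}

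In fact no argument can work at these points, because the inequality is false there. Take a point of the arc $\Gamma^{C_{a,0},\eps}$ with $\re\tau$ slightly less than $\re Q_1$. It lies in $D_{q,r,d}$, just above the real axis (Section~\ref{domain_3}), and $|z|\sim\eps^{1/3}$ there by Lemma~\ref{lem_after_c_3}. Its mirror image $\tau$ satisfies $-c_{l,1}^{-1}\le\im\tau<0$. It also satisfies $\tau\notin D_{q,r,d}$, since for $\re\tau<\re Q_1$ the strip $D_{q,r,d}$ lies above the real axis. Yet $|w(t)|=|z(\bar t)|\sim\eps^{1/3}\gg\eps^3\hat d_+^{-3}$. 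This is consistent with Lemma~\ref{est_of_shorten_r}, which gives only $|w|<c_{r,6}\eps^2\hat d_{-,q}^{-5/2}$ in $\bar D_{q,r,d}$.

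The correct reading is that the corollary holds on the domain of the relation it comes from: $S(T)\setminus(D_{q,r,d}\cup\bar D_{q,r,d})$, intersected with $\{\im\tau\ge-c_{l,1}^{-1}\}$. That is also all that is used later (Lemma~\ref{l_for_kappa_rup}, Lemma~\ref{margin_D_r}). This set is invariant under $\tau\mapsto\bar\tau$, so your reflection argument applies at every point and there is no exceptional set.

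Your worry about upper-half-plane points near $Q_1$ is also unfounded. For $\re\tau>\re Q_1$ the vertical segments defining $D_{q,r,d}$ reach below the real axis, so the whole part of $D_{q,r}$ with $\im\tau\ge0$ lies in $D_{q,r,d}$. Hence in the closed upper half-plane $\tau\notin D_{q,r,d}$ already forces $\tau\notin\bar D_{q,r,d}$.

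Finally, $b$ need not be real: it is a coefficient at the complex point $\kappa_c$ in a complex eigencoordinate. So you cannot justify $\hat d_-(\bar\tau)=\hat d_+(\tau)$ that way. The identity holds exactly if $d_-$ is formed with $\bar b$, as the paper tacitly does (e.g.\ $\hat d_{+,q}=\hat d_{-,q}$). In any case, since $|\dK(\bar\tau)-\bar\kappa_c|=|\dK(\tau)-\kappa_c|$, Lemma~\ref{lem_for_domain_V} makes the two quantities comparable, which suffices after adjusting the constant.
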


\medskip
To estimate $\eta (t)$ in  $S(T)\setminus (D_{q,r,d}\cup \bar D_{q,r,d})$ we use equation (\ref{a_shorten_+r}) for  $\im \tau\ge -c_{l,1}^{-1}$, and the corresponding equation with $d_{-}$ instead of $d_{+}$ for $\im \tau \le -c_{l,1}^{-1}$, with initial data given by Lemma \ref{lem_cont_D_q_l}. 
\begin{lem}
\label{l_right_eta}
In  $S(T)\setminus (D_{q,r,d}\cup \bar D_{q,r,d})$:
\begin{equation}
\begin{aligned}
|\eta(t)|< c_{r,16}\eps^{3} \hat{ d}_{+}^{-3} \  \mbox{\rm if} \ \im \tau\ge -c_{l,1}^{-1}\\
|\eta(t)|< c_{r,16}\eps^{3} \hat{ d}_{-}^{-3} \  \mbox{\rm if} \ \im \tau\le  c_{l,1}^{-1}.
\end{aligned}     
\end{equation}
\end{lem}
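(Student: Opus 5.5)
We need to prove Lemma l_right_eta: the bound on $\eta$ in $S(T)\setminus(D_{q,r,d}\cup\bar D_{q,r,d})$. The plan is to run the same Lyapunov-function argument as in the $\eta$-part of the proof of Lemma \ref{est_of_shorten}, now starting from the vertical line $\re\tau=\re\tau_c$ instead of the real axis.

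We work in the half-plane $\im\tau\ge -c_{l,1}^{-1}$; the case $\im\tau\le c_{l,1}^{-1}$ follows by the symmetry $\eta(\bar t)=\overline{\eta(t)}$, $d_+\leftrightarrow d_-$.

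\textbf{Initial data.} On the line $\re\tau=\re\tau_c$, Lemma \ref{lem_cont_D_q_l} gives $|\eta|<c_{e,8}\eps^3\hat d_+^{-3}$, and $|\eta|=O(\eps^3)$ near the real axis.

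\textbf{Equation along level curves.} In $S(T)$ the inductive hypotheses (\ref{induct_D_r}), (\ref{induct_D_r_kappa}) hold. Hence $\eta$ satisfies the reduced equation (\ref{a_shorten_+r}),
$$\dot\eta=B(\dK_\eps)\eta+\eps^3\tilde O_2(d_+^{-3}),\qquad |\tilde O_2|<2c_{r,2}d_+^{-3},$$
and $d_+$ is comparable to $\hat d_+$. Every point of $S(T)\setminus(D_{q,r,d}\cup\bar D_{q,r,d})$ can be reached from the line $\re\tau=\re\tau_c$ by moving along an arc of a level curve $\re\Psi_\eps=\mathrm{const}$ that lies in $D_{q,r}$. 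For points just below $D_{q,r,d}$, the nearby level curves are almost parallel to $\Gamma_{q,\eps}$ by Lemma \ref{lem_closeness}. We parametrize the arc by arc length $\sigma$.

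\textbf{Lyapunov estimate.} By condition 5), the matrix $(dt/d\sigma)B(\dK_\eps)$ is stable. We take the quadratic Lyapunov function $W$ from \cite{bellman}, whose frozen derivative is $-(\eta\cdot\bar\eta)$. The drift of $\dK_\eps$ contributes $O(\eps\hat d_+^{-1/2})W$, which is absorbed. This gives
$$dW/d\sigma\le -0.5c_{a,5}^{-1}W+c_{a,7}\eps^3\hat d_+^{-3}\sqrt W.$$
We then apply the same two-case barrier argument:

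\emph{Case (a):} once $W^{1/2}<4c_{a,5}c_{a,7}\eps^3\hat d_+^{-3}$, this inequality persists, because $W$ would have to decrease at the first violation. This uses that $\hat d_+^{-3}$ varies slowly along the arc: its logarithmic $\sigma$-derivative is $O(\eps\hat d_+^{-1})\ll \hat d_+^{1/2}$, which holds since $\hat d_+\gg\eps^{2/3}$ for $C_q$ large.

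\emph{Case (b):} otherwise $W$ decays from its initial value $O(\eps^6\hat d_+^{-6})$.

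In both cases $|\eta|\le c_{r,16}\eps^3\hat d_+^{-3}$, with a constant independent of $T$, $\mu_1$ and $\mu_2$.

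\textbf{Main obstacle.} The delicate step is geometric. We must guarantee that each target point is reached by a level-curve arc that never enters $D_{q,r,d}$ or $\bar D_{q,r,d}$. Inside those sets only the weaker hypotheses (\ref{induct_D_r_d}) are available, and (\ref{induct_D_r}) may fail there. We plan to handle this with condition 6) and Lemma \ref{lem_closeness}: level curves below $\Gamma'_{q,r,d}$ stay below it.

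If that fails near $\Gamma'_{q,r,d}$, the fallback is to start instead from initial data on $\Gamma'_{q,r,d}$, where (\ref{induct_Gamma_r_d}) already gives $|\eta|<\eps^{17/6}\hat d_+^{-3}$. Combined with Lemma \ref{lem_in_Dqr}, which gives the sharper bound $c_{r,10}\eps^3\hat d_+^{-3}$ there, the same barrier argument then closes.
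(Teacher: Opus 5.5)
\textbf{The gap in your primary route.} The step you flag as the main obstacle is a genuine gap as written, and the justification you give for it does not work. Condition 6) and Lemma~\ref{lem_closeness} only say that $\Gamma'_{q,r,d}$ is nearly parallel to the curves $\re\Psi_\eps=\mathrm{const}$. Nearly parallel curves can still cross, in either direction, and level curves do enter $D_{q,r,d}$:
\begin{itemize}
\item Lemma~\ref{crossing} shows that for $\hat d_+<c_{e,22}/(\ln\eps)^2$ they cross $\Gamma'_{q,r,d}$ upward as $\re\tau$ increases. So ``level curves below $\Gamma'_{q,r,d}$ stay below it'' is false as stated.
\item Section~\ref{domain_3} notes that for $\hat d_+\sim1$ a level curve at distance $O(\eps)$ below $\Gamma_{q,\eps}$ lies inside $\tilde D_{q,r,d}$.
\end{itemize}
Where $\hat d_+\gtrsim(\ln\eps)^{-2}$, nothing in the paper controls the direction of crossing. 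An arc traced from $\re\tau=\re\tau_c$ may therefore enter $D_{q,r,d}$ and leave it downward before reaching the target point. This is why the paper's proof explicitly allows portions of the arc inside $D_{q,r,d}$. Along such portions you only have (\ref{induct_D_r_d}) and, in $\tilde D_{q,r,d}$, a forcing bound of order $\eps^3\hat d_{+,q}^{-3}$ (see (\ref{for_eta_both})). That bound is far larger than $\eps^3\hat d_+^{-3}$ once $\hat d_+\gg\hat d_{+,q}$, so the single-arc barrier argument cannot be run through them.

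\textbf{Your fallback closes the gap.} Made precise, it should be the actual argument. If the arc to the target point $P$ meets $D_{q,r,d}$, let $Q$ be the last point where it leaves $D_{q,r,d}$. This point lies on $\Gamma'_{q,r,d}\subset D_{q,r,d}\setminus\tilde D_{q,r,d}$, so Lemma~\ref{lem_in_Dqr} gives $|\eta(Q)|<c_{r,10}\eps^3\hat d_+^{-3}$, with $c_{r,10}$ independent of $T,\mu_1,\mu_2$. From $Q$ to $P$ the arc stays where (\ref{a_shorten_+r}) holds. Your barrier argument then gives the claim, with $c_{r,16}$ depending only on $c_{r,10}$ and the Lyapunov constants. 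The bound $\eps^{17/6}\hat d_+^{-3}$ from (\ref{induct_Gamma_r_d}) alone would not suffice: it loses a factor $\eps^{-1/6}$ and destroys the margin needed in Lemma~\ref{margin_D_r}.

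\textbf{Comparison with the paper.} This is a genuinely different route. The paper replaces each portion of the level curve inside $D_{q,r,d}$ by the arc of $\Gamma'_{q,r,d}$ with the same endpoints; by analyticity there are finitely many. It then runs the Lyapunov estimate along the whole glued path from $\re\tau=\re\tau_c$, using Lemma~\ref{lem_closeness} to keep $(dt/d\sigma)B(\dK_\eps)$ stable on the $\Gamma'_{q,r,d}$ pieces. Your restart never integrates along $\Gamma'_{q,r,d}$ and needs no closeness lemma at this step. However, it imports the $\eta$-bound on $D_{q,r,d}\setminus\tilde D_{q,r,d}$ (Lemma~\ref{l_for_margin_eta_improved}), and the paper proves that bound by the same glued-path argument. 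So the geometric work is relocated rather than avoided.

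A minor point: the $\eta$-block contracts at rate $O(1)$ in $\sigma$, not $\hat d_+^{1/2}$. Your slow-variation condition is therefore stronger than necessary, but harmless.
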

Now we obtain estimates for $\kappa(t)$.

\begin{lem}
\label{l_for_kappa_rup}
In part of $S(T)$ covered by the curves $\re \Psi_{\eps}={\rm const}$ passing through $D_{q,r,d}$,  we have
\begin{equation}
\label{e_for_kappa_rup}
 |\kappa(t)-\dK_{\eps}(\eps t)|<  c_{r,8}\eps^4 \hat{ d}_{+,q}^{-9/2}. 
\end{equation}
\end{lem}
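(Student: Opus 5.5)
The plan is to repeat the argument of the proof of Lemma \ref{est_of_shorten} for $\kappa$: estimate the perturbation in $\dot\kappa$ and integrate along a path on which the number of fast time units is controlled. I take any point $\tau$ in the part of $S(T)$ covered by level curves $\re \Psi_{\eps}={\rm const}$ that pass through $D_{q,r,d}$. I connect it to $t_4$ by the following path. First I move along the real axis from $t_4$ to the foot point of the relevant level curve. Then I move along this level curve, with arc length $\sigma$ as parameter.

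Along the real axis I use the estimates of Lemma \ref{lem_cont_D_q_l}, which give $z,\eta,w=O(\eps^3)$ there. Along the level curve I use the bounds valid in $S(T)$ from the inductive assumptions (\ref{induct_D_r_d}) and from Lemma \ref{lem_in_Dqr}:
\[
|z|<c_{r,6}\eps^2\hat d_{+,q}^{-5/2},\qquad |\eta|,|w|<c_{r,7}\eps^3\hat d_{+,q}^{-3}.
\]
Outside $D_{q,r,d}$ the same level curve lies in the region covered by Lemmas \ref{l_down_z}, \ref{l_right_eta} and Corollary \ref{c_down_z}. There the bounds in terms of $\hat d_+$ are even better, because $\hat d_+\ge c\,\hat d_{+,q}$ throughout $D_q$.

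Next I substitute these bounds into the equation for $\dot\kappa$ in (\ref{with_beta_+r}). Exactly as in the proof of Lemma \ref{est_of_shorten}, the term-by-term estimate is dominated by $\eps^3 O(|z|d_+^{-3})$. With $d_+\ge c\,\hat d_{+,q}$ this gives
\[
\dot\kappa=\eps F(\kappa)+O\bigl(\eps^5\hat d_{+,q}^{-11/2}\bigr),
\]
where I use $d_+(\kappa)\sim\hat d_+$ from (\ref{induct_D_r_d}). Comparing with $\dot\dK_{\eps}=\eps F(\dK_{\eps})$, the difference $\Delta=\kappa-\dK_{\eps}$ satisfies a linear inequality. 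Its coefficient is $\eps\,\partial F/\partial\kappa=O(\eps\hat d_+^{-1/2})$, and its forcing is $O(\eps^5\hat d_{+,q}^{-11/2})$.

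I then apply the Gronwall inequality along the path. The slow-time length of the path is $O(1)$, so the fast time is $O(1/\eps)$, and the Gronwall factor stays bounded. The forcing integrates to $O(\eps^4\hat d_{+,q}^{-11/2})$, and I want the better bound $\eps^4\hat d_{+,q}^{-9/2}$. For that I use that the full bound $\eps^3|z|d_+^{-3}$ occurs only where $\hat d_+\sim\hat d_{+,q}$. Writing the integrand as $\eps^5\hat d_{+,q}^{-5/2}\hat d_+^{-3}$, I integrate $\hat d_+^{-3}$ in slow time along the level curve, where slow time is proportional to $\hat d_+$ near $\tau_c$ by Lemma \ref{lem_Gamma_q}. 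This gives
\[
\eps^4\hat d_{+,q}^{-5/2}\int \hat d_+^{-3}\,d\hat d_+ \sim \eps^4\hat d_{+,q}^{-9/2}.
\]
The initial discrepancy $O(\eps^6|\ln\eps|)$ and the real-axis contribution are absorbed into this bound.

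The main obstacle is this last integration. One has to verify that along the relevant level curves $\hat d_+$ grows linearly in slow time away from the near-$\tau_c$ region. Lemma \ref{lem_Gamma_q} together with condition 6) should provide this. One must also check that the constant obtained does not depend on $\mu_1,\mu_2$ or $T$, which holds because only the $z,\eta,w$ bounds of Lemma \ref{lem_in_Dqr}, whose constants are independent of $\mu_1,\mu_2,T$, enter the integral.
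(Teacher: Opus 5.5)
Your overall route is the paper's. You insert the bounds of the preceding lemmas into the $\dot\kappa$-equation and compare with $\dot\dK_{\eps}=\eps F(\dK_{\eps})$; the Gronwall factor is bounded since $\int\hat d_+^{-1/2}\,d\tau=O(1)$. You then integrate along the level curves $\re\Psi_{\eps}={\rm const}$, using the growth of $\hat d_+$ away from $\tau_c$ to gain a factor $\hat d_{+,q}$.

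However, the step where you bound the whole forcing by $\eps^5\hat d_{+,q}^{-5/2}\hat d_+^{-3}$ is false. The appeal to ``exactly as in Lemma \ref{est_of_shorten}'' is where it breaks. There $|z|\lesssim\eps^2 d_+^{-5/2}$ decays in $d_+$, whereas here the available bound $|z|<c_{r,6}\eps^2\hat d_{+,q}^{-5/2}$ does not. Consequently $\eps^2|z|^2d_+^{-2}\sim\eps^6\hat d_{+,q}^{-5}\hat d_+^{-2}$ exceeds $\eps^5\hat d_{+,q}^{-5/2}\hat d_+^{-3}$ once $\hat d_+\gg\eps^{-1}\hat d_{+,q}^{5/2}$, which is a $C_q$-dependent multiple of $\eps^{2/3}$. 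For $\hat d_+\sim 1$ the term $\eps|z|^4\sim\eps^9\hat d_{+,q}^{-10}$ also exceeds it.

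Domination by $\eps^3|z|d_+^{-3}$ holds only after the crude replacement $d_+\to\hat d_{+,q}$. With that replacement, already the $|z|^2$-term integrates over fast time $O(1/\eps)$ to $\eps^5\hat d_{+,q}^{-7}$, i.e.\ of order $\eps^{1/3}$ instead of the target order $\eps$.

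The repair is to integrate each term with its own power of $\hat d_+$, as the paper does. The contributions are $\eps^5\hat d_{+,q}^{-6}$, $\eps^{35/6}\hat d_{+,q}^{-6}$, $\eps^8\hat d_{+,q}^{-10}$, $\eps^{10}\hat d_{+,q}^{-27/2}$, $\eps^{41/6}\hat d_{+,q}^{-8}(1+|\ln\hat d_{+,q}|)$, $\eps^4\hat d_{+,q}^{-9/2}$, $\eps^{29/6}\hat d_{+,q}^{-7/2}$ and $\eps^5\hat d_{+,q}^{-5}$. Each is $O(\eps^4\hat d_{+,q}^{-9/2})$; the first has ratio $\eps\hat d_{+,q}^{-3/2}=O(C_q^{-3/2})$ to the target.

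Two smaller points.

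First, the bound $|w|<c_{r,7}\eps^3\hat d_{+,q}^{-3}$ in $D_{q,r,d}$ is not yet available. It is obtained from Lemma \ref{l_down_down_z} via $w(t)=\overline{z(\bar t)}$, and that proof uses the present lemma. At this stage you only have $|w|<\eps^{17/6}\hat d_{+,q}^{-3}$ from (\ref{induct_D_r_d}). That bound suffices, and it is the source of the contributions $\eps^{35/6}\hat d_{+,q}^{-6}$, $\eps^{41/6}\hat d_{+,q}^{-8}(1+|\ln\hat d_{+,q}|)$ and $\eps^{29/6}\hat d_{+,q}^{-7/2}$ above.

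Second, the portion of your path from the left foot point up to $\re\tau=\re\tau_c$ lies in $D_{q,l}$, not in $S(T)$, so there you need the bounds of Lemma \ref{lem_cont_D_q_l}. It is simpler to start, as the paper does, on the line $\re\tau=\re\tau_c$. There Lemma \ref{lem_cont_D_q_l} already gives $|\kappa-\dK_{\eps}|<c_{e,6}\eps^4\hat d_+^{-9/2}\le c\,\eps^4\hat d_{+,q}^{-9/2}$.
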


According to Lemma \ref{lem_cont_D_q_l}, on the real axis for $\re \tau \le\re \tau_c$, we have \\  $|\kappa(t)-\dK_{\eps}(\eps t)|=O(\eps^4)$.  Using all above estimates, equations (\ref{with_beta_+r})  for $\im \tau\ge -c_{l,1}^{-1}$ and analogous equations for $\im \tau\le c_{l,1}^{-1}$,  in $S(T)\setminus (D_{q,r,d}\cup \bar D_{q,r,d})$ we obtain
\begin{equation}
\begin{aligned}
 |\kappa(t)-\dK_{\eps}(\eps t)|<  c_{r,17}\eps^4 \hat{ d}_{\pm}^{-9/2}   
   \end{aligned}
 \end{equation}
 (``+'' for $\im \tau\ge -c_{l,1}^{-1}$, ``-'' for $\im \tau\le c_{l,1}^{-1}$).
 \medskip
 
\begin{lem}
\label{l_down_down_z}
For $\tau \in (\bar D_{q,r,d}\setminus D_{q,r,d}) \cap S(T)$, we have
\begin{equation}
|z(t)|< c_{r,16}\eps^{3} \hat{ d}_{-,q}^{-3}.
\end{equation}

For $\tau \in (\bar D_{q,r,d}\setminus \overline {\tilde  D}_{q,r,d}) \cap S(T)$,  we have
\begin{equation}
|z(t)|< c_{r,16}\eps^{3} \hat{ d}_{-}^{-3}.
\end{equation}
\end{lem}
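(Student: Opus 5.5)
The target is Lemma \ref{l_down_down_z}. I would argue by moving vertically downward inside $\bar D_{q,r,d}$, exactly as in the proof of Lemma \ref{est_of_shorten} and Lemma \ref{l_improved_I}, but using the equation for $z$ valid in $W^-_\delta$. That equation is (\ref{a_shorten_-r}), namely $\dot z=\Lambda_1(\dK_{\eps})z+\eps^3\tilde O_4(d_-^{-3})$ with $|\tilde O_4|<2c_{r,4}d_-^{-3}$. It is available in $S(T)$ for $\im\tau\le c_{l,1}^{-1}$, since there $\beta_4$ is controlled by the inductive assumption (\ref{induct_D_r_4}), and the $\kappa$-difference term is absorbed as shown before (\ref{a_shorten_+r}).

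\textbf{Initial data.} Every point of $(\bar D_{q,r,d}\setminus D_{q,r,d})\cap S(T)$ can be reached from the part of $S(T)$ lying above it along a vertical line $\re\tau={\rm const}$. The path starts either from the real axis or from the lower boundary $\Gamma_{q,r,d}$ of $D_{q,r,d}$, and the two cases give the same type of bound.
\begin{itemize}
\item On the real axis, Lemma \ref{lem_cont_D_q_l} gives $|z|=O(\eps^3)$ for $\re\tau\le\re\tau_c$. Estimates of the type in Lemma \ref{est_of_shorten} in the strip $|\im\tau|<2c_{l,1}^{-1}$ give $|z|=O(\eps^3)$ near the real axis.
\item If the path starts on $\Gamma_{q,r,d}$, Lemma \ref{lem_in_Dqr} gives $|z|<c_{r,6}\eps^2\hat d_{+,q}^{-5/2}$. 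Where that curve crosses into $\im\tau\le c_{l,1}^{-1}$, this is combined with the estimate $|z|=O(\eps^3)$ from the strip.
\end{itemize}
Since $\hat d_{-,q}=\hat d_{+,q}\sim\eps^{2/3}$, all these starting values are $O(\eps^3\hat d_{-,q}^{-3})=O(\eps C_q^{-3})$ in size.

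\textbf{Vertical descent.} With $s=-\im t$, the equation becomes $dz/ds=-i\Lambda_1(\dK_\eps)z+\eps^3 O(\hat d_-^{-3})$. By condition 6) of Section \ref{form_conditions}, vertical lines are transversal to $\re\Psi_\eps={\rm const}$. In the lower half-plane $\Lambda_2$ is the critical eigenvalue, while $\Lambda_1$ is bounded away from the imaginary axis in the contracting direction. Hence $\re(-i\Lambda_1)<-c^{-1}$ for $\im\tau\le c_{l,1}^{-1}$; near $\bar\tau_c$ this holds because $\lambda_1(\bar\kappa_c)\ne0$ by condition D. So $|z|$ decreases while $|z|>c\,\eps^3\hat d_-^{-3}$. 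A comparison argument with $du/ds=-c^{-1}u+c'\eps^3\hat d_-^{-3}$, as in Lemma \ref{l_improved_I}, then gives $|z|<c_{r,16}\eps^3\max(\hat d_{-,q}^{-3},\hat d_-^{-3})$. Throughout $\bar D_{q,r,d}$ we have $\hat d_-\ge\hat d_{-,q}/2$, and this yields the first bound.

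\textbf{Second bound.} In $\bar D_{q,r,d}\setminus\overline{\tilde D}_{q,r,d}$ the point lies at vertical distance at least $c_{e,12,1}\eps\hat d_-^{-1/2}|\ln(\cdot)|$ from the starting curve. Along that stretch the homogeneous part contracts the initial size $O(\eps^3\hat d_{-,q}^{-3})$ by a factor $e^{-c|\ln(\cdot)|/\eps}$ in slow-time units, which leaves only the forced part $O(\eps^3\hat d_-^{-3})$. Lemmas \ref{lem_0.5_2} and \ref{lem_about_width} show that $\hat d_-$ varies by at most a factor 2 on these segments, so $\hat d_{-,q}$ may be replaced by $\hat d_-$.

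\textbf{Main obstacle.} I expect the hardest step to be the uniform contraction claim for $-i\Lambda_1$ close to $\bar\tau_c$ together with handling the transition region near $\im\tau\approx\pm c_{l,1}^{-1}$. If a uniform rate fails there, I would fall back on a weaker, $\hat d_-$-dependent rate and absorb the resulting logarithmic factor using the choice of the constants $c_{e,11,1}$ and $c_{e,12,1}$.
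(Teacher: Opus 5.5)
Your descent is built on the shortened equation (\ref{a_shorten_-r}). You justify it by saying that (\ref{induct_D_r_4}) controls $\beta_4$ throughout $\{\im\tau\le c_{l,1}^{-1}\}\cap S(T)$, and that is the gap. The hypothesis (\ref{induct_D_r_4}) is imposed only on $(D_{q,r}\setminus(D_{q,r,d}\cup\bar D_{q,r,d}))\cap\{\im\tau\le c_{l,1}^{-1}\}$, and it cannot be imposed on $\bar D_{q,r,d}$.

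\begin{itemize}
\item There $w(t)=\overline{z(\bar t)}$ mirrors $z$ on $D_{q,r,d}$. On $D_{q,r,d}$, Lemma \ref{lem_in_Dqr} only gives $|z|<c_{r,6}\eps^2\hat d_{+,q}^{-5/2}\sim\eps^{1/3}C_q^{-5/2}$, because the $\eps^{1/3}$-deviation is carried along $\Gamma_{q,\eps}$ without decay down to the real axis.
\item With $w$ of that size, the term $O(|w|^3)$ in (\ref{d_equation1}) can be of order $\eps C_q^{-15/2}$. This is far above $\eps^3\hat d_-^{-3}$ wherever $\hat d_-$ is not small. The term $\eps O(|z||w|d_-^{-3/2})$ also contributes a linear coefficient of order $\eps^{1/3}$.
\item What is needed is the step the paper carries out. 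Bound the terms of (\ref{d_equation1}) directly, using the reflected bound $|w|<c_{r,6}\eps^2\hat d_{-,q}^{-5/2}$ and the bounds on $\eta$. This gives $\dot z=(\Lambda_1(\kappa)+\beta)z+\alpha$ with $|\beta|=O(\eps^{1/3})$ and $|\alpha|=O(\eps^3\hat d_{-,q}^{-3})$.
\item Lemma \ref{l_for_kappa_rup} then gives $\Lambda_1(\kappa)-\Lambda_1(\dK_\eps)=O(\eps^{2/3})$. The uniform downward contraction yields the first bound, with $\hat d_{-,q}$ and nothing better.
\item Your starting values also need repair. First, $\eps^2\hat d_{+,q}^{-5/2}\sim\eps^{1/3}$ is not $O(\eps^3\hat d_{-,q}^{-3})\sim\eps C_q^{-3}$. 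Second, the strip estimate $|z|=O(\eps^3)$ that you borrow from Lemma \ref{est_of_shorten} is proved only in $D_{q,l}$. Near $\tau_{q,\eps,+}$ the strip $D_{q,r,d}$ reaches the real axis, and $z$ need not be $O(\eps^3)$ there.
\item The correct data on the upper boundary are $|z|<c_{r,16}\eps^3\hat d_-^{-3}$ from Lemma \ref{l_down_z}. Where that boundary is part of $\Gamma'_{q,r,d}$, they are $|z|<c_{r,12}\eps^3\hat d_+^{-4}$.
\end{itemize}

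Your argument for the second bound fails for a more basic reason.
\begin{itemize}
\item $\overline{\tilde D}_{q,r,d}$ is the part of $\bar D_{q,r,d}$ adjacent to $\bar\Gamma_{q,\eps}$, the lower boundary of $D_q$. Hence $\bar D_{q,r,d}\setminus\overline{\tilde D}_{q,r,d}$ is the upper layer of the strip, the first part a downward descent enters. No contraction time has elapsed there.
\item Moreover, no contraction argument can give a bound below the forcing level. As long as $|w|$ may be of order $\eps^{1/3}$, the forcing is only known to be $O(\eps^3\hat d_{-,q}^{-3})$.
\item The missing idea is that this upper layer is the mirror image of $D_{q,r,d}\setminus\tilde D_{q,r,d}$. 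There Lemma \ref{lem_in_Dqr} has already reduced $z$ to $|z|<c_{r,9}(\eps^{11/6}C_q^{-6}\hat d_+^{-1/2}+\eps^3\hat d_+^{-4})$.
\item Hence $|w|<c_{r,9}(\eps^{11/6}C_q^{-6}\hat d_-^{-1/2}+\eps^3\hat d_-^{-4})$ in the upper layer. Together with $|\eta|<c_{r,10}\eps^3\hat d_-^{-3}$, this brings the forcing down to $O(\eps^3\hat d_-^{-3})$; this is estimate (\ref{for_alpha_improved}), checked term by term in Appendix~1.
\item The data on the upper boundary are already below $c_{r,16}\eps^3\hat d_-^{-3}$, so the descent through this layer preserves that level.
\end{itemize}

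The obstacle you anticipated is not the real one. It is $\lambda_2$, not $\lambda_1$, that vanishes at $\bar\tau_c$, and the downward contraction of $z$ in the lower half-plane is uniformly of order one. The real difficulty is the size of $w$ inside $\bar D_{q,r,d}$.
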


\begin{cor}
\label {c_down_down_z}
This lemma implies that $|w(t)|< c_{r,16}\eps^{3} \hat{ d}_{+}^{-3}$ for $\tau \in ( D_{q,r,d}\setminus {\tilde  D}_{q,r,d}) \cap S(T)$.
\end{cor}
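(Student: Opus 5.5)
The statement to prove is Corollary \ref{c_down_down_z}: $|w(t)|<c_{r,16}\eps^3\hat d_+^{-3}$ for $\tau\in(D_{q,r,d}\setminus\tilde D_{q,r,d})\cap S(T)$. The plan is to deduce this directly from the second estimate of Lemma \ref{l_down_down_z} via the real-analyticity relation $w(t)=\overline{z(\bar t)}$ from Lemma \ref{lem_real}. That relation is the defining substitute for the $\dot w$ equation in system (\ref{d_equation}).

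First, take $\tau=\eps t\in (D_{q,r,d}\setminus\tilde D_{q,r,d})\cap S(T)$ and pass to the conjugate point $\bar\tau$. The domains $\bar D_{q,r,d}$ and $\overline{\tilde D}_{q,r,d}$ are by definition the complex conjugates of $D_{q,r,d}$ and $\tilde D_{q,r,d}$. Also, $S(T)$ is symmetric with respect to the real axis, since it is cut out of $D_{q,r}$, which is bounded by $\Gamma_{q,\eps}$ and $\bar\Gamma_{q,\eps}$, by the vertical line $\re t=T$. Hence $\bar\tau\in(\bar D_{q,r,d}\setminus\overline{\tilde D}_{q,r,d})\cap S(T)$, and the second claim of Lemma \ref{l_down_down_z} gives $|z(\bar t)|<c_{r,16}\eps^3\hat d_-^{-3}(\bar\tau)$.

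Second, use $|w(t)|=|z(\bar t)|$ and compare $\hat d_-(\bar\tau)$ with $\hat d_+(\tau)$. Since $\dK_\eps(\bar\tau)=\overline{\dK_\eps(\tau)}$ by real-analyticity of the improved slow equation and of its real initial data, and since $b$ is the real vector from (\ref{de_z1}) with real $x$-structure, we get
\[
\hat d_-(\bar\tau)=|b\cdot(\overline{\dK_\eps(\tau)}-\bar\kappa_c)|=|b\cdot(\dK_\eps(\tau)-\kappa_c)|=\hat d_+(\tau).
\]
This is the same identity already used in the proof of Lemma \ref{lem_cont_D_q_r} when stating $\hat d_{+,q}=\hat d_{-,q}$. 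Substituting it yields the claim with the same constant.

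The only point requiring care is the identity $\hat d_-(\bar\tau)=\hat d_+(\tau)$, which depends on $b$ being real, or at least on conjugation mapping $b$ appropriately. If $b$ is complex, as a projection onto a complex eigenvector, I would instead use the fact that $\bar\kappa_c$ is the critical point for $\lambda_2$ with coefficient $\bar b$. With this in hand, $d_-$ evaluated at $\bar\kappa$ equals $d_+$ evaluated at $\kappa$ by construction of the pair $(d_+,d_-)$. I would also verify that the estimate in Lemma \ref{l_down_down_z} is stated in terms of $\hat d_-$ at the conjugate point, which matches the region $\im\bar\tau\le c_{l,1}^{-1}$ where $d_-$ is the relevant quantity.
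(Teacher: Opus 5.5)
Your proof is correct and is the argument the paper leaves implicit. The corollary is the image of the second estimate of Lemma~\ref{l_down_down_z} under the symmetry $w(t)=\overline{z(\bar t)}$, using that conjugation maps $(D_{q,r,d}\setminus\tilde D_{q,r,d})\cap S(T)$ onto $(\bar D_{q,r,d}\setminus\overline{\tilde D}_{q,r,d})\cap S(T)$ and that $\hat d_-(\bar\tau)=\hat d_+(\tau)$, the same identity the paper uses for $\hat d_{+,q}=\hat d_{-,q}$ in the proof of Lemma~\ref{lem_cont_D_q_r}. One correction: $b$ is in general complex because $(x_c,\kappa_c)$ is a complex point, so that identity rests on your fallback of pairing $\bar\kappa_c$ with $\bar b$, not on $b$ being real.
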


It remains to obtain estimate of $   |\kappa(t)-\dK_{\eps}(\eps t)| $ in $D'_{q,r,d}\setminus  \tilde D_{q,r,d}$.

\begin{lem}
\label{lem_improved_kappa}
If $\tau\in  D'_{q,r,d}\setminus  \tilde D_{q,r,d}$,  
 then
   \begin{equation}
   \label{eq_improved_kappa}
    |\kappa(t)-\dK_{\eps}(\eps t)|<c_{r,11} \eps^4 \hat{ d}_{+}^{-9/2}.
    \end{equation}
\end{lem}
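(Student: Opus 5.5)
The plan is to derive the estimate (\ref{eq_improved_kappa}) from the equation for $\kappa$ in (\ref{with_beta_+r}), comparing $\kappa$ with $\dK_{\eps}$ along vertical segments that start on $\Gamma_{q,\eps}$. The comparison has the same structure as in the proof of Lemma \ref{lem_improved_kappa_old_1}, with one difference: now we \emph{start} from the upper curve and go down, instead of starting from $\Gamma'_{q,r,d}$ and going up.

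First, take a point $\tau\in D'_{q,r,d}\setminus\tilde D_{q,r,d}$. It lies on a full-length vertical segment hanging from some $\tau_u\in\Gamma_{q,\eps}$. On this segment Lemmas \ref{lem_0.5_2} and \ref{lem_about_width} let us replace $\hat d_u$ by $\hat d_+$ in all $O$-estimates. The initial error at $\tau_u$ is controlled in two ways. Lemma \ref{l_for_kappa_rup} gives the bound $\eps^4\hat d_{+,q}^{-9/2}$, which is not good enough when $\hat d_u\gg\hat d_{+,q}$. Better, I would redo the argument of Lemma \ref{l_for_kappa_rup} along $\Gamma_{q,\eps}$ using the local bound $|z|=O(\eps^2\hat d_+^{-5/2})$ that holds near the curve. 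Lemma \ref{on_upper_curve_r} together with the initial data from Lemma \ref{lem_cont_D_q_l} should give $\eps^4\hat d_u^{-9/2}$ at $\tau_u$ once the source terms are integrated along the curve, exactly as in Lemma \ref{est_of_shorten}.

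Second, split the segment into its upper part inside $\tilde D_{q,r,d}$ and its lower part in $D'_{q,r,d}\setminus\tilde D_{q,r,d}$.

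\textbf{Upper part.} The only available bounds are $|z|<c_{r,6}\eps^2\hat d_{+,q}^{-5/2}$ and $|\eta|,|w|=O(\eps^3\hat d_{+,q}^{-3})$. These are large, but the segment length is only $\eps\hat d_+^{-1/2}|\ln(\eps\hat d_+^{-3/2}C_q^{15/16})|$. Multiplying the right-hand side of the $\kappa$-equation minus $\eps F$ by this length gives a contribution of the type already computed in Lemma \ref{lem_improved_kappa_old_1}.

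\textbf{Lower part.} Here we have the better bounds $|z|<c_{r,9}(\eps^{11/6}C_q^{-6}\hat d_+^{-1/2}+\eps^3\hat d_+^{-4})$ and $|\eta|,|w|<c_{r,10}\eps^3\hat d_+^{-3}$, with a segment length of the same logarithmic order. Substituting these into each term of (\ref{with_beta_+r}) should give a contribution that is $o(\eps^4\hat d_+^{-9/2})$. The dominant term is $\eps^3|z|d_+^{-3}$, which should give about $\eps^5\hat d_+^{-11/2}\cdot\eps^{-1}$ times the length.

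Third, control the linear part $\eps(F(\kappa)-F(\dK_{\eps}))$ by Gronwall. Since $\partial F/\partial\kappa=O(\hat d_+^{-1/2})$, the Gronwall factor over a segment of length $\ell$ is $\exp(O(\hat d_+^{-1/2}\ell))=1+o(1)$, by Lemma \ref{lem_about_width}.

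The main obstacle is the upper-part contribution. The term $\eps^{4/3}C_q^{-10}$ multiplied by the length is not bounded by $\eps^4\hat d_+^{-9/2}$ when $\hat d_+\sim1$. I would handle it in one of two ways. The first is to show that the large $z$ near $\Gamma_{q,\eps}$ is in fact of size $\eps^2\hat d_+^{-5/2}$ locally, which is what I expect to follow from Lemma \ref{on_upper_curve_r} applied pointwise. The second, failing that, is to exploit the exponential decay of $z$ downward, $|z|\lesssim\eps^2\hat d_u^{-5/2}e^{-c\hat d_+^{1/2}s}$, coming from the proof of Lemma \ref{l_improved_I}. Integrating this decaying profile instead of its sup bound replaces the logarithmic length by $\hat d_+^{-1/2}$. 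That yields contributions $\eps^2\hat d_+^{-2}(\eps^2\hat d_+^{-5/2})^2\cdot\hat d_+^{-1/2}$ and similar, all of which are $O(\eps^4\hat d_+^{-9/2})$ after absorbing constants into $c_{r,11}$.
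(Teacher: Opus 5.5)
\textbf{The gap is the direction of integration.} Starting each segment on $\Gamma_{q,\eps}$ cannot work, because near that curve neither $z$ nor $\kappa-\dK_{\eps}$ is small enough.

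\emph{No local bound on $z$.} The only bounds available there are those of Lemma \ref{on_upper_curve_r}: $|z|<c_{a,3}\eps^2\hat d_{+,q}^{-5/2}$ and $|\kappa-\dK_{\eps}|<c_{a,4}\eps^4\hat d_{+,q}^{-9/2}$, with $\hat d_{+,q}\sim C_q\eps^{2/3}$ fixed. The local improvement $|z|=O(\eps^2\hat d_+^{-5/2})$ you hope for is false. Along a level curve of $\re\Psi_{\eps}$ the linearised $z$-dynamics neither contracts nor expands, so an oscillation of amplitude of order $\eps^{1/3}$ present near $\tau_q$ is carried along all of $\Gamma_{q,\eps}$. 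This is the mechanism of Theorem \ref{t1.th}: by Lemmas \ref{lem_after_c_3} and \ref{lem_to_D32}, $|z|$ is of order $\eps^{1/3}$ on $\Gamma^{C_{a,0},\eps}$, which for $\hat d_+\sim 1$ lies inside $\tilde D_{q,r,d}$.

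\emph{The initial deviation is too large.} Consequently the terms $\eps^2O(|z|^2d_+^{-2})$ and $\eps O(|z|^4)$ in the $\kappa$-equation, whose coefficients have no reason to vanish, make $\kappa-\dK_{\eps}$ near the top generically as large as $\eps^{7/3}$ when $\hat d_+\sim1$. The extra terms in Lemma \ref{lem_improved_kappa_old_1} record exactly this, and it is far above $\eps^4\hat d_+^{-9/2}$. The $\kappa$-equation has essentially no contraction. So a Gronwall bound on absolute values started from such data cannot give $O(\eps^4\hat d_+^{-9/2})$ at the bottom. The smallness there comes from cancellation between the initial deviation and the integrated oscillating forcing, which absolute-value estimates do not see.

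\emph{The decay fallback fails too.} The decaying profile starts at $\eps^2\hat d_{+,q}^{-5/2}\sim\eps^{1/3}C_q^{-5/2}$, not at $\eps^2\hat d_u^{-5/2}$. For example, the term $\eps O(|z|^4)$ then contributes about $\eps^{7/3}C_q^{-10}\hat d_+^{-1/2}$, which exceeds $\eps^4\hat d_+^{-9/2}$ by a factor of order $\eps^{-5/3}$ when $\hat d_+\sim 1$.

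\textbf{The fix} is to keep the direction used in Lemma \ref{lem_improved_kappa_old_1}. The lower boundary $\Gamma'_{q,r,d}$ borders $S(T)\setminus(D_{q,r,d}\cup\bar D_{q,r,d})$. There $|\kappa-\dK_{\eps}|<c_{r,17}\eps^4\hat d_+^{-9/2}$ has already been obtained, by integrating from the real axis through regions where $z$ is small. Start on $\Gamma'_{q,r,d}$ and move vertically upward only through $D'_{q,r,d}\setminus\tilde D_{q,r,d}$, never entering $\tilde D_{q,r,d}$. On this piece you have
\begin{itemize}
\item $|z|<c_{r,9}(\eps^{11/6}C_q^{-6}\hat d_+^{-1/2}+\eps^3\hat d_+^{-4})$;
\item $|\eta|,|w|<\eps^{17/6}\hat d_+^{-3}$;
\item vertical width at most $c_{e,12}\eps\hat d_u^{-1/2}|\ln(c_{e,11}^{-1}\eps\hat d_u^{-3/2}C_q^{15/16})|$.
\end{itemize}
With these bounds, your ``lower part'' computation together with your Gronwall remark is exactly the paper's proof.

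One small correction: the contributions of $\eps^2|z|^2d_+^{-2}$, $\eps|z|^5d_+^{-2}$ and $\eps^3|z|d_+^{-3}$, relative to $\eps^4\hat d_+^{-9/2}$, are $O(C_q^{-6}\ln C_q)$, $O(C_q^{-18}\ln C_q)$ and $O(C_q^{-3}\ln C_q)$. These are bounded rather than $o(1)$, which suffices because $C_q$ is fixed.
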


 This completes the proof of Lemma \ref{est_of_shorten_r}.
 
 \hskip 12 cm $\square$
 
 \medskip
 {\bf Proof of Lemma \ref{margin_D_r}.}
 
 The required properties for the domain $S(T)\cap  D_{q,r,d}$ are  satisfied due to Lemma  \ref{est_of_shorten_r}.
 
 The proof for the remaining part of $S(T)$  follows from estimates in  Lemma  (\ref{est_of_shorten_r})  completely analogously to the proof of  Lemma \ref{margin_D_l}, and we omit it. On should take into account that $\eps^3 d_{+}^{-4}<\eps^2 d_{+}^{-5/2}$,  if $C_q$ is sufficiently large. 
  
  \hskip 12 cm $\square$

%% file: delay_proofs_sect_12.tex
\section{Proofs of Lemmas from Section 12}
\label{proofs_sect_12}
{\bf Proof of Lemmas \ref{symmetry_triangle_1}, \ref{symmetry_triangle_2}.}

We have
\begin{equation}
 \begin{aligned}
& x=X(\kappa(t))+C(\kappa(t))\xi(t),  \quad x=X(\kappa_c)+C(\kappa_c) \xi_{sm}(t) ,\\
&z(t)=\xi_1(t)+i\xi_2(t),\ w(t)=\xi_1(t)-i\xi_2(t), \\
& z_{sm}(t)=\xi_{1,sm}(t)+i\xi_{2,sm}(t),\ w_{sm}(t)=\xi_{1,sm}(t)-i\xi_{2,sm}(t). \end{aligned}
\end{equation}
Here the value $t$ is taken on the lower boundary of the triangle $D_{\triangle}$, and we denote the first two  components of the vectors $\xi(t)$ and $\xi_{sm}(t)$ by
$\xi_1(t), \xi_2(t)$ and $\xi_{1,sm}(t), \xi_{2,sm}(t)$,   respectively.
 Thus,
 \begin{equation}
 \begin{aligned}
 \xi_{sm}(t)=C^{-1}(\kappa_c)(X(\kappa(t))-X(\kappa_c) )
 +C^{-1}(\kappa_c)C(\kappa(t))  \xi(t). 
 \end{aligned}
\end{equation}
We know that $\kappa(t)=\kappa_c+O(\eps^{2/3})$. This implies that $C(\kappa(t))=C(\kappa_c)+O(\eps^{1/3})$ and 
$C^{-1}(\kappa_c)C(\kappa(t))=I+O(\eps^{1/3})$,  where $I$ is the unit matrix. 

Moreover, $X(\kappa(t)) =X(\kappa_c)+O(\eps^{1/3})$,  and 
$$
C^{-1}(\kappa_c)(X(\kappa(t))-X(\kappa_c)) =\begin{pmatrix}(u+v)/2\\ -i(u-v)/2\\O(\eps^{2/3})   \end{pmatrix} , u=O(\eps^{1/3}),\ v=O(\eps^{2/3}).
$$
This is because the matrix $C^{-1}(\kappa_c)$ transforms $x-x_c$ into a coordinate system in which the matrix $A_c$ takes the block-diagonal form described in Lemma \ref{lem_transform_00}.

Thus, we obtain estimate (\ref{est1_triangle}) of the Lemma \ref {symmetry_triangle_1}. In a similar way, we obtain estimate (\ref{est2_triangle}) of this  Lemma.
 \medskip
 
 We also have 
\begin{equation}
 \begin{aligned}
& \xi_{sm}(t)=C^{-1}(\kappa_c)(X(\kappa(t))-X(\kappa_c)  )
 +C^{-1}(\kappa_c)C(\kappa(t)) \xi(t),\\
 & \xi_{ms}(\bar t)=C^{-1}(\bar \kappa_c)(X(\kappa(\bar t))-X(\bar \kappa_c)  )
 +C^{-1}(\bar \kappa_c)C(\kappa(\bar t)) \xi(\bar t).
 \end{aligned}
\end{equation}
Relations 
\begin{equation}
 \begin{aligned}
&X(\bar\kappa_c)=\bar X(\kappa_c),\ C(\bar \kappa_c)=\bar C(\kappa_c), \
\kappa(\bar t)=\bar \kappa( t), \\
&X(\kappa(\bar t))= \bar X(\kappa( t)),\ C(\kappa(\bar t))=\bar C(\kappa( t)),\
\xi((\bar t))=\bar \xi((t))
 \end{aligned}
\end{equation}
imply that $ \xi_{ms}(\bar t)=\bar \xi_{sm}( t)$. This, in turn, implies that
$$
z_{ms}(\bar t)=\bar w_{sm}( t),\ w_{ms}(\bar t)=\bar z_{sm}( t), \eta_{ms}(\bar t)=\bar \eta_{sm}( t).
$$
\hskip 12cm $\square$

%% file: delay_proofs_sect_13.tex
\section{Proofs of Lemmas from Section 13}
\label{proofs_sect_13}

  \medskip
 {\bf Proof of Lemma \ref{on_upper_curve_r}.}
 
 \medskip
Denote $t_q=\tau_q/\eps$.  According to Lemma \ref {lem_cont_D_q_l},  we have
 \begin{equation*}
| z(t_q)|<c_{e,7}\eps^2 \hat{ d}_{+,q}^{-5/2}, \quad  |\kappa(t_q)-\dK_{\eps}(\eps t_q)|<  c_{e,6}\eps^4 \hat{ d}_{+,q}^{-9/2}  .  
 \end{equation*}
 On the curve $\Gamma_{q, \eps}$,  we have
 \begin{equation}
\begin{aligned}
\label{eq_upper}
 &\dot z=  \Lambda_1(\kappa)z+  \eps O(|z|^2 d_+^{-3/2})+
     \eps O(|\xi|_*^2 d_+^{-1/2})  +O(|\eta|(|\eta|+|w|)\\
       &+ O(|\xi|_*^3d_+^{-1/2})+O(|z|^5d_+^{-3/2}) +\eps^3O(d_{+}^{-7/2}),\\
       & \dot {\kappa}=\eps F(\kappa) +\eps^2 O(|z|^2d_+^{-2})+\eps^2 O(|\xi|_*^2d_+^{-3/2}) \\
 &
 +\eps O\left(|z|^4+|z|^5d_+^{-2}+ |\xi|_*^3d_+^{-1}\right)
 +\eps^3 O(|z|d_+^{-3} + |\xi|_*d_+^{-3/2})+\eps^4O(d_+^{-7/2}|\xi|).
\end{aligned}
\end{equation}
Denote $z_d=\eps^2 \hat{ d}_{+,q}^{-5/2}<c_{b,1}\eps^{1/3}C_q^{-5/2}, \ \kappa_d=\eps^4 \hat{ d}_{+,q}^{-9/2}<c_{b,2}\eps C_q^{-9/2}$.
Denote by $z_m$ and  $\kappa_m$ the suprema  of the quantities    $|z(t)|$ and $|\kappa(t)-\dK_{\eps}(\eps t)|$ on $\Gamma_{q, \eps}$ over the  time interval from $t_q$ to some $t_m$ in $S(T)$.  Assume that on this time interval $z_m<\beta z_d$, where $\beta$ is a positive constant to be determined later. Note that this inequality with $\beta\ge c_{e,7}$  is certainly satisfied for  $t_m$  sufficiently close to $t_q$. 
Equations (\ref {eq_upper}) imply, for the time interval from $t_q$ to $t_m$,
\begin{equation}
\begin{aligned}
\label{forz_0}
|z(t)|&\le z_m<c_{e,7}\eps^2 \hat{ d}_{+,q}^{-5/2} +c_{b,3}\eps^2 \hat{ d}_{+,q}^{-5/2}+c_{b,4}\frac{1}{\eps}\kappa_m z_m\\
&+c_{b,5}\left(z_m^2\hat{ d}_{+,q}^{-1/2}+ z_m\eps^{17/6}\hat{ d}_{+,q}^{-3}+ \frac{1}{\eps}\eps^{17/3}\hat{ d}_{+,q}^{-6}\frac{z_m}{z_d}
+\frac{1}{\eps}z_m^2\eps^{17/6}\hat{ d}_{+,q}^{-3}+\frac{1}{\eps}z_m^5\hat{ d}_{+,q}^{-1/2}
\right).
\end{aligned}
\end{equation}
The first, second, and third terms  in the right hand side  of this inequality   appear, respectively,   due to initial condition for $z$ at $t=t_q$ , the last term in equation for $\dot z$ in (\ref{eq_upper}), and the term  $\Lambda_1(\kappa)z$ in (\ref{eq_upper}).
Equation (\ref{forz_0}) implies that
\begin{equation}
\begin{aligned}
z_m&\left(1-c_{b,6}   \left(z_m\hat{ d}_{+,q}^{-1/2}+ \eps^{17/6}\hat{ d}_{+,q}^{-3}+ \eps^{8/3}\hat{ d}_{+,q}^{-7/2}
+z_m\eps^{11/6}\hat{ d}_{+,q}^{-3}+\frac{1}{\eps}z_m^4\hat{ d}_{+,q}^{-1/2}
\right)\right)\\
&<c_{b,7}(z_d+\frac{1}{\eps}\kappa_m z_m).
\end{aligned}
\end{equation}
One can check that if $\beta < c_{b,8}^{-1}C_q^{21/8}$,  then on the time interval $[t_q, t_m)$ we have 
 \begin{equation}
 \label{induct _tm1}
 c_{b,6}   \left(z_m\hat{ d}_{+,q}^{-1/2}+ \eps^{17/6}\hat{ d}_{+,q}^{-3}+ \eps^{14/3}\hat{ d}_{+,q}^{-6}
+z_m\eps^{11/6}\hat{ d}_{+,q}^{-3}+\frac{1}{\eps}z_m^4\hat{ d}_{+,q}^{-1/2}
\right)<1/2.
 \end{equation}
 Indeed, consider the largest terms in (\ref{induct _tm1}):
 \begin{equation*}
 \begin{aligned}
 &z_m\hat{ d}_{+,q}^{-1/2}<\beta z_d \hat{ d}_{+,q}^{-1/2}< \beta c_{b,9}\eps^{1/3}C_q^{-5/2}(\eps^{2/3}C_q)^{-1/2}=
 \beta c_{b,9}C_q^{-3}<0.1(c_{b,6})^{-1},\\
 &\frac{1}{\eps}z_m^4\hat{ d}_{+,q}^{-1/2}<\frac{1}{\eps}(\beta z_d)^4\hat{ d}_{+,q}^{-1/2}
 <\beta^4c_{b,10}\frac{1}{\eps}(\eps^{1/3}C_q^{-5/2})^4(\eps^{2/3}C_q)^{-1/2}=\beta^4c_{
b,10}C_q^{-21/2}<0.1(c_{b,6})^{-1}, 
 \end{aligned}  
 \end{equation*}
 provided that $\beta < c_{b,8}^{-1}C_q^{21/8}$. Other terms in (\ref{induct _tm1}) are smaller and can be estimated using the bound $|\xi| <c_{t,5}\eps^{1/3}$.
 
  Thus, we have
 \begin{equation}
 \label{for_zm}
 z_m<2c_{b,7}(z_d+\frac{1}{\eps}\kappa_m z_m).
 \end{equation}  
 The second  equation (\ref {eq_upper}) implies that
 \begin{equation}
 \begin{aligned}
 &|\kappa(t) - \dK_{\eps}(\eps t)| \le \kappa_m <c_{b,11}\kappa_d+
 0.5 c_{b,12}z_m\left(\eps z_m \hat{ d}_{+,q}^{-1}+ \eps^{23/6}\hat{ d}_{+,q}^{-7/2}+z_m^3+ 
 z_m^4\hat d_{+,q}^{-1} \right. \\
 &
 \left.+z_m\eps^{17/6}\hat{ d}_{+,q}^{-3}(1+|\ln \hat{ d}_{+,q}|)+\eps^2 \hat d_{+q}^{-2} +\eps^{17/6} \hat d_{+,q}^{-1}+\eps^3 \hat d_{+,q}^{-5/2}\right).
 \end{aligned}
 \end{equation}
 One can check that if  $\beta < c_{b,13}^{-1}C_q^{3/2}$, then on the time interval $[t_q, t_m)$ we have 
 \begin{equation}
 \begin{aligned}
 \label{induct _tm2}
 &\left(\eps z_m \hat{ d}_{+,q}^{-1}+ \eps^{23/6}\hat{ d}_{+,q}^{-7/2}+z_m^3+ 
 z_m^4\hat d_{+,q}^{-1} \right. \\
 &
 \left.+z_m\eps^{17/6}\hat{ d}_{+,q}^{-3}(1+|\ln \hat{ d}_{+,q}|)+\eps^2 \hat d_{+q}^{-2} +\eps^{17/6} \hat d_{+,q}^{-1}+\eps^3 \hat d_{+,q}^{-5/2}\right)<2  \eps^{2}\hat d_{+,q}^{-2}.
 \end{aligned}
  \end{equation}
  Indeed, consider in (\ref{induct _tm2}) the largest terms, but the term $\eps^{2}d_{+,q}^{-2}$,  divided  by $\eps^{2}d_{+,q}^{-2}$:
 \begin{equation*}
 \begin{aligned} 
& \frac{\eps z_m \hat{ d}_{+,q}^{-1}}{\eps^{2}\hat d_{+,q}^{-2}}<\frac{\beta}{\eps} z_d \hat d_{+,q}
 <c_{b,14}\frac{\beta}{\eps}\eps^{1/3}C_q^{-5/2}\eps^{2/3}C_q=c_{b,14}\beta C_q^{-3/2}<0.1,\\
 &\frac{z_m^4 \hat{ d}_{+,q}^{-1}}{\eps^{2}\hat d_{+,q}^{-2}}<\frac{\beta^4}{\eps^2}z_d^4 \hat{ d}_{+,q}<c_{b,15}\frac{\beta^4}{\eps^2}\eps^{4/3}C_q^{-10}\eps^{2/3}C_q=c_{b,15}\beta^4C_q^{-9}<0.1,
 \end{aligned}
 \end{equation*}
  provided that $\beta < c_{b,13}^{-1}C_q^{3/2}$. Other terms in (\ref{induct _tm2}) are smaller and can be estimated using the bound $|\xi| <c_{t,5}\eps^{1/3}$.
  
   Thus, we have
  \begin{equation}
 \label{for_kappam}
  \kappa_m <c_{b,11}\kappa_d+c_{b,12}z_m \eps^{2}\hat d_{+,q}^{-2}.
  \end{equation} 
From (\ref{for_zm}) and (\ref{for_kappam}), we obtain
\begin{equation}
\begin{aligned}
\label{recurrent}
&|z(t)|\le z_m< c_{b,16}(z_d+\frac{1}{\eps}\kappa_m z_m),\\
&|\kappa(t) - \dK_{\eps}(\eps t)| \le \kappa_m < c_{b,17}(\kappa_d+\eps^{2}\hat{ d}_{+,q}^{-2}z_m).
\end{aligned}
\end{equation}
 Substitute  the right  inequality for $\kappa_m$   in (\ref{recurrent}) to the inequality for $z_m$. We obtain  
\begin{equation}
\label{zm_inequality}
z_m< c_{b,16}\left(z_d+\frac{1}{\eps}c_{b,17}\kappa_d z_m   + c_{b,17} \eps \hat{ d}_{+,q}^{-2}z_m^2\right),
\end{equation}
or
$$
c_{b,18}  \eps \hat{ d}_{+,q}^{-2}z_m^2 -(1-c_{b,18}\kappa_d/\eps) z_m +c_{b,16}z_d >0,
$$
where $c_{b,18}=c_{b,16}c_{b,17}$.

Consider the equality corresponding to this inequality:
$$
c_{b,18}  \eps \hat{ d}_{+,q}^{-2}\rho^2 -(1-c_{b,18}\kappa_d/\eps) \rho +c_{b,16}z_d=0.
$$
Denote by $\rho_1$ the smaller root of this equation. 
Calculate $\rho_1$:
$$
\rho_1=\frac{1}{2c_{b,18}  \eps \hat{ d}_{+,q}^{-2}}
\left( (1-c_{b,18}\kappa_d/\eps) - \sqrt {(1-c_{b,18}\kappa_d/\eps)^2-4c_{b,18}c_{b,16}\eps \hat{ d}_{+,q}^{-2} z_d }  \right).
$$
It is convenient to rewrite this as
$$
\rho_1=\frac{2c_{b,16}z_d }
{  \left( (1-c_{b,18}\kappa_d/\eps) + \sqrt {(1-c_{b,3}\kappa_d/\eps)^2-4c_{b,18}c_{b,16}\eps \hat{ d}_{+,q}^{-2} z_d }  \right) }.
$$
For sufficiently large $C_q$, we have $\rho_1<c_{b,19}z_d$. 

\medskip
We have $|z(t_q)|<\rho_1$. Assume that at some moment of time $t_*$ we have  $|z(t_*)|=\rho_1$ for the first time. Then for supremum of $|z(t)|$ on the time interval
from $t_q$ to $t_*$ we have $z_m=\rho_1$. Thus, 
$$
z_m= c_{b,16}\left(z_d+\frac{1}{\eps}c_{b,2}\kappa_d z_m   + c_{b,2} \eps\hat{ d}_{+,q}^{-2}z_m^2\right),
$$
which contradicts (\ref{zm_inequality}). Thus, $z_m<\rho_1<c_{b,19}z_d$.

Taking $\beta=c_{b,19}$, we obtain  that the assumption $z_m<\beta z_d$ is satisfied  on $\Gamma_{q, \eps}$ in $S(T)$, provided that $C_q$ is sufficiently large.

Then
$$
\kappa_m < c_{b,17}(\kappa_d+\eps^2\hat{ d}_{+,q}^{-2}\rho_1)\le
c_{b,20}(\kappa_d+\eps^2\hat{ d}_{+,q}^{-2}z_d)
\le c_{b,21} (\kappa_d+\eps^2\hat{ d}_{+,q}^{-2}\eps^2\hat{ d}_{+,q}^{-5/2})<c_{b,22} \kappa_d.
$$
Thus, on $\Gamma_{q, \eps}$ in $S(T)$,
$$|z(t)|< c_{a,3}  \eps^2 \hat d_{+,q}^{-5/2},\quad  |\kappa(t)- \dK_{\eps}(t)|<c_{a,4}\eps^4 \hat d_{+,q}^{-9/2}.$$

 \hskip 12 cm $\square$
  
 \medskip
 {\bf Proof of Lemma \ref{lem_in_Dqr}}
 
 The plan is to estimate $z(t)$, use this to estimate $\eta(t)$, and then use this to improve the estimate of $z(t)$.
        
 Each point in the time domain $D_{q,r,d}$   can be reached from $\Gamma_{q,\eps}$ by moving   vertically downward. 
 Consider downward motion  along a  vertical line $\re t = {\rm const}$,  while the relation
 \begin{equation}
 \label{induct_D_z}
 |z(t|< 2c_{a,3}\eps^2\hat {d}_{+,q}^{-5/2}
 \end{equation}
 is satisfied. Here $c_{a,3}$ is the constant from Lemma \ref {on_upper_curve_r}.
 For this motion, the  equation for $z$ in (\ref {d_equation}) takes the form
\begin{equation}
\begin{aligned}
\label{eq_z_vert_+r_d}
& \frac{d z}{ds}
 =\left(-i\Lambda_1(\dK_{\eps}(\eps t))+\alpha_2  \right) z +  \alpha_3+ \eps^3 (-i)O_1({d}_{+}^{-7/2}), \quad s= -\im t ,\\
 &\alpha_2=\mu_1 \eps^4O(\hat {d}_{+}^{-5}) +\eps O((\eps^2\hat {d}_{+,q}^{-5/2})  \hat {d}_{+}^{-3/2}) 
 +\eps O( (\eps^{17/6} \hat d_{+,q}^{-3}) \hat d_+^{-1/2})\\
 &+O((\eps^2\hat {d}_{+,q}^{-5/2})  (\eps^{17/6} \hat d_{+,q}^{-3}) \hat d_+^{-1/2})
  +O((\eps^{17/6} \hat d_{+,q}^{-3})^2 \hat d_+^{-1/2})
 + O((\eps^2\hat {d}_{+,q}^{-5/2})^4   \hat {d}_{+}^{-3/2} )+ \eps^3O(\hat {d}_{+}^{-3}), \\
 &\alpha_3=
     \eps O((  \eps^{17/6} \hat d_{+,q}^{-3})^2 \hat d_+^{-1/2})   +O((  \eps^{17/6} \hat d_{+.q}^{-3})^2 )
 + O((  \eps^{17/6} \hat d_{+,q}^{-3})^3 \hat d_+^{-1/2})+  \eps^3O((\eps^{17/6}\hat {d}_{+,q}^{-3})\hat {d}_{+}^{-3}). 
    \end{aligned}
\end{equation}
(We used (\ref{induct_D_r_d}) here.)
According to condition 6) in Section \ref{form_conditions}, each vertical line 
 crosses the curve $\Gamma_{q,\eps,d}$ and all curves  
 $\re\Psi_{\eps}={\rm const}$ transversally, if $c_{e,10}$ is sufficiently large. Thus,  $\re\Psi_{\eps}$ decreases downward along these lines, and  $\re( -i\Lambda_1(\dK_{\eps}(\eps t))<-c_{b,1}^{-1}\hat d_{+}^{1/2}$. 
 We also have
 $$
 |\alpha_2| < c_{b,2,1}C_q^{-9/2} \hat d_+^{1/2} , \ |\alpha_3|=O((  \eps^{17/6} d_{+.q}^{-3})^2 )< c_{b,2,2}\eps^{5/3}C_q^{-6}.
 $$
 This estimate for $ \alpha_2$ originates from the term $\eps O((\eps^2\hat {d}_{+,q}^{-5/2})  \hat {d}_{+}^{-3/2})$: 
 $$
 \frac{\eps^3\hat {d}_{+,q}^{-5/2}  \hat {d}_{+}^{-3/2})}{\hat d_+^{1/2}}=
 \eps^3\hat {d}_{+,q}^{-5/2}\hat {d}_{+}^{-2}=O(\eps^3\hat {d}_{+,q}^{-9/2})=O(C_q^{-9/2}).
 $$
 Other terms in $\alpha_2$ are smaller than this one.  
 
 Let us use notation $\hat d_u$ from Lemma \ref{lem_0.5_2}. Using this lemma, on the segment  of the considered line with length  less than  $c_{r,5}^{-1}\hat d_u$, and for sufficiently large $C_q$, we obtain the estimates 
  $$
 \re\left ( -i\Lambda_1(\dK_{\eps}(\eps t)  +\alpha_2   \right )<-c_{b,3}^{-1}\hat {d_{u}}^{1/2},\quad    |\alpha_3+\eps^3(-i) O_1({d}_{+}^{-7/2})|
 < c_{b,4} (\eps^{5/3}C_q^{-6}+\eps^3 \hat {d}_{u}^{-7/2}).
 $$
 
 Consider the   auxiliary equation (\ref {aux_r_1}), and apply Lemma \ref {l_aux_r_1} for the case 
\begin{equation}
\label{for_alpha_a}
  \nu= c_{b,3}^{-1}\hat {d_{u}}^{1/2}, \ \alpha_a=c_{b,4}(\eps^{5/3}C_q^{-6}+\eps^3 \hat {d}_{u}^{-7/2}), \
  u(s_0)=c_{a,3}\eps^2\hat d_{+,q}^{-5/2}.
\end{equation}
For sufficiently large $C_q$, we have $\alpha_a/\nu<u(s_0)$.  Indeed,
 \begin{equation*}
 \begin{aligned}
 &\frac{\alpha_a}{\nu u(s_0)}=O\left (\frac{\eps^{5/3}C_q^{-6}+\eps^3 \hat {d}_{u}^{-7/2}}
 {\hat {d_{u}}^{1/2}\eps^2\hat d_{+,q}^{-5/2}}\right)
 =O\left (  \frac{\eps^{5/3}C_q^{-6}}{  \eps^2\hat d_{+,q}^{-2}}+
  \frac{ \eps^3 \hat {d}_{u}^{-4}} {\eps^2\hat d_{+,q}^{-5/2}}              \right)
  =O\left (  \frac{\eps^{5/3}C_q^{-6}}{  \eps^{2/3}C_q^{-2}}+
  \frac{ \eps^3 \hat {d}_{+,q}^{-4}} {\eps^2\hat d_{+,q}^{-5/2}}              \right)\\
  &=O\left (\eps C_q^{-4} + \eps \hat d_{+,q}^{-3/2} \right)=O(C_q^{-3/2})<1.
 \end{aligned}
\end{equation*}

\medskip
Solution $u(t)$ of equation (\ref{aux_r_1}) can be used now to estimate $|z(t)|$ in (\ref{eq_z_vert_+r_d}) on the vertical line. We take $s_0$ as the value of $s$ at the intersection of the vertical line with the curve   $\Gamma_{q,\eps}$. According to Lemma
\ref {lem_compar_ODE_1}, for $s> s_0$, while condition  (\ref{induct_D_z}) is satisfied,  we have
$|z(t)|< u(s)$.
In particular, this  implies that condition (\ref{induct_D_z}), namely $|z(t|< 2c_{a,3}\eps^2\hat {d}_{+,q}^{-5/2}$, can not be violated in $D_{q,r,d}\cap S(T)$. Thus, in Lemma \ref {lem_in_Dqr} we can take $c_{r,6}=c_{a,3}$.

\medskip
Denote by $\tilde D_{q,r,d}$ the part of $D_{q,r,d}$ covered by vertical segments  of length less than  or equal to $(\eps /\nu)|\ln(\alpha_a/(\nu u(s_0))|$ 
down  from all points   $\tau_u\in \Gamma_{q,\eps}$ (later will use $c_{11}, c_{12}$ such that $\tilde D_{q,r,d}\subset  D_{q,r,d}$). Denote by $\tilde \Gamma_{q,r,d}$ the lower boundary of $\tilde D_{q,r,d}$. Denote by $\tilde \Gamma_{q,r,d}' $ the part of  $\tilde \Gamma_{q,r,d}$ consisting of points for which the lengths  of the considered vertical segments are  $(\eps/\nu)|\ln(\alpha_a/(\nu u(s_0))|$. According to Lemmas \ref {l_aux_r_1} and \ref{lem_compar_ODE_1}, in the whole $D_{q,r,d}$, we have $|z(t)|<c_{a,3}\eps^2\hat {d}_{+,q}^{-5/2}$, while on  $\tilde \Gamma_{q,r,d}' $ and in  $D_{q,r,d}\setminus \tilde D_{q,r,d}$ we have 
\begin{equation}
\label{for_z_improved_1}
|z(t)|< 2\alpha_a/ \nu<
c_{b,5}(\eps^{5/3}C_q^{-6}\hat {d}_{+}^{-1/2}+\eps^3 \hat {d}_{+}^{-4}).
\end{equation}
Obtain  estimate for $\eta(t)$ in $S(T)\cap D_{q,r,d}$. 
The  equation for $\eta $ in (\ref{with_beta_+r}) has the form
\begin{equation*}
\begin{aligned}
\dot \eta&=\left(B(\kappa) + O(|\eta|)  + O(|z|+|w| )\right)  \eta \\
&+  O(|zw|)+\eps O((|z|^2|+|w|^2|)d_+^{-1/2})\\
       &+O(|\xi|^3_*d_+^{-1/2})  + O(|z|^4)+\eps^3O_2(d_{+}^{-3}).
\end{aligned}     
\end{equation*} 
This implies that
    \begin{equation*}
\begin{aligned}
\dot \eta&=(B(\dK_{\eps}) + O(\eps^2\hat d_{+,q}^{-5/2}))  \eta \\
&+ O((\eps^2\hat d_{+,q}^{-5/2})(\eps^{17/6}  \hat  d_{+,q}^{-3}  )   
+        \eps O((\eps^2\hat d_{+,q}^{-5/2})^2 )\hat d_+^{-1/2})\\
& +O((\eps^2\hat d_{+,q}^{-5/2})^2)(\eps^{17/6}  \hat  d_{+,q}^{-3}  )d_+^{-1/2})
         + O((\eps^2\hat d_{+,q}^{-5/2})^4)+\eps^3O_2(\hat d_{+}^{-3}).
\end{aligned}     
\end{equation*} 
This in turn implies  that
\begin{equation}
\begin{aligned}
\label{for_eta_inside}
\dot \eta=(B(\dK_{\eps}) + O(\eps^{1/3}))  \eta +  \alpha_d,\  |\alpha_d|< c_{b,6}\eps^3 \hat  d_{+,q}^{-3}.
\end{aligned}     
\end{equation} 
Each point in the domain $S(T)\cap D_{q,r,d}$ can be reached from the axis $\re \tau = \re \tau_c$ by motion along a curve $\re \Psi_{\eps}={\rm const}$. Part of this curve is inside the domain  $S(T)\cap D_{q,r,d}$. For change in $\eta$ along this part, we can use equation 
(\ref {for_eta_inside}). It could be that a part of this curve is outside  $S(T)\cap D_{q,r,d}$.  At this part, according to  the equation (\ref {a_shorten_+}), we have 
  \begin{equation}
\begin{aligned}
\label{for_eta_outside}
\dot \eta=B(\dK_{\eps})  \eta +  \eps^3 \tilde O_2(d_{+}^{-3}), \   |\tilde O_2(d_{+}^{-3})|< 2c_{r,2}d_{+}^{-3}.
\end{aligned}     
\end{equation}
Thus, in both cases
   \begin{equation}
\begin{aligned}
\label{for_eta_both}
\dot \eta=(B(\dK_{\eps}) + O(\eps^{1/3}))  \eta +  \tilde \alpha_d,\  |\tilde\alpha_d|< 
c_{b,7}\eps^3 \hat  d_{+,q}^{-3}.
\end{aligned}     
\end{equation} 
\begin{lem}
\label{l_for_margin_eta}
In the part of $S(T)$ covered by the curves $\re \Psi_{\eps}={\rm const}$ passing through $D_{q,r,d}$, we have
\begin{equation}
\label{for_margin_eta}
|\eta(t)|<c_{r,7}\eps^3 \hat  d_{+,q}^{-3}.
\end{equation}
\end{lem}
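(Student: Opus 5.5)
We prove Lemma \ref{l_for_margin_eta} with the same Lyapunov-function argument used for $\eta$ in the proof of Lemma \ref{est_of_shorten}, now applied to the combined equation (\ref{for_eta_both}). We move along the curves $\re \Psi_{\eps}={\rm const}$ and use the arc length $\sigma$ as the new time, $|dt/d\sigma|=1$. Initial data are taken on the axis $\re\tau=\re\tau_c$. There Lemma \ref{lem_cont_D_q_l} gives $|\eta|<c_{e,8}\eps^3\hat d_+^{-3}$, and $\hat d_+\ge \hat d_{+,q}$ on the relevant part of this axis. Hence $|\eta|<c_{e,8}\eps^3\hat d_{+,q}^{-3}$ at the starting point of every such curve.

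Along each curve, (\ref{for_eta_both}) becomes
\[
\frac{d\eta}{d\sigma}=\frac{dt}{d\sigma}\bigl(B(\dK_{\eps})+O(\eps^{1/3})\bigr)\eta+\frac{dt}{d\sigma}\tilde\alpha_d .
\]
By condition 5) of Section \ref{form_conditions}, the eigenvalues of $(dt/d\sigma)B(\dK_{\eps})$ have real parts bounded away from zero by a negative constant, uniformly in the domain considered. Following \cite{bellman}, Sect.~13, we take the Hermitian quadratic Lyapunov function $W(\tau,\eta)$ of the frozen linear system, whose $\sigma$-derivative is $-(\eta\cdot\bar\eta)$. It satisfies $c^{-1}|\eta|^2\le W\le c|\eta|^2$.

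Two terms have to be absorbed. The $\tau$-dependence of $W$ contributes $O(\eps\hat d_+^{-1/2})W$, since $\partial B/\partial\kappa=O(d_+^{-1/2})$ and $d\tau/d\sigma=O(\eps)$. The perturbation $O(\eps^{1/3})\eta$ contributes $O(\eps^{1/3})W$. Both are small compared with $W$, because $\hat d_+>C_\delta\eps^{2/3}$ with $C_\delta$ large and $\eps$ is small. This yields the differential inequality
\[
\frac{dW}{d\sigma}\le -k_1^{-1}W+k_2\eps^3\hat d_{+,q}^{-3}\sqrt W .
\]

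The forcing bound $\eps^3\hat d_{+,q}^{-3}$ is constant along the curve, so the comparison argument is simpler than in Lemma \ref{est_of_shorten}. Whenever $\sqrt W$ exceeds $2k_1k_2\eps^3\hat d_{+,q}^{-3}$, $W$ strictly decreases. Therefore $\sqrt W\le\max\{\sqrt{W(\text{start})},\,2k_1k_2\eps^3\hat d_{+,q}^{-3}\}$ along the whole curve, which gives $|\eta|<c_{r,7}\eps^3\hat d_{+,q}^{-3}$. The constant $c_{r,7}$ depends only on $c_{e,8}$, $c_{b,7}$ and the Lyapunov constants, and not on $T$, $\mu_1$ or $\mu_2$, as Lemma \ref{est_of_shorten_r} requires.

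The main obstacle is justifying that (\ref{for_eta_both}) holds uniformly along the entire curve, including the parts that leave $D_{q,r,d}$ and re-enter it. Outside $D_{q,r,d}$ one uses (\ref{for_eta_outside}) and $d_+^{-3}\le 2^3\hat d_{+,q}^{-3}$. This holds because $\hat d_+$ on these curves stays bounded below by a multiple of $\hat d_{+,q}$; that bound should follow from Lemma \ref{lem_0.5_2} and the geometry of $D_q$. We also need the bootstrap assumptions (\ref{induct_D_r_d}) to supply the bounds on $w$ and $z$ that were used to derive (\ref{for_eta_inside}). A further point is that the curves must not exit $S(T)$ before reaching the point where the estimate is needed. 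I would settle this with condition 6), which gives transversality and non-vertical tangents.
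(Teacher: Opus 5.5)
Your proposal is correct and is essentially the argument the paper intends. The paper omits this proof as analogous to that of Lemma~\ref{l_right_eta}, i.e.\ propagating a Bellman Lyapunov function along the level curves $\re\Psi_{\eps}={\rm const}$ from the axis $\re\tau=\re\tau_c$, here with the forcing bound (\ref{for_eta_both}) that holds both inside and outside $D_{q,r,d}$. Since that bound is the uniform quantity $\eps^3\hat d_{+,q}^{-3}$, you are right to follow the curves directly, without the replacement of curve portions by pieces of $\Gamma'_{q,r,d}$ that Lemma~\ref{l_right_eta} needs.
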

We omit the proof of this lemma because it is analogous to the proof of Lemma \ref{l_right_eta} below.

\medskip
The obtained estimate for $\eta(t)$ allows for an improvement of the pervious estimates related to $z(t)$.
We now obtain  in (\ref {eq_z_vert_+r_d}), (\ref{for_alpha_a})
$$
|\alpha_3|=O((  \eps^{3} d_{+q}^{-3})(\eps^{17/6} d_{+q}^{-3})) ) < c_{b,2}\eps^{11/6}C_q^{-6}, 
\alpha_a=c_{b,4}(\eps^{11/6}C_q^{-6}+\eps^3 \hat {d}_{u}^{-7/2}).
$$
We redefine $\tilde D_{q,r,d}$, $\tilde \Gamma_{q,r,d}$ and $\tilde \Gamma_{q,r,d}'$ using this value of 
$\alpha_a$. On  $\tilde \Gamma_{q,r,d}' $ and in  $D_{q,r,d}\setminus \tilde D_{q,r,d}$, we have 
\begin{equation}
\label{for_z_improved_2}
|z(t)|< 2\alpha_a/\nu < c_{b,5}(\eps^{11/6}C_q^{-6}\hat {d}_{u}^{-1/2}+\eps^3 \hat {d}_{u}^{-4}).
\end{equation}
Now vertical distances between $\Gamma_{q,\eps}$ and $\tilde \Gamma_{q,r,d}'$ are
\begin{equation}
\begin{aligned}
\label{vert_distance_1}
&\eps \nu^{-1} |\ln \left(\frac{\alpha_a}{\nu u(s_0)}\right)|=\eps  c_{b,3}^{-1}\hat {d_{u}}^{-1/2}
|\ln\left(\frac{c_{b,3}c_{b,4}(\eps^{11/6}C_q^{-6}+\eps^3 \hat {d}_{u}^{-7/2})}
 {c_{a,3}\hat {d_{u}}^{1/2}\eps^2\hat d_{+,q}^{-5/2}} \right)|\\
 &<\eps  c_{b,3}^{-1}\hat {d_{u}}^{-1/2}
|\ln\left(\frac{c_{b,3}c_{b,4}\eps^3 \hat {d}_{u}^{-7/2}}
 {c_{a,3}\hat {d_{u}}^{1/2}\eps^2\hat d_{+,q}^{-5/2}} \right)|
 <c_{b,7}\eps \hat {d_{u}}^{-1/2}|\ln\left(c_{b,6}\eps^{8/3} \hat {d}_{u}^{-4}C_q^{5/2}\right)|\\
 &=c_{b,9}\eps \hat {d_{u}}^{-1/2}|\ln\left(c_{b,8}\eps \hat {d}_{u}^{-3/2}C_q^{15/16}\right)|.
\end{aligned}     
\end{equation}

Redefine $\tilde D_{q,r,d}$, $\tilde \Gamma_{q,r,d}$ and $\tilde \Gamma_{q,r,d}'$  using the value
$c_{e,12,2}\eps \hat {d_{u}}^{1/2}|\ln\left(c_{e,12,1}\eps \hat {d}_{u}^{-3/2}C_q^{15/16}\right)|$ 
to determine  the vertical distances between $\Gamma_{q,\eps}$ and $\tilde \Gamma_{q,r,d}'$, where $c_{e,12,1}=c_{b,8}$,  and\\$c_{e,12,2}>c_{b,9}$ is chosen so that this vertical distance on the line
$\re \tau =\re\tau_c$ is greater than the length of the segment $A_q$ (the notation $A_q$ is introduced after Lemma \ref{l_improved_I}).


Now we use this to improve the estimate for $\eta(t)$ in $D_{q,r,d}\setminus \tilde D_{q,r,d}$. Instead of estimate in  (\ref {for_eta_inside}), we now have  (see Appendix 1)
\begin{equation}
\begin{aligned}
\label {for_alpha_d_improved}
&|\alpha_d|= O(|zw|)+\eps O((|z|^2|+|w|^2|)d_+^{-1/2})+O((|w|^3+|w^2z|+|wz^2|)d_+^{-1/2}) \\
 &       + O(|z|^4)+\eps^3O_2(d_{+}^{-3})=\eps^3O(d_{+}^{-3}).
\end{aligned}     
\end{equation} 
This estimate allows an improvement of the estimate for $|\eta(t)|$ in $D_{q,r,d}\setminus \tilde D_{q,r,d}$.

\begin{lem}
\label{l_for_margin_eta_improved}
In $S(T)\cap (D_{q,r,d}\setminus \tilde D_{q,r,d})$, we have
\begin{equation}
\label{for_margin_eta_improved}
|\eta(t)|<c_{r,10}\eps^3d_{+}^{-3}.
\end{equation}
\end{lem}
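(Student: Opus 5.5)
The plan is to treat the equation for $\eta$ in $S(T)\cap(D_{q,r,d}\setminus\tilde D_{q,r,d})$ as a linear system with a small, improved inhomogeneity, and to run the same Lyapunov-function argument that was used for $\eta$ in the proof of Lemma \ref{est_of_shorten}. Each point of $D_{q,r,d}\setminus\tilde D_{q,r,d}$ can be reached along a curve $\re\Psi_{\eps}={\rm const}$ starting from a point where an estimate $|\eta|=O(\eps^3\hat d_+^{-3})$ is already known. That starting point is either on the line $\re\tau=\re\tau_c$ below the segment $A_q$, where Lemma \ref{lem_cont_D_q_l} gives $|\eta|<c_{e,8}\eps^3\hat d_+^{-3}$, or in the region outside $D_{q,r,d}$, where the bound has the same form. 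The constant $c_{e,12,2}$ was chosen so that the vertical distance to $\tilde\Gamma'_{q,r,d}$ on $\re\tau=\re\tau_c$ exceeds the length of $A_q$. Hence the starting points on that axis avoid the region where only the cruder bound $\eps^3\hat d_{+,q}^{-3}$ holds.

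Along such a curve, parametrised by arc length $\sigma$, the $\eta$-equation reads
\[
\frac{d\eta}{d\sigma}=\frac{dt}{d\sigma}\bigl(B(\dK_{\eps})+O(\eps^{1/3})\bigr)\eta+\frac{dt}{d\sigma}\alpha_d .
\]
The bound (\ref{for_alpha_d_improved}), $|\alpha_d|=O(\eps^3 d_+^{-3})$, is obtained by inserting the already proved bounds in $D_{q,r,d}\setminus\tilde D_{q,r,d}$ into the quadratic and cubic terms: (\ref{for_z_improved_2}) for $z$, and (\ref{induct_tilde_D_r_d}) for $w$ (which via Corollary \ref{c_down_down_z} gives $|w|=O(\eps^3\hat d_+^{-3})$). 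Concretely, $|zw|\lesssim(\eps^{11/6}C_q^{-6}\hat d_+^{-1/2}+\eps^3\hat d_+^{-4})\eps^3\hat d_+^{-3}$ and $|z|^4$ are absorbed into $\eps^3\hat d_+^{-3}$ once $\hat d_+\gtrsim C_q\eps^{2/3}$. Condition 5) gives a frozen quadratic Lyapunov function $W$ with $dW/d\sigma=-(\eta\cdot\bar\eta)$ for the principal part. The $O(\eps^{1/3})$ perturbation of the matrix and the $\sigma$-dependence of $W$ (which is $O(\eps\hat d_+^{-1/2})$) are absorbed. One then obtains
\[
\frac{dW}{d\sigma}\le -c^{-1}W+c\,\eps^3\hat d_+^{-3}\sqrt W .
\]

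The case analysis of Lemma \ref{est_of_shorten} then applies: either $\sqrt W$ stays below $4c\,\eps^3\hat d_+^{-3}$, or it decays until it reaches that threshold. Since $\hat d_+$ varies by a bounded factor (Lemma \ref{lem_0.5_2}), the threshold cannot be exceeded afterwards. This yields $|\eta|<c_{r,10}\eps^3 d_+^{-3}$, using $d_+\asymp\hat d_+$ from (\ref{induct_D_r_d}).

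I expect the main obstacle to be geometric. The curve $\re\Psi_{\eps}={\rm const}$ may enter $\tilde D_{q,r,d}$, where only $|\alpha_d|=O(\eps^3\hat d_{+,q}^{-3})$ is available. My first approach is to use Lemma \ref{lem_closeness}: the lower boundary $\tilde\Gamma'_{q,r,d}$ is nearly tangent to the level curves, so a curve started below it stays within a controlled neighbourhood. Any portion spent inside $\tilde D_{q,r,d}$ lies at $\hat d_+$ comparable to $\hat d_{+,q}$ (near $A_q$), and there the two bounds coincide up to constants. If this fails, I would instead reach points by descending vertically from $\tilde\Gamma'_{q,r,d}$ and then following level curves, at the cost of worse constants.
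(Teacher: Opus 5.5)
\textbf{There is a genuine gap, at exactly the step you flag as the main obstacle, and neither of your remedies closes it.}

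Level curves $\re\Psi_{\eps}={\rm const}$ that lead to a point of $D_{q,r,d}\setminus\tilde D_{q,r,d}$ can pass through $\tilde D_{q,r,d}$ at values of $\hat d_+$ of order one, not only near $A_q$.

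\begin{itemize}
\item The drop of $\re\Psi_{\eps}$ across $\tilde D_{q,r,d}$ is roughly $\eps|\ln(\eps\hat d_u^{-3/2}C_q^{15/16})|$ times a smooth factor. Nothing forces it to be monotone along $\Gamma_{q,\eps}$ except for small $\hat d_u$. Lemma \ref{crossing} controls the crossing direction only for $\hat d_+<c_{e,22}/(\ln\eps)^2$.
\item Inside $\tilde D_{q,r,d}$, $z$ has not yet decayed. Near $\Gamma_{q,\eps}$ it is genuinely of size $\eps^{1/3}$; this is what produces the escape (cf. Lemmas \ref{lem_after_c_3}, \ref{lem_to_D32}).
\item Consequently the forcing of $\eta$ there contains $|z|^4\sim\eps^{4/3}$ and $\eps|z|^2\hat d_+^{-1/2}\sim\eps^{5/3}$. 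Both are far larger than $\eps^3\hat d_+^{-3}$. A path that crosses such a portion and reaches the target point shortly after leaving it yields only a bound of that size.
\end{itemize}

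Lemma \ref{lem_closeness} does not prevent this. An angle that is small uniformly in $\eps$ cannot keep a curve inside or outside a strip of width $O(\eps|\ln\eps|)$ over slow-time distances of order one.

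Your fallback, descending vertically from $\tilde\Gamma'_{q,r,d}$, fails for $\eta$ for three reasons.
\begin{itemize}
\item Along the downward vertical the exponents are $-i\lambda_j$, $j\ge3$. Their real parts $\im\lambda_j$ have no definite sign, so the $\eta$-equation need not contract there. Condition 5) gives contraction for $\eta$ only along level curves, and the paper uses vertical descent only for $z$.
\item The vertical fast-time lengths are of order $|\ln\eps|$ or longer, so any growth costs a power of $\eps$, not just worse constants.
\item You have no $\eps^3\hat d_+^{-3}$ bound for $\eta$ on $\tilde\Gamma'_{q,r,d}$ to start from.
\end{itemize}

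\textbf{The missing idea} is the paper's device of modifying the path rather than the estimate. Start on the axis $\re\tau=\re\tau_c$ and follow level curves. Replace every portion that would enter $\tilde D_{q,r,d}$, or leave $D_{q,r,d}$, by an arc of $\tilde\Gamma'_{q,r,d}$, respectively $\Gamma'_{q,r,d}$, with the same endpoints.
\begin{itemize}
\item On these arcs $z$ already satisfies (\ref{for_z_improved_2}), so the forcing is $O(\eps^3 d_+^{-3})$ by (\ref{for_alpha_d_improved}).
\item Lemma \ref{lem_closeness} is needed for exactly one thing: for large $C_q$, $(dt/d\sigma)B$ along these arcs still has all eigenvalues in the left half-plane.
\item Hence the frozen Lyapunov function works along the whole glued path, as in Lemma \ref{l_right_eta}.
\end{itemize}

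\textbf{Two smaller corrections.}
\begin{itemize}
\item Along these rightward paths $\hat d_+$ increases, so the threshold $\eps^3\hat d_+^{-3}$ decreases. The barrier step must use that its $\sigma$-derivative is $O(\eps^4\hat d_+^{-4})$, which is negligible against the contraction, as in the proof of Lemma \ref{l_right_eta}. Neither Lemma \ref{lem_0.5_2} (which concerns short vertical segments) nor the argument of Lemma \ref{est_of_shorten} (where the threshold grows along the path) applies.
\item Do not invoke Corollary \ref{c_down_down_z} for $w$. It rests on Lemma \ref{l_down_down_z}, whose proof uses the complex conjugate of the estimate you are proving. The bootstrap bound $|w|<\eps^{17/6}\hat d_+^{-3}$ from (\ref{induct_D_r_d}) is all that (\ref{for_alpha_d_improved}) needs.
\end{itemize}
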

This is one of estimates stated in Lemma \ref {lem_in_Dqr}.

\medskip
The  obtained estimate for $\eta(t)$ allows to improve the estimate for $z(t)$ on  the curve 
  $\Gamma_{q,r,d}'$. For equation (\ref{eq_z_vert_+r_d}) we now have 
 \begin{equation*}
\begin{aligned}
&|\alpha_3|=\eps O((  \eps^{17/6} d_{+}^{-3})^2 d_+^{-1/2})  
+O((  \eps^{3} d_{+}^{-3}) (  \eps^{17/6} d_{+}^{-3}))
+ O((  \eps^{17/6} d_{+}^{-3})^3  d_+^{-1/2})\\
&+O(\eps^3d_{+}^{-3}(  \eps^{17/6} d_{+}^{-3}))=o(\eps^3d_{+}^{-7/2}).
 \end{aligned}     
\end{equation*} 
Solutions of  equation  (\ref{eq_z_vert_+r_d}) should now be compared with solutions of equation (\ref{aux_r_1}) in which $\alpha_a=c_{b,11}\eps^3 \hat {d}_{u}^{-7/2}$. For this comparison, we take 
$s_0\in \tilde \Gamma_{q,r,d}'$ and initial condition  $u(s_0)=c_{b,12}(\eps^{11/6}C_q^{-6}\hat {d}_{u}^{-1/2}+\eps^3 \hat {d}_{u}^{-4})$.  We have
\begin{equation}
\label{sum_of_two}
\frac{\nu u(s_0)}{\alpha_a}<c_{b,3}^{-1}c_{b,11}^{-1}c_{b,12}\left(\frac{(\eps^{11/6}C_q^{-6}\hat {d}_{u}^{-1/2}+\eps^3 \hat {d}_{u}^{-4})}{\eps^3 \hat {d}_{u}^{-7/2}}{\hat {d}_{u}^{1/2}}\right ) = c_{b,3}^{-1}c_{b,11}^{-1}c_{b,12} \frac{C_q^{-6}}{\eps^{7/6}\hat {d}_{u}^{-7/2}}+ c_{b,3}^{-1}c_{b,11}^{-1}c_{b,12} .
\end{equation}
If the left-hand side of this relation is less than 1, then, according to  Lemma \ref{l_aux_r_1}, $u(s)<{\alpha_a}/\nu$ for any $s \ge s_0$. If the left-hand side of this relation is greater than or equal to 1, then, 
according to  Lemma \ref{l_aux_r_1}, $u(s)<2{\alpha_a}/\nu$ for
 $s \ge s_0+(1/\nu)|\ln(\alpha_a/(\nu u(s_0))|$. 

 If the left-hand side of (\ref{sum_of_two}) is  greater than or equal to 1, then the right-hand side should be greater than 1. In this case, of the two summands on the right-hand side, the larger one should be greater than 1/2.
 Suppose, first, that this is the first summand. 
In this case,  we have
 \begin{equation}
 \begin{aligned}
 \label{vert_distance_2}
&\ln\left( \frac{\nu u(s_0)}{\alpha_a}\right)\le \ln\left( 2c_{b,3}^{-1}c_{b,11}^{-1}c_{b,12} \frac{C_q^{-6}}{\eps^{7/6}\hat {d}_{u}^{-7/2}}\right)<c_{b,13}\ln\left(\frac{C_q^{-6}}{\eps^{7/6}\hat {d}_{u}^{-7/2}}\right)=\frac {7}{3}c_{b,13}\ln\left(\frac{C_q^{-6}}{\eps^{7/6}\hat {d}_{u}^{-7/2}}\right)^{3/7}\\
&=\frac {7}{3}c_{b,13}\ln\left(\frac{C_q^{-18/7}}{\eps^{1/2}\hat {d}_{u}^{-3/2}}\right) 
<c_{b,14}\ln\left(\frac{1}{\eps\hat {d}_{u}^{-3/2}C_q^{15/16}}\right).
 \end{aligned}
 \end{equation}
 If the second summand in (\ref{sum_of_two})  is the larger one, then we also have
 \begin{equation*}
 \begin{aligned}
\ln\left( \frac{\nu u(s_0)}{\alpha_a}\right)<c_{b,14}\ln\left(\frac{1}{\eps\hat {d}_{u}^{-3/2}C_q^{15/16}}\right).
\end{aligned}
 \end{equation*}
  According to Lemmas \ref{l_aux_r_1} and  \ref{lem_compar_ODE_1}, in all three  cases,  for
$$
s \ge s_0+c_{b,15}\hat {d}_{u}^{-1/2}|\ln(\eps\hat {d}_{u}^{-3/2}C_q^{15/16})|
$$ 
 we have 
 $$|z(s)|<2\alpha_a/ \nu=c_{b,16}\eps^3 \hat {d}_{u}^{-4}.
 $$ 
  
  At this point, we  introduce constants $c_{e,11}, c_{e,12}$. We take them such that the sum of the vertical distances given by estimates (\ref{vert_distance_1}) and (\ref{vert_distance_2}) is less than \\$c_{e, 12} \eps \hat d_u^{-1/2}|\ln( c_{e,11}^{-1}\eps \hat d_u^{-3/2}C_{q}^{15/16})|$. We then redefine the curve $\Gamma_{q,r,d}'$ so that its vertical distances from the curve  $\Gamma_{q,\eps}$ are equal to $c_{e, 12} \eps \hat d_u^{-1/2}|\ln( c_{e,11}^{-1}\eps \hat d_u^{-3/2}C_{q}^{15/16})|$.  On the curve 
  $\Gamma_{q,r,d}'$, we have $|z(t)|< c_{r,9}\eps^3d_{+}^{-4}$, as stated 
in Lemma \ref {lem_in_Dqr}.

\hskip 12cm $\square$

  \medskip
 {\bf Proof of Lemma \ref{l_down_z}.}
 
 \medskip
Each point in the domain   $S(T)\setminus (D_{q,r,d}\cup \bar D_{q,r,d})$ can be reached by moving vertically downward from the curve  $\Gamma_{q,r,d}'$. For such a motion, the equation for $z$ in (\ref {a_shorten_+}) for $\im \tau>-c_{l,1}^{-1}$ takes the form
\begin{equation}
\label{z(s)_equation}
\frac{dz}{ds} =-i  \Lambda_1(\dK_{\eps})z+ \alpha, \ |\alpha|< c_{b,1}\eps^3 \hat d_{+}^{-7/2}, \  s=-\im t.
 \end{equation}
 Let  $z(s)$ denote solution to this equation. For the  initial condition on $\Gamma_{q,r,d}'$, we have $|z(s_0)|< c_{r,12}\eps^3 \hat d_{+}^{-4}$. Condition 6) in Section \ref{form_conditions} implies that $\re( -i\Lambda_1(\dK_{\eps}(\eps t))<-c_{b,2}^{-1}\hat d_{+}^{1/2}$.
 
 The solution of equation (\ref {z(s)_equation}) can then be compared with the solution of equation
 \begin{equation}
\label{u(s)_equation}
\frac{du}{ds} =-\nu(\eps s)u+ \tilde \alpha (\eps s), \ 
\nu=-c_{b,2}^{-1}\hat d_{+}^{1/2}, \  \tilde \alpha=c_{b,1}\eps^3 \hat d_{+}^{-7/2}
 \end{equation}
 with the initial condition $u(s_0)= c_{r,12}\eps^3 \hat d_{+}^{-4}$ at $s=s_0$.
 
   On some time interval 
 $[s_0,s_*)$,  we have $|u(s)|<c_{b,3}\eps^3 \hat d_{+}^{-4}, \ c_{b,3}= \max (c_{r,12}, 3c_{b,2}c_{b,1})  $.  Let $s_*$  be the first value of $s$ for which this inequality is not satisfied.   At $s=s_*$,  we have
$$
 \frac{du}{ds}=-c_{b,2}^{-1}\hat d_{+}^{1/2} c_{b,3} \eps^3 \hat d_{+}^{-4}+c_{b,1}\eps^3 \hat d_{+}^{-7/2}< -c_{b,2}^{-1}\hat d_{+}^{1/2} 2c_{b,2}c_{b,1} \eps^3 \hat d_{+}^{-4}+c_{b,1}\eps^3 \hat d_{+}^{-7/2}=-c_{b,1}\eps^3 \hat d_{+}^{-7/2}.
$$
At this value of $s$,
$$
\left|\frac{d}{ds}\left(c_{b,3}\eps^3 \hat d_{+}^{-4} \right)\right  |<c_{b,4}\eps^4 \hat d_{+}^{-5}< c_{b,1}\eps^3 \hat d_{+}^{-7/2},
$$
if $c_{e,10}$ is sufficiently large. Thus, $u(s)-c_{b,3}\eps^3 \hat d_{+}^{-4}$ decays at $s=s_*$ and, therefore, is positive just before $s_*$. This contradicts the definition of $s_*$. Therefore, the inequality $|u(s)|<c_{b,3}\eps^3 \hat d_{+}^{-4}$ is satisfied at least while $\im \tau>-c_{l,1}^{-1}$. Comparing solutions of equations (\ref {z(s)_equation}) and (\ref {u(s)_equation}), according to Lemma  \ref {lem_compar_ODE_1}  we obtain that the inequality $|z(s)|<c_{b,3}\eps^3 \hat d_{+}^{-4}$ holds  at least while $\im \tau>-c_{l,1}^{-1}$. In a similar way, we obtain that the inequality
$|z(s)|<c_{b,5}\eps^3 \hat d_{-}^{-3}$ holds  for $\im \tau<c_{l,1}^{-1}$ up to reaching the boundary of the domain 
$\bar D_{q,r,d}$. 

\hskip 12cm $\square$

{\bf Proof of Lemma \ref{l_right_eta}.}

\medskip
Each point in the domain  $(S(T)\setminus (D_{q,r,d}\cup \bar D_{q,r,d}) \cap \{ \im \tau\ge -c_{l,1}^{-1}\}$ can be reached from the axis $\re \tau = \re \tau_c$  by moving along a curve $\re \Psi_{\eps}={\rm const}$. Some portions  of this curve could pass through the domain $D_{q,r,d}$. We replace these portions by segments of the curve  $\Gamma_{q,r,d}'$ having the same endpoints. By analyticity, there can be only a finite number of such segments. Thus,  each  point in the domain  $(S(T)\setminus (D_{q,r,d}\cup \bar D_{q,r,d}) \cap \{ \im \tau\ge -c_{l,1}^{-1}\}$
can be reached from the axis $\re \tau = \re \tau_c$ by moving along  a curve obtained by gluing together pieces of the curves
$\re \Psi_{\eps}={\rm const}$ and  $\Gamma_{q,r,d}'$.

Consider, in the domain  $(S(T)\setminus (D_{q,r,d}\cup \bar D_{q,r,d}) \cap \{ \im \tau\ge -c_{l,1}^{-1}\}$, a segment of a level curve $\re \Psi_{\eps}={\rm const}$. 
Let the left endpoint of this segment lies on the line $\re \tau=\re \tau_c$.  The right endpoint lies either on $\Gamma_{q,r,d}'$, on the line $ \im \tau=  -c_{l,1}^{-1}$,   or on the line $  \re t=T$. 
Introduce the arc-length parameter $\sigma $ along this curve
 as a new time  variable ($\sigma=0$ at   $\re \tau=\re \tau_c$). Thus, $t=t(\sigma), \ |dt/d\sigma|=1$. The equation for $\eta$  on the curve $\re \Psi_{\eps}={\rm const}$ takes the form
\begin{equation}
\label{e_with_sigma_1}
\frac{d \eta}{d\sigma}=\frac{dt}{d\sigma}B(\dK_{\eps})\eta + \alpha, \ \alpha= \eps^3 \frac{dt}{d\sigma}\tilde O_2( d_+^{-3}), \
 |\alpha|< 2 c_{r,2}\eps^3d_{+}^{-3}.
\end{equation}
According to condition  5) in Section \ref {form_conditions}, all eigenvalues of the matrix $({dt}/{d\sigma})B(\dK_{\eps})$ have negative real parts.

According to \cite{bellman}, Sect 13, the homogeneous system with frozen $\tau$  has a quadratic Lyapunov function $W(\eta)$ whose $\sigma$-derivative for frozen $\tau$  is $ -( \eta\cdot \bar \eta)$.
We have\\ $c_{b,1}^{-1} |\eta|^2 \le W(\eta)\le c_{b,1} |\eta|^2$.   The derivative of $W$ in the original system is 
$$
\frac{dW}{d\sigma}=-(\eta\cdot\bar \eta)+O(\eps\hat {d}_{+}^{-1/2}) W_1(\eta) +W_2(\alpha, \eta) 
 $$
 with a hermitian quadratic form $W_1$ and a hermitian bilinear form $W_2$.  
  
   We have $|W_2(\alpha, \eta)|\le c_{b,2}|\alpha||\eta|\le c_{b,3}\eps^3\hat {d}_{+}^{-3}\sqrt{W}$. This  implies that
 $$
 \frac{dW}{d\sigma} \le - c_{b,4}^{-1} W+  c_{b,5}\eps^3\hat {d}_{+}^{-3}\sqrt{W}.
 $$
 Denote $v=\sqrt{W}$. Then
 $$
 \frac{dv}{d\sigma} \le - c_{b,6}^{-1} v+  c_{b,7}\eps^3\hat {d}_{+}^{-3}.
 $$
 For the starting point on the axis $\re \tau= \re \tau_c$ we  have $v< c_{b,8}\eps^3\hat {d}_{+}^{-3}$. Take \\
 $c_{b,9}=\max\{2c_{b,8}, 3c_{b,6}c_{b,7} \}$.
 Consider the auxiliary  equation
 $$
 \frac{d\tilde v}{d\sigma} =- c_{b,6}^{-1} \tilde v+  c_{b,7}\eps^3\hat {d}_{+}^{-3}
 $$
 with the initial condition $ \tilde v=c_{b,8}\eps^3\hat {d}_{+}^{-3}$ at $\sigma=0$.  Then, on the whole considered interval of $\sigma$, we have $ \tilde v < c_{b,9}\eps^3\hat {d}_{+}^{-3}$. Indeed, assume that there exists $\sigma_*$ such that $ \tilde v = c_{b,9}\eps^3\hat {d}_{+}^{-3}$ for the first time. At $\sigma= \sigma_*$,  we have
 $$
 \frac{d}{d\sigma}(\tilde v-c_{b,9}\eps^3\hat {d}_{+}^{-3})\le- c_{b,6}^{-1} 2c_{b,6}c_{b,7}\eps^3\hat {d}_{+}^{-3}+  c_{b,7}\eps^3\hat {d}_{+}^{-3}+
 O(\eps^4\hat {d}_{+}^{-4} ) <0,
 $$
 which contradicts definition of $\sigma_*$. We also have $v<\tilde v$, which implies that\\
 $\eta(t)< c_{b,10}\eps^3\hat {d}_{+}^{-3}$.

 If the right endpoint of the considered segment of the curve $\re \Psi_{\eps}={\rm const}$ lies on $\Gamma_{q,r,d}'$, we introduce $\sigma$ as the arc length parameter along  $\Gamma_{q,r,d}'$. We again obtain an equation of the form (\ref{e_with_sigma_1}). If $C_q$ is sufficiently large, then all eigenvalues of matrix $({dt}/{d\sigma})B$ have negative real parts. Indeed, according to Lemma \ref{lem_closeness},  for   sufficiently large  $C_q$, 
 the tangent directions of the curves $\Gamma_{q,r,d}'$ and $\re \Psi_{\eps}={\rm const}$ are close to each other at each point of their intersection.
  Thus, we can treat a segment of the curve $\Gamma_{q,r,d}'$ in exactly the same way as a segment of the curve $\re \Psi_{\eps}={\rm const}$.  In this way, we obtain  the result of  the lemma for $\im \tau>-c_{l,1}^{-1}$.
 
 \medskip
 The property $\eta(\bar t) =\bar\eta(t)$ implies the conclusion  of the lemma for $\im \tau<c_{l,1}^{-1}$.
 
 \hskip 12cm $\square$ 
 
  \medskip
{\bf Proof of Lemma \ref{l_for_kappa_rup}.}

\medskip
For $\re\tau =\re \tau_c, \ \im\tau>\-c_{l,1}^{-1}$, according to Lemma \ref{lem_cont_D_q_l},   we have 
$$
|\kappa(t)-\dK_{\eps}(\eps t)|<  c_{e,6}\eps^4 \hat{ d}_{+,q}^{-9/2}. 
$$
According to (\ref{d_equation}),
\begin{equation*}
\begin{aligned}
& \dot {\kappa}=\eps F(\kappa) +\eps^2 O(|z|^2d_+^{-2})+\eps^2 O(|\xi|_*^2d_+^{-3/2}) \\
 &
 +\eps O\left(|z|^4+|z|^5d_+^{-2}+ |\xi|_*^3d_+^{-1}\right)
 +\eps^3 O(|z|d_+^{-3} + |\xi|_*d_+^{-3/2})+\eps^4O(d_+^{-7/2}|\xi|).
\end{aligned}
\end{equation*}
According to the previous lemmas, in the domain covered by curves $\re \Psi_{\eps}={\rm const}$ passing through $D_{q,r,d}$,  this reduces to
\begin{equation*}
\begin{aligned}
&\dot\kappa= \eps F(\kappa) +\eps^2 O((\eps^2 \hat d_{+,q}^{-5/2})^2d_+^{-2})
+\eps^2 O((\eps^{17/6}\hat d_{+,q}^{-3})(\eps^2 \hat d_{+,q}^{-5/2})d_+^{-3/2}) \\
 &
 +\eps O\left((\eps^2 \hat d_{+,q}^{-5/2})^4+(\eps^2 \hat d_{+,q}^{-5/2})^5 d_+^{-2}+ (\eps^2 \hat d_{+,q}^{-5/2})^2(\eps^{17/6}\hat d_{+,q}^{-3})d_+^{-1}\right)\\
 &+\eps^3 O((\eps^2 \hat d_{+,q}^{-5/2})d_+^{-3} + (\eps^{17/6}\hat d_{+,q}^{-3})d_+^{-3/2})+\eps^4O((\eps^2 \hat d_{+,q}^{-5/2})d_+^{-7/2})\\
&=\eps F(\kappa)
+O(\eps^6 \hat d_{+,q}^{-5} \hat d_{+}^{-2}
+\eps^{41/6} \hat d_{+,q}^{-11/2} \hat d_{+}^{-3/2}\\
&+\eps^9 \hat d_{+,q}^{-10} 
+\eps^{11} \hat d_{+,q}^{-25/2} \hat d_{+}^{-2}
+\eps^{47/6} \hat d_{+,q}^{-8} \hat d_{+}^{-1}
+\eps^5 \hat d_{+,q}^{-5/2} \hat d_{+}^{-3}
+\eps^{35/6} \hat d_{+,q}^{-3} \hat d_{+}^{-3/2}
+\eps^6 \hat d_{+,q}^{-5/2} \hat d_{+}^{-7/2}).
\end{aligned}
\end{equation*}
(Here we use the fact that we  know the estimate for $w$ outside $D_{q,r,d}$ due to Corollary \ref{c_down_z}.)
This gives
\begin{equation*}
\begin{aligned}
&|\kappa(t)-\dK_{\eps}(\eps t)|=O(\eps^4 \hat{ d}_{+,q}^{-9/2}+\eps^5 \hat{ d}_{+,q}^{-6} +\eps^{35/6}\hat{ d}_{+,q}^{-6}\\
&+\eps^8 \hat{ d}_{+,q}^{-10}
+\eps^{10} \hat{ d}_{+,q}^{-27/2}
+\eps^{41/6} \hat{ d}_{+,q}^{-8}(1+|\ln \hat{ d}_{+,q}|)
+\eps^{4} \hat{ d}_{+,q}^{-9/2}
+\eps^{29/6} \hat{ d}_{+,q}^{-7/2}
+\eps^{5} \hat{ d}_{+,q}^{-5})=O(\eps^4 \hat{ d}_{+,q}^{-9/2}).
\end{aligned}
\end{equation*}
 \hskip 12cm $\square$

{\bf Proof of Lemma \ref{l_down_down_z}.}

According to Lemmas \ref{l_down_z} and \ref{l_right_eta}, on the upper boundary of   domain    $\bar D_{q,r,d}\setminus  D_{q,r,d}$ we have 
$$
|z(t)< c_{r,16}\eps^{3}d_{-}^3, \  |\eta(t)|< c_{r,16}\eps^{3}d_{-}^3, \ |\kappa(t)-\dK_{\eps}(\eps t)|<  c_{r,17}\eps^4 \hat{ d}_{-}^{-9/2}   
$$ 

According (\ref{d_equation1}), in  $S(T) \cap \bar D_{q,r,d}\setminus  D_{q,r,d}$  we have
\begin{equation}
\begin{aligned}
\dot z&=  \Lambda_1(\kappa)z+  O(|\eta|(|\eta|+|w|)
+\eps O(|z||w| d_-^{-3/2})  + \eps O(|\xi|^2d_{-}^{-1/2})
     +O(|\xi|_{**}^3d_{-}^{-1/2})+ O(|w|^3) \\
     &+\eps^3O(d_{-}^{-3}|\xi|)
     +\eps^3O^*(d_{-}^{-3}).
\end{aligned}
\end{equation}
Here  $ |\xi|_{**}^3=|\xi|^3-|w|^3$.
This implies that
$$
\dot z= (\Lambda_1(\kappa)+\beta)z+\alpha,
$$
where
\begin{equation}
\begin{aligned}
&\beta=O(\eps (\eps^2 d_{-,q}^{-5/2})d_-^{-3/2})
+O(\eps(\eps^2 d_{-,q}^{-5/2})d_{-}^{-1/2})
+O(( \eps^{2}d_{-,q}^{-5/2})^2 d_{-}^{-1/2}) +O(\eps^3d_{-}^{-3}),\\
 &  \alpha=O(( \eps^{2}d_{-,q}^{-5/2})(\eps^{17/6}d_{-,q}^{-3}))+O(\eps (\eps^2 d_{-,q}^{-5/2})^2 d_{-}^{-1/2})
 +O((\eps^{2}d_{-,q}^{-5/2})^2   (\eps^{17/6}d_{-,q}^{-3})d_{-}^{-1/2})\\
 &+O( (\eps^{2}d_{-,q}^{-5/2})^3 +\eps^3O(d_{-}^{-3}).
\end{aligned}   
    \end{equation}
    This gives
    $$
    |\beta| < c_{b,1}\eps^{1/3}, \quad |\alpha|<c_{b,2}\eps^3d_{-,q}^{-3}.
    $$
    Lemma \ref{l_for_kappa_rup} implies that
    $$
    \Lambda_1(\kappa(t))=\Lambda_1(\dK_{\eps}(\eps t))+O(\eps^4 \hat{ d}_{-,q}^{-5})=\Lambda_1(\dK_{\eps}(\eps t))+O(\eps^{2/3}).
    $$
    in $\bar D_{q,r,d}$.
   
   Considering downward motion along the lines $\re t ={\rm const}$, similarly to proof of Lemma
   \ref{l_down_z}, we obtain $|z|< c_{r,7}\eps^3 d_{-,q}^{-3}$ in  the domain  $ \bar D_{q,r,d}\setminus  D_{q,r,d}$.
   \medskip
   
   In the domain  $\bar D_{q,r,d}\setminus \overline {\tilde D}_{q,r,d}$ one can obtain better estimate using the bounds
      $$
   |w(t)|<c_{r,9} (\eps^{11/6}C_q^{-6}\hat {d}_{-}^{-1/2}+\eps^3 \hat {d}_{-}^{-4}),\  |\eta(t)|<  c_{r,10}\eps^{3}\hat d_{+}^{-3}
   $$ 
   valid there. This gives (see Appendix 1)
   \begin{equation}
    \label{for_alpha_improved}
   |\alpha|=O(\eps^3\hat {d}_{-}^{-3}),
   \end{equation}
   which implies that
   $$
   |z(t)|< c_{r,13}\eps^3 d_{-}^{-3}.
   $$

   \hskip 12cm $\square$ 
   
   \medskip
   {\bf Proof of Lemma \ref{lem_improved_kappa}.}
   
   \medskip
   According to (\ref{d_equation}),  in the domain $D_{q,r,d}$ we have
   \begin{equation}
   \begin{aligned}
   \label{e_dot_kappa}
   & \dot {\kappa}=\eps F(\kappa) +\eps^2 O(|z|^2d_+^{-2})+\eps^2 O(|\xi|_*^2d_+^{-3/2}) \\
 &
 +\eps O\left(|z|^4+|z|^5d_+^{-2}+ |\xi|_*^3d_+^{-1}\right)
 +\eps^3 O(|z|d_+^{-3} + |\xi|_*d_+^{-3/2})+\eps^4O(d_+^{-7/2}|\xi|).\\
 %
 \end{aligned}
   \end{equation} 
   On  $\Gamma_{q,r,d}'$, we have
   \begin{equation}
   \label{e_kappa}
    |\kappa(t)-\dK_{\eps}(\eps t)|<  c_{r,17}\eps^4 \hat{ d}_{+}^{-9/2}. 
   \end{equation}
   We estimate $|\kappa(t)-\dK_{\eps}(\eps t)|$ in $D_{q,r,d}'\setminus \tilde D_{q,r,d}$ by moving vertically upward from $\Gamma_{q,r,d}'$. 
   In $D_{q,r,d}'\setminus \tilde D_{q,r,d}$, we have the estimates
   $$
   |z(t)|< c_{r,9} (\eps^{11/6}C_q^{-6}\hat {d}_{+}^{-1/2}+\eps^3 \hat {d}_{+}^{-4}),\
  |\eta(t)|<  \eps^{17/6}\hat d_{+}^{-3}, \
  |w(t)|<\eps^{17/6}\hat  d_{+}^{-3}.
   $$
 One can check that
   $$
   (\eps^{11/6}C_q^{-6}\hat {d}_{+}^{-1/2}+\eps^3 \hat {d}_{+}^{-4})>\eps^{17/6}\hat  d_{+}^{-3}
   $$
   if $\eps$ is sufficiently small. 
   
 The vertical width of  the domain $D_{q,r,d}'\setminus \tilde D_{q,r,d}$ is less than $c_{e, 12} \eps \hat d_u^{-1/2}|\ln( c_{e,11}^{-1}\eps \hat d_u^{-3/2}C_{q}^{15/16})|$. 
   We know that 
 $\partial F(\kappa)/\partial \kappa= O( \hat{ d}_{+}^{-1/2})$.
  According to Lemmas \ref{lem_0.5_2}, \ref{lem_about_width}, on the vertical line, $d_u$ can be replaced with  $\hat { d}_{+}$ in $O(\cdot)$-estimates.  
  Then (\ref{e_dot_kappa}), (\ref{e_kappa}) for $\tau\in D'_{q,r,d}\setminus\tilde  D_{q,r,d}$ imply 
    \begin{equation*}
    \begin{aligned}
    &|\kappa(t)-\dK_{\eps}(\eps t)|=O(\eps^4 \hat{ d}_{+}^{-9/2})
    + \left (\eps \hat d_{+}^{-1/2}|\ln( c_{e,11}^{-1}\eps \hat d_{+}^{-3/2}C_{q}^{15/16})|\right)\\
    &\cdot O\left( \eps(\eps^{11/6}C_q^{-6}\hat {d}_{+}^{-1/2}+\eps^3 \hat {d}_{+}^{-4})^2\hat d_+^{-2}
    +\eps(\eps^{11/6}C_q^{-6}\hat {d}_{+}^{-1/2}+\eps^3 \hat {d}_{+}^{-4})(\eps^{17/6}\hat{ d}_{+}^{-3})\hat{ d}_{+}^{-3/2}\right.\\
    &+(\eps^{11/6}C_q^{-6}\hat {d}_{+}^{-1/2}+\eps^3 \hat {d}_{+}^{-4})^4
    +(\eps^{11/6}C_q^{-6}\hat {d}_{+}^{-1/2}+\eps^3 \hat {d}_{+}^{-4})^5\ \hat{ d}_{+}^{-2}\\
    &+(\eps^{11/6}C_q^{-6}\hat {d}_{+}^{-1/2}+\eps^3 \hat {d}_{+}^{-4})^2(\eps^{17/6}\hat{ d}_{+}^{-3})\hat{ d}_{+}^{-1}\\
    &+ \eps^2(\eps^{11/6}C_q^{-6}\hat {d}_{+}^{-1/2}+\eps^3 \hat {d}_{+}^{-4}) \hat{ d}_{+}^{-3}
    +\eps^2(\eps^{17/6}\hat{ d}_{+}^{-3}) \hat d_+^{-3/2}
    +\left.\eps^3(\eps^{11/6}C_q^{-6}\hat {d}_{+}^{-1/2}+\eps^3 \hat {d}_{+}^{-4})\hat d_+^{-7/2}\right).
    \end{aligned}
    \end{equation*}
    The largest terms in this expression arise from  the terms $\eps^2 O(|z|^2d_+^{-2})$, $\eps O(|z|^5d_+^{-2})$,  and $\eps^3 O(|z|d_+^{-3})$ in (\ref{e_dot_kappa}).
    We estimate the ratios of the contributions of these terms to   $|\kappa(t)-\dK_{\eps}(\eps t)|$  relative  to  $\eps^4 \hat{ d}_{+}^{-9/2}$:
     \begin{equation*}
     \begin{aligned}
     &\frac{\left(\eps \hat d_{+}^{-1/2}|\ln( c_{e,11}^{-1}\eps \hat d_{+}^{-3/2}C_{q}^{15/16})|\right)\eps
     \left(\eps^{11/6}C_q^{-6}\hat {d}_{+}^{-1/2}+\eps^3 \hat {d}_{+}^{-4}\right)^2 \hat{ d}_{+}^{-2}}{\eps^4 \hat{ d}_{+}^{-9/2}}\\
     &=O\left( \frac{\left(\eps \hat d_{+}^{-1/2}|\ln(\eps \hat d_{+}^{-3/2}C_{q}^{15/16})|\right)\eps
     \left(\eps^{11/3}C_q^{-12}\hat {d}_{+}^{-1}+\eps^6 \hat {d}_{+}^{-8}\right) \hat{ d}_{+}^{-2}}{\eps^4 \hat{ d}_{+}^{-9/2}}\right)\\
     &=o(1)+O\left( \frac{\left(\eps \hat d_{+}^{-1/2}|\ln(\eps \hat d_{+}^{-3/2}C_{q}^{15/16})|\right)\eps
     \left(\eps^6 \hat {d}_{+}^{-8}\right) \hat{ d}_{+}^{-2}}{\eps^4 \hat{ d}_{+}^{-9/2}}\right)\\
     &=o(1)+O \left(\eps^4 \hat d_{+}^{-6}|\ln(\eps \hat d_{+}^{-3/2}C_{q}^{15/16})|\right)
     =o(1)+ O \left(\eps^4 \hat d_{+}^{-6}|\ln(\eps^4 \hat d_{+}^{-6}C_{q}^{15/4})|\right)\\
     &=O(C_{q}^{-6}|\ln(C_{q})|)=O(1).
      \end{aligned}
\end{equation*}
\medskip

\begin{equation*}
\begin{aligned}
&\frac{\left(\eps \hat d_{+}^{-1/2}|\ln( c_{e,11}^{-1}\eps \hat d_{+}^{-3/2}C_{q}^{15/16})|\right)
     \left(\eps^{11/6}C_q^{-6}\hat {d}_{+}^{-1/2}+\eps^3 \hat {d}_{+}^{-4}\right)^5 \hat{ d}_{+}^{-2}}{\eps^4 \hat{ d}_{+}^{-9/2}}\\
     &=O\left(\frac{\left(\eps \hat d_{+}^{-1/2}|\ln(\eps \hat d_{+}^{-3/2}C_{q}^{15/16})|\right)
     \left(\eps^{55/6}C_q^{-30}\hat {d}_{+}^{-5/2}+\eps^{15} \hat {d}_{+}^{-20}\right) \hat{ d}_{+}^{-2}}{\eps^4 \hat{ d}_{+}^{-9/2}}\right)\\
      &=o(1)+O\left(\frac{\left(\eps \hat d_{+}^{-1/2}|\ln(\eps \hat d_{+}^{-3/2}C_{q}^{15/16})|\right)
     \left(\eps^{15} \hat {d}_{+}^{-20}\right) \hat{ d}_{+}^{-2}}{\eps^4 \hat{ d}_{+}^{-9/2}}\right)\\
     &=o(1)+O\left(\eps^{12} \hat{ d}_{+}^{-18}|\ln(\eps \hat d_{+}^{-3/2}C_{q}^{15/16})|\right)=o(1)+O\left(\eps^{12} \hat{ d}_{+}^{-18}|\ln(\eps^{12} \hat d_{+}^{-18}C_{q}^{20})|\right)\\
     &=
     O(C_{q}^{-18}|\ln(C_{q})|)=O(1).
\end{aligned}
\end{equation*}

\begin{equation*}
     \begin{aligned}
     &\frac{\left(\eps \hat d_{+}^{-1/2}|\ln( c_{e,11}^{-1}\eps \hat d_{+}^{-3/2}C_{q}^{15/16})|\right)\eps^2
     \left(\eps^{11/6}C_q^{-6}\hat {d}_{+}^{-1/2}+\eps^3 \hat {d}_{+}^{-4}\right) \hat{ d}_{+}^{-3}}{\eps^4 \hat{ d}_{+}^{-9/2}}\\
     &=o(1)+O\left( \frac{\left(\eps \hat d_{+}^{-1/2}|\ln(\eps \hat d_{+}^{-3/2}C_{q}^{15/16})|\right)\eps^2
     \left(\eps^3 \hat {d}_{+}^{-4}\right) \hat{ d}_{+}^{-3}}{\eps^4 \hat{ d}_{+}^{-9/2}}\right)\\
     &=o(1)+O \left(\eps^2 \hat d_{+}^{-3}|\ln(\eps \hat d_{+}^{-3/2}C_{q}^{15/16})|\right)
     =o(1)+O \left(\eps^2 \hat d_{+}^{-3}|\ln(\eps^2 \hat d_{+}^{-3}C_{q}^{15/8})|\right)\\
     &=O(C_{q}^{-3}|\ln(C_{q})|)=O(1).
      \end{aligned}
\end{equation*}
Contributions of the remaining  terms are smaller; their ratios to  $\eps^4 \hat{ d}_{+}^{-9/2}$  tends to 0  as $\eps \to 0$. Thus,
    \begin{equation*}
    |\kappa(t)-\dK_{\eps}(\eps t)|<c_{r,11} \eps^4 \hat{ d}_{+}^{-9/2}.
       \end{equation*}
        \hskip 12cm $\square$ 
        

  \medskip

%% file: delay_proofs_sect_15.tex
\section{Proof for Lemma from Section 15}
\label{proof_sect_15}
\medskip

{\bf Proof of Lemma \ref{l_for_margin_eta_improved}.}

Each point in the domain $S(T)\cap (D_{q,r,d}\setminus \tilde D_{q,r,d} )$ can be reached from the axis $\re \tau = \re \tau_c$ by moving along a curve glued from pieces of the curves  $\re \Psi_{\eps}={\rm const}$,  $\tilde\Gamma'= \tilde\Gamma'_{q,r,d}$, and  $\Gamma' = \Gamma'_{q,r,d}$. According to condition 6) in Section \ref{form_conditions}, the linearised near the equilibrium fast system,  considered along any curve   $\re \Psi_{\eps}={\rm const}$, has $n-2$ eigenvalues with negative real parts, corresponding to variables $\eta$. The same is valid for this system considered along the curves $\tilde\Gamma'$ and $\Gamma'$, if $C_q$ is sufficiently large.
This follows because, according to Lemma \ref{lem_closeness}, by choosing $C_q$
sufficiently large, the tangent directions of the curve  $\tilde\Gamma' $ (respectively, $\Gamma' $) and of the curve $\re \Psi_{\eps}={\rm const}$  passing through the same point can be made arbitrarily close. On this basis, the result of Lemma \ref{l_for_margin_eta_improved} can be obtained using the Lyapunov function, in the same way as in the proof of Lemma \ref{l_right_eta}. We omit the details.

\hskip 12cm $\square$

%% file: appendix_1.tex
\section{Appendix 1. Auxiliary estimates.}
  \begin{lem}
 \label{l_aux_r_1}
 Consider the  linear ODE with constant coefficients  for a real variable $u$ with real independent variable $s$:
 \begin{equation}
 \label{aux_r_1}
 \frac{d u}{ds}=-\nu u +\alpha_a,\  \nu>0,\ \alpha_a>0.
 \end{equation}
 Consider the solution $u(s)$ of this equation with the initial condition $u(s_0)>0$. Then the following estimates hold.

(a) If $u(s_0)<\alpha_a/\nu$, then $u(s)<\alpha_a/\nu$ for  $s\ge s_0$.\\
(b)  If $u(s_0)\ge\alpha_a/\nu$, then $u(s)\le u(s_0)$ for $s\ge s_0$. 
  For $s \ge s_0+(1/\nu)|\ln(\alpha_a/(\nu u(s_0))|$, we have $u(s)<2\alpha_a/ \nu$.
\end{lem}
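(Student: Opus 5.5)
The equation \eqref{aux_r_1} is linear with constant coefficients, so I will write its solution explicitly and read all the estimates off that formula. The solution is
\[
u(s)=\frac{\alpha_a}{\nu}+\Bigl(u(s_0)-\frac{\alpha_a}{\nu}\Bigr)e^{-\nu(s-s_0)},\qquad s\ge s_0 .
\]
This is obtained by variation of constants, or simply checked by differentiation. Everything below is a monotonicity argument on this formula, with no need for the comparison Lemma \ref{lem_compar_ODE_1}.

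For part (a), suppose $u(s_0)<\alpha_a/\nu$. The bracket is then negative and $e^{-\nu(s-s_0)}>0$, so $u(s)<\alpha_a/\nu$ for all $s\ge s_0$.

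For part (b), suppose $u(s_0)\ge\alpha_a/\nu$. The bracket is nonnegative and $e^{-\nu(s-s_0)}\le 1$ for $s\ge s_0$. Hence $u(s)\le \alpha_a/\nu+(u(s_0)-\alpha_a/\nu)=u(s_0)$; in fact $u$ is nonincreasing.

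For the second claim in (b), put $\Delta s=s-s_0\ge(1/\nu)|\ln(\alpha_a/(\nu u(s_0)))|$. Since $\alpha_a/(\nu u(s_0))\le 1$, the logarithm is nonpositive. Therefore $e^{-\nu\Delta s}\le \alpha_a/(\nu u(s_0))$, and
\[
u(s)\le\frac{\alpha_a}{\nu}+u(s_0)\,e^{-\nu\Delta s}\le \frac{\alpha_a}{\nu}+\frac{\alpha_a}{\nu}=\frac{2\alpha_a}{\nu}.
\]
Here I dropped the negative contribution $-(\alpha_a/\nu)e^{-\nu\Delta s}$. Keeping it shows the inequality is strict, because $e^{-\nu\Delta s}>0$.

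There is no real obstacle here. The only point needing care is the strictness claimed in the statement, which the retained term $-(\alpha_a/\nu)e^{-\nu\Delta s}$ provides.
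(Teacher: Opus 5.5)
Your proof is correct and is essentially the paper's argument. Both write down the explicit solution $u(s)=\alpha_a/\nu+(u(s_0)-\alpha_a/\nu)e^{-\nu(s-s_0)}$ and read off (a) and (b) from the sign of the bracket and from the bound $e^{-\nu(s-s_0)}\le\alpha_a/(\nu u(s_0))$. Your retention of the term $-(\alpha_a/\nu)e^{-\nu(s-s_0)}$ simply makes explicit the strictness of $u(s)<2\alpha_a/\nu$, which the paper states without comment.
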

\begin{lem}
\label{lem_compar_ODE_1}
Consider  the following linear ODEs for a complex variable $z$ and a real variable $u$, with real independent variable $s \in [s_0, s_1]$:
 \begin{equation}
 \begin{aligned}
 \label{two_equations}
&  \frac{dz}{ds}= -\nu_1z +\alpha_{a,1},\\
&\frac{d u}{ds}=-\nu u +\alpha_a,\  \nu>0, \alpha_a>0 .
 \end{aligned}
 \end{equation}
 Here  $\nu_1$ and $\alpha_{a,1}$ are complex-valued functions, while  $\nu$ and $\alpha_a$ are real-valued functions such that  $\re \nu_1>\nu,\,  |\alpha_{a,1}|<\alpha_{a}$. Let $z(s), u(s)$ be solutions of these equations such that $|z(s_0)|\le u(s_0)$.  Then  $|z(s)|< u(s)$ for $s\in (s_0, s_1]$.
 \end{lem}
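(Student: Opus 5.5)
We argue by a first-contact comparison on $\delta(s)=u(s)-|z(s)|$, assuming (as in all applications above) that $\nu_1,\alpha_{a,1},\nu,\alpha_a$ are continuous on $[s_0,s_1]$.

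\emph{Step 1: $u$ stays positive.} We first show $u(s)>0$ for $s\in(s_0,s_1]$. Since $u(s_0)\ge|z(s_0)|\ge 0$, it suffices to note that at any point where $u=0$ we have $du/ds=\alpha_a>0$. Hence $u$ cannot reach or cross $0$ from above after $s_0$. If $u(s_0)=0$, the same fact makes $u$ immediately positive.

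\emph{Step 2: a differential inequality for $|z|$.} At any $s$ with $z(s)\ne0$, the function $|z|$ is differentiable and
$$
\frac{d|z|}{ds}=\frac{\re(\bar z\,\dot z)}{|z|}=-\re\nu_1\,|z|+\frac{\re(\bar z\,\alpha_{a,1})}{|z|}\le-\re\nu_1\,|z|+|\alpha_{a,1}|.
$$
Therefore, wherever $z\ne0$,
$$
\frac{d\delta}{ds}\ \ge\ -\nu u+\alpha_a+\re\nu_1\,|z|-|\alpha_{a,1}| \;=\; -\nu\delta+(\re\nu_1-\nu)|z|+(\alpha_a-|\alpha_{a,1}|).
$$
At a point where $\delta=0$, the right-hand side is strictly positive, because $\re\nu_1>\nu$ and $|\alpha_{a,1}|<\alpha_a$.

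\emph{Step 3: strict inequality just after $s_0$.} We show $\delta>0$ on some interval $(s_0,s_0+h)$.
\begin{itemize}
\item If $\delta(s_0)>0$, this follows from continuity.
\item If $\delta(s_0)=0$ and $z(s_0)\ne0$, Step 2 gives $\delta'(s_0)>0$.
\item If $z(s_0)=u(s_0)=0$, we expand both solutions to first order:
$$|z(s)|\le|\alpha_{a,1}(s_0)|(s-s_0)+o(s-s_0),\qquad u(s)=\alpha_a(s_0)(s-s_0)+o(s-s_0).$$
Since $|\alpha_{a,1}(s_0)|<\alpha_a(s_0)$, we get $\delta>0$ for small $s-s_0>0$.
\end{itemize}

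\emph{Step 4: no first contact.} Suppose $\delta$ vanishes somewhere in $(s_0,s_1]$, and let $s^*$ be the first such point. Then $\delta>0$ on $(s_0,s^*)$ and $\delta(s^*)=0$. By Step 1, $|z(s^*)|=u(s^*)>0$, so $z\ne0$ near $s^*$ and Step 2 applies there, giving $\delta'(s^*)>0$. But $\delta$ decreases to $0$ from positive values as $s\uparrow s^*$, which forces $\delta'(s^*)\le0$. This contradiction shows $|z(s)|<u(s)$ on $(s_0,s_1]$.

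The only delicate point is the possible vanishing of $z$, where $|z|$ is not differentiable. This is handled by Step 1, which guarantees that contact can occur only where $z\ne0$, together with the first-order expansion in Step 3 for the degenerate start $z(s_0)=u(s_0)=0$.
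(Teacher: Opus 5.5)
Your proof is correct and follows essentially the paper's argument: the inequality $d|z|/ds\le -\re\nu_1\,|z|+|\alpha_{a,1}|$ at points where $z\neq0$, followed by a first-contact contradiction. Your Steps 1 and 3 also supply details the paper leaves implicit. These are that contact cannot occur where $z=0$, because $u>0$ on $(s_0,s_1]$, and that strict inequality holds immediately after $s_0$, including the degenerate start $z(s_0)=u(s_0)=0$.
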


 {\bf Proof of Lemma \ref{l_aux_r_1}.}
  
  \medskip
  The solution to the equation  (\ref{aux_r_1}) is
  $$
  u(s)=\frac{\alpha_a}{\nu}+e^{-\nu(s-s_0)}\left(u(s_0)-\frac{\alpha_a}{\nu}\right).
  $$
  Thus,
  $$
  u(s)-u(s_0)=\left(\frac{\alpha_a}{\nu}-u(s_0)\right) \left(1- e^{-\nu(s-s_0)}\right).
  $$
  
  (a) For $u(s_0)<{\alpha_a}/{\nu}$, this implies that $u(s)<{\alpha_a}/{\nu}$.
  
 (b) For $u(s_0)\ge{\alpha_a}/{\nu}$, this implies that $u(s)\le u(s_0)$.
 Additionally, for  $s \ge s_0+(1/\nu)|\ln(\alpha_a/(\nu u(s_0))|$, we have
\begin{equation*}
u(s)< \frac{\alpha_a}{\nu}+\frac{\alpha_a}{\nu u(s_0)}u(s_0)= 2\frac{\alpha_a}{\nu}.
 \end {equation*}

 \hskip 10cm $\square$ 
 
 {\bf Proof of Lemma \ref{lem_compar_ODE_1}.}
 
 For $|z|\ne 0$, we have
 \begin{equation*}
  \frac{d|z|}{ds}= -\re(\nu_1)|z| +\frac{1}{2|z|}(\alpha_{a,1}\bar z+\bar \alpha_{a,1}z)\le
   -\re(\nu_1)|z| +|\alpha_{a,1}|.
\end{equation*}
The proof then follows the standard argument for the comparison lemma for solutions of one-dimensional real ODEs.  For $s>s_0$ sufficiently close to $s_0$, we have $|z(s)|<u(s)$. Let $s_*$ be the first  time such that $|z(s)|=u(s)$. At this  time we must have $d|z|/ds \ge du/ds$.  However, this can't occur, because  $|z(s_*)|=u(s_*)$ implies $d|z|/ds < du/ds$.

 \hskip 10cm $\square$ 
 
 {\bf Proof of estimate for $\alpha_d$ in (\ref{for_alpha_d_improved}).}
 
 We have
 \begin{equation*}
\begin{aligned}
&|\alpha_d|= O(|zw|)+\eps O((|z|^2|+|w|^2|)d_+^{-1/2})+O((|w|^3+|w^2z|+|wz^2|)d_+^{-1/2}) \\
 &       + O(|z|^4)+\eps^3O_2(d_{+}^{-3}),
\end{aligned}     
\end{equation*} 
and, according to (\ref {for_z_improved_2}) and (\ref{induct_D_r_d}),
$$
|z(t)|< c_{b,5}(\eps^{11/6}C_q^{-6}{d}_{+}^{-1/2}+\eps^3 \hat {d}_{+}^{-4}),\
 |w|<\eps^{17/6}{d}_{+}^{-3}.
$$
We should prove that $|\alpha_d|=\eps^3O(d_{+}^{-3})$.

\medskip
The estimates can be carried  out separately for $|z| \sim |z_a|=O(\eps^{11/6}C_q^{-6}{d}_{+}^{-1/2})$ and \\  $|z| \sim |z_b|=O(\eps^3 {d}_{+}^{-4})$.

\medskip
\underline {For $|z| \sim |z_a|=O(\eps^{11/6}C_q^{-6}{d}_{+}^{-1/2})$ .}
\medskip
\begin{equation*}
\begin{aligned}
&\frac{ |z_aw|  }{\eps^3d_{+}^{-3}}=O\left( \frac{ (\eps^{11/6}C_q^{-6}{d}_{+}^{-1/2}) ( \eps^{17/6}{d}_{+}^{-3}) }{\eps^3d_{+}^{-3}}\right)
=O\left(\eps^{5/3} {d}_{+}^{-1/2} C_q^{-6}  \right)=O\left(\eps^{4/3} C_q^{-13/2}  \right)\ll 1,\\
&\frac{ \eps |z_a^2| d_+^{-1/2} }{\eps^3d_{+}^{-3}}=O\left( \frac{\eps (\eps^{11/6}C_q^{-6}{d}_{+}^{-1/2})^2 {d}_{+}^{-1/2} }{\eps^3d_{+}^{-3}}\right)=O\left(\eps^{5/3}d_{+}^{3/2}C_q^{-12}\right)=O\left(\eps^{5/3}C_q^{-12}\right)\ll 1,\\
&\frac{ \eps |w^2| d_+^{-1/2} }{\eps^3d_{+}^{-3}}=O\left( \frac{\eps ( \eps^{17/6}{d}_{+}^{-3})^2 {d}_{+}^{-1/2} }{\eps^3d_{+}^{-3}}\right)
=O\left(\eps^{11/3} d_{+}^{-7/2}\right)=O\left(\eps^{4/3} C_q^{-7/2}\right)\ll1,\\
&\frac{ |w^3| d_+^{-1/2} }{\eps^3d_{+}^{-3}}=O\left( \frac{ ( \eps^{17/6}{d}_{+}^{-3})^3 {d}_{+}^{-1/2} }{\eps^3d_{+}^{-3}}\right)=O\left( \eps^{11/2}{d}_{+}^{-13/2}\right)=O\left( \eps^{7/6}C_q^{-13/2}\right)\ll1,\\
&\frac{ |w^2z_a| d_+^{-1/2} }{\eps^3d_{+}^{-3}}=O\left( \frac{ ( \eps^{17/6}{d}_{+}^{-3})^2  (\eps^{11/6}C_q^{-6}{d}_{+}^{-1/2}){d}_{+}^{-1/2} }{\eps^3d_{+}^{-3}}\right)=O\left(\eps^{27/6}{d}_{+}^{-4}C_q^{-6}\right)=O\left(\eps^{11/6}C_q^{-10}\right)\ll1,\\
&\frac{ |wz_a^2| d_+^{-1/2} }{\eps^3d_{+}^{-3}}=O\left( \frac{ ( \eps^{17/6}{d}_{+}^{-3})  (\eps^{11/6}C_q^{-6}{d}_{+}^{-1/2})^2{d}_{+}^{-1/2} }{\eps^3d_{+}^{-3}}\right)=O\left(\eps^{21/6}{d}_{+}^{-3/2}C_q^{-12}\right)=O\left(\eps^{5/2}C_q^{-27/2}\right)\ll1,\\
&\frac{ |z_a^4| }{\eps^3d_{+}^{-3}}=O\left( \frac{  (\eps^{11/6}C_q^{-6}{d}_{+}^{-1/2})^4 }{\eps^3d_{+}^{-3}}\right)
=O\left(\eps^{13/3}{d}_{+}C_q^{-24}\right)=O\left(\eps^{13/3}C_q^{-24}\right)\ll1.
\end{aligned}
 \end{equation*}
 
  \medskip
\underline {For $|z| \sim |z_b|=O(\eps^3{d}_{+}^{-4})$ .}
\medskip
\begin{equation*}
\begin{aligned}
&\frac{ |z_bw|  }{\eps^3d_{+}^{-3}}=O\left( \frac{ (\eps^3d_{+}^{-4}) ( \eps^{17/6}{d}_{+}^{-3}) }{\eps^3d_{+}^{-3}}\right)
=O\left(\eps^{17/6} {d}_{+}^{-4}  \right)=O\left(\eps^{1/6} C_q^{-4}  \right)\ll 1,\\
&\frac{ \eps |z_b^2| d_+^{-1/2} }{\eps^3d_{+}^{-3}}=O\left( \frac{\eps (\eps^3{d}_{+}^{-4})^2 {d}_{+}^{-1/2} }{\eps^3d_{+}^{-3}}\right)
=O\left(\eps^{4} {d}_{+}^{-11/2}  \right)=O\left(\eps^{1/3} C_q^{-11/2}\right) \ll 1,\\
&\frac{ |w^2z_b| d_+^{-1/2} }{\eps^3d_{+}^{-3}}=O\left( \frac{ ( \eps^{17/6}{d}_{+}^{-3})^2 (\eps^3 {d}_{+}^{-4}){d}_{+}^{-1/2} }{\eps^3d_{+}^{-3}}\right)=O\left(\eps^{17/3} {d}_{+}^{-15/2}  \right)=O\left(\eps^{2/3} C_q^{-15/2}  \right)\ll 1,\\
&\frac{ |wz_b^2| d_+^{-1/2} }{\eps^3d_{+}^{-3}}=O\left( \frac{ ( \eps^{17/6}{d}_{+}^{-3})  (\eps^3{d}_{+}^{-4})^2{d}_{+}^{-1/2} }{\eps^3d_{+}^{-3}}\right)=O\left(\eps^{35/6} {d}_{+}^{-17/2}  \right)=O\left(\eps^{1/6} C_q^{-17/2}  \right)\ll 1,\\
&\frac{ |z_b^4| }{\eps^3d_{+}^{-3}}=O\left( \frac{  (\eps^3{d}_{+}^{-4})^4 }{\eps^3d_{+}^{-3}}\right)
=O\left(\eps^{9}{d}_{+}^{-13}\right)=O\left(\eps^{1/3}C_q^{-13}\right)\ll 1.
\end{aligned}
 \end{equation*}
 \medskip
 
 Thus, $|\alpha_d|=\eps^3O(d_{+}^{-3})$.

\medskip
 
  {\bf Proof of estimate for $\alpha$ in (\ref{for_alpha_improved}). }
  
    We have
 \begin{equation*}
\begin{aligned}
&|\alpha|= O(|\eta|(|\eta|+|w|))  
          + \eps O(|w|^2d_{-}^{-1/2}) +\eps O(|w\eta|d_{-}^{-1/2})
     +O(|\eta| |w|^2 d_{-}^{-1/2})\\
     &+ O(|\eta|^2 |w| d_{-}^{-1/2})+O(|\eta|^3)+
     O(|w|^3) 
     +\eps^3O(d_{-}^{-3}|w|)
     +\eps^3O^*(d_{-}^{-3}).
\end{aligned}     
\end{equation*} 
and, according to Lemma \ref{lem_in_Dqr},
$$
|w(t)|< c_{r,9}(\eps^{11/6}C_q^{-6}{d}_{-}^{-1/2}+\eps^3 \hat {d}_{-}^{-4}),\
 |\eta(t)|<c_{r,10}\eps^{3}{d}_{-}^{-3}.
$$
We should prove that $|\alpha|=\eps^3O(d_{-}^{-3})$.

\medskip
The estimates can be carried out separately  for $|w| \sim |w_a|=O(\eps^{11/6}C_q^{-6}{d}_{-}^{-1/2})$ and  \\ $|w| \sim |w_b|=O(\eps^3 {d}_{-}^{-4})$.

\medskip
\newpage
\underline {For $|w| \sim |w_a|=O(\eps^{11/6}C_q^{-6}{d}_{-}^{-1/2})$ .}

\medskip
\begin{equation*}
\begin{aligned}
&\frac{ |\eta w_a|  }{\eps^3d_{-}^{-3}}
=O\left (\frac{(\eps^3d_{-}^{-3})(\eps^{11/6}C_q^{-6}{d}_{-}^{-1/2})}{\eps^3d_{-}^{-3}}\right)=O\left (\eps^{11/6}C_q^{-6}{d}_{-}^{-1/2}\right)=O\left ( \eps^{3/2}C_q^{-13/2}\right)\ll 1,\\
&\frac{\eps  | w_a|^2 d_{-}^{-1/2} }{\eps^3d_{-}^{-3}}
=O\left (\frac{\eps(\eps^{11/6}C_q^{-6}{d}_{-}^{-1/2})^2d_{-}^{-1/2} }{\eps^3d_{-}^{-3}}\right)=O\left (\eps^{5/3} d_{-}^{3/2}C_q^{-12}\right)=O\left (\eps^{5/3} C_q^{-12}\right)\ll 1,\\
&\frac{\eps  | w_a\eta|d_{-}^{-1/2}}{\eps^3d_{-}^{-3}}
=O\left (\frac{\eps(\eps^{11/6}C_q^{-6}{d}_{-}^{-1/2}\eps^3d_{-}^{-3} d_{-}^{-1/2} }{\eps^3d_{-}^{-3}}\right)=O\left (\eps^{17/6} d_{-}^{-1}C_q^{-6}\right)=O\left (\eps^{13/6} C_q^{-7}\right)\ll 1,\\
&\frac{|\eta| | w_a|^2 d_{-}^{-1/2} }{\eps^3d_{-}^{-3}}
=O\left (\frac{(\eps^3d_{-}^{-3})(\eps^{11/6}C_q^{-6}{d}_{-}^{-1/2})^2d_{-}^{-1/2} }{\eps^3d_{-}^{-3}}\right)=O\left(\eps^{11/3} d_{-}^{-3/2}C_q^{-12} \right)=O\left (\eps^{{8/3}}C_q^{-27/2}\right)\ll 1,\\
&\frac{|\eta|^2 | w_a| d_{-}^{-1/2} }{\eps^3d_{-}^{-3}}
=O\left (\frac{(\eps^3d_{-}^{-3})^2(\eps^{11/6}C_q^{-6}{d}_{-}^{-1/2})d_{-}^{-1/2} }{\eps^3d_{-}^{-3}}\right)=O\left(\eps^{29/6} d_{-}^{-4}C_q^{-6} \right)=O\left (\eps^{{5/6}}C_q^{-10}\right)\ll 1,\\
&\frac{| w_a|^3  }{\eps^3d_{-}^{-3}}
=O\left (\frac{(\eps^{11/6}C_q^{-6}{d}_{-}^{-1/2})^3 }{\eps^3d_{-}^{-3}}\right)=O\left (\eps^{5/2}d_{-}^{3/2}C_q^{-18}\right)=O\left (\eps^{5/2}C_q^{-18}\right)\ll 1.
\end{aligned}    
\end{equation*} 

\medskip
\underline {For $|w| \sim |w_b|=O(\eps^3 {d}_{-}^{-4})$. }
\medskip

\begin{equation*}
\begin{aligned}
&\frac{ |\eta w_b|  }{\eps^3d_{-}^{-3}}
=O\left (\frac{(\eps^3d_{-}^{-3})(\eps^3d_{-}^{-4})}{\eps^3d_{-}^{-3}}\right)=O\left(\eps^3d_{-}^{-4}\right)=O\left(\eps^{1/3}C_q^{-4}\right)\ll 1,\\
&\frac{\eps  | w_b|^2 d_{-}^{-1/2} }{\eps^3d_{-}^{-3}}
=O\left (\frac{\eps(\eps^3d_{-}^{-4})^2d_{-}^{-1/2} }{\eps^3d_{-}^{-3}}\right)=O\left (\eps^4 d_{-}^{-11/2}\right)=O\left (\eps^{1/3} C_q^{-11/2}\right)\ll 1,\\
&\frac{|\eta| | w_b|^2 d_{-}^{-1/2} }{\eps^3d_{-}^{-3}}
=O\left (\frac{(\eps^3d_{-}^{-3})(\eps^3 d_{-}^{-4})^2d_{-}^{-1/2}}{\eps^3d_{-}^{-3}}\right)=O\left (\eps^{6} d_{-}^{-17/2}\right)=O\left (\eps^{1/3}C_q^{-17/2}\right)\ll 1,\\
&\frac{| w_b|^3  }{\eps^3d_{-}^{-3}}
=O\left (\frac{(\eps^3d_{-}^{-4})^3 }{\eps^3d_{-}^{-3}}\right)=O\left (\eps^{6} d_{-}^{-9}\right)=O\left (C_q^{-9}\right)\ll 1.
\end{aligned}     
\end{equation*}
\medskip

Thus, $|\alpha|=\eps^3O(d_{-}^{-3})$.

\medskip

{\bf Lemma about crossing of curves $\re \Psi_{\eps}={\rm const}$ near $\Gamma_{q,r,d}$}

\medskip
The following lemma is not used in the proof of Theorem \ref{t1.th}. However, it is useful for obtaining a visual representation of the curves $\re \Psi_{\eps}={\rm const}$ near $\Gamma_{q,r,d}$ (cf. Fig. \ref{closeness_1}).  
Note that  $\Gamma_{q,r,d}'$ consists of points at distance
 $c_{e,12} \eps \hat d_u^{-1/2}|\ln( c_{e,11}^{-1}\eps \hat d_u^{-3/2})|$ from $\Gamma_{q, \eps}$, and $(\Gamma_{q,r,d}\setminus \Gamma_{q,r,d}')\subset \bar \Gamma_{q, \eps} $.
 \begin{lem}
 \label{crossing}
 The level curves $\re \Psi_{\eps}={\rm const}$ cross the part  of the curve $\Gamma_{q,r,d}'$ for which $\hat d_+< c_{e,22}/({\ln \eps)^2}$ in the upward direction as $\re \tau$ increases.
 \end{lem}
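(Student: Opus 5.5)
The plan is to compare two directions at each point of $\Gamma_{q,r,d}'$: the tangent direction of $\Gamma_{q,r,d}'$ itself, and the tangent direction of the level curve $\re\Psi_\eps={\rm const}$ through that point. This comparison is a refinement of the computation in the proof of Lemma \ref{lem_closeness}. Parametrise $\Gamma_{q,r,d}'$ by $\tau_u\in\Gamma_{q,\eps}$ as $\tau_v=\tau_u-i\rho(\tau_u)$, where
\[
\rho=c_{e,12}\,\eps\,\hat d_u^{-1/2}\bigl|\ln(c_{e,11}^{-1}\eps \hat d_u^{-3/2})\bigr| .
\]
Since $\Gamma_{q,\eps}$ is itself a level curve of $\re\Psi_\eps$, the question reduces to the sign of the derivative of $\re\Psi_\eps(\tau_v)$ along $\Gamma_{q,r,d}'$ as $\re\tau$ increases. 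The level curves cross upward exactly when this derivative has a definite sign. That sign is the one corresponding to the level curves rising relative to $\Gamma_{q,r,d}'$, given that $\re\Psi_\eps$ decreases downward along verticals, by condition 6).

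The key expansion is to write
\[
\re\Psi_\eps(\tau_v)=\re\Psi_\eps(\tau_u)-\re\int_0^{\rho}i\Lambda_1(\dK_\eps(\tau_u-i r))\,dr .
\]
The first term is constant along $\Gamma_{q,\eps}$. To leading order the second term equals $-\rho\,\re(i\Lambda_{1,u})$, and $\re(i\Lambda_{1,u})$ has magnitude of order $\hat d_u^{1/2}$. So up to corrections of relative size $O(\rho/\hat d_u)$, the phase difference between $\Gamma_{q,r,d}'$ and $\Gamma_{q,\eps}$ is
\[
\Phi(\tau_u)\approx c\,\rho\,\hat d_u^{1/2}\approx c'\,\eps\,\bigl|\ln(\eps\hat d_u^{-3/2})\bigr| .
\]
Here $c'$ is a positive function of the direction. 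I would then differentiate $\Phi$ along $\Gamma_{q,\eps}$ with respect to arc length $\sigma$. Using $d\hat d_u/d\sigma$ of order $1$ (with a definite sign, $\hat d_u$ increasing to the right of $\tau_c$), this gives
\[
\frac{d\Phi}{d\sigma}\approx \tfrac32 c'\,\eps\,\hat d_u^{-1}\,\frac{d\hat d_u}{d\sigma}\;+\;\eps\,\bigl|\ln(\eps\hat d_u^{-3/2})\bigr|\,\frac{dc'}{d\sigma} .
\]
The first term comes from differentiating the logarithm. The second comes from the variation of the direction-dependent prefactor $|\Lambda_1|$ and of the angle, and is $O(\eps|\ln\eps|)$. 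I also expect that correction terms in $\rho$ and in $\Lambda_1$ along the vertical segment contribute at most $O(\eps\rho\,\hat d_u^{-3/2})$, which is smaller still.

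The condition $\hat d_+<c_{e,22}/(\ln\eps)^2$ is exactly what makes the logarithmic-derivative term dominant. We have $\eps\hat d_u^{-1}>\eps(\ln\eps)^2/c_{e,22}$, and this exceeds the $O(\eps|\ln\eps|)$ prefactor-variation term when $c_{e,22}$ is small. In fact the stated margin is even stronger than this, since $(\ln\eps)^2$ beats $|\ln\eps|$; that margin would also absorb a factor $|\ln\eps|$ coming from the $O(\rho\,\hat d_u^{-1})$ corrections. Hence $d\Phi/d\sigma$ has the sign of $d\hat d_u/d\sigma$. This means $\re\Psi_\eps$ changes monotonically along $\Gamma_{q,r,d}'$, in the direction that forces the level curves through it to cross from below to above as $\re\tau$ increases.

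The main obstacle is controlling the non-leading terms uniformly down to $\hat d_u\sim\eps^{2/3}$. There $\rho/\hat d_u$ is only $O(C_q^{-3/2}\ln C_q)$ rather than tending to zero, so the relative corrections are merely small, not $o(1)$. I would handle this via Lemma \ref{lem_0.5_2} and Lemma \ref{lem_about_width}, which give $\hat d_+\sim\hat d_u$ on the segment, and the bound $\partial\Lambda_1/\partial\kappa=O(\hat d_+^{-1/2})$, which makes each correction a factor $O(C_q^{-1})$ times the main term. Determining the orientation of the crossing from the geometry of Fig. \ref{closeness_1} is then routine.
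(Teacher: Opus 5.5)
Your computation is the paper's in dual form. The paper fixes $\tau_{v,*}\in\Gamma_{q,r,d}'$, measures the vertical distance $\delta$ from $\Gamma_{q,\eps}$ down to the level curve through $\tau_{v,*}$, and differentiates $\delta-\rho$ in $\hat d_u$; you differentiate $\Phi=\re\Psi_\eps(\tau_v)-\re\Psi_\eps(\tau_u)$ along $\Gamma_{q,\eps}$. In both, the leading term comes from differentiating the logarithm in $\rho$. Two steps are wrong as written, however.

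\textbf{The prefactor-variation term.} It is not $O(\eps|\ln\eps|)$. By Lemma \ref{FG}, $\Lambda_1$ has a Puiseux expansion $A(\tau-\tau_c)^{1/2}+B(\tau-\tau_c)+\dots$. By Lemma \ref{lem_Gamma_gamma}, the argument of $\tau-\tau_c$ along $\Gamma_{q,\eps}$ differs from that of $\gamma_2$ by $O(\hat d_u^{1/2})$. Hence $c'\propto|\im\Lambda_{1,u}|\,\hat d_u^{-1/2}$ is a constant plus $O(\hat d_u^{1/2})$, and its derivative is $O(\hat d_u^{-1/2})$, not $O(1)$. This is the paper's $\im\Lambda_1=\alpha(\hat d_u)\hat d_u^{1/2}+O(\hat d_u)$, with error term $O(\hat d_{u,*}^{-1/2}\rho_*)$.

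The competing term is therefore $O(\eps\,\hat d_u^{-1/2}|\ln(\eps\hat d_u^{-3/2})|)$. It is dominated by $\tfrac32|c'|\eps\hat d_u^{-1}$ only when $\hat d_u^{1/2}|\ln(\eps\hat d_u^{-3/2})|$ is small, and that is exactly what $\hat d_u<c_{e,22}/(\ln\eps)^2$ guarantees. So the hypothesis is sharp for this argument, not ``stronger than needed''; at $\hat d_u\sim 1/|\ln\eps|$ your comparison would fail.

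The corrections from the variation of $\Lambda_1$ along the vertical segment behave differently. They contribute $O(\rho^2\hat d_u^{-3/2})$ to $d\Phi/d\hat d_u$, which is the paper's $O(\rho_*^2 d_{u,*}^{-2})$. This is smaller than the main term by a factor $O(\eps\hat d_u^{-3/2}\ln^2(\eps\hat d_u^{-3/2}))$, so it is handled by taking $C_q$ large, not by the $(\ln\eps)^2$ margin.

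\textbf{The sign.} The orientation is the whole content of the lemma, and your write-up has it reversed. Since $\re\Psi_\eps$ decreases downward, $\im\Lambda_1<0$. Hence $\Phi=\int_0^\rho\im\Lambda_1(\dK_\eps(\tau_u-ir))\,dr<0$, that is, $\Phi\approx-|c'|\eps|\ln(\eps\hat d_u^{-3/2})|$. As $|\ln(\eps\hat d_u^{-3/2})|$ increases with $\hat d_u$, you get $d\Phi/d\sigma<0$.

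So $\re\Psi_\eps$ decreases along $\Gamma_{q,r,d}'$ to the right. The level curve through a point of $\Gamma_{q,r,d}'$ then lies above $\Gamma_{q,r,d}'$ to its right, which is the upward crossing. This is the paper's ``$\delta-\rho$ decreases from $0$, since the logarithm is negative''.

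With your signs ($c'>0$, and $d\Phi/d\sigma$ having the sign of $d\hat d_u/d\sigma>0$), $\re\Psi_\eps$ would increase along $\Gamma_{q,r,d}'$ and the crossing would be downward. The orientation therefore cannot be deferred to the figure. With these two corrections your argument coincides with the paper's.
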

 
 \medskip
  
  {\bf Proof of Lemma \ref{crossing}.}
  
  \medskip
  For a point $\tau_u\in  \Gamma_{q, \eps}$,  we denote $\hat d_u=\hat d_+(\tau_u), 
  \Lambda_{1,u}=\Lambda_{1}(\dK_{\eps}(\tau_u)), \rho=-c_{e,12} \eps \hat d_u^{-1/2}\ln( c_{e,11}^{-1}\eps \hat d_u^{-3/2})$. Denote  $\tau_v=\tau_u-i\rho\in \Gamma_{q,r,d}'$. 
   Fix some $\tau_{u}=\tau_{u,*}\in \Gamma_{q, \eps}$ and the corresponding values 
   $\hat d_u=\hat d_{u,*},   \Lambda_{1,u}=  \Lambda_{1,*}, \rho=\rho_*$. Consider the level curve  $\re \Psi_{\eps}(\tau)={\rm const}$ passing through
   the point $\tau_{v,*}=\tau_{u,*}-i\rho_*$. Denote by $\tau_{\psi}$ the point on this curve lying below point $\tau_u$ (i.e. $\im \tau_{\psi}=\im \tau_u$). Denote $i\delta=\tau_u-\tau_{\psi}$. 
   
  We have
  \begin{equation}
  \begin{aligned}
  &\re \Psi_{\eps}(\tau_{\psi})=\re \Psi_{\eps}(\tau_{v,*})=\re \Psi_{\eps}(\tau_{u,*}-i\rho_*)
  =\re \Psi_{\eps}(\tau_{u,*})-\re(i\Lambda_{1,*})\rho_*+O(\rho_*^2 d_{u,*}^{-1/2})\\
 &=\re \Psi_{\eps}(\tau_{u,*})+\im(\Lambda_{1,*})\rho_*+O(\rho_*^2 d_{u,*}^{-1/2} ).
   \end{aligned}
   \end{equation}
   On the other hand,
    \begin{equation}
  \begin{aligned}
  &\re \Psi_{\eps}(\tau_{\psi})=\re \Psi_{\eps}(\tau_{u}-i\delta)
  =\re \Psi_{\eps}(\tau_{u})-\re(i\Lambda_{1})\delta+O(\delta^2 d_{u}^{-1/2})\\
 &=\re \Psi_{\eps}(\tau_{u})+\im(\Lambda_{1})\delta+O(\delta^2 d_{u}^{-1/2})=\re \Psi_{\eps}(\tau_{u,*})+\im(\Lambda_{1})\delta+O(\delta^2 d_{u}^{-1/2}).
   \end{aligned}
   \end{equation}
   Thus,
   \begin{equation}
  \begin{aligned}
 \re \Psi_{\eps}(\tau_{u,*})+ \im(\Lambda_{1,*})\rho_*+O(\rho_*^2 d_{u,*}^{-1/2} )=\re \Psi_{\eps}(\tau_{u,*})+\im(\Lambda_{1})\delta+O(\delta^2 d_{u}^{-1/2}).
   \end{aligned}
   \end{equation}
   In what follows, we  will proceed to the limit as  $\tau_{u}$ tends to $\tau_{u,*}$. Thus, we can replace $\rho_*$ with $\delta$, and $d_{u,*}$ with $d_{u}$ in $O(\cdot)$-symbols. We obtain
    \begin{equation}
  \begin{aligned}
\delta= \frac{ \im(\Lambda_{1,*})}{ \im(\Lambda_{1})}\rho_*+O(\rho_*^2 d_{u}^{-1} ).
  \end{aligned}
   \end{equation}
   Then
   \begin{equation}
  \begin{aligned}
 & \delta-\rho=\frac{ \im(\Lambda_{1,*})}{ \im(\Lambda_{1})}\rho_* -\rho+O(\rho_*^2 d_{u}^{-1} )
  =\left(\frac{ \im(\Lambda_{1,*})}{ \im(\Lambda_{1})} -\frac{\rho}{\rho_*}\right)\rho_*+O(\rho_*^2 d_{u}^{-1} )\\
  &=\frac{ \im(\Lambda_{1,*})}{ \im(\Lambda_{1})}
\left(1-\frac{ \im(\Lambda_{1})  \hat d_{u}^{-1/2}\ln( c_{e,11}\eps \hat d_{u}^{-3/2})}{ \im(\Lambda_{1,*})\hat d_{u,*}^{-1/2}\ln( c_{e,11}\eps \hat d_{u,*}^{-3/2})}
\right)\rho_*
+O(\rho_*^2 d_{u}^{-1}).
  \end{aligned}
   \end{equation}
   
   We have $  \im(\Lambda_{1})=\alpha(\hat d_u)\hat d_u^{1/2}+O(\hat d_u)$, where  $\alpha$ is a smooth function. Differentiate $\delta-\rho$ over $\hat d_u$ at $\hat  d_u=\hat  d_{u,*}$. Principal part of the derivative for small $\hat  d_u$ is given by differentiation of the logarithm. We have
   \begin{equation}
  \begin{aligned}
 \left( \frac{d(\delta-\rho)}{d(\hat d_u)}\right)_{\hat  d_u=\hat  d_{u,*}}
 =\frac{3}{2}\frac{\rho_*}{\hat  d_{u,*}\ln( c_{e,11}\eps \hat d_{u,*}^{-3/2})}+O(\hat  d_{u,*}^{-1/2}\rho_*)+
 O(\rho_*^2 d_{u,*}^{-2}).
   \end{aligned}
   \end{equation}
   The first term in this derivative is dominant  provided that $\hat  d_{u,*}<c_{e,22}/{(\ln \eps)^2}$ and  $C_q$ is sufficiently large. In this case, since value of the logarithm is negative, the value $(\delta-\rho)$ decreases from 0 (which is its value at  $\hat  d_u=\hat  d_{u,*}$). 
    This means that curves  $\re \Psi_{\eps}={\rm const}$ cross the curve $\Gamma_{q,r,d}'$ in the upward direction as $\re \tau$ increases.
  
   \hskip 12 cm $\square$